\documentclass[11 pt]{amsart}

\usepackage{latexsym}
\usepackage{amssymb, amsmath}
\usepackage{amsthm}
\usepackage{geometry}
\geometry{hmargin=1.1 in, vmargin=1.1 in}
\usepackage{hyperref}
\usepackage{changebar}
\usepackage{comment}
\usepackage[T1]{fontenc}
\usepackage[stretch=10]{microtype}
\usepackage{tikz}
\usetikzlibrary{calc}
\usetikzlibrary{intersections}
\usepackage{fixltx2e}
\usepackage{wrapfig}

\numberwithin{equation}{section}

\newtheorem{theorem}{Theorem}[section]
\newtheorem{lemma}[theorem]{Lemma}
\newtheorem{cor}[theorem]{Corollary}
\newtheorem{sublem}[theorem]{Sublemma}
\newtheorem{proposition}[theorem]{Proposition}

\newtheorem{defin}[theorem]{Definition}

\newtheorem{remark}[theorem]{Remark}


\newcommand\cB{{\mathcal B}}
\newcommand{\cC}{\mathcal{C}}
\newcommand\cD{{\mathcal D}}

\newcommand{\cF}{\mathcal{F}}
\newcommand{\cG}{\mathcal{G}}
\newcommand{\cI}{\mathcal{I}}

\newcommand{\cK}{\mathcal{K}}
\newcommand{\cL}{\mathcal{L}}
\newcommand{\cM}{\mathcal{M}}
\newcommand{\cO}{\mathcal{O}}
\newcommand{\cP}{\mathcal{P}}
\newcommand{\cQ}{\mathcal{Q}}
\newcommand\cR{{\mathcal R}}
\newcommand{\cS}{\mathcal{S}}
\newcommand{\cV}{\mathcal{V}}
\newcommand{\cW}{\mathcal{W}}
\newcommand{\cT}{\mathcal{T}}


\newcommand{\bh}{\mathbf{h}}


\newcommand{\bC}{{\mathbb C}}
\newcommand{\bE}{{\mathbb E}}
\newcommand{\bF}{\mathbb{F}}

\newcommand{\bH}{\mathbb{H}}
\newcommand{\bM}{\mathbb{M}}
\newcommand\bN{{\mathbb N}}
\newcommand{\bS}{\mathbb{S}}
\newcommand\bR{{\mathbb R}}
\newcommand\bZ{{\mathbb Z}}
\newcommand{\bW}{\mathbb{W}}


\newcommand{\bx}{\bar{x}}
\newcommand{\bxs}{\bar{x}^s}
\newcommand{\bxu}{\bar{x}^u}
\newcommand{\bys}{\bar{y}^s}
\newcommand{\byu}{\bar{y}^u}

\newcommand{\bgamma}{\bar{\gamma}}

\newcommand{\bp}{\overline{\psi}}
\newcommand{\bpsi}{\overline{\psi}}

\newcommand{\oW}{\overline{W}}
\newcommand{\bvf}{\overline{\vf}}
\newcommand{\barG}{\overline{G}}
\newcommand{\bDelta}{\overline{\Delta}}

\newcommand{\tC}{\tilde C}

\newcommand{\tG}{\widetilde{\cG}}
\newcommand{\tW}{\widetilde{W}}
\newcommand{\tV}{\widetilde{V}}
\newcommand{\tU}{\widetilde{U}}
\newcommand{\tI}{\widetilde{\cI}}
\newcommand{\td}{\tilde{d}}
\newcommand{\tx}{\tilde{x}}
\newcommand{\ty}{\tilde{y}}


\newcommand{\hx}{\hat{x}}
\newcommand{\hy}{\hat{y}}
\newcommand{\heta}{\hat{\eta}}
\newcommand{\hxi}{\hat{\xi}}
\newcommand{\hG}{\widehat{\cG}}


\newcommand{\vf}{\varphi}

\newcommand{\eps}{\epsilon}
\newcommand{\ve}{\varepsilon}

\newcommand\up{\varkappa}
\newcommand\vu{{\tau}_-}       

\newcommand\epsilonr{{\varepsilon}}
\newcommand\vus{{\tau_+}}
\newcommand\ho{{\varpi}}

\newcommand{\Cs}{C_{\#}}
\newcommand\Id{\text{\bf Id}}
\newcommand\intt{\text{int}\, }
\newcommand\spp{\text{sp}\, }
\newcommand\Dom{\text{Dom}\, }
\newcommand{\gap}{\mbox{\scriptsize gap}}

\def\beq{\begin{equation}}
\def\eeq{\end{equation}}
\newcommand*\circled[1]{\textcircled{\raisebox{-.6 pt}{\footnotesize {#1}}}}

\newcommand{\oldh}{f} 
\newcommand{\oldp}{{\alpha}} 
\newcommand{\oldq}{\beta}
\newcommand{\oldbeta}{{1/q}} 
\newcommand{\oldbetazero}{{1/q_0}} 
\newcommand{\oldbetatwo}{{1/(2q)}} 
\newcommand{\oldbetaparam}{q} 
\newcommand{\oldbetazeroparam}{q_0} 
\newcommand{\bal}{\boldsymbol{\alpha}}
\newcommand{\balzero}{\boldsymbol{\alpha_0}}
\DeclareMathOperator{\extd}{\mathbf{d}}


\newcommand{\Rtime}{R}
\newcommand{\bRtime}{\bar\Rtime}
\DeclareMathOperator*{\esssup}{ess-sup}
\newcommand{\bomega}{{\boldsymbol{\omega}}}
\newcommand{\BG}{{\boldsymbol{L}}}
\newcommand{\bdelta}{{\boldsymbol{\delta}}}
\newcommand{\Beta}{{\boldsymbol{\eta}}}


\begin{document}

\title[Exponential decay of correlations for
Sinai billiard flows]{Exponential Decay of Correlations \\for  Finite Horizon Sinai Billiard Flows}
\author{Viviane Baladi 
\and Mark F. Demers
\and Carlangelo Liverani
}
\address{
Sorbonne Universit\'es, UPMC Univ. Paris 6, CNRS, Institut de Math\'ematiques de Jussieu\\ 
\phantom{De}(IMJ-PRG), 
4, Place Jussieu, 75005 Paris, France}
\email{viviane.baladi@imj-prg.fr}
\address{Department of Mathematics, Fairfield University, Fairfield CT 06824, USA}
\email{mdemers@fairfield.edu}
\address{Dipartimento di Matematica
Universit\`a degli Studi di Roma Tor Vergata, Italy}
\email{liverani@mat.uniroma2.it}
\dedicatory{
In memoriam Nikolai Chernov 1956--2014.}
\thanks{CL acknowledges the partial support of the European Advanced Grant  Macroscopic Laws and Dynamical Systems (MALADY) (ERC AdG 246953). 
 MD  was  partially  supported  by  NSF  grants  DMS
1101572 and 1362420.
Part of this work was carried out by VB and MD at the Newton Institute
in 2013 (Mathematics for the Fluid Earth), during visits of VB and MD to Roma 
in 2013, 2014, 2015, and a visit of MD to Paris in 2014,  using
the  Action incitative ENS {\it Spectres en dynamique, g\'eom\'etrie et th\'eorie des
nombres.} VB is grateful to Peter B\'alint for 
his patient explanations on billiards and
thanks S\'ebastien Gou\"ezel for many key ideas. We thank I.P. Toth for several useful remarks.
We are grateful to the two anonymous referees
for many  constructive comments.}

\date{June 6, 2017, amended following referees' reports}

\begin{abstract}
We prove exponential decay of correlations for the billiard flow associated with 
a two-dimensional finite horizon Lorentz Gas (i.e., the  Sinai billiard flow with finite
horizon). Along the way, we describe the spectrum of the generator of the corresponding
semi-group $\cL_t$ of transfer operators, i.e., the resonances of the Sinai billiard flow, on a suitable
Banach space of anisotropic distributions.
\end{abstract}
\maketitle

\tableofcontents

\section{Introduction and statement of results}

This paper completes, on a conceptual level, the study of decay of correlations
of planar dispersing billiard
systems  initiated by Sinai's seminal papers \cite{Sinai63, Sinai70} in which he extended the 
ideas of Hopf and Anosov to the case of piecewise smooth dynamical systems.  
Sinai's breakthrough prompted several works establishing ergodicity for 
more and more general systems. 
Yet, the quantitative study of their mixing properties had to wait almost twenty years until \cite{BSC91} 
established sub-exponential decay of correlations (and the Central Limit Theorem)  for the 
collision map 
associated to certain dispersing billiard systems. The question remained if the discontinuities prevented 
exponential decay of correlations or not. The question was settled in \cite{Li95, Y98, Ch99} where exponential 
decay of correlations was established for a  large class of 
discrete-time dynamical systems with discontinuities (including 
 Poincar\'e maps for the finite horizon Lorentz gas in \cite{Y98}, and more general dispersing
billiards in \cite{Ch99}).

These results for the billiard collision map did not settle the question of the rate of 
decay of correlations for the billiard flow.
It is well known that this seemingly 
uneventful step is highly non trivial: In the case of smooth  systems 
it took 26 years to go from the proof of exponential
decay of correlations for Anosov maps \cite{Sinai72, ruelle} to the first results, by Dolgopyat, on 
exponential decay of correlations for
Anosov flows \cite{Do}.  
The first progress for dispersing billiard flows 
was made by Melbourne \cite{Mel2007} and Chernov \cite{Ch2007}, who proved super-polynomial and 
stretched-exponential decay, respectively. 
The methods of \cite{Y98, Mel2007, Ch2007} employ some kind 
of countable Markov partitions.  As pointed out by one of the
referees, two very recent works have successfully combined Dolgopyat arguments with
countable Markov partitions (in the spirit of \cite{BV, AGY}) to get exponential decorrelations for
(non-billiard) hyperbolic flows with singularities: K. Burns et al. \cite{BMMW}  for some Weil--Petersson flows and Ara\'ujo--Melbourne \cite{AM} for the Lorenz attractor.
However, these works do not constitute compelling evidence for  {\it dispersing billiard flows,}  the difficulties of which come from  the  severe lack of smoothness of the foliations
(which are measurable, but not  continuous). Indeed,  in \cite{AM} the stable foliation is H\"older and consists of long leaves. The situation of \cite{BMMW}  is even simpler, since a very good description of the structure of the singularities is available, and the foliation is  rather regular.
It seems much more difficult (at least to the authors of the present
paper) to implement Dolgopyat-type arguments with
countable Markov partitions to get exponential mixing for dispersive billiard flows. It is thus natural to try a more direct  line
of attack, studying the transfer operators on suitable spaces\footnote{ Unfortunately, the spaces introduced 
e.g. in \cite{BTs, GL, FRS, GLP}
for Anosov diffeomorphisms or flows do not work well
in the presence of discontinuities.} of anisotropic distributions defined 
on the manifold, thus bypassing   non-intrinsic constructions.

The first exponential decay result for piecewise smooth hyperbolic {\it flows} was
obtained by \cite{BaL} who used such a direct functional approach:
They were able to build on Liverani's version \cite{Li04} of the Dolgopyat
argument for contact Anosov flows  and an anisotropic space construction
from \cite{BG}  to prove exponential decay for a large class of piecewise smooth contact
hyperbolic flows.
However, this class   did not
contain billiards, since the blowup of derivatives at the boundary
of the domain (corresponding to grazing orbits in billiards) was not allowed.  
In parallel,  \cite{demers zhang}
succeeded in adapting the anisotropic space previously introduced for
piecewise hyperbolic discrete-time systems by \cite{DL} to allow for the
blowup of derivatives at boundaries, giving a new functional
proof of Young's \cite{Y98} exponential mixing for the billiard {\it map.} 
(This function
space approach  has recently been  applied to a wide variety of billiard  maps and their
perturbations
\cite{demers zhang 2, demers zhang 3}.)

In the present paper, we  exploit the fact that the billiard flow preserves a contact form 
and combine the methods of \cite{BaL} with the function spaces
of \cite{demers zhang}   to establish 
exponential decay of 
correlations (Corollary ~\ref{truemain}) for the billiard flow associated with a finite horizon Lorentz gas.
As a byproduct of our proof, we obtain information on the resonances of the billiard
flow (Theorem ~\ref{resonances}), that is, the spectrum of the generator $X$ of the semi-group
$\cL_t$ given by the flow.  We warn the reader that, just like in \cite{Li04, BaL}, although we do
obtain a spectral gap for the spectrum of the generator $X$ on an anisotropic Banach space $\cB$, our method does {\it not} show that
the time-one transfer operator $\cL_1$ has  a spectral gap on $\cB$. (Note that $X$ is  closed but not bounded on $\cB$.) The spectral gap of $\cL_1$
on a (different) anisotropic Banach space was obtained by Tsujii (\cite[Thm 1.1]{Tsu1}) in the easier case of
smooth Anosov flows.

One key technical hurdle we had to overcome, in order to carry
out the Dolgopyat cancellation argument from \cite{Li04, BaL}
in the dispersive billiard setting,
 is the construction in Section~\ref{Lipschitz}
of a suitable {\it approximate (fake) unstable foliation for the billiard}, see Theorem \ref{thm:foliation} for a detailed description of its properties.
This (a posteriori very natural)
delicate construction is one of the main novelties of the present work (see also Remark ~ \ref{rem:foliation}
below).  Along the way we discovered that using in addition the analogous fake stable foliation
allows a much more  systematic writing of  the Dolgopyat cancellation argument (Section ~\ref{dodo}).
Note also that we  replace the $\cC^{1+\ho}$ estimates on the
holonomies used in \cite{BaL} by ``four-point conditions'' (see condition (vii) and Lemma~ \ref{lem:holder jac} in Section
~\ref{Lipschitz}) which allow one to control the
$\cC^{1+\ho}$ norm of  differences of holonomies (\eqref{eq:amazing}).

\begin{remark}\label{rem:foliation} The construction
of Section~\ref{Lipschitz} and in particular
properties (i)-(viii) there, can be easily used to prove the following interesting property of the stable and unstable foliations of the billiard flow (which are only measurable): 
For each small enough\footnote{It suffices to take $\eta < L_0^5$, where $L_0$ is from
Lemma~\ref{lem:growth}.} $\eta > 0$, there exists a set of Lebesgue measure at most $C
\eta^{\frac 45}$, the complement of which is foliated by leaves of the
stable (or unstable) foliation with length at least $\eta$ such that the
foliation is Whitney--Lipschitz, with
Lipschitz constant not larger than $C\eta^{-\frac 45}$.
(See Remark~\ref{opt} on the improvement of this exponent.)  An analogous
statement holds for the billiard map. 
To our knowledge, this fact was not previously known. We refer to
\cite{KS} for related information on dynamical foliations of billiards.
\end{remark}

The present work  finally
settles the issue of whether billiard 
flows can have exponential decay of correlations. In addition, we  obtain new spectral
information and we develop new tools. 
Many problems remain open (more general dispersing billiards, higher 
dimensional billiards, dynamical zeta functions -- which would give another
interpretation of the resonances --- other Gibbs states
etc.), and we hope that some of the techniques in the present paper may
be useful towards their solution.
Some limit theorems on the billiard flow can be obtained from
information on the Poincar\'e map (see \cite{MT} for the central
limit theorem\footnote{ Speed estimates can be found in \cite{LP}.} and almost sure invariance principle, see
 \cite{M?} for large deviations, see \cite{Pe} for some
Berry--Esseen bounds). Although  the existing proofs \cite{I} of local limit theorems with error terms
are based on a spectral gap for the transfer operator $\cL_1$,
 we hope that a spectral
gap for (suitable perturbations of) the generator $X$ of the  flow  will lead to e.g., a local limit theorem with error terms,
better Berry--Esseen estimates, and rate functions for large deviations.

We remark that the results of the present paper do not apply to the billiard flow
corresponding to an infinite horizon periodic Lorentz gas.   Recall that
although the discrete time collision
map for such billiards enjoys exponential decay of correlations \cite{Ch99},
it is anticipated by physicists  \cite{FM, MM} that correlations for the flow
decay at a polynomial rate.

The paper is as self-contained as possible, with precise references to the
book by Chernov and Markarian \cite{chernov book} or  to the previous
works \cite{BaL} or \cite{demers zhang}, whenever we use  non trivial
facts. Regarding operator-theoretical or
functional-analytic background, very little is expected from
the reader, and we again give precise references to Davies \cite{Davies}.

\subsection{The main results (Theorems ~\ref{main} and ~\ref{resonances}, Corollary \ref{truemain})}
\label{setting}

We shall state our results in the setting of the Sinai billiard flow.
A more general axiomatic setting can probably be considered
in the spirit of \cite{demers zhang 3}.

Let $B_i$, $i = 1, \ldots d$, denote open, convex sets in the two-torus 
$\mathbb{T}^2 = \mathbb{R}^2 / \mathbb{Z}^2$.
We assume that the closures of the sets $B_i$ are pairwise disjoint and that their
boundaries $\Gamma_i$ are $\cC^3$ curves with strictly positive curvature.
 We consider the motion of a point particle in the domain 
$Q := \mathbb{T}^2 \setminus (\cup_{i=1}^d B_i)$ undergoing elastic collisions at the boundaries
and maintaining constant velocity between collisions.  
At collisions, 
the velocity vector changes direction, but not magnitude.
Thus we set the magnitude of the velocity vector equal to one and view the 
phase space of the flow as three-dimensional. 

We adopt the  coordinates  $Z = (x,y, \omega)$ for the flow, where 
$(x,y) \in Q$
is the particle's position on the table, and 
$\omega \in \bS^1$ (we view $\bS^1$ as the quotient of $[0, 2\pi]$, identifying endpoints)
is the angle made by the velocity vector
with the positive $x$-axis. 
The quotient
$$\Omega := \{ (x,y, \omega)\mid (x,y) \in Q\, , \quad \omega \in \bS^1\}  /\sim$$
is often used as the phase space for the billiard flow $\Phi_t:\Omega\to \Omega$, where
$\sim$ identifies   ingoing and outgoing velocity
vectors at the collisions, via the reflection with respect to the boundary
of the scatterer at $Z\in \partial Q$. (It is then  customary
to work with the outgoing -- post-collisional --  velocity vector at collisions.)
The  topological metric space $\Omega$  can be endowed with H\"older
(non differentiable) charts. 
It is thus more convenient\footnote{ In view of defining $\cC^2$ stable curves which can touch the scatterers
on their endpoints,
$\cC^2$ test functions (when considering $\cC^\ell$ functions on $\Omega_0$, we mean
differentiable in the sense of Whitney, viewing $\Omega_0$ as a subset of the
three-torus).} to work
with the $\cC^2$ manifold with boundary 
$$\Omega_0 := \{ (x,y,\omega) \mid (x,y) \in Q\, , \quad \omega \in \bS^1\}  \subset \mathbb{T}^3 =
\mathbb{T}^2\times  \bS^1
\, .$$
For nonzero $t$, we may consider the time-$t$ billiard flow $\Phi_t$ on $\Omega_0$.
The time-zero map $\Phi_0$ is the identity in the interior of $\Omega_0$,
and also at grazing collisions in $\partial \Omega_0$, where the incoming and outgoing
angles coincide. (Note that at $(x,y) \in \partial Q$ there are
exactly two grazing angles, $\omega_{gr}(x,y)\in [0, \pi)$ and $\omega_{gr}(x,y)+\pi$. These two
angles
divide the circle in the two arcs of ingoing, respectively outgoing, angles.). For $(x,y) \in \partial Q$ and $\omega^-$ a 
non-grazing incoming angle, we have that the (reflected) outgoing angle
 $\omega^+$ 
 (i.e., so that  $\Phi_0(x,y,\omega^-)=(x,y,\omega^+)$) is different from  $\omega^-$. In particular,
$\Phi_t(x,y,\omega^-)=\Phi_t(x,y,\omega^+)$ for such $(x,y,\omega^-)$ and
all $t>0$, so that the time-$t$
map is not injective. Similar properties hold
for $\omega$ a non-grazing outgoing angle, reversing time.
The nature of the Banach space norms defined below 
will ensure that these apparent flaws in the flow do
not create problems.\footnote{ In particular strong continuity holds for the transfer operator
$\cL_t$ associated to $\Phi_{t}$ for $t>0$, see Lemma~\ref{lem:strong c}.}

The billiard flow is our primary object of study, but it will sometimes be convenient to use the
billiard map (also called collision map), i.e., the Poincar\'e map $T:\cM \to \cM$
of the flow on the union of scatterers $\cM = \cup_{i=1}^d \Gamma_i \times [-\frac{\pi}{2}, \frac{\pi}{2}]$. 
Natural
coordinates for the collision map $T$ are $(r,\vf)$, where $r$ represents the position on the
boundary of the scatterer, parametrized by arclength and oriented positively with
respect to the domain $Q$, and
$\vf$ is the angle made by the outgoing (post-collision) velocity vector with the outward
pointing normal to the boundary of the scatterer.  
Let $\tau(Z)$ denote the time when the particle at $Z\in  \Omega_0$ first
collides with one of the scatterers.  (It is known that $\tau$ is 
a piecewise $1/2$ H\"older function, see the proof of
Lemma~\ref{lem:smooth}.) We assume that the table has {\em finite horizon},
i.e., there exists $\tau_{\max} < \infty$ such that $\tau(Z) \le \tau_{\max}$ for all $Z \in \Omega_0$.  Since we have assumed the scatterers are a positive distance apart, there
exists a constant $\tau_{\min}>0$ such that $\tau(r,\vf) \ge \tau_{\min}$ for all $(r,\vf) \in \cM$.

\bigskip
The billiard flow preserves the normalised Lebesgue
measure (see e.g. \cite[Def. 2.23]{chernov book})
\begin{equation}\label{Lebb}
m= (2\pi |Q|)^{-1} dx dy d\omega\, ,
\end{equation}
where $|Q|$ denotes the area of the billiard domain.
Our first result gives a precise description of the Sinai billiard flow decorrelations:

\begin{theorem}[Fine correlation structure for the Sinai billiard flow]\label{main}
Let $\Phi_t:\Omega_0\to \Omega_0$ be a finite horizon 
(two-dimensional) billiard flow associated to finitely many  scatterers
$B_i$ with $\cC^3$ boundaries of positive curvature and so that the $\overline B_i$
are pairwise disjoint. Then there exist a constant $\upsilon_{Do}>0$, and for any
$\upsilon_1<\upsilon_{Do}$ a constant
 $C_1>0$, a finite dimensional vector space $F\subset (\cC^1(\Omega_0))^*$, a (nontrivial) bounded operator $\Pi: \cC^2(\Omega_0)\cap\cC^0(\Omega)\to F$ and a matrix $\widehat X\in L(F,F)$ so that for any $\psi\in \cC^1(\Omega_0)$ and $\oldh \in \cC^2(\Omega_0)\cap \cC^0(\Omega)$ we have\footnote{ For integer $\ell$, we set $|f|_{\cC^\ell(\Omega_0)}=\sum_{k=0}^\ell \sup_{x \in \Omega_0}\max_{|\vec k|=k}|\partial^{\vec k} f(x)|$, where $\partial^{\vec k}$ is the partial derivative associated
to $\vec k=(k_1, k_2,k_3)\in \mathbb{ Z}_+^3$ and $|\vec k|=k_1+k_2+k_3$, using the natural chart on the torus.}
$$
\left|\int (\psi \circ \Phi_t) \, \oldh \, dm -
\int \psi e^{\widehat X t} \Pi\oldh \, dm \right|\le C_{1}|f|_{\cC^2(\Omega_0)}|\psi|_{\cC^1(\Omega_0)} \cdot 
e ^{-\upsilon_{1} t} \, , \forall t \ge 0 \,  . 
$$
In addition, the spectrum $\spp(\widehat X)\subset\{z\in\bC\;:\;  -\upsilon_{Do}<\Re(z)< 0\}\cup\{0\}$, and zero is a simple eigenvalue with contribution to $\Pi(f)$
given by $\int f\, dm$.
\end{theorem}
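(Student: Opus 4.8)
The plan is to realise the correlation integral as a pairing with a transfer operator on a space of anisotropic distributions, and to extract the stated structure from the spectral decomposition of the generator of the associated semigroup. First I would fix a pair of anisotropic Banach spaces $\cB\subset\cB_w$ of distributions on the $\cC^2$ manifold--with--boundary $\Omega_0$, built as in \cite{demers zhang} (following \cite{DL}) but adapted to the flow and using the homogeneity--layer decay rate $\chi=2$ of Definition~\ref{def:defhom}. One wants continuous injections
\[
\cC^2(\Omega_0)\cap\cC^0(\Omega)\ \hookrightarrow\ \cB\ \hookrightarrow\ \cB_w\ \hookrightarrow\ \bigl(\cC^1(\Omega_0)\bigr)^{*},
\]
with $\oldh$ identified throughout with the distribution $\oldh\,m$ (the continuity of $\oldh$ on $\Omega$ being exactly what places $\oldh\,m$ in $\cB$), with $\cB\hookrightarrow\cB_w$ compact, and with $|\langle\psi,u\rangle|\le C\|\psi\|_{\cC^1(\Omega_0)}\|u\|_{\cB_w}$. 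Let $\cL_t$ be the transfer operator of $\Phi_{-t}$ on $\cB$; it is a strongly continuous semigroup by Lemma~\ref{lem:strong c} (the non--injectivity of $\Phi_t$ and the time--zero map causing no harm), invariance of $m$ gives $\int(\psi\circ\Phi_t)\,\oldh\,dm=\langle\psi,\cL_t(\oldh\,m)\rangle$, and I write $X$ for the generator and $R(z)=(z-X)^{-1}=\int_0^{\infty}e^{-zt}\cL_t\,dt$ for the resolvent.

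Next I would prove, using the hyperbolicity, bounded distortion and one--step expansion (complexity) estimates for dispersing billiards from \cite{chernov book} together with the regularity gained by integrating along the flow, that $\|\cL_t\|_{\cB\to\cB}\le C$ for all $t\ge0$ and a Lasota--Yorke inequality $\|\cL_t u\|_{\cB}\le C\,e^{-\lambda_0 t}\|u\|_{\cB}+C_t\|u\|_{\cB_w}$ for some $\lambda_0>0$. By Hennion's theorem this bounds the essential spectral radius of $\cL_t$ by $e^{-\lambda_0 t}$, so $\spp(X)\cap\{\Re z>-\lambda_0\}$ consists of isolated eigenvalues of finite algebraic multiplicity (the resonances), while $\spp(X)\subset\{\Re z\le0\}$ since $\cL_t$ is positivity preserving and fixes the probability measure $m$. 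It then remains to locate the resonances in a strip: to show there are only finitely many with $-\upsilon_{D}<\Re z\le0$ (for a suitable $\upsilon_{D}\in(0,\lambda_0]$), that $ib\notin\spp(X)$ for $b\neq0$, and that $0$ is a simple eigenvalue whose eigenprojection sends $\oldh\,m$ to $\bigl(\int\oldh\,dm\bigr)\,m$; simplicity and the absence of a Jordan block at $0$ follow from ergodicity of the flow with respect to $m$ (classical, after Sinai) together with $\|\cL_t\|_{\cB}\le C$.

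The heart of the argument --- and the step where essentially all the new difficulty lies --- is a Dolgopyat--type resolvent bound: $\|R(z)\|_{\cB}\le C$ for $\Re z\ge-\upsilon_{D}$ and $|\Im z|\ge b_0$, proved first for $\Re z$ slightly positive and then continued across the strip by the Neumann--series (resolvent--identity) trick. This rules out resonances with $|\Im z|\ge b_0$, so together with the discreteness above only finitely many resonances survive in $-\upsilon_{D}<\Re z\le0$; those with $b\neq0$ are then excluded by applying the same bound near $ib$, and $0$ is handled as above. To prove the bound one iterates $\cL_t$ for a time of order $\log|\Im z|$, localises onto short admissible (stable) curves, and runs the cancellation mechanism of \cite{Li04,BaL}: the oscillatory factor carried by the $n$--th iterate oscillates at scale $|\Im z|^{-1}$, and the non--integrability of the stable and unstable directions (the contact structure of the billiard flow) forces the temporal lengths of nearby trajectory pieces to differ by a definite amount, producing cancellation when one passes from an $L^1$--type to an $L^{\infty}$--type estimate. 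Carrying this out for dispersing billiards --- where the genuine unstable foliation is only measurable and derivatives blow up near grazing orbits --- requires the approximate (fake) unstable foliation of Section~\ref{Lipschitz} (Theorem~\ref{thm:foliation}): it supplies a genuinely Lipschitz family of curves, uniformly transverse to the stable cone and compatible with the homogeneity layers, along which the cancellation can be organised; the fake stable foliation streamlines the bookkeeping (Section~\ref{dodo}); and the four--point / holonomy estimates (condition (vii), Lemma~\ref{lem:holder jac}, and \eqref{eq:amazing}) replace the $\cC^{1+\ho}$ holonomy control used in \cite{BaL}.

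Finally I would set $F$ to be the finite--dimensional sum of the generalised eigenspaces of $X$ for eigenvalues in $\{-\upsilon_{D}<\Re z\le0\}$ (all resonances in that strip, together with $0$); it is a subspace of $\cB\subset(\cC^1(\Omega_0))^{*}$. Let $\Pi$ denote the spectral projection of $X$ onto $F$ (a bounded operator on $\cB$), whose restriction to $\cC^2(\Omega_0)\cap\cC^0(\Omega)\hookrightarrow\cB$ is the operator of the statement, and set $\widehat X:=X|_F\in L(F,F)$, so that $\spp(\widehat X)=\spp(X)\cap\{-\upsilon_{D}<\Re z\le0\}\subset\{-\upsilon_{D}<\Re z<0\}\cup\{0\}$ with $0$ simple; $\Pi$ is nontrivial because its $0$--component is $\oldh\mapsto(\int\oldh\,dm)\,m\not\equiv0$. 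Shrinking $\upsilon_{D}$ so that the remaining spectrum of $X$ lies in $\{\Re z\le-\upsilon_{D}\}$, the spectral gap yields $\|\cL_t(\Id-\Pi)\|_{\cB}\le C_{D}\,e^{-\upsilon_{D} t}$ for all $t\ge0$. Then, for $\oldh\in\cC^2(\Omega_0)\cap\cC^0(\Omega)$ and $\psi\in\cC^1(\Omega_0)$,
\[
\int(\psi\circ\Phi_t)\,\oldh\,dm=\langle\psi,\cL_t(\oldh\,m)\rangle=\bigl\langle\psi,e^{\widehat X t}\Pi\oldh\bigr\rangle+\bigl\langle\psi,\cL_t(\Id-\Pi)(\oldh\,m)\bigr\rangle,
\]
where the first term is the asserted $\int\psi\,e^{\widehat X t}\Pi\oldh\,dm$ (the pairing of $\psi\in\cC^1(\Omega_0)$ with $e^{\widehat X t}\Pi\oldh\in F$) and the second is bounded by $\|\psi\|_{\cC^1(\Omega_0)}\|\cL_t(\Id-\Pi)(\oldh\,m)\|_{\cB}\le C_{D}\|\oldh\|_{\cC^2(\Omega_0)}\|\psi\|_{\cC^1(\Omega_0)}\,e^{-\upsilon_{D} t}$. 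Since $\widehat X$ annihilates the $0$--eigenvector $m$ and the $0$--component of $\Pi\oldh$ equals $(\int\oldh\,dm)\,m$, the $z=0$ contribution to $e^{\widehat X t}\Pi\oldh$ is the $t$--independent term accounting for $\int\oldh\,dm$, as claimed; to reiterate, the main obstacle is the Dolgopyat estimate of the third paragraph, which is precisely where the fake foliations of Section~\ref{Lipschitz} are needed.
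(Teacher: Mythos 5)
Your overall architecture matches the paper's (anisotropic spaces, resolvent of the generator, Dolgopyat cancellation via the fake foliations, spectral projection onto a finite-dimensional $F$), but there is a genuine gap at the two steps where you pass between the semigroup and its generator. First, the Lasota--Yorke inequality you assert for $\cL_t$ itself, $\| \cL_t u\|_{\cB}\le C e^{-\lambda_0 t}\|u\|_{\cB}+C_t\|u\|_{\cB_w}$, is not available with these norms: the neutral component $\|\cdot\|_0$ (the flow-direction derivative) satisfies only $\|\cL_t \oldh\|_0\le C\|\oldh\|_0$ --- it is neither contracted nor dominated by the weak norm --- so Hennion's theorem cannot be applied to $\cL_t$ and one gets no bound on its essential spectral radius. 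The paper's route is to establish quasi-compactness only for the resolvent $\cR(z)$: integrating in time and integrating by parts converts the neutral norm into $\|\cR(z)^n\oldh\|_0\le Ca^{2-n}(1+a^{-1}|z|)|\oldh|_w$ (Proposition~\ref{prop:LY R}), which \emph{is} an honest Lasota--Yorke bound, and the spectral structure of $X$ is then read off from $\cR(z)$.

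Second, and more fundamentally, your final step ``shrinking $\upsilon_D$ so that the remaining spectrum of $X$ lies in $\{\Re z\le-\upsilon_D\}$, the spectral gap yields $\|\cL_t(\Id-\Pi)\|_{\cB}\le C_D e^{-\upsilon_D t}$'' is precisely where the spectral mapping theorem fails for $C_0$-semigroups on Banach spaces: a spectral gap for the generator does not by itself imply exponential decay of the complementary part of the semigroup (the growth bound can exceed the spectral bound), and Gearhart--Pr\"uss is a Hilbert-space theorem. The paper closes this gap by invoking Butterley's theorem \cite{Butterley}, whose hypotheses are exactly the uniform resolvent bounds $\|\cR(a+ib)^n\|_{\cB}\le (a+\upsilon)^{-n}$ for $n\asymp\ln|b|$ (Proposition~\ref{dolgo}), the resolvent Lasota--Yorke estimates (Corollary~\ref{CorLY}), and the Lipschitz property $|\cL_t\oldh-\oldh|_w\le Ct\|\oldh\|_{\cB}$ (Lemma~\ref{lem:lip}); the conclusion is the decomposition $\cL_t=\cP_t+e^{t\widehat X}\Pi$ with the remainder controlled only in the \emph{weak} norm and only for $\oldh\in\Dom(X)$, via $|\cP_t\oldh|_w\le Ce^{-\upsilon_1 t}\|X\oldh\|_{\cB}$. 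This is also why the theorem requires $\oldh\in\cC^2$ (so that $\|X\oldh\|_{\cB}\le C\|\oldh\|_{\cC^2}$) and why the pairing with $\psi$ must go through $|\cdot|_w\ge\|\cdot\|_{(\cC^1)^*}$ rather than through a bound on $\|\cL_t(\Id-\Pi)\|_{\cB}$. Without Butterley's theorem or an equivalent Laplace-inversion argument, your last display is unjustified.
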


A lower bound for $\upsilon_{Do}$, given by Proposition~\ref{dolgo} below, 
can be  explicitly traced from the proof of that proposition
(it
depends on the constants $\gamma_{Do}$ and $C_{Do}$ from 
Lemma~\ref{dolgolemma}: we show there that $\gamma_{Do}\ge \frac 1 {1222}$ and $C_{Do}$ can also be made explicit, depending in particular on the hyperbolicity
exponents \eqref{Lambda}, if desired) but is rather small
and not  optimal.  We refrain from stating the lower bound for $\upsilon_{Do}$ which would be very unwieldy.

Given Theorem \ref{main} it is immediate, by a standard approximation argument, to obtain
our main result of exponential decay of correlations for the
Lebesgue invariant measure of the  billiard flow \eqref{Lebb} and
H\"older test functions.
\begin{cor}[Exponential mixing of the Sinai billiard flow]\label{truemain} Under the hypotheses of Theorem~\ref{main}, for any
$\kappa \in (0, 1)$ there exists $0 < \upsilon_{corr}(\kappa) < \upsilon_{Do}$ and $C_{corr}(\kappa)>0$ so that for any $\psi, \oldh \in \cC^\kappa(\Omega_0)$ we have\footnote{ If  $\kappa\in (0,1)$, we set $|f|_{\cC^\kappa(\Omega_0)}=\sup_{x\in \Omega_0}|f(x)| + \sup_{x\ne y\in \Omega_0}
\frac{|f(x)-f(y)|}{d(x,y)}$, for  the distance on $\Omega_0$ induced by $\mathbb{T}^3$.}
$$
\left|\int (\psi \circ \Phi_t) \, \oldh \, dm -
\int \psi \, dm \cdot \int \oldh \, dm \right|\le C_{corr}(\kappa)|\psi|_{\cC^{\kappa}(\Omega_0)}|\oldh|_{\cC^{\kappa}(\Omega_0)} \cdot 
e ^{-\upsilon_{corr}(\kappa) t} \, , \forall t \ge 0 \,  . 
$$
\end{cor}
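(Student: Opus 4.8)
The plan is to derive Corollary~\ref{truemain} from Theorem~\ref{main} by a routine density/approximation argument, the only subtlety being that Theorem~\ref{main} requires one test function in $\cC^2(\Omega_0)\cap\cC^0(\Omega)$ and the other in $\cC^1(\Omega_0)$, whereas the corollary only assumes H\"older regularity $\cC^\kappa$. First I would observe that the leading term in Theorem~\ref{main}, namely $\int \psi\, e^{\widehat X t}\Pi f\, dm$, splits as $\int\psi\,dm\cdot\int f\,dm$ (the contribution of the simple eigenvalue $0$) plus a remainder coming from the part of $\spp(\widehat X)$ in $\{-\upsilon_D<\Re z<0\}$; since $\Pi$ is bounded and $\widehat X$ acts on the finite-dimensional space $F$ with spectral radius strictly less than $1$ in the relevant sense, $\|e^{\widehat X t}(\Pi f-(\int f\,dm)\,\mathbf{1})\|_F\le C e^{-\upsilon' t}\|f\|_{\cC^2}$ for some $0<\upsilon'\le\upsilon_D$ (any $\upsilon'$ below the smallest $|\Re z|$ over the nonzero spectrum works, up to a polynomial factor in $t$ absorbed into the exponential). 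Hence for $f\in\cC^2(\Omega_0)\cap\cC^0(\Omega)$ and $\psi\in\cC^1(\Omega_0)$ one already has exponential mixing with rate $\upsilon':=\upsilon_{corr}$ and constant $C'\|f\|_{\cC^2}\|\psi\|_{\cC^1}$.

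Next I would remove the regularity gap by mollification. Given $\psi,f\in\cC^\kappa(\Omega_0)$ and $\delta>0$, choose smooth approximants $\psi_\delta,f_\delta\in\cC^\infty(\mathbb{T}^3)$ (restricted to $\Omega_0$) with $\|\psi-\psi_\delta\|_{\cC^0}\le C\delta^\kappa\|\psi\|_{\cC^\kappa}$, $\|f-f_\delta\|_{\cC^0}\le C\delta^\kappa\|f\|_{\cC^\kappa}$, and with derivative bounds $\|\psi_\delta\|_{\cC^1}\le C\delta^{\kappa-1}\|\psi\|_{\cC^\kappa}$, $\|f_\delta\|_{\cC^2}\le C\delta^{\kappa-2}\|f\|_{\cC^\kappa}$ (standard convolution estimates, using that $\Omega_0\subset\mathbb{T}^3$ so extension/mollification is unproblematic; note $f_\delta$ is automatically in $\cC^0(\Omega)$ since it is continuous on all of $\mathbb{T}^3$). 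Writing
\[
\int(\psi\circ\Phi_t)f\,dm-\int\psi\,dm\int f\,dm
\]
and inserting $\psi_\delta,f_\delta$, the error from replacing $\psi$ by $\psi_\delta$ is bounded by $\|\psi-\psi_\delta\|_{\cC^0}\|f\|_{\cC^0}$ (since $\Phi_t$ preserves $m$, $\|\psi\circ\Phi_t\|_{L^1(m)}=\|\psi\|_{L^1(m)}$ and similarly for products), and likewise for $f$; the error in the product term $\int\psi\,dm\int f\,dm$ is handled the same way. The main term $\bigl|\int(\psi_\delta\circ\Phi_t)f_\delta\,dm-\int\psi_\delta\,dm\int f_\delta\,dm\bigr|$ is controlled by the $\cC^2\times\cC^1$ estimate above by $C'\delta^{2\kappa-3}\|\psi\|_{\cC^\kappa}\|f\|_{\cC^\kappa}e^{-\upsilon' t}$.

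The remaining step is optimization over $\delta$. Collecting terms, the correlation is bounded by
\[
C\|\psi\|_{\cC^\kappa}\|f\|_{\cC^\kappa}\bigl(\delta^\kappa+\delta^{2\kappa-3}e^{-\upsilon' t}\bigr).
\]
Choosing $\delta=e^{-\upsilon' t/(3-\kappa)}$ (assume first $\kappa<3$; for $\kappa\ge3$ one can just take $\psi_\delta=\psi$, $f_\delta=f$ directly, or equivalently use a coarser mollification) balances the two terms and yields a bound $C\|\psi\|_{\cC^\kappa}\|f\|_{\cC^\kappa}e^{-\upsilon_{corr}(\kappa)t}$ with $\upsilon_{corr}(\kappa):=\kappa\upsilon'/(3-\kappa)$ when this is $\le\upsilon_D$, and $\upsilon_{corr}(\kappa):=\upsilon_D$ (with a correspondingly adjusted constant) otherwise; in all cases $0<\upsilon_{corr}(\kappa)\le\upsilon_D$ as required, and $C_{corr}(\kappa)$ depends only on $\kappa$ and the data of Theorem~\ref{main}. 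I do not expect a genuine obstacle here: the only point requiring a little care is the bookkeeping of the polynomial-in-$t$ factor from the Jordan blocks of $\widehat X$ (absorbed by shrinking $\upsilon'$ slightly) and the fact that mollification must respect the $\cC^0(\Omega)$ requirement on $f$, which is automatic because smooth functions on $\mathbb{T}^3$ restrict to continuous functions on the quotient $\Omega$. \qedhere
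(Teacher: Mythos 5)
Your overall strategy is exactly the ``standard approximation argument'' the paper invokes without writing out: split off the $0$-eigenvalue of $\widehat X$, use finite-dimensionality of $F$ to get exponential decay of the remaining spectral part, mollify the H\"older data, and optimize over the mollification scale. The bookkeeping ($\delta^\kappa+\delta^{2\kappa-3}e^{-\upsilon' t}$, balanced at $\delta=e^{-\upsilon' t/(3-\kappa)}$) is correct, as is the treatment of the Jordan blocks and the cap $\upsilon_{corr}\le\upsilon_D$.

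There is, however, one genuine error: your claim that the $\cC^0(\Omega)$ requirement on $\oldh$ is ``automatic because smooth functions on $\mathbb{T}^3$ restrict to continuous functions on the quotient $\Omega$.'' This is false. The quotient $\Omega=\Omega_0/\!\sim$ identifies, at each boundary point $(x,y)\in\partial Q$, the ingoing velocity $\omega^-$ with the reflected outgoing velocity $\omega^+=\omega^-+\pi-2\vf$; these are \emph{distinct} points of $\mathbb{T}^3$, so a generic smooth function on $\mathbb{T}^3$ (and in particular a convolution mollification of $\oldh$, which is not even assumed $\sim$-compatible to begin with, since the corollary only asks $\oldh\in\cC^\kappa(\Omega_0)$) takes different values at them and hence does \emph{not} lie in $\cC^0_\sim$. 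As written, Theorem~\ref{main} cannot be applied to your $\oldh_\delta$. The repair is short and is the one the paper itself uses in the proof of Lemma~\ref{lem:C1}: replace $\oldh_\delta$ by $\bM_\delta(\oldh\cdot 1_{\Omega_0\setminus\partial_\delta\Omega_0})$, which vanishes identically on $\partial\Omega_0$ and therefore trivially descends to $\Omega$. Since $\partial\Omega_0$ is a smooth two-dimensional surface, the cutoff introduces an additional $L^1$ error of order $\delta\,\|\oldh\|_{\cC^0}$ (or at worst $\delta^{1/2}$ if one is careless about transversality), which is dominated by the $\delta^\kappa$ term already present for $\kappa\le 1$ and in any case is absorbed into the optimization without changing the form of $\upsilon_{corr}(\kappa)$. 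With that modification your proof is complete; note also that the same fix is needed even in your $\kappa\ge 3$ aside, where you propose taking $\oldh_\delta=\oldh$ directly.
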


In Corollary~\ref{truemain}, we may take $\psi$ or $\oldh$ (or both)
to be
the velocity  function (which belongs to $C^\kappa(\Omega_0)$), 
or 
the free flight function $\tau$ (the piecewise H\"older function $\tau$ 
can be approached by a function in $C^2(\Omega_0)$, either by
adapting the proof of \cite[Lemma 3.7]{demers zhang} or by noticing
that $\tau$ belongs to a fractional Sobolev space $H^\kappa_p$ with
$\kappa>0$ and $p>2$, and using mollification.).


The proof of Theorem~\ref{main} will be completed in Subsection~\ref{subsec:theend}.
It  is based on a study of the semi-group of transfer operators
defined  by
(a priori just measurably,
since  $\Phi_t$ is not  injective)
$$
\cL_t \oldh = \oldh \circ \Phi_{-t} \,  , \quad t \ge 0\, ,
$$
acting on a suitable Banach space $\cB$ (see Definition~\ref{spcce}) of anisotropic distributions.
Along the way, we shall obtain information on the ``resonances'' of the flow,
that is the spectrum (on $\cB$) of the generator
$X$ of the semi-group, defined by
$
X \oldh =\lim_{t \downarrow 0} \frac{\cL_t \oldh - \oldh }{t}
$,
whenever the limit exists (see Section ~\ref{sec:R}).
To state the corresponding result, letting $\cK(r)$ denote the curvature at a point $r$ on the boundary $\Gamma_i$ of a scatterer, first note that
our assumptions 
 ensure that
there exist constants $0<\cK_{\min} \le \cK_{\max} < \infty$ with
$\cK_{\min} \le \cK(r) \le \cK_{\max}$, for all $r \in \Gamma_i$. 
Then set
\begin{equation}
\label{Lambda}
\Lambda_0 = 1 + 2\cK_{\min} \tau_{\min}\, , \qquad \Lambda = \Lambda_0^{1/\tau_{\max}} \,  .
\end{equation}
$\Lambda_0$ and $\Lambda$ are the hyperbolicity exponents
 (minimum expansion and maximum contraction)
 of the billiard map and flow, respectively (see \cite[eqs. (4.6), (4.17), (4.19)]{chernov book} for $\Lambda_0$). Our value for $\upsilon_{Do}$ satisfies $\upsilon_{Do}<\frac{1}{6} \log \Lambda$.

\begin{theorem}[Resonances of the Sinai billiard flow]\label{resonances}
For any 
$\upsilon_{ess} \in (\upsilon_{Do}, \tfrac{1}{4} \log \Lambda)$
there exists a Banach space $\cB$ of distributions on $\Omega_0$
(with the embeddings $
\cC^1(\Omega_0) \hookrightarrow \cB \hookrightarrow (\cC^1(\Omega_0))^*$) 
so that $X$ is a closed operator
on $\cB$ with a dense domain, and the spectrum $\spp(X)$ 
on $\cB$ satisfies:
\item {a)}
The intersection
$\spp(X) \cap \{  \Re (z) > -\upsilon_{ess}\}$ consists of (at most countably many) isolated eigenvalues $\Sigma=\{ z_j\, , j \ge 0\}$
of finite multiplicities. (``Discrete eigenvalues of the generator.'')
\item {b)} There exists $0<\upsilon_0 \le \upsilon_{Do}<\upsilon_{ess}$ so that
$\spp(X) \cap \{  \Re (z) > -\upsilon_0\}=\{z_0=0\}$,
which is an eigenvalue of algebraic multiplicity equal to one.  (``Spectral gap.'')
\item {c)} $\spp(X)\cap \{z\in\bC\;:\; \Re(z)>-\upsilon_{Do}\}=\spp(\widehat X)$, where multiplicities coincide.
(``Resonances.'')
\end{theorem}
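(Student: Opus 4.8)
The plan is to derive Theorem~\ref{resonances} as a consequence of the detailed spectral and dynamical analysis of the semigroup $\cL_t$ on the Banach space $\cB$ of anisotropic distributions, which is the technical backbone of the paper. First I would recall the Lasota--Yorke / Doeblin--Fortet type estimates for the transfer operators (both the flow operators $\cL_t$ and, underpinning them, the billiard map operators on the Demers--Zhang space), together with the compactness of the inclusion of the "strong" into the "weak" norm, to establish that for $t$ large the essential spectral radius of $\cL_t$ on $\cB$ is bounded by $e^{-t \upsilon_{ess}}$ for any $\upsilon_{ess}<\tfrac16\log\Lambda$. This is exactly where the hyperbolicity exponents \eqref{Lambda} enter, and it is the step that fixes the range of admissible $\upsilon_{ess}$. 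By Hennion's theorem (or the standard quasi-compactness argument, cf.\ \cite{Davies}), the part of $\spp(\cL_t)$ outside the disk of radius $e^{-t\upsilon_{ess}}$ consists of finitely many eigenvalues of finite algebraic multiplicity, and by the spectral mapping theorem for strongly continuous semigroups relating $\spp(\cL_t)$ and $\spp(X)$ (using strong continuity, Lemma~\ref{lem:strong c}), this transfers to the statement that $\spp(X)\cap\{\Re z>-\upsilon_{ess}\}$ consists of isolated eigenvalues of finite multiplicity, lying in $\{\Re z\le 0\}$ since $\|\cL_t\|$ is bounded. This gives part~(b) and the half-plane confinement.

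Next, for part~(a), I would identify the peripheral spectrum: the constant function $\mathbf{1}$ (viewed as the density of $m$) is a fixed point of every $\cL_t$, so $0\in\spp(X)$ with eigenvalue; conversely, using that $\cL_t$ preserves $m$ and the variational/Lasota--Yorke structure, any eigenvalue on the imaginary axis would force a peripheral eigenfunction, and one rules out eigenvalues $is$, $s\ne0$, and higher algebraic multiplicity at $0$ by invoking ergodicity of the billiard flow with respect to $m$ (invoked "at the end of the proof of Corollary~\ref{spX}" as the excerpt notes) — this pins down $\spp(X)\cap i\bR=\{0\}$ with algebraic multiplicity one. Part~(c), the spectral gap, then follows automatically: since by~(b) the spectrum near the imaginary axis is a finite point set and by~(a) its only point on $i\bR$ is $0$, there is a gap $\upsilon_0>0$ with $\spp(X)\cap\{\Re z>-\upsilon_0\}=\{0\}$. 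The honestly hard analytic input sitting behind all of this is the Dolgopyat-type cancellation estimate (Sections~\ref{Lipschitz} and~\ref{dodo}, via the fake stable/unstable foliations), which is what actually produces a \emph{uniform} $\upsilon_0$ and more importantly a genuine resolvent bound on a half-plane $\{\Re z>-\upsilon_D\}$ for $0<\upsilon_D<\upsilon_0$ possibly, or rather on the relevant strip — this is encoded in Lemma~\ref{dolgolemma} and gives the quantitative $\upsilon_D<\tfrac16\log\Lambda$; I would cite it as a black box here.

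For part~(d), I would argue that the finite-dimensional objects $F$, $\Pi$, and $\widehat X$ of Theorem~\ref{main} are precisely the spectral projection data of $X$ restricted to the strip $\{\Re z>-\upsilon_D\}$. Concretely: by the quasi-compactness from step one, let $P$ be the spectral projector of $X$ onto $\spp(X)\cap\{\Re z>-\upsilon_D\}$, a finite-rank bounded operator on $\cB$; set $F=P\cB$ (one checks $F\subset(\cC^1(\Omega_0))^*$ since $\cB\subset(\cC^1(\Omega_0))^*$ and $F$ is finite dimensional), $\widehat X = X|_F$, and $\Pi=P$ composed with the inclusion $\cC^2(\Omega_0)\cap\cC^0(\Omega)\hookrightarrow\cB$ (this inclusion is where one uses that such functions genuinely lie in $\cB$). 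Then the resolvent/contour-integral representation of $e^{Xt}$ splits $\cL_t = e^{\widehat X t}P + (\text{remainder on }\ker P)$, the remainder having norm $O(e^{-\upsilon_D t})$ by the spectral bound on $\ker P$; pairing against $\psi\in\cC^1$ via the duality $\cB\subset(\cC^1)^*$ and using $\cL_t f$ acting as $f\circ\Phi_{-t}$ against $m$ — i.e.\ $\int(\psi\circ\Phi_t)f\,dm=\int\psi\,\cL_t f\,dm$ — yields exactly the estimate of Theorem~\ref{main} and the identification $\spp(\widehat X)=\spp(X)\cap\{\Re z>-\upsilon_D\}$ with multiplicity. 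The contribution of $z_0=0$ being $\int f\,dm$ comes from the rank-one projector onto $\mathbf{1}$ having that form, by ergodicity again. I expect the main obstacle to be \emph{not} the abstract spectral bookkeeping but the verification that all the hypotheses of the quasi-compactness machinery actually hold on $\cB$ for the billiard flow — strong continuity of $\cL_t$ despite the non-injectivity of $\Phi_t$ at grazing collisions, the uniform Lasota--Yorke bounds uniformly in $t$, and above all the Dolgopyat cancellation giving the crucial strip of analyticity of the resolvent; but all of those are precisely the results established in the earlier sections that I am entitled to assume here, so the proof of Theorem~\ref{resonances} itself is a relatively short assembly of those ingredients.
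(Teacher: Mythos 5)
There is a genuine structural gap in your first step, and it propagates. You propose to show that $\cL_t$ itself is quasi-compact on $\cB$ with essential spectral radius $\le e^{-t\upsilon_{ess}}$ and then transfer to $X$ by ``the spectral mapping theorem for strongly continuous semigroups.'' Neither half works. The Lasota--Yorke bounds for $\cL_t$ (Proposition~\ref{prop:L_t}) contain the neutral term $\|\cL_t\oldh\|_0\le C\|\oldh\|_0$, which is neither contracted nor relatively compact -- the paper says explicitly that these are ``not honest Lasota--Yorke bounds'' -- so Hennion's theorem does not apply to $\cL_t$; and for $C_0$-semigroups there is no spectral mapping theorem (only the inclusion $e^{t\,\spp(X)}\subseteq\spp(\cL_t)$), so even spectral information about $\cL_t$ would not localize $\spp(X)$. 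The paper's route is through the resolvent: one integrates the $\cL_t$ bounds against $\frac{t^{n-1}}{(n-1)!}e^{-zt}$ to get honest Lasota--Yorke inequalities for $\cR(z)^n$ (Proposition~\ref{prop:LY R}; the neutral norm of $\cR(z)^n\oldh$ is controlled by $|\oldh|_w$ alone after an integration by parts), combines them with the compact embedding $\cB\hookrightarrow\cB_w$ to bound the essential spectral radius of $\cR(z)$ by $(a-\ln\tilde\lambda)^{-1}$, and then uses the exact correspondence $\rho\in\spp(\cR(z))\setminus\{0\}\iff\rho=(z-\tilde\rho)^{-1}$, $\tilde\rho\in\spp(X)$ (which \emph{is} a theorem, for the resolvent) to obtain (a), (b) and the half-plane confinement. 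This is the whole reason the flow case is harder than the map case, and it cannot be bypassed the way you describe.

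A second, related gap: you assert that (c) ``follows automatically'' because ``the spectrum near the imaginary axis is a finite point set.'' Part (b) only gives isolated eigenvalues of finite multiplicity in the strip $\{\Re z>-\upsilon_{ess}\}$, which is unbounded in the imaginary direction; a priori eigenvalues $z_j$ with $|\Im z_j|\to\infty$ and $\Re z_j\to 0^-$ are not excluded, and ruling them out is precisely the content of the Dolgopyat estimate (Lemma~\ref{dolgolemma} feeding into Proposition~\ref{dolgo}), which shows $\|\cR(a+ib)^n\|_\cB\le(a+\upsilon)^{-n}$ for $|b|\ge b_0$ and hence that $\{\Re z>-\upsilon,\;|\Im z|>b_0\}$ lies in the resolvent set; only then does the finiteness in the remaining compact region $\{|\Im z|\le b_0\}$ close the gap. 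The same issue affects (d): knowing that $\spp(X)\cap\ker P$ lies in $\{\Re z\le-\upsilon_D\}$ does not give $\|\cP_t\|=O(e^{-\upsilon_D t})$ for a $C_0$-semigroup (the spectral bound need not equal the growth bound); one needs the uniform resolvent estimates on vertical lines, which is exactly what Butterley's theorem packages and what the paper invokes via \eqref{eq:result}--\eqref{eq:estimate}. Finally, you never address $\cC^1(\Omega_0)\subset\cB$, which is nontrivial given that $\cB$ is defined as a closure of $\{\cL_t\oldh:\oldh\in\cC^2\cap\cC^0_\sim\}$ and requires the mollification argument of Lemma~\ref{lem:C1}.
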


(Note that (c) implies (b), taking $\upsilon_0 = \upsilon_{Do}$, but (c) does not imply (a),  since  $\upsilon_{ess}$
can be taken arbitrarily close to $\tfrac{1}{4} \log \Lambda$. Note also that Theorem~\ref{resonances} does not imply Theorem~\ref{main} immediately --- we use \cite{Butterley} --- since (b) gives
a spectral gap for $X$ and not for any individual $\cL_t$.)
 Claims (a) and (b) encapsulate Lemma~\ref{lem:C1},  Corollaries~ 
 \ref{spX} and   \ref{specX'}, 
 together with
Remark ~\ref{opt}.
It would of course be very interesting to have examples where $\spp(\widehat X)\ne \{0\}$
and examples where $\Sigma \setminus \spp(\widehat X)$ is not empty (even better, an infinite set).

\begin{remark}[On hyperbolicity, transversality, and complexity]\label{cutup}
Billiards  are well known to possess families of invariant
stable and unstable cones, and this is of course  crucial
to get exponential mixing.
For abstract piecewise smooth and hyperbolic (or expanding) systems in dimension
two or higher, it is also essential
in order to get a spectral gap that ``hyperbolicity dominates complexity,'' and
a corresponding {\it complexity} assumption is present
 e.g. in \cite{Y98, BaL}.
This assumption is well known to hold for billiards and is encapsulated
(and precisely quantified) in
the one-step expansion (see e.g. \cite[Lemma 5.56]{chernov book})  used
by Demers--Zhang \cite{demers zhang} for the discrete time billiard, and that we use several times below
(most importantly in Lemma ~ \ref{lem:growth}).
 An additional essential ingredient in the piecewise hyperbolic case is
{\it transversality} between the stable (or unstable) cones and the hyperplanes\footnote{ In the language of \cite{BaL}  the transversality condition 
is relative to the lateral sides of the flow boxes.}
of discontinuities or singularities. In the context of Sinai billiard maps, uniform transversality
between the stable cones and the boundaries of all homogeneity layers
(a sequence of hyperplanes approaching the hyperplane of
grazing singularities, see Definition~\ref{def:defhom}) is again a
well known property.
For the flow, the transversality is more delicate, and in fact we make use of 
a weak transversality property throughout (as a single example,
we mention Lemma~\ref{lem:discarded}), yet
it is neither easy nor necessary to isolate a specific statement or definition embodying
the notion of weak transversality.
\end{remark}

The paper is organised as follows:
Section~\ref{defspace} contains  definitions of our norms (based on
a notion of admissible stable curves $W\in \cW^s$, see Definition ~\ref{admiss}) and Banach spaces 
 $\cB$ 
and
$\cB_w$ of distributions on $\Omega_0$ (Definition ~\ref{spcce}).
In Section ~ \ref{section3}, we prove on the one hand some lemmas on
growth and distortion under the action of the flow $\Phi_t$, in particular giving invariance
of the class $\cW^s$ of stable curves  (modulo the necessary ``cutting up'' causing the complexity
evoked 
in Remark  ~ \ref{cutup},  and which
is controlled by Lemma ~ \ref{lem:growth}), and on the other hand 
the key compact embedding statement  $\cB \subset \cB_w$ (Lemma ~ \ref{lem:compact}).
Just like in \cite{Li04, BaL}, our spectral study of the generator $X$
is based on an analysis of the resolvent
$$
\cR(z) = (z\Id-X)^{-1}\, \qquad z=a+ib \in \bC\, .
$$
In Section~ \ref{sec:R}, we obtain  Lasota--Yorke type estimates (Proposition ~ \ref{prop:LY R}
and its consequences Corollaries ~ \ref{CorLY} and ~ \ref{spX})
on $\cR(z)$ which give (a little more than) claim (a) of  Theorem~\ref{resonances}.
Since the resolvent $\cR(z)$ can  be expressed as the Laplace transform \eqref{eq:R} of the transfer
operator $\cL_t$, these Lasota-Yorke estimates follow from estimates on the
transfer operator proved in Section ~ \ref{section4}.
The approximate unstable foliation is constructed and studied in
Section  ~ \ref{Lipschitz} and Appendix ~ \ref{postponeproof}.
The delicate Dolgopyat-type cancellation Lemma ~ \ref{dolgolemma} which bounds
$$
\int_W \psi\, \cR(a+ib)^m (\oldh)\, dm_W 
$$
is stated and proved
in Section~ \ref{dodo} and Appendix ~\ref{sec:hoihoi}.
Since the right-hand side of this lemma involves the supremum
and Lipschitz norms of the argument $\oldh$, we use mollification operators
$\bM_\epsilon$ 
like\footnote{ Note however that the Sobolev nature
of the norms in \cite{BaL} made the corresponding estimates much
easier than in the present work.} in \cite{BaL} to replace the distribution
$\oldh \in \cB$ by the function $\bM_\epsilon(\oldh)$. The
operators $\bM_\epsilon$ are introduced and studied in Section ~ \ref{molsec}.
Finally, in Section ~ \ref{theend},
putting together Lemma ~ \ref{dolgolemma} and Proposition ~ \ref{prop:LY R},
we first prove the spectral gap (claims (a) and (b)
of Theorem ~ \ref{resonances}) of the generator
$X$ and then, applying recent work of Butterley \cite{Butterley}, we easily
obtain   Theorem~\ref{main} (and thus Corollary ~\ref{truemain} on exponential mixing)
and claim (c) of Theorem ~\ref{resonances}.

\bigskip
{\bf Notation.} We use  $A=C^{\pm 1} B$ for $A, B \in \bR$ and $C\ge 1$ to mean
$
\frac{|B|}{C}\le |A| \le C |B|
$.
Let $0<\oldq\le 1$ 
be a real number and $\oldh$ be a real or complex-valued function on a
metric space $(W, d_W)$, then
$\cC^\oldq(\oldh)$ 
(or $\cC^\oldq_W(\oldh)$ for emphasis) denotes the $\oldq$-H\"older constant of $\oldh$
and $|\oldh|_\infty$ (or $|\oldh|_{L^\infty(W)}$ for emphasis) denotes the supremum of $\oldh$ on $W$.


\section{Definition of the norms}
\label{defspace}
\subsection{Stable and unstable cones for the flow}

We recall the (well known) stable cones for the map $T$ and use them to define stable cones for the flow $\Phi_t$.   We first need to introduce standard notation.
In the coordinates $(x,y,\omega)$ and $(r,\vf)$ from Subsection ~\ref{setting}, 
the flow is given between collisions by
\[
\Phi_t(x_0,y_0,\omega_0) = (x_t, y_t, \omega_t) = (x_0 + t \cos \omega_0, y_0 + t \sin \omega_0, \omega_0)
\]
and at collisions by
$(x^+, y^+, \omega^+) = (x^-, y^-, \omega^- + \pi - 2 \vf)$, where $(r,\vf)$ is the collision
point. 

We will also  work with the following coordinates in the tangent space.  
Setting $\hx$ and $\hy$
to be unit vectors in the $x$ and $y$ directions, respectively, we define
\begin{equation}\label{Jacobi}
\heta = (\cos \omega) \hx + (\sin \omega) \hy , \; \; \; \hxi = (- \sin \omega) \hx + (\cos \omega) \hy
\,  .
\end{equation}
The corresponding coordinates $(d\eta, d\xi, d\omega)$ 
in the tangent space are called the Jacobi coordinates for the flow.
In these coordinates, the linearized flow is given between collisions by (\cite[(3.26)]{chernov book})
\[
D_Z\Phi_t (d\eta, d\xi, d\omega) = (d\eta_t, d\xi_t, d\omega_t) = (d\eta, d\xi + t d\omega, d\omega)
\]
and at collisions by (see \cite[(3.28)]{chernov book})
$(d\eta^+, d\xi^+, d\omega^+) = (d\eta^-, - d\xi^-, - d\omega^- - \frac{2 \cK(r)}{\cos \vf} d\xi^-)$,
where the collision is at the point $(r,\vf)$.

A crucial feature  is that the $d\xi d\omega$-plane perpendicular to the flow direction
is preserved under the flow.  In addition, the flow preserves horizontal 
planes ($\omega = $ const.) between collisions.  We will use these facts to 
introduce after Definition~\ref{admiss} the family $\cW^s$ of stable curves 
with which our Banach norms will be defined.

For $Z \in Q \times \bS^1$, let $P^+(Z)$ denote the point in $\cM=(\cup_i \Gamma_i) \times \bS^1$
that represents the first collision of $Z$ with the one of the scatterers under the flow.
Similarly, denote by $P^-(Z)\in \cM$ the point of first collision of $Z$ with one of the
scatterers under the backwards flow.  Since $P^+(Z)$ and $P^-(Z)$ lie in the phase space
for the map $T$, we will sometimes refer to these points in the map coordinates $z=(r,\vf)$.
In these coordinates, we have the following standard choice
of globally defined unstable and stable cones for the map. (See e.g. \cite[(4.13)]{chernov book}.)
\[
\begin{split}
C^u_z & = \left\{ (dr, d\vf) \in \bR^2 : \cK_{\min} \le \frac{d\vf}{dr} \le \cK_{\max} + \frac{1}{\tau_{\min}} \right\}  \\
C^s_z & = \left\{ (dr, d\vf) \in \bR^2 : - \cK_{\min} \ge \frac{d\vf}{dr} \ge - \cK_{\max} - \frac{1}{\tau_{\min}} \right\} \, .
\end{split}
\]
We have (see \cite[Exercise 4.19]{chernov book}) for any $z=(r,\vf)\in \cM$
\begin{equation}\label{mapinvar}
D_z T (C^u_z) \subset \intt C^u_{T(z)} \cup \{0\}\, \quad
D_z T^{-1} (C^s_z) \subset \intt C^s_{T^{-1}(z)} \cup \{0\}  \, .
\end{equation}

To translate these cones to the Jacobi coordinates for the flow, we use 
\cite[eq.~(2.12)]{chernov book} (note that $\psi$ there denotes
$\psi=\pi/2-\varphi$, with $d\psi=-d\varphi$, see also \cite[eq. (3.19), p.51]{chernov book}), which yields
\begin{equation}
\label{eq:translate}
d\omega = -\cK(r) dr + d\vf \qquad d\xi = \cos \vf dr -\tau(Z) d\omega \, ,
\end{equation}
for a tangent vector $(0, d\xi, d\omega)$, perpendicular to the flow at a point
$Z = (x, y, \omega)$ that collides with the boundary at the point $(r,\vf)$
at time $\tau(Z)$, i.e., $P^+(Z) = (r,\vf)$. 
The slope of this vector in the $d\xi d\omega$-plane is thus
\[
\frac{d\omega}{d\xi} = \frac{1}{\frac{\cos \vf}{-\cK(r) + \frac{d\vf}{dr}} - \tau(Z)}\,  .
\]
Flowing $C^s_z$ backward from the boundary yields the following family of
stable cones for the flow
\begin{equation}
\label{eq:stable cone}
C^s(Z) = \biggl \{ (0,d\xi, d\omega) 
: 
-\frac{1}{\frac{\cos \vf(P^+(Z))}{ \cK(P^+(Z)) + \cK_{\min}} + \tau(Z)} \ge \frac{d\omega}{d\xi} 
\ge -\frac{1}{\frac{\cos \vf(P^+(Z))}{\cK(P^+(Z)) + \cK_{\max} + \frac{1}{\tau_{\min}}} + \tau^(Z)} \biggr \} \, .
\end{equation}
Thus the stable cones flow backwards only until the first collision $P^-(Z)$ 
and then are reset.

\begin{remark}\label{rk1.1} Equivalently, in terms of
$1/\mbox{(wave front curvature)}$, the cone $C^s(Z)=$ 
\[
 \biggl\{ (0,d\xi, d\omega)  : 
-\frac{\cos \vf(P^+(Z))} { \cK(P^+(Z)) + \cK_{\max}+ \frac{1}{\tau_{\min}} } 
- \tau(Z) 
\ge \frac{d\xi}{d\omega} 
\ge -\frac{\cos \vf(P^+(Z))}{\cK(P^+(Z)) + \cK_{\min} } 
- \tau(Z) \biggr\}\,  .
\]
\end{remark}

  Analogously, using \cite[eq.~(2.11)]{chernov book}
the unstable cones for the flow are defined by flowing $C^u_z$ forwards,
\[
C^u(Z) = \biggl\{ (0,d\xi, d\omega)
: \frac{1}{\frac{\cos \vf(P^-(Z))}{\cK(P^-(Z)) + \cK_{\min}}
+\tau^-(Z)} 
\le \frac{d\omega}{d\xi} 
\le \frac{1}{\frac{\cos \vf(P^-(Z))}{\cK(P^-(Z)) + \cK_{\max} + \frac{1}{\tau_{\min}}} + \tau^-(Z)} \biggr\} 
\, ,
\]
where $\tau^-(Z)$ denotes the time of first collision starting at $Z$ under the backwards flow.
Just like $\tau$, the function $\tau^-$ is (piecewise) $1/2$ H\"older continuous.

Note that both families of cones are bounded away from the horizontal in the $\xi\omega$-plane
due to the finite horizon condition.  Near tangential collisions ($\tau=\cos \varphi=0$), both families of cones
squeeze to a line (approaching the vertical direction), but never at the same point:  $C^u(Z)$ is arbitrarily
close to vertical just after nearly tangential collisions, while $C^s(Z)$ is bounded
away from the vertical at these points due to the fact that $\tau_{\min}>0$.  The roles
of $C^s(Z)$ and $C^u(Z)$ are reversed just before nearly tangential collisions.
Thus $C^s(Z)$ and $C^u(Z)$ are uniformly
transverse in the phase space for the flow.

Our cones are planar and therefore trivially have empty interior
in $\bR^3$, contrary to the flow cones used in \cite{BaL}. See also Remark~\ref{kerct}. 
Like in \cite{BaL} (see also
\cite{BuL}), we get {\it strict} contraction only for large enough times:

\begin{lemma}\label{flowinvar}
For any $Z\in \Omega_0$, and any $t\ge 0$, we have
$$
D_Z \Phi_t (C^u(Z)) \subseteq  C^u(\Phi_t(Z)) \, , \quad
D_Z \Phi_{-t} (C^s(Z)) \subseteq  C^s(\Phi_{-t}(Z))  \,  .
$$
In addition, if $t > \tau_{\max}$ then
(slightly abusing notation, ``int'' refers to the interior of the cone in the
plane $d\eta=0$) for any $Z \in \Omega_0$
$$
D_Z \Phi_t (C^u(Z)) \subset  \{0\} \cup \intt C^u(\Phi_t(Z)) \, ,\quad
D_Z \Phi_{-t} (C^s(Z)) \subset  \{0\} \cup \intt C^s(\Phi_{-t}(Z))  \,  .
$$
\end{lemma}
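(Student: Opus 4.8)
The plan is to view the restriction of $D_Z\Phi_t$ to the flow-invariant plane $\{d\eta=0\}$ perpendicular to the flow (see the paragraph after \eqref{Jacobi}) as a suspension of the derivative cocycle of the collision map $T$, and then to read off both inclusions from the map cone invariance \eqref{mapinvar}. It is enough to treat the unstable cones; the stable statement follows from the time-reversed argument, replacing $\Phi_t$ by $\Phi_{-t}$, $T$ by $T^{-1}$ and \cite[eq.~(2.11)]{chernov book} by \eqref{eq:translate}, and invoking the second inclusion in \eqref{mapinvar}.

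First I would fix $Z\in\Omega_0$, set $z=P^-(Z)\in\cM$, $s_0=\tau^-(Z)\in[0,\tau_{\max}]$, and, for $t>0$, let $j=j(Z,t)\ge 0$ be the number of collisions in the time interval $(0,t)$, so that $P^-(\Phi_t Z)=T^{\,j}z$. The definition of $C^u(Z)$ (apply the pointwise change of coordinates \cite[eq.~(2.11)]{chernov book} to the map cone $C^u_{P^-(Z)}$, then flow forward by $\tau^-(Z)$) exhibits $C^u(Z)$ as the image $A_Z\big(C^u_{P^-(Z)}\big)$ under a linear map $A_Z$ of $\{d\eta=0\}$ which depends on the data $\vf,\cK$ at the last collision and, crucially, on $\tau^-(Z)$ only through its \emph{additive} occurrence in the defining inequalities (compare \eqref{eq:stable cone} and the line before it); off the grazing locus $\{\cos\vf(P^-(Z))=0\}$ the map $A_Z$ is a linear isomorphism, and when $\tau^-(Z)=0$ it is just the pointwise coordinate change, consistent with the word ``reset''.

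The heart of the proof is the comparison of $A_Z$ at $Z$ and at $\Phi_t Z$. If $j(Z,t)=0$, then $P^-(\Phi_t Z)=z$, $\tau^-(\Phi_t Z)=s_0+t$, and the between-collision linearized flow is the shear $(d\eta,d\xi,d\omega)\mapsto(d\eta,d\xi+t\,d\omega,d\omega)$ (see \cite[(3.26)]{chernov book}); since $\tau^-$ enters $A_\bullet$ precisely through this shear of the $d\xi$-coordinate, one gets $D_Z\Phi_t\circ A_Z=A_{\Phi_t Z}$ on $\{d\eta=0\}$, hence $D_Z\Phi_t\big(C^u(Z)\big)=C^u(\Phi_t Z)$. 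If $j\ge 1$, one inserts at each collision the law $(d\eta^+,d\xi^+,d\omega^+)=(d\eta^-,-d\xi^-,-d\omega^--\tfrac{2\cK(r)}{\cos\vf}d\xi^-)$ (see \cite[(3.28)]{chernov book}) and checks, using \cite[eq.~(2.11)]{chernov book} once more, that passing through a collision conjugates the flow-perpendicular derivative to $D\,T$ in map coordinates; composing the $j$ passages with the intervening shears yields $D_Z\Phi_t\circ A_Z=A_{\Phi_t Z}\circ D_z T^{\,j}$ on $\{d\eta=0\}$, so that $D_Z\Phi_t\big(C^u(Z)\big)=A_{\Phi_t Z}\big(D_z T^{\,j}(C^u_z)\big)$. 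By \eqref{mapinvar} iterated, $D_z T^{\,j}(C^u_z)\subseteq C^u_{T^{\,j}z}$, the inclusion landing in $\{0\}\cup\intt C^u_{T^{\,j}z}$ as soon as $j\ge 1$; since $A_{\Phi_t Z}$ is a linear isomorphism taking $C^u_{T^{\,j}z}$ onto $C^u(\Phi_t Z)$ and interior onto interior, we obtain the weak inclusion for every $t\ge 0$ and the strict one whenever $j(Z,t)\ge 1$. Finally $t>\tau_{\max}$ forces $j(Z,t)\ge 1$ uniformly in $Z$, because the last collision preceding $\Phi_t Z$ takes place at time $t-\tau^-(\Phi_t Z)\ge t-\tau_{\max}>0$, hence inside $(0,t)$.

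I do not expect a real difficulty here --- this is a structural warm-up lemma --- but two points call for care. The first is pure bookkeeping: keeping the sign conventions straight in \eqref{eq:translate} and \cite[eq.~(2.11)]{chernov book} (the stable slopes are negative, and $\psi=\pi/2-\vf$ intervenes) while verifying the conjugation identity $D_Z\Phi_t\circ A_Z=A_{\Phi_t Z}\circ D_z T^{\,j}$ across a collision. The second is the codimension-one grazing locus, where the cone degenerates to a line (see the discussion before Remark~\ref{rk1.1}) and $D\,T$ is unbounded, so that $A_\bullet$ stops being invertible and the suspension picture must be read with the limiting conventions for such configurations; there the two inclusions are recovered by passing to the limit $\cos\vf\to 0$ in the defining cone inequalities, which is a short direct check.
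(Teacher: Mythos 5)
Your proposal is correct and follows essentially the same route as the paper's proof: restrict $D_Z\Phi_t$ to the flow-perpendicular plane $\{d\eta=0\}$, decompose it into between-collision shears (which merely transport the cone, giving equality when no collision occurs) and per-collision factors, and reduce the strict inclusion to the map cone invariance \eqref{mapinvar}, with $t>\tau_{\max}$ guaranteeing at least one collision factor. Your conjugation identity $D_Z\Phi_t\circ A_Z=A_{\Phi_t Z}\circ D_zT^{\,j}$ is just a more explicit packaging of the decomposition \cite[(3.29)]{chernov book} that the paper invokes directly.
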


\begin{proof}
First note that the planes $\cT^\perp_Z \Omega_0 \subset \cT_Z\Omega_0$ defined by $d\eta =0$ are perpendicular to the flow and
preserved by the flow, see  \cite[Cor. 3.12, (3.14)]{chernov book}.
So we may restrict $D_Z\Phi_t$ to these planes.

We consider the statements for unstable cones; the others are similar.

If $0 < t < \tau(Z)$, then the claim immediately follows from the definition of the cones
since 
$D_Z \Phi_t (C^u(Z)) = C^u(\Phi_t(Z))$. This applies to any factor 
$ \left[ \begin{array}{cc}
1 & t  \\
0 & 1  \\ \end{array} \right]$ between
collisions in the decomposition  \cite[(3.29)]{chernov book} for
$D_Z\Phi_t$ restricted to $\cT^\perp_Z \Omega_0$.

For the action at a collision,  using \cite[(3.29)]{chernov book} again,
\eqref{mapinvar} and the definition of the unstable cone give
(with $Z^-$ and $Z^+$ representing the moments just before and just after collision, respectively)
$$
 -\left[ \begin{array}{cc}
1 & 0  \\
2 \cK/\cos \vf & 1  \\ \end{array} \right](C^u(Z^-))
\subset \{0\}\cup \intt (C^u(Z^+) ) \, .
$$
where we view the cones (and take interior) as subsets of $\cT^\perp_{Z^\pm} \Omega_0$.
There is at least one collision factor  in the decomposition if $t > \tau_{\max}$, ending the proof.
\end{proof}


\subsection{Admissible stable and unstable curves for the flow}
\label{stable curves}

Following \cite{demers zhang}, a $\cC^1$ curve $V$ in $\cM$ is called
a {\em stable curve for the map $T$} if the tangent vector at each point
$z$ in $V$ lies in $C^s_z$.
We call a $\cC^1$ curve\footnote{ Our curves  $W$ are
diffeomorphic to a bounded open interval, in particular relatively open at endpoints.
However, we require the $\cC^r$ ($r=\oldp,\oldq,1,2$)  norms of functions supported on $W$ to be bounded in the sense of Whitney
on the closed interval.} $W\subset \Omega_0$ a {\em stable curve for the flow} if at every point $Z \in W$, the tangent vector
$\cT_ZW$  to $W$ lies in $C^s(Z)$.  
 An essential property of stable curves $W$ is 
 \begin{equation}\label{esscurve}
\# ( \overline {W} \cap \partial \Omega_0 ) \le 2
 \end{equation}
(where $\overline W$ denotes the closure of $W$)
since either both the scatterer and
the wavefront corresponding to a stable curve are convex in opposition,
or the curvature of a stable curve is bounded away (for  this claim, see e.g.,  the paragraph containing
 \eqref{cardinality}) from the curvature of the scatterer.
Our definition also implies that a stable curve $W$ is perpendicular to
the flow, which is essential in the following remark:

\begin{remark}[Checking that stable curves lie in the kernel of the contact form]\label{kerct}
In the Dolgopyat estimate we will   use that
stable curves $W$ belong to the kernel of the invariant contact form $\bal$.

In\footnote{ We use a bold upright symbol $\extd$ to denote the
exterior derivative. This should not be confused with the $d$ 
used for coordinates in a vector ($dr$,$d\varphi$), ($d\eta,d\xi,d\omega$),
adopted from \cite{chernov book}.} the present billiard case, $\bal= p \, \extd q$
(see e.g. \cite[App. 4]{Ar}), and since the velocity
$p=v=(v_1,v_2)=(\cos \omega,\sin \omega)$, we have
$$
\bal=\cos \omega \, \extd x + \sin\omega \, \extd y \, , 
$$
so that $\extd \bal=-\sin \omega \extd \omega \wedge \extd x +
 \cos\omega \extd \omega \wedge \extd y$ and 
$$\bal \wedge \extd\bal =\extd x \wedge \extd\omega\wedge \extd y\, .$$
In particular, $\bal$ coincides with the Jacobi coordinate
$(0,0,d\eta)$ corresponding to  the velocity (i.e., flow) 
direction (referring -- again -- to \cite[Cor. 3.12, (3.14)]{chernov book}), 
and the stable and unstable cones lie in the
kernel of the contact form $\bal$, as desired.
Note that by definition of contact, there is no surface tangent to
the kernel of $\bal$ on an open set (in fact, the maximal dimension
of a manifold everywhere tangent to this kernel is one). 
\end{remark}

\begin{remark}[Putting the contact form in standard form]\label{Rk1.4}
If we replace the coordinates $(x,y,\omega)\in Q\times \bS^1$ by $(\omega,\tilde \xi,\tilde \eta)
\in  \bS^1 \times \widetilde Q$ where
\begin{equation}\label{Jacobidirect}
\omega=\omega\, ,\quad 
\tilde \xi=-x \sin \omega + y \cos \omega\, ,\quad
\tilde \eta = x \cos \omega + y \sin \omega \, ,
\end{equation}
(defining $\widetilde Q$  by the above),
then inverting gives
$
\omega=\omega$, $x=\tilde \eta \cos \omega - \tilde \xi \sin \omega$,
$y = \tilde \eta \sin \omega + \tilde \xi \cos \omega
$,
so that
\begin{align*}
\bal&= \cos \omega \extd x + \sin \omega \extd y\\
&=\cos \omega [\cos \omega \extd \tilde \eta - \tilde \eta \sin \omega 
\extd \omega - \sin \omega \extd \tilde \xi 
-\tilde \xi \cos \omega \extd \omega ]
\\&\qquad 
+ \sin \omega[\sin \omega \extd \tilde \eta +
\tilde \eta \cos \omega \extd \omega + \cos \omega \extd \tilde \xi
- \tilde \xi \sin \omega \extd \omega ]
= \extd \tilde \eta - \tilde \xi \extd \omega \, .
\end {align*}
Therefore,   the contact form $\bal$ takes the
standard form $\balzero=\extd \tilde \eta - \tilde \xi \extd \omega$ in the
coordinates $(\omega, \tilde \xi,\tilde \eta)$. 
Note that the change of coordinates \eqref{Jacobidirect} in the
phase space of the flow
is the same as the one used to construct the Jacobi coordinates
$(d\eta,d\xi,d\omega)$, except that \eqref{Jacobi} corresponded to changing
variables in the tangent space. So there is no contradiction
between the fact that $\bal=\extd \tilde \eta - \tilde \xi \extd \omega $
and the fact that $\bal$
vanishes  on those vectors so that $d\eta=0$.
(Indeed, vectors which are perpendicular
to $(1,0,0)$ in $(d\eta, d\xi, d\omega)$ coordinates are perpendicular to 
$(\cos \omega, \sin \omega, 0)$ in $(dx, dy, d\omega)$ coordinates by definition of $d\eta$.  
Now given the 
definition of $\bal$, we have (in $(dx, dy, d\omega)$ coordinates),
$
\bal(v) = \cos \omega \extd x (v) + \sin \omega \extd y (v)
= (\cos \omega, \sin \omega, 0) \cdot v
$, 
so that $\bal(v) = 0$ if and only if $v$ is perpendicular to $(d\eta, 0 ,0)$.)

In Sections ~\ref{Lipschitz} and ~\ref{dodo}  (analogously to what was done in
\cite{BaL}), we will need coordinate charts and local coordinates $(x^u,x^s,x^0)$ such that at the origin of such charts,
$(1,0,0)$ lies in the unstable cone, $(0,1,0)$ lies in the stable cone,
and $(0,0,1)$ is the flow direction, while
$$
\bal= \extd x^0 - x^s \extd x^u \, .
$$
(We call such charts cone-compatible Darboux charts.)
The above  does not seem to hold for the
coordinates $(\omega,\tilde \xi,\tilde \eta)$ (recall the conditions
on the slope $d\omega/d\xi$ defining $C^u$ and $C^s$), but
it can be achieved by applying a ``symplectic'' change of coordinates,
as in \cite[Lemma A.4]{BaL},
\begin{align*}&\omega= A x^s+B x^u\, , \quad \tilde \xi= C x^s+ D x^u\, ,
\quad \tilde \eta= x^0 + \frac{AC}{2} (x^s)^2
+ AD x^s x^u + \frac{BD}{2} (x^d)^2)\, , 
\end{align*}
 with $AD-BC=1$, for $A,B,C,D$ real-valued functions. 
This is possible, since inverting the above gives
\begin{align*}
&x^s= D \omega - B \tilde\xi \, , \quad x^u= -C \omega + A  \tilde \xi\, ,\quad
x^0=\tilde \eta - [ \frac{AC}{2} (x^s)^2
+ AD x^s x^u + \frac{BD}{2} (x^u)^2)]\, ,
\end{align*}
and we may  find $A>0$, $B>0$, $C<0$, $D>0$, with $AD-BC=1$
so that $(D,-B)$ lies in the stable cone
while $(-C,A)$ lies in the unstable cone. 
In order to
be able to choose the functions $A$, $B$, $C$, and
$D$  (and thus the charts) in a $\cC^2$ way, we shall  discard small
neighborhoods where the cone directions do not behave
in a nice way. This  will be performed in Remarks ~\ref{suitcharts0}
and ~\ref{suitcharts}.
\end{remark}

The homogeneity layer decay rate $k^{-\chi}$ with $\chi=2$  in Definition~\ref{def:defhom} below
is standard in the literature.
Letting $\chi>1$ tend to $1$ 
gives better estimates,
as we explain in Remark~\ref{opt}.

\begin{defin}[Homogeneity strips]\label{def:defhom}
To control distortion,
we define the usual homogeneity strips for the map by 
\begin{equation}
\label{defhom}
\bH_k = \{ (r,\vf) \in \cM : \pi/2 - 1/k^2 \le \vf \le \pi/2 - 1/(k+1)^2 \}
\end{equation}
for $k \ge k_0$, and similarly for $\bH_{-k}$ near $\vf = -\pi/2$.
We also put
\begin{equation*}
\bH_0 = \{ (r,\vf) \in \cM : -\pi/2 + 1/k_0^2 \le \vf \le \pi/2 - 1/k_0^2 \}\, .
\end{equation*}
Finally, we set for $|k|\ge k_0$
\begin{equation*}
\bS_k=\{(r,\vf) : |\vf|=\pi/2-k^{-2}\}\, ,
\mbox{ and }
\bS=\cup_{|k|\ge k_0} \bS_k\, .
\end{equation*}
A subset $Z$ of $\cM$ is called {\it homogeneous} if
$Z \subset \bH_{k_1}$
for some $k_1 \in \{ 0 \} \cup \{ k \ge k_0 \}$. 
\end{defin}
  
  \begin{defin}[Singularity sets]\label{singul}
 We define $\cS_0=\partial \cM=\{ |\varphi|=\pi/2\}$, and, inductively, for $n\ge 1$,
 the iterated singularity 
 sets
 $$
 \cS_n=\cS_{n-1} \cup T^{-1}(\cS_{n-1})
 , \quad
 \cS_{-n}=\cS_{-(n-1)}\cup T(\cS_{-(n-1)}) \, .
 $$
 The extended singularity sets are defined for $n\ge 0$ by
  $$
  \cS^{\bH}_n= \cS_n \cup (\cup_{m=0}^n T^{-m} \bS) 
  , \quad \cS^{\bH}_{-n}=\cS_{-n} \cup (\cup_{m=0}^n T^{m} \bS) 
 \,  .
  $$
\end{defin}

Recall that $\cS_{-n}\setminus \cS_0$
consists of finitely many smooth unstable curves
while  $\cS_{n}\setminus \cS_0$ consists of finitely many smooth stable curves, see
\cite[(2.19), (2.27), Proposition 4.45, Exercise 4.46,(5.14), (5.15) \S5.4]{chernov book}.

\begin{defin}\label{admiss} An admissible stable curve for $L>0$ and $B>0$ is a subset
$W$ of $\Omega_0$, satisfying the following conditions:
\begin{itemize}
  \item[(W1)] $W$ is a stable curve
  {\bf for the flow}. (This implies that  $P^+(W)$ is a stable curve
for the map, by definition of the stable cones.
If $P^-(W)$  intersects $n\ge 1$ scatterers,
then  $P^-(W)$ is  a union of $n$
stable curves for the map,  using $P^-(W) = T^{-1}(P^+(W))$.)
 \item[(W2)]  $W$ has length at most $L$ and is $\cC^2$, with curvature bounded by $B>0$.  
for the map $T$ (in particular, $\cos \varphi\ne 0$ for any $(r,\varphi)\in P^+(W)$).
\end{itemize}
If in addition $P^+(W)$ is {\bf homogeneous,} then $W$ is called homogeneous.
\end{defin}

We denote by $\cW^s$ the set of all admissible stable curves for fixed 
$L = L_0$ and $B = B_0$, where $L_0$ is chosen
in Lemma~\ref{lem:growth} below (determined in particular
by the complexity condition for the map, see also Definition~\ref{cG_t}) and $B_0$ is chosen after Lemma~\ref{lem:preserved}.
The set $\cW^s$ is
not empty. In fact   by 
simply flowing
a homogeneous stable curve for the map backwards under the flow, we may construct
homogeneous flow stable curves $W$.  
Unstable curves and (homogeneous) admissible unstable curves are defined similarly.

\begin{defin}[Distances $d_{\cW^s}(W_1,W_2)$ between stable curves]
Let $W_1, W_2 \in \cW^s$ be two stable curves. 
If there does not exist any unstable curve 
$W^u_{1,2}$ (for the flow) so that  $W^u_{1,2} \cap W_1$ and $W^u_{1,2} \cap W_2$ 
are both  nonempty, we set  $d_{\cW^s}(W_1,W_2)=\infty$.
If there exists \footnote{ This holds for example if $W_1\cap W_2\ne \emptyset$. It does {\it not} hold if $W_2=\Phi_t(W_1)$.} such a curve  $W^u_{1,2}$,
writing
the projections $P^+(W_i)$ 
as graphs $P^+(W_i) = \{ (r, \vf_i(r)) : r \in I_i \}$ over some
interval $I_i$, we define
$d_{\cW^s}(W_1,W_2) = | I_1 \bigtriangleup I_2| + |\vf_1 - \vf_2|_{\cC^1(I_1 \cap I_2)}$,
where $\bigtriangleup$ denotes the symmetric difference of two intervals, and
$|\vf|_{\cC^\ell(I)}=\sum_{k=0}^\ell\sup_I |D^k \vf |$.
\end{defin}

Note that this distance stated in terms of the projected curves
is 
the distance used in \cite{demers zhang} between 
stable curves
for the map (with the small difference that the curves there were
assumed to be homogeneous).  For future reference, we denote this distance between projected curves
by $\td_{\cW^s}(P^+(W_1), P^+(W_2))$.

\begin{remark}
Our distance between $W_1$ and $W_2$ is based
on $P^+(W_1)$ and $P^+(W_2)$, so if $W_1$ and $W_2$ are two halves of the same
stable curve split at a single point making a collision, then the distance
between them is infinite in our norms since $P^+(W_1)$ and $P^+(W_2)$ lie on
different scatterers (or the opposite side of the same scatterer,
but then the distance is still of order one).\end{remark}

\begin{defin}[Distance $d(\psi_1, \psi_2)$ between test functions]
\label{def:dist}
Let  $\psi_1$ and $\psi_2$ be supported
on stable curves $W_1$ and $W_2$, respectively. Since $P^+(W_i)$ is represented by the
graph of $\vf_i$ over $I_i$, let $G_{W_i}(r) = (r, \vf_i(r))$ denote this graph and
$S_{W_i}(r) = \Phi_{-t(r)} \circ G_{W_i}(r)$
 denote the natural map 
from $I_i$ to $W_i$ defined by the flow.
We use this map to define the distance between test functions by
$d(\psi_1, \psi_2) = |\psi_1 \circ S_{W_1} - \psi_2 \circ S_{W_2}|_{\cC^0(I_1 \cap I_2)}$
when $I_1 \cap I_2 \neq \emptyset$.  If $I_1 \cap I_2 = \emptyset$, we set $d(\psi_1, \psi_2) = \infty$,
i.e., we simply do not compare these functions.
\end{defin} 

(The notation for $S_{W_i}$  
and in particular the map $\Phi_{-t(r)}$ used in the definition above
hides the change of coordinates from $(r,\vf)$ to $(x,y,\omega)$.  The 
corresponding Jacobian is a product of
the Jacobian of the flow between collisions and that of the
change of variables.  See Sections~\ref{section3} and~\ref{section4}.)

\subsection{The norms $|\cdot|_w$, $\|\cdot\|_s$, $\|\cdot\|_u$, $\|\cdot\|_0$,
and the spaces $\cB$ and $\cB_w$}

Given $W \in \cW^s$ and $0<\oldq <1$, we define the H\"older norm of a test function $\psi$
by $|\psi|_{\cC^\oldq(W)} = |\psi|_{\cC^0(W)} + \cC^\oldq_W(\psi)$.  Letting $\cC^1(W)$ denote the
set of continuously differentiable functions on $W$, we define the spaces $\cC^\oldq(W)$
to be the closure of $\cC^1(W)$ in the $| \cdot |_{\cC^\oldq(W)}$ norm\footnote{ This
space $\cC^\oldq(W)$ coincides with the ``little H\"older space'' noted $b_{\infty,\infty}^\oldq$
in \cite[Prop. 2.1.2, Def. 2.1.3.1]{RS} obtained by taking the closure of $\cC^\infty$
in the $| \cdot |_{\cC^\oldq(W)}$ norm, it is
strictly smaller than the set of all H\"older
continuous functions with exponent $\oldq$, but contains all H\"older continuous
functions with exponent $\oldq' > \oldq$.}.

Now\footnote{ \label{foot10}The value $1/3$ is determined by the choice \eqref{defhom}
of homogeneity layers, see Lemma~\ref{lem:distortion}. Replacing $k^2$ by $k^\chi$
for $\chi >1$ replaces the bound $\oldp\le 1/3$ by $\oldp \le 1/(\chi+1)$.} fix $0<\oldp \le 1/3$.  For $\oldh \in \cC^1(\Omega_0)$, define the
weak norm of $\oldh$ by
\[
|\oldh|_w = \sup_{W \in \cW^s} \sup_{\substack{\psi \in \cC^\oldp(W) \\ |\psi|_{\cC^\oldp(W)} \le 1}}
\int_W \oldh \psi \, dm_W \, ,
\]
where $dm_W$ denotes arclength along $W$.

Now choose $1 < \oldbetaparam < \infty$ and\footnote{ In view of \eqref{neater}, it will be natural to further restrict to those $\oldq$ which satisfy in addition $\oldq \le   1- \oldbeta$.}
$0<\oldq <  \min \{ \oldp, 1/\oldbetaparam \}$. 
For $\oldh \in \cC^1(\Omega_0)$,
define the strong stable norm of $\oldh$ by
\[
\| \oldh \|_s = \sup_{W \in \cW^s} \sup_{\substack{\psi \in \cC^\oldq(W) \\
|W|^{\oldbeta} |\psi|_{\cC^\oldq(W)} \le 1}}
\int_W \oldh \psi \, dm_W \, .
\]
Choose $0<\gamma \le \min \{ \oldbeta , \oldp-\oldq \} < 1/3$.  Define the unstable norm
of $\oldh$ by\footnote{ In previous works, $d(\psi_1,\psi_2) =0$ was replaced by
$d_\oldq(\psi_1,\psi_2) \le \ve$ where $\oldq > 0$
and the distance used the $\cC^\oldq$ instead of the $\cC^0$ norm. The two formulations are equivalent by the triangle
inequality, using the strong stable norm, and since
$d(\psi_1,\psi_2)=0$ implies $d_\oldq(\psi_1,\psi_2)=0$.}
\[
\| \oldh \|_u = \sup_{\ve >0} \sup_{\substack{W_1, W_2 \in \cW^s \\ d_{\cW^s}(W_1,W_2) \le \ve}} 
\sup_{\substack{\psi_i \in \cC^\oldp(W_i) \\ |\psi_i|_{\cC^\oldp(W_i)} \le 1 \\ d(\psi_1,\psi_2) =0}}
\ve^{-\gamma} \left| \int_{W_1} \oldh \psi_1 \, dm_{W_1} - \int_{W_2} \oldh \psi_2 \, dm_{W_2} \right|  \, .
\]
We define the  neutral norm of $\oldh$ by
\[
\| \oldh \|_0 = 
\sup_{W \in \cW^s}
 \sup_{\substack{\psi \in \cC^\oldp(W) \\ |\psi|_{\cC^\oldp(W)} \le 1}}
\int_W  \partial_t (\oldh \circ \Phi_t)|_{t=0} \, \psi  \, dm_W   \,  .
\]
Finally, define the strong norm of $\oldh$ by 
$$\| \oldh \|_\cB = \|\oldh\|_s + c_u \| \oldh \|_u + \|\oldh \|_0 \, , $$
where
$c_u>0$ is a constant to be chosen  in Section~\ref{sec:R}.

We will prove in Lemma~\ref{lem:embedding} that there exists
a constant $C$ so that for any
 $\oldh\in \cC^1(\Omega_0)$
  $$\| \oldh \|_\cB \le C |\oldh|_{\cC^1(\Omega_0)}\, .
  $$
In the other direction, we relate our norms to the dual of H\"older spaces.
For $0<r<1$ and any $\oldh\in L^\infty(\Omega_0)$, set
\begin{equation}\label{duallittle}
| \oldh |_{(\cC^{r})^*} = \sup \left\{ \int \oldh \psi\ dm :
\psi \in \cC^r(\Omega_0),  \; |\psi|_{\cC^r(\Omega_0)} \le 1 \right\}<\infty \, .
\end{equation}
where $\cC^r(\Omega_0)$ is the closure of $\cC^1$ for the $\cC^r$ norm (little H\"older space).
The following estimate   follows from the role of $0<\oldq<\oldp$ in our definition
(the proof is given in Section~\ref{molsec}).

\begin{lemma}[Relation with the dual of $\cC^r(\Omega_0)$]\label{relating}
There exists $C\ge 1$ so that 
$$
| \oldh |_{(\cC^{\oldp}(\Omega_0))^*}\le C |\oldh|_{w} \,, \quad  | \oldh |_{(\cC^{\oldq}(\Omega_0))^*}\le C \|\oldh\|_s\qquad \forall \oldh\in \cC^0(\Omega_0)\, .
$$
\end{lemma}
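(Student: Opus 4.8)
The plan is to bound $|\oldh|_{(\cC^r(\Omega_0))^*}$ by integrating over a suitable (measurable) family of stable curves $\cW^s$ foliating (a full-measure subset of) $\Omega_0$, thereby reducing a three-dimensional integral against a test function on $\Omega_0$ to a family of one-dimensional integrals of the type controlled by $|\cdot|_w$ and $\|\cdot\|_s$. More precisely, since the stable cones $C^s(Z)$ are everywhere transverse to the flow direction and bounded away from the horizontal in the $\xi\omega$-plane (finite horizon), we can choose a smooth foliation $\{W_a\}_{a\in A}$ of $\Omega_0$ (up to a null set near $\partial\Omega_0$ and $\bS$) by curves $W_a\in\cW^s$, together with a transverse coordinate $a$ ranging over a bounded parameter set $A$, such that Fubini gives $\int_{\Omega_0}\oldh\psi\,dm = \int_A \bigl(\int_{W_a}\oldh\, (\psi J_a)\, dm_{W_a}\bigr)\, d\nu(a)$ for a bounded measure $\nu$ and a smooth conditional Jacobian factor $J_a$ with $|J_a|_{\cC^1(W_a)}$ uniformly bounded (the curves $W_a$ and hence these Jacobians are controlled because the foliation can be built by flowing back a fixed smooth foliation of a Poincaré section, as in the construction of $\cW^s$ after Definition~\ref{admiss}).

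\textbf{Key steps.} First I would fix such a foliation $\{W_a\}$ with all leaves in $\cW^s$ of length bounded below by a uniform constant (and above by $L_0$), with smooth transverse dependence on $a$. Second, for $\psi\in\cC^r(\Omega_0)$ with $|\psi|_{\cC^r(\Omega_0)}\le 1$, note that the restriction $\psi|_{W_a}$, multiplied by the smooth Jacobian $J_a$, lies in $\cC^r(W_a)$ with $|\psi J_a|_{\cC^r(W_a)}\le C$ uniformly in $a$ (the $\cC^r$ norm along a $\cC^2$ curve is controlled by the ambient $\cC^r$ norm and the $\cC^1$ norm of $J_a$). Third, for the first inequality take $r=\oldp$: by definition of $|\cdot|_w$ each inner integral $\int_{W_a}\oldh\,(\psi J_a/C)\,dm_{W_a}$ is bounded by $|\oldh|_w$, so $|\int_{\Omega_0}\oldh\psi\,dm|\le C\,\nu(A)\,|\oldh|_w$. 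Fourth, for the second inequality take $r=\oldq$: here use that $\oldq<\oldbeta=1/\oldbetaparam$ — wait, one needs to absorb the factor $|W_a|^{\oldbeta}$; since leaf lengths are bounded, $|W_a|^{\oldbeta}\le L_0^{\oldbeta}$, so $|\psi J_a|_{\cC^\oldq(W_a)}\cdot|W_a|^{\oldbeta}\le C L_0^{\oldbeta}$ and each inner integral is bounded by $C L_0^{\oldbeta}\|\oldh\|_s$, giving $|\int_{\Omega_0}\oldh\psi\,dm|\le C'\|\oldh\|_s$. Finally take the supremum over admissible $\psi$ and over the fixed foliation to conclude, noting all constants depend only on $L_0$, $\oldp$, $\oldq$, $\oldbeta$ and the billiard data. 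The passage from $\oldh\in\cC^0$ to the stated bound is then by the definition \eqref{duallittle} of $|\cdot|_{(\cC^r(\Omega_0))^*}$.

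\textbf{Main obstacle.} The delicate point is the construction of a genuinely usable foliation of $\Omega_0$ by curves of $\cW^s$: one must control the behavior near the scatterer boundaries $\partial\Omega_0$ and near the homogeneity-strip boundaries $\bS$ (where condition (W3) and the homogeneity requirement on $P^+(W)$ impose constraints), and ensure the transverse Jacobians $J_a$ stay $\cC^1$-bounded uniformly. Since one only needs a \emph{fixed} auxiliary foliation (not an invariant one), this can be done by flowing back, under $\Phi_{-t}$, a fixed smooth foliation by homogeneous stable curves for the map $T$ on the Poincaré section $\cM$ — the same mechanism already invoked to show $\cW^s\ne\emptyset$. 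Because the flow-return map and the change of coordinates $(r,\vf)\mapsto(x,y,\omega)$ are smooth away from tangencies, and because $\tau_{\min}>0$ keeps returns bounded below, the resulting leaves and Jacobians are uniformly controlled on a full-measure set, which suffices. The only subtlety is that the complement (a neighborhood of grazing trajectories) has measure tending to zero and contributes negligibly since $\oldh\in L^\infty$ and $\psi$ is bounded; one passes to the limit by monotone/dominated convergence. This is routine once the foliation is in hand, so the real content is the (standard but slightly technical) foliation construction.
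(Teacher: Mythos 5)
Your overall strategy---disintegrate Lebesgue measure over a family of stable curves and apply the definition of $|\cdot|_w$ (resp.\ $\|\cdot\|_s$) leaf by leaf---is the same as the paper's, which performs the disintegration through the Poincar\'e section $\cM$. But the way you set it up has genuine gaps, all concentrated at the grazing set, and they are not routine. First, a foliation of $\Omega_0$ by curves of $\cW^s$ with lengths bounded below does not exist: condition (W3) forces $P^+(W_a)$ to lie in a single homogeneity strip $\bH_k$, whose width is $\sim k^{-3}$, and since the expansion from $P^+(W_a)$ to $W_a$ is of order $1$ (Lemma~\ref{lem:smooth}), the leaves over $\bH_k$ have length $\lesssim k^{-3}\to 0$. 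Second, and more seriously, your two uniform claims---$\nu(A)<\infty$ and $|J_a|_{\cC^1(W_a)}\le C$---cannot both hold. The transverse measure of the leaves filling $\{Z: P^+(Z)\in\bH_k\}$ is of order $1$ for \emph{each} $k$ (the strip has two-dimensional measure $\sim k^{-3}$ per unit of $r$, while each leaf has length $\sim k^{-3}$), so the total transverse mass diverges when summed over $k$; if you normalize $\nu$ to be finite, the densities must degenerate. What makes the paper's argument close is quantitative: writing $dm=c\cos\vf\,ds\,dr\,d\vf$, the conditional density on a leaf over $\bH_k$ carries the factor $\cos\vf(P^+(\cdot))$, whose sup norm \emph{and} $\oldp$-H\"older seminorm along the leaf are both $\le Ck^{-2}$---the H\"older bound uses precisely $\oldp\le 1/3$ together with the $k^{-2}$ spacing of the homogeneity layers---and then $\sum_k k^{-2}<\infty$. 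Without tracking this decay, the leaf-wise bounds do not sum.

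Third, your fallback of discarding a neighborhood of the grazing trajectories and estimating the remainder by $|\oldh|_{L^\infty}$ times its measure is not admissible here: the conclusion must be a bound by $|\oldh|_w$ (resp.\ $\|\oldh\|_s$) alone, uniformly over $\oldh\in\cC^0$, and $|\oldh|_{L^\infty}$ is not controlled by $|\oldh|_w$. (This uniformity is the entire content of the lemma; it is what lets the completions $\cB_w$ and $\cB$ embed into duals of H\"older spaces.) So the ``main obstacle'' you identify is real, but its resolution is not a nicer foliation---it is the $k^{-2}$ decay of the conditional densities near grazing collisions, which your write-up omits.
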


We introduce the subspace $\cC^0_\sim$ of those continuous functions on $\Omega_0$ which
can be viewed as continuous functions on the quotient space $\Omega=\Omega_0/\sim$:
\begin{equation}\label{contit}
\cC^0_\sim=\cC^0(\Omega)=\{  \oldh  \in \cC^0(\Omega_0) \mid \oldh(Z)=\oldh(Y) \mbox{ if } Z \sim Y \} \, .
\end{equation}
It is well known that $\Phi_t$ is continuous on the topological metric quotient\footnote{ The requirement $\|\oldh\|_0<\infty$ essentially implies that
$\oldh$ is in some kind of Sobolev space with positive
exponent in the flow direction,  thus continuous almost everywhere on the quotient
$\Omega$. We shall neither prove nor use this.}
 space $\Omega$
 see
e.g. \cite[Exercise 2.27]{chernov book}.  This implies 
 that if $\oldh\in \cC^0_\sim$ then $\oldh\circ \Phi_{-t} \in \cC^0_\sim$ for all
$t\in \bR$.
We can now introduce our main Banach spaces:

\begin{defin}[Banach spaces $\cB_w$, $\cB$]\label{spcce}
Let $\cB_w$  be the completion with respect to $| \cdot |_w$ of 
the set $\{  \oldh  \in \cC^0_\sim \mid   | \oldh |_w< \infty \}$.
Let $\cB\subset \cB_w$ be  the completion for the norm $\|\cdot \|_\cB$ of
\begin{equation}\label{smallspace}
\{ \cL_t \oldh \mid t\ge 0, \quad \oldh \in \cC^0_\sim \cap  \cC^2(\Omega_0) \} \, .
\end{equation}
\end{defin}
The apparently contrived definition of $\cB$ will ensure joint continuity of $(t,\oldh)\mapsto \cL_t(\oldh)$
in Lemma~\ref{lem:strong c}, adapting \cite[(3.9)--(3.12)]{BaL}.

Lemma~\ref{relating}  implies that $\cB \subset (\cC^\oldq(\Omega_0))^*$ and 
$\cB_w \subset (\cC^\oldp(\Omega_0))^*$ (defining $(\cC^r)^*$ to be the closure of 
$\cC^1$ functions $\oldh$ for the
$(\cC^r)^*$ norm), while Lemma ~\ref{lem:C1} implies that $\cC^1(\Omega_0)\subset \cB$.
Clearly, $\cB\subset \cB_w$ (we shall see in Lemma ~ \ref{lem:compact}
that the embedding is compact).  The injectivity of the various embeddings is summarized in
Lemma~\ref{lem:embedding}.

\smallskip

Note for further use that, by the group property of the flow,
$\oldh \circ \Phi_{r+s+\delta} - \oldh \circ \Phi_{r+s} = 
(\oldh \circ \Phi_{s+\delta} - \oldh\circ \Phi_s) \circ \Phi_r$ for any
$\oldh \in \cC^0_\sim$, so that, whenever
both sides of the identity below are well-defined, 
\begin{equation}
\label{eq:flow derivative}
\partial_t (\oldh \circ \Phi_t)|_{t=r+s} (Z)= ( \partial_t \oldh \circ \Phi_t)|_{t=s} \circ \Phi_r (Z)\,  .
\end{equation}


\section{Preliminary lemmas}
\label{section3}

In this section, we establish some basic facts about expansion and control of complexity and prove
several key properties of our Banach spaces.


\subsection{Growth and distortion}

When we flow a stable curve $W$ backwards under the flow, $\Phi_{-t}(W)$ may be
cut by singularity curves or undergo large expansion at collisions.  
For fixed $t>0$ and $Z$, the $t$-trace of $Z$ 
(on the collision space $\cM$, up to time $t$) is the set
$\{\Phi_{s}(Z)\cap \cM\, , s \in (0,t]\}$.
(Note that $\Phi_{-t}(W)$ has a cusp at $Z$ if the $t$-trace
of $Z$ contains a tangential collision.) 
Recall 
\cite[p.76]{chernov book}
that the trace of $E\subset \Omega_0$ is simply $P^+(E)$.

\begin{defin}[The partition $\cG_t(W)$]\label{cG_t}
For $t\ge 0$ and $W\in \cW^s$, we partition $\Phi_{-t}(W)$ as 
follows.\footnote{ Whenever we partition a curve into finitely or countably
many subcurves, we drop the (at most countably many) division points.}
First, we partition $\Phi_{-t}(W)$ at any  points
whose $t$-trace intersects either the boundary of one of the homogeneity strips $\bH_k$
or a tangential collision point $\{ \vf =\pm \pi/2\}$.
Second, if any component defined thus far was subdivided at a previous time $s$,
for $0 < s< t$ due to growing to length greater than $L_0$, we continue
to consider $\Phi_{-t}(W)$ to be subdivided at the image of this point
under $\Phi_{s-t}$.  
Finally, if a component reaches length $=L_0$, we subdivide
it
into two curves of length  $L_0/2$.
The countable collection of components of
$\Phi_{-t}(W)$ defined in this way is denoted by $\hG_t(W)$.
If one of the elements of $\hG_t(W)$ is 
in the midst of a collision at time $t$, i.e., if this component
intersects the
boundary of one of the scatterers (such an intersection 
can contain at most two points, since both the scatterer and
the stable curve (a diverging wavefront under the backwards flow) are convex in opposition, recall \eqref{esscurve}), then we split this component into 
two or three pieces
 temporarily:
one or two curves consisting of points that have just completed a collision and one curve
consisting of points that are about to make a collision.  
This refined set of components is denoted by $\cG_t(W)$.
\end{defin}  

Note that for $s>0$, if $W_i \in \hG_t(W)$, then $\Phi_{-s}(W_i)$ is a union of elements
in $\hG_{t+s}(W)$.  However, if $W_i \in \cG_t(W)$, then $\Phi_{-s}(W_i)$ may not be
a union of elements of $\cG_{t+s}(W)$ if $W_i$ was part of an curve undergoing a collision
at time $t$ which has since completed its collision without crossing any boundaries
of homogeneity strips.  In this case, $\Phi_{-s}(W_i)$ is contained in an element
of $\hG_{t+s}(W)$.

The following lemma proves the invariance of the family $\cW^s$ under $\Phi_{-t}$, $t > 0$.
While the ideas in the proof are by now well-known, we include the lemma 
since we are not aware of a published proof  that includes the family of
flow-stable curves $\cW^s$ we have introduced here.  Our proof follows the approach of
\cite[\S~4.6]{chernov book}, which
contains many of the ideas we will need.

\begin{lemma}[Invariance of stable curves]
\label{lem:preserved}
For $W \in \cW^s$ and $t \ge 0$, let $\cG_t(W) = \{ W_i \}_i$.  Then $W_i \in \cW^s$ for each $i$.
\end{lemma}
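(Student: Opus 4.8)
The statement to prove is Lemma~\ref{lem:preserved}: that every component $W_i \in \cG_t(W)$ of the backward-flowed curve is again an admissible stable curve, i.e., satisfies (W1), (W2), (W3). The plan is to verify each of the three defining conditions of Definition~\ref{admiss} separately, using the cone-invariance Lemma~\ref{flowinvar} for (W1), the curvature bound (which determines the choice of $B_0$) for (W2), and the construction of the partition $\cG_t(W)$ together with the known properties of homogeneous stable curves for the map $T$ from \cite{demers zhang} for (W3).

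First I would treat (W1). Since $W$ is a stable curve for the flow, its tangent space $\cT_Z W$ lies in $C^s(Z)$ for each $Z \in W$. By Lemma~\ref{flowinvar}, $D_Z\Phi_{-t}(C^s(Z)) \subseteq C^s(\Phi_{-t}(Z))$, so the tangent vectors to $\Phi_{-t}(W)$ — hence to each $W_i$ — remain in the stable cones. This gives that each $W_i$ is a stable curve for the flow; the bracketed consequences in (W1) about $P^+(W_i)$ and $P^-(W_i)$ being stable curves for the map follow automatically from the definition of the stable cones (they were built by flowing $C^s_z$ backward from the boundary, so the projection of a flow-stable curve is a map-stable curve), using $P^-(W_i) = T^{-1}(P^+(W_i))$ when there are intervening collisions. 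I should also confirm here the essential bound $\#(\overline{W_i} \cap \partial\Omega_0) \le 2$ from \eqref{esscurve}, which follows from convexity-in-opposition of the scatterer and the diverging wavefront, or from the curvature separation argument referenced near \eqref{cardinality}.

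Next, (W2): the length bound $|W_i| \le L_0$ is built directly into the partition $\cG_t(W)$ — the third step of Definition~\ref{cG_t} subdivides any component reaching length $L_0$ into two pieces of length $L_0/2$, and the ``second step'' bookkeeping ensures the subdivision is inherited consistently. The $\cC^2$ regularity and curvature bound are the substantive part: between collisions the flow acts affinely in the $(\eta,\xi,\omega)$-coordinates $(d\eta,d\xi,d\omega) \mapsto (d\eta, d\xi + s\,d\omega, d\omega)$, which is smooth and for which one controls how curvature of a stable curve evolves (it stays controlled because stable curves are uniformly transverse to the flow and bounded away from horizontal by the finite-horizon condition); at collisions, the reflection formula $(d\eta^-, -d\xi^-, -d\omega^- - \tfrac{2\cK(r)}{\cos\vf} d\xi^-)$ is $\cC^2$ in the collision data as long as $\cos\vf$ is bounded away from zero — which is exactly what homogeneity (W3) guarantees — and one checks the curvature of the reflected/flowed curve is bounded by some universal constant. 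This is precisely why $B_0$ is ``chosen after Lemma~\ref{lem:preserved}'': one takes $B_0$ large enough that this universal bound holds; the standard curvature-recursion estimates for dispersing billiards (cf.\ \cite[\S4.4, \S5.4]{chernov book}) give the needed uniform control. Finally (W3): by the first step of the partition, we have cut $\Phi_{-t}(W)$ at every point whose $t$-trace meets a homogeneity-strip boundary $\bS_k$ or a tangential collision point, so for each $W_i$ the projection $P^+(W_i)$ lies in a single homogeneity strip $\bH_{k_1}$ with $k_1$ finite (finiteness because $P^+(W_i)$ is a compact curve avoiding $\{\cos\vf = 0\}$); moreover $P^+(W_i)$ is a homogeneous stable curve for $T$ by the already-established (W1) plus the definition of the cuts, and in particular $\cos\vf \ne 0$ on it.

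\textbf{Main obstacle.} I expect the delicate point to be the curvature/$\cC^2$ control in (W2), specifically verifying that the curvature of $W_i$ stays below a fixed $B_0$ uniformly over all $t \ge 0$ and all components. One must show the curvature recursion is a contraction (or at least uniformly bounded) under the combined effect of flowing and reflecting, and this uses in an essential way both that $P^+(W_i)$ is homogeneous (so $\cos\vf$ is bounded below, keeping the collision map $\cC^2$ with controlled derivatives) and that the stable curves are diverging wavefronts under the backward flow (so that the $t$-segment of free flight contracts curvature toward zero rather than blowing it up). A secondary subtlety is making sure the partition bookkeeping in Definition~\ref{cG_t} — in particular the distinction between $\hG_t(W)$ and $\cG_t(W)$ and the handling of components caught mid-collision at time $t$ — genuinely produces pieces on which all three conditions hold simultaneously, rather than, say, a piece that is $\cC^2$ but whose projection straddles two homogeneity strips.
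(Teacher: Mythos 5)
Your handling of (W1) and (W3) matches the paper: cone invariance (Lemma~\ref{flowinvar}) gives (W1), and (W3) is built into the cutting rules of Definition~\ref{cG_t}; the paper dispatches both in one sentence and, like you, identifies (W2) as the only substantive point. The gap is in your mechanism for (W2). You argue that the collision map is controlled "as long as $\cos\vf$ is bounded away from zero --- which is exactly what homogeneity (W3) guarantees." But homogeneity does not give a lower bound on $\cos\vf$ that is uniform over components: the strips $\bH_k$ accumulate at $\vf=\pm\pi/2$, so a component $W_i$ may project into $\bH_k$ with $k$ arbitrarily large, where $\cos\vf\sim k^{-2}$. Consequently the curvature $B=d\omega/d\xi$ of the \emph{projected} wavefront genuinely blows up like $2\cK/\cos\vf$ at near-tangential collisions, and no argument based on a uniform lower bound for $\cos\vf$, nor on the free flight subsequently relaxing $B$, can produce a single constant $B_0$ valid for all $t$ and all components (the curve may be cut arbitrarily soon after a near-grazing collision, before $B$ has decayed).

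The paper's proof supplies the missing idea, which your proposal does not contain: one must compute the curvature $\kappa$ of $W_i$ as a curve in the three-dimensional phase space, not of its planar projection. The blowup of $B$ is exactly compensated by the blowup of the arclength element in the $\omega$-direction, and the explicit computation (following \cite[Sect.~4.6]{chernov book}) gives
\[
\kappa^2 \;=\; \frac{B^2}{(1+B^2)^2} + \frac{B_\xi^2}{(1+B^2)^3}\,,
\]
where $B_\xi = dB/d\xi$. The first term is at most $1/4$ for every $B\in\bR$, however large, and the second is at most $(B_\xi/B^3)^2$, which \cite[eqs.~(4.36),~(4.38)]{chernov book} show stays uniformly bounded for all time, even through collisions arbitrarily close to tangential. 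This is how $B_0$ is fixed. Without this cancellation (or an equivalent one) your argument for (W2) would fail precisely in the near-grazing regime that the construction must accommodate.
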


\begin{proof}
Due to the inductive definition of $\cG_t(W)$ and the invariance 
(Lemma~\ref{flowinvar}) of the stable cones, 
each element $W_i$ satisfies condition (W1) 
required for $\cW^s$.  The
one point to prove is (W2): The curvature of such curves remains uniformly bounded for all times.
At collisions,
\[
\frac{d\omega^+}{d\xi^+} = \frac{d\omega^-}{d\xi^-} + \frac{2\cK(r)}{\cos \vf} \, ,
\]
(cf.~\cite[eq. (3.33)]{chernov book}),
and this quantity represents the curvature of the wave front (a stable curve 
$W$ projected onto the $xy$-plane).  Yet
the expansion in the $\omega$ direction is of the same order, so as we will
show below, the curvature of $W$ in
$\bR^3$ remains uniformly bounded, even near tangential collisions. 

We begin by parametrizing a stable curve $W$ approaching a collision 
(under the backwards flow) 
by points $Z_s = (x_s,y_s,\omega_s)$
and define $Z_{st} = \Phi_{-t}(Z_s)$.  The points $Z_{st}$ fill a $2D$ surface in $\Omega_0$
and we choose the interval of $t$ to be large enough that we follow $W$ through precisely one
collision.  We assume that $\Phi_{-t}(W)$ is still smooth after completing the collision.

We will denote derivatives with respect to $t$ by dots and those with respect to $s$ by
primes.   Define $u = -x' \sin \omega + y' \cos \omega$.  Then since $(x',y',\omega')$ remains
perpendicular to the velocity vector, $(\cos \omega, \sin \omega,0)$, we have
\begin{equation}
\label{eq:u}
u^2 = (x')^2 + (y')^2 \; \;\; 
\mbox{and} \; \; \; u' = -x''\sin \omega + y'' \cos \omega \, .
\end{equation}  
We will need the following relations, which follow from
\cite[eqs. (4.25),(4,26)]{chernov book},
\begin{equation}
\label{eq:omega}
\omega' = u B \; \; \; \mbox{and} \; \; \;
\omega'' = u'B + u^2 \frac{dB}{d\xi}\, ,
\end{equation}
where $B = d\omega/d\xi$ represents the curvature of $W$ projected onto the $xy$-plane.

We will denote the curvature of $W$ in $\mathbb{R}^3$ by $\kappa$. By definition,
\[
\kappa^2 = \frac{(\omega'y''-y'\omega'')^2 + (\omega'x'' - x'\omega'')^2 + (x'y''-y'x'')^2}
{((x')^2 + (y')^2 + (\omega')^2)^3}\,  .
\]
Notice that the arclength factor  satisfies 
$(x')^2 + (y')^2 + (\omega')^2 = u^2(1+B^2)$, by \eqref{eq:u} and \eqref{eq:omega}.
We consider each term in the numerator separately.  For the first term, using 
\eqref{eq:u} and again
\eqref{eq:omega},
we let $B_\xi$ denote $dB/d\xi$ and write
\[
\begin{split}
\omega'y''-y'\omega'' & = B(-x'\sin \omega + y' \cos \omega)y'' - y'(u'B + u^2 B_\xi) \\
& = B(-x'y''\sin \omega + y'y'' \cos \omega) + (y'x'' \sin \omega - y'y'' \cos \omega)B - 
y' u^2 B_\xi \\
& = B \sin \omega (y'x'' - x'y'') - y' u^2 B_\xi \, .
\end{split}
\]
A similar calculation for the second term yields,
\[
\omega'x'' - x'\omega'' = B \cos \omega (y'x''-x'y'') - x'u^2 B_\xi \,  .
\]
Substituting these relations into the expression for $\kappa^2$ and using 
again \eqref{eq:u} and that $x'\cos \omega + y'\sin \omega =0$, we obtain $\kappa^2=$
\[
\begin{split}
& 
\frac{B^2 (\sin^2 \omega \, (y'x''-x'y'')^2 + (y')^2 u^4 B_\xi^2+ \cos^2 \omega \, (y'x''-x'y'')^2) + 
(x')^2 u^4 B_\xi^2 + (x'y''-y'x'')^2}
{((x')^2 + (y')^2 + (\omega')^2)^3} \\
& = \frac{(1+B^2)(x'y''-y'x'')^2 + u^6 B_\xi^2}{u^6(1+B^2)^3}
= \frac{B^2}{(1+B^2)^2} + \frac{B_\xi^2}{(1+B^2)^3} \,  ,
\end{split}
\]
where in the last equality, we have used the fact that the curvature of the planar
wavefront satisfies $B^2 = (x'y'' - y'x'')^2/u^6$ by definition.
The first term in the expression for $\kappa^2$ is bounded by 1/4 for all $B \in \bR$.  
The second term is bounded above by $B_\xi^2/B^6$ and
we use \cite[eqs. (4.36), (4.38)]{chernov book} to conclude
that $B_\xi/B^3$ remains uniformly bounded for all time, even after undergoing
collisions arbitrarily close to tangential.
\end{proof}

A corollary of the above proof (in particular \cite[eq. (4.38)]{chernov book})
is that there exists a constant $B_0>0$ such that if $W$ is a stable curve with curvature
less than $B_0$, then each smooth component of $\Phi_{-t}(W)$
also has curvature less than $B_0$.  We fix this choice of $B_0$ in the definition of $\cW^s$.

\smallskip

If $W$ is a flow-stable curve, $Z\in W$, and $t\in  \bR$, we denote
by $J_W \Phi_t (Z)$ the Jacobian of $\Phi_t$ from $W$ to $\Phi_t(W)$, at
$Z$, with respect to the arclength measure on $W$. 
Similarly, if $n$ is an integer then $J_U T^n (z)$ denotes the Jacobian of $T^n$ from 
the map-stable curve $U$ to $T^n(U)$, at
$z\in U$, with respect to the arclength measure on $U$.

\begin{lemma}
\label{lem:expansion}
There exists $C_0 >0$, such that for all $W \in \cW^s$, all $t>0$, all $W_i \in \cG_t(W)$, 
and all $Z\in W_i$; the Jacobian
$J_{W_i}\Phi_t(Z) = C_0^{\pm 1} J_{P^+(W_i)}T^n(P^+(Z))$, where $n$ is the number of collisions of
$P^+(Z)$ before time $t$.
\end{lemma}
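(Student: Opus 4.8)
The plan is to decompose the Jacobian $J_{W_i}\Phi_t(Z)$ as a product over the collision segments of the orbit segment $\{\Phi_s(Z): 0 \le s \le t\}$, compare each factor with the corresponding factor of the billiard map Jacobian, and collect the ``boundary terms'' at the first and last collisions (where the flow-stable curve meets the collision space). First I would write $t = s_0 + \tau_1 + \tau_2 + \cdots + \tau_{n-1} + s_n$, where $\Phi_{s_0}(Z)$ is the first collision of (the orbit of) $Z$ with $\cM$ after $Z$, the $\tau_j$ are the successive free flight times between the $n$ collisions, and $s_n$ is the remaining time so that $\Phi_{s_0}(Z)$ and $\Phi_t(Z) = \Phi_{s_n}(\text{last collision})$. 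By the chain rule along $W_i$,
\[
J_{W_i}\Phi_t(Z) = J_{\Phi_{s_0+\tau_1+\cdots+\tau_{n-1}}(W_i)}\Phi_{s_n}(\cdot)\cdot \prod_{j=1}^{n-1}\big(\text{collision }+\text{ free flight factor}\big)\cdot J_{W_i}\Phi_{s_0}(Z).
\]

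The key point is that between two consecutive collisions the flow acts on the perpendicular plane $\cT^\perp\Omega_0$ by the shear $\left[\begin{smallmatrix}1 & t\\ 0 & 1\end{smallmatrix}\right]$ in Jacobi coordinates, and the arclength expansion of a stable curve under this shear, combined with the collision map expansion, is exactly (up to a bounded, distortion-controlled factor) the expansion $J_{P^+(W_i)}T^n$ of the map between the homogeneous stable curves $P^+(W_i)$ and $T^n(P^+(W_i))$. Concretely, I would invoke the relations \eqref{eq:translate} together with Remark~\ref{rk1.1}, which express the Jacobi-coordinate slope $d\xi/d\omega$ of the tangent to the curve in terms of the map-coordinate data $(\cos\vf, \cK(r), d\vf/dr)$ and the free flight time $\tau(Z)$; differentiating $S_{W_i}$ (the flow map from $P^+(W_i)$ to $W_i$ of Definition~\ref{def:dist}) shows that the arclength ratio between $W_i$ and its projection $P^+(W_i)$ is pinched between two positive constants, uniformly over $\cW^s$, \emph{because} condition (W3) guarantees $P^+(W_i)$ lies in a single homogeneity strip, so $\cos\vf$ is bounded below along $P^+(W_i)$ and the flight time lies in $[\tau_{\min},\tau_{\max}]$. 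The same comparison at $\Phi_t(Z)$ (using the last free segment of length $s_n < \tau_{\max}$) and at $Z$ (using $s_0 < \tau_{\max}$) yields the two remaining uniformly bounded factors. Multiplying everything, the $n$ ``interior'' collision-plus-flight factors combine to reconstruct $J_{P^+(W_i)}T^n(P^+(Z))$ up to a bounded distortion constant that does not grow with $n$, thanks to the standard distortion bounds for homogeneous stable curves (the point of Definition~\ref{def:defhom} and Lemma~\ref{lem:distortion}, cited in the footnote fixing $\oldp\le 1/3$), and the two boundary factors contribute the overall constant $C_0$.

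The step I expect to be the main obstacle is making the comparison between one flow-collision-plus-free-flight factor and one map factor genuinely \emph{uniform}, i.e., showing that the accumulated ratio $\prod_{j} (\text{flow factor})_j / (\text{map factor})_j$ stays in a fixed interval $[C_0^{-1}, C_0]$ as $n \to \infty$. This requires that the discrepancy in each factor be summable along the orbit; one gets this from the fact that the discrepancy at the $j$-th collision is governed by the free flight time $\tau_j \in [\tau_{\min}, \tau_{\max}]$ and by $\cos\vf_j$ along a \emph{homogeneous} stable curve, where the curvature of the wavefront is, by Lemma~\ref{lem:preserved} and the relations \eqref{eq:omega}, comparable to the curvature data appearing in the standard map distortion estimate \cite[eqs.~(4.36),(4.38)]{chernov book}; the exponential contraction of stable curves under the backward flow then makes the telescoped logarithmic discrepancy a convergent geometric-type series, yielding a constant $C_0$ independent of $t$, $W$, $W_i$, and $Z$. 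The remaining bookkeeping — handling the case where $W_i \in \cG_t(W)$ sits in the midst of a collision at time $t$ (so one uses the pre- or post-collisional piece and $s_n$ may be $0$), and confirming that ``$n$ collisions before time $t$'' matches the number of map factors — is routine given Definition~\ref{cG_t}.
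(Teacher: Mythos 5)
Your skeleton (decompose along the collision sequence, compare flow and map expansion factors, collect boundary contributions into $C_0$) is in the right spirit, but the way you close the critical step fails in two places. First, the uniform pinching of the arclength ratio between $W_i$ and $P^+(W_i)$ does \emph{not} follow from ``(W3) gives $\cos\vf$ bounded below'': inside $\bH_k$ one has $\cos\vf\sim k^{-2}$, so homogeneity only yields a bound depending on the strip index, and a constant obtained that way would blow up as $k\to\infty$, contradicting the uniformity of $C_0$. The actual reason the conversion factor $\|dZ\|/\|dz\|$ is of order one, uniformly even at nearly grazing collisions, is the stable-cone condition $d\vf/dr<0$: in \eqref{eq:equiv} the term $\bigl(\cK(r)-\tfrac{d\vf}{dr}\bigr)^2\ge \cK(r)^2+\bigl(\tfrac{d\vf}{dr}\bigr)^2$ gives the lower bound with no reference to $\cos\vf$, while the upper bound uses only $\cK_{\max}$, $\tau_{\max}$, $\tau_{\min}^{-1}$. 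This is exactly the refinement of \cite[Exercise 3.15]{chernov book} used in the paper, which exploits that the curves are stable.

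Second, and more seriously, your mechanism for controlling the accumulated ratio $\prod_j(\text{flow factor})_j/(\text{map factor})_j$ --- summability of per-collision discrepancies via distortion bounds and exponential contraction --- does not apply here. The two Jacobians are compared at the \emph{same} point $Z$, so exponential contraction (which makes discrepancies between two exponentially close points summable, as in Lemma~\ref{lem:distortion}) plays no role; and the per-collision discrepancy is a ratio of conversion factors at consecutive collisions, which is merely $O(1)$, not geometrically small, so a naive product bound would give $C^n$. What saves the argument is that these conversion factors cancel \emph{exactly}: each appears once in a numerator and once in the following denominator, so the product telescopes, leaving only the conversions at the two ends of the orbit segment. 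That is precisely the paper's identity \eqref{eq:to map} (with $V_i=\Phi_t W_i$),
\[
J_{V_i}\Phi_{-t}(Z)=\frac{\| dZ_{-t} \|}{\| dz_{-n} \|}\,\frac{\|dz_{-n}\|}{\|dz_0\|}\,\frac{\| dz_0 \|}{\| dZ_0 \|}\,,
\]
in which the middle factor is \emph{exactly} $J_{P^+(V_i)}T^{-n}(P^+(Z))$ and only the two boundary factors require the order-one estimate above; no homogeneity, distortion, or summability enters. Once you replace your ``convergent geometric-type series'' by this exact cancellation and correct the boundary estimate as indicated, your argument reduces to the paper's proof.
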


\begin{proof}
Set $V_i = \Phi_tW_i$. Since $J_{V_i}\Phi_{-t}(\Phi_t Z) = (J_{W_i}\Phi_t(Z))^{-1}$,
it is equivalent to estimate
$J_{V_i}\Phi_{-t}$.  

First we show that for $V \in \cW^s$, the expansion from $P^+(V)$ to $V$ given by 
the inverse of the flow from $Z\in V$ to $P^+(V)$
is of order $1$ even though $P^+(V)$ may not be homogeneous.  
(The proof of this is a refinement of \cite[Exercise 3.15]{chernov book}, taking
advantage of the fact that we only consider stable curves.)

For $Z \in V$, setting $P^+(Z) = z = (r,\vf)$, we let $J_{P^+(V)}\Phi_{-\tau(Z)}$ denote the Jacobian of the 
map from $P^+(V)$ (in $(r,\vf)$ coordinates) to $V$ (in $(x,y,\omega)$ coordinates) under the 
backwards flow.
Let $dz = (dr, d\vf)$ denote the image of a vector
$dZ = (0, d\xi, d\omega) \in C^s(Z)$. 
Recalling \eqref{eq:translate}, and
since $\| dZ \|^2 = (d\xi)^2 + (d\omega)^2$, we obtain
\begin{equation}
\label{eq:equiv}
\| dZ \|^2 = (dr)^2 \left[ \left(\cos \vf + \tau(Z) \cK(r) - \tau(Z) \frac{d\vf}{dr}\right)^2 
+ \left(\cK(r) - \frac{d\vf}{dr}\right)^2 \right] \, .
\end{equation}
On the one hand, \eqref{eq:equiv} implies  $\| dZ \|^2 \le C \| dz\|^2$, where
$C$ is a constant  depending on $\cK_{\max}$, $\tau_{\max}$ and $1/\tau_{\min}$,
since $(dr, d\vf) \in C^s_z$.  On the other hand, since $\frac{d\vf}{dr} < 0$, we get that $\| dZ \|^2 \ge$
\[
\begin{split}
 (dr)^2 \left( \cK(r) - \frac{d\vf}{dr} \right)^2 \ge (dr)^2 [ \cK(r)^2 + \left(\frac{d\vf}{dr}\right)^2]
= \cK(r)^2 (dr)^2 + (d\vf)^2 \ge \min \{ \cK^2_{\min}, 1 \} \| dz \|^2  \, ,
\end{split}
\]
proving the claim.

Next, we fix $Z \in V_i$ as above and decompose its past orbit as follows,
let $z_0 = P^+(Z)$ and $z_{-i} = T^{-1}z_{-i+1}$ for
$i = 1, \ldots n$, where $n$ is the number of collisions between $Z$ and $\Phi_{-t}(Z)$.
Then, letting $dZ_0$ denote a vector tangent to $V$ at $Z$ perpendicular to the flow
and $dZ_{-t}$ denote its image under $D\Phi_{-t}$, we have 
\begin{equation}
\label{eq:to map}
J_{V_i}\Phi_{-t}(Z) = \frac{\| dZ_{-t} \|}{\| dZ_0 \|} = \frac{\| dZ_{-t} \|}{\| dz_{-n} \|} \frac{\|dz_{-n}\|}{\|dz_0\|}
\frac{\| dz_0 \|}{\| dZ_0 \|} \,  ,
\end{equation}
where $dz_i$ represents the image of $dZ_0$ at $z_i$.  The first and third factors above
are of order 1 by the previous claim.  The middle factor is precisely
$J_{P^+(V_i)}T^{-n}(P^+(Z))$.  Now since
$J_{V_i}\Phi_{-t}(\Phi_tZ) = (J_{W_i}\Phi_t(Z))^{-1}$
and $J_{P^+(V_i)}T^{-n}(P^+(\Phi_t Z)) = (J_{P^+(W_i)}T^n(P^+(Z)))^{-1}$,
the lemma is proved.
\end{proof}

The following lemma will be the key to the bounded distortion Lemma~\ref{lem:distortion}.
(Note that the exponent $1/2$ is intrinsic to the billiard and cannot be improved.)
\begin{lemma}
\label{lem:smooth}
For any $W \in \cW^s$, let $S_W(r) = \Phi_{-t(r)} \circ G_{W}(r)$ denote the map 
from $I_W$ to $W$ defined at the end of Subsection~\ref{stable curves}.  There exists $C>0$, depending
only on the table and choice of $L_0$ and $B_0$, such that 
$|\ln JS_W|_{\cC^{1/2}(I_W)} \le C$ and $|\ln JS_W^{-1}|_{\cC^{1/2}(W)} \le C$,
where $JS_W^{\pm 1}$ denotes the Jacobian of $S_W^{\pm 1}$.
\end{lemma}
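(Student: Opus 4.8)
The plan is to decompose the map $S_W \colon I_W \to W$ into two pieces whose Jacobians are separately $\cC^{1/2}$, and to control each piece using the standard distortion machinery for dispersing billiards from \cite[Sect. 4.6, 5.3]{chernov book}. Explicitly, write $S_W = \Phi_{-t(r)} \circ G_W(r)$, where $G_W \colon I_W \to P^+(W)$ is the graph map $r \mapsto (r, \vf_W(r))$ onto the homogeneous stable curve $P^+(W)$ for the map, and $\Phi_{-t(r)}$ carries that curve back to $W$ along the flow. Thus $J S_W(r) = J_{P^+(W)} G_W(r) \cdot J_{P^+(W)}\Phi_{-t(r)}(G_W(r))$, and it suffices to bound the $\cC^{1/2}(I_W)$ norm of the logarithm of each factor (the bound for $S_W^{-1}$ then follows by the same argument with the roles of $I_W$ and $W$ reversed, or by composing with $\ln J S_W \circ S_W^{-1}$ and using that $S_W$ is bi-Lipschitz with uniform constants, which was exactly the ``order $1$'' claim established inside the proof of Lemma~\ref{lem:expansion}).

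First I would handle the graph factor $J_{P^+(W)} G_W = \sqrt{1 + (\vf_W')^2}$. Since $P^+(W)$ is a stable curve for the map $T$, its slope $\vf_W'$ lies in $C^s_z$, hence is uniformly bounded above and below away from $0$, and condition (W2) on $W$ together with the curvature transfer at collisions (the computation of $B_\xi/B^3$ from Lemma~\ref{lem:preserved}, equivalently \cite[eqs. (4.36)--(4.38)]{chernov book}) gives a uniform bound on $\vf_W''$, in fact better than $\cC^{1/2}$; so $\ln J_{P^+(W)} G_W$ is uniformly $\cC^1$, a fortiori uniformly $\cC^{1/2}$, on $I_W$. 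Second, for the flow factor I would use the explicit form of the flow between collisions together with \eqref{eq:translate}: the expansion of $\Phi_{-t(r)}$ from a point $(r,\vf)$ on $P^+(W)$ back to $W$ is given, up to the $\cC^1$ changes of variables between $(r,\vf)$ and Jacobi coordinates, by the factor $\|dZ\|/\|dz\|$ computed in \eqref{eq:equiv}, namely the square root of $\big(\cos\vf + \tau\cK - \tau\,\tfrac{d\vf}{dr}\big)^2 + \big(\cK - \tfrac{d\vf}{dr}\big)^2$ evaluated along $P^+(W)$. The potential loss of smoothness comes entirely from the dependence of $\tau$ (and of the collision data $r(\cdot),\vf(\cdot)$) on the base point: $\tau$ is only $1/2$-Hölder, and this is where the exponent $1/2$ (and the parenthetical remark that it ``cannot be improved'') enters.

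The hard part will be showing that, although $\tau$ and $\cos\vf$ individually degenerate near grazing collisions, the combinations appearing in \eqref{eq:equiv} are controlled in $\cC^{1/2}$ with a constant depending only on the table, $L_0$, $B_0$. Here the homogeneity assumption (W3) is essential: on a single homogeneity strip $\bH_k$ one has the standard two-sided estimates $\cos\vf \asymp k^{-2}$ and $|\partial_r \cos\vf| \lesssim \cK_{\max}$, and the classical distortion bounds for dispersing billiards (\cite[Lemma 5.27 and its proof]{chernov book}, adapted as in the proof of Lemma~\ref{lem:expansion}) show that along a homogeneous stable curve the quantity $(\cos\vf + \tau\cK - \tau\vf'/ \ldots)$ stays comparable to $\cos\vf$, with a Hölder-$1/2$ logarithm — the $1/2$ being sharp because of the square-root singularity of $\tau$ at tangencies. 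I would then assemble these, noting that $t(r) = \tau(G_W(r))$ and that all intermediate changes of coordinates are uniformly $\cC^1$ with bounded inverses, so that sums, products, and compositions of uniformly-$\cC^{1/2}$ (and uniformly-$\cC^1$) functions with uniformly bounded arguments stay uniformly $\cC^{1/2}$, yielding $|\ln J S_W|_{\cC^{1/2}(I_W)} \le C$. The symmetric statement for $S_W^{-1}$ follows since $\ln J S_W^{-1} = -(\ln J S_W)\circ S_W^{-1}$ and $S_W^{-1}$ is uniformly Lipschitz (indeed $\cC^{1/2}$ suffices), so the Hölder norm is only multiplied by a uniform constant.
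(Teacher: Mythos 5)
Your overall strategy is the paper's: the same factorization $JS_W(r) = JG_W(r)\cdot J_{P^+(W)}\Phi_{-\tau}(G_W(r))$, the same (easy, in fact $\cC^1$) treatment of the graph factor via \eqref{eq:dG_W} and the bounded curvature of $P^+(W)$, the identification of the $1/2$-H\"older continuity of $\tau$ along the curve as the sole source of the exponent $1/2$, and the same reduction of the inverse statement via $JS_W^{-1}=(JS_W)^{-1}\circ S_W^{-1}$ together with the uniform comparability of $d(z_1,z_2)$ and $d_W(Z_1,Z_2)$.

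There is, however, one incorrect claim in your account of the ``hard part,'' and it is worth flagging because, taken at face value, it would destroy the uniformity of the constant. The bracket in \eqref{eq:equiv} is \emph{not} comparable to $\cos\vf$. Since $P^+(W)$ is a stable curve for the map, $\frac{d\vf}{dr}<0$, so the second summand alone gives $\bigl(\cK-\frac{d\vf}{dr}\bigr)^2\ge \cK_{\min}^2$, and the finite-horizon bounds $\tau\le\tau_{\max}$, $\bigl|\frac{d\vf}{dr}\bigr|\le\cK_{\max}+\tau_{\min}^{-1}$ bound the bracket above; hence $J_{P^+(W)}\Phi_{-\tau}$ is uniformly bounded away from $0$ and $\infty$, with no degeneration near grazing. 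The logarithm is therefore uniformly Lipschitz in each of its arguments, and one only needs each argument to be $1/2$-H\"older along the curve: $\cK$, $\cos\vf$ and $\frac{d\vf}{dr}$ are Lipschitz (smoothness of the scatterers and bounded curvature of $P^+(W)$), while $\tau$ is $1/2$-H\"older by \eqref{ceiling}. Had the bracket really been of size $\cos\vf\asymp k^{-2}$, the H\"older constant of its logarithm would pick up a factor $k^2$ and could not be uniform in the homogeneity index, so your appeal to ``comparability with $\cos\vf$'' plus homogeneity would not close the estimate. Relatedly, (W3) is not what the proof of this lemma uses: the key input for \eqref{ceiling} is that $P^+(W)$ lies in one connected component of the complement of the singularity set, and the paper's footnote there stresses that the exponent $1/2$ is intrinsic and unrelated to the choice of homogeneity layers. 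With this correction your outline reproduces the paper's argument.
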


\begin{proof}
As in the proof of Lemma~\ref{lem:expansion}, 
for $Z \in W$ and $z=P^+(Z)$, let $J_{P^+(W)}\Phi_{-\tau(Z)}(z)$
denote the Jacobian of the map
from $P^+(W)$ to $W$ 
under the backwards flow. 
Then for $r \in I_W$ with $z = G_W(r)$, we have
$JS_W(r) = J_{P^+(W)}\Phi_{-\tau(Z)}(z) JG_W(r)$.

First note that (as in \cite[eq. (3.18)]{demers zhang}),
\begin{equation}
\label{eq:dG_W}
| JG_W | = \sqrt{1 + \left( \frac{d\vf_W}{dr}   \right)^2} \le
C_g := \sqrt{ 1 + (\cK_{\max} + \tau_{\min}^{-1})^2} \, . 
\end{equation}
Also $JG_W \ge 1$ and since by \eqref{eq:equiv} and the estimates following,  
$J_{P^+(W)}\Phi_{-\tau(Z)}(z)$ is uniformly bounded above and below away from zero,
the $\cC^0$ bound is proved.  We proceed to estimate the H\"older constant.
Since $JG_W$ is $\cC^1$ using \eqref{eq:dG_W} and the fact that $W$ has bounded
curvature, it remains to estimate $\cC^{1/2}(\ln J_{P^+(W)}\Phi_{-\tau})$.

Let $Z_1, Z_2 \in W$ and set $z_j = (r_j,\vf_j) = P^+(Z_j)$, $j=1,2$.
Using again \eqref{eq:equiv}, we have
\begin{align}
\nonumber
&\ln \frac{J_{P^+(W)}(\Phi_{-\tau(Z_1)}(z_1))}{J_{P^+(W)}(\Phi_{-\tau(Z_2)}(z_2))}
= \ln \frac{|dz_2|}{|dz_1|}
\\
\label{eq:log J} 
&\qquad\qquad\qquad+
\frac 12 \ln  \frac{(dr_1)^2 \left[ \left( \cos \vf_1 + \tau(Z_1)\cK(r_1) - \tau(Z_1)\frac{d\vf_1}{dr_1}
\right)^2
+ \left(\cK(r_1) - \frac{d\vf_1}{dr_1}\right)^2 \right]}
{(dr_2)^2 \left[ \left( \cos \vf_2 + \tau(Z_2)\cK(r_2) - \tau(Z_2)\frac{d\vf_2}{dr_2}\right)^2
+ \left(\cK(r_2) - \frac{d\vf_2}{dr_2} \right)^2 \right]} 
\, ,
\end{align}
where $dz_j = (dr_j, d\vf_j)$ denotes the tangent vector to $P^+(W)$ at $z_j$. 
Without loss of generality, we may set $dr_1=dr_2=1$.
For the first term on the right-hand side, we have
\[
\ln \frac{|dz_2|}{|dz_1|} = \frac 12 \ln \left( \frac{1 + (d\vf_2)^2}{1 + (d\vf_1)^2} \right)
\le \frac 12 (d\vf_2 - d\vf_1)(d\vf_2 + d\vf_1)
\le Cd(z_1,z_2)  \, ,
\]
where in the last step, we used that $d\vf_1, d\vf_2$ are uniformly bounded by
definition of $C^s_z$ as well as the fact that
$|\frac{d\vf_1}{dr_1} - \frac{d\vf_2}{dr_2}| \le C d(z_1,z_2)$ since $P^+(W)$ has 
uniformly bounded curvature.
Using again \eqref{eq:equiv} and the fact that $W$ has uniformly bounded
curvature according to (W2), we get 
$C d_W(Z_1,Z_2) \le d(z_1,z_2) \le C^{-1} d_W(Z_1,Z_2)$,
for some $C>0$.  

Now we turn to the second term on the right-hand side of \eqref{eq:log J}.
Since $J_{P^+(W)}\Phi_{-\tau(Z)}(z)$ is bounded away from $0$ and all the functions
appearing in the numerator and denominator of the right-hand side
are uniformly bounded, it suffices to consider
the differences in each function of the right-hand side separately.

It is easy to verify by direct inspection
(using that our billiard has finite horizon, while
$P^+(Z_1)$ and $P^+(Z_2)$ lie in a connected component of
the complement of the singularity set for $T$ in $\cM$, and 
the curvatures of the scatterers are bounded above)
that\footnote{ The factor $1/2$ here is intrinsic and not related to the
quadratic decay choice in \eqref{defhom}.}
\begin{equation}
\label{ceiling}
|\tau(Z_1) - \tau(Z_2)| \le C d_W(Z_1,Z_2)^{1/2} \, .
\end{equation} 
 Since $\cK$ and $\cos \vf$ are
smooth functions of
$r$ and $\vf$, the differences in $\cK$ and $\cos \vf$
are Lipschitz in $d_W(Z_1,Z_2)$ as well.  
Putting these estimates together and using again that $d_W(Z_1, Z_2) \le C^{-1}d(z_1, z_2)$, 
we have that the H\"older constant
$\cC^{1/2}(\ln J_{P^+(W)} \Phi_{-\tau}) \le C$ for some $C>0$, independent of $W$.

The estimate for the inverse follows similarly, using the fact that
$JS_W^{-1} = (JS_W)^{-1} \circ S_W^{-1}$ and exploiting $d(z_1,z_2) \le C^{-1} d_W(Z_1,Z_2)$.
\end{proof}

In the following distortion bound, the exponent $1/3$ is 
a consequence of our choice of
decay $1/k^2$ for the homogeneity layers.

\begin{lemma}[Bounded distortion]
\label{lem:distortion}
There exists $C_d >0$, 
such that for all $W \in \cW^s$, $t \ge 0$, and $Z_1, Z_2 \in W_i \in \cG_t(W)$,
\[
\left| \frac{J_{W_i}\Phi_t(Z_1)}{J_{W_i}\Phi_t(Z_2)} - 1 \right| \le C_d d_{W_i}(Z_1,Z_2)^{1/3} \, .
\]
\end{lemma}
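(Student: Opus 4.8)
The plan is to factor the flow map $\Phi_t|_{W_i}$ through the billiard map $T$ and reduce the whole estimate to the classical distortion bound for $T$ on homogeneous chains of stable curves, which is exactly where the exponent $1/3$ tied to the layer widths \eqref{defhom} comes from. As in the proof of Lemma~\ref{lem:expansion}, I would let $n$ be the number of collisions before time $t$ (constant on $W_i$), set $z_0 = P^+(Z)$ for $Z \in W_i$, so that $z_n := T^n z_0 = P^+(\Phi_t Z)$, and write $\widehat{W}_i = P^+(W_i)$ and $\widehat{V}_i = T^n(\widehat{W}_i) = P^+(\Phi_t(W_i))\subset P^+(W)$. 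Denoting by $P^+_U : U \to P^+(U)$ the first-collision map along the flow, one has $\Phi_t|_{W_i} = (P^+_{\Phi_t(W_i)})^{-1}\circ T^n \circ P^+_{W_i}$, hence by the chain rule
\[
\ln J_{W_i}\Phi_t(Z) = \ln J_{W_i}P^+_{W_i}(Z) + \ln J_{\widehat{W}_i}T^n(z_0) - \ln J_{\widehat{V}_i}P^+_{\Phi_t(W_i)}(\Phi_t Z)\, .
\]
It then suffices to bound the oscillation of each of the three terms along $W_i$ by $C\, d_{W_i}(Z_1,Z_2)^{1/3}$, since $d_{W_i}(Z_1,Z_2)\le L_0$ is bounded and $|e^u-1|\le C|u|$ for $|u|$ bounded.

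For the two flow terms I would use that $J_{W}P^+_W(Z) = (J\Phi_{-\tau(Z)}(P^+(Z)))^{-1}$, so that the proof of Lemma~\ref{lem:smooth}, where it is shown that $\cC^{1/2}(\ln J_{P^+(W)}\Phi_{-\tau})\le C$ uniformly, together with the fact (from \eqref{eq:equiv} and the estimates just after it, integrated along the curve using its bounded curvature) that $P^+_W$ is uniformly bi-Lipschitz in arclength between $W$ and $P^+(W)$, gives an oscillation bound $\le C\, d_{\widehat{W}_i}(z_0^1,z_0^2)^{1/2}\le C\, d_{W_i}(Z_1,Z_2)^{1/2}$ for the first term. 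The exponent $1/2$ here originates in the $1/2$-H\"older regularity \eqref{ceiling} of the free-flight time, is unrelated to the layer widths, and is harmless since $d_{W_i}(Z_1,Z_2)\le L_0$. The third term is handled identically on $\widehat{V}_i$, noting in addition that $T^n$ contracts stable curves, so $d_{\widehat{V}_i}(z_n^1,z_n^2)\le C\, d_{\widehat{W}_i}(z_0^1,z_0^2)\le C\, d_{W_i}(Z_1,Z_2)$.

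For the map term I would invoke the standard distortion bound for $T$ on a homogeneous chain of stable curves (\cite[Ch.~5]{chernov book}, see also \cite{demers zhang}). The point is that, by the very definition of $\cG_t(W)$, we cut $\Phi_{-t}(W)$ precisely at points whose $t$-trace meets $\bS$ or a tangential collision, so that $\widehat{W}_i, T\widehat{W}_i,\dots, T^n\widehat{W}_i = \widehat{V}_i$ is a chain of homogeneous stable curves for $T$ (the last one being homogeneous by (W3)); the cited bound then gives $|\ln J_{\widehat{W}_i}T^n(z_0^1)-\ln J_{\widehat{W}_i}T^n(z_0^2)|\le C\, d_{\widehat{W}_i}(z_0^1,z_0^2)^{1/3}\le C\, d_{W_i}(Z_1,Z_2)^{1/3}$, the last step by the bi-Lipschitz property of $P^+_{W_i}$, with the exponent $1/3$ being precisely the one produced by the $k^2$-width layers \eqref{defhom}. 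This term dominates the two flow contributions and produces the exponent in the statement.

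The main obstacle I expect is not analytic but bookkeeping: making sure the homogeneous-chain hypothesis needed for the map distortion bound is exactly what the construction of $\cG_t(W)$ delivers (including the temporary splitting of a component caught in the midst of a collision at time $t$, where one simply replaces $n$ by $n$ or $n\mp1$), and tracking that all constants are uniform over $\cW^s$ and over $t\ge 0$ — in particular for the arbitrarily short curves living in high homogeneity strips, where one relies on the homogeneity condition (W3) together with the controls of Lemmas~\ref{lem:expansion} and~\ref{lem:smooth}.
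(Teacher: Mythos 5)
Your proposal is correct and follows essentially the same route as the paper: the paper's proof also factors the Jacobian as in \eqref{eq:to map} into the map-to-flow factor, the Jacobian $J_{P^+(W_i)}T^{n}$, and the flow-to-map factor, bounds the two flow factors via the $1/2$-log-H\"older estimate of Lemma~\ref{lem:smooth}, and invokes the bounded distortion of $T$ along homogeneous stable curves (\cite[Lemma 5.27]{chernov book}) for the middle factor, which supplies the exponent $1/3$. Your additional bookkeeping remarks (homogeneity of the chain delivered by the construction of $\cG_t(W)$, uniformity via (W3)) match what the paper leaves implicit.
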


\begin{proof}
Fixing $W \in \cW^s$ and $t>0$, if $\Phi_{-s}(W)$, $0 \le s \le t$, has undergone no collisions then
the required bound holds trivially due to the linearity of the flow between collisions.  On the
other hand, if $\Phi_{-t}(W)$ has undergone a collision, then
 $W_i$ is a homogeneous stable curve by construction of $\cG_t(W)$.

Let $n$ denote the number of collisions from $W_i$ to $\Phi_t(W_i)$.
We will use \eqref{eq:to map} in order to leverage the bounded distortion enjoyed by the
collision map $T$.  We begin with the first factor, which gives the Jacobian of the map
from $P^+(W_i)$ to $W_i$ and is $J_{P^+(W_i)}\Phi_{-\tau(Z)}(z)
$ in the notation
of the proof of Lemma~\ref{lem:smooth}.  This is log-H\"older with exponent $1/2$
by that lemma.

Comparable estimates hold for the last factor in \eqref{eq:to map},
which represents the Jacobian of the map from $P^+(\Phi_t(W_i))$ to $\Phi_t(W_i)$.
Finally, since $P^+(\Phi_t(W_i)) = T^n(P^+(W_i))$, the middle factor in
\eqref{eq:to map} enjoys bounded distortion along stable curves with the
H\"older exponent of $1/3$ \cite[Lemma 5.27]{chernov book}.
Notice that for the stable Jacobian of the map to enjoy bounded distortion at the last iterate
(from $T^{n-1}(P^+(W_i))$ to $T^n(P^+(W_i))$),
it is essential that $T^{n-1}(P^+(W_i))$ be a homogeneous stable curve, but not that
$T^n(P^+(W_i)) \subset P^+(W)$ be homogeneous.  This is because $J_{T^{n-1}(P^+(W_i))}T(z_1)$
is proportional to $\cos \vf(z_1)$, not $\cos \vf(T(z_1))$.
Combining the estimates for these three factors completes the proof of the lemma.
\end{proof}

Recall the hyperbolicity constant $\Lambda$ for the flow from \eqref{Lambda}.
The following elementary lemma is one of the keys to exponential mixing:

\begin{lemma}[Exponential decay of stable-H\"older constants under the flow]
\label{lem:Holder}
For any $0 \le \kappa \le 1$
there exists  $C_1 >0$ so that for any $W$,
 for each $\psi \in \cC^\kappa(W)$, all $t>0$, and  all $W_i \in \cG_t(W)$,
$$
\cC^\kappa_{W_i}(\psi \circ \Phi_t) \le C_1 \Lambda^{-t \kappa}\cC^\kappa_{W}(\psi) \, . $$
\end{lemma}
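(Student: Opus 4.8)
The plan is to reduce the flow statement to the corresponding contraction estimate for the billiard map $T$, which is classical, via the comparison between Jacobians and between H\"older norms on stable curves and their projections $P^+(W)$ already established above. First I would recall that for $Z_1,Z_2\in W_i\in\cG_t(W)$ one has, by the definition of $\cC^\kappa_{W_i}(\psi\circ\Phi_t)$,
\[
|\psi(\Phi_tZ_1)-\psi(\Phi_tZ_2)|\le \cC^\kappa_{\Phi_t(W_i)}(\psi)\, d_{\Phi_t(W_i)}(\Phi_tZ_1,\Phi_tZ_2)^\kappa\, ,
\]
so the whole point is to bound $d_{\Phi_t(W_i)}(\Phi_tZ_1,\Phi_tZ_2)/d_{W_i}(Z_1,Z_2)$, i.e. the \emph{expansion} of the map $\Phi_t$ along $W_i$, by $C\Lambda^{-t}$ (up to the arclength versus chord-length comparison, which costs only a uniform constant because $W_i$ has uniformly bounded curvature by (W2)). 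Here I should be slightly careful that the statement as written has an exponent ``$r$'' in $\Lambda^{-tr}$ which I read as a typo for $\Lambda^{-t\kappa}$ (or simply $\Lambda^{-t}$ after renaming), since the H\"older seminorm of the composition scales by the $\kappa$-th power of the contraction; I would state it in the form $\cC^\kappa_{W_i}(\psi\circ\Phi_t)\le C_1\Lambda^{-t\kappa}\cC^\kappa_W(\psi)$ and note $\Lambda^{-t\kappa}\le\Lambda^{-t}$ when $\kappa\ge$ something, or just keep the exponent $\kappa$.

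Next I would use Lemma~\ref{lem:expansion}: for $Z\in W_i$, the forward flow Jacobian satisfies $J_{W_i}\Phi_t(Z)=C_0^{\pm1}J_{P^+(W_i)}T^n(P^+(Z))$ where $n$ is the number of collisions of $P^+(Z)$ before time $t$. For the map, the uniform hyperbolicity estimate $J_{U}T^n(z)\ge \Lambda_0^n$ along stable curves (equivalently $J_UT^{-n}\le\Lambda_0^{-n}$; this is \cite[eqs. (4.6), (4.17), (4.19)]{chernov book}, the definition of $\Lambda_0$) gives contraction by $\Lambda_0^{-n}$. Then I would convert the number of collisions $n$ into elapsed time using the finite-horizon bound $\tau(Z)\le\tau_{\max}$: a point that collides $n$ times in time $t$ has $n\ge t/\tau_{\max}-1$, hence $\Lambda_0^{-n}\le \Lambda_0\,\Lambda_0^{-t/\tau_{\max}}=\Lambda_0\,\Lambda^{-t}$ by the definition \eqref{Lambda} of $\Lambda$. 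Absorbing $C_0$ and $\Lambda_0$ into the constant $C_1$, and the arclength/chord comparison constants as well, yields the desired bound on the expansion of $\Phi_t$ along $W_i$, and raising to the power $\kappa$ finishes the estimate for the H\"older seminorm. One should also check the edge case $n=0$ (no collision up to time $t$, only possible for $t<\tau_{\max}$), where $\Phi_t$ restricted to $W_i$ is an isometry in the flow direction but acts on a stable curve; here $d_{\Phi_t(W_i)}\le C\,d_{W_i}$ trivially with a uniform constant, which is already covered by enlarging $C_1$ so that $C_1\Lambda^{-t\kappa}\ge1$ for $t\le\tau_{\max}$.

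The main obstacle I expect is not the hyperbolicity input, which is standard, but the bookkeeping at collisions and the reduction to chord-length: the natural quantity controlled by the map estimates is the ratio of lengths of the \emph{projected} curves $P^+(\Phi_tZ_1)$-to-$P^+(\Phi_tZ_2)$ versus $P^+(Z_1)$-to-$P^+(Z_2)$ segments, and one must pass from arclength on $W_i$ to arclength on $P^+(W_i)$ and back. This is exactly what Lemma~\ref{lem:expansion}'s proof does (the first and third factors in \eqref{eq:to map} are of order one by the claim proved there about stable curves), so I would simply invoke that comparison rather than redo it; the curves are uniformly transverse to the flow and have uniformly bounded curvature by (W1)--(W2), so all these conversion factors are uniformly bounded. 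A secondary subtlety is that $W_i\in\cG_t(W)$ may be in the midst of a collision at time $t$, so $\Phi_t(W_i)$ is a stable curve for the \emph{flow} but its $P^+$-projection needs the right interpretation; since $\cG_t$ splits such components so that each piece is genuinely post-collisional or pre-collisional, $P^+$ is well-defined on each piece and the argument applies piecewise, which suffices for the seminorm estimate on each element of $\cG_t(W)$.
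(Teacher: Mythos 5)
Your proposal is correct and follows essentially the same route as the paper: the Hölder quotient is factored through the distance-contraction ratio $d_W(\Phi_t Z_1,\Phi_t Z_2)^\kappa/d_{W_i}(Z_1,Z_2)^\kappa$, bounded by $C\Lambda_0^{-n\kappa}$ via the Jacobian comparison of Lemma~\ref{lem:expansion} and the minimum expansion of the map, and then $n$ is converted to $t$ using $n\ge \lfloor t/\tau_{\max}\rfloor -1$. Your reading of the exponent in the statement as $\Lambda^{-t\kappa}$ is also the correct interpretation.
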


\begin{proof}
This will be a consequence of Lemma~\ref{lem:expansion}, since
for $Z_1, Z_2 \in W_i$, 
\[
\frac{|\psi \circ \Phi_t(Z_1) - \psi \circ \Phi_t(Z_2)|}{d_{W_i}(Z_1,Z_2)^\kappa}
= \frac{|\psi \circ \Phi_t(Z_1) - \psi \circ \Phi_t(Z_2)|}{d_W(\Phi_t(Z_1),\Phi_t(Z_2))^\kappa} 
\frac{d_W(\Phi_t(Z_1), \Phi_t(Z_2))^\kappa}{d_{W_i}(Z_1,Z_2)^\kappa}
\le \cC^\kappa_W(\psi) C \Lambda_0^{-n\kappa} \, ,
\]
where $n$ represents the number of collisions undergone by $W_i$ by time $t$
and we have used  that $\Lambda_0$ is the minimum expansion
factor for the map \cite[(4.19)]{chernov book}.  The lemma follows since 
$\lfloor \frac{t}{\tau_{\max}} \rfloor - 1\le n \le \lfloor \frac{t}{\tau_{\min}} \rfloor + 1$.
\end{proof}

We next present a growth lemma\footnote{ In Step 1 of Section ~\ref{Lipschitz}, we shall apply a growth
lemma \cite[Theorem 5.52]{chernov book}
directly.} adapted 
from \cite[\S 3.2]{demers zhang} which is a direct consequence of the one-step expansion
\cite[Lemma 5.56]{chernov book}.
Recall $L_0$ from the definition of $\cW^s$.

\begin{defin}[$\cI_t(W)$]
For $W \in \cW^s$ and $t \ge 0$,
let $\cI_t(W)$ be those elements $W_i \in \cG_t(W)$ such that $\Phi_s(W_i)$
is contained in an element $V^s_i \in \hG_{t-s}(W)$ with 
$|V^s_i| < L_0/3$ for all $0 \le s \le t$.  (The curves $\Phi_t(W_i)$ 
corresponding to $\cI_t(W)$
are repeatedly cut by the singularities of $\Phi_{-t}$ without ever growing to size $L_0/3$
before time $t$.)
\end{defin}

\begin{lemma}[Modified growth lemma]
\label{lem:growth}
 For any  $\lambda \in (\Lambda_0^{-1/\tau_{\max}},1)$, if $L_0$ is small enough, then there exists $C_2 \ge 1$ 
such that for all $1 \le \oldbetazeroparam \le \infty$, $W \in \cW^s$, and $t \ge 0$,
\begin{enumerate}
  \item[(a)] $\displaystyle 
  \sum_{W_i \in \cI_t(W)} \frac{|W_i|^{\oldbetazero}}{|W|^{\oldbetazero}} |J_{W_i}\Phi_t|_{\cC^0(W_i)} \le
  C_2 \lambda^{t(1-\oldbetazero)}$;  
  \item[(b)]  $\displaystyle 
  \sum_{W_i \in \cG_t(W)} \frac{|W_i|^{\oldbetazero}}{|W|^{\oldbetazero}} |J_{W_i}\Phi_t|_{\cC^0(W_i)} \le
  C_2 $.
  \end{enumerate}
\end{lemma}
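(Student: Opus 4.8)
The plan is to transfer both inequalities to the Poincar\'e map $T$, where they coincide with the growth lemma of \cite[\S 3.2]{demers zhang}, and then to re-express the number of collisions in terms of flow time. \textbf{Reduction to the map.} For $W_i\in\cG_t(W)$ let $n_i$ be the number of collisions undergone by the points of $W_i$ before time $t$; this is constant on $W_i$ (the partition $\cG_t(W)$ has been cut at every point whose $t$-trace meets a tangential collision) and satisfies $n_i\ge t/\tau_{\max}-1$. By Lemma~\ref{lem:expansion}, $|J_{W_i}\Phi_t|_{\cC^0(W_i)}=C_0^{\pm1}|J_{P^+(W_i)}T^{n_i}|_{\cC^0}$, and by \eqref{eq:equiv} and the two-sided bounds following it, $|W_i|$ and $|W|$ are comparable, with uniform constants, to $|P^+(W_i)|$ and $|P^+(W)|$. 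Moreover $T^{n_i}(P^+(W_i))\subseteq P^+(W)$, each $P^+(W_i)$ lies in a single homogeneity strip, and $\{P^+(W_i)\}$ is the family of homogeneous components of the iterated partitions of $P^+(W)$ produced precisely by the cuts defining $\cG$ at the map level (at singularities, at boundaries of homogeneity strips, and at length $L_0$): the only discrepancy, the temporary two- or three-fold splitting of a component caught in mid-collision that distinguishes $\cG_t$ from $\hG_t$, costs at most a bounded multiplicative factor (and, since $(a+b)^{\oldbetazero}\le a^{\oldbetazero}+b^{\oldbetazero}$ for $\oldbetazeroparam\ge 1$, is harmless for the sums at hand), while the length condition defining $\cI_t(W)$ becomes, up to the fixed comparison constant, the map condition that the forward images $T^s(P^+(W_i))$, $0\le s\le n_i$, never reach a fixed fraction of $L_0$.

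\textbf{The map estimates.} Fixing $L_0\le\delta_0$, where $\delta_0$ is the length scale of the one-step expansion \cite[Lemma 5.56]{chernov book}, and $k_0$ large, that lemma gives $\vartheta<1$ with $\sum_j|J_{V_j}T|_{\cC^0}\le\vartheta$ for every homogeneous stable curve $V$ with $|V|\le\delta_0$, summed over the homogeneous components $V_j$ of $T^{-1}V$ (if $\Lambda_0$ is close to $1$, so that a single collision does not suffice, one first passes to a fixed iterate $T^N$, exactly as in \cite{chernov book} or \cite{demers zhang}). Iterating this along backward orbits that stay below $L_0/3\le\delta_0$ gives $\sum_{U_i}|J_{U_i}T^{n}|_{\cC^0}\le\vartheta^{n}$ over the $\cI$-components of $T^{-n}U$, which is the case $\oldbetazeroparam=\infty$ of (a); the case $\oldbetazeroparam=1$ is the elementary bound $\sum_{U_i}(|U_i|/|U|)|J_{U_i}T^{n}|_{\cC^0}\le C_d$, from bounded distortion (Lemma~\ref{lem:distortion}, the exponent $1/3$ there playing no role) together with $\sum_{U_i}|T^{n}U_i|\le|U|$; and for $1<\oldbetazeroparam<\infty$ one applies H\"older's inequality with exponents $\oldbetazeroparam$ and $\oldbetazeroparam/(\oldbetazeroparam-1)$ to the splitting $\bigl((|U_i|/|U|)\,|J_{U_i}T^{n}|\bigr)^{\oldbetazero}\cdot|J_{U_i}T^{n}|^{\,1-\oldbetazero}$, obtaining $\sum_{U_i}(|U_i|/|U|)^{\oldbetazero}|J_{U_i}T^{n}|_{\cC^0}\le C_d\,\vartheta^{\,n(1-\oldbetazero)}$ over the $\cI$-components. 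For (b) one uses the standard decomposition of each component of $T^{-n}U$ according to the last epoch at which its forward image lay in a component of length $\ge L_0/3$: the total weight of such ``long'' components born at a fixed epoch is again $\le C_d$ (distortion, $\sum|T^{\cdot}V|\le|U|$, and $|V|\ge L_0/3$), each contributes a further factor $\vartheta^{\ell}$ by the $\cI$-estimate applied from its birth, and $\sum_{\ell\ge 0}\vartheta^{\ell}<\infty$; the H\"older step then gives the uniform (in $n$) bound for all $\oldbetazeroparam$.

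\textbf{Conclusion and main obstacle.} Since every contributing $W_i$ has $n_i\ge t/\tau_{\max}-1$ and $\Lambda=\Lambda_0^{1/\tau_{\max}}$, the estimate $C_d\vartheta^{\,n_i(1-\oldbetazero)}\le C\lambda^{t(1-\oldbetazero)}$ holds for any prescribed $\lambda\in(\Lambda^{-1},1)$ once $\vartheta$ is taken close enough to $\Lambda_0^{-1}$ (i.e. $L_0$ small, $k_0$ large), the polynomial-in-$t$ losses incurred in the reduction — from the bounded multiplicities above and from reorganising the varying collision counts into the collision-indexed tree on which the one-step expansion is iterated — being absorbed into the margin between $\vartheta^{1/\tau_{\max}}$ and $\lambda$; this gives (a), and (b) follows from the uniform-in-$n$ map bound together with the comparison constants. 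The only genuinely delicate point — and the step I expect to be the main obstacle — is the bookkeeping in the reduction: verifying that the flow partitions $\cG_t(W),\hG_t(W)$ and the class $\cI_t(W)$ correspond under $P^+$, up to uniform constants and bounded multiplicities, to the homogeneous iterated partitions of $P^+(W)$, so that the one-step expansion can be iterated without any accumulation of constants. The dynamical inputs — the one-step expansion \cite[Lemma 5.56]{chernov book}, bounded distortion (Lemma~\ref{lem:distortion}), the flow/map Jacobian comparison (Lemma~\ref{lem:expansion}), and the invariance of $\cW^s$ (Lemma~\ref{lem:preserved}) — are quoted, and the iteration together with the $\oldbetazeroparam$-interpolation is routine.
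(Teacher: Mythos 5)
Your part (a) is essentially the paper's argument: reduce to the billiard map via Lemma~\ref{lem:expansion} and Lemma~\ref{lem:smooth}, invoke the one-step expansion/Demers--Zhang bound $\sum_{V_i\in\tI_n}|J_{V_i}T^n|_{\cC^0}\le C\lambda_1^n$, sum over the admissible collision counts $n\in[t/\tau_{\max}-1,\,t/\tau_{\min}+1]$ (a geometric sum, so the losses you describe are indeed harmless there), and interpolate in $\oldbetazeroparam$ by H\"older; the bounded two/three-fold multiplicity for mid-collision pieces is also handled as in the paper.

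Part (b), however, has a genuine gap at exactly the step you compress into ``(b) follows from the uniform-in-$n$ map bound together with the comparison constants.'' Under $P^+$, the family $\cG_t(W)$ is \emph{not} (a bounded-multiplicity copy of) a single partition $\tG_n(P^+(W))$: it is a union, over the roughly $t(\tau_{\min}^{-1}-\tau_{\max}^{-1})$ admissible values of $n$, of \emph{subsets} of the partitions $\tG_n(P^+(W))$. Summing the uniform-in-$n$ constant bound over these $n$ gives a bound of order $t$, and unlike in (a) there is no geometric factor $\lambda^{t(1-\oldbetazero)}$ left to absorb it. Nor does regrouping by map epochs of the most recent long ancestor rescue this: if ancestors are indexed by their collision depth $j$ below $P^+(W)$, the damping attached to a descendant with $n_i$ collisions is $\vartheta^{\,n_i-j}$, which summed over the varying $n_i\ge j$ is $O(1)$ per ancestor, leaving an undamped sum over $j$ of order $t$; if instead you index by the number of collisions \emph{remaining} below the ancestor, the ancestors at a fixed index live in different partitions $\tG_j(P^+(W))$, their images in $P^+(W)$ overlap, and the disjointness that bounds their total weight by $CL_0^{-1}|P^+(W)|$ is lost. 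The paper's proof of (b) is built precisely to avoid this: it runs the most-recent-long-ancestor decomposition at the \emph{flow} level on the uniform grid $t_k=t-k\tau_{\min}$ (with the small mid-collision adjustment of $t_k$), so that for each fixed $k$ the ancestors belong to the single partition $\hG_{t_k}(W)$, whence distortion plus disjointness of $\Phi_{t_k}(W^{t_k}_j)\subset W$ bounds their total weight by $CL_0^{-1}|W|$, while part (a) applied from time $t_k$ to time $t$ supplies the factor $\lambda^{k\tau_{\min}}$, geometrically summable in $k$. You need this flow-level (uniformly time-gridded) decomposition, or an equivalent device; the reduction-plus-comparison-constants step as written does not close the argument.
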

\begin{proof}
By Lemma~\ref{lem:smooth}, there exists $C_3 \ge 1$ such that $|W_i| = C_3^{\pm 1} |P^+(W_i)|$.
We next invoke the one-step expansion for the map from
\cite[Lemma 5.56]{chernov book}.  Using \cite[(3.31), (5.39), (5.36)]{chernov book},
(since $\cB_1^{-} = 1/(\tau_0 + 1/\cB_0^{+})$, we have  $\cB_1^{-} \tau_{\min}\le 1$), we 
set $\tilde L_0 = C_3^{-1} L_0$
and for $\lambda_1 \in (\Lambda_0^{-1},1)$, choose 
$L_0\leq \tau_{\min}/4$ sufficiently small
that the one-step expansion for the map satisfies
\begin{equation}
\label{theta1}
 \sup_{|V| \le \tilde{L}_0} \sum_{i} \frac{|TV_i|_*}{|V_i|_*} \le \lambda_1 \, ,
\end{equation}
where $V$ is a (not necessarily homogeneous)
map stable curve, $V_i$ are the homogeneous components of $T^{-1}V$, 
and $| \cdot |_*$
denotes length in the adapted metric defined in \cite[Section 5.10]{chernov book}:
For a tangent vector $(dr, d\vf)$, define 
$\| (dr, d\vf) \|_* = \frac{\cK + |\frac{d\vf}{dr}|}{\sqrt{1 + (\frac{d\vf}{dr})^2}} \| (dr, d\vf) \|$, where
$\cK$ denotes the curvature at the given point.

Fix $n$ and consider curves $W_i \in \cG_t(W)$ undergoing $n$ collisions
between $\Phi_t(W_i)$ and $W$. 
Then $P^+(W_i)$ is contained in an element
of $\tG_n(P^+(W))$, where $\tG_n(P^+(W))$ are the connected homogeneous
components of $T^{-n}(P^+(W))$, defined inductively as in \cite[Sect 3.2]{demers zhang}.
If $W_i$ does not have an endpoint on a scatterer (i.e., is not in the midst
of a collision), then the correspondence is one to one.  If $W_i$ is part of a
longer element making a collision, then there are at most two such $W_i$ for
each element of $\tG_n(P^+(W)$), due to the fact that at most two points of
a diverging wave front can be in contact with a scatterer at one time
(this divides the curve into at most three pieces, but note that in this case,
two of the three will have made $n$ collisions, while one of the three will have made
$n+1$ collisions, and so will be grouped in the $(n+1)$th generation,
$\tG_{n+1}(P^+(W)$).

Since the expansion rates for the flow are comparable to those for the map
by Lemma~\ref{lem:expansion}, and we have just seen that there are at most two components
of $\cG_t(W)$ for each component of the corresponding partition $\tG_n(P^+(W))$ into
homogeneous components of $P^+(W)$ for the map, we will use the analogous estimates for the map
from \cite{demers zhang} to prove the growth lemma.
The added complication here is that the components of $\cG_t(W)$ may have undergone
different numbers of collisions and so they belong to an assortment of generations
for the discrete time map.

Fix $W \in \cW^s$, and $W_i \in \cG_t(W)$ for  $t >0$.  Let $V = P^+(W)$ and $V_i = P^+(W_i)$.
For (a), by \cite[(3.2), (3.3), Lemma 3.1]{demers zhang}
(choosing $\delta_1=\tilde L_0/3$ in the notation used there),
 there exists $C>0$  such that
for any $n \in \bN$,
\begin{equation}
\label{eq:contract map}
\sum_{V_i \in \tI_n(V)} |J_{V_i}T^n|_{\cC^0(V_i)} \le C\lambda_1^n \, ,
\end{equation}
where  $\tG_k(V)$ are the smooth components of $T^{-k}(V)$ defined in
the beginning of the proof and 
$\tI_n(V)$ denotes the set of smooth components $V_i$ of $T^{-n}(V)$
such that $T^j(V_i)$ never belongs to a curve in $\tG_{n-j}(V)$ that has length greater 
than $\tilde{L}_0/3$ for $0 \le j \le n$.
Since the number of collisions undergone by each $W_i$ before time $t$ may vary, we 
denote by $\cI_{t,n}(W)$ those components of $\cI_t(W)$ which experience $n$ collisions
by time $t$.  Note that 
$\lfloor \frac{t}{\tau_{\max}} \rfloor - 1\le n \le \lfloor \frac{t}{\tau_{\min}} \rfloor + 1$.
Thus by \eqref{eq:contract map} and Lemma~\ref{lem:expansion},
\[
\begin{split}
\sum_{W_i \in \cI_t(W)} |J_{W_i}\Phi_t|_{\cC^0(W_i)}
&= \sum_{n = \lfloor \frac{t}{\tau_{\max}} \rfloor - 1}^{\lfloor \frac{t}{\tau_{\min}} \rfloor + 1}
\sum_{W_i \in \cI_{t,n}(W)} |J_{W_i}\Phi_t|_{\cC^0(W_i)} \\
&\le 2C_0 \sum_{n = \lfloor \frac{t}{\tau_{\max}} \rfloor - 1}^{\lfloor \frac{t}{\tau_{\min}} \rfloor + 1}
C\lambda_1^n \le C' \lambda_1^{t/\tau_{\max}} \, ,
\end{split}
\]
proving part (a) of the  lemma for $\oldbetazero=0$ with $\lambda = \lambda_1^{1/\tau_{\max}}$.  Part (a) of the lemma
for $\oldbetazero \in (0,1] $ follows by an application of the H\"older inequality, as in 
\cite[Lemma 3.3]{demers zhang}:
\[
\begin{split}
\sum_{W_i \in \cI_t(W)} \frac{|W_i|^{\frac{1}{q_0}}}{|W|^{\frac{1}{q_0}}} |J_{W_i}\Phi_t|_{\cC^0(W_i)}
& \le \left(\sum_{W_i \in cI_t(W)} \frac{|W_i|}{|W|} |J_{W_i}\Phi_t|_{\cC^0(W_i)} \right)^{\! \frac{1}{q_0}}
\! \left(\sum_{W_i \in \cI_t(W)} |J_{W_i}\Phi_t|_{\cC^0(W_i)} \right)^{\! 1-\frac{1}{q_0}} \\
& \le (1 + C_d)^{\oldbetazero} (C' \lambda_1^{t/\tau_{\max}})^{1-\oldbetazero},
\end{split}
\]
where we have used  that $|J_{W_i}\Phi_t|_{\cC^0(W_i)} \le (1+C_d) \frac{|\Phi_t(W_i)|}{|W_i|}$
by Lemma~\ref{lem:distortion}, and $\sum_i \frac{|\Phi_t(W_i)|}{|W|} \le 1$.

To prove part (b), we shall adapt the argument of
\cite[Lemma 3.2]{demers zhang}. First, we 
subdivide the time interval $[0,t]$ into times $t_k = t - k \tau_{\min}$,
$k = 0, \ldots, \lfloor t/\tau_{\min} \rfloor$.  Using these subintervals, we group 
the components  $W_i\in \cG_t(W)$ into ``most recent long ancestors,''
defining sets $L_k(W,t)$, $k = 0, \ldots, \lfloor t/\tau_{\min} \rfloor$,
and grouping in  $A_t(U)$ those $W_i\in \cG_t(W)$ so that $U\in L_k(W,t)$, as follows.
For each $W^t_i \in \cG_t(W)$ and each $k$, 
we have $\Phi_{k \tau_{\min}}(W^t_i) \subset W^{t_k}_j$
for some $W^{t_k}_j \in \hG_{t_k}(W)$.  If $W^{t_k}_j$ is undergoing a collision, 
we can adjust
$t_k$ (for that $W^{t_k}_j$ only) by adding a small time   $|\tilde \delta(k,W_i^t)| <L_0$ so that 
$W^{t_k+\tilde\delta}_j \in \cG_{t_k+\tilde \delta}(W)$ is an admissible stable curve.
  Note that even with this small correction,
the times\footnote{ We write $t_k$ for $t_k+\tilde \delta(k,W_i^t)$, for simplicity.} $t_k$ 
are still 
at least $\tau_{\min}/2$ apart since $L_0$ has been chosen $< \tau_{\min}/4$.
We say that such a curve $W^{t_k}_j$ is the most recent long ancestor of $W^t_i$ if
$|P^+(W^{t_k}_j)| \ge L_0/3$ and $k\ge 0$ is the least such $k$ with this property for $W^t_i$.
In this case, we put $i \in A_t(W^{t_k}_j)$ and $j \in L_k(W,t)$.  If no such 
$k$ exists, then by definition, $W^t_i \in \cI_t(W)$
(that we denote by $i\in  \cI_t(W)$ for simplicity).  Using this grouping, we estimate,
\[
\sum_{W^t_i \in \cG_t(W)} |J_{W^t_i}\Phi_t|_{\cC^0(W^t_i)}
= \sum_{k=0}^{\lfloor t/\tau_{\min} \rfloor} \sum_{j \in L_k(W,t)} 
\sum_{i \in A_t(W^{t_k}_j)} |J_{W^t_i}\Phi_t|_{\cC^0(W^t_i)}
+ \sum_{i \in \cI_t(W)} |J_{W^t_i}\Phi_t|_{\cC^0(W^t_i)} \,  .
\]
The  sum over $\cI_t(W)$ is exponentially small in $t$ by (a), so we focus on the sum over $k$.
For each $W^{t_k}_j$, we have $|J_{W^t_i} \Phi_t|_{\cC^0(W^t_i)}
\le |J_{W^t_i} \Phi_{k \tau_{\min}}|_{\cC^0(W^t_i)} |J_{W^{t_k}_j} \Phi_{t_k} 
|_{\cC^0(W^{t_k}_j)}$.
Thus, using part (a) of the lemma from time $t_k$ to time $t$,
\begin{align*}
\sum_{i \in A_t(W^{t_k}_j)} |J_{W^t_i}\Phi_t|_{\cC^0(W^t_i)}
&\le |J_{W^{t_k}_j} \Phi_{t_k} |_{\cC^0(W^{t_k}_j)} 
\sum_{i \in A_t(W^{t_k}_j)} |J_{W^t_i} \Phi_{k \tau_{\min}}|_{\cC^0(W^t_i)}\\
&\le |J_{W^{t_k}_j} \Phi_{t_k} |_{\cC^0(W^{t_k}_j)} C \lambda^{k \tau_{\min}}\,  .
\end{align*}
Using this estimate, plus the fact that 
$|J_{W^{t_k}_j} \Phi_{t_k} |_{\cC^0(W^{t_k}_j)} \le C_d \frac{|\Phi_{t_k}(W^{t_k}_j)|}{|W^{t_k}_j|} \le C L_0^{-1}
|\Phi_{t_k}(W^{t_k}_j)|$ by bounded distortion (Lemma~\ref{lem:distortion}), we have
\[
\sum_{W^t_i \in \cG_t(W)} |J_{W^t_i}\Phi_t|_{\cC^0(W^t_i)}
\le \sum_{k=0}^{\lfloor t/\tau_{\min} \rfloor} \sum_{j \in L_k(W,t)} C L_0^{-1} 
|\Phi_{t_k}(W^{t_k}_j)| \lambda^{k \tau_{\min}} + C \lambda^t \,  .
\]
For each $k$, the sum over $j$ is at most $|W|$ since $\cup_j \Phi_{t_k}(W^{t_k}_j)$ is a disjoint union,
and the sum over $k$ is uniformly bounded in $t$, proving part (b) for $\oldbetazero = 0$.
Part (b) for $\oldbetazero >0$ now follows from a H\"older inequality as in part (a).
\end{proof}


\subsection{Embeddings of smooth functions in $\cB$. Compactness of $\cB$ in $\cB_w$}

Recalling $\cC^0_\sim$ from \eqref{contit}, it is convenient to introduce two auxiliary Banach spaces:

\noindent The space $\cB^0$  is the completion with respect to $\| \cdot \|_\cB$ of 
\begin{equation}
\label{defB0}
\{ \oldh \in \cC^0(\Omega_0) \mid  \| \oldh \|_\cB< \infty \} \, .
\end{equation}
The space $\cB^0_\sim$  is the completion with respect to $\| \cdot \|_\cB$ of 
\begin{equation}\label{defB0sim}
\{ \oldh \in \cC^0_\sim \mid  \| \oldh \|_\cB< \infty \} \, . 
\end{equation}

These definitions ensure that 
$\cB\subset \cB^0_\sim$ and $\cC^0_\sim \cap \cC^1 \subset \cB^0_\sim$.
 Also,   $\cB^0_\sim \cap \cC^0_\sim$ is dense in $\cB^0_\sim$ and  
similarly for $\cB_w$. 
 This will allow us to define  transfer
operators $\cL_t \oldh=\oldh \circ \Phi_{-t}$ on $\cB^0_\sim$ by density and the Lasota-Yorke estimate
(Proposition~\ref{prop:L_t}), bypassing the
use of analogues of the spaces $\cC^\beta(T^{-n}\cW^s)$ in \cite{demers zhang}.
 (In view of \cite[Lemma 3.7]{demers zhang} for the discrete
time billiard, one can expect that $\cB=\cB^0_\sim$, but this will not be needed.)

We establish  relations between our spaces $\cB$ and $\cB_w$ and
smooth  functions on $\Omega_0$.

\begin{lemma}[Embedding smooth functions]
\label{lem:embedding}
The following continuous injective embeddings hold
\[
\begin{split} 
& \cC^1(\Omega_0) \hookrightarrow \cB^0 ,
\quad 
\cC^0_\sim\cap \cC^1(\Omega_0) \hookrightarrow \cB^0_\sim \hookrightarrow \cB_w ,
\quad
\cB \hookrightarrow (\cC^\beta(\Omega_0))^*, \\
& \cB \hookrightarrow (\cC^1(\Omega_0))^*, \quad
\mbox{and} \quad
\cC^0_\sim\cap \cC^2(\Omega_0) 
\hookrightarrow \cC^1(\Omega_0)\hookrightarrow \cB \hookrightarrow \cB_w \, .
\end{split}
\]
\end{lemma}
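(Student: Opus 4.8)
The plan is to establish the chain of embeddings by proving a single master inequality, namely that there is a constant $C>0$ such that $\|\oldh\|_\cB \le C\|\oldh\|_{\cC^1(\Omega_0)}$ for every $\oldh\in\cC^1(\Omega_0)$, together with the (easier) bound $|\oldh|_w \le C\|\oldh\|_{\cC^0(\Omega_0)}$; injectivity and the inclusions between completions then follow formally. First I would bound the three constituent norms of $\|\cdot\|_\cB$ separately. For the weak and strong stable norms, given $W\in\cW^s$ and an admissible test function $\psi$ (with $|\psi|_{\cC^\oldp(W)}\le 1$, resp.\ $|W|^{\oldbeta}|\psi|_{\cC^\oldq(W)}\le 1$), the integral $\int_W \oldh\psi\,dm_W$ is dominated by $|\oldh|_\infty \cdot |\psi|_\infty \cdot |W| \le \|\oldh\|_{\cC^0}\cdot 1 \cdot L_0$, since $\cW^s$ curves have length at most $L_0$; for the strong stable norm the extra factor $|W|^{-\oldbeta}$ is absorbed because $\oldbeta>0$ and $|W|\le L_0$ is bounded. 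This already gives $|\oldh|_w \le C\|\oldh\|_{\cC^0}$ and $\|\oldh\|_s\le C\|\oldh\|_{\cC^0}$.

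For the neutral norm $\|\oldh\|_0$, the integrand is $\partial_t(\oldh\circ\Phi_t)|_{t=0}\,\psi$. Here I would use that $\partial_t(\oldh\circ\Phi_t)|_{t=0}$ is just the derivative of $\oldh$ in the flow direction, which for $\oldh\in\cC^1(\Omega_0)$ is bounded pointwise by $\|\oldh\|_{\cC^1(\Omega_0)}$ (the flow vector field $(\cos\omega,\sin\omega,0)$ being smooth and of unit length; at a collision point one uses the $\cC^2$-manifold-with-boundary structure of $\Omega_0$ and Whitney differentiability, which is exactly why $\Omega_0$ rather than $\Omega$ was introduced). Then the same length-times-sup estimate as above gives $\|\oldh\|_0\le C\|\oldh\|_{\cC^1}$. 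For the unstable norm, given $W_1,W_2\in\cW^s$ with $d_{\cW^s}(W_1,W_2)\le\ve$ and test functions $\psi_i$ with $d(\psi_1,\psi_2)=0$, I would estimate the difference $\int_{W_1}\oldh\psi_1 - \int_{W_2}\oldh\psi_2$ by pulling both integrals back to the common interval $I_1\cap I_2$ via the maps $S_{W_i}$ of Definition~\ref{def:dist}: the Jacobians $JS_{W_i}$ are uniformly bounded and $\cC^{1/2}$-close by Lemma~\ref{lem:smooth} (with distortion controlled in terms of $d_{\cW^s}(W_1,W_2)$), the functions $\oldh\circ S_{W_i}$ differ by $O(\|\oldh\|_{\cC^1}\,\ve)$ since the leaves are $\cC^1$-close and $\oldh$ is $\cC^1$, and $\psi_1\circ S_{W_1}=\psi_2\circ S_{W_2}$ on $I_1\cap I_2$; the boundary contribution from $I_1\triangle I_2$ is $O(\ve)$ as well. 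This yields a bound of order $\ve$, hence $\ve^{-\gamma}$ times it is $O(\ve^{1-\gamma})=O(1)$ since $\gamma<1$, giving $\|\oldh\|_u\le C\|\oldh\|_{\cC^1}$. Adding the three pieces produces the master inequality.

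With the master inequality in hand, the embeddings are formal. Since $\cC^1(\Omega_0)$ embeds $\|\cdot\|_\cB$-boundedly into the $\cC^0$-space whose $\|\cdot\|_\cB$-completion is $\cB^0$, and $\cC^0_\sim\cap\cC^1$ lands in the analogous $\sim$-version, we get $\cC^1\hookrightarrow\cB^0$ and $\cC^0_\sim\cap\cC^1\hookrightarrow\cB^0_\sim$; that $\cB^0_\sim\hookrightarrow\cB_w$ follows because $|\cdot|_w\le\|\cdot\|_\cB$ trivially (the weak norm is one of the ingredients, up to the constant $c_u$ and the other nonnegative terms) and completions respect continuous inclusions. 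For $\cC^0_\sim\cap\cC^2\hookrightarrow\cC^1$ there is nothing to prove, and $\cC^1\hookrightarrow\cB$ requires only that $\cC^1$ functions lie in the closure \eqref{smallspace}, i.e.\ are $\|\cdot\|_\cB$-limits of $\cL_t\oldh$ with $\oldh\in\cC^0_\sim\cap\cC^2$: take $t=0$ and approximate a $\cC^1$ function uniformly in $\cC^1$ by $\cC^2$ functions (standard mollification, respecting the $\sim$-condition for functions already in $\cC^0_\sim$), then apply the master inequality to control the $\cB$-norm of the approximation error; the non-$\sim$ case of $\cC^1\hookrightarrow\cB^0$ is handled identically. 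Finally injectivity of all these maps: a $\cC^1$ (or $\cC^0$) function with $|\oldh|_w=0$ must vanish, because integrating $\oldh$ against arbitrary smooth test functions supported on arbitrarily short stable curves through every point separates points (here one uses that through each $Z\in\Omega_0$ there passes some $W\in\cW^s$, as noted after Definition~\ref{admiss}), so the inclusion of the dense subspace into each completion is injective. The only genuinely delicate step is the unstable-norm estimate — everything else is a length-times-sup bound — and there the real content is the uniform $\cC^{1/2}$ control of the change-of-variables Jacobians from Lemma~\ref{lem:smooth} and the $\cC^1$-closeness of nearby admissible stable leaves encoded in the definition of $d_{\cW^s}$; I expect this to be the main obstacle, though it is essentially a bookkeeping exercise given the lemmas already proved.
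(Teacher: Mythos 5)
Your master inequality $\|\oldh\|_\cB\le C\|\oldh\|_{\cC^1(\Omega_0)}$ follows the same route as the paper, but two points need repair. The smaller one is quantitative: in the unstable-norm estimate the comparison of the two parametrisations is \emph{not} of order $\ve$. The Jacobian factor $J\Theta$ (with $\Theta=S_{W_2}\circ S_{W_1}^{-1}$) involves the collision time $\tau$, which is only $1/2$-H\"older (see \eqref{ceiling}), so one only gets $|1-J\Theta|_{\cC^0}\le C\ve^{1/2}$; the resulting bound is $C\ve^{1-\gamma}|\oldh|_\infty+C\ve^{1/2-\gamma}|\oldh|_{\cC^1}$, and the argument closes because $\gamma<\oldp\le 1/3<1/2$, not ``since $\gamma<1$'' as you wrote. (Similarly, the absorption of $|W|^{-\oldbeta}$ in the strong stable norm uses $\oldbeta\le 1$, i.e.\ $|W|^{1-\oldbeta}\le L_0^{1-\oldbeta}$; the issue is small $|W|$, not $|W|\le L_0$.) These are fixable with the stated parameter choices.

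The genuine gap is the embedding $\cC^1(\Omega_0)\hookrightarrow\cB$. Recall $\cB$ is the completion of $\{\cL_t\oldh:\ t\ge 0,\ \oldh\in\cC^0_\sim\cap\cC^2(\Omega_0)\}$, so you must approximate an \emph{arbitrary} $f\in\cC^1(\Omega_0)$ — which in general does \emph{not} satisfy the identification $\sim$ at collisions — by functions in $\cC^0_\sim\cap\cC^2$, in the $\|\cdot\|_\cB$ norm. Your proposal (``approximate uniformly in $\cC^1$ by $\cC^2$ functions, respecting the $\sim$-condition'') cannot work for such $f$: a uniform limit of functions continuous on the quotient $\Omega$ is itself continuous on $\Omega$, so no $\cC^0_\sim$ sequence can approach a non-$\sim$ function even in $\cC^0$, let alone $\cC^1$. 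The approximation has to be carried out directly in the (much weaker) $\cB$-norm, and this is precisely the content of the separate Lemma~\ref{lem:C1}, proved in Section~\ref{molsec}: one mollifies $f\cdot 1_{\Omega_0\setminus\partial_\eps\Omega_0}$, so that the approximant vanishes on $\partial\Omega_0$ and is trivially in $\cC^0_\sim$, and then controls the error curve by curve, using the billiard-specific fact that admissible stable curves meet the $r$-neighborhood of $\partial\Omega_0$ in length $O(r^{1/2})$ (Lemma~\ref{lem:discarded}) and again exploiting $\gamma<1/3$ so that the exponent $(\tfrac12-s)(\tfrac1\gamma-1)-1$ is positive. Without an argument of this kind your chain $\cC^1(\Omega_0)\hookrightarrow\cB$ is unsupported; for functions already in $\cC^0_\sim\cap\cC^2$ the inclusion is immediate (take $t=0$ in \eqref{smallspace}), but that is the easy case.
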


(The proof uses $\oldbeta \le 1$, $\beta <1/q$, and  $\gamma \le 1/2$.)

\begin{proof}
First we show that if $\oldh \in \cC^1(\Omega_0)$, then $\oldh \in \cB^0$ and 
in particular,  
\begin{equation}
\label{laborne}
\| \oldh \|_{\cB} \le C |\oldh|_{\cC^1(\Omega_0)}
\end{equation}
for a uniform constant $C$. 
This immediately implies that if $\oldh \in \cC^1(\Omega_0)\cap \cC^0_\sim$, then $\oldh \in \cB^0_\sim$.

To estimate the neutral component $\| \oldh \|_0$, fix $W \in \cW^s$ and $\psi \in \cC^\oldp(W)$ with 
$|\psi|_{\cC^\oldp} \le 1$.  Then, recalling \eqref{esscurve} and the notation in \eqref{Jacobi},
we have 
\[
\int_W \partial_t (\oldh \circ \Phi_t)|_{t=0} \, \psi \, dm_W 
=\int_W  \nabla \oldh \cdot \heta\,  \psi \, dm_W\le
|\nabla \oldh \cdot \heta|_\infty |\psi|_\infty |W|
 \le L_0 |\oldh|_{\cC^1(\Omega_0)}\,  .
\]

To estimate $\| \oldh \|_s$, fix $W \in \cW^s$ and $\psi \in \cC^\oldq(W)$ with 
$|W|^{\oldbeta} |\psi|_{\cC^\oldq(W)} \le 1$.  Then
\[
 \int_W \oldh \psi \, dm_W \le |\oldh|_\infty |\psi|_{\infty} |W| \le |\oldh|_{\infty} L_0^{1-\oldbeta}\,  .
\]

Finally, we estimate $\| \oldh \|_u$.  Fix $\ve \in( 0,1)$ (if $\ve \ge 1$
we may bound the relevant
quotient by a constant multiple of the weak norm) and $W_1, W_2 \in \cW^s$ with 
$d_{\cW^s}(W_1,W_2) \le \ve$.
For $i=1,2$, let $\psi_i \in \cC^\oldp(W_i)$ with $|\psi_i|_{\cC^\oldp(W_i)} \le 1$ and $d(\psi_1, \psi_2)=0$.

By definition of $d_{\cW^s}$, the trace curves $P^+(W_1)$ and $P^+(W_2)$ are defined over a common $r$-interval $I$, apart from at most
two endpieces which have length no more than $\ve$.  Let $U_1 \subset W_1, U_2 \subset W_2$
denote the curves for which $P^+(U_1)$ and $P^+(U_2)$ are defined as graphs over $I$.  Denote by $V_1$ and $V_2$
the (at most two) pieces which are not defined over $I$.  Without loss of generality, we may assume $V_1 \subset W_1$
and $V_2 \subset W_2$.  Now,
\begin{align*}
 \int_{W_1} \oldh \psi_1 \, dm_{W_1} - \int_{W_2} \oldh \psi_2 \, dm_{W_2} &= 
\int_{U_1} \oldh \psi_1 dm_{W_1} - \int_{U_2} \oldh \psi_2 dm_{W_2}\\
&\qquad 
 + \int_{V_1} \oldh \psi_1 dm_{W_1} - \int_{V_2} \oldh \psi_2 dm_{W_2} \,  .
\end{align*}
We use the fact that $|P^+(V_i)| \le \ve$ to bound the estimate over the unmatched pieces,
\[
 \left| \int_{V_i} \oldh \psi_i dm_{W_i} \right| \le |V_i| |\oldh|_{\infty} \le C \ve |\oldh|_{\infty}\, ,
\]
where the constant $C$ depends only on the Jacobian of the map from $P^+(V_i)$ to $V_i$,
which is uniformly bounded by the proof of Lemma~\ref{lem:smooth}.

Next, we estimate the contribution from the matched curves $U_i$,
\begin{align*}
 \int_{U_1} \oldh \psi_1 dm_{W_1} - \int_{U_2} \oldh \psi_2 dm_{W_2}
 &= \int_{U_1} (\oldh \psi_1 - (\oldh\cdot \psi_2 )\circ \Theta \cdot J\Theta) dm_{W_1}
\\ &
\le |U_1| |\oldh \psi_1 -( \oldh \cdot \psi_2)\circ \Theta \cdot J\Theta|_{C^0(U_1)}\, ,
\end{align*}
where $\Theta$ is the map from $U_1$ to $U_2$ defined via the trace curves:
$\Theta = S_{U_2} \circ S_{U_1}^{-1}$, where $S_{U_i}(r) = \Phi_{-t(r)} \circ G_{U_i}(r) : I \to U_i$ 
is defined as in Lemma~\ref{lem:smooth}.
Note that since $d(\psi_1,\psi_2)=0$, we have $\psi_2 \circ \Theta = \psi_1$ on $U_1$.
Thus to estimate the sup norm of the difference above, it suffices to estimate
$|\oldh - \oldh \circ \Theta|_{\cC^0(U_1)}$ and $|1- J\Theta|_{\cC^0(U_1)}$.  Note also that
$J\Theta$ is bounded by Lemma~\ref{lem:smooth}, so this factor contributes only a bounded
constant to our estimate.  

Now since the distance between $P^+(U_1)$ and $P^+(U_2)$ along vertical segments is at most
$\ve$ and since the Jacobians of $S_{U_2}$ and $S_{U_2}^{-1}$ are uniformly bounded by
Lemma~\ref{lem:smooth}, we have $d(Z, \Theta(Z)) \le C \ve$, for $Z \in U_1$.  Since 
$\oldh \in \cC^1(\Omega_0)$,
we have $|\oldh - \oldh \circ \Theta|_{\cC^0(U_1)} \le |\oldh|_{\cC^1} \ve$.

It remains to estimate $|1-J\Theta|_{\cC^0(U_1)}$.  Using the notation of 
Lemma~\ref{lem:smooth} and denoting by $\tau_1$ and $\tau_2$ the first collision times starting on $U_1$ and $U_2$,  
we have
\begin{equation}
 \label{eq:JTheta}
\begin{split}
J\Theta & = (J_{P^+(U_2)} \Phi_{-\tau_2}) \circ (G_{U_2} \circ S_{U_1}^{-1}) \cdot (JG_{U_2}\circ S_{U_1}^{-1})
 \cdot (JG_{U_1}^{-1} \circ \Phi_{\tau_1}) \cdot (J_{U_1}\Phi_{\tau_1}) \\
& = \frac{(J_{P^+(U_2)} \Phi_{-\tau_2}) \circ G_{U_2} \circ S_{U_1}^{-1} }
{(J_{P^+(U_1)} \Phi_{-\tau_1}) \circ \Phi_{\tau_1}} \cdot \frac{JG_{U_2} \circ S_{U_1}^{-1}}
{JG_{U_1} \circ S_{U_1}^{-1}} \, .
\end{split}
\end{equation}
For $Z \in U_1$, the points $G_{U_2} \circ S_{U_1}^{-1}(Z)$ and $\Phi_{\tau_1}(Z)$ lie on the same vertical line in $(r,\vf)$ coordinates; thus the contribution of $\cK$ in the analogue of
\eqref{eq:log J} vanishes. Hence,
adapting \eqref{eq:log J} and the following lines
(using $|\tau_1(Z)-\tau_2(S_{U_2}S_{U_1}^{-1}(Z))|\le C \sqrt \ve$), we have 
\[
\sup_{U_1} \left| \ln \frac{J_{P^+(U_2)} \Phi_{-\tau_2}(G_{U_2} \circ S_{U_1}^{-1}) }{J_{P^+(U_1)} \Phi_{-\tau_1} (\Phi_{\tau_1})}  \right|
\le C\ve^{1/2}\,  ,
\]
for some uniform constant $C>0$.  Also, by \eqref{eq:dG_W} and using $\vf_1(r)$ and $\vf_2(r)$ to denote the functions defining $P^+(U_1)$ and
$P^+(U_2)$, we obtain the following bound on $I$:
\[
\begin{split}
\left| \ln \frac{JG_{U_2}(S_{U_1})}{JG_{U_1}(S_{U_1})} \right| 
& = \frac 12 \left| \ln \left( 1 + \Big(\frac{d\vf_2}{dr}\Big)^2 \right) - \ln \left( 1 + \Big(\frac{d\vf_1}{dr}\Big)^2\right) \right| \\
& \le \frac{1}{2 + 2\cK_{\min}} \left| \Big(\frac{d\vf_2}{dr}\Big)^2 -  \Big(\frac{d\vf_1}{dr}\Big)^2\right|
\le \frac{\cK_{\max} + \tau_{\min}^{-1}}{1 + \cK_{\min}} \left| \frac{d\vf_2}{dr} - \frac{d\vf_1}{dr} \right| \le C\ve \, ,
\end{split}
\]
by definition of $d_{\cW^s}(W_1, W_2)$.  By \eqref{eq:JTheta}, these two estimates imply  $|1-J\Theta|_{\cC^0(U_1)} \le C \ve^{1/2}$.

Putting together the estimates for matched and unmatched pieces and dividing by $\ve^\gamma$ yields
\[
\ve^{-\gamma} \left|\int_{W_1} \oldh \psi_1 \, dm_W - \int_{W_2} \oldh \psi_2 \, dm_W\right|  
\le C\ve^{1-\gamma} |\oldh|_{\infty} + C\ve^{1/2-\gamma}|\oldh|_{\cC^1(\Omega_0)}  \, . 
\]
(Recall that $\gamma < \oldp \le 1/3$.)

The above estimates imply $\| \oldh\|_{\cB} \le C |\oldh|_{\cC^1(\Omega)}$ for a uniform constant $C$, as claimed.
This implies  continuity of the embeddings $\cC^1(\Omega_0) \hookrightarrow \cB^0$
and  $\cC^0_\sim
\cap \cC^1(\Omega_0) \hookrightarrow \cB^0_\sim$.  Injectivity
is obvious.

As for the embedding $\cB^0_\sim \hookrightarrow \cB_w$, continuity follows from the 
fact that $| \cdot |_w \le \| \cdot \|_s$, while injectivity follows from the definition
of the spaces $\cC^\oldp(W)$ and $\cC^\oldq(W)$ as the closures of $\cC^1(W)$ in the respective H\"older norms.  Similarly, the embedding $\cB \hookrightarrow \cB_w$ is continuous and injective
by the same observations.  The  embedding 
$\cC^1(\Omega_0) \hookrightarrow \cB$ 
 is the content of  Lemma~ \ref{lem:C1}, while injectivity is again obvious.
 Finally, continuity of the embeddings $\cB \hookrightarrow (\cC^{\beta}(\Omega_0))^*$
 and $\cB \hookrightarrow (\cC^1(\Omega_0))^*$ follow from
 Lemma~\ref{relating}, while injectivity is proved in Lemma~\ref{lem:injective}, using $\beta <1/q$.
\end{proof}

We close this section with the following compactness result.

\begin{lemma}[Compact embedding]
\label{lem:compact}
The unit ball of $\cB$  is  compactly embedded in $\cB_w$.
\end{lemma}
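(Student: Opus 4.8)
The plan is to run the by–now standard compactness argument for anisotropic norms (compare \cite[Lemma 3.8]{demers zhang} for the billiard map): given a sequence $(\oldh_n)$ in the unit ball of $\cB$, I would first reduce the supremum defining $|\cdot|_w$ to a \emph{finite} family of ``reference'' stable curves; then extract, on each such curve, a uniformly (in the test function) convergent subsequence of the associated linear functionals by an Arzel\`a--Ascoli argument; and finally diagonalise to produce a subsequence that is Cauchy in $\cB_w$. Since $\cB_w$ is complete this proves relative compactness of the image of the unit ball.

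\textbf{Step 1 (reduction to finitely many reference curves).} Fix $\ve>0$. First I would dispose of the high homogeneity strips. If $W\in\cW^s$ has $P^+(W)\subset \bH_k$ with $|k|$ large, then a stable curve has slope $|d\vf/dr|\ge\cK_{\min}$, so $|P^+(W)|\le C|k|^{-3}$ (the $\vf$-width of $\bH_k$ is $\sim |k|^{-3}$), hence $|W|\le C|k|^{-3}$ by Lemma~\ref{lem:smooth}; and for any $\psi$ with $|\psi|_{\cC^\oldp(W)}\le 1$ one has $|\psi|_{\cC^\oldq(W)}\le C$ since $\oldq<\oldp$ and $\mathrm{diam}\,W\le 1$. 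By the very definition of the strong stable norm,
$$\Big|\int_W \oldh_n\,\psi\,dm_W\Big|\le C\,|W|^{\oldbeta}\,\|\oldh_n\|_s\le C\,|k|^{-3\oldbeta}\, ,$$
so for $k_1=k_1(\ve)$ large the contribution of all curves whose trace lies in $\bH_k$ with $|k|>k_1$ is $\le\ve$, uniformly over the unit ball of $\cB$. On the complementary (bounded) region I would construct, for a small parameter $\delta=\delta(\ve)$, a finite family $\{W_j\}_{j=1}^{N(\delta)}\subset\cW^s$ so that every admissible stable curve $W$ with trace in some $\bH_k$, $|k|\le k_1$, is, up to two unmatched end–pieces of total trace–length $\le\delta$, within $d_{\cW^s}(W,W_j)\le\delta$ of some $W_j$ (a $\delta$–net, inside each of the finitely many relevant strips, of the $r$–interval and of the $\cC^1$ graph describing the trace). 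For such $W$ and any admissible $\psi$ on $W$, transporting $\psi$ to $\psi_j$ on $W_j$ through the trace parametrisations (so that $d(\psi,\psi_j)=0$ and $|\psi_j|_{\cC^\oldp(W_j)}\le C$), the unstable norm controls the matched part while the strong stable norm controls the unmatched pieces, exactly as in the proof of Lemma~\ref{lem:embedding}:
$$\Big|\int_W\oldh_n\psi\,dm_W-\int_{W_j}\oldh_n\psi_j\,dm_{W_j}\Big|\le C\delta^{\gamma}\,\|\oldh_n\|_u+C\delta^{\oldbeta}\,\|\oldh_n\|_s\, .$$
Hence $|\oldh_n-\oldh_m|_w\le 2\ve+C\delta^{\gamma}+C\delta^{\oldbeta}+\max_{1\le j\le N}\ \sup_{|\psi_j|_{\cC^\oldp(W_j)}\le C}\big|\int_{W_j}(\oldh_n-\oldh_m)\psi_j\,dm_{W_j}\big|$.

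\textbf{Step 2 (Arzel\`a--Ascoli on each reference curve, then diagonalise).} Fix $W_j$ and consider the functionals $\ell_n^j:\psi\mapsto\int_{W_j}\oldh_n\psi\,dm_{W_j}$ on $\cC^\oldp(W_j)$. They are uniformly bounded, $|\ell_n^j(\psi)|\le|\oldh_n|_w\,|\psi|_{\cC^\oldp(W_j)}$, and \emph{equicontinuous in the weaker $\cC^\oldq(W_j)$–topology}, $|\ell_n^j(\psi)-\ell_n^j(\psi')|\le\|\oldh_n\|_s\,|W_j|^{-\oldbeta}\,|\psi-\psi'|_{\cC^\oldq(W_j)}$. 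Since $\oldq<\oldp$, the $\cC^\oldp(W_j)$–unit ball is precompact in $\cC^\oldq(W_j)$ (bounded $\oldp$–H\"older functions are equicontinuous, hence precompact in $\cC^0$, and an interpolation inequality upgrades this to $\cC^\oldq$), so it contains a countable $\cC^\oldq$–dense subset; a diagonal extraction gives a subsequence along which $\ell_n^j(\psi)$ converges for every $\psi$ in that subset, and the equicontinuity promotes this to convergence \emph{uniform} over the whole $\cC^\oldp$–unit ball. Diagonalising over the finitely many $j$ makes the last term in Step~1 tend to $0$ along the extracted subsequence. Combining with Step~1: for every $\ve>0$ there is a subsequence with $\limsup_{n,m}|\oldh_n-\oldh_m|_w\le 2\ve+C\delta(\ve)^{\gamma}+C\delta(\ve)^{\oldbeta}$; a final diagonalisation over $\ve=1/N$ yields a subsequence Cauchy in $\cB_w$, hence convergent, which is the claim.

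\textbf{Main obstacle.} The functional–analytic core (Step~2) is soft; the real work is Step~1 — the uniform construction of the finite reference family despite the non–compact features of $\cW^s$ (arbitrarily short curves, long curves that have been subdivided, the countably many strips $\bH_k$), together with the bookkeeping of which norm absorbs which error: the strong stable norm, through its gain $|W|^{\oldbeta}$, kills both the high–strip tail and the unmatched end–pieces, while the unstable norm, through its gain $\delta^{\gamma}$, handles the passage from $W$ to the nearby reference curve, and the Arzel\`a--Ascoli step crucially uses $\oldq<\oldp$. All the geometric inputs required (the length comparison $|W|=C^{\pm1}|P^+(W)|$, bounded distortion, the description of $\cW^s$) are already available from Lemma~\ref{lem:smooth}, Lemma~\ref{lem:distortion} and the definitions of Section~\ref{defspace}.
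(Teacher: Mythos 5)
Your Step 2 is sound and matches the paper's mechanism (compactness of the $\cC^\oldp$ unit ball in $\cC^\oldq$, finitely many functionals, diagonalisation), and your treatment of high homogeneity strips via the gain $|W|^{\oldbeta}$ in the strong stable norm is essentially the paper's estimate on the ``bad set''. But Step 1 has a genuine gap: your finite reference family is indexed only by the \emph{trace} data of $W$ (the $r$--interval $I_W$ and the graph $\vf_W$ in a fixed strip), and this does not determine a flow--stable curve. Two admissible curves $W_1,W_2$ with $P^+(W_1)=P^+(W_2)$ but sitting at different times along the flow are \emph{not} close in $d_{\cW^s}$ --- indeed $d_{\cW^s}(W_1,\Phi_t(W_1))=\infty$ by definition, since no unstable curve connects a curve to its own flow translate. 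So your net, as constructed, is not a $\delta$--net of $\cW^s$; and if you enlarge it by all flow translates it is no longer finite. This is exactly the new difficulty relative to the discrete--time case of \cite{demers zhang}, which the paper flags explicitly (``the difference with the previous situations is the presence of the time direction'').

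Consequently your error bound $C\delta^{\gamma}\|\oldh_n\|_u+C\delta^{\oldbeta}\|\oldh_n\|_s$ cannot be correct as stated: comparing $\int_W\oldh\,\psi$ with the integral over a small flow translate $\Phi_{-\theta}(W)$ is precisely what the \emph{neutral} norm is for, and the paper's estimate necessarily carries an extra term $\ve\,\|\oldh\|_0$ coming from differentiating in the flow direction (this is why the paper discretises each reference surface $V_i^0$ in time, with the additional complication that near collisions the leaves over $V_i$ are shortened by the variable collision time, handled there by the $\tau_{i,m}$ decomposition). To repair your argument you must (i) build the net over the two--dimensional surfaces $V_i^0$ swept out by flowing the finitely many reference map--stable curves backwards, discretising both the transverse (unstable) position \emph{and} the flow time into finitely many pieces, and (ii) absorb the residual time mismatch $\theta=O(\ve)$ using $\|\oldh\|_0$, which is available on the unit ball of $\cB$ since $\|\cdot\|_0$ is part of $\|\cdot\|_\cB$ (just as the unstable comparison is available because $c_u>0$). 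With those two additions your scheme coincides with the paper's proof.
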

 (Recall that $\oldq <  \oldp$. Like in \cite{demers zhang}, the proof also uses  $c_u>0$.)
 \begin{proof}
Recalling \eqref{defB0sim}, it suffices to
show that the unit ball of $\cB^0_\sim$  is compactly embedded in $\cB_w$. 
The general strategy of our proof will be to create, for each $\ve > 0$, an $\ve$-covering of 
$\Omega$ by finitely many curves $W_i$ in $\cW^s$ and an $\ve$-covering of $C^\alpha(W_i)$
by finitely many functions $\psi_j$ so that the weak norm of any $f \in \cB$ with $\| f \|_{\cB} \le 1$ can be 
uniformly approximated by 
measuring it against the finitely many functionals defined by $\int_{W_i} f \, \psi_j \, dm_W$.

We may assume without restricting generality that there exists $\ell$
so that $P^+(W) \in  \Gamma_\ell \times [-\pi/2,\pi/2]$, i.e, we
argue one scatterer-component $\Omega^{(\ell)}:= (P^+)^{-1}(\Gamma_\ell \times [-\pi/2,\pi/2])$
at a time. 
Let $0<\ve \le 1$ be fixed. 
We split $\Omega^{(\ell)}$ into
two parts, the good set 
$$A(\ell,\ve)=
 (P ^+)^{-1}(\{-\pi/2+\ve
 \le \varphi\le \pi/2-\ve
 \})\cap \Omega^{(\ell)}\, ,
$$
and the bad set $B(\ell,\ve)=\Omega^{(\ell)}\setminus A(\ell,\ve)$
(that is, $\ve$-close to tangential collisions). 
The image under $P^+$ of stable curves are graphs of
decreasing functions $\varphi_W$ of absolute value of the slope  larger than $\cK_{\min}>0$ 
and with uniformly bounded second derivatives.
If $W\subset B(\ell,\ve)$, then since  $P^+(W)$
is transversal to the boundary of the scatterer, it has length $O(\ve)$.
Since the
expansion from $P^+(W)$ to $W$ is of order $1$ for a stable curve
by Lemma \ref{lem:smooth}, there exists
$C=C(Q)$ so that any admissible curve $W \subset B(\ell,\ve)$
has length at most $C\ve$.

Let $\oldh\in \cC^0_\sim$ with $\|\oldh\|_\cB\le 1$. First, we estimate the weak norm of $\oldh$
on curves $W\in B(\ell,\ve)$. If 
$|\psi|_{\cC^\oldp(W)}\le 1$, then, using the bound $C\ve$ on the length of $W$,
\begin{equation}
\label{smallcurve}
\int_W \oldh \psi \, dm_W \le \|\oldh\|_s |W|^\oldbeta |\psi|_{\cC^\oldq(W)} \le C^\oldbeta \ve^\oldbeta \|\oldh\|_s\, .
\end{equation}

In order to study curves $W\in A(\ell,\ve)$, we need some preparations.

Letting $\delta_1(L_0)$ and $B_1(B_0)$  (uniform
in $\ve$)  
be as
 in the proof of Lemma~\ref{lem:growth},   for any map-stable 
curve
$V\in \cW^s(T)\cap P^+(A(\ell,\ve))$
of length $\le \delta_1$ and curvature bounded by $B_1$,
the surface 
$$
V^0=\{ \Phi_{-t}(r,\varphi_V(r))\mid r \in I_V\, , 0\le  t < \tau_-(r,\varphi_V(r))\}
$$
(where $V$ is the graph of $\varphi_V$ defined on $I_V$,
and $\tau_-(z)$, for $z\in \cM$, is the smallest $t>0$ so that
$\Phi_{-t}(z)\in \cM$)
is foliated by admissible flow-stable curves $W$ so that 
 $P^+(W)\subset V$.
Indeed, if $Z\in V^0$, there is a unique curve $W=W(Z, V)\subset V^0$ containing
$Z$ and so that $W$ is everywhere perpendicular to the flow. We take
 $W$ maximal with these properties, noting that  the endpoints
of  $W$ either (1) project to endpoints of $V$; (2) lie on a scatterer, i.e., are of the form 
\begin{equation}\label{point2}
\Phi_{-\tau_-(r,\varphi_V(r))}(r,\varphi_V(r))\, ,
\end{equation} 
or: (3)
 undergo a grazing collision under the flow in forward time. Such a curve
$W$ is $\cC^2$ and flow-stable by construction, it satisfies the admissibility
requirements for $L_0$ and $B_0$,  if $\delta_1$ and $B_1$ are small
 enough. (Note that for any flow stable curves $W_1\ne W_2\subset V^0$ we have
$d_{\cW^s}(W_1,W_2)=\infty$.)  All such curves $W$  are 
constant-time flow translates of one another, except near
collisions, where some shortening of the curve,  due to the variable collision times,
may occur.
Conversely, if $W\subset A(\ell,\ve)$ is an admissible flow curve for
$L_0$ and $B$, then $W$ belongs to the surface
$(P^+(W))^0$, which is foliated by  flow-stable curves, admissible for
$\delta_2(L_0)$ and $B_2(B_0)$, uniformly in $\ve$.

On a fixed $r$-interval $I$, the set of functions $\{\varphi_{V}\}$ 
for map-stable curves $V\in \cW^s(T)$, defined on $I$
is uniformly bounded in $\cC^2$ norm and therefore compact in the $\cC^1$ norm.
There exists a finite set of 
admissible map-stable curves $\{V'_i\}_{i=1}^{I'_\ve}\subset P^+(A(\ell,\ve))$ 
so that for any flow-stable admissible curve $W \subset A(\ell,\ve)$ 
there exists $i$ with $\td_{\cW^s}(V'_i,P^+(W))<\ve$. 
Therefore, since the stable and unstable cones $C^s$ and $C^u$ are uniformly
transverse, and are both orthogonal to the flow direction,
we may choose a finite set of 
admissible map-stable curves $\{V_i\}_{i=1}^{I_\ve}\subset P^+(A(\ell,\ve))$ 
so that for any flow-stable admissible curve $W \subset A(\ell,\ve)$,
there exist:
\begin{itemize}
\item[(i)] an index $i_W$ 
so that $\tilde{d}_{\cW^s}(P^+(W), V_{i_W}) < \ve$, and a flow-unstable curve $W^u$ with
$W^u\cap W\ne \emptyset$ and $W^u \cap V_{i_W}^0\ne \emptyset$;
\item[(ii)] a flow-stable curve $W' \subset V_{i_W}^0$ with $W^u \cap V_{i_W}^0 \in W'$ 
and $d_{\cW^s}(W, W')< C\sqrt{\ve}$, for some uniform $C>0$.
\end{itemize}
Item (i) above is obvious, as is the existence of $W'$ with 
$W^u \cap V_{i_W}^0 \in W' \subset V_{i_W}^0$ in item (ii).  To prove the bound
on the distance between $W$ and $W'$ in (ii), first note that the tangent vectors to $P^+(W^u)$
lie  in the map-unstable cone by forward invariance of the unstable cones, and
that $P^+(W^u)$ intersects 
both $V_{i_W}$ and $P^+(W)$.  By choice of the index $i_W$, the length
of the segment in $P^+(W^u)$  connecting $P^+(W)$ to $V_{i_W}$
is at most $C\ve$.  Since 
$W^u$ intersects both $W$ and $W'$ and the length 
of unstable curves is expanded going forward, it follows that the   length
of the segment in $W^u$ 
connecting $W$ and $W'$ is at most $C\ve$ as well.  Finally, the fact that $P^+(W') \subset V_{i_W}$ 
and $\tilde{d}_{\cW^s}(P^+(W), V_{i_W}) < \ve$, together with the $1/2$-H\"older continuity of 
$\tau$, implies that the distance $\tilde{d}_{\cW^s}(P^+(W), P^+(W'))$
(which is due only to the endpoints-discrepancy)
satisfies $\tilde{d}_{\cW^s}(P^+(W), P^+(W')) < C\sqrt{\ve}$
(recalling \eqref{point2}) as required.\footnote{ The bound $<\sqrt{\epsilon}$ can perhaps be improved to
$<C \epsilon$ by making a special choice of $W^u$.}

\smallskip
We next decompose each $V_i$ corresponding
to collision times  to handle
the shortening of the curve  due to the variable collision times mentioned
above. We let $M_\ve=[\epsilon^{-1}]$, 
$$
\tau_{i,\min}=\min \tau_-(r,\varphi_V(r))\, ,\, 
\,
\tau_{i,m}= \tau_{i,\min} + m M_\ve^ {-1} (\max \tau_-(r,\varphi_V(r))-\tau_{i,\min})\, , \, 
m=0, \ldots, M_\ve \, .
$$
Note that $\tau_{i,\min}$ is the  first collision time when
flowing $V_i$ backwards. It is not enough to flow $V_i$ back until this first collision:  
If $W$ is a stable curve with $P^+(W)$ $\ve$-close to $V_i$, the curve $W$ may be in fact very far 
from any of  the curves spanned by $V_i$ up to time $-\tau_{i,\min}$
(for example, if a little piece of $V_i$ hits a
nearby scatterer under the backwards flow, but most of the rest of $V_i$ continues without
 collision for some time).  Still, there is a subcurve $V_i'$ of $V_i$, differing in 
 length\footnote{ By construction of the family $V_i$.} by no 
more  than $\ve$ from $V_i$,
 such that  $V_i'$ does not hit this close scatterer, and the backward translates of  $V_i'$
 contain a stable curve which is close to $W$ and connected with $W$ by an unstable curve.  
By the triangle inequality, $V_i'$ and $P^+(W)$ differ by no more than $2\ve$.) 

Let us now formalise the above discussion: We construct nested curves by setting
$V_{i,0}=V_i=\{V_i\cap \tau_{-}^{-1}[\tau_{i,0}, \infty)\}$ and,
for $m=0, \ldots, M_\ve$,
$$
V_{i,m}=\{V_i\cap \tau_{-}^{-1}[\tau_{i,m}, \infty)\} \subset V_{i,m-1}\, .
$$
(If any of the $V_{i,m}$ is disconnected, we  subdivide it  into
its finitely many connected components, without making this explicit in the
notation for the sake of conciseness.)
Finally, in each $V_{i,m}$ we choose a point $v_{i,m}$ so that
$\tau_-(v_{i,m})=\tau_{i,m}$.

Then, there exists a nonnegative
real number $\Theta_\ve=O(\ve)$, and a finite integer $N_\ve=O(\Theta_\ve^{-1})$, 
such that\footnote{ We do not claim
that $I_\ve$, $N_\ve$, or $\Theta_\ve^{-1}$ are bounded
uniformly in $\ve$.}
the set of flow-stable curves
\[
\begin{split}
&\{ W(\Phi_{-\theta}(\Phi_{-\tau_-( v_{i,m})\frac{n}{N_\ve}}(v_{i,m})), V_{i,m}) , \, \\
&\qquad\qquad i=1,\ldots ,I_\ve, \, m=0, \ldots, M_\ve, \, 
n=0,\ldots, N_\ve-1,\, 
\theta \in[0,\Theta_\ve]
\}
\end{split}
\] 
forms an $\sqrt{\ve}$-covering
of the admissible flow-stable curves $\cW^s|_{(A(\ell,\ve))}$ in the distance $d_{\cW^s}$,
which follows by applying items (i) and (ii) explained above.

By definition, the
uniquely defined $\cC^1$ functions  $\tilde t_{i,m, n,\theta}$  on $V_{i,m}$
so that
$$
\widetilde W_{i,m, n,\theta}: =W(\Phi_{-\theta}\Phi_{-\tau_-( v_{i,m})\frac{n}{N_\ve}}(v_{i,m}), 
V_{i,m})
\mbox{ is 
given by }
\Phi_{-\tilde t_{i,m, n,\theta}(r)}(r,\varphi_{V_i}(r))=:
\tilde S_{i,m, n,\theta}(r)
$$ 
satisfy
$
\tilde t_{i,m,n,\theta}-\tilde t_{i,m,n,0}\equiv \theta
$.

Set $t_{i,m, n}=\tilde t_{i,m,n,0}$,
so that
$W_{i,m, n}:=W(\Phi_{-\tau_-( v_{i,m})\frac{n}{N_\ve}}(v_{i,m}), V_{i,m})$
is given by 
$$\Phi_{-t_{i,m,n}(r)}(r,\varphi_{V_{i,m}}(r))=:
S_{i,m, n}(r) \, .
$$

Let $|\Gamma_\ell|$ denote the arclength of $\Gamma_\ell$, and define
$\bS^1_\ell$ to be the circle of length $|\Gamma_\ell|$. Since any ball of
finite radius in the $\cC^\oldp$ norm is compactly embedded in $\cC^\oldq$,
we may choose finitely many functions $\bar \psi_j\in \cC^\oldp$
such that $\{\bar \psi_j\}_{j=1}^{J_\ve}$ forms an $\ve$-covering 
in the $\cC^\oldq(\bS^1_\ell)$-norm of the ball of radius $CC_0$
in $\cC^\oldp(\bS^1_\ell)$,
with
$
C_0
$ the constant depending on $\cK_{\max}$, $\tau_{\max}$, and $1/\tau_{\min}$ from
Lemma~\ref{lem:smooth}.

This ends the announced preparations for the case $W \subset A(\ell,\ve)$.

\smallskip
From now on, fix $W=S_W(I_W)\in \cW^s|_{A(\ell,\ve)}$, and $\psi \in \cC^\oldp(W)$ with
$|\psi|_{\cC^\oldp(W)}\le 1$.  We view $I_W$ as a subset of
$\bS^1_\ell$. 
Let $\bar \psi=\psi \circ S_W$ be the push down
of $\psi$ to $I_W$. By definition, $S_W(r)=\Phi_{-t_W(r)}G_W(r)$ with
$G_W$ the graph of the function $\varphi_W$.
By the proof of Lemma ~\ref{lem:smooth}, the Jacobian 
of $S_W$ is 
bounded by $C_0$, and we have
$$
|\bar \psi|_{\cC^\oldp(I_W)}\le C C_0 \, .
$$
Choose $\bar \psi_j \in \cC^\oldp(\bS^1_\ell)$ so that
$|\bar \psi -\bar\psi_j|_{\cC^\oldq(I_W)}\le \ve$. 

Take $i$, $m$,  $n$, and $\theta$ so that
$d_{\cW^s}(W,\widetilde W_{i,m, n,\theta})<\sqrt{\ve}$.
Define $\psi_{j,i,m, n}=\bar \psi_j \circ
S_{i,m, n}^{-1}$ to be the lift of $\bar \psi_j$ to 
$W_{i,m, n}$.
Note 
that $|\psi_{j,i,m,n}|_{\cC^\alpha(W_{i,m,n})}\le 2 CC_0$, 
again by  Lemma \ref{lem:expansion}.

Then, normalizing $\psi$ and $\psi_{j,i,m, n}$
by $2CC_0$, and letting $J_{W,i,m, n,\theta}=J_{W_{i,m, n}}\Phi_{-\theta}$ be the Jacobian of the map
$\tilde S_{i,m, n,\theta} \circ S_{i,m, n}^{-1}=\Phi_{-\theta}$ from $W_{i,m,n}$ 
to $\widetilde W_{i,m, n,\theta}$,
we decompose
\begin{equation}
\begin{split}
&\left | \int_W \oldh\psi dm_W -\int_{W_{i,m, n}} \oldh\psi_{j,i,m, n} dm_{W_{i,m, n}}\right |\\
&\quad\le  \left | \int_W \oldh\psi dm_W -
\int_{\widetilde W_{i,m, n,\theta}}  \oldh(\bar \psi_{j}  \circ \tilde S_{i,m, n,\theta}^{-1})
dm_{\widetilde W_{i,m, n,\theta}}\right |\\
\label{llast} &\quad\quad+ \left | \int_{ W_{i,m, n}} 
[(\oldh \circ \tilde S_{i,m, n,\theta} \circ S_{i,m, n}^{-1}) \psi_{j,i,m,n} J_{W,i,m, n,\theta} 
- \oldh\psi_{j,i,m, n} ]\, dm_{W_{i,m, n}}
\right | \, ,
\end{split}
\end{equation}
where 
$|J_{W,i,m, n,\theta}-1|_{C^\oldq(W_{i,m, n})}\le C\ve 
$, so that we can use the triangle inequality and $\|\oldh\|_s$
to eliminate $J_{W,i,m, n,\theta}$ in the last term of \eqref{llast}. Next, recalling the group property \eqref{eq:flow derivative}, we estimate the remaining
difference in the last term of \eqref{llast},
\[
\begin{split}
\int_{W_{i,m,n}} &(f \circ \Phi_{-\theta} - f) \psi_{j,i,m,m} \, dm_{W_{i,m,n}}
= \int_{W_{i,m,n}} \int_0^\theta \partial_s (f \circ \Phi_{-s}) \psi_{j,i,m,n} \, ds dm_{W_{i,m,n}} \\
&= \int_0^\theta  \int_{W_{i,m,n}} \partial_r (f \circ \Phi_r)|_{r=0} \circ \Phi_{-s} \psi_{j,i,m,n} \, dm_{W_{i,m,n}} ds \\
& = \int_0^\theta \int_{\Phi_{-s}(W_{i,m,n})} \partial_r (f \circ \Phi_r)|_{r=0} \, \psi_{j,i,m,n} \circ \Phi_s \,
J_{\Phi_{-s}(W_{i,m,n})} \Phi_s \\
& \le \theta \|f \|_0 |\psi_{j,i,m,n} \circ \Phi_s |_{\cC^\alpha(\Phi_s(W_{i,m,n}))} 
|J_{\Phi_{-s}(W_{i,m,n})} \Phi_s |_{\cC^\alpha(\Phi_s(W_{i,m,n}))} 
\le C \ve \| f \|_0,
\end{split}
\]
where in the last line we have used the smoothness of the flow between collisions to bound
both the H\"older norm of the test function and the Jacobian of the change of variables.

Finally, for the first term on the right-hand side of \eqref{llast}, we again use 
the triangle inequality and $\|\oldh\|_s$  to replace
$\bar \psi_j \circ \tilde S_{i,m, n,\theta}^{-1}$ by a test function
$\psi_{i,j,m, n,\theta}$ with $d(\psi,\psi_{i,j,m, n,\theta})=0$ and use the unstable norm of $f$ to 
bound the remaining difference of integrals.
Putting these estimates together yields, 
$$
\left | \int_{W} \oldh\psi\, dm_W 
-\int_{W_{i,m, n}}  \oldh\psi_{j,i,m, n} \, dm_{W_{i,m, n}}\right |
\le (\ve^{\gamma/2} \|\oldh\|_u +\ve\|\oldh\|_s+\ve \|\oldh\|_0   )2 C C_0 \, .
$$
Recalling that $2\gamma<1$,
we have proved that for each $0<\ve\le 1$, there exist
finitely many bounded linear functionals $\ell_{i,j,m, n}$ with
$\ell_{i,j,m, n}(\oldh)=\int_{W_{i,m, n}} \oldh\psi_{j,i,m, n} dm_{W_{i,m, n}}$ such that
\begin{align*}
|\oldh|_w
&\le \max_{i\le I_\ve, m\le M_\ve,j \le J_\ve, n\le N_\ve-1}
\ell_{i,j,m, n}(\oldh)+
2CC_0(( \ve^\oldbeta+\ve) \|\oldh\|_s +\ve^{\gamma/2} \|\oldh\|_u + \ve \|\oldh\|_0)\\
&\le  \max
\ell_{i,j,m, n}(\oldh) + C' c_u^{-1} \ve^{\gamma/2} \|\oldh\|_\cB\, .
\end{align*}
Since $c_u>0$, this implies the required compactness.
\end{proof}


\section{Lasota-Yorke-type bounds and strong continuity for the semi-group $\cL_t$}
\label{section4}

The
transfer operator for the flow is defined on\footnote{ Defining the transfer operator on $L^\infty$ leads
to problems since an element
of $L^\infty$ is not in general well-defined on a stable curve $W$.} $\cC^0_\sim$ 
(recall \eqref{contit}) by 
$\cL_t \oldh = \oldh \circ \Phi_{-t}
$ for $t\ge 0$.
(In particular, $\cL_0$ is the identity on $\cC^0_\sim$.)
Letting $m$ denote Riemannian volume on $\Omega_0$,
we also have  $\cL_t \oldh = \oldh \circ \Phi_{-t}$ for any $\oldh \in L^1(\Omega_0, m)$,
since $\Phi_t$ preserves $m$.
The Banach space  $\cB$ is not contained in 
$\cC^0_\sim$ or  $L^1(m)$.
However, we have that
 $\cC^0_\sim \cap \cB^0_\sim$ is dense in the auxiliary space $\cB^0_\sim$ by definition
 \eqref{defB0sim}.
 In this section, we will prove the following proposition
 (recall $\cB^0$  from  \eqref{defB0},
and the constants $\Lambda=\Lambda_0^{1/\tau_{\max}}>1$ from (\ref{Lambda}) and 
 $\lambda \in(\Lambda_0^{-1/\tau_{\max}},1)$
from the growth Lemma~\ref{lem:growth}).

\begin{proposition}
\label{prop:L_t} Recall that $\oldq< \oldp$ and $\gamma \le \min\{\oldp-\oldq, \oldbeta\}$.
There exists $C>0$ such that for all $\oldh \in \cC^0 \cap \cB^0$ and $t > 0$,
\begin{eqnarray}
|\cL_t \oldh|_w & \le & C |\oldh|_w ÷, , \label{eq:weak L} \\
\| \cL_t \oldh \|_s & \le & C ( \Lambda^{-\oldq t} + \lambda^{(1-\oldbeta)t}) \| \oldh \|_s 
+ C L_0^{-\oldbeta} |\oldh|_w \, ,  \label{eq:strong stable L}  \\
\| \cL_t \oldh \|_0 & \le & C \| \oldh \|_0 \, ,   \label{eq:neutral L}\\
\| \cL_t \oldh \|_u & \le & C t^\gamma \Lambda^{-\gamma t} \| \oldh \|_u + C  \| \oldh \|_0 
+ C  \| \oldh \|_s \, .
\label{eq:strong unstable L} 
\end{eqnarray}
If we assume in addition $\oldq \le 1-\oldbeta$, and if we allow
$C$ to depend on $L_0$, then \eqref{eq:strong stable L} implies
\begin{equation}\label{neater}
\| \cL_t \oldh \|_s  \le  C  \Lambda^{-\oldq t} \| \oldh \|_s 
+ C  |\oldh|_w \, .
\end{equation}
\end{proposition}
Recall that $\|\cdot\|_u$ will appear with a (small) factor $c_u$ to be introduced later
in the norm $\|\cdot\|_\cB$. 
Even with this weighting the above
bounds   (just as in \cite{Li04,BaL}) are not honest Lasota-Yorke bounds
because of \eqref{eq:neutral L}, which is neither a contracted term
nor compact. Integration with respect to
time in the resolvent $\cR(z)$ will yield the true Lasota-Yorke bounds
of Proposition~\ref{prop:LY R}.

It follows from the above proposition that
for any $\oldh\in \cC^0_\sim \cap \cB^0_\sim\subset \cC^0 \cap \cB^0$ and any $t > 0$, the image
 $\cL_t \oldh$, which is defined as an element of $\cC^0_\sim$, still belongs to $\cB^0_\sim$
and satisfies $\|\cL_t \oldh\|_\cB\leq C\|\oldh\|_\cB$. Since $\cC^0_\sim \cap \cB^0_\sim$ is
dense in $\cB^0_\sim$ the operator
$\cL_t$ can be extended to a continuous operator on
$\cB^0_\sim$ for any $t\ge 0$, and indeed Proposition~\ref{prop:L_t} holds for all $\oldh \in \cB^0_\sim$.
Since $\cB\subset \cB^0_\sim$, we  get, noticing that the
set in \eqref{smallspace} is $\cL_t$-invariant for all $t$:

\begin{cor}[Continuity and bounds  on $\cB$]
The operator
$\cL_t$ is  continuous  on
$\cB$ for any $t\ge 0$,  and the  bounds
in Proposition~\ref{prop:L_t} hold for all $\oldh \in \cB$.
\end{cor}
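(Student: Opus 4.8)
The plan is to deduce the corollary from Proposition~\ref{prop:L_t} by a routine density and extension argument, essentially the one sketched in the paragraph preceding the statement; no new analytic input is needed. First I would check that $\cL_t$ maps the subspace $\cC^0_\sim\cap\cB^0_\sim$ into itself: if $\oldh\in\cC^0_\sim$ has $\|\oldh\|_\cB<\infty$, then $\cL_t\oldh=\oldh\circ\Phi_{-t}$ again lies in $\cC^0_\sim$ (since $\Phi_{-t}$ is continuous on the quotient $\Omega$, as recalled after \eqref{contit}), and, viewing $\oldh$ as an element of $\cC^0\cap\cB^0$, Proposition~\ref{prop:L_t} gives $\|\cL_t\oldh\|_\cB\le C\|\oldh\|_\cB<\infty$; hence $\cL_t\oldh\in\cC^0_\sim\cap\cB^0_\sim$. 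Thus $\cL_t$ is a $\|\cdot\|_\cB$-bounded operator from $\cC^0_\sim\cap\cB^0_\sim$ into itself, and since that space is dense in $\cB^0_\sim$ by \eqref{defB0sim}, $\cL_t$ extends uniquely to a bounded operator on $\cB^0_\sim$, consistent with $\oldh\mapsto\oldh\circ\Phi_{-t}$ on the dense set.

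Next I would push the four estimates \eqref{eq:weak L}--\eqref{eq:strong unstable L} through the completion. Each of $|\cdot|_w$, $\|\cdot\|_s$, $\|\cdot\|_u$, $\|\cdot\|_0$ is a seminorm dominated by $\|\cdot\|_\cB$ on $\cC^0_\sim\cap\cB^0_\sim$, hence extends by uniform continuity to a continuous seminorm on $\cB^0_\sim$; consequently $\|\oldh_n-\oldh\|_\cB\to 0$ forces $\|\oldh_n\|_\star\to\|\oldh\|_\star$ for each $\star\in\{w,s,u,0\}$. Given $\oldh\in\cB^0_\sim$, take $\oldh_n\in\cC^0_\sim\cap\cB^0_\sim$ with $\|\oldh_n-\oldh\|_\cB\to 0$; then $\cL_t\oldh_n\to\cL_t\oldh$ in $\cB^0_\sim$ by boundedness of $\cL_t$, and applying Proposition~\ref{prop:L_t} to each $\oldh_n$ and letting $n\to\infty$ yields \eqref{eq:weak L}--\eqref{eq:strong unstable L} for $\oldh$. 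This establishes the bounds of Proposition~\ref{prop:L_t} on all of $\cB^0_\sim$.

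Finally I would descend to $\cB$. By the semigroup property $\cL_t\circ\cL_s=\cL_{t+s}$, the set $\{\cL_s\oldh\mid s\ge 0,\ \oldh\in\cC^0_\sim\cap\cC^2(\Omega_0)\}$ generating $\cB$ via \eqref{smallspace} is $\cL_t$-invariant; moreover $\cC^0_\sim\cap\cC^2(\Omega_0)\subset\cB^0_\sim$ by Lemma~\ref{lem:embedding}, so this set lies in $\cC^0_\sim\cap\cB^0_\sim$ and $\cL_t$ is $\|\cdot\|_\cB$-bounded on it by the first step. Hence $\cL_t$ extends to a bounded operator on the completion $\cB$, and this extension is consistent with the one already built on $\cB^0_\sim$, because the natural inclusion $\cB\hookrightarrow\cB^0_\sim$ is an isometry onto a closed subspace and the two operators agree on the dense generating set; the bounds of Proposition~\ref{prop:L_t} then hold on $\cB$ since $\cB\subset\cB^0_\sim$. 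The only point demanding a little care — and the closest thing here to an obstacle — is the bookkeeping around these nested extensions and the verification that the four auxiliary seminorms extend continuously to the completions so that the inequalities genuinely pass to the limit; the substantive work is entirely contained in Proposition~\ref{prop:L_t} itself.
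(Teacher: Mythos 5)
Your proposal is correct and follows essentially the same route as the paper: the paper's own (brief) argument is precisely that Proposition~\ref{prop:L_t} bounds $\cL_t$ on $\cC^0_\sim\cap\cB^0_\sim$, density extends it (and the four estimates) to $\cB^0_\sim$, and the $\cL_t$-invariance of the generating set \eqref{smallspace} then gives continuity and the bounds on $\cB\subset\cB^0_\sim$. Your extra bookkeeping on the continuous extension of the seminorms and the consistency of the nested extensions is a sound elaboration of details the paper leaves implicit.
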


\smallskip
The family of bounded operators $\cL_t$ on $\cB$ satisfy:
\begin{enumerate}
\item[(i)]
$\cL_0$ is the identity on $\cB^0_\sim$ and thus on $\cB$;
\item[(ii)]
For any $0\le t, s < \infty$ we have $\cL_t \circ \cL_s= \cL_{s+t}$.
\end{enumerate}

In Lemma~\ref{lem:strong c} (stated and proved Subsection~\ref{lastt}), we will show that for any $\oldh\in \cB$
$$
\lim_{t\downarrow 0} \cL_t(\oldh)=\oldh \, ,
$$
with convergence in $\cB$. This implies
the third condition required for a one-parameter semi-group of
bounded operators (see \cite[\S 6, p. 152]{Davies}), that
\begin{enumerate}
\item[(iii)]
the map $(t,\oldh)\mapsto
\cL_t(\oldh)$ from $[0,\infty)\times \cB$ to
$\cB$ is jointly continuous.
\end{enumerate}

Note also that $\cL_t$ is bounded on
$\cB_w$ (same argument as above).  
In fact, Lemma~\ref{lem:lip} in Subsection~\ref{lastt} will show that 
$\cL_t$ is Lipschitz when viewed as an operator from $\cB$ to $\cB_w$.

In Subsections~\ref{easy norms} and \ref{unstable norm}, we prove  Proposition~\ref{prop:L_t}.
Subsection~\ref{lastt} is devoted to Lemmas ~\ref{lem:strong c} and ~\ref{lem:lip}.
We shall use several times without mention the key observation (recall \eqref{esscurve}) that
a stable curve $W \in \cW^s$, and more generally any $W_i \in \cG_t(W)$ for
$t>0$,  may intersect $\partial \Omega_0$ in at most two
points.


\subsection{Weak stable, strong stable, and neutral norm estimates for $\cL_t$}
\label{easy norms}

We start with  \eqref{eq:weak L}.
Let $\oldh \in \cC^0 \cap \cB^0$, $W \in \cW^s$ and $\psi \in \cC^\oldp(W)$ such that
$|\psi|_{\cC^\oldp(W)} \leq 1$.	 For $t  >0$, we write,
\begin{equation}
\label{eq:start}
\int_W\cL_t \oldh \, \psi \, dm_W
=\sum_{W_i \in\cG_t(W)}\int_{W_i}\oldh \, J_{W_i}\Phi_t \, \psi \circ \Phi_t dm_W
\le \sum_{W_i \in\cG_t(W)} |\oldh|_w
| J_{W_i}\Phi_t|_{\cC^\oldp(W_i)} |\psi\circ \Phi_t|_{\cC^\oldp(W_i)} \,  ,
\end{equation}
where we have used  the definition of the weak norm on each $W_i$.
Since $\oldp\le 1/3$, the distortion bounds given by Lemma~\ref{lem:distortion} imply that
\begin{equation}
\label{eq:dist holder}
| J_{W_i}\Phi_t|_{\cC^\oldp(W_i)} \leq C_d | J_{W_i}\Phi_t|_{L^\infty(W_i)} \,  .
\end{equation}
By Lemma~\ref{lem:Holder}, we have
$|\psi \circ \Phi_t|_{\cC^\oldp(W_i)} \leq C_1 |\psi|_{\cC^\oldp(W)}$.
Using these estimates in equation \eqref{eq:start}, we obtain
\begin{equation}
\label{eq:weak}
\int_W \cL_t \oldh \, \psi \, dm_W \; \leq \; C |\oldh|_w \sum_{W_i \in\cG_t(W)}
|J_{W_i}\Phi_t|_{L^\infty(W_i)}  \le C  | \oldh |_w \, ,
\end{equation}
where in the last inequality we have used Lemma~\ref{lem:growth}(b) with $\oldbetazero = 0$.
Taking the supremum over all $W \in \cW^s$ and $\psi \in \cC^\oldp(W)$ with
$|\psi|_{\cC^\oldp(W)} \leq 1$
yields \eqref{eq:weak L}.


We next prove \eqref{eq:strong stable L}.
Let $\oldh \in \cC^0 \cap \cB^0$, $t>0$, $W \in \cW^s$, and let $\{W^t_i\}$ denote the elements of $\cG_t(W)$.
For $\psi \in \cC^\oldq(W)$,  with $|W|^{\oldbeta} |\psi|_{\cC^ \oldq(W)} \le 1$, define
$\bp_i = |W^t_i|^{-1} \int_{W^t_i} \psi \circ \Phi_t \, dm_W$.
Using equation \eqref{eq:start}, we write
\begin{equation}
\label{eq:stable split}
\int_W\cL_t \oldh\, \psi \, dm_W	 =
\sum_{i} \int_{W^t_i}\oldh \, J_{W^t_i}\Phi_t \,(\psi \circ \Phi_t - \bp_i) \, dm_W
 + \bp_i  \int_{W^t_i}\oldh \, J_{W^t_i}\Phi_t \, dm_W \,  .
\end{equation}

To bound the first term of \eqref{eq:stable split},
we first estimate $|\psi \circ \Phi_t - \bp_i|_{\cC^\oldq(W^t_i)}$.
Since $\bp_i$ is constant on $W^t_i$, we have
$\cC^\oldq_{W^t_i}(\psi \circ \Phi_t - \bp_i) \leq C_1 \Lambda^{-\oldq t} \cC^\oldq_W(\psi)$
by Lemma~\ref{lem:Holder}.
To estimate the $L^\infty$ norm, note that $\bp_i = \psi \circ \Phi_t(Z_i)$ for some
$Z_i \in W^t_i$.  Thus for each $Z \in W^t_i$,
\[
| \psi \circ \Phi_t(Z) - \bp_i|
    = |\psi \circ \Phi_t(Z) - \psi \circ \Phi_t(Z_i)|
    \leq \cC^\oldq_{W^t_i}(\psi \circ \Phi_t) |W^t_i|^\oldq \leq C \cC^\oldq_W(\psi) \Lambda^{-\oldq t}  \,  .
\]
These estimates together with the fact that $|W|^{\oldbeta}|\psi|_{\cC^\oldq(W)} \leq 1$, imply
\begin{equation}
\label{eq:C^q small}
|\psi \circ \Phi_t - \bp_i|_{\cC^\oldq(W^t_i)} \leq C \Lambda^{-\oldq t} |\psi|_{\cC^\oldq(W)}
\leq C \Lambda^{-\oldq t} |W|^{-\oldbeta} \, .
\end{equation}

We apply \eqref{eq:C^q small}, the distortion estimate \eqref{eq:dist holder}
and the definition of the strong stable norm to the first term on the right-hand side
of \eqref{eq:stable split},
\begin{equation}
\label{eq:first stable}
\begin{split}
\sum_i \int_{W^t_i} \oldh \, J_{W^t_i}\Phi_t  \, (\psi \circ \Phi_t - \bp_i) \, dm_W & \leq
C \sum_i \|\oldh\|_s \frac{|W^t_i|^\oldbeta}{|W|^\oldbeta} 
| J_{W^t_i}\Phi_t |_{L^\infty(W^t_i)} \Lambda^{-\oldq t}\\
& \leq \; C'  \Lambda^{-\oldq t} \|\oldh\|_s \,  ,
\end{split}
\end{equation}
where in the second line we have used 
Lemma~\ref{lem:growth}(b) with $\oldbetazeroparam = \oldbetaparam$.

For the second term on the right-hand side of \eqref{eq:stable split}, we use the fact that
$|\bp_i| \leq |W|^{-\oldbeta} $ since $|W|^{\oldbeta}|\psi|_{\cC^ \oldq(W)} \le 1$.
We group the curves $W^t_i \in \cG_t(W)$
according to most recent long ancestors, as in the proof of Lemma~\ref{lem:growth}.
As before, $t_k = t - k \tau_{\min}$, $k = 0, \ldots, \lfloor t/\tau_{\min} \rfloor$,
and each $W^t_i$ belongs either to $A_t(W^{t_k}_j)$ for some $W^{t_k}_j \in \cG_{t_k}(W)$
or $W^t_i \in \cI_t(W)$. 

Using this grouping, we estimate the second term in \eqref{eq:stable split} by
\[
\begin{split}
\sum_{i}  |W|^{-\oldbeta} \int_{W^t_i}\oldh \, J_{W^t_i}\Phi_t \, dm_W
= & \sum_{k=0}^{\lfloor t/\tau_{\min} \rfloor} \sum_{j\in L_k(W,t)}\sum_{ i\in A_t(W^{t_k}_j)}
     |W|^{-\oldbeta} \int_{W^t_i}\oldh J_{W^t_i}\Phi_t \, dm_W \\
&  + \sum_{ i\in \cI_t(W)}
     |W|^{-\oldbeta} \int_{W^t_i} \oldh J_{W^t_i}\Phi_t \, dm_W \,  .
\end{split}
\]
We estimate the terms in the sum over $k$
using the weak norm and the terms corresponding to $\cI_t(W)$ using the strong stable norm,
\begin{equation}
\begin{split}
\label{eq:weak term split}
\sum_{i}  |W|^{-\oldbeta} \int_{W^t_i} \oldh \, J_{W^t_i}\Phi_t \, dm_W
 \leq & 
C\sum_{k=0}^{\lfloor t/\tau_{\min} \rfloor} \sum_{j\in L_k(W,t)}
\sum_{i\in A_t(W^{t_k}_j)}  |\oldh|_w
\frac{|J_{W^t_i}\Phi_t |_{L^\infty(W^t_i)}}{|W|^{\oldbeta}}\\
&  \; \; +
C \sum_{\ i\in \cI_t(W)} \frac{|W^t_i|^\oldbeta}{|W|^\oldbeta} \|\oldh\|_s | J_{W^t_i}\Phi_t|_{L^\infty(W^t_i)} 
\, .
\end{split}
\end{equation}

In the first sum above corresponding to $k\geq 0$, we have as 
in the proof of Lemma~\ref{lem:growth}
$$
|J_{W^t_i} \Phi_t|_{L^\infty(W^t_i)}
\le |J_{W^t_i} \Phi_{k \tau_{\min}}|_{L^\infty(W^t_i)} |J_{W^{t_k}_j} \Phi_{t_k} |_{L^\infty(W^{t_k}_j)}
\,  .
$$
Thus using Lemma~\ref{lem:growth}(a) for $\oldbetazero=0$ from time $t_k$ to time $t$,
\begin{equation}
\label{eq:first strong}
\begin{split}
\sum_{k=0}^{\lfloor t/\tau_{\min} \rfloor} &\sum_{j \in L_k(W,t)} 
\sum_{i \in A_t(W^{t_k}_j)}  |W|^{-\oldbeta} |J_{W^t_i}\Phi_t|_{L^\infty(W^t_i)} \\
& \leq \sum_{k=0}^{\lfloor t/\tau_{\min} \rfloor} \sum_{j \in L_k(W,t)} |J_{W^{t_k}_j}\Phi_{t_k}|_{L^\infty(W^{t_k}_j)} |W|^{-\oldbeta}
\sum_{i \in A_t(W^{t_k}_j)} |J_{W^t_i}\Phi_{k\tau_{\min}}|_{L^\infty(W^t_i)} \\
& \le C L_0^{-\oldbeta} \sum_{k=0}^{\lfloor t/\tau_{\min} \rfloor} \sum_{j \in L_k(W,t)} 
\frac{|W^{t_k}_j|^\oldbeta}{|W|^\oldbeta}  |J_{W^{t_k}_j}\Phi_{t_k}|_{L^\infty(W^{t_k}_j)} \lambda^{k \tau_{\min}}\, ,
\end{split}
\end{equation}
since $|W^{t_k}_j| \ge L_0/3$.  Since each $L_k(W,t) \subset \cG_{t_k}(W)$ by definition,
the  sum over $j$ is bounded independently of $k$ and $t$ by Lemma~\ref{lem:growth}(b).
The sum over $k$ is also bounded independently of $t$.
Finally, for the sum corresponding to $\cI_t(W)$ in \eqref{eq:weak term split}, we 
use Lemma~\ref{lem:growth}(a) with $\oldbetazeroparam = \oldbetaparam$ to get
\[
\sum_{i \in \cI_t(W)} \frac{|W^t_i|^\oldbeta}{|W|^\oldbeta} |J_{W^t_i}\Phi_t|_{L^\infty(W^t_i)}
\le C \lambda^{t(1-\oldbeta)} \, .
\]

Putting this estimate together with \eqref{eq:first strong} in \eqref{eq:weak term split}, we obtain
\begin{equation}
\label{eq:second stable}
\sum_{i}  |W|^{-\oldbeta}\left| \int_{W^t_i}\oldh \, J_{W^t_i}\Phi_t \, dm_W\right|
\; \leq \; C L_0^{-\oldbeta} |\oldh|_w + C\|\oldh\|_s\lambda^{t(1-\oldbeta)}\,  .
\end{equation}

Finally, combining \eqref{eq:first stable} and \eqref{eq:second stable} with \eqref{eq:stable split}
yields
\begin{equation}
\label{eq:strong stable}
\int_W \cL_t \oldh\psi dm_W \leq C\left(\Lambda^{-\oldq t}+\lambda^{t(1-\oldbeta)}\right)\|\oldh\|_s
         + C L_0^{-\oldbeta}  |\oldh|_w \,  .
\end{equation}
Taking the supremum over $W \in \cW^s$ and $\psi \in \cC^\oldq(W)$ with $|W|^{\oldbeta}|\psi|_{\cC^ \oldq(W)}\le 1$
proves \eqref{eq:strong stable L}.


We prove \eqref{eq:neutral L}. Let $\oldh$ and $t>0$ be as before.
Fix $W \in \cW^s$ and $\psi \in \cC^\oldp(W)$ with $|\psi|_{\cC^\oldp(W)} \le 1$.  Then for $\oldh \in \cC^0(\Omega_0) \cap \cB^0$,
\[
\int_W \partial_s \left( ( \cL_t \oldh ) \circ \Phi_s \right)|_{s=0} \, \psi \, dm_W
= \int_W ( \partial_s \left( \oldh \circ \Phi_s \right)|_{s=0} \circ \Phi_{-t}) \, \psi \, dm_W \, ,
\]
where we  used the group property  \eqref{eq:flow derivative}.
Now, 
\[
\begin{split}
\int_W  (\partial_s \left( \oldh \circ \Phi_s \right)|_{s=0} \circ \Phi_{-t} )\, \psi \, dm_W
&  =
\sum_{W_i \in \cG_t(W)} \int_{W_i} (\partial_s \left( \oldh \circ \Phi_s \right)|_{s=0}) \cdot
J_{W_i}\Phi_t \cdot  \psi \circ \Phi_t \, dm_W  \\
& \le \sum_i \| \oldh \|_0 |J_{W_i} \Phi_t|_{\cC^\oldp(W_i)} |\psi\circ \Phi_t |_{\cC^\oldp(W_i)}
\le C \| \oldh \|_0  \, ,
\end{split}
\]
where we estimated the H\"older norms of the functions as for~\eqref{eq:weak L}
and used Lemma~\ref{lem:growth}(b) to bound the sum.  Taking the appropriate
suprema over $W$ and $\psi$ proves \eqref{eq:neutral L}.


\subsection{Unstable norm estimate for $\cL_t$}
\label{unstable norm}

Fix $\oldh$, $t$, and $W$ as in the previous subsection, let $\ve \le 1$, and
consider two curves $W^1, W^2 \in\cW^s$ with $d_{\cW^s}(W^1,W^2) \leq \ve$.
For $t > 0$, we will use the collision map to partition $\Phi_{-t}(W^\ell)$ into
``matched'' pieces $U^\ell_j$ and
``unmatched'' pieces $V^\ell_i$, $\ell=1,2$.
Define $\tW^\ell_i = P^+(W^\ell_i)$ for each $W^\ell_i \in \cG_t(W^\ell)$ and 
$\tW^\ell = P^+(W^\ell)$.
Note that $\tW^\ell_i \in \tG_n(\tW^\ell)$ 
(defined in the beginning of the
proof of Lemma~\ref{lem:growth}) for some $t/\tau_{\max}\le n
\le t/\tau_{\min}$.  We recall the matching of stable
curves for the map used in \cite[\S 4.3]{demers zhang}.  Note that
$\td_{\cW^s}(\tW^1, \tW^2) \le \ve$ by definition of $d_{\cW^s}$.

Recall from Definition~\ref{singul}, $\cS_0 = \{ (r,\vf) \in \cM : \vf = \pm \pi/2 \}$ and 
$\cS_{-n}^{\bH} = \cup_{i=0}^n T^i(\cS_0 \cup \cup_{k \ge k_0} \partial \bH_k)$
denotes the extended singularity set for $T^{-n}$, where we include the boundaries of the
homogeneity strips to ensure applicability of Lemma~\ref{lem:distortion}.
Let $\omega$ be a connected component of $\tW^1 \setminus \cS_{-n}^{\bH}$.
To each
point $z = (r,\vf) \in T^{-n}\omega$, we associate a vertical line segment $\gamma_z$ of length
at most $C\Lambda_0^{-n}\ve$ such that its image $T^n\gamma_z$,
if not cut by a singularity or the boundary of a homogeneity strip,
will have length $C\ve$.  By \cite[\S 4.4]{chernov book}, all the tangent vectors to $T^i\gamma_z$
lie in the unstable cone
$C^u(T^iz)$ for each $i \ge 1$ so that they remain uniformly transverse to the stable cone
and enjoy minimum expansion given by the factor $C\Lambda_0^i$.

Doing this for each connected component of $\tW^1 \setminus \cS_{-n}^{\bH}$,
we subdivide $\tW^1 \setminus \cS_{-n}^{\bH}$ into a countable
collection of subintervals of points for which $T^n\gamma_z$ intersects
$\tW^2 \setminus \cS_{-n}^{\bH}$ and subintervals for which this is not the case.
This 
induces a matching partition on $\tW^2 \setminus \cS_{-n}^{\bH}$.

We denote by $\tV^\ell_i$ the pieces in $T^{-n}\tW^\ell$ which are not matched up by this process
and note that
the images $T^n\tV^\ell_i$ occur either at the endpoints of $\tW^\ell$ or because the vertical segment
$\gamma_z$ has been cut by a singularity.  In both cases, the length of the
curves $T^n\tV^\ell_i$ can be at most $C\ve$ due to the uniform transversality of
$\cS_{-n}^{\bH}$ 
(see \cite[Prop. 4.41]{chernov book}) with the map-stable cone $C^s_z$ and of $C^s_z$ with $C^u_z$.

In the remaining
pieces the foliation $\{ T^n\gamma_z \}_{z \in T^{-n}\tW^1}$ provides a one-to-one correspondence
between points in $\tW^1$ and $\tW^2$.
We further subdivide these pieces in
such a way that the lengths of their images under $T^{-i}$
are less than $L_0$ for each $0 \le i \le n$ and
the pieces are pairwise matched by
the foliation $\{\gamma_z\}$. We call these matched pieces $\tU^\ell_j$.
Possibly changing the constant $L_0/2$ to $L_0/C$ for some uniform constant
$C>1$ (depending only on the distortion constant and the angle between stable and unstable cones) in the definition of $\tG_n(\tW^\ell)$, we may arrange it so that
$\tU^\ell_j \subset \tW^{\ell,n}_i$ for some $\tW^{\ell,n}_i \in \tG_n(\tW^\ell)$ and
$\tV^\ell_k \subset \tW^{\ell,n}_{i'}$ for some $\tW^{\ell,n}_{i'} \in \tG_n(\tW^\ell)$
for all $j, k \ge 1$ and $\ell =1,2$.  There is at most one $\tU^\ell_j$ and two $\tV^\ell_j$
per $\tW^{\ell, n}_i \in \tG_n(\tW^\ell)$.

In this way, we write $\tW^\ell = (\cup_j T^n\tU^\ell_j) \cup (\cup_i T^n\tV^\ell_i)$.
The images $T^n\tV^\ell_i$ of the unmatched pieces must be short
while the images of the matched pieces
$\tU^\ell_j$
may be long or short.

Returning to the components of $\Phi_{-t}(W^\ell)$ for the flow, $\cG_t(W^\ell)$, note that
if $W^\ell_i$ is not part of a curve in $\hG_t(W^\ell)$ undergoing a collision at time $-t$,
then $P^+(W^\ell_i)$ is a union of at-most  one matched and two unmatched curves
$\tU^\ell_j$ and $\tV^\ell_k$ as described above.  We may thus define
$U^\ell_j$ as the subset of $W^\ell_i$ such that $P^+(U^\ell_j) = \tU^\ell_j$ and
$V^\ell_k$ as a connected subset of $W^\ell_i$ such that $P^+(V^\ell_k) = \tV^\ell_k$.
If $W^\ell_i$ is part of a curve in $\hG_t(W^\ell)$ undergoing a collision at time $-t$, then
$P^+(W^\ell_i)$ may correspond to only part of a curve $\tU^\ell_j$ and 
at most two $\tV^\ell_k$ as  described 
above.  However, we will 
still consider those pieces
as matched or unmatched as defined.

With the above matching, $d_{\cW^s}(U^1_j, U^2_j)$ may be infinite since
there may not be an unstable curve connecting the two.  However, there is an unstable
curve $\tW^u_j$ for the map connecting $\tU^1_j$ and $\tU^2_j$.  Note that 
$\tW^u_j$ is the trace of an unstable curve for the flow $W^u_j$ which has nonempty intersection with
$U^1_j$.  Although $W^u_j$ may not intersect $U^2_j$, the weak unstable manifold containing
$W^u_j$ does intersect $U^2_j$, since $P^+(W^u_j) \cap P^+(U^2_j)$ intersect.
Thus there is a time $s_j$ so that 
\begin{equation}
\label{sj}
\Phi_{s_j}(U^2_j) \cap W^u_j \neq \emptyset \, .
\end{equation}  

For $\psi_\ell$  on $W^\ell$ 
with $|\psi_\ell|_{C^\oldp(W^\ell)} \leq 1$ and
$d(\psi_1, \psi_2) = 0$,
with the above construction 
we must bound
\begin{equation}
\label{eq:unstable split}
\begin{split}
\left| \int_{W^1} \cL_th \, \psi_1 \, dm_W \right. & \left. - \int_{W^2} \cL_th \, \psi_2 \, dm_W \right|
  \; \leq \; \sum_{\ell,k} \left|\int_{V^\ell_k} \oldh \, J_{V^\ell_k}\Phi_t \, \psi_\ell\circ \Phi_t \, dm_W \right|\\
  & + \sum_j \left| \int_{U^1_j} \oldh \, J_{U^1_j}\Phi_t \, \psi_1\circ \Phi_t \, dm_W
    - \int_{U^2_j} \oldh \, J_{U^2_j}\Phi_t \, \psi_2\circ \Phi_t \, dm_W \right| \, .
\end{split}
\end{equation}

We do the estimate over the unmatched pieces $V^\ell_k$ first
using the strong stable norm.  To do this, we group the $V^\ell_k$ according to when 
the associated vertical segments were most recently cut as follows (notice that we may just
as well define the vertical segments $\gamma_z$ on $T^{-n}\tW^2$ as on $T^{-n}\tW^1$).
For $z \in T^{-n}\tW^\ell$, let $\gamma^j_z$ denote the component of 
$T^j\gamma_z$ still connected to $T^{j-n}\tW^\ell$.
Define 
\[
\begin{split}
A^\ell(i) = \{ k :  (\cup_{z \in \tV^\ell_k} & \gamma^{n-i-1}_z) \cap (\cS_1^{\bH} \setminus \cS_0^{\bH}) \neq \emptyset \mbox{ and $i \in [0, n-1]$ is  minimal for this property } \} \, .
\end{split}
\]
Let $S$ be the singularity curve which intersects $(\cup_{z \in \tV^\ell_k} \gamma^{n-i-1}_z)$
for some $k \in A^\ell(i)$.  
Then $TS$ is an element of $\cS_{-1}^{\bH}$ and since
the curves $\gamma^{n-i}_z$ have length at most $C \Lambda_0^{-i} \ve$, by the uniform
transversality of stable curves with curves in $\cS_{-1}^{\bH}$, we have
$|T^{n-i} \tV^\ell_k| \le C \ve \Lambda_0^{-i}$.

Next we subdivide the interval $[0,t]$ into times $t_j = t - j \tau_{\min}/2$, 
$j = 0, \ldots, 2\lfloor \frac{t}{\tau_{\min}} \rfloor$.  For each $k \in A^\ell(i)$, 
there exists at least
one $j_k$ such that $\Phi_{j_k \tau_{\min}/2}(V^\ell_k)$ is not undergoing a collision 
(due to the choice $L_0 \le \tau_{\min}/4$ from Lemma~\ref{lem:growth}), and 
$P^+(\Phi_{j_k \tau_{\min}/2}(V^\ell_k)) = T^{n-i}\tV^\ell_k$.  It follows from
Lemma~\ref{lem:smooth} that $|\Phi_{j_k \tau_{\min}/2}(V^\ell_k)| \le C \ve \Lambda_0^{-i}$.

In addition, the $i$th collision must occur at a time $s \in [ \lfloor i \tau_{\min} \rfloor, \lfloor i \tau_{\max} \rfloor + 1]$ so that the number of $t_j$ corresponding to a fixed $i$ is 
bounded by a uniform constant times $i$.  Now for $k \in A^\ell(i)$,
\begin{equation}
\label{eq:one term}
\begin{split}
\int_{V^\ell_k} \oldh \, J_{V^\ell_k}\Phi_t \, \psi_\ell \circ \Phi_t \, dm_W
& = \int_{\Phi_{j_k\tau_{\min}/2}(V^\ell_k)} \!\!\!\!\!\!\cL_{j_k \tau_{\min}/2} \oldh \, J_{\Phi_{j_k \tau_{\min}/2}(V^\ell_k)}\Phi_{t_{j_k}} \, \psi_\ell \circ \Phi_{t_{j_k}} \, dm_W \\
& \le C \| \cL_{j_k \tau_{\min}/2} \oldh \|_s |\Phi_{j_k \tau_{\min}/2}(V^\ell_k)|^\oldbeta
|J_{\Phi_{j_k \tau_{\min}/2}(V^\ell_k)}\Phi_{t_{j_k}}|_{\cC^\oldq} \\
& \le C \| \oldh \|_s \ve^\oldbeta \Lambda_0^{-i/q} |J_{\Phi_{j_k \tau_{\min}/2}(V^\ell_k)}\Phi_{t_{j_k}}|_{L^\infty(\Phi_{j_k \tau_{\min}/2}(V^\ell_k))} \,   ,
\end{split}
\end{equation}
where we have used \eqref{eq:strong stable L} to bound $\| \cL_{j_k \tau_{\min}/2} \oldh \|_s$,
 Lemma~\ref{lem:Holder} to estimate $|\psi_\ell \circ \Phi_{t_{j_k}}|_{\cC^\oldq}
\leq C_1 |\psi_\ell|_{\cC^\oldp(W^\ell)}$, and \eqref{eq:dist holder}.  

In order to sum over the relevant $\ell$ and $k$, we fix $i$, define $J(i)$ to be the set
of $j$ possible for the $i$th collision and $K(j)$ to be the set of $k$ for which $j_k = j$.
Thus using \eqref{eq:one term}, we estimate
\begin{equation}
\label{eq:first unstable}
\begin{split}
\sum_{\ell,k} & \left|\int_{V^\ell_k}\oldh \, J_{V^\ell_k}\Phi_t \, \psi_\ell\circ \Phi_t \, dm_W \right| \\
&\quad \leq
C \| \oldh \|_s \ve^\oldbeta \sum_{\ell,i} \sum_{j \in J(i)} \sum_{k \in K(j)} \Lambda_0^{-i/q} |J_{\Phi_{j \tau_{min}/2}(V^\ell_k)}\Phi_{t_j}|_{L^\infty(\Phi_{j \tau_{min}/2}(V^\ell_k))} \\
&\quad \leq C \| \oldh \|_s \ve^\oldbeta \sum_{\ell,i} \sum_{j \in J(i)} \Lambda_0^{-i/q}
\leq C  \|\oldh\|_s \ve^\oldbeta	 \sum_{\ell,i} i \Lambda_0^{-i /q} 
\le C \ve^\oldbeta \|\oldh\|_s \, ,
\end{split}
\end{equation}
where for the sum over $k$, we have noted that 
$\Phi_{j \tau_{\min}/2}(V^\ell_k) \subset \Phi_{-t_j}(W^\ell)$, and we have applied 
Lemma~\ref{lem:growth}(b) with $\oldbetazero = 0$,
since there are at most two such curves corresponding to
each element of $\cG_{t_j}(W^\ell)$; for the sum over $j$, we have used the fact that 
$\#J(i) \le Ci$ for some $C>0$ depending on $\tau_{\max}/\tau_{\min}$.

Next, we must estimate the second term on the right-hand side of \eqref{eq:unstable split}.
Recall that $\tU^\ell_j = P^+(U^\ell_j)$ can be represented as the graph 
$G^\ell_j(r) = (r, \vf_{\tU^\ell_j}(r))$ of a function
over some $r$-interval $I_j$. By the definition of matched pieces,
$\tU^1_j$ and $\tU^2_j$ are defined over the same interval $I_j$.
 Let $S^1_j(r) = \Phi_{-t(r)} \circ G^1_j(r)$ denote the
(invertible) map from $I_j$ to $U^1_j$ and $S^2_j = \Phi_{-t(r)} \circ G^\ell_j(r)$ 
denote the map from
$I_j$ to $\Phi_{s_j}(U^2_j)$.  
For each $j$, define
\[
\phi_j = [ J_{U^1_j}\Phi_t \cdot \psi_1 \circ \Phi_t ] \circ S^1_j \circ (S^2_j)^{-1}  \,  .
\]
The function $\phi_j$ is well-defined on $U^2_j$, and recalling $s_j$ from \eqref{sj}, we estimate,
\begin{equation}
\label{eq:stepone}
\begin{split}
\left|\int_{U^1_j}\oldh J_{U^1_j}\Phi_t \, \psi_1\circ \Phi_t - \right. & \left.
\int_{U^2_j}\oldh J_{U^2_j}\Phi_t \, \psi_2\circ \Phi_t \right|
\leq \left|\int_{U^1_j}\oldh J_{U^1_j}\Phi_t \, \psi_1\circ \Phi_t -
 \int_{\Phi_{s_j}(U^2_j)} \oldh \,\phi_j \,  \right| \\
& + \left|  \int_{\Phi_{s_j}(U^2_j)} \oldh \,(\phi_j - J_{\Phi_{s_j}(U^2_j)}\Phi_{t-s_j} 
\, \psi_2 \circ \Phi_{t-s_j}) \right|  \\
& + \left| \int_{\Phi_{s_j}(U^2_j)}\oldh J_{\Phi_{s_j}U^2_j}\Phi_{t-s_j} \, \psi_2\circ \Phi_{t-s_j} 
- \int_{U^2_j}\oldh J_{U^2_j}\Phi_t  \, \psi_2\circ \Phi_t \right| \, .
\end{split}
\end{equation}

We estimate the first term in equation~\eqref{eq:stepone} using the unstable norm.
Lemma~\ref{lem:Holder} and the distortion bounds given by \eqref{eq:dist holder} imply that
\begin{equation}
\label{eq:c1-unst 1}
|J_{U^1_j}\Phi_t \cdot \psi_1 \circ
\Phi_t|_{\cC^\alpha(U^1_j)} 
\le C |J_{U^1_j}\Phi_t|_{L^\infty(U^1_j)} \, .
\end{equation}
Similarly, since $|S^1_j \circ (S^2_j)^{-1}|_{\cC^1} \le C$ by Lemma~\ref{lem:smooth},
we have
\[
|\phi_j|_{\cC^\oldp(U^2_j)}
\le |J_{U^1_j}\Phi_t \cdot \psi_1 \circ
\Phi_t |_{\cC^\oldp(U^1_j)} |S^1_j \circ (S^2_j)^{-1}|_{\cC^1}
\le C |J_{U^1_j}T^n|_{L^\infty(U^1_j)} \, .
\]
By the definition of $\phi_j$ and $d(\cdot, \cdot)$,
\[
d(J_{U^1_j}\Phi_t \psi_1\circ \Phi_t, \phi_j )
= \left| \left[ J_{U^1_j}\Phi_t \psi_1\circ \Phi_t \right] \circ S^1_j
  -  \phi_j  \circ S^2_j  \right|_{\cC^0(I_j)} \; = \; 0 \, .
\]
To complete the estimate on the first term of \eqref{eq:stepone},
we need the following lemma:

\begin{lemma}[Lemma 4.2 of \cite{demers zhang}]
\label{lem:graph contract}
There exists $C>0$, independent of $n$, $W^1$, and $W^2$, such that for each $j$,
\[
\td_{\cW^s}(\tU^1_j,\tU^2_j)\leq C\Lambda_0^{-n}n \ve =: \ve_1 \, .
\]
\end{lemma}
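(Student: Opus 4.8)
The plan is to recall the proof of this lemma (it is Lemma~4.2 of \cite{demers zhang}) in the present notation. Both $\tU^1_j$ and $\tU^2_j$ are homogeneous stable curves for $T$, contained in elements of $\tG_n(\tW^1)$ and $\tG_n(\tW^2)$ respectively, and by construction of the matched pieces the vertical foliation $\{\gamma_z\}$ gives an $r$-preserving bijection $\tU^1_j\to\tU^2_j$; writing $\tU^\ell_j=\{(r,\vf^\ell_j(r)):r\in I_j\}$ over a common interval $I_j$, the quantity $\td_{\cW^s}(\tU^1_j,\tU^2_j)$ equals $|\vf^1_j-\vf^2_j|_\infty+|(\vf^1_j)'-(\vf^2_j)'|_\infty$. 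First I would record the basic length bound: since $T^n\gamma_z$ has length $\le C\ve$, while $T^i$ ($1\le i\le n$) expands vectors in the unstable cone by at least $\Lambda_0$ per step and $T^i\gamma_z$ is uncut for $0\le i\le n$ (being inside matched pieces),
\[
|T^i\gamma_z|\le C\Lambda_0^{-(n-i)}\ve\qquad(0\le i\le n),
\]
which already gives $|\vf^1_j-\vf^2_j|_\infty\le C\Lambda_0^{-n}\ve$; so the content of the lemma is the $\cC^1$ bound.

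For $|(\vf^1_j)'-(\vf^2_j)'|_\infty$ the plan is to propagate the slope discrepancy backwards through the orbit. Set $\delta_i=\sup|(\text{slope of }T^i\tU^1_j)-(\text{slope of }T^i\tU^2_j)|$, the supremum over pairs of points joined by $T^i\gamma_z$. At level $n$ the curves $T^n\tU^\ell_j$ are sub-arcs of $\tW^\ell$; since $\td_{\cW^s}(\tW^1,\tW^2)\le\ve$, stable curves have uniformly bounded curvature (so slopes are Lipschitz in $r$), and matched endpoints differ in $r$ by $\le|T^n\gamma_z|\le C\ve$, I get $\delta_n\le C\ve$. Passing from level $i$ to level $i-1$ means applying $DT^{-1}$: the strict invariance $DT^{-1}(C^s)\subset\intt C^s$ from \eqref{mapinvar} makes the induced action on slopes inside $C^s$ a contraction with a uniform factor $\theta\in(0,1)$, and by the one-step hyperbolicity and bounded-distortion estimates for homogeneous stable curves (adapted metric, \cite[Ch.~5]{chernov book}) one may take $\theta\le\Lambda_0^{-1}$, with the slope-map depending Lipschitz-uniformly on the base point along a (homogeneous) matched piece. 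Since the two base orbits at level $i$ are joined by $T^i\gamma_z$ of length $\le C\Lambda_0^{-(n-i)}\ve$, this yields
\[
\delta_{i-1}\le\theta\,\delta_i+C\,\Lambda_0^{-(n-i)}\ve,\qquad i=n,\dots,1,
\]
and iterating, using $\delta_n\le C\ve$ and $\theta\le\Lambda_0^{-1}$ (so $\theta^{j}\Lambda_0^{-(n-1-j)}\le\Lambda_0^{-(n-1)}$ for each $0\le j\le n-1$),
\[
\delta_0\le\theta^n\delta_n+C\ve\sum_{j=0}^{n-1}\theta^{j}\Lambda_0^{-(n-1-j)}\le C\,n\,\Lambda_0^{-n}\ve.
\]
Since at level $0$ the matching is by the vertical segments $\gamma_z$, one has $\delta_0=|(\vf^1_j)'-(\vf^2_j)'|_\infty$, and together with the $\cC^0$ bound this gives $\td_{\cW^s}(\tU^1_j,\tU^2_j)\le Cn\Lambda_0^{-n}\ve=\ve_1$, the constant $C$ depending only on the table and on $L_0,B_0,k_0$, hence uniform in $j,n,W^1,W^2$.

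The hard part will be the middle step: extracting a \emph{uniform} contraction rate $\theta\le\Lambda_0^{-1}$ and a \emph{uniform} Lipschitz modulus for the base-point dependence of the slope-map, despite the fact that the intermediate curves $T^i\tU^\ell_j$ may visit homogeneity strips $\bH_{k_i}$ with $k_i$ arbitrarily large, where $1/\cos\vf$ and the wave-front curvature blow up. This is exactly what the homogeneity-strip decomposition is built to control, through the adapted metric and bounded-distortion lemmas of \cite[Ch.~5]{chernov book} already used above in Lemmas~\ref{lem:smooth} and~\ref{lem:distortion}: because each $T^i\tU^\ell_j$ is homogeneous and has length $\le L_0$, the product of the (possibly large) local Lipschitz constant with the (correspondingly small) length of the relevant segment stays bounded, and this is what makes both $\theta$ and the perturbation constant uniform. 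Once that input is granted, the remainder is the elementary geometric summation above, in which the borderline rate $\theta=\Lambda_0^{-1}$ is precisely what produces the extra factor $n$ in $\ve_1=C\Lambda_0^{-n}n\ve$.
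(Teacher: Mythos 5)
The paper does not prove this statement: it is quoted verbatim as Lemma~4.2 of \cite{demers zhang}, so the only benchmark is the cited proof, and your reconstruction follows essentially the same route as that proof. The two ingredients are exactly right: the $\cC^0$ part is immediate from the length $C\Lambda_0^{-n}\ve$ of the connecting segments $\gamma_z$, and the $\cC^1$ part is the slope/angle recursion along the backward orbit with per-step contraction $\le\Lambda_0^{-1}$ and a perturbation of size $Cd(x_i,\tx_i)\le C\Lambda_0^{-(n-i)}\ve$, whose borderline geometric sum produces the factor $n$ in $\ve_1$. The only place where your write-up leans on an assertion rather than a computation is the claim that strict cone invariance alone yields the uniform projective contraction $\theta\le\Lambda_0^{-1}$ together with a uniform Lipschitz dependence of the slope map on the base point; the rigorous version of this step is the explicit recursion for slopes and angles of stable curves in \cite[eqs.~(5.27)--(5.29)]{chernov book} (the same estimates this paper invokes in the proofs of Lemma~\ref{lem:jac holonomy} and in Subsection~\ref{sec:step3}), where the $Q_j$ terms are bounded by $Cd(T^jx,T^jy)$ precisely because the curves are kept homogeneous, exactly as you indicate. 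With that reference supplied, the argument is complete.
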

Since $P^+(\Phi_{s_j}(U^2_j)) = \tU^2_j$, it follows from the definition of
$d_{\cW^s}$ that $d_{\cW^s}(U^1_j, \Phi_{s_j}(U^2_j)) \le \ve_1$ as well. 
In view of \eqref{eq:c1-unst 1}, we renormalize the test functions by
$R_j = C|J_{U^1_j}\Phi_t|_{\cC^0(U^1_j)}$.
Then we apply the definition of the unstable norm with
$\ve_1$ in place of $\ve$.	Thus,
\begin{equation}
\label{eq:second unstable}
\begin{split}
&\sum_j \left|\int_{U^1_j}\oldh  J_{U^1_j}\Phi_t \, \psi_1\circ \Phi_t -
\int_{\Phi_{s_j}(U^2_j)} \oldh  \, \phi_j \, \right|	 \\
& \leq C \ve_1^\oldbeta \|\oldh\|_u \sum_j |J_{U^1_j}\Phi_t|_{L^\infty(U^1_j)}
   \leq C \|\oldh\|_u t^\gamma \Lambda^{-t\gamma} \ve^\gamma \, ,
   \end{split}
\end{equation}
using the fact that 
$\lfloor \frac{t}{\tau_{\max}} \rfloor \le n \le \lfloor \frac{t}{\tau_{\min}}\rfloor + 1$ and
the sum is bounded by Lemma~\ref{lem:growth}(b)
for $\oldbetazero=0$ since there is at most
one matched piece $U^1_j$ corresponding to each curve
$W^1_i \in \cG_t(W^1)$.

Next we estimate the second term in \eqref{eq:stepone} using the
strong stable norm.
\begin{equation}
\label{eq:unstable strong}
\begin{split}
& \left|  \int_{\Phi_{s_j}(U^2_j)} \oldh \,(\phi_j - J_{\Phi_{s_j}(U^2_j)}\Phi_{t-s_j} 
\, \psi_2 \circ \Phi_{t-s_j} ) \right| \\
& \leq  \|\oldh\|_s |\Phi_{s_j}(U^2_j)|^\oldbeta 
       \left|\phi_j - J_{\Phi_{s_j}(U^2_j)}\Phi_{t-s_j} \cdot \psi_2 \circ \Phi_{t-s_j} \right|_{\cC^\oldq(\Phi_{s_j}(U^2_j))} \,  .
\end{split}
\end{equation}
Using the fact that $|S^1_j|_{\cC^1} |(S^2_j)^{-1}|_{\cC^1} \le C$ and 
$|\psi_1 \circ \Phi_t|_{\cC^\oldq(U^1_j)} \le C |\psi|_{\cC^\oldq(W^1)} \le C$, 
we split the estimate on the norm of the test function,
\begin{equation}
\label{eq:split test}
\begin{split}
\Big| \phi_j - J_{\Phi_{s_j}(U^2_j)}\Phi_{t-s_j} & \cdot \psi_2 \circ \Phi_{t-s_j} \Big|_{\cC^\oldq(\Phi_{s_j}(U^2_j))}
\le C \left| J_{U^1_j}\Phi_t \circ S^1_j - J_{\Phi_{s_j}(U^2_j)}\Phi_{t-s_j} \circ S^2_j \right|_{\cC^\oldq(I_j)}
 \\
& + C \left| J_{\Phi_{s_j}(U^2_j)}\Phi_{t-s_j} \right|_{\cC^\oldq(\Phi_{s_j}(U^2_j))}
\left| \psi_1 \circ \Phi_t \circ S^1_j - \psi_2 \circ \Phi_{t-s_j} \circ S^2_j \right|_{\cC^\oldq(I_j)}\,  .
\end{split}
\end{equation}

We will use the following lemma (which is the analogue of
\cite[Lemmas 4.3 and 4.4]{demers zhang}) to estimate the two differences above.

\begin{lemma}
\label{lem:dist match}
There exists $C>0$ such that for each $j \ge 1$, 
\begin{itemize}
\item[a)] $\displaystyle
|J_{U^1_j}\Phi_t\circ S^1_j
-J_{\Phi_{s_j}(U^2_j)}\Phi_{t-s_j} \circ S^2_j|_{\cC^\oldq(I_j)}
\leq C | J_{U^2_j}\Phi_t|_{C^0(U^2_j)} \ve^{1/3-\oldq}$ ; 
\item[b)] $\displaystyle
\left| \psi_1 \circ \Phi_t \circ S^1_j - \psi_2 \circ \Phi_{t-s_j} \circ S^2_j \right|_{\cC^\oldq(I_j)}
\le C\ve^{\oldp-\oldq}$ .
\end{itemize}
\end{lemma}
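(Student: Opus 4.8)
The plan is to reduce both estimates to comparisons between the corresponding quantities for the collision map $T^n$, for which the analogous statements are established in \cite[Lemmas 4.3, 4.4]{demers zhang}, and then control the extra ``flow'' factors coming from the maps $S^1_j$, $S^2_j$ (and the reparametrization $\Phi_{s_j}$) using the distortion bounds of Lemma~\ref{lem:smooth} together with the $1/2$-H\"older control on collision times \eqref{ceiling}. Throughout, I will use that $\td_{\cW^s}(\tU^1_j,\tU^2_j)\le \ve_1 = C\Lambda_0^{-n}n\ve$ by Lemma~\ref{lem:graph contract}, and that $\tU^1_j$ and $\tU^2_j$ are graphs over a common interval $I_j$, joined by the vertical foliation $\{\gamma_z\}$ which moves points a distance $O(\Lambda_0^{-n}\ve)$ in the $(r,\vf)$-coordinates.

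For part (a), I would factor the flow Jacobian as in \eqref{eq:to map}: $J_{U^\ell_j}\Phi_t = (\text{flow}\to\text{map factor at }U^\ell_j)\cdot J_{\tU^\ell_j}T^n(\cdot)\cdot(\text{map}\to\text{flow factor at }\Phi_t U^\ell_j)$, so that $J_{U^\ell_j}\Phi_t\circ S^\ell_j$ is a product of (i) $J_{\tU^\ell_j}T^n\circ G^\ell_j$, (ii) the Jacobian $J_{P^+(U^\ell_j)}\Phi_{-\tau}$ of the backward flow from the map to the stable curve, and (iii) the graph Jacobian $JG^\ell_j$. The middle factor (i) is handled directly by the map estimate \cite[Lemma 4.3]{demers zhang}, which gives a bound of order $|J_{\tU^2_j}T^n|_{\cC^0}\,\ve^{1/3-\oldq}$ (the $1/3$ is the map's bounded-distortion exponent, cf. Lemma~\ref{lem:distortion}). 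For the flow factors (ii) and (iii): the difference of the graph Jacobians $JG^1_j - JG^2_j$ is Lipschitz-small in $\ve$ by \eqref{eq:dG_W} and the $\cC^1$-closeness of $\vf_{\tU^1_j}$ and $\vf_{\tU^2_j}$ coming from $\td_{\cW^s}(\tU^1_j,\tU^2_j)\le\ve_1$; the difference of the factors (ii) is controlled exactly as in \eqref{eq:log J} and the lines following it (as was already done in the proof of Lemma~\ref{lem:embedding}, cf. \eqref{eq:JTheta}), where the key point is that the two points compared lie on the same vertical line in $(r,\vf)$-coordinates so the $\cK\,dr$-contribution to $d\omega$ cancels, leaving a $\sqrt{\ve}$-H\"older term from $|\tau(Z_1)-\tau(Z_2)|\le C\sqrt\ve$ via \eqref{ceiling}. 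Since $\sqrt\ve\le\ve^{1/3-\oldq}$ for $\ve\le 1$ (recall $\oldq<\oldp\le 1/3$, so $1/3-\oldq<1/2$ — one must be slightly careful here and in fact $1/3-\oldq$ is the smaller exponent, so all flow-factor errors, being $O(\ve)$ or $O(\sqrt\ve)$, are absorbed), multiplying out the three factors and using that all factors are uniformly bounded above and below (factors (ii), (iii)) or comparable to $|J_{U^2_j}\Phi_t|_{\cC^0}$ (factor (i), via Lemma~\ref{lem:expansion}) yields (a). The H\"older norm $|\cdot|_{\cC^\oldq(I_j)}$ (as opposed to $\cC^0$) is controlled because each individual factor is log-H\"older with exponent $\ge 1/3 > \oldq$ by Lemmas~\ref{lem:smooth} and~\ref{lem:distortion}, so differences in $\cC^\oldq$ cost only an interpolation factor, which is where the loss from $1/3$ to $1/3-\oldq$ comes from.

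For part (b), I would use that $d(\psi_1,\psi_2)=0$, i.e. $\psi_1\circ S_{W^1} = \psi_2\circ S_{W^2}$ on the common $r$-interval, together with $|\psi_\ell|_{\cC^\oldp(W^\ell)}\le 1$. Pushing $\psi_\ell\circ\Phi_{t}$ (resp. $\psi_2\circ\Phi_{t-s_j}$) down to $I_j$ via $S^\ell_j$ identifies it, up to the change of variables, with the restriction of $\psi_\ell\circ S_{W^\ell}$ to a subinterval; the difference $\psi_1\circ\Phi_t\circ S^1_j - \psi_2\circ\Phi_{t-s_j}\circ S^2_j$ is then, after unwinding, the difference $\psi_1\circ S_{W^1} - \psi_2\circ S_{W^2}$ evaluated at two points of $I_1\cap I_2$ that differ by $O(\ve)$ (the horizontal discrepancy between the projected curves), which vanishes by $d(\psi_1,\psi_2)=0$ up to the error caused by the fact that $S^1_j$ and $S^2_j$ land on curves whose projections differ by $\ve_1$ rather than coincide, plus the reparametrization error from $\Phi_{s_j}$. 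Since $\psi_\ell$ is $\oldp$-H\"older and points are moved a distance $O(\ve)$ (the $r$-discrepancy $|I_1\triangle I_2|$-type term dominates $\ve_1$ and $\sqrt\ve\,$-effects are, after the contact-form computation, again $\le\ve^{\oldp-\oldq}$ since $\oldp\le 1/3<1/2$... here one checks the exponent bookkeeping carefully, using $\gamma\le\oldp-\oldq$ is not needed but $\oldq<\oldp$ is), one gets a bound $C\ve^\oldp$ in $\cC^0$, hence $C\ve^{\oldp-\oldq}$ in $\cC^\oldq$ after the same interpolation argument as in (a), using that $\psi_\ell\circ\Phi_t\circ S^\ell_j$ has $\cC^\oldp$-norm bounded by a uniform constant (Lemmas~\ref{lem:Holder} and~\ref{lem:smooth}). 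The main obstacle is the careful bookkeeping of which H\"older exponent governs each error term — the competition between the map's intrinsic $1/3$ (distortion), the billiard's intrinsic $1/2$ (collision-time regularity \eqref{ceiling}), the test-function exponents $\oldp,\oldq$, and the $\cC^0\to\cC^\oldq$ interpolation loss — and in checking that after all interpolations the dominant (smallest) exponents are exactly $1/3-\oldq$ in (a) and $\oldp-\oldq$ in (b), with no hidden term of even smaller order; this is exactly the kind of delicate accounting done in \cite[\S4.3]{demers zhang} for the map, transported here through the uniformly-bi-Lipschitz-with-$\cC^{1/2}$-log-Jacobian maps $S^\ell_j$.
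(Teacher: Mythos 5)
Your proposal is correct and follows essentially the same route as the paper's proof: factor $J_{U^\ell_j}\Phi_t\circ S^\ell_j$ through the collision map as in \eqref{eq:jac expansion}, compare the $T^n$-Jacobians via the Demers--Zhang map estimate, control the flow-conversion factors at both ends via \eqref{eq:log J} and the $1/2$-H\"older bound \eqref{ceiling} on $\tau$ (so the $O(\sqrt\ve)$ flow errors are absorbed by the dominant $\ve^{1/3}$ term), and for (b) use $d(\psi_1,\psi_2)=0$ together with the uniform transversality of the matching foliation to reduce to the $\oldp$-H\"older modulus of $\psi_1$ at points $O(\ve)$ apart. The concluding $\cC^0$-to-$\cC^\oldq$ interpolation, using the $1/3$-distortion (resp.\ $\oldp$-H\"older) regularity of the factors, is exactly the paper's appeal to \cite[Lemma 4.3]{demers zhang 2}.
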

\begin{proof}
(a) Let $G_{\tW^\ell}$ denote the graph representing $\tW^\ell = P^+(W^\ell)$ and 
$S^\ell$ denote the map from $I_{\tW^\ell}$ to $W^\ell$, $\ell=1,2$.  Then
$\Phi_t \circ S^1_j = S^1 \circ G_{\tW^1}^{-1} \circ T^n \circ G_{\tU^1_j}$, so that
\begin{equation}
\label{eq:jac expansion}
J\Phi_t \circ S^1_j = J(S^1 \circ G_{\tW^1}^{-1}) \circ (T^n \circ G_{\tU^1_j})
\cdot J_{\tU^1_j}T^n \circ G_{\tU^1_j} \cdot JG_{\tU^1_j} \cdot (JS^1_j)^{-1}\,  ,
\end{equation}
and a similar expression holds for $J_{\Phi_{s_j}(U^2_j)}\Phi_{t-s_j}$.
All Jacobians except for $J_{\tU^1_j}T^n$ are uniformly bounded by
Lemma~\ref{lem:smooth} (note that $S^1 \circ G_{\tW^1}^{-1}$ is the natural
flow map from $P^+(W^1)$ to $W^1$).  

Fixing $r \in I_j$, by \cite[eq. (4.16)]{demers zhang}, we have
\[
|J_{\tU^1_j}T^n \circ G_{\tU^1_j}(r) - J_{\tU^2_j}T^n \circ G_{\tU^2_j}(r)|
\le C |J_{\tU^2_j}T^n|_{L^\infty(\tU^2_j)} \ve^{1/3} \, .
\]
Also, $|JG_{\tU^1_j}(r) - JG_{\tU^2_j}(r)| \le C \ve$
using \eqref{eq:dG_W} and the fact that $\td_{\cW^s}(\tU^1_j, \tU^2_j) \le \ve$.
Now,
\[
|(JS^1_j)^{-1}(r) - (JS^2_j)^{-1}(r)| = \frac{1}{JS^1_j(r)JS^2_j(r)}|JS^1_j(r) - JS^2_j(r)| \, ,
\]
and since by Lemma~\ref{lem:smooth}, $JS^\ell_j$ is bounded away from 0, we can focus on
the difference.  Now in the notation of Lemma~\ref{lem:smooth},
$JS^\ell_j(r) = J_{\tU^\ell_j}\Phi_{-t(r)}(G_{\tU^\ell_j}(r)) JG_{\tU^\ell_j}(r)$, and as already noted,
the difference of $JG_{\tU^\ell_j}$ is bounded by $\ve$.  Thus we focus on the difference
involving $J_{\tU^\ell_j}\Phi_{-t(r)}(G_{\tU^\ell_j}(r))$.   Letting $z_\ell = G_{\tU^\ell_j}(r) = (r_\ell, \vf_\ell)$
and $(dr_\ell, d\vf_\ell)$ denote the tangent vector to $\tU^\ell_j$ at $z_\ell$, we have by 
\eqref{eq:log J}
\[
\ln \frac{J_{\tU^1_j}\Phi_{-\tau(Z_1)}(z_1)}{J_{\tU^2_j}\Phi_{-\tau(Z_2)}(z_2)}
= \frac 12 \ln  \frac{(dr_1)^2 [(\cos \vf_1 + \tau(Z_1)\cK(r_1) - \tau(Z_1)\frac{d\vf_1}{dr_1})^2
+ (\cK(r_1) - \frac{d\vf_1}{dr_1})^2]}
{(dr_2)^2 [(\cos \vf_2 + \tau(Z_2)\cK(r_2) - \tau(Z_2)\frac{d\vf_2}{dr_2})^2
+ (\cK(r_2) - \frac{d\vf_2}{dr_2})^2]} \,  ,
\]
where $z_\ell = P^+(Z_\ell)$, $Z_1 \in U^1_j$, $Z_2 \in \Phi_{s_j}(Z_2)$.  Once again, we
pair corresponding terms and estimate the differences.  We have $d(z_1,z_2) \le \ve$
and $|\frac{d\vf_1}{dr_1} - \frac{d\vf_2}{dr_2}| \le C\ve$ since $\td_{\cW^s}(\tU^1_j, \tU^2_j) \le \ve$.
This leaves only $|\tau(Z_1) - \tau(Z_2)|$ to estimate.  Since by definition of 
$d_{\cW^s}$, there is an unstable curve connecting $U^1_j$ and $\Phi_{s_j}(U^2_j)$,
we have $d(Z_1,Z_2) \le C\ve$ and so $|\tau(Z_1) - \tau(Z_2)| \le C\ve^{1/2}$.

Finally, note that
$J(S^1 \circ G_{\tW^1}^{-1}) = J_{\tW^1}\Phi_{-\tau}$ and so satisfies the same estimate
as above.  Since $d(T^n \circ G_{\tU^1_j}(r), T^n \circ G_{\tU^2_j}(r)) \le C\ve$ by the
uniform transversality of the foliation $\{ T^n \gamma_z \}$, the triangle inequality
together with Lemma~\ref{lem:smooth} yields that this difference is also bounded
by $C\ve^{1/2}$.  

These estimates together with Lemma~\ref{lem:expansion} yield by \eqref{eq:jac expansion},
\[
|J_{U^1_j}\Phi_t \circ S^1_j(r) - J_{\Phi_{s_j}(U^2_j)}\Phi_{t-s_j} \circ S^2_j(r)|
\le C|J_{U^2_j}\Phi_t|_{L^\infty(U^2_j)} \ve^{1/3} \,  .
\]
Now since both Jacobians satisfy bounded distortion along stable curves
with exponent 1/3 by Lemma~\ref{lem:distortion},
we use the H\"older interpolation from
\cite[Lemma 4.3]{demers zhang 2} to conclude the proof of part (a) the lemma.

\smallskip
\noindent
(b)  Fix $r \in I_j$ and set
$z_\ell = v^\ell_j(r)$, $Z_\ell = S^\ell_j(r)$, $\ell = 1,2$.  Also, let 
$Y_1 = \Phi_t(Z_1)$ and $Y_2 = \Phi_{t-s_j}(Z_2)$ denote the images of $Z_1$ and 
$Z_2$ in $W^1$ and $W^2$, respectively.  Now,
\[
|\psi_1 \circ \Phi_t \circ S^1_j(r) - \psi_2 \circ \Phi_{t-s_j} \circ S^2_j(r)|
= |\psi_1(Y_1) - \psi_2(Y_2)|  \, . 
\]
Let $r'$ denote the arclength coordinate of $w_2 = P^+(Y_2)$ and set $w_1 = P^+(Y_1)$.  
Note that $w_1$ does not have the same arclength coordinate as $w_2$ since 
the billiard map does not preserve vertical lines.
Using the
same notation as in part (a), let $w_1' = G_{\widetilde{W}^1}(r')$ and $Y_1' = S^1(r')$
denote the lifts of $r'$ to $P^+(W^1)$ and $W^1$, respectively.  
Since $w_1'$ and $w_2$ lie on the same vertical segment and 
$\tilde d_{\cW^s}(P^+(W^1), P^+(W^2)) \le \ve$, we have $|w_1' - w_2 | \le \ve$.
Also, since $z_1$ and $z_2$
line on the same vertical line segment, the segment connecting $w_1$ and $w_2$ lies in
the map unstable cone and so is uniformly transverse to the stable cone.  Thus
$|w_1 - w_2| \le C \ve$ and so by the triangle inequality, $|w_1 - w_1'| \le C \ve$.  It then follows
from Lemma~\ref{lem:smooth} that $d(Y_1, Y_1') \le C\ve$.  Putting these estimates together,
and using the fact that $|\psi_1 \circ S^1 - \psi_2 \circ S^2|_{\infty} = 0$, we estimate,
\[
\begin{split}
|\psi_1 \circ \Phi_t \circ S^1_j(r) - \psi_2 \circ \Phi_{t-s_j} \circ S^2_j(r)|
& \le |\psi_1(Y_1) - \psi_1(Y_1')| + |\psi_1(Y_1') - \psi_2(Y_2)| \\
& \le C d(Y_1, Y_1')^\oldp \le C \ve^\oldp  \, .
\end{split}
\]
Finally, since $\psi_1$ and $\psi_2$ are H\"older continuous with exponent $\oldp$,
we again use the H\"older interpolation from \cite[Lemma 4.3]{demers zhang 2},
to conclude part (b) of the lemma.
\end{proof}

With Lemma~\ref{lem:dist match} proved, \eqref{eq:unstable strong}, and \eqref{eq:split test}
complete the estimate
on the second term in \eqref{eq:stepone}. Indeed, we have
\begin{equation}
\label{eq:unstable three}
\begin{split}
\sum_j  \left|  \int_{\Phi_{s_j}(U^2_j)} \oldh \,(\phi_j - J_{\Phi_{s_j}(U^2_j)}\Phi_{t-s_j}) 
\, \psi_2 \circ \Phi_{t-s_j} \right| 
& \leq  \sum_j C \|\oldh\|_s |\Phi_{s_j}(U^2_j)|^\oldbeta 
|J_{U^2_j}\Phi_t|_{L^\infty(U^2_j)} \, \ve^{\alpha -\oldq} \\
& \le C \| \oldh\|_s  \ve^{\oldp-\oldq}\, ,
\end{split}
\end{equation}
where again the sum is bounded by Lemma~\ref{lem:growth}(b)
(using that $|\Phi_{s_j}(U^2_j)|$ is comparable to $|U^2_j|$).

It remains to estimate the third term in \eqref{eq:stepone}, which we do using the  neutral norm. For this, we first state and prove a needed bound on the times $s_j$ from
 \eqref{sj},
\begin{lemma}
\label{lem:close}
There exists $C>0$ such that
 $s_j \le C\ve^{1/2}$ for each $j$.
\end{lemma}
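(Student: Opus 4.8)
The plan is to pin down $s_j$ explicitly as a difference of two free‑flight times and then bound that difference by a \emph{geometrically summable} family of H\"older estimates along the two matched orbits. The point is that the naive termwise bound would only give something of size $\asymp t\,\ve^{1/2}$, so the summation structure (not just proximity of curves) is what must be exploited; this is the only delicate step.

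Fix $Y_1\in W^u_j\cap U^1_j$ and set $z^*_1=P^+(Y_1)\in\tW^u_j\cap\tU^1_j$; pick $z^*_2\in\tW^u_j\cap\tU^2_j$, and let $Z^*_2\in U^2_j$ and $\gamma\in W^u_j$ be the points with $P^+(Z^*_2)=P^+(\gamma)=z^*_2$ (these exist since $z^*_2\in\tU^2_j=P^+(U^2_j)$ and $z^*_2\in\tW^u_j=P^+(W^u_j)$, and are unique by injectivity of $P^+$ on admissible curves). Since the backward orbit of $z^*_2$ up to its previous collision lies in the weak unstable manifold through $W^u_j$, the point $\Phi_{s_j}(Z^*_2)\in W^u_j$ furnished by \eqref{sj} still has first collision $z^*_2$, hence coincides with $\gamma$; as $Z^*_2$ and $\gamma$ then lie on a common orbit segment without intervening collision, $|s_j|=|\tau(Z^*_2)-\tau(\gamma)|$. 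It therefore suffices to bound $|\tau(Z^*_2)-\tau(\gamma)|\le|\tau(Z^*_2)-\tau(Y_1)|+|\tau(Y_1)-\tau(\gamma)|$. The second term is immediate: $Y_1$ and $\gamma$ both lie on $W^u_j$, whose trace $\tW^u_j$ is the matching curve between $\tU^1_j$ and $\tU^2_j$, so by uniform transversality of the stable and unstable cones together with $\td_{\cW^s}(\tU^1_j,\tU^2_j)\le\ve_1$ (Lemma~\ref{lem:graph contract}) one has $|\tW^u_j|\le C\ve_1$, hence $|W^u_j|\le C\ve_1\le C\ve$ (bounded distortion of the trace map, as in Lemma~\ref{lem:smooth}); since $\tW^u_j$ is homogeneous, $\tau$ is $1/2$‑H\"older along $W^u_j$ (the argument behind \eqref{ceiling}), giving $|\tau(Y_1)-\tau(\gamma)|\le C\ve^{1/2}$.

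For the first term I would avoid arguing through $d(Z^*_2,Y_1)$ (which only produces a circular chain of triangle inequalities) and instead use that both $Y_1\in U^1_j$ and $Z^*_2\in U^2_j$ flow forward, through exactly $n$ collisions (as $\tU^1_j$ and $\tU^2_j$ belong to the common generation $\tG_n$), into $W^1$ and $W^2$ respectively at time $t$. Writing $t$ as the first flight time plus the subsequent free path lengths along each orbit, and subtracting the two identities, yields
\[
\tau(Y_1)-\tau(Z^*_2)=-\sum_{k=0}^{n-1}\big(\tau(T^k z^*_1)-\tau(T^k z^*_2)\big)+\big(\tau(\Phi_t(Y_1))-\tau(\Phi_t(Z^*_2))\big),
\]
where $\tau(T^k z^*_\ell)$ is the free path length following the collision $T^k z^*_\ell\in\cM$, and $T^n z^*_\ell=P^+(\Phi_t(\cdot))\in\tW^\ell$. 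For $1\le k\le n$ the points $T^k z^*_1$ and $T^k z^*_2$ are joined by the unstable curve $T^k\tW^u_j$; its level‑$0$ image satisfies $|T^n\tW^u_j|\le C\ve$ (it lies in a foliation curve $T^n\gamma_z$ of length $\le C\ve$ and connects the $\ve$‑close traces $\tW^1$, $\tW^2$), so by backward contraction of unstable curves $|T^k\tW^u_j|\le C\Lambda_0^{-(n-k)}\ve$. Since the matching foliation stays in the homogeneous, singularity‑free region where the free path length is $1/2$‑H\"older, $|\tau(T^k z^*_1)-\tau(T^k z^*_2)|\le C\Lambda_0^{-(n-k)/2}\ve^{1/2}$ (the $k=0$ term being $\le C\ve_1^{1/2}\le C\ve^{1/2}$), and summing this geometric series bounds $\sum_{k}|\tau(T^k z^*_1)-\tau(T^k z^*_2)|$ by $C\ve^{1/2}$, \emph{uniformly in $n$}. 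Finally $\Phi_t(Y_1)\in W^1$ and $\Phi_t(Z^*_2)\in W^2$ have traces $C\ve$‑close in $\cM$ (they are joined by $T^n\tW^u_j$), and since $d_{\cW^s}(W^1,W^2)\le\ve$, comparing the first‑collision‑time functions on $W^1$ and $W^2$ via the estimates behind \eqref{eq:log J}--\eqref{ceiling} gives $|\tau(\Phi_t(Y_1))-\tau(\Phi_t(Z^*_2))|\le C\ve^{1/2}$. Combining the three bounds gives $|\tau(Y_1)-\tau(Z^*_2)|\le C\ve^{1/2}$, hence $|s_j|\le C\ve^{1/2}$.

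The hard part is precisely the geometric summation: a crude bound $|\tau(T^k z^*_1)-\tau(T^k z^*_2)|\le C\ve^{1/2}$ on each of the $n\asymp t/\tau_{\min}$ terms would only yield $Cn\ve^{1/2}$, so one genuinely needs the geometric decay $|T^k\tW^u_j|\le C\Lambda_0^{-(n-k)}\ve$ of the width of the matching tube away from level $0$, which in turn comes from backward contraction of unstable curves applied to the level‑$0$ matching width. The remaining inputs are routine variants of estimates already used above (Lemmas~\ref{lem:smooth} and~\ref{lem:dist match}, and the discussion around \eqref{ceiling}), together with the customary care near collisions, singularities, and grazing directions, all of which is handled by the homogeneity of the curves involved.
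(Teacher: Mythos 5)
Your proof is correct and follows essentially the same route as the paper: both arguments decompose the total time $t$ into the successive free-flight times along the two matched orbits, and then exploit the backward geometric contraction of the connecting unstable curve together with the $1/2$-H\"older continuity of the flight time to sum the resulting series to $C\ve^{1/2}$, uniformly in $n$. The only (cosmetic) difference is bookkeeping: you identify $s_j$ directly as a difference of first-collision times of two points with the same trace, whereas the paper bounds $|\tau(\Phi_{-t}Z)-\tau(\Phi_{-t}Y)|$, converts this into $d(\Phi_{-t}Z,\Phi_{-t}Y)\le C\ve^{1/2}$, and concludes by a triangle inequality with the unstable curve of length $C\ve$ joining $U^1_j$ and $\Phi_{s_j}(U^2_j)$.
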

\begin{proof}
We first show that the collision times of $U^1_j$ and $U^2_j$ remain close throughout 
their orbits until time $t$.  Let $n$ denote the number of collisions from time 0 to time $t$.

Let $Z \in \Phi_t(U^1_j)$, $Y \in \Phi_t(U^2_j)$ be two points such that $P^+(Z)$ and
$P^+(Y)$ are connected by one of the
curves $T^n(\gamma_z)$ defined during the matching process at the beginning of this 
section.  Since $d_{\cW^s}(W^1,W^2) \le \ve$, we have $d(P^+(Z), P^+(Y)) \le C\ve$ and
$d(Z,Y) \le C\ve$, for some uniform constant $C>0$. Now
\[
\begin{split}
|\tau^{-n}(P^+(Z)) - \tau^{-n}(P^+(Y))|
& \le \sum_{i = 0}^{n-1} |\tau^-(T^{-i}P^+(Z)) - \tau^-(T^{-i}P^+(Y))| \\
& \le \sum_{i=0}^{n-1} C \Lambda_0^{-i/2} d(P^+(Z), P^+(Y))^{1/2} \le C' \ve^{1/2} \, ,
\end{split}
\]
due to the $1/2$-H\"older continuity of $\tau^-$
(analogous to \eqref{ceiling}) and the uniform expansion along unstable curves.
Since $|\tau(Z)- \tau(Y)| \le C d(Z,Y)^{1/2} \le C \ve^{1/2}$ and since
$t = \tau(\Phi_{-t}(Z)) + \tau^{-n}(P^+(Z)) - \tau(Z)$ with an analogous expression
for $t$ in terms of $Y$, we conclude that
$|\tau(\Phi_{-t}Z) - \tau(\Phi_{-t}Y)| \le C \ve^{1/2}$.  Thus it must be that
$d(\Phi_{-t}Z, \Phi_{-t}Y) \le C \ve^{1/2}$.  

Since $\Phi_{s_j}(U^2_j)$ and $U^1_j$ are connected by an unstable curve of length
$C\ve$, if we choose $Y$ in the above analysis to be the point in $U^2_j$ whose
translate by $s_j$ belongs to this unstable curve, then the triangle inequality yields
$s_j \le C\ve^{1/2}$, proving Lemma~\ref{lem:close}.
\end{proof}

We may now estimate the second term in \eqref{eq:stepone}:
\begin{equation}
\label{eq:unstable third one}
\begin{split}
\int_{\Phi_{s_j}(U^2_j)}\oldh J_{\Phi_{s_j}(U^2_j)}\Phi_{t-s_j} \, \psi_2\circ \Phi_{t-s_j} 
&-  \int_{U^2_j}\oldh J_{U^2_j}\Phi_t  \, \psi_2\circ \Phi_t \\
&= \int_0^{s_j} {\partial_s} \int_{\Phi_s(U^2_j)} \oldh J_{\Phi_s(U^2_j)} \Phi_{t-s} \,
\psi_2 \circ \Phi_{t-s} \, ds \\
& = \int_0^{s_j} \partial_s \int_{U^2_j} \oldh \circ \Phi_s \, J_{U^2_j} \Phi_t \, \psi_2 \circ \Phi_t \, ds\, .
\end{split}
\end{equation}
Thus, recalling \eqref{eq:flow derivative}, 
(with $s_j \le C \sqrt \epsilon$ by Lemma~\ref{lem:close},)
\begin{equation}
\begin{split}
\label{eq:unstable third two}
 \int_0^{s_j}\int_{U^2_j} \partial_s (\oldh \circ \Phi_s) & J_{U^2_j}\Phi_t \, \psi \circ \Phi_t \, dm_W\, ds
=  \int_0^{s_j}\int_{U^2_j} ( \partial_r \oldh \circ \Phi_r)|_{r=s} \circ \Phi_s \,
J_{U^2_j}\Phi_t \, (\psi \circ \Phi_t) \, dm_W \, ds\\
& \le s_j \| \oldh \|_0  |J_{
(U^2_j)}\Phi_{t
} \, (\psi \circ \Phi_{t
})|_{\cC^\oldp(
U^2_j)}
\le C s_j  \| \oldh \|_0 |J_{
(U^2_j)}\Phi_{t
}|_{L^\infty(
U^2_j)} \,  , 
\end{split}
\end{equation}
where once again, we have used Lemma~\ref{lem:Holder} and 
\eqref{eq:dist holder}.

We use Lemma~\ref{lem:close}  together
with \eqref{eq:unstable third one} and \eqref{eq:unstable third two}
to estimate the third term in \eqref{eq:stepone},
\[
\begin{split}
\sum_j \left| \int_{\Phi_{s_j}(U^2_j)}\oldh J_{\Phi_{s_j}(U^2_j)}\Phi_{t-s_j} \, \psi_2\circ \Phi_{t-s_j} 
- \int_{U^2_j}\oldh J_{U^2_j}\Phi_t  \, \psi_2\circ \Phi_t \right| & 
\le \sum_j C \| \oldh \|_0 s_j |J_{U^2_j} \Phi_t|_{L^\infty(U^2_j)} \\
& \le C \ve^{1/2} \| \oldh\|_0 \,  ,
\end{split}
\]
where again we have used Lemma~\ref{lem:growth}(b).

Now we use this bound, together with \eqref{eq:first unstable},
\eqref{eq:second unstable} and \eqref{eq:unstable three} to estimate \eqref{eq:unstable split}
\begin{equation}
 \label{eq:unstable curve}
\begin{split}
 \left|\int_{W^1} \cL_t \oldh \, \psi_1 \, dm_W - \int_{W^2} \cL_t \oldh \, \psi_2 \, dm_W \right|
  \; &\leq \; C \|\oldh\|_s \ve^\oldbeta + C \|\oldh\|_u t^\gamma \Lambda^{-t \gamma} \ve^\gamma\\
  &\qquad + C \|\oldh\|_s  \ve^{\oldp-\oldq} + C \| \oldh \|_0 \ve^{1/2} \, .
\end{split}
\end{equation}
Since $\gamma \le \min \{ \oldbeta, \oldp-\oldq \} < 1/3$, we divide through by $\ve^\gamma$ and take
the appropriate suprema to complete the proof of \eqref{eq:strong unstable L}.


\subsection{Strong continuity of $\cL_t$ on $\cB$}
\label{lastt}

\noindent To complete this section, we state and prove the announced lemma
on the regularity of $t\mapsto \cL_t$. 
(See also the variant given in Lemma \ref{lem:lip} below.)

\begin{lemma}[Strong continuity]
 \label{lem:strong c}
 For each $\oldh \in \cB$ we have $\lim_{t \downarrow 0} \| \cL_t \oldh - \oldh \|_\cB = 0$.
\end{lemma}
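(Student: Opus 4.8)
The strategy is to reduce the claim to an equicontinuity statement on the dense subset in \eqref{smallspace}, using the uniform bound $\|\cL_t\|_\cB \le C$ from Proposition~\ref{prop:L_t} (valid for all $t\ge 0$ by the Corollary on continuity and bounds on $\cB$). Indeed, if $h = \cL_s g$ with $g \in \cC^0_\sim \cap \cC^2(\Omega_0)$, then $\cL_t h - h = \cL_t \cL_s g - \cL_s g = \cL_s(\cL_t g - g)$; since $\cL_s$ is bounded on $\cB$ with norm $\le C$ uniformly in $s$, it suffices to prove $\|\cL_t g - g\|_\cB \to 0$ as $t\downarrow 0$ for $g \in \cC^0_\sim \cap \cC^2(\Omega_0)$. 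Then, given an arbitrary $h \in \cB$ and $\varepsilon > 0$, pick $h' = \cL_s g$ in \eqref{smallspace} with $\|h - h'\|_\cB < \varepsilon$; for $t$ small, $\|\cL_t h - h\|_\cB \le \|\cL_t(h-h')\|_\cB + \|\cL_t h' - h'\|_\cB + \|h' - h\|_\cB \le (C+1)\varepsilon + \|\cL_t h' - h'\|_\cB$, and the middle term tends to $0$. This is the standard $\varepsilon/3$ argument for strong continuity of a semigroup from equicontinuity on a dense set, as in \cite[\S6]{Davies}.

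So the heart of the matter is: for $g \in \cC^0_\sim \cap \cC^2(\Omega_0)$, show $\|\cL_t g - g\|_\cB = \|g\circ\Phi_{-t} - g\|_\cB \to 0$. The plan is to write $g\circ\Phi_{-t} - g$ and estimate each of the three components $\|\cdot\|_s$, $\|\cdot\|_u$, $\|\cdot\|_0$ separately by testing against admissible curves $W \in \cW^s$ and test functions $\psi$. For the stable and unstable norms we integrate $g\circ\Phi_{-t} - g$ against $\psi$ on $W$; since $g$ is $\cC^2$ hence Lipschitz on $\Omega_0$ (in the Whitney sense), and since $\Phi_{-t}$ moves points on $W$ by a distance of order $t$ between collisions, one expects $|g\circ\Phi_{-t} - g|_{\cC^0(W)}$ to be controlled — but the subtlety is that $\Phi_{-t}$ is \emph{discontinuous} across collision times, so on curves $W$ whose $t$-trace hits a scatterer, $g\circ\Phi_{-t}$ jumps. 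This is precisely where the hypothesis $g \in \cC^0_\sim$ enters: $g$ descends to a continuous function on the quotient $\Omega$, so that the apparent jump at a reflection is not a jump of $g\circ\Phi_{-t}$ as a function on $\Omega$, and hence $g\circ\Phi_{-t} \to g$ uniformly on $\Omega$ as $t\downarrow 0$ by uniform continuity of $g$ on the compact space $\Omega$ together with continuity of the flow on $\Omega$ (cited from \cite[Exercise 2.27]{chernov book}). This gives $|g\circ\Phi_{-t} - g|_{\cC^0(W)} \to 0$ uniformly over $W \in \cW^s$, which immediately handles the $\|\cdot\|_s$ part (bounding the H\"older test function trivially and using that $|W|\le L_0$), and, combined with the triangle inequality over matched/unmatched pieces as in the definition of $\|\cdot\|_u$, also the $\|\cdot\|_u$ part (the unmatched pieces are short, of size $O(\varepsilon)$, and on matched pieces one uses that $g\circ\Phi_{-t} - g$ has small sup norm uniformly).

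The genuinely delicate component is the neutral norm $\|\cL_t g - g\|_0$, which involves $\partial_s\big((g\circ\Phi_{-t} - g)\circ\Phi_s\big)\big|_{s=0} = \big(\partial_s(g\circ\Phi_s)|_{s=0}\big)\circ\Phi_{-t} - \partial_s(g\circ\Phi_s)|_{s=0}$ by the group property \eqref{eq:flow derivative}. Setting $\dot g := \partial_s(g\circ\Phi_s)|_{s=0} = \nabla g \cdot \heta$ (a $\cC^1$ function on $\Omega_0$ since $g$ is $\cC^2$), we need $\|\dot g\circ\Phi_{-t} - \dot g\|_{0,\text{weak}} := \sup_{W,\psi}\int_W (\dot g\circ\Phi_{-t} - \dot g)\psi\,dm_W \to 0$. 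Here $\dot g$ is merely $\cC^1$ on $\Omega_0$ and need \emph{not} descend to $\cC^0_\sim$ (differentiating a flow-invariant-ish quantity does not respect the identification $\sim$), so $\dot g\circ\Phi_{-t}$ genuinely jumps at collisions and does not converge uniformly. The plan here is to split the integral: on the (small) part of $W$ whose $t$-trace meets $\cM$ we use that this part has measure $O(t)$ (by finite horizon, $\tau_{\min}>0$, and uniform transversality of stable cones with $\bS$, i.e., the weak transversality discussed in Remark~\ref{cutup}) and that $\dot g$ is bounded, so the contribution is $O(t\,|\dot g|_\infty)$; on the complementary part $\Phi_{-t}$ acts smoothly (it is just a time shift along the flow between collisions) and $\dot g\circ\Phi_{-t} \to \dot g$ uniformly there since $\dot g$ is uniformly continuous on compacts and $\Phi_{-t}$ moves points $O(t)$ without crossing $\cM$. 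Uniformity in $W \in \cW^s$ of the "$O(t)$ measure of the bad part" is the main obstacle: one must quantify how much of an admissible stable curve can be flowed into a collision within time $t$, which follows from the fact that the projections $P^+(W)$ are homogeneous stable curves (condition (W3), so $\cos\varphi$ is bounded below on $P^+(W)$ uniformly? — no, only on a given $W$), so more carefully one stratifies by homogeneity strip and uses the bounded-distortion / bounded-length control on curves in a fixed strip together with Lemma~\ref{lem:smooth}; this is the step I expect to require the most care, but it is routine given the growth and distortion machinery of Section~\ref{section3}. Assembling the three estimates yields $\|\cL_t g - g\|_\cB \to 0$, completing the proof.
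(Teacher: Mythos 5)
Your reduction to the dense subset \eqref{smallspace} via the uniform bound $\|\cL_s\|_{\cB}\le C$ and the $\varepsilon/3$ argument is exactly the paper's second step. The core estimate for $g\in\cC^0_\sim\cap\cC^2(\Omega_0)$, however, is where you diverge, and your route has a genuine gap in the unstable norm. The quantity $\|g\circ\Phi_{-t}-g\|_u$ is a supremum over \emph{all} $\ve>0$ of $\ve^{-\gamma}\bigl|\int_{W_1}f\psi_1-\int_{W_2}f\psi_2\bigr|$. On matched pieces, knowing only that $f=g\circ\Phi_{-t}-g$ has small sup norm $\delta(t)$ gives the difference of integrals bounded by $C\delta(t)$, which after division by $\ve^{\gamma}$ blows up as $\ve\downarrow 0$; sup-norm smallness alone can never control $\|\cdot\|_u$. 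To salvage your approach you would need, in addition, a uniform-in-$t$ modulus of continuity of $g\circ\Phi_{-t}-g$ transverse to the curves (so as to produce a competing bound $C\ve^{\gamma'}$ and interpolate), and this is delicate precisely because $D\Phi_{-t}$ does not converge to the identity near collisions. A milder version of the same issue affects your neutral-norm step, where you correctly flag the difficulty but defer the uniformity over $W\in\cW^s$ of the ``bad set'' estimate to routine machinery.

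The paper sidesteps all of this by writing $\cL_tg-g=\int_0^t\partial_s(\cL_sg)\,ds=\int_0^t\cL_s(\nabla g\cdot\heta)\,ds$ via \eqref{eq:flow derivative}, and then applying the already-established Lasota--Yorke estimates for $\cL_s$ (in particular \eqref{eq:strong stable} and \eqref{eq:unstable curve}) to the fixed $\cC^1$ function $\nabla g\cdot\heta$. The unstable-norm Lasota--Yorke bound supplies the needed $\ve^{\gamma}$ factor uniformly in $s$, the discontinuities of $\Phi_{-s}$ are absorbed by the $\cG_s(W)$ decomposition and the growth lemma, and the time integral yields the quantitative rate $\|\cL_tg-g\|_{\cB}\le Ct\|\nabla g\cdot\heta\|_{\cC^1}$. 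I recommend you restructure your core estimate around this identity rather than around uniform continuity of $g$ on the quotient $\Omega$.
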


\begin{proof}
 First we prove the statement for $\oldh \in \cC^0_\sim \cap \cC^2(\Omega_0)$, then we extend it to general $\oldh \in \cB$.
 We fix $\oldh \in \cC^0_\sim \cap \cC^2(\Omega_0)$ and consider each component of the norm separately.
 
 To estimate the strong stable norm, fix $W \in \cW^s$ and $\psi \in \cC^\oldq(W)$ with 
 $|W|^{\oldbeta}|\psi|_{\cC^\oldq(W)} \le 1$.  Then,\footnote{ Note for any
curve $W$ and any $x$ in W, the point $\Phi_s(x)$ only hits
$\partial \Omega_0$ at finitely many times $s$, 
recall also that $\oldh\in \cC^0_\sim$.}
\[
 \int_W (\cL_t \oldh - \oldh) \psi \, dm_W = \int_W \int_0^t \partial_s (\cL_s \oldh) ds \, \psi \, dm_W \, .
\]
As usual, by the group property (see \eqref{eq:flow derivative}), we have
\begin{equation}
\label{eq:grad}
 \partial_s (\cL_s \oldh) = \partial_r (\oldh \circ \Phi_r) |_{r=0} \circ \Phi_{-s}
 = (\nabla \oldh \cdot \heta) \circ \Phi_{-s} = \cL_s(\nabla \oldh \cdot \heta)\, ,
\end{equation}
where $\heta$ denotes the unit vector in the direction of the flow.  Since
$\nabla \oldh \cdot \heta \in \cC^1(\Omega_0)\subset \cB^0$ (it is not an element
of $\cC^0_\sim$ or $\cB^0_\sim$ in general, but this is immaterial)
we can apply \eqref{eq:strong stable} to estimate,
\[
 \int_W (\cL_t \oldh - \oldh) \psi \, dm_W = \int_0^t \int_W \cL_s(\nabla \oldh \cdot \heta) \psi \, dm_W ds
 \le Ct (\|\nabla \oldh \cdot \heta\|_s + |\nabla \oldh \cdot \heta |_w) \,  ,
\]
where  \eqref{laborne}  gives $\|\nabla \oldh \cdot \heta\|_s + |\nabla \oldh \cdot \heta |_w
\le |\nabla \oldh \cdot \heta |_{\cC^0(\Omega_0)}$.
Taking the supremum over $W$ and $\psi$ yields the estimate on the strong stable norm,
\[
 \| \cL_t \oldh - \oldh \|_s \le C t | \nabla \oldh \cdot \heta |_{\cC^0(\Omega_0)}  \, .
\]

To estimate the unstable norm, fix $\ve >0$ and let $W_1, W_2 \in \cW^s$ satisfy $d_{\cW^s}(W_1,W_2) < \ve$.
Take $\psi_i \in \cC^\oldp(W_i)$ with $|\psi_i|_{\cC^\oldp(W)} \le 1$ and $d(\psi_1,\psi_2)=0$.  
Then using again \eqref{eq:grad} and \eqref{eq:unstable curve}, we have
\[
\begin{split}
 \int_{W_1} (\cL_t \oldh - \oldh) \psi_1 \, dm_W & - \int_{W_2} (\cL_t \oldh - \oldh) \psi_2 \, dm_W \\
 & = \int_{W_1} \int_0^t \partial_s (\cL_s \oldh) ds \, \psi_1 \, dm_W
 - \int_{W_2} \int_0^t \partial_s (\cL_s \oldh) ds \, \psi_2 \, dm_W \\
 & = \int_0^t \Big(\int_{W_1} \cL_s(\nabla \oldh \cdot \heta) \psi_1 \, dm_W - \int_{W_2} \cL_s(\nabla \oldh \cdot \heta) \psi_2 \, dm_W \Big) ds \\
 & \le t C (\| \nabla \oldh \cdot \heta \|_s \ve^\oldbeta + \| \nabla \oldh \cdot \heta \|_u  \ve^\gamma +  \| \nabla \oldh \cdot \heta \|_s  \ve^{\oldp-\oldq} + \| \nabla \oldh \cdot \heta \|_0 \ve^{1/2}) \,  .
 \end{split}
\]
Dividing through by $\ve^{\gamma}$ (recall that $\gamma < 1/3$), recalling \eqref{laborne}, and taking the appropriate suprema yields the bound
$\| \cL_t \oldh - \oldh \|_u \le Ct |\nabla \oldh \cdot \heta |_{\cC^1(\Omega_0)}$.

Finally, to estimate the neutral norm, we  fix 
$0<|r_0|\le 1$,
$W \in \cW^s$ and $\psi \in \cC^\oldp(W)$ with $|\psi|_{\cC^\oldp(W)}\le 1$.  Then using
\eqref{eq:grad} and the neutral estimate \eqref{eq:neutral L}, we have,
\[
\begin{split}
\int_W \partial_r \big((\cL_t f - &\oldh)\circ  \Phi_r\big)|_{r=0} \, \psi \, dm_W 
 =\int_W \partial_r \left( \Big( \int_0^t \cL_s(\nabla \oldh \cdot \heta) ds \Big) \circ \Phi_r \right)\Big|_{r=0} \, \psi \, dm_W  \\ 
 & = \int_0^t  \int_W \partial_r \big( \cL_s(\nabla \oldh \cdot \heta) \circ \Phi_r \big)|_{r=0} \, \psi \, dm_W \,  ds  
\;  \le  \; Ct   \| \nabla \oldh \cdot \heta \|_0 \, . 
 \end{split}
\]

Taken together, the estimates on the three components of the norm imply the bound
$\| \cL_t \oldh - \oldh \|_\cB \le C t | \nabla \oldh \cdot \heta |_{\cC^1(\Omega_0)}$, which proves the lemma
for $\oldh \in \cC^2(\Omega_0)\cap \cC^0_\sim$.

For more general $\oldh \in \cB$, we proceed by approximation.
By the definition of $\cB$, in particular \eqref{smallspace}, we can
approach any $\oldh$ in $\cB$ by a sequence,
$\cL_{t_n}(h_n)$ with $h_n\in \cC^2(\Omega_0)\cap \cC^0_\sim$ and $t_n\ge 0$. Then we write
$$
\cL_t (\oldh) -\oldh =\cL_t(\oldh-\cL_{t_n}h_n) + \cL_{t_n}(\cL_t(h_n)-h_n)
+(\cL_{t_n}h_n-\oldh)\, .
$$
By Proposition~ \ref{prop:L_t}, there exists $\hat C$
so that  $\|\cL_t \oldh\|_\cB \le \hat C  \|\oldh\|_\cB$
for all $\oldh\in \cB\subset \cB^0$.
Thus the first and last term on the right-hand side
above tend to zero in $\cB$ as $n\to \infty$,
uniformly in $t$.
Finally,
$$\|\cL_{t_n}(\cL_t(h_n)-h_n)\|\le \hat C  \|\cL_t(h_n)-h_n\|_{\cB}\, ,
$$
so that
it suffices to see that
$\lim_{t \downarrow 0}\|\cL_t g - g \|_{\cB}=0$ for any $\cC^2$ function
$g\in \cC^0_\sim$, which is precisely what we proved above.
\end{proof}

To apply the results of Butterley \cite{Butterley} to prove Theorem~\ref{main}, we will  use
the following lemma.  

\begin{lemma}
 \label{lem:lip}
 There exists $C>0$ such that $| \cL_t \oldh- \oldh |_w \le Ct\|\oldh\|_{\cB}$ for all $t \ge 0$
 and all $f\in \cB$.
\end{lemma}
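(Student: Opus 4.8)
The statement to prove is that $|\cL_t \oldh - \oldh|_w \le C t \|\oldh\|_{\cB}$ for all $t \ge 0$ and $\oldh \in \cB$. The natural strategy mirrors the strong-continuity argument of Lemma~\ref{lem:strong c}, but only at the level of the \emph{weak} norm $|\cdot|_w$, which is where the gain comes from: we do not need the full strong norm of $\nabla\oldh\cdot\heta$ (which need not be controlled by $\|\oldh\|_\cB$ for a general distribution $\oldh$), only a bound of the form $|\nabla\oldh\cdot\heta|_w \lesssim \|\oldh\|_0$, which is essentially the \emph{definition} of the neutral norm. First I would reduce to $0\le t\le 1$ (for $t\ge 1$ the left side is bounded by $2\sup_{s}|\cL_s\oldh|_w + \text{const} \le C\|\oldh\|_\cB$ using \eqref{eq:weak L}, and $Ct\|\oldh\|_\cB \ge C\|\oldh\|_\cB$ absorbs it), and reduce to $\oldh \in \cC^0_\sim \cap \cC^2(\Omega_0)$ by density in $\cB$: any $\oldh\in\cB$ is a $\|\cdot\|_\cB$-limit of elements $\cL_{t_n}(h_n)$ with $h_n\in\cC^0_\sim\cap\cC^2$, and since both $\oldh\mapsto \cL_t\oldh - \oldh$ and $\oldh \mapsto \|\oldh\|_\cB$ are continuous on $\cB$ (the former by Proposition~\ref{prop:L_t}, giving $|\cL_t g|_w \le C|g|_w \le C\|g\|_\cB$), the inequality for smooth $\oldh$ passes to the limit, noting that $\cL_t$ commutes appropriately with the approximation exactly as in the last paragraph of the proof of Lemma~\ref{lem:strong c}.

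For smooth $\oldh \in \cC^0_\sim\cap\cC^2(\Omega_0)$, fix $W\in\cW^s$ and $\psi\in\cC^\oldp(W)$ with $|\psi|_{\cC^\oldp(W)}\le 1$. Write, using the fundamental theorem of calculus and the group identity \eqref{eq:grad},
\[
\int_W (\cL_t\oldh - \oldh)\,\psi\,dm_W = \int_0^t \int_W \cL_s(\nabla\oldh\cdot\heta)\,\psi\,dm_W\,ds\,.
\]
By \eqref{eq:weak L} applied to the $\cC^1$ function $g := \nabla\oldh\cdot\heta$ (which lies in $\cC^1(\Omega_0)\subset\cB^0$, even though not in $\cC^0_\sim$, which is immaterial here), each inner integral is bounded by $C|g|_w$. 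Then the crucial observation is that $|g|_w = |\nabla\oldh\cdot\heta|_w \le \|\oldh\|_0 \le \|\oldh\|_\cB$: indeed the integrand $\int_W g\,\psi\,dm_W = \int_W \partial_s(\oldh\circ\Phi_s)|_{s=0}\,\psi\,dm_W$ is precisely what the neutral norm $\|\oldh\|_0$ bounds (with the same class of test functions $\psi\in\cC^\oldp(W)$, $|\psi|_{\cC^\oldp(W)}\le 1$, and the same curves $W\in\cW^s$). Hence $\left|\int_W(\cL_t\oldh-\oldh)\psi\,dm_W\right| \le Ct\,\|\oldh\|_0 \le Ct\,\|\oldh\|_\cB$, and taking the supremum over $W$ and $\psi$ gives $|\cL_t\oldh - \oldh|_w \le Ct\,\|\oldh\|_\cB$.

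The main (minor) obstacle is the one just flagged: verifying that the weak norm of $\nabla\oldh\cdot\heta$ is genuinely controlled by $\|\oldh\|_0$ and not merely by the $\cC^0$ or $\cC^1$ norm of $\oldh$ — this is what makes the final bound involve $\|\oldh\|_\cB$ rather than a smooth norm, and thus what makes the lemma useful for the density extension to all of $\cB$. Once this is in hand, the rest is routine: applying \eqref{eq:weak L} termwise under the time integral requires only that $\cL_s$ is uniformly bounded on $\cB_w$ (equivalently on the relevant weak norm) for $s\in[0,t]$, which Proposition~\ref{prop:L_t} provides, and Fubini is justified since everything is a bounded integrand over a bounded interval. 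One should also recall, as in the footnote to Lemma~\ref{lem:strong c}, that for fixed $W$ and $Z\in W$ the orbit $\Phi_s(Z)$ meets $\partial\Omega_0$ only at finitely many times $s$, so $\partial_s(\oldh\circ\Phi_s)$ is defined $dm_W\,ds$-a.e.\ and the interchange of $\int_0^t$ and $\int_W$ causes no difficulty.
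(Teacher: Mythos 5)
Your proof is correct and follows essentially the same route as the paper's: the fundamental theorem of calculus plus the group property \eqref{eq:flow derivative} reduce everything to bounding $\int_W \cL_s(\nabla\oldh\cdot\heta)\,\psi\,dm_W$ by $C\,\|\oldh\|_0$, which is exactly the paper's argument (the paper redoes the $\cG_s(W)$ decomposition with Lemmas~\ref{lem:Holder} and \ref{lem:growth}(b) by hand, while you invoke the already-proven weak bound \eqref{eq:weak L} together with the identity $|\nabla\oldh\cdot\heta|_w=\|\oldh\|_0$ — a purely cosmetic difference). Your explicit density step for general $\oldh\in\cB$ is if anything more careful than the paper's one-line "take the supremum over $\|\oldh\|_\cB\le 1$".
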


\begin{proof}
 Let $\oldh \in \cC^2(\Omega_0)$ and fix $W \in \cW^s$ and $\psi \in \cC^\oldp(W)$ with 
 $|\psi|_{\cC^\oldp(W)} \le 1$.  Then using the group property as in \eqref{eq:flow derivative}
 and changing variables,
 \[
  \begin{split}
   \int_W (\cL_t \oldh - \oldh) \psi \, dm_W & = \int_W \int_0^t \partial_s
   ( \oldh \circ \Phi_{-s}) \psi \, dm_W
   =  \int_0^t \int_W \partial_r (\oldh \circ \Phi_r)|_{r=0} \circ \Phi_{-s} \, \psi \, dm_W ds \\
   & = \int_0^t \sum_{W_i \in \cG_s(W)} \int_{W_i} \partial_r (\oldh \circ \Phi_r)|_{r=0}
      \, \psi \circ \Phi_s \, J_{W_i}\Phi_s \, dm_W ds \\
   & \le t \| \oldh\|_0 \sum_{W_i \in \cG_s(W)} |\psi \circ \Phi_s|_{\cC^\oldp(W_i)} |J_{W_i}\Phi_s |_{\cC^\oldp(W_i)}
   \le C t \| \oldh \|_0 \, ,
  \end{split}
 \]
where we have used Lemma~\ref{lem:Holder} and Lemma~\ref{lem:growth}(b) in the last line
to bound the sum independently of $s$.  Now taking the supremum over $W$ and $\psi$
yields $| \cL_t \oldh - \oldh |_w \le Ct \|\oldh\|_0$, and taking the supremum over $\oldh$ with
$\|\oldh\|_\cB \le 1$ proves the lemma.
\end{proof}


\section{Quasi-compactness of the resolvent $\cR(z)$, first results on the spectrum of $X$}
\label{sec:R}

In this section, we use the control over $\cL_t$ established in the previous section to
deduce the quasi-compactness of the resolvent acting on $\cB$ 
and obtain the first spectral properties
of the generator $X$.  We begin by defining the
generator $X$ of the semi-group acting on elements of $\cB$,
\[
 X\oldh = \lim_{t \downarrow 0} \frac{\cL_t \oldh - \oldh}{t}  \, .
\]
The domain of $X$ is defined as the set of $\oldh \in \cB$ for which the above limit
converges in the norm of $\cB$. 
By Lemma~\ref{lem:strong c}, the operator $\cL_t$ is
strongly continuous on $\cB$ which implies \cite[Lemmas 6.1.11, 6.1.14]{Davies} that  $X$ is a closed operator, with
domain dense
in $\cB$. 
(Indeed by Lemma~\ref{lem:C1}, the domain of $X$ contains $\cC^2(\Omega_0)$.)
This allows us to
define the resolvent $\cR(z) = (z Id-X)^{-1}$.  
For $z = a + ib$, $a>0$, the resolvent
$\cR(z)$ has the representation,
\begin{equation}
\label{eq:R}
\cR(z)\oldh = \int_0^\infty e^{-zt} \cL_t \oldh \,  dt 
\end{equation}
and is a bounded operator on $\cB$.
By induction, the iterates of $\cR(z)$ thus satisfy 
\begin{equation}
\label{eq:Rn}
\cR(z)^n \oldh = \int_0^\infty \frac{t^{n-1}}{(n-1)!} e^{-zt} \cL_t \oldh \, dt \, .
\end{equation}
We will use repeatedly the fact that for any $z \in \mathbb{C}$ with Re$(z)>0$,
\begin{equation}
 \label{eq:int}
 \left| \int_0^\infty \frac{t^{n-1}}{(n-1)!} e^{-zt} dt \right| \le (\mbox{Re}(z))^{-n}\,  .
\end{equation}

Recall the hyperbolicity exponents $\Lambda>1$ from \eqref{Lambda} and $\lambda$
from the growth Lemma~\ref{lem:growth}
In Subsection~\ref{S1},
we will prove the following proposition.

\begin{proposition}
[Lasota-Yorke inequalities for $\cR(z)$]
\label{prop:LY R}
Recall that $\gamma\le \min\{\oldp-\oldq, \oldbeta\}$. For any 
$$1 > {\tilde \lambda} > \max \{ \Lambda^{-\oldq}, \Lambda^{-\gamma}, \lambda^{1-\oldbeta} \}\, $$
there exists $C\ge 1$ such that for all $z \in \mathbb{C}$ with Re$(z) = a >0$, all
$\oldh \in \cB$ and all $n \ge 0$,
\begin{eqnarray}
|\cR(z)^n \oldh|_w & \le & C a^{-n} |\oldh|_w   \label{eq:weak R} \\
\| \cR(z)^n \oldh \|_s & \le & C(a - \ln {\tilde \lambda})^{-n} \| \oldh \|_s + C a^{-n} |\oldh|_w   \label{eq:strong stable R} \\
\| \cR(z)^n \oldh \|_u & \le & C(a - \ln {\tilde \lambda})^{-n} \| \oldh \|_u + C a^{-n} \| \oldh \|_s + C a^{-n}\| \oldh \|_0
\label{eq:strong unstable R} \\
\| \cR(z)^n \oldh \|_0 & \le & C a^{1-n}(1 + a^{-1} |z|) |\oldh|_w \, . \label{eq:neutral R}
\end{eqnarray}
\end{proposition}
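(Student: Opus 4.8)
The plan is to derive Proposition~\ref{prop:LY R} from Proposition~\ref{prop:L_t} by integrating the one-parameter bounds against the kernel $e^{-zt}$ and then iterating. The starting point is the integral representation \eqref{eq:R}, which (for $\Re(z)=a>0$) also gives, by the semigroup property and an $n$-fold convolution, the formula
\[
\cR(z)^n \oldh = \frac{1}{(n-1)!}\int_0^\infty t^{n-1} e^{-zt} \cL_t \oldh \, dt \, ,
\]
valid as a Bochner integral on $\cB$ (and on $\cB_w$). All four estimates will follow by applying the corresponding line of Proposition~\ref{prop:L_t} under the integral sign and using the elementary identity $\int_0^\infty t^{n-1} e^{-at}\, dt = (n-1)!\, a^{-n}$, together with its shifted version $\int_0^\infty t^{n-1} e^{-at}\mu^t\, dt = (n-1)!\,(a-\ln\mu)^{-n}$ for $0<\mu<1$.

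First I would treat the weak bound \eqref{eq:weak R}: insert \eqref{eq:weak L} to get $|\cR(z)^n\oldh|_w \le \frac{C}{(n-1)!}\int_0^\infty t^{n-1}e^{-at}|\oldh|_w\, dt = C a^{-n}|\oldh|_w$. For the strong stable bound \eqref{eq:strong stable R}, I would insert \eqref{eq:strong stable L}, splitting the resulting integral into the term with $(\Lambda^{-\oldq t}+\lambda^{(1-\oldbeta)t})\|\oldh\|_s$, which integrates to a constant times $(a-\ln\tilde\lambda)^{-n}\|\oldh\|_s$ once $\tilde\lambda>\max\{\Lambda^{-\oldq},\lambda^{1-\oldbeta}\}$, and the term $C L_0^{-\oldbeta}|\oldh|_w$, which integrates to $C a^{-n}|\oldh|_w$. (Here one must be slightly careful: \eqref{eq:strong stable L} bounds $\|\cL_t\oldh\|_s$ by a \emph{constant} times $(\Lambda^{-\oldq t}+\lambda^{(1-\oldbeta)t})\|\oldh\|_s + CL_0^{-\oldbeta}|\oldh|_w$, so after integrating the first piece I compare $C'\!\int t^{n-1}e^{-at}(\Lambda^{-\oldq t}+\lambda^{(1-\oldbeta)t})dt$ with $(n-1)!(a-\ln\tilde\lambda)^{-n}$, which holds because $\tilde\lambda$ dominates both rates; to iterate cleanly one may instead prove the $n=1$ bound and then argue by induction using $\cR(z)^n = \cR(z)\cR(z)^{n-1}$, absorbing the cross terms via the already-established weak bound.) The unstable bound \eqref{eq:strong unstable R} is handled the same way from \eqref{eq:strong unstable L}: the leading term $C t^\gamma \Lambda^{-\gamma t}\|\oldh\|_u$ integrates to $\le C (a-\ln\tilde\lambda)^{-n}\|\oldh\|_u$ after absorbing the extra polynomial factor $t^\gamma$ into the exponential gap (using $\tilde\lambda>\Lambda^{-\gamma}$ and $t^\gamma e^{-\delta t}\le C_\delta$), while the remaining terms $C\|\oldh\|_0 + C\|\oldh\|_s$ integrate to $C a^{-n}(\|\oldh\|_0+\|\oldh\|_s)$.

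The genuinely different — and hence the main obstacle — is the neutral bound \eqref{eq:neutral R}, because Proposition~\ref{prop:L_t} only gives $\|\cL_t\oldh\|_0\le C\|\oldh\|_0$, which is not contracted and would merely yield $\|\cR(z)^n\oldh\|_0\le Ca^{-n}\|\oldh\|_0$, not the desired estimate in terms of $|\oldh|_w$. The point is that the neutral direction is the flow direction, so $\partial_t(\oldh\circ\Phi_t)|_{t=0}$ is, after integrating by parts in $t$ inside \eqref{eq:R}, expressible through $\cL_t\oldh$ itself rather than $\partial_t\cL_t\oldh$; concretely, writing $\partial_s((\cR(z)^n\oldh)\circ\Phi_s)|_{s=0}$ and using the group property $\partial_s(\cL_t\oldh\circ\Phi_s)|_{s=0} = \partial_t(\cL_t\oldh)$ together with \eqref{eq:R}, one integrates by parts in $t$: the boundary terms and the extra factor of $z$ produced by differentiating $e^{-zt}$ account for the $(1+a^{-1}|z|)$ factor, and one is left controlling $\int_W \cL_t\oldh\cdot(\text{test})\, dm_W$ type quantities, i.e.\ the \emph{weak} norm of $\cL_t\oldh$, which by \eqref{eq:weak L} is bounded by $C|\oldh|_w$. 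The two extra powers of $a$ in $a^{2-n}$ come from the fact that one needs $n\ge 2$ (one integration by parts consumes one power, and $\cR(z)^n = \cR(z)^2\cR(z)^{n-2}$ with the $\cR(z)^{n-2}$ factor handled by the weak bound \eqref{eq:weak R} giving $a^{-(n-2)}$). I would carry out this computation carefully, keeping track of the collision times where $\Phi_s$ is discontinuous (harmless since these form a measure-zero set of $s$ and $\oldh\in\cC^0_\sim$, as in the footnotes to Lemma~\ref{lem:strong c}), and then assemble the four estimates; the iteration in $n$ for the first three is a routine induction using $\cR(z)^n=\cR(z)^{n-1}\cR(z)$ and the already-proved $n=1$ versions, with cross-terms absorbed by the weaker norms exactly as the statement is structured.
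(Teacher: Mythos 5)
Your proposal is correct and follows essentially the same route as the paper: the first three estimates are obtained exactly as in the text by inserting the $\cL_t$ bounds of Proposition~\ref{prop:L_t} into the $n$-fold Laplace formula \eqref{eq:Rn} and using \eqref{eq:int} with $\tilde\lambda$ dominating the rates, and your treatment of the neutral norm (group property, integration by parts in $t$ producing the $z$-factor, then the weak-norm bound on $\cL_t\oldh$) is precisely the paper's argument. The only cosmetic difference is your bookkeeping for the exponent $2-n$ via $\cR(z)^2\cR(z)^{n-2}$, where the paper simply integrates the kernel $\bigl(\tfrac{t^{n-2}}{(n-2)!}-\tfrac{zt^{n-1}}{(n-1)!}\bigr)e^{-zt}$ directly.
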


From this proposition, we  deduce below a bound on the essential
spectral radius of $\cR(z)$:

\begin{cor}[Spectral and essential spectral radii of $\cR(z)$]\label{CorLY}
Let ${\tilde \lambda}<1$ and $C$ be as in Proposition~\ref{prop:LY R}.
If the constant $c_u$ defining the norm in $\cB$ is small enough (depending only on
$C$ from Proposition~\ref{prop:LY R})
then, for any $z=a+ib$ with $a\ge 1$, the essential
spectral radius of $\cR(z)$ on $\cB$ is bounded by $(a - \ln {\tilde \lambda})^{-1}$, while its spectral radius
is bounded by $a^{-1}$.
\end{cor}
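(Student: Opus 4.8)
The plan is to deduce Corollary~\ref{CorLY} from the Lasota--Yorke inequalities of Proposition~\ref{prop:LY R} via the abstract Hennion/Nussbaum argument together with the compact embedding $\cB \hookrightarrow \cB_w$ (Lemma~\ref{lem:compact}). First I would record the elementary bound on the spectral radius: iterating \eqref{eq:weak R}, \eqref{eq:strong stable R}, \eqref{eq:strong unstable R} and \eqref{eq:neutral R} and using $\| \cdot \|_\cB = \| \cdot \|_s + c_u \| \cdot \|_u + \| \cdot \|_0$, one gets, for $a \ge 1$,
\[
\| \cR(z)^n \oldh \|_\cB \le C' (a - \ln \tilde\lambda)^{-n} \| \oldh \|_\cB + C'' a^{2-n}(1 + a^{-1}|z|) |\oldh|_w \, ,
\]
with $C', C''$ independent of $n$; the second, non-contracted term comes from the neutral estimate \eqref{eq:neutral R}, which is the reason the argument is slightly more delicate than the classical one. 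Since $|\oldh|_w \le C \| \oldh \|_s \le C \| \oldh \|_\cB$, taking $n$-th roots and letting $n \to \infty$ gives that the spectral radius of $\cR(z)$ on $\cB$ is at most $\max\{(a - \ln \tilde\lambda)^{-1}, a^{-1}\} = a^{-1}$ (using $\tilde\lambda < 1$, so $-\ln\tilde\lambda > 0$). This uses no compactness, only the inequalities and a geometric-series bound on the cross terms.

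For the essential spectral radius, the plan is to apply Nussbaum's formula: the essential spectral radius equals $\lim_{n\to\infty} \| \cR(z)^n \|_{\mathrm{ess}}^{1/n}$ where $\| A \|_{\mathrm{ess}} = \inf_K \| A - K\|$ over compact operators $K$ on $\cB$. The point is to show that, for a suitable choice of $c_u$, one can for each $n$ write $\cR(z)^n = A_n + K_n$ with $\|A_n\|_{\cB \to \cB} \le C (a - \ln\tilde\lambda)^{-n}$ (up to a constant absorbing the $a^2(1+a^{-1}|z|)$ prefactor into the exponential decay, which is harmless since $(a-\ln\tilde\lambda)^{-1} < a^{-1} \le 1$ forces a genuine exponential gain once $n$ is large — more precisely one fixes $\mu$ with $(a-\ln\tilde\lambda)^{-1} < \mu < a^{-1}$ and observes $C a^{2-n}(1+a^{-1}|z|) \le \mu^n$ for $n$ large) and $K_n$ compact from $\cB$ to $\cB$. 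The compact part is produced exactly as in \cite{demers zhang}: the term $C a^{-n}|\oldh|_w$ on the right of \eqref{eq:strong stable R}--\eqref{eq:neutral R} means that $\cR(z)^n$, viewed as a map $\cB \to \cB$, differs by a $\|\cdot\|_w$-bounded (hence, by Lemma~\ref{lem:compact}, $\cB \to \cB$ compact) operator from an operator with small $\cB \to \cB$ norm. Concretely, since the unit ball of $\cB$ is compact in $\cB_w$, any operator that factors through $|\cdot|_w$ is compact as an operator into $\cB$, provided one has at the same time a $\cB$-norm bound; this is where choosing $c_u$ small matters, precisely as flagged in the statement — it guarantees the genuinely contracting terms dominate the $c_u^{-1}$-weighted cross terms coming from the unstable norm.

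In more detail, the key step I would carry out is: fix $z = a + ib$ with $a \ge 1$; choose $\mu \in ((a-\ln\tilde\lambda)^{-1}, a^{-1})$; pick $N$ large enough that $C(a-\ln\tilde\lambda)^{-N} \le \tfrac12 \mu^N$ and $C a^{2-N}(1 + a^{-1}|z|) \le \tfrac12 \mu^N$; then from the iterated Lasota--Yorke inequality write $\cR(z)^N = A + K$ where $A$ carries the terms estimated by the $\cB$-norm (so $\|A\|_{\cB\to\cB} \le \mu^N$ after fixing $c_u$ small so the weighting in $\|\cdot\|_\cB$ is consistent) and $K$ carries the terms estimated by $|\cdot|_w$; then $K$ is bounded from $\cB$ to itself by the embedding $\cB \hookrightarrow \cB_w$ and is compact because it factors through the weak norm on the unit ball of $\cB$, which is $\cB_w$-precompact by Lemma~\ref{lem:compact}. (One must check $A$ is honestly bounded $\cB \to \cB$, which follows from Proposition~\ref{prop:LY R} with $n=1$ applied to the definition of $A$, not just from the $n=N$ estimate; this is routine.) Nussbaum's theorem then gives $r_{\mathrm{ess}}(\cR(z)) \le \|\cR(z)^N\|_{\mathrm{ess}}^{1/N} \le \|A\|^{1/N} \le \mu$, and letting $\mu \downarrow (a-\ln\tilde\lambda)^{-1}$ yields the claim. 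The main obstacle, and the only genuinely non-mechanical point, is the bookkeeping needed to absorb the polynomial-in-$a$, linear-in-$|z|$ prefactor from the neutral estimate \eqref{eq:neutral R} into the exponential decay and to verify that the decomposition $\cR(z)^N = A + K$ respects the three-component structure of $\|\cdot\|_\cB$ with a uniform (in $b$, for $a$ fixed) small choice of $c_u$; once that is set up, compactness is immediate from Lemma~\ref{lem:compact} and the rest is the standard Hennion--Nussbaum mechanism.
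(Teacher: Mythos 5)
Your proposal is correct and follows essentially the same route as the paper: the paper likewise assembles the four estimates of Proposition~\ref{prop:LY R} into a single Lasota--Yorke inequality $a^n\|\cR(z)^n \oldh\|_\cB \le \nu_a^n\|\oldh\|_\cB + C(1+|z|a+a^2)|\oldh|_w$ for a suitable $n$ and $c_u$ small, and then invokes the compact embedding of Lemma~\ref{lem:compact} together with the standard Hennion--Nussbaum mechanism. The only cosmetic difference is that the paper leaves the Nussbaum decomposition implicit, whereas you spell it out (note only that the splitting $\cR(z)^N=A+K$ is obtained via the usual covering/measure-of-noncompactness argument rather than literally from the norm inequality).
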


\begin{remark}[Optimal strip]\label{opt} Since our choice of homogeneity layers
gives $\oldp\le 1/3$, it is easy to see that the optimal choice for our parameters
in view of Corollary ~\ref{CorLY}  is
$$
\frac{1}{\oldbetaparam} =\frac{1}{2}, \quad \oldp=\frac{1}{3} \, , \quad \oldq= \frac{1}{6}\, , \quad
\gamma = \frac{1}{6}\, ,
$$
giving a Banach space $\cB$ for 
which we can take $\tilde \lambda > \Lambda^{-1/6}$ arbitrarily close to $ \Lambda^{-1/6}$.
By choosing homogeneity layers $k^\chi$ with  $\chi>1$ arbitrarily close to $1$
(see footnote
 \ref{foot10}), we can consider the space $\cB$ associated to
$\oldbeta=1/2$,  $\oldq=1/4$, $\oldp<1/2$ arbitrarily close to $1/2$, and $\gamma<1/4$ arbitrarily close 
to $1/4$, so that we can take $\tilde \lambda > \Lambda^{-1/4}$ arbitrarily close to $ \Lambda^{-1/4}$. (This seems to be the optimal\footnote{ In particular,
taking other values than the ``Hilbert-space'' choice $\oldbetaparam=2$
does not help here. However, for the proof of Proposition~\ref{dolgo},
it is crucial to let $\oldbetaparam$ tend to $1$.} strip, since for a  piecewise hyperbolic 
symplectic map or hamiltonian
flow
we expect $\Lambda^{-1/2}$ in view of \cite[Remark 5.9]{DL}, and the further square root appears natural in view
of the billiard singularity type, see Lemma~\ref{lem:smooth}.)
To get exponential decay of correlations  (Corollary~\ref{truemain}), we shall need to
consider  in Section~\ref{theend} a Banach space $\widehat \cB\subset \cB$ 
corresponding to $\oldq>0$ very small,
and $\oldbeta <1$ very close to $1$ (we may take   $\oldp\le 1/3$
and take smaller $\gamma$ if needed). However, the non-essential spectra of $\cR$ on  the
Banach spaces $\cB$ and $\widehat \cB$ corresponding to 
these different parameters coincide 
(see e.g.  \cite[App. A]{BT}, using that the spaces
$\widehat \cB\subset \cB$
are continuously embedded in $(C^{1/4}(\Omega_0))^*$, while their intersection
$\widehat \cB$
is a Banach space dense in $\cB$, recalling \eqref{smallspace}), so the final spectral result also holds on the space
$\cB$
for $\frac{1}{\oldbetaparam} =\frac{1}{2}$, $\oldp=\frac{1}{3}$,  $\oldq= \frac{1}{6}$,  $\gamma = \frac{1}{6}$.
Note finally that the choice of homogeneity layer decay also affects the constants
in the approximate foliation constructed in Section~\ref{Lipschitz} and used to prove
Lemma~\ref{dolgolemma} and thus Theorem~\ref{main} and its corollary. The choice of homogeneity
layers made there is independent from the one used to get Corollary~\ref{CorLY}, and in any case
 taking  $1<\chi <2$ close
to $1$ will
in fact give a better exponent for the foliations in 
Section~\ref{Lipschitz} and Remark ~\ref{rem:foliation} . 
\end{remark}

\begin{proof}[Proof of Corollary~\ref{CorLY}]
Fixing $z = a + ib$ with $a >0$, we choose $n\ge 1$ minimal 
so that $2C/(1 -  a^{-1} \ln {\tilde \lambda})^n =:  \nu_a^n < 1$, 
where
$C$ is from Proposition~\ref{prop:LY R}.
If $a\ge 1$, we  have $0<c_u  < (1- a^{-1} \ln {\tilde \lambda})^{-n}$,
if $c_u>0$ satisfies
$
c_u < (1- a^{-1} \ln \tilde \lambda)^{-n}
$, 
with $n$ fixed as above.
Now,
\begin{equation}\label{babyLY} 
\begin{split}
 a^n \| \cR(z)^n \oldh \|_{\cB} & = a^{n}\| \cR(z)^n \oldh \|_s + c_ua^{n} \| \cR(z)^n \oldh \|_u +a^{n} \| \cR(z)^n \oldh \|_0 \\
 & \le C(1- a^{-1} \ln {\tilde \lambda})^{-n} \|\oldh\|_s + C  |\oldh|_w
  + c_u C(1 -  a^{-1} \ln {\tilde \lambda})^{-n} \| \oldh \|_u  \\ 
  & \qquad + c_u C  \| \oldh \|_s+ c_u C \| \oldh \|_0
  + C a (1 +  a^{-1} |z|) |\oldh|_w \\
  & \le 2C(1- a^{-1} \ln \tilde \lambda)^{-n} ( \|\oldh\|_s + c_u \|\oldh\|_u + \|\oldh\|_0) + C(1 + |z|
   +a) |\oldh|_w \\
 & \le \nu_a^n \|\oldh\|_\cB + C(1 + |z| +a)|\oldh|_w\,  .
 \end{split}
 \end{equation}
By a classical result of Hennion \cite{hennion},
this, together with the compactness of the unit ball of $\cB$ in $\cB_w$
(Lemma~ \ref{lem:compact}), implies that the essential
spectral radius of $\cR(z)$ on $\cB$ is bounded by $(a - \ln \tilde \lambda)^{-1}$, while its spectral radius
is bounded by $a^{-1}$.
\end{proof}

As a consequence of Corollary ~\ref{CorLY},  we get our
first result on the spectrum of the generator $X$ (see also Corollary ~\ref{specX'}):

\begin{cor}[Spectrum of $X$]\label{spX}
Under the assumptions  of  Corollary~\ref{CorLY}, the following holds:
The spectrum of $X$ on  $\cB$
is contained in the left half-plane $\Re z \le 0$, and
its intersection with the half-plane
$
\{ z \in \cC \mid \Re z > \ln \tilde \lambda \}
$
consists of at most countably many isolated eigenvalues of finite
multiplicity. 
In addition, the spectrum on the imaginary axis 
consists in a finite union of discrete additive subgroups of $\bR$, and 
if $b \in \bR$ then $X\psi =ib \psi$
for $\psi \in \cB$ implies $\psi \in L^\infty$, while $X\psi=ib\psi$ for $\psi \in L^1$
implies $\psi \in \cB$.
Finally, $X$ has an eigenvalue of algebraic multiplicity one at $0$.
\end{cor}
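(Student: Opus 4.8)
The plan is to deduce Corollary~\ref{spX} from the Lasota--Yorke bounds in Proposition~\ref{prop:LY R} together with the compact embedding Lemma~\ref{lem:compact}, following the standard Hennion/Nussbaum argument, and then to upgrade to the statements about the imaginary axis using the representation \eqref{eq:R} of the resolvent and general semigroup theory. The first step: fix $z=a+ib$ with $a\ge 1$ and apply Corollary~\ref{CorLY}, so $\cR(z)$ is quasi-compact on $\cB$ with essential spectral radius $\le (a-\ln\tilde\lambda)^{-1}$ and spectral radius $\le a^{-1}$. By the spectral mapping theorem for resolvents of generators (see \cite[Ch. 6]{Davies}), a point $\mu\ne 0$ in $\spp(\cR(z))$ corresponds to $z-\mu^{-1}\in\spp(X)$; since $\spp(\cR(z))\subset\{|\mu|\le a^{-1}\}$, this forces $\Re(z-\mu^{-1})\le 0$, i.e.\ $\Re\,w\le 0$ for all $w\in\spp(X)$ (letting $a\to\infty$, or just using $a\ge1$ and that the conclusion is independent of $a$). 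Moreover, eigenvalues $\mu$ of $\cR(z)$ with $|\mu|>(a-\ln\tilde\lambda)^{-1}$ are isolated with finite-dimensional generalized eigenspaces, and these correspond exactly to the isolated finite-multiplicity eigenvalues $w=z-\mu^{-1}$ of $X$ with $\Re\,w>\ln\tilde\lambda$; countability follows since an isolated-point set in $\bC$ is countable.

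Next I would treat the imaginary axis. That $X$ has an eigenvalue at $0$ of algebraic multiplicity one: the constant function $\mathbf 1$ (equivalently, the measure $m$, which lies in $\cB$ by Lemma~\ref{lem:embedding}) satisfies $\cL_t\mathbf 1=\mathbf 1$, hence $X\mathbf 1=0$, so $0$ is an eigenvalue; it is in the non-essential part of the spectrum by the previous paragraph. To see the algebraic multiplicity is one, one shows the corresponding spectral projector $P_0$ of $\cR(z)$ has rank one: if $\cR(z)^n\oldh$ stayed bounded away from zero for a generalized eigenvector, the weak-norm bound \eqref{eq:weak R}, $|\cR(z)^n\oldh|_w\le Ca^{-n}|\oldh|_w$, would force $|\oldh|_w=0$ unless $\oldh$ is a genuine eigenvector; combined with the fact that $|\cdot|_w$ separates points of $\cB$ (injectivity of $\cB\hookrightarrow\cB_w$, Lemma~\ref{lem:embedding}), one reduces to showing the eigenspace $\ker X$ is one-dimensional. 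For this, an eigenfunction $\psi$ with $X\psi=0$ is $\cL_t$-invariant; the regularity statement (that $X\psi=ib\psi$ with $\psi\in\cB$ forces $\psi\in L^\infty$) together with ergodicity of the billiard flow (invoked here) gives $\psi=$ const. The regularity $\psi\in L^\infty$ itself: from $\cL_t\psi=e^{ibt}\psi$ and $\cB\subset(\cC^\oldq(\Omega_0))^*$ (Lemma~\ref{relating}), pull back along stable curves and use the uniform bounds of Proposition~\ref{prop:L_t} on $\|\psi\|_s$, iterating $t\to\infty$; the $\Lambda^{-\oldq t}$ decay in \eqref{eq:strong stable L} forces the stable norm of $\psi$ to be controlled by $|\psi|_w$, and a Lebesgue-differentiation / density argument on stable curves then shows $\psi$ is (a.e.) equal to a bounded function. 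Conversely, an $L^1$ eigenfunction with $X\psi=ib\psi$ is flow-invariant up to phase, hence has the regularity needed to lie in $\cB$ by a direct verification that its Banach norms are finite (using that the modulus $|\psi|$ is flow-invariant, hence in $L^\infty$ by ergodicity, plus invariance of $\cW^s$). Finally, the set $\spp(X)\cap i\bR$ is a finite union of discrete subgroups of $\bR$: it is closed under addition because if $X\psi_1=ib_1\psi_1$ and $X\psi_2=ib_2\psi_2$ with $|\psi_j|$ constant, then $\psi_1\psi_2$ (after normalizing $|\psi_j|\equiv1$) is a bounded flow-eigenfunction with eigenvalue $i(b_1+b_2)$, hence lies in $\cB$; discreteness follows from the spectral gap structure (eigenvalues on $i\bR$ are isolated in $\spp(X)\cap\{\Re w>\ln\tilde\lambda\}$), and finiteness because the billiard flow is a suspension with bounded roof over a map with finitely many "components" in the relevant sense — alternatively this is recorded as part of \cite{Butterley} / the general theory of Anosov-like flows.

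The main obstacle I expect is the $L^\infty$-regularity of imaginary-axis eigenfunctions, i.e.\ the implication $X\psi=ib\psi,\ \psi\in\cB\Rightarrow\psi\in L^\infty$. Unlike the diffeomorphism case, here one must transfer information between the flow and the collision map, control the behavior near grazing collisions through the homogeneity-layer machinery, and exploit the precise decay rate $\Lambda^{-\oldq t}$ in the strong stable bound; making rigorous the passage from "the distribution $\psi$ restricted to every admissible stable curve $W$ is given by integration against a function with controlled H\"older norm, uniformly in $W$" to "$\psi$ is an honest $L^\infty$ function on $\Omega_0$" requires a Fubini-type argument over the (only measurable) stable lamination, which is where the fake-foliation technology or at least a careful covering argument enters. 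Everything else — the quasi-compactness bookkeeping, the rank-one projector, the group structure on $i\bR$ — is routine given Proposition~\ref{prop:LY R}, Lemma~\ref{lem:compact}, the embeddings of Section~\ref{section3}, and ergodicity of the flow.
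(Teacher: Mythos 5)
Your first paragraph (quasi\hyphen compactness of $\cR(z)$, the correspondence $\mu=(z-w)^{-1}$ between $\spp(\cR(z))\setminus\{0\}$ and $\spp(X)$, and the localization of eigenvalues) is correct and is exactly the paper's route, as is the use of ergodicity to get simplicity of $z_0=0$ at the very end. The genuine gap is in the two regularity claims on the imaginary axis, which you correctly flag as the main obstacle but do not resolve. Your proposed mechanism for $X\psi=ib\psi,\ \psi\in\cB\Rightarrow\psi\in L^\infty$ — iterate the $\Lambda^{-\oldq t}$ decay in $\|\cdot\|_s$ and then run a Lebesgue-differentiation/Fubini argument over the stable lamination — does not work as stated: finiteness (even uniform control) of the strong stable norm only says that $\psi$ integrates nicely against test functions \emph{along} stable curves; elements of $\cB$ are genuinely distributional in the unstable direction (e.g.\ measures supported on unstable leaves have finite $\|\cdot\|_s$), so no averaging along stable curves can recover pointwise values, and the decay factor $\Lambda^{-\oldq t}$ in \eqref{eq:strong stable L} plays no role in producing an $L^\infty$ representative. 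The paper's actual argument is purely functional-analytic and sidesteps the lamination entirely: after ruling out Jordan blocks on the peripheral spectrum of $\cR(z)$ (polynomial growth $a^{-n}n^d$ would contradict the uniform bound $\|\cR(z)^n\|\le C a^{-n}$), one writes the spectral projector as a Ces\`aro limit $\Pi_1(z)=\lim_n \frac1n\sum_{k<n}a^k\cR(z)^k$, observes from the Laplace representation \eqref{eq:R} and volume preservation that $|a^k\cR(z)^k\psi_0|_{L^\infty}\le|\psi_0|_{L^\infty}$ for $\psi_0$ in the dense subset $\cD\subset L^\infty\cap\cB$ coming from \eqref{smallspace}, hence $\Pi_1(\cD)\subset L^\infty$, and then uses that $\Pi_1$ has \emph{finite rank} to conclude $\Pi_1(\cB)\subset L^\infty$; finally $\psi=a\Pi_1(z)\psi$.

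The converse direction has the same problem: ``a direct verification that its Banach norms are finite'' is not available for a merely bounded measurable flow-eigenfunction — there is no reason its unstable norm $\|\cdot\|_u$ is finite, and ergodicity only controls the modulus, not the phase. The paper again routes through the projector: approximate $\psi$ in $L^1$ by smooth $\psi_\ve$, show the Ces\`aro averages of $a^k\cR(z)^k\psi_\ve$ converge to $\Pi_1(z)\psi_\ve$ (and that $a^{-1}$ must be an eigenvalue, else testing against $\cC^2$ functions forces $\psi\equiv0$), read off from \eqref{eq:close-psi} that the finitely many coordinates $\ell_k(\psi_\ve)$ are bounded, and extract a convergent subsequence in the finite-dimensional range $\Pi_1(\cB)\subset\cB$. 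Your treatment of the subgroup structure via products $\psi_1\psi_2$ is the paper's \eqref{cutandpaste}, but note that it \emph{depends} on the two regularity implications (one needs $\psi_k\in L^\infty$ to form the product in $L^1$, and then $L^1\Rightarrow\cB$ to conclude $i(b_1+b_2)\in\spp(X)$), so the gap above propagates into that step as well.
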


\begin{proof}
The arguments are standard, see e.g.  \cite[Lemma 3.6, Cor. 3.7]{BaL}.
A nonzero
$\rho\in \bC$ lies in the spectrum of $\cR(z)$ on $\cB$ 
if and only if $\rho=(z-\tilde \rho)^{-1}$,
where $\tilde \rho$ lies in the
spectrum of $X$ as a closed operator on $\cB$ 
(see e.g. \cite[Lemma 8.1.9]{Davies}). For such 
a pair $(\rho,\tilde \rho)$,  is easy to check that, for any $k\ge 1$
and any $\psi \in \cB$, we have
$(\cR(z)-\rho)^k (\psi)=0$ if and only if $(X-\tilde \rho)^k (\psi)=0$, so
that $\rho$ is an eigenvalue of $\cR(z)$ of algebraic multiplicity $m_0$,
$1\le m_0\le \infty$, if and only if $\tilde \rho$ is an eigenvalue 
of $X$ of algebraic multiplicity $m_0$.
The first two claims then follow  from Corollary~ \ref{CorLY}.

It remains to study the spectrum on the imaginary axis.
First note that if $X(\psi)=ib\psi$
for $b\neq 0$, then  $\cR(a+ib)(\psi)=a^{-1}\psi$. 
Another simple computation, using
\cite[Theorem 6.1.16]{Davies}
(noting that $\tilde \psi_t = e^{ib t} \psi$ satisfies $\partial_t \tilde \psi_t|_{t=s}= X (\tilde \psi_s)$
so that $\cL_t (\tilde \psi_0)=\tilde \psi_t$) gives 
\begin{equation}\label{eq:flow-inv}
\psi \circ \Phi_t=e^{-ib t} \psi\, .
\end{equation}

In the following, we will use two properties of $\cB$: First, there exists a  set $\cD\subset L^\infty(\Omega_0)$ dense in $\cB$ (see \eqref{smallspace} and use that $\cL_t$ is a contraction in $L^\infty$, 
i.e., $|\cL_t \psi|_\infty \le |\psi|_\infty$). 
Second, if $\psi\in \cB$ and $\int \psi \varphi\, dm=0$ for all $\varphi\in C^2$, then $\psi=0$ (due to the embedding properties  in Lemma \ref{relating}).

We next show  that $\psi \in L^\infty$ if $\psi \circ \Phi_t=e^{-ib t} \psi$ for $b \in \bR$.
We already observed that this implies $\cR(a+ib)(\psi)=a^{-1}\psi$. Next, let $\{\zeta_j\}_{i=1}^K$, $\zeta_1=a^{-1}$, be the finitely many eigenvalues of modulus $a^{-1}$ of $R(z)$. By the  spectral decomposition, \cite{Ka}, we can write
\begin{equation}\label{eq:spec-bound}
\cR(z)=\sum_{i=1}^K\left[ \zeta_i\Pi_i(z)+N_i(z)\right]+Q(z)\, , \quad z=a+ib\, ,
\end{equation}
where  the
operators  $\Pi_i(z)$ and $N_i(z)$ are finite rank, the $\Pi_i(z)$, $N_i(z)$, and $Q(z)$ commute,
with $\Pi_i(z)^2=\Pi_i(z)$, $\Pi_i(z) Q(z)= N_j(z)\Pi_i(z)=0$, for $i\neq j$,
 $\Pi_i(z) N_i(z)=N_i(z)$, and, if $N_i(z)\neq 0$, there exists
$d_i(z)\in \bN$ such that $N_i(z)^{d_i(z)+1}=0$ while $N_i(z)^{d_i(z)}\ne 0$.
In addition there exist $C(z)>0$ and $\rho_0(z)<a^{-1}$ such that $\|Q^n(z)\|\leq C(z)\rho_0(z)^n$. 
(We suppress the $z$ dependence when no confusion can arise.)
We next show that  $N_i(z)\equiv 0$ for all $i\in\{1,\dots,K\}$. Assume otherwise, then, for any $n\ge 1$, we would have
\[
\begin{split}
\cR(z)^n
&=\sum_{i=1}^K\sum_{j=0}^{d_i(z)}\binom nj \zeta_i^{j-n}\Pi_i(z) N_i(z)^j+\cO(\rho_0(z)^n)\, ,
\end{split}
\]
which, setting $d=\min_i d_i$, would immediately imply that there is  $\tilde \psi\in \cB$ for which 
$\|\cR(z)^n\tilde \psi\|\geq C(z) a^{-n}n^{d(z)}$, contradicting  $\eqref{eq:weak R}$.
Accordingly,  \eqref{eq:spec-bound} implies 
$\lim_{n\to\infty} \frac 1n \sum_{k=0}^{n-1} a^{k}\cR(z)^k=\Pi_1(z)
$.
Let $\psi_0\in \cD$, then for $k\geq 1$, using that $\cL_t$ preserves volume,
\begin{equation}\label{crucialvol}
\left|a^{k}\cR(z)^k(\psi_0)\right|_\infty
\leq a^{k}\int_0^\infty \frac{t^{k-1}e^{-at}}{(k-1)!}  |\psi_0\circ \Phi_{-t}|_\infty \, dt\leq |\psi_0|_{\infty} \, .
\end{equation}
Hence, for each $\psi_0\in\cD$ and $\vf\in C^\infty$, 
\begin{equation}\label{eq:Pi-bound}
\left|\int \Pi_1(z) (\psi_0)\cdot \vf\, dm\right| \leq |\psi_0|_{\infty} |\vf|_{L^1}\, .
\end{equation}
This implies that $\Pi_1(\cD)\subset L^\infty$. Since the range of $\Pi_1$ is finite-dimensional, it follows that  $\Pi_1(z)$ is bounded  from $\cB$ to $L^\infty$,
proving our claim that $\psi \in L^\infty$.

We next check that if $\psi\in L^1\setminus\{0\}$ and 
$\cR(a+ib)(\psi)=a^{-1}\psi$ for $b\in \bR$, then $\psi \in \cB$. Consider a sequence $\{\psi_\ve\}\subset C^\infty$ converging to $\psi$ in $L^1$. 
Then the limit
\[
\lim_{n\to\infty} \frac 1n\sum_{k=0}^{n-1} a^k \cR(z)^k (\psi_\epsilon)
\, , \qquad
z=a+ib\, ,
\]
always exists, and 
it is zero if $a^{-1}\not\in\spp (\cR(z))$, while it equals $\Pi_1(z)(\psi_\ve)$ if $a^{-1}\in\spp (\cR(z))$, where $\Pi_1(z)$ is the eigenprojector associated to $a^{-1}$.
In the first case, for each $\varphi\in C^2$,
\[
\begin{split}
\left|\int\psi \varphi\, dm\right|
&\leq \lim_{\ve\to 0}\lim_{n\to\infty}\frac 1n\sum_{k=0}^{n-1}a^k\int_0^\infty
e^{-at}\frac{t^{k-1}}{(k-1)!}|\psi_\ve-\psi|_{L^1}\cdot|\varphi\circ \Phi(t)|_{\infty}\, dt\\
&\leq \lim_{\ve\to 0}|\psi_\ve-\psi|_{L^1}\cdot|\varphi|_{\infty}=0\, .
\end{split}
\]
We would then have $\psi\equiv 0$, a contradiction. Hence $a^{-1}$ is an eigenvalue, and by the same computation as above
\begin{equation}\label{eq:close-psi}
\left|\int(\psi-\Pi_1(z)(\psi_\ve)) \varphi\, dx\right|\leq |\psi_\ve-\psi|_{L^1}\cdot|\varphi|_{\infty}\, .
\end{equation}
Since $\Pi_1(z)$ is a projector,  $\Pi_1(z)=\sum_k\tilde \psi_k(z) [\ell_k(z)](\cdot)$, where $\tilde \psi_k(z)\in  \cB\cap L^\infty$, the $\ell_k(z)$ belong to the dual of $\cB$, and $\ell_k(\tilde\psi_j)=\delta_{kj}$. Then \eqref{eq:close-psi} shows that  $\ell_k(\psi_\ve)$ is bounded
uniformly in $\epsilon$. We can then extract a  subsequence 
$\ve_j$ 
such that $\Pi_1(z)(\psi_{\ve_j})$ is convergent. Thus, $\psi$ is a linear combination of the $\tilde \psi_k$,  concluding the proof that $\psi\in \cB$.

We next use the facts about $L^1$ and $L^\infty$ to show the discrete subgroup claim.
If $X(\psi_k)=ib_k\psi_k$, with $\psi_k\ne 0$ 
for $k\in\{1,2\}$, we have $\psi_1,\psi_2\in L^\infty$, and
\begin{align}\label{cutandpaste}
\cR(z)(\psi_1\psi_2)&=\int_0^\infty e^{-zt}(\psi_1\circ \Phi_{-t})(\psi_2\circ \Phi_{-t})\, dt
=\psi_1\psi_2 \int_0^\infty e^{-zt +i(b_1+b_2) t}dt\\
\nonumber &=(z-ib_1-ib_2)^{-1}\psi_1\psi_2\, .
\end{align}
Clearly,  $\psi:=\psi_1\psi_2\in L^1\setminus\{0\}$ and 
$\cR(a+ib_1+ib_2)(\psi)=a^{-1}\psi$, so that    $\psi\in \cB$.
Then, \eqref{cutandpaste} implies that either $\psi_1\psi_2=0$ or $ib_1+ib_2\in\spp(X)$. A similar argument applied to $\bar\psi_k$ shows that $-ib_k\in\spp(X)$. Thus $|\bar \psi_k|^2$ belongs to the finite dimensional eigenspace of the eigenvalue zero and $\{im b_k\}_{m\in\bZ}\subset \spp(X)$.  This ends the proof of the subgroup claim.

Finally, if $A$ is a positive measure invariant set, then $\Id_A\in L^\infty$ 
(which belongs to $\cB$ by the argument above) is an eigenvector associated to zero, and if $\psi$ is an eigenvector associated to zero (we can assume without
loss of generality that $\psi$ is real) then $\{\psi\geq \lambda\}$ are invariant sets,
that is, $\psi$ must be piecewise constant (otherwise zero would have infinite multiplicity). In other words the eigenspace of zero is spanned by the characteristic functions of the ergodic decomposition of Lebesgue.
By ergodicity of the billiard flow, zero is a simple eigenvalue.
\end{proof}

\smallskip


\subsection{
Norm estimates  for $\cR(z)$}
\label{S1}
In this 
subsection, we prove Proposition~\ref{prop:LY R}.
Fix $1 > {\tilde \lambda} > \max \{ \Lambda^{-\oldq}, \Lambda^{-\gamma}, \lambda^{(1-\oldbeta)} \}$.

Now taking $W \in \cW^s$ and $\psi \in \cC^\oldp(W)$ with $|\psi|_{\cC^\oldp(W)} \le 1$, we have
by \eqref{eq:weak} and \eqref{eq:int},
\[
\left| \int_W \cR(z)^n \oldh \, \psi \, dm_W \right|
= \left| \int_0^\infty \int_W \cL_t \oldh \, \psi \, dm_W \frac{t^{n-1}}{(n-1)!} e^{-zt} \, dt \right|
\le C |\oldh|_w a^{-n} \, ,
\]
which proves \eqref{eq:weak R}.

Similarly, taking $\psi \in \cC^\oldq(W)$ with $|W|^{\oldbeta}|\psi|_{\cC^\oldq(W)} \le 1$, we have
by \eqref{eq:strong stable},
\[
\begin{split}
\left| \int_W \cR(z)^n \oldh \, \psi \, dm_W \right|
& = \left| \int_0^\infty \int_W \cL_t \oldh \, \psi \, dm_W \frac{t^{n-1}}{(n-1)!} e^{-zt} \, dt  \right| \\
& \le \int_0^\infty [C (\Lambda^{-tq} + \lambda^{t(1-\oldbeta)}) \| \oldh \|_s + C |\oldh|_w] \frac{t^{n-1}}{(n-1)!}
e^{-at} \, dt \\
& \le C(a - \ln {\tilde \lambda})^{-n} \|\oldh\|_s + C a^{-n} |\oldh|_w \, , 
\end{split}
\]
where we have again used \eqref{eq:int}, proving \eqref{eq:strong stable R}.


\smallskip

To prove \eqref{eq:strong unstable R},  
fixing $\ve>0$ and taking $W_1, W_2$ with $d_{\cW^s}(W_1,W_2)<\ve$, 
and $\psi_i \in \cC^\oldp(W_i)$
with $|\psi_i|_{\cC^\oldp(W_i)} \le 1$ and $d(\psi_1,\psi_2)=0$, we estimate
\[
\begin{split}
 \ve^{-\gamma} &\left| \int_{W_1} \cR(z)^n \oldh \, \psi_1 \, dm_W \right.  \left. - \int_{W_2} \cR(z)^n \oldh \, \psi_2 \, dm_W \right| \\
 &\qquad = \left| \int_0^\infty \frac{t^{n-1}}{(n-1)!} e^{-zt} \ve^{-\gamma} \left(
 \int_{W_1} \cL_t \oldh \, \psi_1 \, dm_W - \int_{W_2} \cL_t \oldh \, \psi_2 \, dm_W \right) dt \right| \\
 &\qquad \le \int_0^\infty \frac{t^{n-1}}{(n-1)!} e^{-at} (C {\tilde \lambda}^t \| \oldh \|_u + C (\| \oldh \|_0 + \| \oldh \|_s)) dt \\
 &\qquad \le C (a-\ln {\tilde \lambda})^{-n} \|\oldh\|_u + C a^{-n} (\| \oldh \|_s + \|\oldh\|_0) \, ,
\end{split}
\]
where we have used \eqref{eq:int} and \eqref{eq:strong unstable L}.
Taking the appropriate suprema
proves \eqref{eq:strong unstable R}.


\smallskip
We prove the neutral norm estimate \eqref{eq:neutral R}.
Let $W \in \cW^s$ and $\psi \in \cC^\oldp(W)$ with $|\psi|_{\cC^\oldp(W)} \le 1$.
Using the definition of the neutral norm, we write for 
$\oldh \in \cB\cap \cC^1(\Omega_0)$,
\[
\begin{split}
\int_W  \partial_s ((\cR(z)^n \oldh) \circ \Phi_s)|_{s=0} \, \psi \, dm_W 
& = 
\int_W \int_0^\infty \frac{t^{n-1}}{(n-1)!} e^{-zt} \partial_s \left( (\cL_t \oldh) \circ \Phi_s \right) |_{s=0} dt \, 
\psi \, dm_W \\
& =  \int_W \int_0^\infty \frac{t^{n-1}}
{(n-1)!} e^{-zt} \partial_s (\oldh \circ \Phi_{-s})|_{s=t} \, dt  \,
\psi \, dm_W \\
& = - \int_0^\infty \left( \frac{t^{n-2}}{(n-2)!} - \frac{zt^{n-1}}{(n-1)!} \right) e^{-zt}  
\int_W \cL_t \oldh \, \psi \, dm_W \, dt \, ,
\end{split}
\]
where in the last line we have integrated by parts and used the fact that $\cL_t \oldh = \oldh \circ \Phi_{-t}$.
Finally, 
recalling the weak norm estimate
\eqref{eq:weak} for the integral on $W$ and using \eqref{eq:int} to integrate
with respect to $dt$  yields \eqref{eq:neutral R}.
This ends the proof of Proposition~\ref{prop:LY R}.


\section{Construction of approximate unstable Lipschitz foliations}
\label{Lipschitz}

Fix a length scale $0<\rho<L_0$. For any homogeneous flow stable curve $W \in \cW^s$ such that 
$$ (a) \quad d(W, \partial \Omega_0) \ge 2C_d \rho \quad \mbox { and } \quad
(b) \quad P^+(W) \subset \bH_{k_W} \, ,
$$ 
where $C_d > 1$ is defined by \eqref{eq:step length} and
$k_W$ satisfies\footnote{ The exponent $-1/5$ comes from our choice of homogeneity 
layer (the length of the fiber, $k_W^2 \rho$ must be less than the width
of the strip $k_W^{-3}$); this choice will impact the exponents throughout this section, without
further notice.}
$k_W \le C \rho^{-1/5}$, and for any
$\up>0$ 
and any $x^0$ such that $\Phi_t(W)$ remains at least $C_d\rho$ away from a collision for all
$|t| \le |x^0|$, 
we shall construct in Theorem~\ref{thm:foliation}
below a surface $\cF_{\up,x^0}$ containing $\Phi_{x^0}(W)$
and transversal to the flow, and a
transverse  foliation of a two--dimensional neighborhood of $W$ in
$\cF_{\up, x^0}$  by
(one-dimensional) curves $\gamma$ so that each $\gamma$
is a flow-unstable curve,
and so that $\Phi_{-t}(\gamma)$ is an unstable curve for $0\le t <\up$
and ``most'' curves $\gamma$.

But first, we need to define good local coordinates (making precise
Remark ~\ref{Rk1.4}):
\begin{remark}[$\cC^2$ cone-compatible Darboux charts]\label{suitcharts0}
Let $W$ satisfy (a) and (b) above.
For any $Z \in W$,  by (b) we have
$\cos \vf(P^+(Z)) \ge C\rho^{2/5}$ for some $C>0$.
Similarly, if $Z' \in \Omega_0$ satisfies $d(Z, Z') < C_d\rho$, we have $\tau(Z') \ge C_d\rho$
and $\cos \vf(P^+(Z')) \ge C\rho^{2/5}$ since $P^+(Z')$ lies either in $\bH_{k_W}$ or
$\bH_{k_W \pm 1}$.
Thus it follows from \eqref{eq:stable cone} that the width of the stable cones at
$Z$ and $Z'$ has angle at least of order $\rho^{2/5}$, and the maximum and minimum
slopes in $C^s(Z)$ and $C^s(Z')$ are uniformly bounded multiples of one another.
Similar considerations hold for the unstable cones.
Thus, fixing $Z\in W$, we may adopt local coordinates $(x^u, x^s, x^0)$
in a $c_1 \rho$ neighborhood
of $Z$,  as introduced in Remark~\ref{Rk1.4}, for which the contact
form is in standard form and $Z$ is the origin.  By the above discussion, the charts
effecting this change of coordinates are uniformly $\cC^2$.  The constant $c_1 \ge 1$ is chosen 
large enough that the box $\{ (x^u, x^s, x^0) : x^s \in W, |x^u| \le 2\rho, |x^0| \le 2\rho \}$
is well-defined in these new coordinates, and if necessary, $C_d$ is increased accordingly.
\end{remark}

Adopting such local coordinates $x^u$, $x^s$, and $x^0$ 
such that $W = \{ (0, x^s,0) : x^s \in [0,|W|] \}$, for each
fixed $0<\varpi \le 1/35$ and $\up >0$,  and each
$x^0$ so that $d(\Phi_t(W), \partial \Omega_0) \ge C_d \rho$ for all $|t| \le |x^0|$, 
Theorem~ \ref{thm:foliation} will provide a
surface 
$$
\cF 
= \{ \bF(x^u,x^s)=(x^u, G(x^u, x^s), H(x^u, x^s)+x^0) : x^s \in [0,|W|], x^u \in [-\rho, \rho] \}
$$
transversal to the flow $\Phi_s$ and
so that the curves $x^u \mapsto \gamma_{x^s}(x^u)=(x^u, G(x^u, x^s), H(x^u, x^s)+x^0)$
are pairwise disjoint, satisfying additional
requirements. (We write $\gamma \in \cF$ and view $\cF$ as foliated
by the curves $\gamma=\gamma_{x^s}$; in particular, 
$\forall Z=(0,x^s,0) \in W$, 
 the curve $\gamma_Z:=\gamma_{x^s}$ is the unique $\gamma \in \cF$ such that $\gamma \cap W = \{Z\}$.)
 
 \begin{theorem}\label{thm:foliation} 
Assuming conditions (a) and (b) above, for each
fixed $0<\varpi \le 1/35$ and $\up >0$,  and each
$x^0$ so that $d(\Phi_t(W), \partial \Omega_0) \ge C_d \rho$ for all $|t| \le |x^0|$, 
it is possible to construct  $\cF=\cF_{\up,x^0}$ so that it
satisfies the following conditions, for a constant $C$ depending on
$\varpi$, but independent
  of $\rho$, $\up$, $x^0$, and $W$:
\begin{itemize}
  \item[(i)]  $\partial_{x^u} H=G$, which implies $\bal(\partial_{x^u}\bF)=0$, i.e.,
  each $\gamma \in \cF$   lies in the kernel of the contact form
  $\bal$.
  In addition,  each $\gamma\in \cF$ is an unstable curve 
  (in particular, $(1, \partial_{x^u}G, \partial_{x^u} H)
\in C^u$ and $|\gamma| \le L_0$) with $|\gamma| \ge \rho$.
  \item[(ii)] For all $x^s\in [0, |W|]$ we have $G(0, x^s)= x^s$ and $H(0, x^s)=0$.
  \item[(iii)]   Let $\Delta_\up$ denote the set of $Z \in W$ such that the $\up$-trace of $\Phi_{-\up}(\gamma_Z)$
  contains no intersections with the boundaries of homogeneity strips or tangential collisions.
  Then for any $Z\in \Delta_\up$, we have 
  $\Phi_{-s}(\gamma_Z) \in \cW^u$ for all $0 \le s \le \up$.
  \item[(iv)]    $m_W(W \setminus \Delta_\up) \le C \rho$.
  \item[(v)]  $C^{-1} \le \| \partial_{x^s}G \|  \le C$,
which implies  $|  \partial_{x^s}\partial_{x^u}H|_{\infty} \le C$ by (i).
  \item[(vi)]  $\partial_{x^u}\partial_{x^s}G\in \cC^0$ and $|  \partial_{x^u}\partial_{x^s}G|_{\infty} \le C\rho^{-4/5}$.
  \item[(vii)]     The following four-point estimate holds:
  \begin{align*}
|\partial_{x^s} G(x^u, x^s)- \partial_{x^s}G(x^u, y^s)
&- \partial_{x^s} G(y^u, x^s)+ \partial_{x^s} G(y^u, y^s) | \\
&\le C \rho^{-4/5 - 11\varpi/15}  
|x^u-y^u|^{1-7\varpi}|x^s- y^s|^\varpi
\, .
\end{align*} 
   \item[(viii)]   $| \partial_{x^s} H |_{\cC^0}\le C\rho$  and
  $| \partial_{x^s} H |_{\cC^\varpi} \le C\rho^{6/5-116\varpi/15}$.  
\end{itemize}
\end{theorem}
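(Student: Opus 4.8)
The plan is to construct the surface $\cF_{\up,x^0}$ as the graph of a pair of functions $(G,H)$ obtained by flowing backwards an arbitrary smooth choice of unstable curves attached to $W$, then iterating a graph transform to straighten the foliation, and finally pushing everything forward by $\Phi_{x^0}$. First I would set up the cone-compatible Darboux chart of Remark~\ref{suitcharts0} around a point $Z\in W$, so that $W=\{(0,x^s,0)\}$, the contact form is $\balzero=\extd x^0-x^s\extd x^u$, and $(1,0,0)\in C^u$, $(0,1,0)\in C^s$. Property (i) is then essentially a bookkeeping identity: demanding that each leaf $\gamma_{x^s}$ lie in $\ker\bal$ forces $\partial_{x^u}H=G$ (this is exactly the calculation $\bal(\partial_{x^u}\bF)=H_{x^u}-G=0$ in the standard chart), so $H$ is determined by $G$ up to the normalization (ii) $H(0,x^s)=0$, and one only has to arrange $G(0,x^s)=x^s$ and $(1,\partial_{x^u}G,\partial_{x^u}H)\in C^u$. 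The rough size bounds $\rho\le|\gamma|\le L_0$ come from the chart size $c_1\rho$ fixed in Remark~\ref{suitcharts0} together with the admissibility length $L_0$.

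Next I would produce the foliation itself. The natural device (as in \cite[\S6]{BaL}, \cite{Li04}) is a graph transform: start with any initial $\cC^2$ family $\gamma^{(0)}_{x^s}$ of unstable curves through the points of $W$, flow backwards by time $\up$, use the strict invariance and expansion of the unstable cone (Lemma~\ref{flowinvar}, Lemma~\ref{lem:expansion}) and the one-step expansion / growth Lemma~\ref{lem:growth} to get a contraction in a suitable $\cC^1$-type norm on the space of such families, and extract the fixed-point family $\gamma_{x^s}$; then flow forward again by $\up$ and translate by $\Phi_{x^0}$. Property (iii) is then automatic for $Z\in\Delta_\up$: if the $\up$-trace of $\Phi_{-\up}(\gamma_Z)$ meets no homogeneity boundary and no tangential collision, the backward images $\Phi_{-s}(\gamma_Z)$ stay admissible unstable curves by cone invariance and bounded distortion. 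Property (iv), $m_W(W\setminus\Delta_\up)\le C\rho$, is where I would invoke a growth/complexity estimate directly (the footnote signals a direct appeal to \cite[Theorem~5.52]{chernov book}): the bad set consists of points whose unstable fiber, of length $\gtrsim\rho$, is cut within time $\up$ by a singularity curve of $T^{-n}$ or by a homogeneity boundary, and uniform transversality of $\cS^{\bH}_{-n}$ with unstable curves, combined with summable growth, forces the total measure of such points on $W$ to be $O(\rho)$.

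The heart of the argument is the quantitative regularity in the $x^s$-direction, i.e.\ (v)--(viii). The uniform two-sided bound (v) on $\|\partial_{x^s}G\|$ follows from the fact that $x^s\mapsto\gamma_{x^s}$ is, up to collisions, a constant-time flow translate (the Jacobi form of the linearized flow $D\Phi_t$ acts trivially in the $d\eta$ direction), together with bounded distortion Lemma~\ref{lem:distortion}; and then $\|\partial_{x^s}\partial_{x^u}H\|_{L^\infty}=\|\partial_{x^s}G\|\le C$ by (i). The bound (vi) $\|\partial_{x^u}\partial_{x^s}G\|_{L^\infty}\le C\rho^{-4/5}$ reflects the curvature blow-up: near a near-tangential collision the unstable cone width is $\sim\cos\vf\sim k_W^{-2}\gtrsim\rho^{2/5}$, and since the fiber must stay inside a cone that narrows at rate governed by $2\cK/\cos\vf$, differentiating the invariance relation twice produces a factor $(\cos\vf)^{-2}\sim\rho^{-4/5}$; this is exactly the place where the choice $k_W\le C\rho^{-1/5}$ (footnoted) is used. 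The four-point estimate (vii) is the most delicate: rather than controlling $\partial_{x^u}\partial_{x^s}G$ in $\cC^\varpi$ (which would be too expensive because of the $\rho^{-4/5}$), one interpolates between the $\rho^{-4/5}$ bound from (vi) and a cheaper Hölder bound, exploiting that the mixed difference annihilates the worst part; this is the ``four-point condition'' replacing the $\cC^{1+\ho}$ holonomy estimates of \cite{BaL}, and it will be obtained by writing $\partial_{x^s}G$ along two nearby fibers $x^u,y^u$ as an integral of a derivative over the interval $[x^u,y^u]$ and using distortion bounds (Lemma~\ref{lem:smooth}, Lemma~\ref{lem:distortion}) that are themselves only $1/2$- or $1/3$-Hölder. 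Finally (viii), $\|\partial_{x^s}H\|_{\cC^0}\le C\rho$ and the $\cC^\varpi$ bound, follow from (i), namely $\partial_{x^s}H(x^u,x^s)=\int_0^{x^u}\partial_{x^s}G(t,x^s)\,dt-\int_0^{x^u}\partial_{x^s}G(t,0)\,dt$ together with (vii) and $|x^u|\le\rho$; the Hölder exponent in $x^s$ then compounds the one from (vii) with the integration in $x^u$, giving the stated $\rho^{6/5-116\varpi/15}$.

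The step I expect to be the main obstacle is (vii), the four-point estimate, because controlling a mixed second difference of $\partial_{x^s}G$ requires tracking how the backward orbit of the fiber through $(y^u,\cdot)$ differs from that through $(x^u,\cdot)$ across a variable number of collisions with distortion constants that are merely $1/2$-Hölder in the billiard metric — precisely the feature (intrinsic exponent $1/2$, see Lemma~\ref{lem:smooth} and \eqref{ceiling}) that makes naive $\cC^{1+\ho}$ bounds fail. I would handle it by a careful telescoping along the singularity-refined partition $\cG_t$, pairing corresponding collision factors and using the uniform transversality of the unstable cone with $\cS^{\bH}_{-n}$ to keep the number and size of cuts under control, and only then interpolating against (vi); the detailed bookkeeping is deferred, as the excerpt indicates, to Appendix~\ref{postponeproof}.
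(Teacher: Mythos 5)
Your proposal has a genuine gap at its core. The fixed point of a backward graph transform on families of unstable curves is the \emph{true} unstable foliation of the billiard, which is only measurable and whose holonomy Jacobian is in general not even H\"older continuous (see \cite[p.~124]{chernov book}); iterating the time-$\up$ graph transform to a fixed point therefore cannot produce a foliation satisfying (vi) or (vii). What the paper actually does is entirely finite-time: it seeds a uniformly $\cC^2$ transverse foliation $\{\ell_z\}$ on the curves $P^+(W_i)$, $W_i\in\cG_\up(W)$, with each seeding leaf given the \emph{prescribed length} $k_W^2\rho/J^u_\ell T^n(z_\ell)$, pushes it forward exactly $n$ steps, and keeps only the uncut leaves. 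The point of this length normalization — which is absent from your proposal and is the single mechanism driving (v)--(vii) — is that every surviving leaf has length $\approx k_W^2\rho$, and via \eqref{eq:strip bound} this converts powers of the homogeneity index $k_j$ into powers of the unstable Jacobian times $\rho^{-2/3}k_W^{-4/3}$, upgrading the holonomy Jacobian from the generic $1/3$-H\"older distortion bound to a genuine Lipschitz bound $|\ln J\bh_V(x)|\le C(d(x,\tx)\rho^{-2/3}k_W^{-2}+\phi_V(x,\tx))$ (Lemma~\ref{lem:jac holonomy}). Property (vi) is then obtained by identifying $\partial_{\bxs}\barG(c,\cdot)$ with $J\bh_{V_c}$ and differentiating in $c$, not from the cone-width heuristic you sketch, and the exponent $-4/5$ only appears after lifting from the scatterer to the flow (an extra factor $k_W^2\le\rho^{-2/5}$), not directly.

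A second, independent gap is the treatment of the leaves that do get cut. The paper must \emph{interpolate} the foliation across each gap (Subsection~\ref{sec:step3}, following \cite[App.~D]{BaL}), prove that the interpolant is still Lipschitz with the same constant (via the angle estimate \eqref{Lipbd} comparing the surviving leaf with the image $T^j(\cS_0)$ bounding the gap), and then establish the four-point estimate both inside gaps (Lemma~\ref{lem:interpolate} and \eqref{eq:4 gap}) and \emph{across} gaps (Lemma~\ref{lem:gap holder}); the last step is essential because the number of gaps grows exponentially in $n$, so one cannot simply chain triangle inequalities over consecutive gaps as a naive telescoping would require. Your proposal acknowledges that (vii) is delicate but offers only "interpolate against (vi)" and "telescope along $\cG_t$", which does not address either the gap interpolation or the uniform-across-gaps issue. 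Finally, note that (viii) in the paper follows from $H(x^u,x^s)=\int_0^{x^u}G(z,x^s)\,dz$ and the four-point estimate applied with the base points $(0,x^s)$, $(0,y^s)$ where $\partial_{x^s}G\equiv 1$ — your formula for $\partial_{x^s}H$ contains a spurious second integral.
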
 

The rest of the section is devoted to the proof of the above Theorem.
We will first (Subsections~ \ref{sec:step1}--\ref{sec:step4}) construct a foliation with the (analogue
of the) above properties in a neighborhood of
$P^+(W)$ in the phase space $\cM$ of the billiard map and then (Subsection~ \ref{sec:step5}) describe how to lift it to $W$
to get a foliation for the billiard flow.  We denote analogous objects for the map-foliation by
$\overline\bF$, $\barG$, $\bDelta_\up$, etc.

Before carrying out this construction, we mention a remark which will be useful below:

\begin{remark}[Taking the exponential of a four-point estimate]\label{rklog4pt}
We claim that if (ii), (v),  and (vi) hold, then to show (vii) it is enough to prove
(vii) for the function $\log \partial_s G$ (up to changing the constant factor).
Indeed, using (ii), first observe that (vii)  for $\log \partial_{x^s} G$ at $x^u=0$ gives
\begin{align*}
| \log \partial_{x^s} G(y^u, x^s)- \log \partial_{x^s}  G(y^u, y^s) | 
&\le C^2 \rho^{-4/5 - 11\varpi/15}  
|y^u|^{1-7\varpi}|x^s- y^s|^\varpi\\
&\le 2 C^2 \rho^{1/5 - 116\varpi/15)}  
|x^s- y^s|^\varpi 
\, ,
\end{align*}
since $|y^u| \le \rho$. 
Then note that for any $C_\infty >1$, there is a constant $C'$ depending
only on $C_\infty$ so that,  if a function $g(x^u, x^s)$ satisfies $|g|_{\infty} \le C_\infty$, and
$$
\sup_{x^u} |g(x^u, x^s)- g(x^u, y^s)|\le C_s|x^s- y^s|^\varpi\, ,
\quad \sup_{x^s} |g(x^u, x^s)- g(y^u, x^s)|\le C_u|x^u- y^u| \, , 
$$
and
$$
|g(x^u, x^s)- g(x^u, y^s)- g(y^u, x^s)+ g(y^u, y^s) |  \le C_{us} 
|x^u-y^u|^{1-7\varpi}|x^s- y^s|^\varpi \, , 
$$
with $C_u C_s \le C_\infty C_{us}$, then
$$
|\exp g(x^u, x^s)- \exp g(x^u, y^s)- \exp g(y^u, x^s)+ \exp g (y^u, y^s) |  \le C'
C_{us} 
|x^u-y^u|^{1-7\varpi}|x^s- y^s|^\varpi \, .
$$
Indeed, first write $\exp g=\sum_{k=0}^\infty g^k/(k!)$. Then note that an  easy induction  gives
  $|g(x^u, x^s)^k- g(x^u, y^s)^k|\le 2^{k-1} C_\infty^{k-1} C_s |x^s-y^s|^\varpi$ and 
  $|g(x^u, x^s)^k- g(y^u, x^s)^k|\le 2^{k-1} C_\infty^{k-1} C_u |x^u-y^u|$ for all $k\ge 1$.
  Finally,  the trivial formula
$$
  aa'-bb'-cc'+dd'=c(a'-b'-c'+d')+(a-b-c+d) d'+(a-c)(a'-b') + (a-b)(b'-d'),
  $$
  implies
  $$
| g(x^u, x^s)^k- g(x^u, y^s)^k-  g(y^u, x^s)^k+ g (y^u, y^s)^k |  \le 2^{k-1} C_\infty^{k-1}
C_{us} 
|x^u-y^u|^{1-7\varpi}|x^s- y^s|^\varpi \, ,
$$
for all $k\ge 1$. To conclude, use $\sum_{k \ge 0} 2^{k-1} C_\infty^{k-1} / k! = (2C_\infty)^{-1} e^{2C_\infty}$.

Now apply this bound to $g = \log \partial_{x^s} G$ with $C_\infty$ derived from (v),
$C_s = C^{\pm 1} \rho^{1/5- 116\varpi/15}$, $C_u = C^{\pm 1} \rho^{-4/5}$ from (vi),
and $C_{us} = C^{\pm 1} \rho^{-4/5 - 11\varpi/15}$.  Then  $C_uC_s \le C_\infty C_{us}$
whenever
$\varpi \le 1/35$. 
\end{remark}

\subsection{Constructing the regular part of the map-foliation,  (iii--iv)}\label{sec:step1}

We construct the analogue $\bDelta_\up$
for $P^+(W)$ of the domain $\Delta_\up$ by putting a transverse foliation on 
curves $P^+(W_i)$ for $W_i \in \cG_\up(W)$ (recall Definition~\ref{cG_t}) and pushing it forward to $P^+(W)$.
Let $W_i \in \cG_\up(W)$.  This fixes $n = n(W_i)$
such that $T^n(P^+(W_i)) \subset P^+(W)$.

Using (a), 
we choose a uniformly $\cC^2$ foliation of curves $\{ \ell \}$ lying in the unstable
cone for the map and transverse to $P^+(W_i)$ of length $|\ell| = k_W^2 \rho/|J^u_\ell T^n(z_\ell)|$,
where $z_\ell = P^+(W_i) \cap \ell$, $J^u_\ell T^n$ is the (unstable)
Jacobian along $\ell$, and
$k_W$ is the index of the homogeneity strip containing $P^+(W)$.  We call this foliation
$\{ \ell_z \}_{z \in P^+(W_i)}$ a {\em seeding foliation}.
Those curves $\ell$ on which $T^n$ is smooth (in particular, 
avoiding intersections with the boundaries of
homogeneity strips) create a transverse foliation of map-unstable curves $\bar \gamma = T^n(\ell)$ 
over the subset $\bDelta_\up$ of $P^+(W)$ such that $T^j(\ell)$ is a homogeneous unstable
curve for each $j =0, 1, \ldots, n$.
We now estimate the size of 
$\bDelta_\up$ in $P^+(W)$.

If $T^j(\ell)$ is a single homogeneous curve, then it has length 
\begin{equation}
\label{eq:step length}
|T^j(\ell)| 
= \tC_d^{\pm 1} k_W^2 \rho / J^u_{T^j(\ell)}T^{n-j}(T^j(z_\ell))\,  ,
\end{equation}
where
$$
\tC_d(j)=1+C_d \sup_{x,y} d(T^j(x), T^j(y))^{1/3}\, ,
$$
with $C_d$ is the distortion constant from Lemma~\ref{lem:distortion}.
Slightly abusing notation, we replace $C_d$ by $\max\{C_d,\max_j \tC_d(j)\}$, observing
that $\sup_{x,y,j}d(T^j(x), T^j(y))^{1/3}$ is bounded uniformly in $\rho$, $W$ and $\up$.
Consequently, $|\bar \gamma| = |T^n(\ell)| = C_d^{\pm 1} k_W^2 \rho$.

We note that by property (b) above regarding the homogeneity strip into which $W$ is allowed
to project, we have $k_W^2 \rho \le C k_W^{-3}$, so that there is no need to cut any curves
$T^n\ell$ that have not already been cut previously at time $1, \ldots n-1$, since they will automatically
lie in a bounded number of homogeneity strips and so enjoy bounded distortion with a 
(properly adjusted) distortion constant $C_d$.  
Since the
expansion in one step at $x$ is proportional to the reciprocal of $\cos \vf(Tx)$, and this in
turn is proportional to the square of the index of the
homogeneity strip containing $T(x)$ (see \cite[eq. (4.20)]{chernov book}), it follows
 that the unstable Jacobian at the last
step is proportional to $k_W^2$, in other words 
\begin{equation}\label{ridk}
k_W^2 / J^u_{T^{n-1}\ell}T(T^{n-1}z_\ell) \le C_d \, .
\end{equation}

The set of cuts appearing in  forward iterates of $\ell$ is the extended
singularity set
$\cS^{\bH}_n$ from Definition~\ref{singul}.  The growth
lemma  from \cite{chernov book} we shall apply below 
estimates the measure of the subset of $W_i$
for which the corresponding $\ell$ are cut by considering the iterate $j$ at which
the cut is made, and estimating the length of
$T^j(W_i)$ that has had its $T^j(\ell)$ cut.
For this, we claim that we need only look at the sets $\cS_1$, $\cS_{-1}$, and the
boundaries of the homogeneity strips (and not any of their iterates):
Notice that if a curve $S$ is in $\cS_1 \setminus \cS_0$, it has a
negative slope (is a stable curve).  Its image under $T$, if it does not
belong to $\cS_0$, belongs to $\cS_{-1} \setminus \cS_0$ and so has a
positive slope (is an unstable curve).  There are exactly two ``images'' of
curves in $\cS_1 \setminus \cS_0$, one coming
from each ``side'' of the curve which goes to a different scatterer.

We make cuts at time $j$ when either  (1) $T^j(\ell)$
crosses the boundary of a homogeneity strip;  or (2) $T^{j-1}(\ell)$ crosses a
curve in $\cS_1$ whose image belongs to $\cS_{-1} \setminus \cS_0$.
(These are the only two possible cases: If $T^{j-1}(\ell)$ crosses
a curve in $\cS_1$ whose image belongs to $\cS_0$, then in fact
$T^j(\ell)$ will cross the boundary of a homogeneity strip; and this is case
(1).)

For (1), the boundary of the homogeneity strip is horizontal
(in the global $(r,\varphi)$ coordinates), so the stable
curve $T^j(W_i)$ is uniformly transverse
to it. Thus the length of $T^j(W_i)$
that has its $T^j(\ell)$ cut is at most a constant times the length of
$T^j(\ell)$.   Here it is important that $T^j(W_i)$ is
transverse to the boundary of the homogeneity strips and not whether
$T^j(\ell)$ is, although in fact, they both are.

For (2), although $\cS_1$ is made of stable
curves (with negative slope), we do not cut
$T^{j-1}(\ell)$ when it lands across this curve (note that at time $j-1$ the
derivatives are still comparable along $T^{j-1}(\ell)$ --- there is no problem
with smoothness or distortion at this step).  We cut at the next step,
when $T^{j-1}(\ell)$ maps to $T^j(\ell)$, which now has two (or more)
connected components.  The subset of
$T^j(W_i)$ which is missing part of its foliation $\{ T^j(\ell) \}$ is the subset of
$T^j(W_i)$ whose $T^j(\ell)$ intersects this singularity curve in $\cS_{-1}$
(the image of the curve in $\cS_1$ at time $j-1$).  Again, it is the
uniform transversality of $\cS_{-1} \setminus \cS_0$ with the stable
curve $T^j(W_i)$ that guarantees that the length of $T^j(W_i)$ that has had
its $T^j(\ell)$ cut is at most a constant times the length of $T^j(\ell)$.

Thus, there exists a constant $C_0$, depending only on $C_d$
and the uniform transversality of stable curves and extended singularities (as explained above),
such that if the intersection of $T^j(\ell)$ and $T^j(P^+(W_i))$ lies at a distance at least
$C_0 \rho/ J^u_{T^j\ell} T^{n-j-1}(T^jz_\ell)$ from the endpoints of $T^j(P^+(W_i))$, then
$T^j(\ell)$ is not cut at time $j$, $j = 0, 1, \ldots n-1$.  (Recall that we do not cut at the last time step
$n$ when we arrive back at $P^+(W)$ because we assumed that
$P^+(W)$ is already a homogeneous stable curve.)

Setting $i=n-j$ and
letting $r_{-i}(z)$ denote the distance from $T^{-i}(z)$ to the nearest endpoint of the connected
homogeneous components of $T^{-i}(W)$ belonging to $\cG_i(W)$, we conclude
that if $T^{n-i}(\ell)$ is cut at time $i$, then $r_{-i}(z) \le C_0 \rho \Lambda_0^{-i+1}$, where
$z$ is the point of intersection of $T^{n-i}(\ell)$ and $T^{-i}(W)$, and $\Lambda_0 > 1$
is defined above \eqref{Lambda}.
Thus 
\[
P^+(W) \setminus \bDelta_\up \; \subseteq \; \bigcup_{i=0}^{n-1} \left\{ z \in P^+(W) : r_{-i}(z) \le C_0 \rho \Lambda^{-i+1} \right\} \, .
\]

Using the growth lemma \cite[Theorem 5.52 and Exercise 5.49]{chernov book}, there exist
constants $C>0$, $\lambda_0 <1$, such that for all $\ve > 0$,
\[
\begin{split}\label{lambda0def}
m_{P^+(W)}(r_{-i}(z) < \ve) & \le C (\lambda_0 \Lambda_0)^i m_{P^+(W)}( r_0(z) < \ve \Lambda_0^{-i}) 
+ C \ve |P^+(W)| \\
& \le 2C \ve \lambda_0^i + C \ve |P^+(W)| \,  .
\end{split}
\]
Applying this bound with $\ve = C_0 \rho \Lambda_0^{-i+1}$, and summing over $i$, we estimate,
\[
m_{P^+(W)} (P^+(W) \setminus \bDelta_\up) \le \sum_{i=0}^{n-1} C' \rho \Lambda_0^{-i+1} \lambda_0^{i} + C' \rho \Lambda_0^{-i+1}
|P^+(W)| \le C'' \rho \,   .
\]
This implies the analogue  for $\bDelta_\up$
of item (iv) of the foliation.

\subsection
{Smoothness of the regular part of the map-foliation,  (v--vi--vii)}  
\label{sec:step2}

We proceed to check the regularity of the parts of the foliation
$\{\ell_z \}_{z \in T^{-n}\bDelta_\up}$ which survive uncut until time $n$.

Let $\oW_i \subset P^+(W_i)$ denote the maximal subcurve of $P^+(W_i)$
so that $\{ T^n \ell_z \}_{z \in \oW_i} = \{ \bar\gamma_x \}_{x \in T^n\oW_i}$ 
is not cut.
Let $k_j$ denote the index of the homogeneity strip containing
$T^{n-j}(\oW_i)$.  Since the foliation is not cut at step $j$ and has length given by
\eqref{eq:step length}, there exists $C>0$ (depending only on distortion and uniform
transversality of the unstable cone with the boundaries of homogeneity strips) such that
for any $\ell$ so that $z_\ell\in \oW_i$, 
\begin{equation}
\label{eq:strip bound}
\frac{k_W^2 \rho}{J^u_{T^{n-j}\ell}T^{j}(T^{n-j}z_\ell)} \le C k_j^{-3}
\implies k_j^2 \le \left( \frac{C J^u_{T^{n-j}\ell}T^{j}(T^{n-j}z_\ell)}{k_W^2 \rho} \right)^{2/3} \,  .
\end{equation}

Now let $V_0 = T^n(\oW_i)\subset P^+(W)$ and let $V$ denote a  map-stable curve 
such that the surviving foliation $\{ \bar \gamma \}$  associated to
$\oW_i$ via $V_0$,
intersects every point of $V$.
Let $\bh_V:V_0 \to V$ denote the holonomy map defined by the foliation
$\{ \bar \gamma \}$.

Set $V^n = T^{-n}V$ and note that since the surviving foliation intersects every point of $V$, it follows
that $V^n$ lies in the same homogeneity strip as $\oW_i$ and the seeding foliation of 
unstable curves $\{ \ell \}$ intersects every point of $V^n$.  Let 
$\bh_{V^n}$ denote the holonomy map from $\oW_i$ to $V^n$.

The lemma below is a refinement\footnote{ The analogous time reversed counterpart
from \cite{chernov book} would be 
$|\ln J\bh_V(x)| \le C (d(x,\tx)^{1/3} + \phi_V(x,\tx))$.  Here, we leverage the fact that we
require the transverse foliation to survive for only $n$ steps (not for all time) in addition to the
fact that the surviving curves must be long in the length scale $\rho$, to improve the
exponent of $d(x,\tx)$ to 1, at the cost of having the constant depend on $\rho$.}
of \cite[Theorem~5.42]{chernov book}
for the Jacobian $J\bh_V$.  (We shall use it below to obtain claim (vi), after
relating the Jacobian with $\partial_{x^s}G$ in appropriate
coordinates.)

\begin{lemma}[Bounds on the Jacobian outside of the gaps]
\label{lem:jac holonomy}
 There exists $C>0$, independent
 of $W$, $\rho$, and $\up$, such that  for any $x \in V_0$,
\[
|\ln J\bh_V(x)| = \left| \ln \left(\frac{J^s_{V_0}T^{-n}(x)}{J^s_VT^{-n}(\tx)} J\bh_{V^n}(T^{-n}x) \right)\right| 
\le C \big(d(x,\tx) \rho^{-2/3} k_W^{-2} + \phi_V(x,\tx) \big) \, .
\]
where $\tx = \bh_V(x)$, $J^s_VT^{-n}$ denotes the (stable) Jacobian of $T^{-n}$ along $V$,
 and $\phi_V(x,\tx)$ is the angle between the tangent vector to $V_0$ at $x$ and 
the tangent vector to $V$ at $\tx$.
\end{lemma}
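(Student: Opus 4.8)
\textbf{Proof plan for Lemma~\ref{lem:jac holonomy}.}
The plan is to adapt the standard holonomy-Jacobian estimate \cite[Theorem~5.42]{chernov book}, but to track carefully how the $n$ collision steps contribute. The starting point is the decomposition already written in the statement:
\[
J\bh_V(x) = \frac{J^s_{V_0}T^{-n}(x)}{J^s_VT^{-n}(\tx)}\, J\bh_{V^n}(T^{-n}x)\, ,
\]
which follows because $T^{-n}$ conjugates the holonomy $\bh_V$ (defined by $\{\bar\gamma\}$) to the holonomy $\bh_{V^n}$ (defined by the seeding foliation $\{\ell\}$), so that $\bh_V = T^n \circ \bh_{V^n}\circ T^{-n}$ on $V_0$. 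Taking logarithms, I would estimate the three factors separately: first $\ln(J^s_{V_0}T^{-n}(x)/J^s_{V_0}T^{-n}(\tx'))$ where $\tx'=\bh_{V^n}$-image lifted, using the bounded-distortion-along-stable-curves estimate (Lemma~\ref{lem:distortion}, exponent $1/3$, or \cite[Lemma 5.27]{chernov book}) between the two past orbits; second the "curve-switching" discrepancy $\ln(J^s_{V_0}T^{-n}/J^s_{V}T^{-n})$ comparing expansion along $V_0$ versus along $V$ at matched points, which is the term producing $\phi_V(x,\tx)$ via the usual argument that stable curves through nearby points with nearly-equal tangent directions have comparable contraction factors (here one uses that the curves in $\cS_{-k}$ are uniformly transverse and the curvature bound (W2)); third the Jacobian $J\bh_{V^n}$ of the seeding-foliation holonomy inside a \emph{single} homogeneity strip, which is $\cC^1$-close to $1$ because the seeding foliation $\{\ell\}$ was chosen uniformly $\cC^2$, contributing $O(d(T^{-n}x, T^{-n}\tx))=O(d(x,\tx)\Lambda_0^{-n})$.

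The key new ingredient — and the reason the exponent on $d(x,\tx)$ improves from $1/3$ to $1$ and a factor $\rho^{-2/3}k_W^{-2}$ appears — is the telescoping estimate along the $n$ steps. At step $j$ (counting from the collision side) the relevant distance is $d(T^{-j}x, T^{-j}\tx)$. Along an unstable curve $\bar\gamma$ the contraction under $T^{-1}$ is governed by $\cos\vf$, and by \eqref{eq:strip bound} the index $k_j$ of the strip at step $j$ satisfies $k_j^2 \le (C\, J^u_{T^{n-j}\ell}T^j/(k_W^2\rho))^{2/3}$, i.e.\ the local expansion rate is controlled in terms of $k_W^2\rho$. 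Summing the per-step contributions — each of the form (length of the relevant piece of $T^{-j}\ell$) times (local distortion constant) — and using that the surviving curves $T^{n-j}\ell$ have length $k_W^2\rho/J^u T^j$ bounded by $C k_j^{-3}$ (so they never need re-cutting, by property (b)), the geometric series in $\Lambda_0^{-j}$ converges and the worst term is the last collision, which via \eqref{ridk} gives the factor $k_W^2$ in the denominator and the extra $\rho^{-2/3}$ from \eqref{eq:strip bound}. Thus $|\ln J\bh_V(x)|\le C(d(x,\tx)\rho^{-2/3}k_W^{-2}+\phi_V(x,\tx))$.

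The main obstacle I anticipate is the bookkeeping in the telescoping sum: one must simultaneously control (a) how $d(T^{-j}x,T^{-j}\tx)$ shrinks, (b) how the homogeneity-strip index $k_j$ varies (it is \emph{not} monotone, so \eqref{eq:strip bound} must be invoked at each step), and (c) how the distortion constant $\tC_d(j)$ from \eqref{eq:step length} depends on $j$ — and then show the combined sum is dominated by its final term up to the claimed $\rho$-power. A secondary subtlety is that $x$ and $\tx$ lie on \emph{different} curves $V_0$ and $V$, so the per-step comparison is really a comparison of two distinct orbits whose initial tangent directions differ by $\phi_V(x,\tx)$; one propagates this angle discrepancy forward, noting it only improves (unstable directions converge under $T$, hence stable comparisons stay controlled backward), which is exactly what isolates the $\phi_V(x,\tx)$ term. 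Once these are in place, the three-factor estimate above assembles directly into the stated bound.
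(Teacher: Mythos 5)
Your plan follows the paper's proof essentially step for step: the same telescoping decomposition $\ln J\bh_V(x) = \ln J\bh_{V^n}(x_n) + \sum_{j=0}^{n-1}\bigl[\ln J^s_{T^{-j}V_0}T^{-1}(x_j) - \ln J^s_{T^{-j}V}T^{-1}(\tx_j)\bigr]$, the same per-step estimate $|\ln\cos\vf_{j+1}-\ln\cos\tilde\vf_{j+1}|\le C\,d(x_{j+1},\tx_{j+1})k_{j+1}^2$ converted via \eqref{eq:strip bound} (and the last-step expansion $\propto k_W^2$) into $C\,d(x,\tx)\rho^{-2/3}k_W^{-2}\Lambda_0^{-j/3}$, the same propagation of the angle $\phi_j\le C(d(x,\tx)\,j\Lambda_0^{-j}+\phi_0\Lambda_0^{-j})$, and the same use of the uniform $\cC^2$ seeding foliation to bound $\ln J\bh_{V^n}(x_n)$. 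One caution: the first paragraph's proposal to invoke the $1/3$-H\"older distortion bound (Lemma~\ref{lem:distortion}) for a ``same-curve'' factor must not be taken literally, since that would only yield $d(x,\tx)^{1/3}$, strictly weaker than the claimed linear bound; your second paragraph correctly supersedes it with the step-by-step argument, which is exactly what the paper does.
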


\begin{proof}
We begin by writing,
\[
\ln J\bh_V(x) = \ln J\bh_{V^n}(x_n) + \sum_{j=0}^{n-1} \ln J^s_{T^{-j}V_0}T^{-1}(x_j) - \ln J^s_{T^{-j}V}T^{-1}(\bx_j) \, ,
\]
where $x_j = T^{-j}(x)$ and $\tx_j = T^{-j}(\tx)$, $j = 0, 1, \ldots n$. 

Letting
$x_j=(r_j,\varphi_j)$,  $\cK_j=\cK(r_j)$, $\tau_j=\tau(x_j)$,
and similarly for $\tilde{\vf}_j$, $\widetilde{\cK}_j$ and $\widetilde{\tau}_j$,
we have the time reversal counterpart to \cite[eq. (5.24)]{chernov book},
\begin{equation}
\label{eq:jac factors}
\ln J^s_{T^{-j}V_0}T^{-1}(x_j) = - \ln \cos \vf_{j+1} + \ln (\cos \vf_j + \tau_{j+1}(\cK_j + |\cV_j|))
+ \tfrac12 \ln (1+ \cV_{j+1}^2) - \tfrac12 \ln (1+\cV_j^2) \, ,
\end{equation}
where $\cV_j < 0$ represents the slope of the tangent vector to $T^{-j}(V_0)$ at $x_j$.
Using the analogous expression for $\ln J^s_{T^{-j}V}T^{-1}(\tx_j)$ and letting $\ell_n$ denote the
curve of the initial seeding foliation containing $x_n$,
we first compare,
\[
\begin{split}
|\ln \cos \vf_{j+1} - \ln \cos \tilde{\vf}_{j+1}| & \le C \frac{|\vf_{j+1} - \tilde{\vf}_{j+1}|}{\cos \vf_{j+1}}
\le C d(x_{j+1}, \tx_{j+1}) \, k_{j+1}^2 \\
& \le C \frac{d(x, \tx)}{J^u_{T^{n-j-1}\ell_n}T^{j+1}(x_{j+1})} 
\frac{(J^u_{T^{n-j-1}\ell_n}T^{j+1}(x_{j+1}))^{2/3}}{k_W^{4/3} \rho^{2/3}} \\
& \le C \frac{d(x,\tx)}{k_W^{4/3} \rho^{2/3} (J^u_{T^{n-j-1}\ell_n}T^{j+1}(x_{j+1}))^{1/3}}
\le C  \frac{d(x,\tx)}{\rho^{2/3} k_W^2 \Lambda_0^{j/3}} \, ,
\end{split}
\]
where we have used \eqref{eq:strip bound} in the second line, and $\Lambda_0$
was defined above (\ref{Lambda}). 
(We refer to the explanations above \eqref{ridk} regarding the factor $k_W^2$.) 
The other terms appearing
in \eqref{eq:jac factors} are bounded and differentiable functions of their arguments, except
possibly $\tau_j$, but for this we have
$$|\tau_{j+1} - \widetilde{\tau}_{j+1}| \le C( d(x_j, \tx_j) + d(x_{j+1}, \tx_{j+1})) \le C' d(x_j, \tx_j)$$
(see \cite[eq. (5.28)]{chernov book}).
This combined with the previous estimate yields,
\[
|\ln J^s_{T^{-j}V_0}T^{-1}(x_j) - \ln J^s_{T^{-j}V}T^{-1}(\tx_j)| \le C(d(x, \tx) \Lambda_0^{-j/3}(\rho^{-2/3} k_W^{-2} + 1) +
\phi_j + \phi_{j+1}) \, ,
\] 
where $\phi_j$ is the angle formed by the tangent vectors to $T^{-j}V_0$ 
at $x_j$ and $T^{-j}V$
 at $\tx_j$ (in particular $\phi_0=\phi(x,\tx)$.

Now it follows from \cite[eq. (5.29)]{chernov book} that 
\[
\phi_j \le C(d(x, \tx) j \Lambda_0^{-j} + \phi_0 \Lambda_0^{-j}) \, ,
\]
and summing over $j$ completes the required estimate on the difference of Jacobians.

Finally, we estimate $\ln J\bh_{V^n}(x_n)$.  Since the holonomy $\bh_{V^n} : \oW_i \to V^n$
corresponding to the seeding foliation is uniformly $\cC^2$, 
its Jacobian is a $\cC^1$ function of the distance and angle between the two curves.
Thus using again \cite[eq. (5.29)]{chernov book},
\[
|\ln J\bh_{V^n}(x_n)| \le C (\phi(x_n, \tx_n) + d(x_n,\tx_n)) \le C ( d(x, \tx) n \Lambda_0^{-n} + \phi(x, \tx) \Lambda_0^{-n}) \,  . 
\]
Combining this with our previous estimate completes the proof of Lemma~\ref{lem:jac holonomy}.
\end{proof}

We adopt smooth local coordinates $(\bxu,\bxs) \in \cM$ to write the foliation 
$\bar \gamma = (\bxu, \barG(\bxu, \bxs))$, where $\bxu$ and $\bxs$ range over 
the relevant intervals.  The curve $V_0$ corresponds to $\{ (0, \bxs) : \bxs \in [0,|V_0|] \}$,
and (this is the first condition of (ii) for the map-foliation)
\begin{equation}\label{cond(1)BLu}
\barG(0,\bxs) = \bxs\, .
\end{equation}
This is the canonical
straightening map of the foliation $\barG$, with respect to the point
$(0,0)$, i.e., the map
sending the horizontal lines of $\bR^2$ on the leaves of the foliation,
preserving vertical lines and equal to the identity on $\{0\} \times \bR$. In other words,
\[\bar \bF(\bxu,\bxs)=(\bxu, \barG(\bxu,\bxs))\]
where $\barG(\bxu,\bxs)$ is the first component of the point on the vertical
through $\bxu$
on the leaf going through $(0,\bxs)$, 
with $\barG(0,\bxs)=\bxs$. Geometrically,
$\bxs \mapsto \barG(\bxu, \bxs)$ is the holonomy
$\bh_{0 \to \bxu}$   of the foliation
between the vertical transversals at  $0$ and $\bxu$, so that
$\partial_{\bxs} \barG(\bxu,\bxs)$
is the Jacobian of this holonomy between verticals. 
The
image $U$ under $D\bar \bF$ of the horizontal vector field  is tangent
to the foliation, and has first component equal to one, i.e.,
$U(\bxu,\bxs)=(1, u(\bxu,\bxs))$, with $u(\bar \bF(\bxu, \bxs))=
\partial_{\bxu} \barG(\bxu,\bxs)$.

  We next use
Lemma~\ref{lem:jac holonomy}  to derive the
smoothness properties (v)--(vi) of this foliation.
Write $V = \{ ( v(\bxs),\bxs) \}$ for some smooth function $v$. 
Taking $V = V_c$ in  Lemma~\ref{lem:jac holonomy}  to be the (purely vertical) curve defined by 
$v(\bxs) \equiv c$ for some constant $c$
(which we can do since the map-unstable cones are globally defined, i.e.,~they have a uniform
width), we have by the above discussion,
\begin{equation}
\label{magicc}
J\bh_{V_c}(0,\bxs) = \partial_{\bxs} \barG(c, \bxs )\, .
\end{equation}
Since $\tx=\bh_V(x)$ implies $d(x,\tx) \le C k_W^2 \rho$, we have 
\begin{equation}
\label{eq:uniform}
d(x,\tx) \rho^{-2/3} k_W^{-2} \le C \rho^{1/3} \, .  
\end{equation}
Thus 
Lemma~\ref{lem:jac holonomy} implies that
$\partial_{\bxs} \barG$ is bounded independently of $\rho$ 
(both above away from infinity and below away
from $0$), proving the analogue of (v) for $P^+(W)$.  

The next lemma establishes the analogue of (vi) for $\barG$.

\begin{lemma}
\label{lem:c}
$\partial_{\bxu} \partial_{\bxs} \barG \in \cC^0$ and
there exists $C>0$ such that
$|\partial_{\bxu} \partial_{\bxs} \barG|_\infty  \le C\rho^{-2/3} k_W^{-2}$.
\end{lemma}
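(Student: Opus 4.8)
The plan is to derive the claimed $\cC^0$ bound on $\partial_{\bxu}\partial_{\bxs}\barG$ from the Jacobian estimate in Lemma~\ref{lem:jac holonomy}, exploiting the identity \eqref{magicc}, which says that for the vertical curve $V_c = \{(c,\bxs)\}$ one has $J\bh_{V_c}(0,\bxs) = \partial_{\bxs}\barG(c,\bxs)$. So fixing $\bxs$ and varying $c$, the function $c \mapsto \partial_{\bxs}\barG(c,\bxs)$ is the Jacobian (at the fixed base point $(0,\bxs)$) of the holonomy onto the moving vertical transversal. First I would fix two nearby values $\bxu$ and $\bxu'$ (playing the role of $c$), and estimate $|\partial_{\bxs}\barG(\bxu,\bxs) - \partial_{\bxs}\barG(\bxu',\bxs)|$. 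The natural way is to write the ratio $\partial_{\bxs}\barG(\bxu,\bxs)/\partial_{\bxs}\barG(\bxu',\bxs)$ as the Jacobian of the holonomy $\bh_{\bxu' \to \bxu}$ between the two verticals (at the appropriate point), i.e. compose $\bh_{0\to\bxu}$ with $(\bh_{0\to\bxu'})^{-1}$; this is again of the form $J\bh_V$ for a vertical target curve $V$, but now the source curve is the vertical at $\bxu'$ rather than $V_0$. I would re-run the proof of Lemma~\ref{lem:jac holonomy} with this source curve: the key gain is that the two endpoints $x$, $\tx = \bh_V(x)$ are now a distance $\le C|\bxu - \bxu'|$ apart (rather than $\le C k_W^2\rho$), while the angle term $\phi_V(x,\tx)$ between the two verticals vanishes identically since both are vertical. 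Hence $|\ln J\bh_V(x)| \le C|\bxu - \bxu'|\rho^{-2/3}k_W^{-2}$, and since $\partial_{\bxs}\barG$ is bounded above and below (the analogue of (v), already established just before the lemma), this yields
\[
|\partial_{\bxs}\barG(\bxu,\bxs) - \partial_{\bxs}\barG(\bxu',\bxs)| \le C\rho^{-2/3}k_W^{-2}\,|\bxu - \bxu'| \, .
\]
Dividing by $|\bxu - \bxu'|$ and letting $\bxu' \to \bxu$ gives that $\partial_{\bxu}\partial_{\bxs}\barG$ exists, is bounded in absolute value by $C\rho^{-2/3}k_W^{-2}$, and (by the same Lipschitz estimate applied to the difference quotient, or by noting the bound is uniform) lies in $\cC^0$.

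The main obstacle I expect is making rigorous the claim that the composed holonomy $\bh_{\bxu'\to\bxu}$ is itself a holonomy of the form covered by (the proof of) Lemma~\ref{lem:jac holonomy} with the stated constants, and in particular that re-running that proof with a vertical source curve $V^n = T^{-n}(\text{vertical at }\bxu')$ still gives the factor $\Lambda_0^{-j/3}$ summability: one must check that $V^n$ is a map-stable curve in the correct homogeneity strip and that the seeding foliation still intersects all of it, so that the strip-index bound \eqref{eq:strip bound} and the angle-contraction estimate $\phi_j \le C(d(x,\tx)j\Lambda_0^{-j} + \phi_0\Lambda_0^{-j})$ from \cite[eq.~(5.29)]{chernov book} still apply verbatim. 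Since both transversals are purely vertical this should go through with $\phi_0 = 0$, and the only surviving term in the bound is the $d(x,\tx)\rho^{-2/3}k_W^{-2}$ contribution, as needed. A secondary technical point is continuity of $\partial_{\bxu}\partial_{\bxs}\barG$ rather than mere boundedness: this follows because the estimate above is in fact a two-sided Lipschitz-type control that is uniform in $\bxs$, so $\partial_{\bxs}\barG$ is $\cC^1$ in $\bxu$ with the derivative depending continuously on $(\bxu,\bxs)$ (one can repeat the argument with $\bxs$ also varying, using that the target vertical can be taken through any point, together with the already-known regularity of $\barG$ in $\bxs$ coming from (v) and the smoothness of the seeding foliation). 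This completes the proof of Lemma~\ref{lem:c}; the subsequent step (not part of this lemma) will be to transfer properties (v), (vi), and the four-point estimate from $\barG$ to $G$ via the change to cone-compatible Darboux coordinates, using Remark~\ref{rklog4pt} for (vii).
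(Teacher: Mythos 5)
Your treatment of the $L^\infty$ bound is essentially the paper's argument: the paper also fixes $\bxs$, compares $J\bh_{V_c}(0,\bxs)$ and $J\bh_{V_{c+\delta}}(0,\bxs)$ by factoring out the holonomy Jacobian $J\bh_{V_c,V_{c+\delta}}$ between the two vertical transversals, and invokes the fact that the estimate of Lemma~\ref{lem:jac holonomy} holds for the holonomy between \emph{any} two stable curves connected by the surviving foliation, with $\delta$ in place of $d(x,\tx)$. One small imprecision: the angle $\phi_{V_c}(x,\tx)$ does not vanish identically — the curves are vertical only in the smooth chart $(\bxu,\bxs)$, while the angle in Lemma~\ref{lem:jac holonomy} is measured in $(r,\vf)$; the correct statement (and the one the paper uses) is that the smooth change of coordinates makes $\phi_{V_c}(x,\tx)$ an $O(\delta)$ (smooth) function of $d(x,\tx)$, which is then absorbed since $\rho^{-2/3}k_W^{-2}\ge C$. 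This is harmless for the bound.

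The genuine gap is the first half of the statement, namely that $\partial_{\bxu}\partial_{\bxs}\barG$ \emph{exists and is continuous}. Your argument only establishes that $\bxu\mapsto\partial_{\bxs}\barG(\bxu,\bxs)$ is Lipschitz with constant $C\rho^{-2/3}k_W^{-2}$, uniformly in $\bxs$. A Lipschitz bound on the difference quotient does not show that the limit defining $\partial_{\bxu}\partial_{\bxs}\barG$ exists at every point (Lipschitz only gives a.e.\ differentiability), and it gives no information at all about continuity of the derivative where it does exist; "the bound is uniform'' is not a substitute. This part matters: the continuity of $\partial_{\bxu}\partial_{\bxs}\barG$ is used later (e.g.\ for the continuity of $\partial_{\bxu}\partial_{\bxs}\sigma$ across the interpolated gaps and for property (vi) of the lifted foliation). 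The paper closes this by an explicit computation: writing $\partial_{\bxu}\partial_{\bxs}\barG(c,\bxs)=\partial_c J\bh_{V_c}(0,\bxs)$ with $J\bh_{V_c}(0,\bxs)=\frac{J^s_{V_0}T^{-n}(0,\bxs)}{J^s_{V_c}T^{-n}(\bh_{V_c}(0,\bxs))}J\bh_{V_c^n}(T^{-n}(0,\bxs))$ and differentiating in $c$ term by term, using that the seeding holonomy is $\cC^1$ in base point and angle, that the angle depends on $c$ through the slope $\cV=\cB^-\cos\vf-\cK$ whose backward curvature $\cB^-$ satisfies the recursion \eqref{eq:recursive}, that the scatterers are $\cC^3$ (so $\cK$ is $\cC^1$) and $T$ is locally $\cC^2$; the chain rule then produces continuous functions times unstable Jacobians $J^u_{\bgamma}T^{-j}$, which are continuous in $c$. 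Your parenthetical suggestion to "repeat the argument with $\bxs$ varying'' does not supply this differentiation-in-$c$ step, so as written the proposal does not prove $\partial_{\bxu}\partial_{\bxs}\barG\in\cC^0$.
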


\begin{proof}
Since we have chosen a smooth change of coordinates, angles in $(r,\vf)$-coordinates are transformed smoothly to 
$(\bxu, \bxs)$-coordinates.  Thus for the family of vertical stable curves $V_c$,
we have $\phi_{V_c}(x,\tx)$ varying as a smooth function of $d(x,\tx)$.

We have
$$\partial_{\bxu} \partial_{\bxs} \barG(c,\bxs) = \partial_c J\bh_{V_c}(0,\bxs) \, .
$$  

First we prove the bound on $|\partial_{\bxu} \partial_{\bxs} \barG|_\infty$.
As before, let $V^n_c = T^{-n}V_c$, and let $\bh_{V^n_c}$ denote the holonomy along
$\{ \bar \gamma \}$ from $V^n_0$ to $V^n_c$.  Note that 
\begin{equation}\label{use4}
\frac{J\bh_{V^n_{c+\delta}}}{J\bh_{V^n_c}} = J\bh_{V^n_c, V^n_{c+\delta}}\, , 
\end{equation} where $\bh_{V^n_c, V^n_{c+\delta}}$ is the holonomy map from
$V^n_c$ to $V^n_{c+\delta}$.
For each $\bxs$,
\[
\begin{split}
|J\bh_{V_{c+\delta}}(0,\bxs) & - J\bh_{V_c}(0,\bxs)|
 = \left| \frac{J^s_{V_0}T^{-n}(0,\bxs)}{J^s_{V_{c+\delta}}T^{-n}(\bh_{V_{c+\delta}}(0,\bxs))}
J\bh_{V^n_{c+\delta}}(T^{-n}(0,\bxs)) \right.  \\
& \qquad \qquad \qquad \qquad \left. - \frac{J^s_{V_0}T^{-n}(0,\bxs)}{J^s_{V_c}T^{-n}(\bh_{V_c}(0,\bxs))} J\bh_{V^n_c}(T^{-n}(0,\bxs)) \right| \\
& = \frac{J^s_{V_0}T^{-n}(0,\bxs)}{J^s_{V_c}T^{-n}(\bh_{V_c}(0,\bxs))} J\bh_{V^n_c}(T^{-n}(0, \bxs)) \\
& \qquad \times \left| \frac{J^s_{V_c}T^{-n}(\bh_{V_c}(0,\bxs))}{J^s_{V_{c+\delta}}T^{-n}(\bh_{V_{c+\delta}}(0,\bxs))}
J\bh_{V^n_c, V^n_{c+\delta}}(T^{-n}(\bh_{V_c}(0,\bxs))) -1 \right| \\
& \le C \delta \rho^{-2/3} k_W^{-2} \,  ,
\end{split}
\]
where in the last line, we have used the smooth dependence of $\phi_{V_c}(x,\bx)$
on $d(x,\bx)$ and replaced the factor $d(x,\bx)$ from 
Lemma~\ref{lem:jac holonomy}
by $\delta$ since the estimates of that lemma hold for the holonomy between 
any two stable curves connected by 
the foliation $\{ \bar \gamma \}$. (To bound
the first factor via Lemma~\ref{lem:jac holonomy},
we also used that $|c|\le k^2_W \rho$ so that
$1+c\rho^{-2/3}k_W^{-2}\le C$.)
Thus $|\partial_{\bxu} \partial_{\bxs} \barG|_\infty \le C \rho^{-2/3} k_W^{-2}$,
proving the required bound.

To see that in fact $\partial_{\bxu} \partial_{\bxs} \barG$ is continuous, write
\begin{equation}
\label{eq:second partial}
\partial_{\bxu} \partial_{\bxs} \barG(c,\bxs) 
= \partial_c J\bh_{V_c}(0,\bxs) =  \partial_c 
\left(\frac{J^s_{V_0}T^{-n}(0,\bxs)}{J^s_{V_c}T^{-n}(\bh_{V_c}(0,\bxs))} J\bh_{V_c^n}(T^{-n}(0,\bxs)) \right) \,  .
 \end{equation}
Since the seeding foliation is uniformly $\cC^2$, the Jacobian $J\bh_{V^n_c}$ is a $\cC^1$
function of its base point and the angle $\phi(x_n, \tx_n^c)$ between the stable curves $V_0^n$ and 
$V_c^n$ at the points $x_n = T^{-n}(0,\bxs)$ and $\tx_n^c = T^{-n}(\bh_{V_c}(0,\bxs))$, respectively.  
Thus we need only consider the dependence of $\phi(x_n, \tx_n^c)$ on $c$.  

Since we have adopted a smooth change of coordinates, the angle 
$\phi(x_n, \tx_n^c)$ is a smooth function of the difference in slopes
$\cV(x_n)- \cV(\tx_n^c)$ of the stable curves $V^n_0$ and $V^n_c$ in 
$(r,\vf)$ coordinates, at the points $x_n$ and $\tx_n^c$, respectively.
By the time reversal of \cite[eq.~(3.39)]{chernov book} we have $\cV = \cB^- \cos \vf - \cK$, 
where $\cB^-$ is the curvature of the wavefront just after impact (under the backwards flow).
Thus,
\begin{equation}
\label{eq:c deriv}
\partial_c (\cV(x_n) - \cV(\tx_n^c)) = - \partial_c \cK(\tx_n^c) + \partial_c \cos \vf(\tx_n^c) \cB^-(\tx_n^c)
+ \cos \vf(\tx_n^c) \partial_c \cB^-(\tx_n^c)\,  .
\end{equation}
Moreover, the curvature of the wave front $\cB^-(\tx_n^c)$ can be expressed 
recursively as 
\begin{equation}
\label{eq:recursive}
\cB^-(\tx^c_n) = - \frac{2 \cK(\tx^c_n)}{\cos \vf(\tx^c_n)} 
+ \frac{\cB^-(\tx^c_{n-1})}{1 - \tau(\tx^c_n) \cB^-(\tx^c_{n-1})}
\end{equation}
where $\tx_j^c = T^{-j}(\bh_{V_c}(0,\bxs))$.
Since our scatterers are $\cC^3$, $\cK$ is $\cC^1$ so 
every term appearing above is a locally $\cC^1$ function of its argument 
composed with
$T^{-j}$ for some $j = 1, \ldots, n$, except possibly $\cB^-(\tx^c_{n-1})$.  
Note that this is also true of $\tau$, even though the derivative of $\tau(x)$ has
a term involving $\cK(Tx)/\cos(Tx)$.  To deal with $\cB^-(\tx^c_{n-1})$, simply apply
relation \eqref{eq:recursive} recursively with the chain rule to deduce that $\cB^-(\tx^c_n)$
is continuously differentiable with respect to $c$ if $\cB^-(\tx^c_0)$ is.  But this last statement
obviously holds since the curves $V_c$ are vertical segments in $(\bxu, \bxs)$ coordinates
and the change of coordinates is smooth.

Differentiating with respect to $c$ in \eqref{eq:c deriv}
and using the chain rule together with the above observations
yields a continuous function
(which we need not compute) times the unstable Jacobians $J^u_{\bgamma_{x_0}}T^{-j}(\bh_{V_c}(x_0))$, where $\bgamma_{x_0}$ is the unstable curve of the surviving foliation
passing through $x_0$.  This last factor is also continuous in $c$ since $T$ is locally $\cC^2$.

Next we turn to the factor $J^s_{V_c}T^{-n}(\bh_{V_c}(0,\bxs))$ in \eqref{eq:second partial}.
This is simply the product of factors of the form,
\[
J^s_{T^{-j}V_c}T^{-1}(\tx_j^c) = \frac{\cos \vf(\tx_j^c) + \tau(\tx_j^c)(\cK(\tx_j^c) - \cV(\tx_j^c))}{\cos \vf(\tx_{j+1}^c)} \sqrt{\frac{1 + \cV(\tx_{j+1}^c)^2}{1+\cV(\tx_j^c)^2}}, \qquad j = 0, 1, \ldots n-1\, ,
\] 
using \eqref{eq:jac factors}.  All functions
appearing here are again (locally) smooth functions of their arguments so that
as above, differentiating with respect to $c$ and using \eqref{eq:c deriv} once more for 
$\partial_c \cV(\tx_j^c)$, yields a sum of continuous functions times unstable Jacobians of the form
$J^u_{\bgamma_{x_0}}T^{-j}(\bh_{V_c}(x_0))$, which are again continuous.
This completes the verification of the continuity of $\partial_{\bxu} \partial_{\bxs} \barG(c,\bxs)$
and thus the proof of Lemma~\ref{lem:c}.
\end{proof}

Although the Jacobians of holonomy maps corresponding to billiards are 
not necessarily H\"older continuous (see \cite[p.~124]{chernov book}), 
we shall next see
that the holonomy we have
constructed here does have H\"older continuous Jacobian,  leveraging the
fact that the surviving foliation must have length $\rho$, as in the proof
of Lemma~\ref{lem:jac holonomy}.

\begin{lemma}[H\"older continuity of the Jacobian of surviving foliation]
\label{lem:holder jac}
Let $\{ \bar\gamma \}$ be the surviving foliation transversal to $V_0$ introduced
just before Lemma~\ref{lem:jac holonomy}.
Fix $C'>0$ and consider the set of stable curves 
$V_1$, $V_2 \in \cW^s$ which are connected by the
transverse foliation $\{ \bar\gamma \}$ and such that the angles between $V_1$
and $V_2$ satisfy, $\phi(x, \bh_{12}(x)) \le C' d(x, \bh_{12}(x))$
for all $x \in V_1$, where $\bh_{12}$ is the holonomy map from $V_1$ to $V_2$.
We require that $V_0$ be in this family.
For any $0<\varpi \le 1/15$, there exists $C>0$ such that given two such  $V_1$, $V_2 \in \cW^s$, 
we have
\[
\begin{split}
&\left| \ln \frac{J\bh_{12}( v_1(\bxs),\bxs)}{J\bh_{12}( v_1(\bys),\bys)} \right|\\
& \le 
C d(\bxs, \bys)^\varpi d(V_1, V_2)^{1-\varpi} \max\{ k_W^{-2(2+5\varpi)/3} \rho^{-2(1+\varpi)/3}, 
k_W^{-82\varpi/3} \rho^{-38\varpi/3} \} \, ,
\end{split}\]
where
$V_1$ is the graph of the function $v_1$, and $\bxs, \bys$ are 
in the domain $[0,|V_0|]$ of $v_1$.
\end{lemma}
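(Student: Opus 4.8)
The strategy is to iterate the single-step estimate from \eqref{eq:jac factors}, but now keeping track of how each factor depends on the two \emph{base points} $(v_1(\bxs),\bxs)$ and $(v_1(\bys),\bys)$ rather than on the two transversal curves. Write
\[
\ln J\bh_{12}(v_1(\bxs),\bxs) = \ln J\bh^n_{12}(x_n) + \sum_{j=0}^{n-1} \big[ \ln J^s_{T^{-j}V_1}T^{-1}(x_j) - \ln J^s_{T^{-j}V_2}T^{-1}(\tx_j) \big]\, ,
\]
where $x_j = T^{-j}(v_1(\bxs),\bxs)$, $\tx_j = T^{-j}(\bh_{12}(v_1(\bxs),\bxs))$, and $\bh^n_{12}$ is the holonomy of the seeding foliation between $T^{-n}V_1$ and $T^{-n}V_2$; likewise write the analogous expansion with $\bys$ in place of $\bxs$, and subtract. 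The point is to estimate, for each fixed $j$, the \emph{difference of differences}
\[
\big[\ln J^s_{T^{-j}V_1}T^{-1}(x_j) - \ln J^s_{T^{-j}V_2}T^{-1}(\tx_j)\big] - \big[\ln J^s_{T^{-j}V_1}T^{-1}(x'_j) - \ln J^s_{T^{-j}V_2}T^{-1}(\tx'_j)\big]\, ,
\]
where primes denote the orbit of the $\bys$ base point. One factor — roughly $d(x,\tx)$, the transversal displacement — is what already appeared in Lemma~\ref{lem:jac holonomy} and contributes the ``$d(V_1,V_2)$'' and the $k_W^{-2}\rho^{-2/3}$-type blowup; the \emph{new} factor comes from moving the base point along $V_1$, i.e.\ from $d(\bxs,\bys)$, and each term in \eqref{eq:jac factors} is a smooth (locally Lipschitz) function of its arguments except for the occurrences of $\cos\vf_{j+1}$ (giving a $1/\cos\vf_{j+1}\sim k_{j+1}^2$ loss) and $\tau_{j+1}$ (giving only a $1/2$-Hölder loss, as in \eqref{ceiling}). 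So first I would produce a single-step bound of the form: the $j$-th term is $\le C\,\Lambda_0^{-j\theta}\cdot d(V_1,V_2)\cdot(\text{small power of }d(\bxs,\bys))\cdot(\text{power of }k_W^{-1}\rho^{-1})$, using along the way \eqref{eq:strip bound} (to convert $k_{j+1}^2$ into powers of the cumulative expansion and hence of $k_W^{-1}\rho^{-1}$), the backwards expansion $d(x_{j+1},\tx_{j+1})\le C\Lambda_0^{-j}d(x,\tx)$ along the surviving unstable foliation, and the angle estimate $\phi_j \le C(d(x,\tx)\,j\,\Lambda_0^{-j} + \phi_0\Lambda_0^{-j})$ from \cite[eq.~(5.29)]{chernov book} together with the hypothesis $\phi_0\le C'd(x,\tx)$.

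\textbf{Key steps, in order.} (1) Fix $n = n(W_i)$ and decompose $\ln J\bh_{12}$ as above; reduce to bounding, term by term, the second difference in the base point. (2) For the sum over $j=0,\dots,n-1$: in the $j$-th term pair up corresponding sub-factors of \eqref{eq:jac factors} and its $V_2$-counterpart, and Taylor expand in the base-point displacement. The displacement at time $j$ is controlled by $\Lambda_0^{-j}d(\bxs,\bys)$ in ``good'' directions but only by $(\Lambda_0^{-j}d(\bxs,\bys))^{1/2}$ through $\tau$; interpolating between the trivial bound (just ``$d(\bxs,\bys)^0$'') and the Lipschitz/Hölder bound gives a factor $d(\bxs,\bys)^\varpi$ at the cost of a compensating $d(V_1,V_2)^{-\varpi}$-type loss and extra powers of the expansion (hence of $k_W^{-1}\rho^{-1}$); this is where the exponents $1-\varpi$ on $d(V_1,V_2)$ and the two competing maxima $k_W^{-2(2+5\varpi)/3}\rho^{-2(1+\varpi)/3}$ vs.\ $k_W^{-82\varpi/3}\rho^{-38\varpi/3}$ appear — one maximand comes from the ``$+1$'' branch in the $(\rho^{-2/3}k_W^{-2}+1)$ of Lemma~\ref{lem:jac holonomy}, the other from the genuinely singular branch, and raising to appropriate powers and using \eqref{eq:strip bound} repeatedly produces the stated exponents. (3) Sum the geometric factors $\Lambda_0^{-j\theta}$ (and the $j\Lambda_0^{-j}$ coming from the angle bound) over $j$, which converges uniformly in $n$. (4) Handle the seeding-foliation term $\ln J\bh^n_{12}$: since $\bh^n$ is uniformly $\cC^2$, its Jacobian is $\cC^1$ in base point and angle, and the orbit displacements at time $n$ are exponentially small ($\Lambda_0^{-n}$), so this term is lower order. (5) Combine, convert the restriction $0<\varpi\le 1/15$ into the place where the interpolation exponents stay admissible, and read off the claimed inequality after noting $d(x,\tx)\le C k_W^2\rho$ and $|c|\le k_W^2\rho$ exactly as in \eqref{eq:uniform} to absorb the constants uniformly.

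\textbf{Main obstacle.} The delicate point is the bookkeeping of \emph{two} singular directions — the $\cos\vf$ factors (power-law blowup, controlled by \eqref{eq:strip bound}) and the collision time $\tau$ (only $1/2$-Hölder, controlled by \eqref{ceiling}) — \emph{simultaneously with} the interpolation that trades a small power $d(\bxs,\bys)^\varpi$ against powers of $d(V_1,V_2)$ and of the huge factor $k_W^{-1}\rho^{-1}$. Getting the exponents to close in the form stated (and in particular seeing why the max of the two maximands is exactly the right bound, and why $\varpi\le 1/15$ suffices) is the part requiring care; everything else is a more-or-less mechanical iteration of the estimates already set up for Lemma~\ref{lem:jac holonomy}, combined with the standard distortion/transversality inputs from \cite[Ch.~5]{chernov book}. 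I expect the proof to be a somewhat long but conceptually routine refinement of Lemma~\ref{lem:jac holonomy}; the only genuinely new idea is to differentiate (or take second differences) once more, in the base-point variable, and to observe that the seeding foliation's $\cC^2$ regularity makes the $n$-th term harmless.
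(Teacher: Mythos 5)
Your plan follows the paper's own proof essentially step for step: the same telescoping into second differences of the one-step stable Jacobians plus a seeding-holonomy remainder, the same ``minimum of two bounds'' at each $j$ (a base-point bound $d(x_{j+1},y_{j+1})k_{j+1}^2$ versus the transversal bound coming from Lemma~\ref{lem:jac holonomy}) followed by a H\"older interpolation yielding the factor $d(\bxs,\bys)^\varpi d(V_1,V_2)^{1-\varpi}$, the same conversion of $k_{j+1}$ into powers of the cumulative expansion via \eqref{eq:strip bound}, the same angle recursion with $\phi_0\le C'd(x,\tx)$, and the same observation that the uniformly $\cC^2$ seeding foliation makes the time-$n$ term harmless. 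The only cosmetic discrepancies are your attribution of the second maximand to the ``$+1$'' branch of Lemma~\ref{lem:jac holonomy} (in the paper it arises from the angle terms in the case $d(x_{j+1},\tx_{j+1})\le d(x_{j+1},y_{j+1})$, treated with partial powers of the distances) and the role you give the $1/2$-H\"older continuity of $\tau$ (which in the paper's argument enters only through the Lipschitz-type bound along unstable connections), neither of which changes the approach.
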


The  proof of the above lemma is  in Appendix~\ref{postponeproof}.
In view of \eqref{magicc}, \eqref{use4}, and Remark~\ref{rklog4pt}, Lemma~\ref{lem:holder jac} gives the analogue of the
four-point condition (vii) for the
billiard map. 

\subsection
{Interpolating the map-foliation across the gaps,  (vi)}  
\label{sec:step3}

We will fill in the gaps in the surviving foliation by interpolation, to obtain a 
full foliation in a $k_W^2 \rho$ neighborhood of $P^+(W)$. 
More precisely, we will first obtain bounds on the Lipschitz norm of the tangent vector $\partial_u \bar F$
to the foliation across the gaps. Interpolating
linearly between the Jacobians, following
\cite[App. D]{BaL} (see \eqref{formint})  these bounds will then give the analogue
of (vi)  for the map,  while the analogue of (vii) for the map will be discussed
in Section~\ref{sec:step4}.

First,
if there are gaps at the endpoints of $P^+(W)$, we may trivially extend the surviving foliation 
smoothly across these gaps since no interpolation is necessary.

Gaps are created by the intersection of $T^{n-j}(\ell)$ with a singularity curve 
or\footnote{ We use implicitly here that $T^{-j}(P^+(W))$ crosses an extended
singularity line  only if there exists $\ell$ as constructed in
Step 1 so that  $T^{n-j}(\ell)$ crosses
a singularity line.}
with boundaries of homogeneity strips,
since to not be cut under the original scheme means to lie in a
single homogeneity strip.  
Some gaps are created by an intersection with the boundary
of a single homogeneity strip.  If we choose not to cut those curves $T^{n-j}(\ell)$ which cross these two homogeneity strips, we will just be considering
unstable curves that lie in two adjacent strips.  The
expansion and contraction factors along these curves still enjoy bounded distortion,
with a slightly larger distortion constant.
We fill these gaps by 
not cutting the transverse foliation at time $n-j$.

The parts of the foliation that cross more than two strips will belong to
the gap near $\cS_0$ for which all curves are cut. For that gap, we linearly
interpolate between the surviving curves at each end of the gap at time $0$, and for
this we shall estimate the Lipschitz constant of the interpolation.

If $T^{-j}(P^+(W))$ lands in a neighborhood of $\cS_0$
it may cross
countably many homogeneity strips.  Since we enforce a definite length
scale for the transverse foliation, this means that the foliation is cut
in all homogeneity strips above a certain index (depending on the length
of $T^{n-j}(\ell)$).  So we can consider this to be a single gap in our
construction (even though $T^{-j}(P^+(W))$ is subdivided into countably
many homogeneous curves).

With this construction, since $\up>0$ is fixed, there are finitely many gaps in the foliation of
$P^+(W)$, each gap containing one or more intersections of $P^+(W)$ with a 
curve in $\cS_{-n}$, $n \le \lfloor \frac{\up}{\tau_{\min}} \rfloor + 1$.

Now fix one gap and choose a segment $V_0$ of $P^+(W)$ with surviving foliation on
one side of the gap.  This fixes $n = n(V_0)$ as the number of iterates appearing in the
definition of the foliation on $V_0$.  Let $j$ be the least integer $j' \ge1$
such that an element of $T^{j'}(\cS_0)$ intersects $P^+(W)$ in the gap.  
(Note that $j \le n$ must exist otherwise there would be no gap.)  Thus
$V_0$ is contained in a longer curve $V_1 \subset P^+(W)$ 
such that $T^{-m}(V_1)$ is a homogeneous stable curve for $m = 0, 1, \ldots, j-1$ and
which is cut for the 
first time at time $-j$.  We prove (vi) for the interpolated foliation by considering two cases.

\smallskip
\noindent
{\em Case 1.}  $T^{-j}(V_1) \cap \cS_0 \neq \emptyset$.  This implies that $T^{-j}(V_1)$
crosses countably many homogeneity strips ($T^{-j}(V_1)$ is not assumed to
be homogeneous).  At this step, each $\bar\gamma$ intersecting $V_0$
is such that $T^{-j}(\bar\gamma)$ is a homogeneous unstable curve
of length $k_W^2 \rho/ J^u_{T^{-j}(\bar\gamma)}T^j(z)$, for some $z \in T^{-j}(\bar\gamma)$.
This implies that the index $k_j$ of the homogeneity strip containing this curve
satisfies 
\begin{equation}
\label{eq:new(2)}
k_j^{-3} = C^{\pm 1} \frac{k_W^2 \rho}{J^u_{T^{-j}(\bar\gamma)}T^j(z)}\, ,
\end{equation}
for some uniform constant $C$ and any $z \in T^{-j}(\bar\gamma)$.
(Compare with (\ref{eq:strip bound}).)
Thus the length of the image of the gap in $T^{-j}(V_1)$ is of order $k_j^{-2}$.
Since this is the first time $V_1$ is being cut, $T^{-j+1}(V_1)$ is still a homogeneous
stable curve so, using the fact that the expansion from $T^{-j+1}(V_1)$ to $T^{-j}(V_1)$ in the
gap is of order $k_j^2$,
we conclude that for any $x \in V_1$ the length of the gap in $V_1$ is
\begin{equation}
\label{eq:gap length}
|V_1^{\gap}| = C^{\pm 1} k_j^{-4} /J^s_{V_1}T^{-j+1}(x) \,  .
\end{equation}

In order to estimate the Lipschitz constant of the foliation with which we want to fill in the gap,
we must calculate the largest possible angle between the edge of the surviving foliation
in $V_0$ and the part of $T^j(\cS_0)$ crossing $P^+(W)$.  Note that $T^j(\cS_0)$ is necessarily
an unstable curve, and indeed, the tangent vector to $\cS_0$ is mapped
into the unstable cone in one step.

Using \cite[eq. (3.39)]{chernov book}, we see that the image under $DT$ of an unstable
curve has slope $\cV_1$ (in $(r,\vf)$ coordinates) given by
\[
\cV_1 = \frac{\cos \vf_1}{\tau_0 + \frac{\cos \vf_0}{\cV_0 + \cK_0}} + \cK_1 \, ,
\]
where objects with subscript 0 correspond to a base point $x$ and objects with subscript
1 correspond to $T(x)$.

Let $x = \cS_0 \cap T^{-j}V_1$.  Now since the slope $\cV_0(x) = 0$ and 
$\vf_0(x) = \pi/2$, the slope of the image of this vector after one step is given by
\[
\cV_1(x) = \cK_1 + \frac{\cos \vf_1}{\tau_0} \,  .
\]

Let $y$ denote the endpoint of $T^{-j}(V_0)$ adjacent to the gap and let $\bar\gamma$
denote the unstable curve in the constructed foliation containing $T^j(y)$.  
Recall that $\bar\gamma = T^n(\ell)$ for some curve $\ell$ of the unstable seeding foliation
of a neighborhood of a curve $W_i \in \cG_\up(W)$; thus we do not make
any assumption on the slope of $T^{-j}\bar\gamma = T^{n-j}(\ell)$, other than
that it lies in the unstable cone at $y$. 

Letting $\phi^{\bar \gamma}(T(x), T(y))$ denote
the angle between $T(\cS_0)$ and $T^{-j+1}(\bar\gamma)$ at $T(x)$ and $T(y)$ respectively, we estimate,
\[
\begin{split}
\phi^{\bar \gamma}(Tx,Ty) & \le |\cV_1(x) - \cV_1(y)|
\le |\cK_1(x) - \cK_1(y)| + \left| \frac{\cos \vf_1(x)}{\tau_0(x)} 
- \frac{\cos \vf_1(y)}{\tau_0(y) + \frac{\cos \vf_0(y)}{\cV_0(y) + \cK_0(y)}} \right| \\
& \le  |\cK(Tx) - \cK(Ty)| + \frac{1}{\tau_{\min}} | \cos \vf(Tx) - \cos \vf(Ty)| \\
& \; \; \; \; \; + \cos \vf(Ty) \left| \frac{1}{\tau(x)} - \frac{1}{\tau(y)} \right| 
 + \cos \vf(Ty) \left| \frac{1}{\tau(y)} - \frac{1}{\tau(y) + \frac{\cos \vf_0(y)}{\cV_0(y) + \cK_0(y)}} \right| 
 \, .
\end{split}
\]
The first two differences above are bounded by a uniform constant times $d(T(x),T(y))$, which is of order
$k_j^{-4}$.  As before, $|\tau(x) - \tau(y)| \le C(d(x,y) + d(T(x),T(y))) \le C' d(x,y) \le C'k_j^{-2}$. 
Finally, the last difference is of order $\cos \vf(y) = Ck_j^{-2}$.  Putting these estimates together,
we obtain 
\begin{equation}
\label{eq:angle diff}
\phi^{\bar \gamma}(T(x),T(y)) \le C k_j^{-4} + C k_j^{-2} \cos \vf(Ty)\,  .
\end{equation}

The quantity to be estimated is $\frac{\phi^{\bar \gamma}(T^j(x), T^j(y))}{d(T^j(x), T^j(y))}$.  This will yield the 
Lipschitz constant of our interpolated foliation in the gap.
To do this, we note that
\cite[eq. (5.27) and following p.~122]{chernov book} imply
\[
\begin{split}
\phi^{\bar \gamma}(T^j(x), T^j(y)) &= Q_j + \cos \vf(T^j(y)) Q_{j-1} \\
&\qquad\qquad+ \tfrac{\cos \vf(T^j(y))}{\cos \vf(T^{j-1}(y))}
\tfrac{\phi^{\bar \gamma}(T^{j-1}(x), T^{j-1}(y))}{(1+\tau(T^{j-1}(y)) B^+(T^{j-1}y))(1 + \tau(T^{j-1}x) B^+(T^{j-1}(x)))} \, ,
\end{split}
\]
where $B^+$ denotes the post-collisional curvature of the unstable wavefront and
the functions $Q_j, Q_{j-1}$ satisfy $|Q_j| \le C d(T^j(x), T^j(y))$ and $|Q_{j-1}| \le C d(T^{j-1}(x), T^{j-1}(y))$.
Proceeding inductively and recalling the
expressions for the stable and unstable Jacobians given by \eqref{eq:jac factors} 
and \eqref{eq:translate factors} , we obtain the following expression for the difference in angles,
\begin{equation}
\label{eq:angle expand}
\begin{split}
\phi^{\bar \gamma}(T^jx, T^jy) & \le C \sum_{m=0}^{j-2} \tfrac{d(T^{j-m}x, T^{j-m}y) + \cos \vf(T^{j-m}y) d(T^{j-m-1}x, T^{j-m-1}y)}{J^u_{T^{m-j}\bar\gamma}T^m(T^{j-m}y) J^s_{V_0}T^{-m}(T^jy)} \\
&\qquad\qquad+ \tfrac{\phi^{\bar \gamma}(Tx,Ty)}{J^u_{T^{1-j}\bar\gamma}T^{j-1}(Ty) J^s_{V_0}T^{-j+1}(T^jy)}\,  .
\end{split}
\end{equation}
For each $m$ in the sum we have
the following two bounds: For the first term in the numerator, 
$d(T^{j-m}(x), T^{j-m}(y)) / J^s_{V_0}T^{-m}(T^jy) \le C d(T^j(x), T^j(y))$.
For the second term in the numerator, there is a gap in the expansion factors, so we need to 
add the stable Jacobian,  $J^s_{T^{-m}V_0}T^{-1}(T^{j-m}(y))$ as follows:
\[
\frac{\cos \vf(T^{j-m}y) d(T^{j-m-1}x, T^{j-m-1}y)}{J^s_{V_0}T^{-m}(T^jy)}
\le \cos \vf(T^{j-m}y) d(T^j x, T^j y) J^s_{T^{-m}V_0}T^{-1}(T^{j-m}y)\,  .
\]
Now since $T^{j-m-1}(y) \in \bH_{k_{m+1}}$, we have
$J^s_{T^{-m}V_0}T^{-1}(T^{j-m}y) \le C k_{m+1}^2$.  
Also, in preparation for using \eqref{eq:new(2)}, we note that
\[
\begin{split}
J^u_{T^{-m-1}\bar\gamma}T^{m+1}(T^{j-m-1}(y)) &= J^u_{T^{-m}\bar\gamma}T^m(T^{j-m}y) J^u_{T^{-m-1}\bar\gamma}T(T^{j-m-1}(y))\\
& \le C J^u_{T^{-m}\bar\gamma}T^m(T^{j-m}y) k_m^2 \, .
\end{split}
\]
Putting these facts together and using \eqref{eq:new(2)}, we estimate
\[
\begin{split}
&\frac{\cos \vf(T^{j-m}(y)) d(T^{j-m-1}(x), T^{j-m-1}(y))}{J^u_{T^{m-j}\bar\gamma}T^m(T^{j-m}(y)) J^s_{V_0}T^{-m}(T^j(y))} 
 \le \frac{\cos \vf(T^{j-m}y) d(T^jx, T^jy) J^s_{T^{-m}V_0}T^{-1}(T^{j-m}y)}
{J^u_{T^{m-j}\bar\gamma}T^m(T^{j-m}y)} \\
&\qquad\quad\le C \frac{k_m^{-2}d(T^jx, T^jy) }
{J^u_{T^{m-j}\bar\gamma}T^m(T^{j-m}y)}  k_{m+1}^2
\; \le \; C \frac{k_m^{-2} d(T^j(x), T^j(y))}{J^u_{T^{m-j}\bar\gamma}T^m(T^{j-m}y)}
\frac{(J^u_{T^{m-j}\bar\gamma}T^m(T^{j-m}y))^{2/3} k_m^{4/3}}{k_W^{4/3} \rho^{2/3}} \\
&\qquad\quad \le C d(T^jx, T^jy) \Lambda_0^{-m/3} k_W^{-4/3} \rho^{-2/3} \,  .
\end{split}
\]
Collecting this bound together with  \eqref{eq:gap length},
 \eqref{eq:angle diff}, and \eqref{eq:angle expand}, we obtain 
 (note that $|V_1^{\gap}| = d(T^j(x), T^j(y))$),
\[
\begin{split}
\frac{\phi^{\bar \gamma}(T^j(x), T^j(y))}{d(T^j(x), T^j(y))}
& \le C \sum_{m=0}^{j-2} \frac{k_W^{-4/3}}{\Lambda_0^{m/3} \rho^{2/3}} + C \frac{k_j^{-4} + \cos \vf(Ty) k_j^{-2}}{J^u_{T^{1-j}\bar\gamma}T^{j-1}(Ty) J^s_{V_0}T^{-j+1}(T^jy)} \cdot \frac{J^s_{V_0}T^{-j+1}(T^jy)}{k_j^{-4}} \\
& \le C k_W^{-4/3} \rho^{-2/3} + \frac{C}{J^u_{T^{1-j}\bar\gamma}T^{j-1}(Ty)} 
+ \frac{k_j^2 \cos \vf(Ty)}{J^u_{T^{1-j}\bar\gamma}T^{j-1}(Ty)} \,  .
\end{split}
\]
Now using \eqref{eq:new(2)}, we bound the last term by
\[
\frac{k_j^2 \cos \vf(Ty)}{J^u_{T^{1-j}\bar\gamma}T^{j-1}(Ty)}
\le \frac{\cos \vf(Ty) (J^u_{T^{-j}\bar\gamma}T(y))^{2/3}}{(J^u_{T^{1-j}\bar\gamma}T^{j-1}(Ty))^{1/3}
k_W^{4/3} \rho^{2/3}}  \, .
\]
Finally using the fact that $J^u_{T^{-j}\bar\gamma}T(y)) \le C/\cos \vf(Ty)$, we conclude
\begin{equation}\label{Lipbd}
\frac{\phi^{\bar \gamma}(T^jx, T^jy)}{d(T^jx, T^jy)}
\le C + Ck_W^{-4/3} \rho^{-2/3} \, .
\end{equation}
This ends the proof of (vi) for the interpolation in Case~1.

\smallskip
\noindent
{\em Case 2.} $T^{-j}(V_1) \cap \cS_0 = \emptyset$.  This implies that $T^{-j+1}(V_1)$
lies on one side of a singularity curve in $\cS_{-1}$, and that one endpoint of
$T^{-j}(V_1)$ lies on a singularity
curve in $\cS_1$, but not on $\cS_0$.  If $V_1$ is such a curve containing a gap,
then the other side of the gap necessarily contains a curve satisfying the condition
of Case 1 above. Since the estimate of the angle difference carried out there was a
worst-case estimate (in fact, we started with an angle outside the unstable cone, while
in Case 2, the angle of the singularity curve in $\cS_{-1}$ will necessarily be inside the
unstable cone), the interpolated foliation across the entire gap can only have
a better Lipschitz constant since we are increasing the distance we have to carry out
the interpolation while not making the difference in angles any larger.

\smallskip
Other curves in $\cS_{-n}$ may also cross the same gap.  But these additional cuts can only
make the gap longer, while again making the difference in angles no worse.
Thus they do not make the Lipschitz constant any larger than the bound derived in Case 1.

\smallskip

We next show that the Lipschitz control
\eqref{Lipbd} that we obtained gives the $L^\infty$ bound (vi) 
on the foliation,
essentially following the construction from Appendix D of 
\cite{BaL} (the main difference is that we shall not use induction).
\smallskip

Let $(0, \bxs)$, $\bxs \in [a',b']$, denote the stable curve $P^+(W)$
and the interval $[a,b]$, $a' < a < b < b'$, denote the interval on which the foliation 
must be interpolated.
The surviving foliation on both sides of the gap is given by
$\bgamma_a(\bxu, \bxs) = (\bxu, \barG_a(\bxu, \bxs))$ and
$\bgamma_b(\bxu, \bxs) = (\bxu, \barG_b(\bxu, \bxs)$.
Next, fix $\bvf \in \cC^2([0,1], [0,1])$ satisfying
$\bvf(0) = \bvf'(0) = \bvf'(1) = 0$ and $\bvf(1) =1$.  
Define $\overline{\psi} = (1-c_1)\bvf'$, where
$c_1$ will be chosen later small enough to ensure the interpolated foliation comprises unstable
curves.  Then $\bpsi(0)=\bpsi(1)=0$ and  $\int_0^1 \overline{\psi} = 1-c_1$.

Define $\vf(\bxs) = \bvf (\frac{\bxs -a}{b-a})$ and $\psi(\bxs) = \overline{\psi}(\frac{\bxs-a}{b-a})$.
It follows that $\vf(a) = \partial_{\bxs} \vf(a) = \partial_{\bxs} \vf(b) = \psi(a)
= \psi(b) = 0$, $\vf(b) =1$, $\int_a^b \psi = (1-c_1)(b-a)$.  Now define
for all $\bxs \in [a,b]$ and $|\bxu| \le k_W^2 \rho$,
\[
\begin{split}
\theta_0(\bxu, \bxs) & = \left( \partial_{\bxs} \barG_b(\bxu, b) \frac{\bxs - a}{b-a}
+ \partial_{\bxs} \barG_a(\bxu, a) \frac{b-\bxs}{b-a} \right) (1-\psi(\bxs)) \\
\theta(\bxu, \bxs) & = \theta_0(\bxu, \bxs) - \frac{\psi(\bxs)}{(1-c_1)(b-a)} \int_a^b \theta_0(\bxu, z) \, dz \\
\label{formint} \sigma(\bxu, \bxs) & = \barG_b(\bxu, b)\vf(\bxs) + \barG_a(\bxu, a)(1-\vf(\bxs)) 
+ \int_a^{\bxs} \theta(\bxu, z) \, dz \,  .
\end{split}
\]
The foliation in the gap is then defined by $\bgamma(\bxu, \bxs) = (\bxu, \sigma(\bxu, \bxs))$.
With these definitions, $\sigma(\bxu, a) = \barG_a(\bxu, a)$ and 
$\sigma(\bxu, b) = \barG_b(\bxu, b)$.  As shown in \cite[Appendix D]{BaL}, 
$\bgamma \in \cC^{1+\mbox{\scriptsize Lip}}$, and in particular, 
$C^{-1} \le |\partial_{\bxs} \sigma|
\le C$, proving the analogue of (v) for the interpolated foliation; moreover, 
choosing $c_1$ small enough guarantees that
the curves of the interpolated foliation are unstable curves.  

To prove (vi), note that
\begin{equation}
\label{eq:C0 est}
\partial_{\bxu} \partial_{\bxs} \sigma(\bxu, \bxs) = (\partial_{\bxu} \barG_b(\bxu, b) 
- \partial_{\bxu} \barG_a(\bxu, a))
\vf'(\bxs) + \partial_{\bxu} \theta(\bxu, \bxs) \, . 
\end{equation}

Now, the second term on the right-hand side is bounded by the supremum of the surviving foliations
$\partial_{\bxu} \partial_{\bxs} \barG_a$ and $\partial_{\bxu} \partial_{\bxs} \barG_b$ 
by a straightforward calculation of $\partial_{\bxu} \theta$,
and this is bounded by $Ck_W^{-2} \rho^{-2/3}$, by 
Lemma~\ref{lem:c} .

To bound the first term on the right-hand side, we use our Lipschitz bound 
\eqref{Lipbd}.
Note that $\barG_a(\eta, a)$ parametrizes the curve on one side of the gap, while
$\barG_b(\eta, b)$ parametrizes the curve on the other.  Equation
\eqref{Lipbd} says
\[
\frac{\phi(\barG_a(\bxu, a), \barG_b(\bxu, b))}{d(\barG_a(\bxu, a), \barG_b(\bxu,b))} \le C k_W^{-4/3} \rho^{-2/3} \, , 
\]
where $\phi$ represents the angle between the two unstable curves at the given points and
the distance between points is measured along a stable curve connecting the two points.  Since
stable curves have uniformly bounded curvature, this distance is uniformly equivalent to 
Euclidean distance on the scatterer.

Since $\partial_{\bxu} \barG$ represents the slope of an unstable curve, we have
\[
\begin{split}
\phi(\barG_a & (\bxu, a),  \barG_b(\bxu, b))  = | \tan^{-1}(\partial_{\bxu} \barG_a(\bxu, a)) 
- \tan^{-1}(\partial_\eta \barG_b(\bxu, b)) |    \\
&= \frac{1}{1+z^2} | \partial_{\bxu} \barG_a(\bxu, a) - \partial_{\bxu} \barG_b(\bxu, b)| 
 \ge \frac{1}{1 + K_0^2} | \partial_{\bxu} \barG_a(\bxu, a) - \partial_{\bxu} \barG_b(\bxu, b)|
\end{split}
\]
for some $z \in \mathbb{R}$, and where
$K_0$ denotes the maximum slope in the unstable cone (recall that we have global
stable and unstable cones for the map). This estimate together with 
\eqref{Lipbd}
yields,
\[
| \partial_{\bxu} \barG_a(\bxu, a) - \partial_{\bxu} \barG_b(\bxu, b)| 
\le C K_0^2 k_W^{-4/3} \rho^{-2/3}  d(\barG_a(\bxu, a), \barG_b(\bxu,b)) \, .
\]
Now using this together with the fact that 
$|\vf'| \le C \big( d(\barG_a(\bxu, a), \barG_b(\bxu,b)) \big)^{-1}$ 
due to the rescaling, we estimate the first term on the right-hand side of \eqref{eq:C0 est} by
$C k_W^{-4/3} \rho^{-2/3}$.  This completes the estimate we need for the $L^\infty$ norm (vi)
for the interpolated map-foliation.
Continuity of $\partial_{\bxu} \partial_{\bxs} \sigma$ follows immediately from 
\eqref{eq:C0 est} since all functions appearing on the right-hand side are continuous
(using the continuity of $\partial_{\bxu} \partial_{\bxs} \barG$ in the surviving foliation
by Lemma~\ref{lem:c}).
The continuity extends to the boundary of the gap since
$\partial_{\bxu} \partial_{\bxs} \sigma(\bxu, a) = \partial_{\bxu} \theta_0(\bxu, a) = \partial_{\bxu} \partial_{\bxs} \barG_a(\bxu, a)$,  and similarly for $\partial_{\bxu} \partial_{\bxs} \sigma(\bxu, b)$.


\subsection
{Checking (vii) for the map-foliation interpolated across the gaps}
\label{sec:step4}

In this step, we prove the analogue of Lemma~\ref{lem:holder jac} across gaps and in the
part of the foliation that has been filled by interpolation, this will give
the four-point estimate (vii)  across the gaps.  We begin by proving a lemma which will allow 
us to control the H\"older continuity of $\partial_{\bxs} \sigma$ defined by interpolation in each gap,
specifically that
 the average slope of the interpolated foliation across the
gap and the derivatives $\partial_{\bxs} \barG$ on either side of the gap are close in the length
scale of the gap.

\begin{lemma}
\label{lem:interpolate}
Let $P^+(W)$ be a stable curve parametrized by $\bxs \in [a',b']$ as in Subsection ~ \ref{sec:step3} and suppose
the interval $(a,b) \subset [a', b']$ is a gap in the surviving foliation pushed forward $n$ steps.
Then there exists $C>0$ such that for any $\bxu$ with $|\bxu| \le k_W^2 \rho$,
\[
\left| \partial_{\bxs} \barG_a(\bxu, a) - \frac{\barG_b(\bxu, b) - \barG_a(\bxu, a)}{b-a} \right| 
\le C \rho^{-31/105} |a-b|^{1/7} \,  .
\]
A similar bound holds for $\partial_{\bxs} \barG_b(\bxu, b)$.
\end{lemma}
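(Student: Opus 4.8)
\textbf{Proof plan for Lemma~\ref{lem:interpolate}.}
The plan is to compare the value $\partial_{\bxs}\barG_a(\bxu,a)$ with the secant slope $(\barG_b(\bxu,b)-\barG_a(\bxu,a))/(b-a)$ by interpolating through the ``true'' surviving foliation: write
$$\barG_b(\bxu,b)-\barG_a(\bxu,a)=\int_a^b \partial_{\bxs}\barG(\bxu,z)\,dz$$
where $\barG(\bxu,\cdot)$ denotes the correct foliation function where it survives, and is simply linear interpolation where it does not. The difference in question is then the average over $[a,b]$ of $\partial_{\bxs}\barG(\bxu,z)-\partial_{\bxs}\barG_a(\bxu,a)$, so the task reduces to bounding the oscillation of $\partial_{\bxs}\barG$ over an interval of length $|a-b|$. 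First I would split this oscillation into two contributions: the oscillation \emph{within} the surviving pieces, controlled by the H\"older estimate of Lemma~\ref{lem:holder jac} (in the form $|\ln(\partial_{\bxs}\barG(\bxu,\bxs)/\partial_{\bxs}\barG(\bxu,\bys))|\le C d(\bxs,\bys)^\varpi \cdots$ with $\bxu$ fixed, hence a genuine $\cC^\varpi$ bound on $\partial_{\bxs}\barG(\bxu,\cdot)$ since $\partial_{\bxs}\barG$ is bounded above and below by the analogue of (v)); and the jump \emph{across} any sub-gaps contained in $[a,b]$, which is where the argument of Subsection~\ref{sec:step3} (the Lipschitz bound \eqref{Lipbd} on the interpolated angle, equivalently on $\partial_{\bxu}\partial_{\bxs}\sigma$) must be invoked to show the interpolated derivative stays within $C|a-b|\cdot(\text{Lip const})$ of the surviving values at the gap edges.

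The second step is to track the exponents. From Lemma~\ref{lem:holder jac} with $\bxu$ fixed, the $\cC^\varpi$ seminorm of $\partial_{\bxs}\barG(\bxu,\cdot)$ on a surviving piece is bounded by a constant times $\max\{k_W^{-2(2+5\varpi)/3}\rho^{-2(1+\varpi)/3},\ k_W^{-82\varpi/3}\rho^{-38\varpi/3}\}$; using $k_W\le C\rho^{-1/5}$ to eliminate $k_W$ one gets a pure power of $\rho$ as the prefactor. Choosing $\varpi=1/7$ (the exponent appearing in the statement) and simplifying should produce the claimed $\rho^{-31/105}|a-b|^{1/7}$: indeed $31/105$ is exactly what one obtains by taking the worse of the two $\rho$-powers after the substitution $k_W=\rho^{-1/5}$ and $\varpi=1/7$, so I would verify $2(1+\varpi)/3+ (2/5)\cdot 2(2+5\varpi)/3$ versus $38\varpi/3+(2/5)\cdot 82\varpi/3$ at $\varpi=1/7$ and confirm one of them equals $31/105$. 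The across-gap contribution, governed by \eqref{Lipbd} which is $C(1+k_W^{-4/3}\rho^{-2/3})\le C\rho^{-14/15}$, multiplied by the gap length $|a-b|$, is then shown to be dominated by the H\"older term provided $|a-b|$ is small relative to $\rho$ in the relevant regime (and $|a-b|^{1/7}$ with a large $\rho$-power is the binding constraint); alternatively one absorbs it by noting $|a-b|\le |a-b|^{1/7}|a-b|^{6/7}$ and $|a-b|^{6/7}\le (C k_W^2\rho)^{6/7}$ since each gap in $[a,b]$ has length controlled as in \eqref{eq:gap length}.

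Finally I would assemble: the difference is at most the average oscillation, which is bounded by (sum of H\"older oscillations over surviving subpieces) $+$ (sum of jumps across sub-gaps), each term being $\le C\rho^{-31/105}|a-b|^{1/7}$ after the exponent bookkeeping, and the number of sub-gaps in $[a,b]$ is finite (since $\up$ is fixed) with total gap length $\le|a-b|$, so the sum telescopes without losing the exponent. The symmetric statement for $\partial_{\bxs}\barG_b(\bxu,b)$ follows by the same argument run from the $b$-endpoint, or simply by the triangle inequality combined with the already-established bound on $|\partial_{\bxs}\barG_a(\bxu,a)-\partial_{\bxs}\barG_b(\bxu,b)|$ (which is itself the oscillation over $[a,b]$).

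\textbf{Main obstacle.} The delicate point is the exponent accounting: one must feed $k_W\le C\rho^{-1/5}$ into the two-branch maximum of Lemma~\ref{lem:holder jac}, pick $\varpi$ optimally (here forced to $1/7$ by the target), and then check that the interpolation/gap contribution from \eqref{Lipbd}, \eqref{eq:gap length} genuinely does not worsen the power $-31/105$. Getting a clean inequality rather than an $\epsilon$-loss requires being careful that the gap length $|a-b|$ enters with a favorable power ($6/7$ left over after extracting $|a-b|^{1/7}$) so that the coarse Lipschitz bound across gaps is subsumed by the finer H\"older bound.
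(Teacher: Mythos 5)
Your approach has a structural problem that makes it unworkable as written. The interval $(a,b)$ in Lemma~\ref{lem:interpolate} is \emph{entirely} a gap: by the construction of Subsection~\ref{sec:step3}, the surviving foliation exists only at (just outside) the endpoints $a$ and $b$, so your decomposition of the oscillation of $\partial_{\bxs}\barG(\bxu,\cdot)$ over $[a,b]$ into ``oscillation within surviving pieces'' plus ``jumps across sub-gaps'' degenerates: there are no surviving pieces inside $(a,b)$ on which to invoke Lemma~\ref{lem:holder jac}, and the function $\barG(\bxu,\cdot)$ you propose to integrate from $a$ to $b$ is, on all of $(a,b)$, the \emph{interpolated} function $\sigma$ — whose $\bxs$-regularity is exactly what Lemma~\ref{lem:interpolate} is needed to establish (see the estimates of \circled{A}, \circled{B}, \circled{C} in Subsection~\ref{sec:step4}, all of which cite this lemma). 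Using the regularity of $\sigma$ to prove the lemma would be circular. Moreover, the bound \eqref{Lipbd} you invoke for the across-gap jump controls the variation of $\partial_{\bxu}\barG$ (the slope of the unstable leaves, i.e., $\partial_{\bxu}\partial_{\bxs}\sigma$ after interpolation), not the variation of the holonomy Jacobian $\partial_{\bxs}\barG$; these are different objects and the first does not bound the second. Finally, Lemma~\ref{lem:holder jac} is only valid for $\varpi\le 1/15$, so the choice $\varpi=1/7$ is not available, and in fact the exponent $1/7$ in the statement does not originate from a H\"older exponent at all.

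The missing idea is dynamical, not interpolative. The paper writes the secant slope as a ratio of lengths of the two vertical transversals $V_0$ (at $x^u=0$) and $V_1$ (at $x^u=\bxu$) over the gap, each expressed as $\int_{T^{-j}V_i} J^s T^j\,dm_W$ where $j$ is the last time before the gap-creating singularity, and writes $\partial_{\bxs}\barG_a(\bxu,a)$ as a ratio of stable Jacobians times the holonomy Jacobian $J\bh_{-j}$ at time $-j$ (see \eqref{eq:slope split}). The comparison then reduces to distortion along $T^{-j}V_0$, $T^{-j}V_1$ (order $|a-b|^{1/3}$, since these stay in one homogeneity strip for $j$ steps), the holonomy bound \eqref{eq:h bound}, and the length ratio $|T^{-j}V_0|/|T^{-j}V_1|$. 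The crucial quantitative input — absent from your plan — is Sublemma~\ref{sub:gap}, which bounds $1/J^u_{T^{-j}\bgamma}T^j(x_j)\le Ck_W^{-2}\rho^{-26/35}|a-b|^{9/35}$ by analyzing how gaps are created (intersection with $\cS_0$ versus with $\cS_1$ away from $\cS_0$, via \eqref{eq:new(2)} and \eqref{eq:gap length}). It is this sublemma that converts the holonomy estimate, naturally expressed in powers of $1/J^u$ and $\rho$, into a positive power of $|a-b|$; the exponent $1/7$ then appears as a simplified common lower bound of $9/35$ and $27/175$, with the leftover powers of $|a-b|$ absorbed into $\rho$. Without an analogue of Sublemma~\ref{sub:gap}, no amount of exponent bookkeeping on Lemma~\ref{lem:holder jac} will produce a bound that vanishes as $|a-b|\to 0$.
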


Lemma~\ref{lem:interpolate} is proved in Appendix~\ref{postponeproof}.
It  allows us to prove the four-point estimate on the 
interpolated foliation in the gap.  Let 
$a \le \bxs < \bys \le b$, and $-k_W^2 \rho \le \bxu < \byu \le k_W^2 \rho$ be the coordinates
of arbitrary points in the gap.  For our first estimate,  the estimate on 
$\partial_{\bxu} \partial_{\bxs} \sigma$ from Subsection ~ \ref{sec:step3} allows us to write,
\begin{equation}
\label{eq:first 4}
\begin{split}
|\partial_{\bxs} \sigma(\bxu, \bxs) & - \partial_{\bxs} \sigma(\bxu, \bys) - \partial_{\bxs} \sigma(\byu, \bxs)
+ \partial_{\bxs} \sigma(\byu, \bys)| \\
& = \left| \int_{\bxu}^{\byu} \partial_{\bxu} \partial_{\bxs} \sigma(z, \bys) - \partial_{\bxu} \partial_{\bxs} \sigma(z, \bxu) \, dz \right|
\le C k_W^{-4/3} \rho^{-2/3} |\bxu - \byu| \,  .
\end{split}
\end{equation}

We will use Lemma~\ref{lem:interpolate} to produce a second estimate on the four-point difference above.
In the following calculation, for brevity, we set $M_a = \barG_a(\bxu, a)$, $M_b = \barG_b(\bxu, b)$,
$M_a' = \partial_{\bxs} \barG_a(\bxu, a)$ and $M_b' = \partial_{\bxs} \barG_b(\bxu, b)$.
According to the definition of $\sigma$, we have for $\bxs \in [a,b]$,
\[
\partial_{\bxs} \partial_{\bxs} \sigma (\bxu, \bxs) = ( M_b - M_a) \vf''(\bxs) + \partial_{\bxs} \theta_0(\bxu, \bxs) - \tfrac{\psi'(\bxs)}{(1-c_1)(b-a)} \int_a^b \theta_0(\bxu, z) \, dz \, .
\]
In addition, using the definition of $\theta_0$,
\[
\begin{split}
\partial_{\bxs} \theta_0(\bxu, \bxs) & = \tfrac{M_b' - M_a'}{b-a}
(1- \psi(\bxs)) - \left( M_b' \tfrac{\bxs - a}{b-a} +
M_a' \tfrac{b-\bxs}{b-a} \right) \psi'(\bxs) \qquad \mbox{and} \\
\int_a^b \theta_0(\bxu, z) \, dz & = \int_a^b M_b' \tfrac{z-a}{b-a} + M_a' \tfrac{b-z}{b-a} \, dz
 - \int_a^b (M_b' \tfrac{z-a}{b-a} + M_a' \tfrac{b-z}{b-a}) \psi(z) \, dz \\
 & = (M_b'+M_a') \tfrac{b-a}{2} - \int_a^b (M_b' \tfrac{z-a}{b-a} + M_a' \tfrac{b-z}{b-a}) \psi(z) \, dz
 \,   .
\end{split}
\]
Now combining these expressions, and recalling that by construction $\psi' = (b-a)^{-1} \bpsi'$ and
$\vf'' = \bvf'' (b-a)^{-2} = \frac{\bpsi'}{(1-c_1) (b-a)^2}$, we have
\[
\begin{split}
\partial_{\bxs} \partial_{\bxs} \sigma (\bxu, \bxs) 
& = ( M_b - M_a) \vf''(\bxs) + \tfrac{M_b' - M_a'}{b-a}(1- \psi(\bxs)) 
- \left( M_b' \tfrac{\bxs - a}{b-a} +
M_a' \tfrac{b-\bxs}{b-a} \right) \psi'(\bxs) \\
& \; \; \; \; - \tfrac{\psi'(\bxs)}{(1-c_1)(b-a)} \left[  (M_b'+M_a') \tfrac{b-a}{2} - \int_a^b (M_b' \tfrac{z-a}{b-a} + M_a' \tfrac{b-z}{b-a}) \psi(z) \, dz \right] \\
& = \tfrac{\bpsi'(\bxs)}{(1-c_1)(b-a)} \left[ \tfrac{M_b - M_a}{b-a} - \tfrac{M_b' + M_a'}{2} \right]
+  \tfrac{M_b' - M_a'}{b-a}(1- \psi(\bxs)) \\
& \; \; \; \; + \tfrac{\bpsi'(\bxs)}{b-a} \left[ \int_a^b  \big( M_b' \tfrac{z-a}{b-a} + M_a' \tfrac{b-z}{b-a} \big) \tfrac{\psi(z)}{(1-c_1)(b-a)} \, dz -  \left( M_b' \tfrac{\bxs - a}{b-a} +
M_a' \tfrac{b-\bxs}{b-a} \right) \right] \\
& =: \hbox{\circled{A} }+ \hbox{\circled{B}} + \hbox{\circled{C}} \, .
\end{split}
\]
To estimate \circled{A}, we use Lemma~\ref{lem:interpolate} to conclude that both $M_a'$ and
$M_b'$ are close to $\tfrac{M_b-M_a}{b-a}$, so that
\[
|\hbox{\circled{A}}|  \le \tfrac{|\bpsi'|}{(1-c_1)(b-a)} 2 C \rho^{-31/105} |a-b|^{1/7}
\le C' \rho^{-31/105} |a-b|^{-6/7} \,  .
\]
For \circled{B}, we again use that $|M_b' - M_a'| \le 2 C \rho^{-31/105} |a-b|^{1/7}$ by
Lemma~\ref{lem:interpolate}, so that
\[
|\hbox{\circled{B}}| \le C'  \rho^{-31/105} |a-b|^{-6/7} \,  .
\]
Finally, to estimate \circled{C}, note that the first term inside the brackets  is the average value of
$M_b' \tfrac{z - a}{b-a} + M_a' \tfrac{b-z}{b-a}$ with respect to the smooth probability measure
having density $\tfrac{\psi}{(1-c_1)(b-a)}$.  Thus there exists $\bys \in [a,b]$ such that this average
value equals the function value at $\bys$. 
\[
\begin{split}
|\hbox{\circled{C}}| & = \tfrac{|\bpsi'|}{b-a} \left|  M_b' \tfrac{\bys - a}{b-a} +
M_a' \tfrac{b-\bys}{b-a} - \big( M_b' \tfrac{\bxs - a}{b-a} +
M_a' \tfrac{b-\bxs}{b-a} \big) \right| \\
& = \tfrac{|\psi'|}{b-a} \tfrac{|\bys - \bxs|}{b-a} |M_b' - M_a'|
\le C' \rho^{-31/105} |a-b|^{-6/7} \, ,
\end{split}
\]
where we have again used Lemma~\ref{lem:interpolate} to bound the difference $|M_b'-M_a'|$.
Collecting our estimates for \circled{A}, \circled{B} and \circled{C}, we have the following bound,
\begin{equation}
\label{eq:d^2 sigma}
|\partial_{\bxs} \partial_{\bxs} \sigma (\bxu, \bxs)| \le C \rho^{-31/105} |a-b|^{-6/7} \, .
\end{equation}

Now we return to the four-point estimate for $\partial_{\bxs} \sigma$.   Grouping the terms according to their unstable coordinates and using \eqref{eq:d^2 sigma}, we estimate,
\[
\begin{split}
|\partial_{\bxs} \sigma(\bxu, \bxs) - \partial_{\bxs} \sigma(\bxu, \bys) - \partial_{\bxs} \sigma(\byu, \bxs)
+ \partial_{\bxs} \sigma(\byu, \bys)|
& \le C \rho^{-31/105} |\bys-\bxs| |a-b|^{-6/7} \\
&  \le C \rho^{-31/105} |\bys - \bxs|^{1/7} \,  .
\end{split}
\]
Putting this estimate together with \eqref{eq:first 4}, we see that the four-point difference is bounded by
the minimum of these two quantities; let us call them $X$ and $Y$.  
But if a quantity $Q$ satisfies $0 \le Q \le X$ and $0 \le Q \le Y$, then
\[
Q = Q^{1 - 7\varpi} Q^{7 \varpi} \le X^{1-7\varpi} Y^{7 \varpi},
\]
provided $0 \le 7 \varpi \le 1$.
Using this we obtain
\begin{equation}
\label{eq:4 gap}
\begin{split}
|\partial_{\bxs} \sigma(\bxu, \bxs) & - \partial_{\bxs} \sigma(\bxu, \bys) - \partial_{\bxs} \sigma(\byu, \bxs)
+ \partial_{\bxs} \sigma(\byu, \bys)| \\
& \le C k_W^{-4/3 + 28\varpi/3} \rho^{-2/3 + 13 \varpi/5} |\byu - \bxu|^{1-7\varpi} |\bys - \bxs|^\varpi
\,  ,
\end{split}
\end{equation}
which completes the required four-point estimate in the gap for the foliation on the scatterer.

Up to this point, we have proved the four-point estimate both in the gaps in the current subsection
and within each 
interval containing
a surviving part of the foliation in Lemma~\ref{lem:holder jac}.
However, this is not sufficient to prove a uniform four-point estimate along the whole of
$P^+(W)$ since, given two stable coordinates $\bxs, \bys$ with multiple gaps between them,
we would have to apply the triangle inequality once for each gap.  Since the number of gaps
grows exponentially with $n$, the H\"older control established by the four-point estimate
would not extend uniformly across $P^+(W)$.  To remedy this situation, our next
lemma proves a bound analogous to Lemma~\ref{lem:holder jac}
across gaps in the surviving foliation.

\begin{lemma}[H\"older continuity across gaps]
\label{lem:gap holder} 
Let $P^+(W)$ be a stable curve parametrized by $\bxs \in [a',b']$ as above and suppose
the interval $(a,b) \subset [a', b']$ contains one or more gaps in the surviving foliation pushed 
forward $n$ steps.
Fix $C'>0$ and consider the set of stable curves $V_1, V_2 \in \cW^s$ 
which are connected by the surviving foliation on either side of the gap such that the angles
between $V_1$ and $V_2$ satisfy $\phi(x, \bh_{12}(x)) \le C' d(x , \bh_{12}(x))$ for all $x \in V_1$,
where $\bh_{12}$ denotes the holonomy from $V_1$ to $V_2$.  
We require that $P^+(W)$ be in this set.
For any $0 \le \varpi \le 1/20$, \footnote{ Equation \eqref{largest} below  shows that we can get away with $\varpi \le 1/15$, up to using a more complicated
expression for the upper bound.} 
there exists $C>0$, independent of $W$ and $n$, such that for 
any two such curves $V_1, V_2$ and two
points, $x_a = (\barG(x^u, a), a), x_b = (\barG(x^u, b), b) \in V_1$, we have
\[
\ln \frac{J \bh_{12}(x_a)}{J \bh_{12}(x_b)} \le C k_W^{-2} \rho^{-4/5 - 11\varpi/15}  d(x_a, \bh_{12}(x_a))^{1-\varpi} |a-b|^\varpi \,  .
\]
\end{lemma}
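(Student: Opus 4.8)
The plan is to bootstrap Lemma~\ref{lem:holder jac}, which handles H\"older continuity of the Jacobian \emph{within} a surviving interval, together with Lemma~\ref{lem:interpolate} and the four-point bound \eqref{eq:4 gap} in the gap, into a statement that holds uniformly \emph{across} an arbitrary number of gaps. The key structural point is that a gap of length $|V_1^{\gap}|$ cannot be arbitrarily small: when a curve $T^{-j}(V_1)$ meets a singularity, the gap it produces in $V_1$ has length at least a constant multiple of $k_j^{-4}/J^s_{V_1}T^{-j+1}$ by \eqref{eq:gap length}, and $k_j$ is constrained by \eqref{eq:new(2)}. So first I would record that, on the collision space, a gap $(a,b)$ satisfies $|a-b| \ge c\rho^{?}$ for an explicit exponent; more importantly, I would sum the contributions gap-by-gap \emph{with weights}: when estimating $\ln (J\bh_{12}(x_a)/J\bh_{12}(x_b))$ with $a,b$ far apart, I split $[a,b]$ at the (countably many, but summably-sized) gap endpoints $a = s_0 < s_1 < \cdots < s_N = b$, alternating surviving intervals and gaps, write the log-ratio as a telescoping sum, and apply Lemma~\ref{lem:holder jac} on the surviving pieces and the analogue of Lemma~\ref{lem:interpolate}/\eqref{eq:4 gap} across each gap.

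The crucial trick (exactly as in the standard billiard argument, e.g. the proof of the original growth/distortion lemmas in \cite{chernov book}) is that one does \emph{not} pay a factor $N$ from the triangle inequality, because the pieces have geometrically decaying size: the surviving interval adjacent to a cut at generation $j$ has length comparable to $k_j^{-4}/J^s T^{-j+1} \le C\Lambda_0^{-(n-j)}\cdot(\text{stuff})$, hence $\sum$ over all pieces of $d(\text{piece})^{1-\varpi}$ is dominated by a geometric series in $\Lambda_0^{-(1-\varpi)}$, which converges to something independent of $n$. Concretely I would: (1) parametrize the decomposition of $[a,b]$ by the generation $j$ at which each cut occurs; (2) on the $j$-th surviving piece apply Lemma~\ref{lem:holder jac}, noting its bound already contains the correct $\rho$- and $k_W$-powers, and bound the local H\"older seminorm times $d(\text{piece})^{1-\varpi}$; (3) across the $j$-th gap use \eqref{eq:4 gap} together with Lemma~\ref{lem:interpolate} to get the same shape of bound with $|a-b|$ replaced by the gap length; (4) sum in $j$ using the geometric decay of piece-lengths coming from the uniform expansion $\Lambda_0$ of the unstable (seeding) foliation combined with the uniform transversality of $\cS_{-n}^{\bH}$ with stable curves; (5) finally bound the whole thing by $d(x_a, \bh_{12}(x_a))^{1-\varpi}|a-b|^\varpi$ using that $d(x_a, \bh_{12}(x_a)) \le C k_W^2 \rho$ on the foliation and $|a-b| \le |P^+(W)| \le L_0$, and track the exponents of $\rho$ and $k_W$ to land exactly on $k_W^{-2}\rho^{-4/5 - 11\varpi/15}$. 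The homogeneity-layer exponent $-1/5$ (i.e. $k_W \le C\rho^{-1/5}$) is what converts the $k_W$-powers in the intermediate estimates into the stated $\rho$-power.

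The main obstacle I anticipate is the bookkeeping in step (4): one has to be careful that the "most recent cut" generation $j$ governing a given surviving piece is correctly coupled to the expansion factor appearing in \eqref{eq:gap length} and in Lemma~\ref{lem:holder jac}, so that the series $\sum_j (\text{H\"older constant on piece } j)\cdot(\text{length of piece } j)^\varpi$ is genuinely geometric and not merely $o(1)$ in a non-uniform way. A secondary subtlety is the hypothesis on the family of curves $V_1,V_2$: the angle condition $\phi(x,\bh_{12}(x)) \le C' d(x,\bh_{12}(x))$ must be shown to be \emph{propagated} under $T^{-1}$ along the surviving foliation (so that Lemma~\ref{lem:holder jac} is applicable on each intermediate piece), which follows from \cite[eq. (5.29)]{chernov book} — the angle between the curves at time $-m$ is controlled by $d\cdot m\Lambda_0^{-m} + \phi_0\Lambda_0^{-m}$, so it stays in the allowed regime. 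Once these two points are settled the estimate is a routine, if lengthy, summation, and I would defer the detailed exponent-chasing (as the paper does for Lemmas~\ref{lem:holder jac} and~\ref{lem:interpolate}) to Appendix~\ref{postponeproof}.
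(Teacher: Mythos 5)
Your route is genuinely different from the paper's, and it has a gap at exactly the point the paper identifies as the reason this lemma is needed. You propose to telescope $\ln(J\bh_{12}(x_a)/J\bh_{12}(x_b))$ over all surviving pieces and gaps between $a$ and $b$, applying Lemma~\ref{lem:holder jac} and \eqref{eq:4 gap} piece by piece, and to control the sum by a geometric series. But a H\"older-type bound $C\,\ell_i^{\varpi}$ on each piece does not recombine into $C\,|a-b|^{\varpi}$ under the triangle inequality: since $t\mapsto t^{\varpi}$ is concave, $\sum_i \ell_i^{\varpi}$ exceeds $(\sum_i\ell_i)^{\varpi}$ by a factor of order $N^{1-\varpi}$ for $N$ comparable pieces, and the paper states explicitly (just before the lemma) that the number of gaps grows exponentially with $n$. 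Your rescue — geometric decay of piece lengths in the generation $j$ — does not close this: the decay rate \eqref{eq:gap length} applies to the \emph{gaps}, not to the surviving intervals between them (which can be long and numerous, so the subadditivity loss hits them in full), and even for the gaps the multiplicity at generation $j$ grows with $j$, so $\sum_j(\#\text{gaps at gen }j)\cdot\Lambda_0^{-j\varpi}$ is not summable for the small values $\varpi\le 1/20$ actually needed. So step (4) as written does not go through.

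The paper's proof avoids any summation over gaps. It takes $j+1$ to be the \emph{first} generation at which a singularity image enters $(a,b)$, splits the log-ratio as in \eqref{eq:gap split} into a difference of stable Jacobians up to time $-j$ (handled exactly as in Lemma~\ref{lem:holder jac}, since both orbits stay in the same homogeneity strips until then) plus the two holonomy terms $\ln J\bh_{-j}(T^{-j}x_a)$ and $\ln J\bh_{-j}(T^{-j}x_b)$ at time $-j$. The key device is Sublemma~\ref{sub:gap}: because $(a,b)$ contains a gap, $|a-b|$ is bounded \emph{below} by a power of $1/J^u_{T^{-j}\bar\gamma}T^j(x_j)$, which lets one trade a fractional power of the unstable Jacobian in the absolute bound \eqref{eq:h bound} for the factor $|a-b|^{\varpi}$ — no difference of the two holonomy terms, and no telescoping, is required. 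The handling of multiple gaps between two arbitrary stable coordinates is then done \emph{after} the lemma, using at most two applications of the triangle inequality, precisely because Lemma~\ref{lem:gap holder} covers any interval containing one \emph{or more} gaps in a single stroke. If you want to salvage your approach you would need a uniform-in-$N$ recombination mechanism, which is what Sublemma~\ref{sub:gap} supplies in the paper's argument.
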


\begin{proof}
Using similar notation to Subsection~ \ref{sec:step3} (and the proof of Lemma~\ref{lem:interpolate}), 
let $j+1$ denote the least integer $j' \ge 1$ such that
an element of $T^{j'}(\cS_0)$ intersects $P^+(W)$ in the subcurve defined by $(a,b)$.  This implies in
particular, that the surviving parts of the foliation containing $x_a$ and $x_b$
lie in the same homogeneity
strip for the first $j$ interates of $T^{-1}$.  Let $V_i^j= T^{-j} V_i$, $i=1,2$, and let
$\bh_{-j}$ denote the holonomy map from $V_1^{j}$ to $V_2^{j}$.  Then,
\begin{equation}
\label{eq:gap split}
\ln \frac{J\bh_{12}(x_a)}{J\bh_{12}(x_b)}
= \ln \frac{J^s_{V_1}T^{-j}(x_a)}{J^s_{V_2}T^{-j}(\bh_{12}(x_a))}
- \ln \frac{J^s_{V_1}T^{-j}(x_b)}{J^s_{V_2}T^{-j}(\bh_{12}(x_b))}
+ \ln \frac{J\bh_{-j}(T^{-j}x_a)}{J\bh_{-j}(T^{-j}x_b)} \, .
\end{equation}
Since $T^{-i}(x_a)$ and $T^{-i}(x_b)$ lie in the same homogeneity strip for $i=0, \ldots j$, the
difference of the first two terms in \eqref{eq:gap split} can be estimated in precisely the same
way as the difference of Jacobians in \eqref{eq:holder split} in the proof of
Lemma~\ref{lem:holder jac}.  Thus fixing $\varpi \le 1/15$,
there exists $C>0$ such that
\begin{equation}
\label{eq:first gap}
\begin{split}
& \left| \ln \frac{J^s_{V_1}T^{-j}(x_a)}{J^s_{V_2}T^{-j}(\bh_{12}(x_a))} 
 - \ln \frac{J^s_{V_1}T^{-j}(x_b)}{J^s_{V_2}T^{-j}(\bh_{12}(x_b))} \right| \\
& \qquad \le C |a-b|^\varpi d(x_a, \bh_{12}(x_a))^{1-\varpi} \max \{ k_W^{-4/3 - 10\varpi/3} \rho^{-2(1+\varpi)/3}, 
k_W^{-82 \varpi/3} \rho^{-38\varpi/3} \} \, .
\end{split}
\end{equation}

Next we estimate $\ln J \bh_{-j}(T^{-j}x_a)$ using \eqref{eq:h bound}.  Let $x_j = T^{-j}(x_a)$,
$x_0 = x_a$ and $\tx_0 = \bh_{12}(x_a)$.
Due to \eqref{eq:first gap}, we cannot use an estimate with a factor better than $|a-b|^\varpi$ and
since $\varpi \le 15$, we do not convert the full power of
$J^u_{T^{-j}\bgamma}T^j(x_j)$ in the denominator of the bound in \eqref{eq:h bound}
using Sublemma~\ref{sub:gap}.
Rather, we convert $J^u_{T^{-j}\bgamma}T^j(x_j)^{35\varpi/9}$, 
after noting that $35\varpi/9 \le 7/27 < 3/5$.  Thus,
\[
\begin{split}
\ln J\bh_{-j}(x_j) & \le C \left( \frac{d(x_0, \tx_0) |a-b|^{\varpi}}{\rho^{2/3 + 26\varpi/9} k_W^{4/3+ 70\varpi/9}} + \frac{d(x_0, \tx_0)|a-b|^\varpi k_W^{4/5}}{\rho^{2/5 + 26\varpi/9} k_W^{70\varpi/9} k_W^{6/5 - 70\varpi/9}}    \right) \\
& \le C  d(x_0, \tx_0)^{1-\varpi} |a-b|^\varpi \Big( k_W^{-4/3 - 52\varpi/9} \rho^{-2/3 - 17\varpi/9} 
+ k_W^{-2/5 + 2\varpi} \rho^{-2/5 - 17\varpi/9} \Big) \, ,
\end{split}
\]
where in the second term on the first line we have converted the remaining power of
$J^u_{T^{-j}\bgamma}T^j(x_j)$ to $k_W^{6/5 - 70\varpi/9}$ to cancel the power of $k_W$ in the
numerator of that fraction; and in the second line we have used $d(x_0, \tx_0) \le k_W^2 \rho$.
A similar estimate holds for $\ln J\bh_{-j}(T^{-j}(x_b))$. 
This estimate together with
\eqref{eq:first gap} in \eqref{eq:gap split}
yields four factors involving $k_W$ and $\rho$.  In anticipation of Subsection ~ \ref{sec:step5}
and to simplify these terms, we factor out $k_W^{-2}$ from each term and convert the remaining powers
of $k_W$ using the inequality, $k_W \le \rho^{-1/5}$.  The factor involving $\rho$
in our bound for $\ln \frac{J\bh_{12}(x_a)}{J\bh_{12}(x_b)}$
is then,
\begin{equation}\label{largest}
\max \{ \rho^{-2/5 - 36\varpi/5}, \rho^{-4/5- 11\varpi/15}, \rho^{-18/25 - 103\varpi/45} \} \, ,
\end{equation}
where we have dropped one of the terms, $\rho^{-4/5}$, as being clearly less than the middle term
above.  
Unfortunately, the remaining three exponents intersect for $\varpi < 1/15$, so there is no clear maximum
in this range; however, if we restrict to $0<\varpi \le 1/20$, the middle factor is the largest so we may drop
the other two, completing the proof of Lemma~\ref{lem:gap holder}. 
\end{proof}

Finally, we use 
Lemma~\ref{lem:gap holder} to extend the four-point estimate uniformly
across gaps along the whole of $P^+(W)$.  
If we are given two stable coordinates $\bxs, \bys$ corresponding to surviving 
parts of the foliation between which there may be multiple gaps and multiple bits of surviving foliation,
Lemma~\ref{lem:gap holder} immediately implies the four-point estimate holds between $\bxs$ 
and $\bys$.  Now suppose $\bxs$ and $\bys$ belong to different gaps in which the foliation
has been interpolated.  Then using \eqref{eq:4 gap}
we have a four-point estimate from $\bxs$ to the 
edge of its gap closest to $\bys$, call that stable coordinate $a$; similarly, the four-point estimate
holds from $\bys$ to the edge of its gap closest to $\bxs$, 
call this stable coordinate $b$.  Now from $a$ to $b$ there may be multiple gaps, but by 
Lemma~\ref{lem:gap holder}, 
the four-point estimate holds from $a$ to $b$ and so by the triangle inequality, it extends from
$\bxs$ to $\bys$.  
Notice that decomposing the distance from $\bxs$ to $\bys$ in this way, we only have
to add three terms, using the triangle inequality twice, making the estimate uniform on all of $P^+(W)$.
Similarly, we need only use the triangle inequality once
to obtain the four-point estimate given $\bxs$ in a gap and $\bys$ in a surviving piece of foliation
or vice versa.

\subsection
{Lifting the map-foliation to a flow foliation, checking (i--viii)}
\label{sec:step5}

In this step, we lift the map foliation $\{ \bar\gamma \}$ to $W$.

Given a curve $\bar\gamma$ in our foliation, we want to define a ``lift'' of $\bar\gamma$
to a curve $\gamma \in \cW^u$ which intersects $W$ in a point and such that 
$P^+(\gamma) = \bar\gamma$.  Adopting
the coordinates $(x^u, x^s, x^0)$ defined by Remark ~\ref{Rk1.4},
we want to find a parametrisation
$\gamma(v) = (x^u(v), x^s(v), x^0(v))$, with $v \in I$, an interval, which has the desired properties.  Note that the functions
$x^u(v)$ and $x^s(v)$ are uniquely determined by the projection $P^+(W)$ 
(they do not depend on $x^0$) so the only
degree of freedom is in $x^0(v)$.  However, since $\gamma$ must lie in the kernel
of the contact form, we must have $\mathbf{d}x^0 - x^s \mathbf{d}x^u = 0$, 
i.e., $(x^0)' = (x^u)' \cdot   x^s$.  This, together with the initial condition that
$x^0(0)$ must be an endpoint of $\gamma$ uniquely determines $\gamma$.

Now according to Lemma ~\ref{lem:smooth}, the map from $P^+(W)$ to $W$ is
$\cC^{1+1/2}$ with uniformly bounded norm (not depending on $k_W$).  Thus
the measure of $\bDelta_\up$ in $P^+(W)$ is comparable to the measure of 
$\Delta_\up$ in $W$, proving item (iv).  Items (i)-(iii) follow directly
from the definition of $\{ \gamma \}$ and the construction of $\{ \bar\gamma \}$,
recalling (\ref{cond(1)BLu}).

In order to determine the smoothness of the lifted foliation, we must consider the
action of lifting $\bar\gamma$ off the scatterer in the unstable direction as well.
In the unstable direction, the curves $\bar\gamma$ undergo a contraction on the
order of $1/J^u_{T^{-1}\bar\gamma}T \approx k_W^{-2}$.  Using the usual distortion
bounds together with the expression for the Jacobian given by 
\eqref{eq:jac factors},
we see that
\[
\left| \ln \frac{J^u_{T^{-1}\bar\gamma}T(x)}{J^u_{T^{-1}\bar\gamma}T(y)} \right|
\le C d(x,y) k_W^2 \, ,
\]
for any $x, y \in \bar\gamma$ and a similar estimate holds between different curves
by estimates similar to those used in the proof of Lemma~\ref{lem:jac holonomy}.
Thus the Lipschitz constants of the foliation increase by a factor of 
$k_W^2 \le C \rho^{-2/5}$.

For $\bar\gamma$
on the domain $\bDelta_\up$, we have $|\partial_{\bxu} \partial_{\bxs} \barG| \le C\rho^{-2/3} k_W^{-2}$
by Section~\ref{sec:step2},
so that $|\partial_{x^u} \partial_{x^s} G| \le C \rho^{-2/3}$.
On the complement of the domain $\bDelta_\up$, the Lipschitz constant of the
projected foliation is bounded by $C + Ck_W^{-4/3} \rho^{-2/3}$
by (\ref{Lipbd}) in Section~ \ref{sec:step3}.  Thus the Lipschitz
constant of the lifted foliation is bounded by 
$Ck_W^2 + C k_W^{2/3} \rho^{-2/3} \le C \rho^{-4/5}$, which yields
item (vi) of the foliation.
We can then integrate $\partial_{x^u} \partial_{x^s} G$ to obtain the uniform bound
on $\partial_{x^s}G$ needed for item (v) of the foliation, recalling (ii).

Finally,
using \eqref{eq:4 gap} and Lemmas~\ref{lem:holder jac} and \ref{lem:gap holder},
and collecting the relevant terms, we see that the constant
in the right-hand side of the four-point condition (vii) for the Jacobian  
 is at most 
 \[
 \max \{ \rho^{-4/5}, \rho^{-2/5-36\varpi/5}, \rho^{-4/5 + 37\varpi/15}, \rho^{-4/5-11\varpi/15} \} 
 \, .
 \]
 If we restrict to $\varpi \le 1/20$, the last term above is the largest, completing the proof of (vii).

Finally, we turn to condition (viii).  Note that since $\gamma$ lies in the kernel of the
contact form, we have $H(x^u, x^s) = \int_0^{x^u} G(z, x^s) \, dz$.  In particular,
$\partial_{x^s} H(x^u, x^s) = \int_0^{x^u} \partial_{x^s} G(z, x^s) \, dz$, so that since $\partial_{x^s}G$
uniformly bounded, we have $|\partial_{x^s} H|_{C^0} \le C \rho$.

Next, we note that due to the normalization $W = (0, x^s, 0)$, we have
$\partial_{x^s} G(0, x^s) = 1$ for all $x^s \in [0, |W|]$.  Thus using the four-point estimate (vii) 
for $G$ at the
points $(x^u, x^s)$, $(x^u, y^s)$, $(0, x^s)$ and $(0, y^s)$,
we have
\[
|\partial_{x^s} G(x^u, x^s) - \partial_{x^s} G(x^u, y^s)| \le C \rho^{-4/5 - 11\varpi/15} |x^s - y^s|^\varpi 
|x^u|^{1-7 \varpi} \, .
\] 
This implies immediately that 
$|\partial_{x^s} H|_{\cC^\varpi} \le C \rho^{6/5-116\varpi/15}$,
which completes the proof of (viii).

\section{Mollification operators and embeddings}
\label{molsec}

In this section we prove some relations between our Banach spaces and standard spaces of
distributions, and establish several key inequalities which indicate that 
mollification operators provide good approximations in the norms we have defined.
We begin by proving Lemma~\ref{relating}, relating our norms to the dual
spaces of continuous functions.

\begin{proof}[Proof of Lemma~\ref{relating}]
We will prove that  
\begin{equation}
| \oldh|_{(\cC^{\oldp}(\Omega_0))^*}\le C |\oldh|_{w} \qquad \forall \oldh\in \cC^0(\Omega_0)
\end{equation}
for some $C > 0$.  The analogous inequality for the strong norm,
$| \oldh |_{(\cC^{\oldq}(\Omega_0))^*}\le C \|\oldh\|_s$, is proved similarly.

For $f \in \cC^0(\Omega_0)$ and $\psi \in \cC^\alpha(\Omega_0)$ with 
$|\psi|_{\cC^\alpha(\Omega_0)} \le 1$, we must estimate
$\int_{\Omega_0} f \, \psi \, dm$.  To estimate this, we decompose Lebesgue measure
over the Poincar\'e section $\cM$ defined in Section~\ref{defspace}.

Our first step is to decompose $\cM$ into boxes foliated by a smooth family of homogeneous map-stable curves.
(For the present lemma, it would  be enough to consider a smooth family of map-stable curves,
the further decomposition into homogeneous curves is useful in view of the proof
of Lemma~\ref{mollbound1} below.)
On each connected component $\bH_{k,i}$ of $\bH_k$, $k > k_0$,
we define a smooth foliation $\{ V_\xi \}_{\xi \in E_{k,i}}$ of 
map-stable curves; indeed, we may choose a foliation of straight line segments due to the
global stable cones for the map.  Using this foliation, 
we desintegrate the probability measure $\bar{m} = \bar{c} dr d\vf$ into
$c' d\bar{m}_{V_\xi} d\xi$, where $c'$ is a constant depending smoothly on the angle 
between the foliation
of stable curves and the boundary of $\bH_k$, and $\bar{m}_V$ is arclength measure on the curve
$V$.
On the set $\bH_{k_0} := \cM \setminus \cup_{k > k_0} \bH_k$, 
we perform a similar decomposition, after
first subdividing the space into boxes $B_i$ which are foliated by parallel stable line segments 
of length at most $L_0$.

Next, we lift this decomposition to $\Omega_0$.
For $k > k_0$, define 
\[
H_k^{0-} = \{ Y \in \Omega_0 : P^+(Y) \in \bH_k \},
\]
and $H_{k_0}^{0-} = \Omega_0 \setminus \cup_{k > k_0} H_k^{0-}$. 
Over each box $B_i$ define the flow region 
$B_i^{0-} = \{ Z \in \Omega_0 : P^+(Z) \in B_i \}$.  

In each $B_i^{0-}$ or $H_k^{0-}$, we represent Lebesgue measure as\footnote{ The projection on the scatterers of Lebesgue measure is the $T$-invariant probability measure $\mu_0=c \cos \vf dr d\vf$, where $c$ is a normalization constant.}
$c \cos \vf ds dr d\vf$, where $r, \vf$ range over the box $B_i$ or strip $\bH_k$ and $s$ ranges from
0 to the maximum free flight time under the backwards flow of any point in the box $B_i$, which
we denote by $\tau^-_{\max,i}$.  For each $s$ in this range and each curve $V_\xi$,
let $W^s_\xi = \Phi_{-t(s)}(V_\xi)$, where the function $t(s,z)$ is defined  for $z \in V_\xi$
so that $W^s_\xi$ lies in
the kernel of the contact form.  Note that for $s < L_0$, it may be that some points in $V_\xi$ 
have not yet lifted off of $\cM$.  For such small times, $W^s_\xi$ denotes only those points which have
lifted off of $\cM$ and so may be the union of at most\footnote{ Just like  for \eqref{esscurve}, see the paragraph containing \eqref{cardinality}.}  two flow-stable curves.  Also, for 
$s > \tau_{\min}$, it may be that part of $\Phi_{-t(s)}(V_\xi)$ has made a collision with a scatterer.
In this case, again, $W^s_\xi$ denotes only those points which have not yet made a first collision.
Thus, using the disintegration of
$\bar {m}$ described above, $c \cos \vf ds dr d\vf = \cos \vf(P^+Z) \rho_{\xi}(Z) dm_{W_\xi^s}(Z) d\xi ds$,
where $\xi \in E_{k,i}$ and (using that the disintegration factor $c'$ 
from the third paragraph of the proof is smooth) $|\rho_\xi|_{\cC^1(W^s_\xi)} \le C$, uniformly in $\xi$ and $s$.  Now,
\begin{equation}
\label{eq:cos decomp}
\begin{split}
\left| \int_{\Omega_0} f \psi \, dm \right| & = \left| \sum_{k \ge k_0} \sum_{i} \int_0^{\tau^-_{\max,i}}
\int_{E_{k,i}} \int_{W^s_\xi} f \psi \, \rho_{\xi} \cos \vf(P^+) \, dm_{W^s_\xi} d\xi ds \right| \\
& \le \sum_{k \ge k_0} \sum_i \int_0^{\tau^-_{\max,i}} \int_{E_{k,i}} 
| f |_w |\rho_\xi|_{\cC^\alpha(W^s_\xi)} 
|\cos \vf(P^+)|_{\cC^\alpha(W^s_\xi)} |\psi|_{\cC^\alpha(\Omega_0)} d\xi ds .
\end{split}
\end{equation}
Now $|\cos \vf|_{\cC^0(\bH_k)} \le C k^{-2}$, while for $y, z \in V_\xi \subset \bH_k$, 
\[
|\cos \vf(y) - \cos \vf(z) | \le |y-z|^\alpha |y-z|^{1-\alpha} \le |y-z|^\alpha C k^{-3(1-\alpha)}.
\]
Since $\alpha \le 1/3$, the H\"older constant of $\cos \vf$ is bounded by $Ck^{-2}$ as well.
Since $P^+$ is $\cC^1$ along stable curves by Lemma~\ref{lem:smooth}, we have
$|\cos \vf(P^+)|_{\cC^\alpha(W^s_\xi)} \le Ck^{-2}$, independently of $\xi$ and $s$.
Also, for each $k \ge k_0$, the number of boxes $B_i$ in $\bH_k$ is finite, depending on 
$L_0$.  Thus,
\[
\left| \int_{\Omega_0} f \psi \, dm \right| \le \sum_k C \tau_{\max} |f|_w  k^{-2} \le C' |f|_w,
\]
completing the proof of the lemma.
\end{proof}

\begin{lemma}
\label{lem:injective}
If $\beta < 1/q$, the inclusions $\cB \subset (\cC^\beta(\Omega_0))^*$ 
and $\cB \subset (\cC^1(\Omega_0))^*$ are injective.
\end{lemma}

\begin{proof}
Our proof has two steps:  (1) for $f \in \cB$, if $\| f \|_{\cB} \neq 0$, then $\| f \|_s \neq 0$; (2) if $\| f \|_s \neq 0$,
then $f \neq 0$ as an element of $(\cC^\beta(M))^*$.

For claim (1), note that $\| f \|_s = 0$ implies immediately that $\| f \|_u = 0$ since the test
functions for $\| \cdot \|_u$ are in $\cC^\alpha(W)$, while those for $\| \cdot \|_s$ are in
$\cC^\beta(W)$ and $\alpha > \beta$.  To see that $\| f \|_0 = 0$ as well, observe that for fixed 
$W \in \cW^s$, $\psi \in \cC^\alpha(W)$, the functional $F_s(f) = \int_{\Phi_{-s}(W)} \frac{d}{dr}(f \circ \Phi_r)|_{r=0} \, \psi \circ \Phi_s \, J_{\Phi_{-s}(W)}\Phi_s \, dm_{\Phi_{-s}(W)}$ is continuous as a function of $s$
as long as $\Phi_{-s}(W)$ undergoes no collisions.  (It is clearly continuous for 
$f \in \cC^2(\Omega_0)$ and extends to $f \in \cB$ by density since the map $f \to  F_s(f)$ is
continuous in the $\| \cdot \|_{\cB}$ norm.)
Thus 
$$
\lim_{t \downarrow 0} \frac 1t \int_0^t \int_{\Phi_{-s}(W)} \frac{d}{dr}(f \circ \Phi_r)|_{r=0} \, \psi \circ \Phi_s \, J_{\Phi_{-s}(W)}\Phi_s \, dm_{\Phi_{-s}(W)} \, ds
= \int_{W} \frac{d}{dr}(f \circ \Phi_r)|_{r=0} \, \psi \, dm_W .
$$
On the other hand,
\[
\begin{split}
\int_0^t \int_{\Phi_{-s}(W)} & \frac{d}{dr}(f \circ \Phi_r)|_{r=0} \, \psi \circ \Phi_s \, J_{\Phi_{-s}(W)}\Phi_s \, dm_{\Phi_{-s}(W)} \, ds
= \int_W \int_0^t \frac{d}{dr} (f \circ \Phi_r)|_{r=0} \circ \Phi_{-s} \, \psi \, ds \, dm_W \\
& = \int_W \int_0^t \frac{d}{ds}(f \circ \Phi_{-s}) \, \psi \, ds \, dm_W
= \int_{\Phi_{-t}(W)} f \, \psi \circ \Phi_t \, J_{\Phi_{-t}(W)}\Phi_t  - \int_W f \, \psi = 0,
\end{split}
\]
by assumption on $f$, where we have used \eqref{eq:flow derivative} for the second equality.
Thus $\| f \|_0 = 0$ as claimed.

Our proof of claim (2) follows closely \cite[Lemma 3.8]{demers zhang 3}.  For $f \in \cC^2(\Omega_0)$
and $W \in \cW^s$, the expression,
\[
\langle D^\beta_W(f), \psi \rangle = \int_W f \psi \, dm_W, \quad \psi \in \cC^\beta(\Omega_0),
\]
satisfies $| \langle D^\beta_W(f), \psi \rangle | \le \| f \|_s |W|^{1/q} |\psi_{\cC^\beta(W)}$, so that
$D^\beta_W(f) \in (\cC^\beta(\Omega_0))^*$.  Since the map $f \to D^\beta_W(f)$ is continuous
in the $\| \cdot \|_{\cB}$ norm, by density it can be extended to $\cB$.

Now assume $\| f \|_s \neq 0$.  There exists $W \in \cW^s$, $\psi \in \cC^\beta(\Omega_0)$
such that $\langle D^\beta_W(f), \psi \rangle =: \delta > 0$.  Again, the map $W \to \langle D^\beta_W(f), \psi \rangle$ is continuous for $f \in \cB$.  Thus there exists an open set $E$,
foliated by invariant curves $W' \in \cW^s$ close to $W$ such that $\langle D^\beta_{W'}(f), \psi \rangle \ge \delta/2$ for each $W' \subset E$.

In order to localize the support of $\psi$ to the set $E$, we extend each stable curve $W'$ in $E$
by length $\ve >0$ at both ends to form a larger set $E' \supset E$.  Call such extended curves
$W'_\ve$.  Next we choose a bump function $\rho_\ve$ such that $\rho_\ve = 0$ on
$\Omega_0 \setminus E'$ and $\rho_\ve = 1$ on $E$.  We may choose $\rho_\ve$ so that
$| \rho_\ve|_{\cC^\beta(W'_\ve)} \le C \ve^{-\beta}$ for some uniform constant $C$.  Now,
\[
\begin{split}
\langle D^\beta_{W'_\ve}(f), \rho_\ve \psi \rangle 
& = \langle D^\beta_{W'}(f), \rho_\ve \psi \rangle
+ \langle D^\beta_{W'_\ve \setminus W'}(f) , \rho_\ve \psi \rangle \\
& \ge \delta/2 - C | \psi |_{\cC^\beta(\Omega_0)} \ve^{-\beta} |W'_\ve \setminus W'|^{1/q} \| f \|_s
\ge \delta/2 - C |\psi|_{\cC^\beta(\Omega_0)} \| f \|_s \ve^{1/q - \beta} .
\end{split}
\]
This difference can be made larger than $\delta/3$ by choosing $\ve$ small since $1/q < \beta$.
Thus the function $\rho_\ve \psi \in \cC^\beta(\Omega_0)$ satisfies 
$f (\rho_\ve \psi) \neq 0$ and so $f \neq 0$ as an element of 
$(\cC^\beta(\Omega_0))^*$.

The injectivity of $\cB \subset (\cC^1(\Omega_0))^*$ follows by a similar argument since
we may take $\psi \in \cC^1(\Omega_0)$ in the proof of claim (2).  This holds
since we have defined $\cC^\beta(W)$ to be the closure of $\cC^1$ functions in the $\cC^\beta$
norm. 
\end{proof}

\subsection{Mollification operators}

Since the right-hand side of  the Dolgopyat Lemma~\ref{dolgolemma} will 
involve the Lipschitz and supremum norm of $\oldh$, it will be convenient 
to use mollification  operators $\bM_\epsilon$.

We start by defining $\bM_\epsilon$:
Fix $\epsilon_0$ small.\footnote{ In Section~\ref{dodo}, we shall take $\epsilon_0$ small enough as a
function of $n=\lceil c \ln |b| \rceil$.} 
Let $\eta: \bR^{3} \to [0,\infty )$ be a 
$\cC^\infty$ function, supported in $|Y|\le 1$
and bounded away from zero on  $|Y|\le 1/3$,
with $\int \eta \, dm =1$, and set, for $0<\epsilon<\epsilon_0$,
$$
\eta_\epsilon(Y)=\frac{1}{\epsilon^{3}} \eta\left( \frac{Y}{\epsilon} \right)
\, .
$$

Let $\Omega_1$ be an $\epsilon_0$ neighborhood
of $\Omega_0$, assuming that $\epsilon_0$ is small enough so that $\Omega_1$
is still in the torus.

\begin{defin}[The mollifier operator $\bM_\epsilon$]
Fix global periodic coordinates on $\mathbb T^3$, i.e., view $\mathbb T^3$ as 
a subset of $\bR^3$,
extending functions periodically. 
For   $0<\epsilon<\epsilon_0$,  set
for $\oldh \in L^\infty(\Omega_0)$
\begin{equation}
\label{eq:moll def}
\bM_\epsilon (\oldh) (Z)= \int_{\bR^{3}} \eta_\epsilon ( Z-Y) 
\oldh (Y)\, dm(Y) = [\eta_\epsilon *  \oldh ](Z)
\, .
\end{equation}
\end{defin}

Since $\oldh$ is supported in $\Omega_0$, we have that
$\bM_\epsilon(\oldh)$ is supported in a small neighborhood of $\Omega_0$ contained
in $\Omega_1$.  For $\oldh\in \cC^1(\mathbb T^3)$, not necessarily supported in $\Omega_0$, we let
\begin{equation}
\label{deflip}
|\oldh|_{H^1_\infty(\Omega_0)}:=| \nabla (\oldh) |_{L^\infty(\Omega_0)}
\le
 |\oldh|_{H^1_\infty(\mathbb T^3)}:=| \nabla (\oldh) |_{L^\infty(\mathbb T^3)} \, .
\end{equation}

We have the following  bounds for $\bM_\epsilon$:

\begin{lemma}\label{mollbound1}
There exists $C$ so that for all small
enough $\epsilon >0$, all $\oldq$ and all $\oldbetaparam$,
every admissible stable curve   and every $\oldh\in \cC^0(\mathbb T^3)$, supported
in $\Omega_0$,
\begin{equation}\label{claim1}
|\bM_\epsilon (\oldh)|_{L^\infty(\Omega_0)} 
\le C \epsilon^{-\oldq-1+\oldbeta} \|\oldh\|_{s}  \, ,
\end{equation}
and
\begin{equation}\label{claim0}
|\bM_\epsilon (\oldh)|_{H^1_\infty(\Omega_0)} 
\le C \epsilon^{-\oldq-2+\oldbeta} \|\oldh\|_{s}  \, .
\end{equation}
\end{lemma}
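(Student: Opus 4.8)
The idea is that $\bM_\epsilon(\oldh)(Z)$ is, up to a constant, an integral of $\oldh$ against a smooth bump supported in a ball of radius $\epsilon$ around $Z$, and this three-dimensional integral can be decomposed (disintegrated) into a one-parameter family of integrals over admissible stable curves, each of which is controlled by $\|\oldh\|_s$. The main work is to carry out this disintegration so that the pieces genuinely are (uniformly bounded) admissible stable curves, and to keep track of the $\epsilon$-powers.

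First I would fix $Z\in\Omega_0$ and observe that $\bM_\epsilon(\oldh)(Z)=\int\eta_\epsilon(Z-Y)\oldh(Y)\,dm(Y)$ is an integral over the ball $B(Z,\epsilon)$. Using the local coordinates $(x,y,\omega)$ (or, near a scatterer, the flow-box coordinates of Remark~\ref{suitcharts0}), I would foliate $B(Z,\epsilon)$ by short admissible stable curves $W_\xi$, $\xi$ ranging over a two-dimensional transversal of diameter $O(\epsilon)$: since the stable cones for the flow have directions bounded away from the coordinate hyperplanes except near grazing collisions, and since on any ball of radius $\epsilon_0$ one can pick a $\cC^2$ family of stable line segments lying in the cone (exactly as in the proof of Lemma~\ref{relating}, where $\cM$ was disintegrated along a smooth stable foliation), one gets $dm = \rho_\xi(Y)\,dm_{W_\xi}(Y)\,d\xi$ with $|\rho_\xi|_{\cC^1(W_\xi)}\le C$ uniformly. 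Each $W_\xi$ has length $\le C\epsilon \le L_0$ and curvature $\le B_0$, and its projection $P^+(W_\xi)$ is homogeneous provided $\epsilon_0$ is small (one can always arrange the local foliation so $P^+$ of each short leaf lies in a single homogeneity strip, subdividing the transversal into finitely many pieces if $Z$ is near $\bS$); hence $W_\xi\in\cW^s$. Then
\begin{equation}\label{eq:moll-disint}
\bM_\epsilon(\oldh)(Z)=\int_{|\xi|\le C\epsilon}\int_{W_\xi}\oldh(Y)\,\eta_\epsilon(Z-Y)\,\rho_\xi(Y)\,dm_{W_\xi}(Y)\,d\xi\, .
\end{equation}

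For \eqref{claim1}, I would apply the definition of $\|\cdot\|_s$ to the inner integral in \eqref{eq:moll-disint} with test function $\psi_\xi(Y)=\eta_\epsilon(Z-Y)\rho_\xi(Y)$: since $\eta$ is $\cC^\infty$ with $\|\eta_\epsilon\|_{\cC^0}\le C\epsilon^{-3}$ and $\cC^\oldq_{W_\xi}(\eta_\epsilon(Z-\cdot))\le C\epsilon^{-3-\oldq}$, and $|\rho_\xi|_{\cC^1}\le C$, one gets $|\psi_\xi|_{\cC^\oldq(W_\xi)}\le C\epsilon^{-3-\oldq}$; thus $|W_\xi|^{\oldbeta}|\psi_\xi|_{\cC^\oldq(W_\xi)}\le C\epsilon^{\oldbeta-3-\oldq}$, so the inner integral is $\le C\epsilon^{\oldbeta-3-\oldq}\|\oldh\|_s$. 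The $\xi$-integration contributes a factor $C\epsilon^2$ (area of the transversal), giving $|\bM_\epsilon(\oldh)(Z)|\le C\epsilon^{\oldbeta-1-\oldq}\|\oldh\|_s$, which is \eqref{claim1} after taking the supremum over $Z$. For \eqref{claim0}, I would differentiate under the integral sign in \eqref{eq:moll def}: $\nabla\bM_\epsilon(\oldh)(Z)=\int(\nabla\eta_\epsilon)(Z-Y)\oldh(Y)\,dm(Y)$, and repeat the disintegration \eqref{eq:moll-disint} with $\eta_\epsilon$ replaced by a component of $\nabla\eta_\epsilon$, which satisfies $\|\nabla\eta_\epsilon\|_{\cC^0}\le C\epsilon^{-4}$ and $\cC^\oldq(\nabla\eta_\epsilon)\le C\epsilon^{-4-\oldq}$; the same computation then yields the extra power $\epsilon^{-1}$, i.e. $\|\bM_\epsilon(\oldh)\|_{H^1_\infty(\Omega_0)}\le C\epsilon^{\oldbeta-2-\oldq}\|\oldh\|_s$.

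The step I expect to be the real obstacle is verifying uniformity of the disintegration near grazing collisions and near the scatterers: there the stable cones degenerate (collapsing toward a line when $\tau=\cos\varphi=0$) and a ball $B(Z,\epsilon)$ may meet $\partial\Omega_0$, so a single $\cC^2$ stable foliation of the ball need not exist. The remedy, as in the proofs of Lemma~\ref{relating} and Lemma~\ref{lem:compact}, is to work one scatterer-component at a time, to use the change of coordinates lifting the map-stable foliation on $\cM$ (which is smooth and transverse uniformly in the homogeneity strip, since $P^+$ is $\cC^{1+1/2}$ along stable curves by Lemma~\ref{lem:smooth}), and to allow each short leaf $W_\xi$ to be a union of at most two admissible stable curves when it straddles $\partial\Omega_0$ (recall \eqref{esscurve}) — this only doubles the constant $C$. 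Once this is in place the $\epsilon$-bookkeeping is as above, and the constants are manifestly independent of the particular admissible curve and of $\oldq,\oldbetaparam$ (which enter only through the explicit powers displayed).
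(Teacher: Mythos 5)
You take essentially the same route as the paper: disintegrate the $\epsilon$-neighborhood of $Z$ into stable curves, use $\eta_\epsilon(Z-\cdot)$ times the conditional density as the test function in the strong stable norm on each leaf, and obtain \eqref{claim0} by differentiating the kernel once. Away from grazing collisions your bookkeeping ($\epsilon^{2}$ from the transversal, $\epsilon^{\oldbeta}$ from the leaf length, $\epsilon^{-3-\oldq}$ from the mollifier) coincides with the paper's and gives the right exponent.

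The genuine gap is the step you flag as "the real obstacle" and then dismiss. Near $S^{0-}=\{\vf(P^+(\cdot))=\pm\pi/2\}$ your claim that each leaf of length $\sim\epsilon$ can be arranged to have homogeneous $P^+$-projection by "subdividing the transversal into finitely many pieces", with uniformly $\cC^1$ densities $\rho_\xi$, and that this "only doubles the constant $C$", is false. Since $P^+$ has order-one expansion along stable curves (Lemma~\ref{lem:smooth}), a leaf of length $\epsilon$ projects to a map-stable curve of length $\sim\epsilon$, which near grazing crosses \emph{all} homogeneity strips $\bH_k$ with $k^{-3}\lesssim\epsilon$: the subdivision is countable, the admissible pieces have lengths $\sim k^{-3}\to 0$, and no single smooth stable foliation of the ball with uniformly bounded conditional densities exists there (the stable direction field is only H\"older across the unstable direction and the cone width collapses like $\cos\vf(P^+)$). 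This is exactly where the paper's proof does its work: it decomposes $N_\epsilon(Z)$ according to the regions $H_k^{0-}=\{Y:P^+(Y)\in\bH_k\}$ and, as in the proof of Lemma~\ref{relating}, disintegrates Lebesgue measure as $\cos\vf(P^+)\,\rho_\xi\, dm_{W^s_\xi}\,d\xi\,ds$ with $\rho_\xi$ uniformly $\cC^1$ but with the explicit degenerating factor $\cos\vf(P^+)$, whose leafwise $\cC^{\oldq}$ norm is $\le Ck^{-2}$ (this uses $\oldq\le 1/3$, i.e.\ the choice of homogeneity layers); this yields the per-strip bound $C\epsilon^{-2-\oldq+\oldbeta}k^{-5}$, and the stated exponent in \eqref{claim1} is recovered only after summing over $k$, using that $H_k^{0-}$ has unstable width $\sim k^{-5}$ so that the strips met by an $\epsilon$-ball contribute $\sum_k k^{-5}\sim\epsilon$. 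Your proposed remedies (working one scatterer component at a time; letting a leaf split into two admissible pieces at $\partial\Omega_0$) address the discontinuity surfaces and the boundary, but not this accumulation of homogeneity strips; without the per-strip densities and the convergent sum over $k$, the uniformity of your constant, and hence the exponent $\epsilon^{-\oldq-1+\oldbeta}$, is not justified.
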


The following lemma shows that $\bM_\epsilon$ is in some sense
an approximation of the identity.

\begin{lemma}\label{mollbound2}
There exists $C >0$ so that for all  small enough $\epsilon >0$,
all $\oldp$, $\oldq$, $\gamma$,  $\oldbetaparam$, and all $\oldh\in \cC^0(\mathbb T^3)$,
supported in $\Omega_0$
\begin{equation}\label{lemma1.1b}
|\bM_\epsilon (\oldh)-\oldh  |^{\mathbb H}_{w}
\le C \epsilon^{\delta} \|\oldh\|_{\cB} \, ,
\end{equation}
where $\delta = \min \{ \gamma, 1/(2q), 1/q - 2/5 - \beta \}$, and
where the homogeneous weak norm is defined by
 \begin{equation}\label{onemorenorm}
 |\oldh|_w^{\mathbb H}:=
 \sup_{\substack{W \in \cW^s\\ W \, \,\,  { homogeneous}} }\, 
\sup_{\substack{\psi \in \cC^\oldp(W) \\ |\psi|_{\cC^\oldp(W)} \le 1}}
\int_W \oldh \psi \, dm_W \, .
\end{equation}
\end{lemma}

The last lemma of this section completes the proof
of Lemma ~ \ref{lem:embedding},
showing that, although we have taken $\cB$ to be the closure
of $\{\cL_t(\cC^2(\Omega_0) \cap \cC^0_\sim)\mid t\ge 0\}$, in fact we have $\cC^1(\Omega_0) \subset \cB$ as well.

\begin{lemma}
\label{lem:C1}
$
\cC^1(\Omega_0) \subset \cB$.
\end{lemma}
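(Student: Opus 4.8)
\textbf{Proof proposal for Lemma~\ref{lem:C1}.}

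The plan is to show that every $\oldh \in \cC^1(\Omega_0)$ lies in the closure (for $\|\cdot\|_\cB$) of the set $\{\cL_t g : t\ge 0,\ g\in \cC^0_\sim\cap\cC^2(\Omega_0)\}$ appearing in \eqref{smallspace}. We already know from Lemma~\ref{lem:embedding} (via \eqref{laborne}) that $\cC^1(\Omega_0)\subset \cB^0$ with $\|\oldh\|_\cB\le C|\oldh|_{\cC^1(\Omega_0)}$, so the only issue is the approximation by the specific dense family: the point is that $\oldh$ need not itself be continuous on the quotient $\Omega$, nor $\cC^2$. The natural device is the mollifier $\bM_\epsilon$ of the previous subsection together with the smoothing effect of the flow for positive times. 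First I would observe that for any fixed $t>0$ and any $g\in \cC^1(\Omega_0)$ we have $\cL_t g = g\circ \Phi_{-t}$, which is piecewise $\cC^1$ but typically not globally $\cC^1$ on $\Omega_0$; however $\bM_\epsilon(g)$ \emph{is} $\cC^\infty$ on $\mathbb T^3$ (restricting to $\Omega_0$), so $\bM_\epsilon(\oldh)\in \cC^2(\Omega_0)$ for each $\epsilon$. The trouble is that $\bM_\epsilon(\oldh)$ need not belong to $\cC^0_\sim$ — it does not respect the identification $\sim$ at the scatterers.

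To fix the quotient-compatibility I would compose with the flow: for $t>0$ small, $\cL_t(\bM_\epsilon(\oldh)) = \bM_\epsilon(\oldh)\circ\Phi_{-t}$, and here I would use that the flow $\Phi_{-t}$ pushes every point off the collision set instantaneously, so that $\cL_t(\bM_\epsilon(\oldh))$ \emph{does} agree with a continuous function on $\Omega$; more precisely, pulling back by $\Phi_{-t}$ forces the two pre-images under $\sim$ of a collision point to receive the same value, since they are flowed to the same forward orbit. (This is exactly the mechanism described after Definition~\ref{spcce} and used for strong continuity in Lemma~\ref{lem:strong c}; compare the discussion of $\Phi_0$ in the introduction.) Thus $\cL_t(\bM_\epsilon(\oldh))\in \cC^0_\sim\cap\cC^2(\Omega_0)$, and it is a legitimate element of the dense family \eqref{smallspace}. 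It then remains to show
$$
\lim_{t\downarrow 0}\ \lim_{\epsilon\downarrow 0}\ \big\| \cL_t(\bM_\epsilon(\oldh)) - \oldh \big\|_\cB = 0 \,.
$$
I would split this as $\cL_t(\bM_\epsilon(\oldh)) - \oldh = \cL_t(\bM_\epsilon(\oldh) - \oldh) + (\cL_t \oldh - \oldh)$. For the second term, $\oldh\in\cC^1\subset\cC^0$, and $\cL_t \oldh\to \oldh$ in $\cB$ as $t\downarrow 0$ by Lemma~\ref{lem:strong c} (whose proof for $\cC^2$ functions adapts verbatim to $\cC^1$ functions, using $\|\nabla\oldh\cdot\heta\|_{\cC^0}$ in place of $\|\nabla\oldh\cdot\heta\|_{\cC^1}$ where the unstable-norm estimate forces a loss — I would instead simply note that $\cC^1$ functions are dense in $\cC^1$ for a slightly weaker topology, or argue by the approximation scheme at the end of the proof of Lemma~\ref{lem:strong c}). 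For the first term, by the boundedness of $\cL_t$ on $\cB^0$ (the Corollary after Proposition~\ref{prop:L_t}) it suffices to bound $\|\bM_\epsilon(\oldh)-\oldh\|_\cB$ uniformly in small $t$; and since $\bM_\epsilon(\oldh)\to\oldh$ uniformly on $\Omega_0$ with $|\bM_\epsilon(\oldh)|_{\cC^1}\le C|\oldh|_{\cC^1}$ (standard mollifier bounds, plus $\bM_\epsilon$ preserves $\cC^1$ bounds), one has $\bM_\epsilon(\oldh)\to \oldh$ in, say, the uniform and weak topologies, and the estimate \eqref{laborne} applied to the $\cC^1$ function $\bM_\epsilon(\oldh)-\oldh$ together with an interpolation (the strong, neutral and weak norm pieces of $\|\bM_\epsilon(\oldh)-\oldh\|_\cB$ are each controlled by a product of a uniformly bounded $\cC^1$-type quantity and a small power of $|\bM_\epsilon(\oldh)-\oldh|_\infty$, exactly as in the proof of Lemma~\ref{lem:embedding}) gives $\|\bM_\epsilon(\oldh)-\oldh\|_\cB\to 0$.

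The main obstacle I anticipate is the last interpolation: the unstable-norm component $\|\bM_\epsilon(\oldh)-\oldh\|_u$ is defined with a $\cC^1$-type bound divided by $\epsilon^\gamma$, and naively applying \eqref{laborne} gives a bound proportional to $|\bM_\epsilon(\oldh)-\oldh|_{\cC^1}$, which does \emph{not} go to zero. The resolution — and the step requiring care — is to revisit the unstable-norm estimate in the proof of Lemma~\ref{lem:embedding}: there the bound was $C\ve^{1-\gamma}|\cdot|_\infty + C\ve^{1/2-\gamma}|\cdot|_{\cC^1}$, i.e. the $\cC^1$-norm appears only with a \emph{positive} power of the matching parameter $\ve$, so by choosing, for each target accuracy, the matching parameter $\ve$ small and then $\epsilon$ (the mollification scale) even smaller, the $|\bM_\epsilon(\oldh)-\oldh|_\infty$ factor is made small while the $|\bM_\epsilon(\oldh)-\oldh|_{\cC^1}\le C|\oldh|_{\cC^1}$ factor is absorbed into the $\ve^{1/2-\gamma}$. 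Equivalently, one shows $\cC^1(\Omega_0)$ embeds continuously into $\cB$ and that the mollifications $\bM_\epsilon(\oldh)$ form a Cauchy net in this embedding, which is automatic once one has the two facts $|\bM_\epsilon(\oldh)|_{\cC^1}\le C|\oldh|_{\cC^1}$ and $\bM_\epsilon(\oldh)\to\oldh$ uniformly, because the embedding $\cC^1\hookrightarrow\cB$ from \eqref{laborne} combined with $|\cdot|_w\le\|\cdot\|_s$ shows $\cB$-convergence follows from $\cC^1$-boundedness plus $|\cdot|_w$-convergence. This makes $\oldh = \lim_{\epsilon} \bM_\epsilon(\oldh)$ (in $\cB$) a limit of $\cC^2$ functions; finally composing with $\cL_t$ and letting $t\downarrow 0$ produces the approximation by elements of \eqref{smallspace}, establishing $\oldh\in\cB$.
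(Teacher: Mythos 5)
There is a genuine gap at the very first structural step: your device for restoring quotient-compatibility does not produce elements of the dense family \eqref{smallspace}. That family consists of images $\cL_t g$ with $g\in\cC^0_\sim\cap\cC^2(\Omega_0)$; it is not the set of functions that \emph{become} quotient-compatible after applying $\cL_t$. Since $\bM_\epsilon(\oldh)$ is not in $\cC^0_\sim$, the function $\cL_t(\bM_\epsilon(\oldh))$ is simply not of the required form. Worse, it is not even continuous on $\Omega_0$: the remark after Definition~\ref{spcce} (continuity of $\Phi_t$ on the quotient $\Omega$) only yields $\oldh\circ\Phi_{-t}\in\cC^0_\sim$ \emph{under the hypothesis} $\oldh\in\cC^0_\sim$. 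If $g=\bM_\epsilon(\oldh)$ assigns different values to the incoming and outgoing representatives of a collision point, then $g\circ\Phi_{-t}$ is discontinuous at every interior point $Z_0$ whose backward orbit lands on $\cM$ exactly at time $t$ (nearby points are sent near the two distinct representatives), i.e.\ along the two-dimensional surface $\Phi_t(\cM)$. It is certainly not $\cC^2$ either, since $\Phi_{-t}$ is only piecewise smooth with derivative blow-up at grazing collisions. So your approximating sequence lies neither in \eqref{smallspace} nor obviously in its closure — which is precisely what is to be proved. The paper's fix is different and cannot be reached by post-composing with the flow: one mollifies the \emph{truncated} function, $\bM_\epsilon^\sim f := \bM_\epsilon(f\cdot 1_{\Omega_0\setminus\partial_\epsilon\Omega_0})$, which vanishes identically on $\partial\Omega_0$ and is therefore trivially in $\cC^2(\Omega_0)\cap\cC^0_\sim$ (so one may take $t=0$ in \eqref{smallspace}); the price is an extra error term supported on the $\epsilon$-neighborhood of $\partial\Omega_0$, whose intersection with any $W\in\cW^s$ has length $O(\epsilon^{1/2})$ by Lemma~\ref{lem:discarded} and is absorbed using the weight $|W|^{\oldbeta}$ in the strong stable norm.

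A secondary problem: your closing claim that ``$\cB$-convergence follows from $\cC^1$-boundedness plus $|\cdot|_w$-convergence'' via \eqref{laborne} and $|\cdot|_w\le\|\cdot\|_s$ is false — those inequalities go the wrong way, and a $\cC^1$-bounded sequence converging weakly converges in $\cB_w$, not in $\cB$ (the embedding $\cB\hookrightarrow\cB_w$ is compact, not the reverse). The correct mechanism is the one you sketch just before that sentence, and which the paper carries out: in the unstable norm one balances the two regimes $\epsilon^{1/2-s}<\ve^\gamma$ and $\epsilon^{1/2-s}\ge\ve^\gamma$, using that the $\cC^1$-size of $\bM_\epsilon^\sim f$ (of order $\epsilon^{-1}$ for the mollified part) enters only multiplied by $\ve^{1-\gamma}$, and that $\gamma<1/3$ gives $\tfrac1\gamma-1>2$ so the net exponent of $\epsilon$ is positive. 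The neutral norm also needs a separate argument (the paper uses uniform continuity of $\nabla f$, obtaining only a modulus of continuity $\rho(\epsilon)$, which suffices). These estimates are doable along the lines you indicate, but the proof cannot be completed until the approximating functions are genuinely members of \eqref{smallspace}.
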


\subsection{Proofs of Lemmas~\ref{mollbound1} - \ref{lem:C1}}

\begin{proof}[Proof of Lemma~ \ref{mollbound1}]
We first bound $|\bM_\epsilon \oldh(Z)|$ for all $Z \in \Omega_0$.  
Recalling the notation used in the proof of Lemma~\ref{relating}, we decompose the 
$\epsilon$-neighborhood $N_\epsilon(Z)$
of $Z$ inside $\Omega_0$, into homogeneous stable curves $W_\xi^s$ over each component of $N_{\epsilon}(Z) \cap \bH_k^{0-}$ and $N_{\epsilon}(Z) \cap B^{0-}_i$. 
We estimate the relevant integral over one such component at a time, using \eqref{eq:cos decomp},
\[
\begin{split}
\int_{B^{0-}_i} f(Y) & \eta_\epsilon(Z-Y) dm(Y) 
 = \int_0^{\tau^-_{\max,i}} \int_{E_k,i} \int_{W^s_\xi} f \, \eta_\epsilon(Z-\cdot) \rho_{\xi} \cos \vf (P^+)
dm_{W^s_\xi} d\xi ds \\
& \le C \eps k^{-3} \| f \|_s \sup_{\xi, s} |W^s_\xi \cap \mbox{supp}(\eta_\epsilon(Z - \cdot))|^{1/q}
|\eta_\epsilon|_{\cC^\beta} |\cos \vf(P^+)|_{\cC^\beta(W^s_\xi)} |\rho_{\xi}|_{\cC^\beta(W^s_\xi)},  
\end{split}
\]
where the factor $\epsilon k^{-3}$ comes from the fact that the support of $\eta_\epsilon$ is of order 
at most $\epsilon$ in the transverse integral $ds$ and of order $k^{-3}$ in the integral $d\xi$.  
Next, 
$|\cos \vf(P^+)|_{\cC^\beta(W^s_\xi)} \le Ck^{-2}$, using the estimate following \eqref{eq:cos decomp}
and the fact that $\beta < 1/3$.  Also, $|\eta_\epsilon|_{\cC^\beta} \le C \epsilon^{-3-\beta}$, and
as before, $|\rho_\xi|_{\cC^1} \le C$.  Putting these estimates together yields,
\begin{equation}
\label{eq:one box}
\left| \int_{B^{0-}_i} f(Y)  \eta_\epsilon(Z-Y) dm(Y) \right| \le C \epsilon^{-2-\beta + 1/q} k^{-5} \| f \|_s .
\end{equation}
It remains to sum over the relevant $k$ and $i$.  Since there are only a finite number of flow boxes
$B^{0-}_i$, independent of $\epsilon$, this sum is uniformly bounded.  Also, 
$N_\epsilon(Z)$ can cross only a uniformly bounded number of the singularity surfaces
$S^{0-} = \{ Y \in \Omega_0 : \vf(P^+(Y)) = \pm \frac \pi2 \}$ (in fact, no more than
$\tau_{\max}/\tau_{\min}$ of them).  
Thus the only index which may be unbounded or infinite is $k$.  Suppose $N_\eps(W)$ intersects
a range of $k$, from $k_1$ to $k_2$ (note that $k_2 = \infty$ is allowed).  
Since the width in the unstable direction of
$\bH_k^{0-}$ is approximately $C k^{-5}$, we must have $\epsilon$ of order 
$k_1^{-4} - k_2^{-4}$,
which is precisely the bound we obtain summing over the above estimate, yielding one more
power of $\epsilon$ and completing
the proof of \eqref{claim1}.

For the $H^1_\infty$ bound \eqref{claim0}, we first differentiate once (which produces an
extra factor $\epsilon^{-1}$) and then proceed in the same way as above.
\end{proof}

\begin{proof}[Proof of Lemma~ \ref{mollbound2}]
Let $W$ be a homogeneous stable curve and let $\psi$ be a function on
$W$ with $|\psi |_{C^\oldp}(W)\le 1$.  

Let $A$ be a constant to be determined below.  
We assume for the moment that $P^+(W)$ does not lie within a
distance of $A \epsilon^{3/5}$ of the boundary of any homogeneity strip. 
We may also assume that $|W| \le L_0/2$, since otherwise, we may simply subdivide $W$ into
two components and perform the estimate on each component separately.

 In order to compare $\bM_\epsilon(f)$ with $f$, we will
adopt a new coordinate system in $N_{\epsilon}(W)$
(from the proof of Lemma~ \ref{mollbound1}) so that small translations of $W$ are
again stable curves.  Let $W^\epsilon$ be a $\cC^2$ extension of $W$ of length $\epsilon$
at each end.   Note that if $P^+(Z)$ and $P^+(Y)$ lie in the same homogeneity strip $\bH_k$, then
$d(P^+(Z), P^+(Y)) \le C k^2 d(Z,Y)$, since distances along stable curves are contracted by 
a factor of order 1 under $P^+$, while distances along unstable curves are expanded by a factor proportional
to $k^2$.  Since the maximum length of map-stable and -unstable curves in $\bH_k$ is at most
$k^{-3}$, this implies that 
\begin{equation}
\label{mysterious}
d(P^+(Z), P^+(Y)) \le B' d(Z,Y)^{3/5}
\end{equation}
 for some $B'>0$ depending only on the 
maximum curvature and $\tau_{\max}$. 
Thus, $P^+(N_\epsilon(W))$ lies in a $B' \epsilon^{3/5}$-neighborhood of $P^+(W)$. 
Since we have assumed the homogeneity strip
containing $P^+(W)$ has width at least $2A\epsilon^{3/5}$, we choose $A$ large enough 
compared to $B'$ that 
$P^+(N_\epsilon(W))$ lies
in the same homogeneity strip $\bH_k$ as $W$; indeed, our assumptions
imply $Ck^{-3} \ge 2A\epsilon^{3/5}$, so that the width of the stable cone in this
neighborhood of $W$ is at least $\cos \vf(P^+) \ge A' \epsilon^{2/5}$, for some constant 
$A'$, increasing with $A$. 

Now using Remark~\ref{suitcharts0} and proceeding like in the decomposition in the
proof of Lemma~\ref{relating}, we may define a smooth foliation of $N_\epsilon(W)$ by 
admissible stable curves with the following
properties: (i) $W^\epsilon$ belongs to the foliation; (ii) the foliation is trivial in the flow direction,
i.e., $W^\epsilon$ lies in a smooth surface formed by such stable curves, which is then flown
a distance proportional to $\pm \epsilon$ to define the foliation of $N_\epsilon(W)$.
It may be that this flow surface reaches $\partial \Omega_0$.  In this case, we simply stop
the foliation at $\partial \Omega_0$ so that it does not contain any collision points.  

Fixing a transverse curve $\gamma_0$ in the surface containing $W^\epsilon$, we define 
local coordinates $(s,u,t)$, associating $W^\epsilon$ with the first coordinate,
$\{ (s,0,0) : s \in [-\epsilon - |W|/2, |W|/2 +\epsilon] \}$, parametrized by arclength,
$\gamma_0$ with the second coordinate,
and the flow direction with the third.  Due to the smoothness of the foliation, the
Jacobian $J_0$ of the change of variables $(x,y,\omega) \mapsto (s,u,t)$ is uniformly $\cC^1$.
Also, it may be that in this coordinate system near $\partial \Omega_0$, some coordinate choices
lie inside one of the scatterers.  This is immaterial to the integral we must estimate since
$f$ is taken to be simply 0 inside such scatterers.

We are now ready to proceed with the required estimate.
We  have, by Fubini,
\begin{equation}
\label{chain0}
\begin{split}
 \int_W  (\bM_\epsilon(\oldh) - & f) \psi \, dm_W 
= \int_W  \int_{\bR^3} \eta_\epsilon (Z-Y)
(f(Y) - f(Z))
\, dm(Y) \psi(Z)\, dm_W(Z) \\
& =  \int_{\bR^3} \eta_\epsilon(Y)
\int_W 
(\oldh(Z-Y) - f(Z)) \psi(Z)  J_0(Z-Y) \,
dm_W(Z)\, dm(Y)\\
& \le | \eta_\epsilon |_{L^1} \sup_{Y \in N_\epsilon(0)} \left |\int_W 
(\oldh(Z-Y) - f(Z)) \psi(Z)  J_0(Z-Y) \, dm_W(Z) \right|\, ,
\end{split}
\end{equation}
using that $|\eta_\epsilon |_{L^1}=1$.
We next want to apply the definition of the unstable norm.  
For $Y \in N_\ve(0)$, let $W_Y$ denote the stable curve corresponding to $W - Y$ in the
adapted coordinates $(s,u,t)$ and lying in the interior of $\Omega_0$.  
$W_Y$ is necessarily an admissible stable curve by our choice
of adapted coordinates.  
Let $\bh_Y$ denote the translation map from $W_Y$ to $W$.  Then,
\[
\int_W f(Z-Y) \psi(Z) J_0(Z-Y) \, dm_W = \int_{W_Y} f \psi \circ \bh_Y J_0 \, dm_{W_Y}.
\]
Note that $J\bh_Y = 1$.
In light of \eqref{chain0}, the following sublemma is the main estimate in the proof of 
Lemma~ \ref{mollbound2}.

\begin{sublem}
\label{lem:close Y}
There exists $C>0$ such that for each $Y \in N_\epsilon(W)$,
\[
\left| \int_{W_Y} \oldh \psi \circ \bh_Y J_0 \, dm_{W_Y} - \int_W \oldh \psi J_0 \circ \bh_Y^{-1} 
\, dm_W \right|
\le C {\epsilon}^{r_0} \| \oldh \|_\cB \,  ,
\]
where $r_0=  \min \{ \gamma, \oldbetatwo \}$.
\end{sublem}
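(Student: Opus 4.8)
The plan is to compare the two integrals in Sublemma~\ref{lem:close Y} by regarding $W_Y$ and $W$ as a pair of admissible stable curves that are $d_{\cW^s}$-close and applying the unstable, strong stable, and neutral norms in turn — this is precisely the situation the unstable norm was designed to control. First I would estimate $d_{\cW^s}(W_Y, W)$. Since $W_Y = W - Y$ in the adapted $(s,u,t)$ coordinates and $|Y| \le C\epsilon$, the $u$-displacement between $W_Y$ and $W$ is at most $C\epsilon$; pushing forward by $P^+$ and using the estimate $d(P^+(Z), P^+(Y)) \le B' d(Z,Y)^{3/5}$ from \eqref{mysterious} (valid because $P^+(W)$ has been assumed to stay at distance $\ge A\epsilon^{3/5}$ from the boundary of its homogeneity strip, so $N_\epsilon(W)$ stays in a bounded number of homogeneity strips), one gets $\td_{\cW^s}(P^+(W_Y), P^+(W)) \le C\epsilon^{3/5}$. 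One must also absorb the discrepancy in collision times, which is $\tfrac12$-H\"older in the spatial distance (cf.~\eqref{ceiling}), hence bounded by $C\epsilon^{1/2}$. Being a bit crude, I would set $\ve := C\sqrt\epsilon$ as the value of $d_{\cW^s}(W_Y, W)$; this $\sqrt\epsilon$ is exactly the source of the exponent $\oldbetatwo = 1/(2q)$ appearing in the final bound (the $\ve^\oldbeta$ term of the unstable-norm machinery becomes $\epsilon^{1/(2q)}$, after noting $\oldbeta = 1/q$).

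Next I would split the difference using the test functions $\psi_1 = \psi \circ \bh_Y \cdot J_0$ on $W_Y$ and $\psi_2 = \psi \cdot J_0 \circ \bh_Y^{-1}$ on $W$. One checks that $d(\psi_1, \psi_2) = 0$: both $\psi$ and $J_0$ are pulled back to the common parameter interval via $S_{W_Y}, S_W$, and since $\bh_Y$ is the translation map $J\bh_Y = 1$ and $\psi_1 \circ S_{W_Y}$, $\psi_2 \circ S_W$ agree on the overlap by construction of the adapted coordinates. (If $J_0 \circ S$ does not match exactly one first replaces $\psi_2$ by an honest $d$-equivalent test function at the cost of a $\|\oldh\|_s$ term times the $\cC^\oldq$-distance of the two pushdowns, which is $O(\epsilon)$, because $J_0$ is uniformly $\cC^1$.) One also needs $|\psi_i|_{\cC^\oldp} \le C$, which holds since $J_0$ is uniformly $\cC^1$ and hence uniformly $\cC^\oldp$, and since $\psi \circ \bh_Y$ has the same $\cC^\oldp$ norm as $\psi$ (translation is an isometry in the adapted coordinates, and the $\cC^\oldp$ norm on $W_Y$ compares to that on $W$ with a uniform constant via $S_{W_Y}$ and $S_W$, using Lemma~\ref{lem:smooth}). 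Having arranged this, the difference is bounded by $C \ve^\gamma \|\oldh\|_u \le C \epsilon^{\gamma/2} \|\oldh\|_u$ directly from the definition of $\|\cdot\|_u$.

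The remaining issue is that the endpoints of $W_Y$ and $W$ need not match: the unmatched end-pieces (and any parts where the adapted-coordinate foliation ran into $\partial\Omega_0$ and had to be truncated) have length $O(\epsilon)$, since the $u$-translation is $O(\epsilon)$ and $P^+$ distorts lengths by a bounded factor along stable curves. On each such short piece I would bound $\left|\int \oldh \psi_i J_0\right| \le C\,|W_{\mathrm{short}}|^{\oldbeta} \|\oldh\|_s \le C\epsilon^{\oldbeta}\|\oldh\|_s = C\epsilon^{1/q}\|\oldh\|_s$ by the strong stable norm. Finally, to compare $\int_W \oldh \psi_1 J_0$ on $W$ versus on $\bh_Y(W_Y)$-shifted positions I must account for the $t$-direction shift of magnitude $O(\epsilon)$; this contributes a term of the form $\int_0^{O(\epsilon)} \partial_s\!\int_W (\oldh\circ\Phi_s)\cdots\,ds \le C\epsilon \|\oldh\|_0$ via \eqref{eq:flow derivative} and the neutral norm, exactly as in the third term of \eqref{eq:stepone}. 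Collecting, the total is $C(\epsilon^{\gamma/2}\|\oldh\|_u + \epsilon^{1/q}\|\oldh\|_s + \epsilon\|\oldh\|_0) \le C\epsilon^{r_0}\|\oldh\|_\cB$ with $r_0 = \min\{\gamma/2, 1/q\}$; since $\gamma < 1/3 < 2/q$ one has $\gamma/2 < 1/q$, but more to the point, tracking $\ve = C\sqrt\epsilon$ carefully through the $\ve^\oldbeta$-type terms rather than the crude $\ve^\gamma$ gives the sharper $r_0 = \min\{\gamma, \oldbetatwo\}$ stated (the unstable term carries $\ve^\gamma$ where $\ve$ is the genuine $d_{\cW^s}$-distance, and one should take $\ve = C\sqrt\epsilon$ only where forced, namely in the collision-time discrepancy, while the spatial $\epsilon^{3/5}$ part can be kept separate — this bookkeeping is the one genuinely delicate point). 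I expect the main obstacle to be precisely this: verifying $d(\psi_1,\psi_2)=0$ cleanly in the adapted coordinates near $\partial\Omega_0$ where the foliation is truncated, and getting the exponent bookkeeping to land on $\min\{\gamma, 1/(2q)\}$ rather than something worse.
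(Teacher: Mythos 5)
Your overall strategy---unstable norm for the transverse displacement, strong stable norm for short unmatched end-pieces, neutral norm for the flow-direction shift---is the same as the paper's, but two steps as you state them do not go through. First, you cannot apply the unstable norm directly to the pair $(W_Y, W)$ with $\ve = d_{\cW^s}(W_Y,W) = C\sqrt{\epsilon}$: since the translation $Y$ has a component in the flow direction, $W_Y$ and $W$ need not be connected by any unstable curve, and by the definition of $d_{\cW^s}$ (see the footnote there: the distance is infinite for a pure flow translate $\Phi_t(W_1)$) this distance may well be $+\infty$. The paper's resolution is to introduce an intermediate curve $V_Y=\Phi_{t_Y}(W)\subset W^0$ with $|t_Y|\le C_\kappa\epsilon$ (see \eqref{eq:curve const}) chosen so that $d_{\cW^s}(V_Y,W_Y)<\sqrt{\epsilon}$; the comparison $W_Y\leftrightarrow V_Y$ is then handled by the unstable and strong stable norms, and the comparison $V_Y\leftrightarrow W$ is a pure flow translation handled by the neutral norm, exactly as in \eqref{eq:Y split}. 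Your third paragraph contains the neutral-norm term, so you have the right pieces, but your second paragraph's bound $C\ve^\gamma\|\oldh\|_u$ is applied to a pair of curves for which the hypothesis of the unstable norm may fail.

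Second, the exponent. As you yourself flag, the argument you actually execute yields $\epsilon^{\gamma/2}$ from the unstable norm, which does not prove the stated bound; the ``careful bookkeeping'' you defer is the real content. In the paper the $\sqrt{\epsilon}$ enters only through the lengths of the unmatched end-pieces of $W_Y$ and $V_Y$: these have length $O(\sqrt{\epsilon})$ (not $O(\epsilon)$, as your third paragraph asserts, contradicting your first) and are estimated by the strong stable norm, giving $(\sqrt{\epsilon})^{\oldbeta}=\epsilon^{\oldbetatwo}$---this is where $\oldbetatwo$ comes from---while on the matched subcurves one has $\td_{\cW^s}(P^+(W'_Y),P^+(V'_Y))<\epsilon$, so the unstable norm contributes $\epsilon^\gamma$ with no loss. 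A smaller point: your claim that the test-function mismatch is exactly zero, or $O(\epsilon)$ ``because $J_0$ is uniformly $\cC^1$,'' is not correct---the factor $\psi$ is only $\cC^{\oldp}$, so comparing $\psi\circ\bh_Y\circ S_{W_Y}\circ S_{V_Y}^{-1}$ with $\psi\circ\Phi_{-t_Y}$ in the $\cC^{\oldq}$ norm requires the interpolation argument of \eqref{eq:psi diff} and yields $O(\epsilon^{\oldp-\oldq})$. This happens to be harmless for the final exponent since $\gamma\le\oldp-\oldq$, but the $O(\epsilon)$ claim is false as stated.
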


\begin{proof}[Proof of Sublemma~\ref{lem:close Y}]
Notice that not every $W_Y$ has 
$d_{\cW^s}(W_Y, W) < \infty$.
To remedy this, let $\tau^{\pm}(W) = \min_{W} \{ \tau(\cdot ), \tau_{-1}(\cdot ) \}$ and
define the local surface $W^0 = \{ \Phi_t (W) : |t| \le \tau^{\pm}(W)  \}$
obtained by flowing $W$.  
By arguments similar to those used to ensure
(ii) in the proof of
Lemma~\ref{lem:compact}, for each $Y$ we may choose a stable curve 
$V_Y = \Phi_{t_{Y}}(W) \subset W^0$ such that
$d_{\cW^s}(V_{Y}, W_{Y}) < \sqrt{\epsilon}$.  
Indeed, there exists a constant $C_\kappa > 0$, depending
on the maximum curvature of stable and unstable curves, such that 
\begin{equation}
\label{eq:curve const}
|t_Y| \le C_\kappa \epsilon, \; \mbox{ for all } Y \in N_\ve(W)
\, .
\end{equation}
It may be that $\Phi_{t_Y}(W)$ is in the midst of a collision.  But in this case, we let
$V_Y$ denote only that portion of $\Phi_{t_Y}(W)$ that has not undergone a collision
during time $t \in [0, t_Y]$.  The mismatch between $W$ and $V_Y$ is then of length
at most $C \sqrt{\epsilon}$ and can be estimated as in \eqref{eq:missed} below.

Now we estimate,
\begin{equation}
\label{eq:Y split}
\begin{split}
\int_{W_Y} f \psi \circ & \bh_Y J_0 \,  dm_{W_Y}  - \int_W f \psi J_0 \circ \bh_Y^{-1} \, dm_W \\
& = \int_{W_Y} f \psi \circ \bh_Y J_0 \, dm_{W_Y} 
  - \int_{V_Y} f (\psi  \, 
J_0 \circ \bh_Y^{-1}) \circ \Phi_{-t_Y} \, J_{V_Y} \Phi_{-t_Y} \, dm_{V_Y} \\
&  \; \;\; \;+ \int_{V_Y} f (\psi  \, 
J_0 \circ \bh_Y^{-1}) \circ \Phi_{-t_Y} \, J_{V_Y} \Phi_{-t_Y} \, dm_{V_Y} 
- \int_W f \psi J_0 \circ \bh_Y^{-1} \, dm_W \,  .
\end{split}
\end{equation}

We begin with the first difference above.
Denote by $S_{W_Y}$ the natural map from the $r$-interval $I_{W_Y}$ (on the scatterer) to
$W_Y$ as defined at the
end of Subsection~\ref{stable curves}, and let $S_{V_Y}$ denote the analogous map
for $V_Y$.   Set $I_Y$ to be the common $r$-interval on which $P^+(W_Y)$ and $P^+(V_Y)$ are defined, and denote the
corresponding matched subcurves 
by $W'_Y$ and $V'_Y$, as defined in Section~\ref{unstable norm}.
 The unmatched (at most) two ends 
may be estimated separately, using the strong stable norm since these
unmatched  pieces have length at most $\sqrt{\epsilon}$:
\begin{equation}
\label{eq:missed}
\begin{split}
\int_{W_Y \setminus W'_Y} f \psi \circ \bh_Y J_0 \, dm_{W_Y} 
&\le |W_Y \setminus W'_Y|^{\oldbeta} \| \oldh \|_s |J_0|_{\cC^\beta(W_Y)} |\psi \circ \bh_Y|_{\cC^\beta(W_Y)} \\
&\le  C\epsilon^{\oldbetatwo}\|\oldh \|_s  \,  ,
\end{split}
\end{equation}
where we have used the smoothness of both $J_0$ and $\bh_Y$.
A similar estimate holds for the integral over $V_Y \setminus V'_Y$.

On $I_Y$, define $\phi = (\psi \circ \bh_Y \cdot J_0) \circ S_{W_Y} \circ S_{V_Y}^{-1}$, so that
$d(\psi \circ \bh_Y \cdot J_0, \phi) =0$.  
Since the derivatives of $S_{W_Y}$ and
$S^{-1}_{V_Y}$ are uniformly bounded by Lemma~\ref{lem:smooth}, we get
$|\phi|_{\cC^\oldp(V'_Y)} \le C |\psi|_{\cC^\oldp(W'_Y)} \le C'$.
Now we estimate the first difference in the matched part of equation
\eqref{eq:Y split},
\begin{equation}
\label{eq:first Y}
\begin{split}
& \left| \int_{W'_Y} \oldh \psi \circ \bh_Y \, J_0 \, dm_W - \int_{V'_Y} f (\psi  \, 
J_0 \circ \bh_Y^{-1}) \circ \Phi_{-t_Y} \, J_{V_Y} \Phi_{-t_Y} \, dm_{V_Y} \right| \\
& \le \left|  \int_{W'_Y} \oldh \psi \circ \bh_Y \, J_0 \, dm_W - 
\int_{V'_Y} f \phi \, dm_W \right|  \\
& \; \; \; \; +
\left| \int_{V'_Y} f \phi  \, dm_W - \int_{V'_Y} f (\psi  \, 
J_0 \circ \bh_Y^{-1}) \circ \Phi_{-t_Y} \, J_{V_Y} \Phi_{-t_Y} \, dm_{V_Y} \right| \\
& \le C \epsilon^\gamma \| \oldh \|_u + \| \oldh \|_s |\phi - (\psi  \, 
J_0 \circ \bh_Y^{-1}) \circ \Phi_{-t_Y} \, J_{V_Y} \Phi_{-t_Y} |_{\cC^\oldq(V'_Y)}
\,  .
\end{split}
\end{equation}
where we used that  $d_{\cW^s}(P^+(W'_Y), P^+(V'_Y)) < \epsilon$ 
(recall we have removed the endpoint discrepancy).
We proceed to estimate the norm of the test function in the second term.  Since each term in the
difference is bounded, we may estimate each difference separately.  For $Z \in V'_Y$, we have
\[
|\psi \circ \bh_Y \circ S_{W_Y} \circ S_{V_Y}^{-1}(Z) - \psi \circ \Phi_{-t_Y}(Z)| 
\le |\psi|_{\cC^\alpha(W)} d(\bh_Y \circ S_{W_Y} \circ S_{V_Y}^{-1}(Z), \Phi_{-t_Y}(Z))^\alpha \, .
\]
Using again  $d_{\cW^s}(P^+(W'_Y), P^+(V'_Y)) < \epsilon$, we have,
$d(S_{W_Y} \circ S_{V_Y}^{-1}(Z), Z) \le C \epsilon$.  Moreover, $\bh_Y$ is simply translation
by $Y$, and $|Y| \le C\epsilon$ while $|t_Y| \le C_\kappa \epsilon$ by \eqref{eq:curve const}.
Then using the triangle inequality twice gives
$d(\bh_Y \circ S_{W_Y} \circ S_{V_Y}^{-1}(Z), \Phi_{-t_Y}(Z)) \le C \epsilon$.
Thus,
\[
|\psi \circ \bh_Y \circ S_{W_Y} \circ S_{V_Y}^{-1}(Z) - \psi \circ \Phi_{-t_Y}(Z)| 
\le C \epsilon^\alpha ,
\]
for each $Z \in V'_Y$.  For brevity, set $\psi_1 = \psi \circ \bh_Y \circ S_{W_Y} \circ S_{V_Y}^{-1}$
and  $\psi_2 = \psi \circ \Phi_{-t_Y}$. 
Now given $Z, Z' \in V'_Y$, we estimate on the one hand using the above,
\[
|\psi_1(Z) - \psi_2 (Z) - \psi_1(Z') + \psi_2 (Z')| \, d(Z,Z')^{-\beta} 
\le 2 C \epsilon^\alpha d(Z,Z')^{-\beta} \, ,
\]
while on the other, using the H\"older continuity of $\psi_1$ and $\psi_2$ separately
(since $\bh_Y, S_{W_Y}, S_{V_Y}^{-1}$ and $\Phi_{-t_Y}$ are all smooth functions),
\[
|\psi_1(Z) - \psi_2 (Z) - \psi_1(Z') + \psi_2 (Z')| \, d(Z,Z')^{-\beta}  \le 2 C|\psi|_{\cC^\alpha(W)} d(Z,Z')^{\alpha-\beta} \,  .
\]
The H\"older constant is bounded by the minimum of the two estimates, which is maximized
when the two are equal, i.e., when $d(Z,Z') = \epsilon$.  Thus,
\begin{equation}
\label{eq:psi diff}
|\psi_1 - \psi_2 |_{\cC^\beta(V'_Y)} \le C \epsilon^{\alpha-\beta}\,  .
\end{equation}
Next, we must estimate,
\[
|J_0 \circ S_{W_Y} \circ S_{V_Y}^{-1} - J_0 \circ \bh_Y^{-1} \circ \Phi_{-t_Y}|_{\cC^\beta(V'_Y)} .
\]
But note that setting $\tilde \psi = J_0 \circ \bh_Y^{-1}$, this difference has precisely the same
form as the difference just estimated concerning $\psi$.  Since $J_0$ and $\bh_Y^{-1}$ are both
smooth functions, this estimate has the same bound as \eqref{eq:psi diff}.

Finally, we must estimate $| 1 - J_{V_Y} \Phi_{-t_Y} |_{\cC^\beta(V'_Y)}$.  Using the
linearity of the flow between collisions and again \eqref{eq:curve const}, 
we have $| 1- J_{V_Y} \Phi_{-t_Y} (Z)| \le C \epsilon$ for all $Z \in V_Y'$.  Then since 
$J_{V_Y} \Phi_{-t_Y}$ is uniformly $\cC^1$ on $V_Y$ due to the fact that $V_Y$ has
uniformly bounded curvature, we may use the same technique as that above
\eqref{eq:psi diff} to estimate $|1 - J_{V_Y} \Phi_{-t_Y} |_{\cC^\beta(V'_Y)} \le C \epsilon^{1-\beta}$.

Using these estimates on the test functions in \eqref{eq:first Y} together with \eqref{eq:missed}
 yields the following estimate on the first difference
in \eqref{eq:Y split},
\begin{equation}
\label{eq:second Y}
\begin{split}
& \left| \int_{W_Y} f \psi \circ \bh_Y J_0 \, dm_{W_Y} 
  - \int_{V_Y} f (\psi  \, 
J_0 \circ \bh_Y^{-1}) \circ \Phi_{-t_Y} \, J_{V_Y} \Phi_{-t_Y} \, dm_{V_Y} \right| \\
& \qquad \le C \epsilon^{1/(2q)} \| f \|_s+ C \epsilon^{\gamma} \| f \|_u + C \epsilon^{\alpha-\beta} 
\| f \|_s 
\,  .
\end{split}
\end{equation}

We proceed to estimate the second difference in  \eqref{eq:Y split}.
Using
\eqref{eq:unstable third one} and following,
\[
\begin{split}
\int_{\Phi_{t_Y}(W)} f \, (\psi  \, 
& J_0 \circ  \bh_Y^{-1}) \circ \Phi_{-t_Y} \, J_{V_Y} \Phi_{-t_Y} \, dm_{V_Y}  - \int_W f \, \psi \, J_0 \circ \bh_Y^{-1} \, dm_W  \\
& = \int_0^{t_Y} \partial_t \int_{\Phi_t(W)} f \, (\psi J_0 \circ \bh_Y^{-1}) \circ \Phi_{-t} \, J_{\Phi_t(W)} \Phi_{-t} \, dm_{\Phi_t(W)} \, dt \\
& = \int_0^{t_Y} \partial_t \int_W f \circ \Phi_t \cdot \psi \cdot J_0 \circ \bh_Y^{-1} \, dm_W \, dt \\
& = \int_0^{t_Y} \int_W \partial_r (f \circ \Phi_r)|_{r=0} \circ \Phi_t \cdot \psi \cdot J_0 \circ \bh_Y^{-1}  \, dm_W \, dt \\
& = \int_0^{t_Y} \int_{\Phi_t(W)} \partial_r (f \circ \Phi_r)|_{r=0} \, (\psi J_0 \circ \bh_Y^{-1}) \circ \Phi_{-t} \, \cdot J_{\Phi_t(W)}\Phi_{-t} \, dm_{\Phi_t(W)} \, dt \\
& \le \int_0^{t_Y} \| f \|_0 | (\psi J_0 \circ \bh_Y^{-1}) \circ \Phi_{-t} \cdot J_{\Phi_t(W)} \Phi_{-t} |_{\cC^\alpha(\Phi_t(W))} \, dt \, .
\end{split}
\]
On each stable curve $\Phi_t(W)$
the norm of the test function is bounded by a uniform constant since
the flow is linear in $t$ between collisions, and both $\bh_Y^{-1}$ and $J_0$ are $\cC^1$
functions.  Using these estimates and  
$|t_Y| \le C_\kappa \epsilon$, we obtain the following estimate for the second difference in 
\eqref{eq:Y split},
\begin{equation}
\label{eq:flow diff}
\left| \int_{\Phi_{t_Y}(W)} f (\psi  \, 
J_0 \circ \bh_Y^{-1}) \circ \Phi_{-t_Y} \, J_{V_Y} \Phi_{-t_Y} \, dm_{V_Y}  - \int_W f \, \psi \, J_0 \circ \bh_Y^{-1} \, dm_W \right|
\le C \epsilon  \| f \|_0  \,  .
\end{equation}
Putting this together with \eqref{eq:second Y} and \eqref{eq:Y split} completes the proof
of Sublemma~\ref{lem:close Y} since $\gamma \le \alpha - \beta$ by definition of the norms. 
\end{proof}

The sublemma being proved, we may return to the proof of Lemma~ \ref{mollbound2}:
Sublemma~\ref{lem:close Y} and \eqref{chain0} provide the required estimate
under the assumption that 
$P^+(W)$ does not lie within a distance
$A \epsilon^{3/5}$ of the boundary of a homogeneity strip.  

Now suppose that $P^+(W)$ lies within a distance of $A \epsilon^{3/5}$ of the boundary of
a homogeneity strip $\bH_{k_1}$.  
Let $\overline{W}$ denote the one connected component of $W$ which
does not project to within a distance of $A\epsilon^{3/5}$ of the boundary of $\bH_{k_1}$
(note that this may be empty, for example in homogeneity strips of high index).
We may perform the estimate on $\overline{W}$ precisely as in Sublemma~\ref{lem:close Y}.

By construction, $\overline{W}\setminus W$ consists of at most two components
of length at most $A \epsilon^{3/5}$.  Call one of them $W_1$.  Although now
$N_\ve(W_1)$ may cross countably many homogeneity regions, we proceed using \eqref{eq:one box}
and the decomposition of Lebesgue measure
from the proof of Lemma~\ref{mollbound1}.  Now,
\[
\left| \int_{W_1} \psi(Z) dm_W \sum_{k,i} \int_{B^{0-}_i} f(Y) \eta_\eps(Z-Y) dm(Y) \right|
\le \sum_{k,i} |W_1| C \eps^{-2 -\beta +1/q} k^{-5} \| f\|_s .
\]
As in the proof of Lemma~\ref{mollbound1}, the sum over $k$ yields a factor proportional to
$\epsilon$.  Finally, $|W_1| \le A \eps^{3/5}$, completing the proof of the lemma.
\end{proof}

\begin{proof}[Proof of Lemma~\ref{lem:C1}]
Let $f \in \cC^1(\Omega_0)$ and define 
$\bM_\epsilon^\sim f = \bM_\epsilon (f \cdot 1_{\Omega_0 \setminus \partial_\eps \Omega_0})$, 
where $\partial_\eps \Omega_0$ denotes the $\eps$-neighborhood of $\partial \Omega_0$ in
$\Omega_0$.  Note that $\bM_\epsilon^\sim f \equiv 0$ on $\partial \Omega_0$, so that
$\bM_\eps^\sim f \in \cC^2(\Omega_0) \cap \cC^0_\sim$.  We will show that
$\bM_\eps^\sim f$ is a good approximation of $f$ in $\cB$. 
Note that $\nabla \bM_\epsilon^\sim f= \bM_\epsilon^\sim(\nabla f)$ so
$| \bM_\epsilon^\sim f|_{\cC^1(\Omega_0)}\le |f|_{\cC^1(\Omega_0)}$.
By the proof of Lemma~\ref{lem:embedding}, $\| \bM_\epsilon^\sim f\|_{\cB}
\le C |f|_{\cC^1(\Omega_0)}$ for all $\epsilon >0$.

Let $W \in \cW^s$, $\psi \in \cC^\beta(W)$ with $|\psi|_{\cC^\beta(W)} \le |W|^{-1/q}$.  
Assume for the moment
that $W \cap \partial_{2\eps} \Omega_0 = \emptyset$.  Then using \eqref{eq:moll def} and the fact
that $\int \eta_\eps = 1$,
\[
\begin{split}
\int_W (f - \bM_\eps^\sim f) & \, \psi \, dm_W 
 = \int_W \psi(Z) \int_{\bR^3} \eta_\eps(Z-Y)  (f(Z) - f(Y)) \, dm(Y) dm_W(Z) \\
& \le  |f|_{\cC^1(\Omega_0)} |W|^{-1/q} \int_W \int_{\bR^3} \eta_\eps(Z-Y) |Z- Y| dm(Y) dm_W(Z) 
\le \eps |f|_{\cC^1(\Omega_0)},
\end{split}
\]
where in the last estimate, we have used that $q>1$ and $\eta_\eps(Z-Y) = 0$ if
$|Z-Y| > \eps$.

Next, if $W \cap \partial_{2\ve} \Omega_0 \neq \emptyset$, then we subdivide $W$ into
at most two components $W' = W \cap \partial_{2\ve} \Omega_0$ and at most one component
$W \setminus W'$.  On $W \setminus W'$, the above estimate holds.  On $W'$, we use the 
fact that $|W'| \le C \eps^{1/2}$ (see, for example, Lemma~\ref{lem:discarded}) to estimate,
\[
\int_{W'} (f - \bM_\eps^\sim f) \, \psi \, dm_W \le |W'|^{1-1/q} 2 |f|_{\cC^0(\Omega_0)} 
\le C \eps^{\frac 12(1 - \frac 1q)} |f|_{\cC^0(\Omega_0)} . 
\]
Putting these two estimates together, we have 
\begin{equation}
\label{eq:stable moll}
\| f - \bM_\eps^\sim f \|_s \le C \eps^{\frac 12 (1 - \frac 1q)} |f|_{\cC^1(0)} .
\end{equation}

Next, we estimate the strong unstable norm.  For $\ve > 0$, let $W_1, W_2 \in \cW^s$ with
$d_{\cW^s}(W_1, W_2) < \ve$.  Let $\psi_i \in \cC^\alpha(W_i)$ with $|\psi_i|_{\cC^\alpha(W_i)} \le 1$
and $d(\psi_1, \psi_2) = 0$.

Let $s>0$ be a small number to be chosen below. 
First assume that $\eps^{\frac 12 - s} < \ve^\gamma$.  Then using the estimates for the
strong stable norm on each curve separately, we have,
\[
\ve^{-\gamma} \left| \int_{W_1} \! \! (f - \bM_\eps^\sim f) \psi_1 \, dm_{W_1} -
\int_{W_2} \! \! (f - \bM_\eps^\sim f) \psi_2 \, dm_{W_2} \right| 
\le C \ve^{-\gamma} \eps^{1/2} |f|_{\cC^1(\Omega_0)} 
\le C \eps^s |f|_{\cC^1(\Omega_0)}  .
\]
Notice that we do not have the exponent $1-1/q$ in this estimate since the test functions used in the
strong unstable norm are nicer than those used in the strong stable norm.

Now suppose $\eps^{\frac 12 -s} \ge \ve^\gamma$.  Recalling the notation of 
Definition~\ref{def:dist}, we let
\[
W_i = \{ S_{W_i}(r) = \Phi_{-t(r)} \circ G_{W_i}(r) : r \in I_i \} ,
\]
where $G_{W_i}(r) = (r, \phi_i(r))$ is the graph of $P^+(W_i)$ over the $r$-interval $I_i$.
Denote by $U_i \subset W_i$ the maximal subcurve such that $P^+(U_1)$ and 
$P^+(U_2)$ are defined as graphs over the interval $I = I_1 \cap I_2$.  We have at most
two pieces $V_i = W_i \setminus U_i$ and by definition of $d_{\cW^s}(\cdot, \cdot)$ and
Lemma~\ref{lem:smooth}, we have $|V_i| \le C|P^+(V_i)| \le C\ve$.  Thus the estimate over the
(at most two)
unmatched pieces $V_i$ is,
\[
\ve^{-\gamma} \left| \int_{V_i} (f - \bM_\eps^\sim f) \psi_1 \, dm_{W_1} \right|
\le C \ve^{1-\gamma} 2 |f|_{\cC^0(\Omega)} \le C \eps^{(\frac 12 -s)(\frac 1\gamma -1)} .
\]
It remains to estimate the norm of $f - \bM_\eps^\sim f$ on matched pieces.
\begin{equation}
\label{eq:moll match}
\begin{split}
\int_{U_1} &(f - \bM_\eps^\sim f) \psi_1 \, dm_{W_1} - 
\int_{U_2} (f - \bM_\eps^\sim f) \psi_2 \, dm_{W_2}
= \int_{U_1} f \psi_1 \, dm_{W_1}  \\
& -\int_{U_2} f \psi_2 \, dm_{W_2}+ \int_{U_2} \bM_\eps^\sim f \,  \psi_2 \, dm_{W_2} -
\int_{U_1} \bM_\eps^\sim f \, \psi_1 \, dm_{W_1} \, .
\end{split}
\end{equation}
To estimate the difference of integrals in $f$, we write, using the fact that
$\psi_1 \circ S_{W_1} = \psi_2 \circ S_{W_2}$ by assumption,
\[
\begin{split}
 \int_{U_1} f \psi_1 \, dm_{W_1} -
& \int_{U_2} f \psi_2 \, dm_{W_2}
= \int_I (f \circ S_{W_1} JS_{W_1} - f \circ S_{W_2} JS_{W_2}) \, \psi_1 \circ S_{W_1} \, dr  \\
& \le C |I| |f \circ S_{W_1} \cdot JS_{W_1} - f \circ S_{W_2} \cdot JS_{W_2}|_{\cC^0(I)} \\
& \le C|I| \big( |f|_{\cC^1(\Omega_0)} |S_{W_1} - S_{W_2}|_{\cC^0(I)} + |f|_{\cC^0(\Omega_0)} 
|JS_{W_1} - JS_{W_2}|_{\cC^0(I)} \big) \, .
\end{split}
\]
For the first term above, since expansion in the stable direction from 
$P^+(W_i)$ to $W_i$ is of order 1 by Lemma~\ref{lem:smooth}, and since the unstable direction
only contracts under this map, we have 
\[
|S_{W_1} - S_{W_2}|_{\cC^0(I)} \le C |G_{W_1} - G_{W_2}|_{\cC^0(I)} \le C \ve,
\]
by assumption on $W_1$ and $W_2$.  For the second term,
$|JS_{W_1} - JS_{W_2}|_{\cC^0(I)} \le C \ve$,  
again using Lemma~\ref{lem:smooth} and that $U_1$ and $U_2$ are $\cC^1$-close as graphs
over $I$ by definition of $d_{\cW^s}(W_1, W_2)$.   Thus,
\[
\ve^{-\gamma} \left| \int_{U_1} f \psi_1 \, dm_{W_1} -
 \int_{U_2} f \psi_2 \, dm_{W_2} \right| \le C \ve^{1-\gamma} |f|_{\cC^1(\Omega_0)} 
 \le C \eps^{(\frac 12 - s)(\frac 1\gamma-1)} |f|_{\cC^1(\Omega_0)},
\]
completing the estimate on the first term of \eqref{eq:moll match}.  The second term is estimated
similarly, using the fact that $|\eta_\eps|_{\cC^1(\Omega_0)} \le C \eps^{-1} |f|_{\cC^0(\Omega_0)}$,
\begin{align*}
\ve^{-\gamma} \left| \int_{U_1} \bM_\eps^\sim f \, \psi_1 \, dm_{W_1} -
 \int_{U_2} \bM_\eps^\sim f \, \psi_2 \, dm_{W_2} \right| 
&\le C \ve^{1-\gamma} |\bM_\eps^\sim f|_{\cC^1(\Omega_0)} \\
&\le C \eps^{(\frac 12 - s)(\frac1\gamma - 1) -1} |f|_{\cC^0(\Omega_0)} \, .
 \end{align*}
Putting together these estimates on the terms in  \eqref{eq:moll match} with the estimate on unmatched pieces
yields an exponent of $\eps$ at worst $(\frac 12 - s)(\frac 1\gamma -1) -1$.  Since $\gamma < 1/3$,
we have $\frac 1\gamma -1 > 2$, and so we may choose $s>0$ sufficiently small that the 
above exponent is positive.

Finally, we estimate the neutral norm.  Fix $W \in \cW^s$ and $\psi \in \cC^\alpha(W)$ with
$|\psi|_{\cC^\alpha(W)} \le 1$.  Recall that $\heta$ denotes the unit vector in the flow direction.  Then,
$\partial_t (f \circ \Phi_t)|_{t =0} = \nabla f \cdot \heta$ and also for 
$Z \in \Omega_0 \setminus \partial_{2 \eps} \Omega_0$,
\[
\partial_t ((\bM_\eps^\sim f) \circ \Phi_t)|_{t=0}(Z) = \bM_\eps^\sim (\nabla f \cdot \heta)(Z) .
\]
Now suppose $W \cap \partial_{2 \eps} \Omega_0 = \emptyset$.  Then,
\[
\int_W \partial_t (f - \bM_\eps^\sim f) \circ \Phi_t |_{t=0} \, \psi \, dm_W
= \int_W \psi(Z) \int_{\bR^3} \eta_\eps(Z-Y) (\nabla f \cdot \heta(Z) - \nabla f \cdot \heta (Y)) \, . 
\]
Now since $\nabla f$ is uniformly continuous on $\Omega_0$, there exists a function
$\rho(\eps)$ with $\rho(\eps) \downarrow 0$ as $\eps \downarrow 0$, such that
$|\nabla f (Z) - \nabla f(Y) | \le \rho(\eps)$ whenever $|Z-Y| < \eps$.  Since the flow direction
$\heta$ also changes smoothly on $\Omega_0$, we have
\[
\left| \int_W \partial_t (f - \bM_\eps^\sim f) \circ \Phi_t |_{t=0} \, \psi \, dm_W \right|
\le C \rho(\eps) \, . 
\]
If, on the other hand, $W \cap \partial_{2 \eps} \Omega_0 \neq \emptyset$, then using the same
decomposition of $W$ as in the proof the strong stable norm estimate, we have
\[
\int_{W'}  \partial_t (f - \bM_\eps^\sim f) \circ \Phi_t |_{t=0} \, \psi \, dm_W
\le 2 |W'|  |\nabla f|_{\cC^0(\Omega_0)} \le C \eps^{1/2} |f|_{\cC^1(\Omega_0)} \, .
\]
Since $f$ can be approximated by functions in $\cC^2(\Omega_0) \cap \cC^0_\sim$
with uniformly bounded $\| \cdot \|_{\cB}$-norms, it follows
that $f \in \cB$.
\end{proof}


\section{The Dolgopyat cancellation estimate (Lemma ~\ref{dolgolemma})}
\label{dodo}

Recall the hyperbolicity exponent $\Lambda =\Lambda_0^{1/\tau_{\max}}>1$ from \eqref{Lambda}.
\begin{lemma}[Dolgopyat bound]\label{dolgolemma}
There exists  $\Cs>0$, and, for any $0<\oldp\le 1/3$, there exist  $C_{Do}>0$
and $\gamma_{Do}>0$ so 
that for   any homogeneous curve $W\in \cW^s$ of length
$|W|=m_W(W)\leq 1$ and  any\footnote{ We do not assume here that $\oldh$ is
supported in $\Omega_0$, recalling \eqref{deflip}.} $\oldh \in \cC^1(\mathbb T^3)$
\begin{align*}
 &\sup_{\substack{\psi \in \cC^\oldp(W) \\ |\psi|_{\cC^\oldp(W)} \le 1}}\int_W \psi  \cR(a+ib)^{2m} (\oldh)\,   dm_W 
\leq 
\frac{\Cs }{a^{2m} b^{\gamma_{Do}}}
\big(|\oldh|_{L^\infty(\Omega_0)}+
(1+a^{-1}\ln\Lambda)^{-m}|\oldh|_{H^1_\infty(\Omega_0)} \big)\, ,\\
&\qquad\qquad\qquad  
\forall \, 1<a<2\, , \quad \forall \, b>1\, ,  \quad \forall \, m\geq C_{Do}\ln b \, .
\end{align*}
\end{lemma}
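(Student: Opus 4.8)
The strategy follows the classical Dolgopyat scheme, adapted to the billiard setting using the tools assembled in Sections~\ref{Lipschitz}--\ref{molsec}. First I would use the Laplace-transform representation \eqref{eq:Rn} to write $\cR(a+ib)^{2m}(\oldh)$ as a weighted integral of $\cL_t\oldh = \oldh\circ\Phi_{-t}$, so that the left-hand side becomes a double integral: over times $t$ (with the gamma-density weight $t^{2m-1}e^{-(a+ib)t}/(2m-1)!$) and along $W$. The idea is that for large $b$ the oscillatory factor $e^{-ibt}$, combined with the expansion of $\Phi_{-t}$ along stable curves (equivalently, strong contraction of $\Phi_t$ on $\cW^s$), produces cancellation. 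The first reduction is to replace the distribution $\oldh\in\cB$ by the smoothed function $\bM_\epsilon(\oldh)$ with $\epsilon$ chosen as a suitable negative power of $b$ (so that $\epsilon_0$ is small as a function of $n=\lceil c\ln b\rceil$, as anticipated in the footnote before Definition~\ref{molsec}'s mollifier); the error of this replacement is controlled by Lemma~\ref{mollbound2} against $\|\oldh\|_\cB$, and Lemma~\ref{mollbound1} relates $\|\bM_\epsilon(\oldh)\|_{L^\infty}$ and $\|\bM_\epsilon(\oldh)\|_{H^1_\infty}$ to $\|\oldh\|_s$, so that at the end everything can be expressed in terms of $\|\oldh\|_{L^\infty(\Omega_0)}$ and $\|\oldh\|_{H^1_\infty(\Omega_0)}$ as in the statement — but one must be careful because Lemma~\ref{dolgolemma} is stated for $\oldh\in\cC^1(\mathbb T^3)$, not a priori in $\cB$, so the relevant supremum and Lipschitz bounds must be tracked directly.

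\textbf{Key steps.} (1) Partition $\Phi_{-t}(W)$ into admissible pieces $W_i\in\cG_t(W)$ (Definition~\ref{cG_t}, Lemma~\ref{lem:preserved}), with Jacobians controlled by Lemmas~\ref{lem:expansion}, \ref{lem:distortion}, and the growth Lemma~\ref{lem:growth}; the total mass is bounded uniformly. (2) On each piece, introduce the approximate unstable foliation $\cF_{\up,x^0}$ from Theorem~\ref{thm:foliation}, with the leaf length scale $\rho$ and the foliation-survival time $\up$ tuned to $m$ and $b$; the point is that along a leaf $\gamma$ the phase $t\mapsto$ (contact-form holonomy) is essentially affine in the unstable parameter because $\gamma$ lies in the kernel of the contact form $\bal$ (property (i) of the theorem), which is exactly what makes the oscillatory integral a genuine stationary-phase/van der Corput estimate rather than a trivial bound. (3) Carry out the cancellation: on a ``good'' majority of leaves (the set $\Delta_\up$, with $m_W(W\setminus\Delta_\up)\le C\rho$ by (iv)) one gets a gain of a power of $b$ from the oscillation; on the ``bad'' remainder one uses the smallness of the exceptional set together with the strong stable norm. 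The four-point estimates (vii)--(viii) of Theorem~\ref{thm:foliation}, via \eqref{eq:amazing}-type control of the $\cC^{1+\ho}$ norm of differences of holonomies, are used to keep the phase function regular enough for the van der Corput bound to apply uniformly. (4) Sum over pieces $W_i$ using Lemma~\ref{lem:growth}(b) and integrate over $t$ using \eqref{eq:int}, which supplies the $a^{-2m}$ factor; the contraction estimate of Lemma~\ref{lem:Holder} (decay $\Lambda^{-tr}$ of the stable-H\"older constant) yields the $(1+a^{-1}\ln\Lambda)^{-m}$ factor attached to $\|\oldh\|_{H^1_\infty}$. (5) Optimize the choice of $\epsilon=\epsilon(b)$, $\rho=\rho(b)$, $\up=\up(m)$ so that the mollification error, the bad-leaf contribution, and the oscillatory gain all balance to produce a net factor $b^{-\gamma_0}$ for some $\gamma_0>0$, valid for $m\ge C_{Do}\ln b$.

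\textbf{Main obstacle.} The principal difficulty is precisely the one flagged in the introduction: executing the van der Corput / stationary-phase cancellation along the \emph{fake} unstable foliation, whose regularity degrades like $\rho^{-4/5}$ (and worse for second-order four-point quantities), while simultaneously keeping the exceptional (non-genuinely-unstable) set $W\setminus\Delta_\up$ small. One must interleave three competing scales — the mollification scale $\epsilon$, the leaf scale $\rho$, and the oscillation scale $1/b$ — and verify that the loss of smoothness of the foliation (quantified by (vi), (vii), (viii) of Theorem~\ref{thm:foliation} and by Lemma~\ref{lem:holder jac}) does not destroy the $b$-gain. This is where the ``four-point conditions'' replacing the $\cC^{1+\ho}$ holonomy estimates of \cite{BaL} do the real work, and where the bulk of Section~\ref{dodo} and Appendix~\ref{sec:hoihoi} is spent; a secondary technical nuisance is handling the pieces of $\cG_t(W)$ caught in the midst of a collision and the grazing directions, which requires the weak transversality discussed in Remark~\ref{cutup} and, e.g., Lemma~\ref{lem:discarded}.
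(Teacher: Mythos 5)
Your plan assembles the right ingredients (Laplace representation \eqref{eq:Rn}, decomposition into admissible pieces, the fake unstable foliation of Theorem~\ref{thm:foliation}, the contact form, the four-point estimates, and the balancing of the scales $\epsilon,\rho,\up,1/b$), and you correctly identify that the $(1+a^{-1}\ln\Lambda)^{-m}$ factor multiplying $\|\oldh\|_{H^1_\infty}$ must come from contraction of unstable derivatives under the backward flow (in the paper this is \eqref{eq:itwaswrong}, and it is the reason the statement is about $\cR^{2m}$: the cancellation estimate is applied to $\cR^m\oldh$, whose unstable derivative has already been damped by the first $m$ resolvent iterations). The mollification operators, however, do not belong to this proof at all — they enter only in Section~\ref{dolgoproof} when the lemma is applied to elements of $\cB$ — and, more importantly, there is a genuine gap at the heart of your cancellation step.

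The missing idea is the Cauchy--Schwarz / pairwise-comparison structure. The function $\oldh$ is an arbitrary $\cC^1$ function and carries no oscillation, so no stationary-phase or van der Corput argument ``along a single leaf $\gamma$'' can produce a factor $b^{-\gamma_0}$: on any one branch $W_A$ of $\Phi_{-\ell\vu}(W)$ the integrand is $e^{-ib\ell\vu}$ times a non-oscillatory amplitude, and the modulus of each such term is what the trivial bound already uses. The gain in $b$ only materializes after one localizes in space--time (the partitions of unity $\{\phi_{r,i}\}$ and $p_{m,\ell,z}$ at scale $\vu=cr^\theta$), slides each surviving branch $W_A$ onto a reference manifold $W_*$ via the holonomy of the fake foliation, and then applies Cauchy--Schwarz in the form $|\sum_A z_A|^2=\sum_{A,B}z_A\bar z_B$ (see \eqref{eq:almost}). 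This converts the problem into oscillatory integrals in the \emph{stable} variable $x^s$ with phase $b\,[\bomega_A(x^s)-\bomega_B(x^s)]$, where $\bomega_A=\bh^0_A\circ(\bh^s_A)^{-1}$ is the temporal displacement of the holonomy; the contact structure gives the crucial lower bound $\inf|\partial_{x^s}(\bomega_A-\bomega_B)|\ge \Cs\, d(W_A,W_B)$ via the symplectic-area formula \eqref{eq:symplectic-geo}, the four-point estimate gives the $\cC^{1+\ho}$ control \eqref{eq:amazing} needed for the first-derivative test in Lemma~\ref{lem:cancel}, and the close pairs $d(W_A,W_B)\le r^\vartheta$ — for which no oscillation is available — are handled by the separate counting estimate of Lemma~\ref{lem:disco}. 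Without the pairwise step, the temporal distance function, and the close-pair counting, the scheme you describe cannot close; these are not technical refinements of your plan but its missing engine.
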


Since $\overline{\cR(a+ib)}=\cR(a-ib)$ the obvious counterpart of the above lemma holds  for $b<0$.

Note that we do not require the $\delta$-averaging operator used in
\cite{Li04, BaL}: Since the weak norm in the right-hand side of the
Lasota--Yorke estimates already involves  averaging over stable curves, we can
use the weak norm directly.

The rest of the section contains the proof of Lemma \ref{dolgolemma} and consists of a direct, 
but lengthy and highly non trivial, computation.  In the present section, $\Cs$ denotes a 
constant depending only on the billiard dynamics
and not on $\gamma$, $\oldbeta$,  $\oldq$, or $\oldp$.

First of all note that if $|W|\leq b^{-\gamma_{Do}}$, then the statement of the lemma is trivially true. We can thus assume that $|W|\geq b^{-\gamma_{Do}}$.

It is more convenient to work directly with the flow rather than with the Poincar\'e sections,
and  we will consider time steps $\vu\ll \tau_{\min}$.
The precise value of $\vu$ will be chosen later in \eqref{eq:vu}.

To compute the integral, it is helpful to localize it in space-time. To localize in time, consider a smooth function $\tilde p:\bR\to\bR$ such that $\operatorname{supp} \tilde p\subset (-1,1)$, $\tilde p(s)=\tilde p(-s)$, and $\sum_{\ell\in \bZ}\tilde p(t-\ell)=1$ for all $t\in\bR$. 
For  $\oldh\in L^\infty(\Omega_0, \text{vol})$, we set $p(s)=\tilde p(\vu^{-1}s)$ and write
\begin{align}
\nonumber
\cR(z)^m(\oldh)&=\sum_{\ell\in\bZ}\int_{0}^\infty p(t-\ell\vu) \frac{t^{m-1}}{(m-1)!}e^{-zt}\cL_t \oldh\, dt\\
\label{eq:step-1}&=\sum_{\ell\in\bN^*}\int_{-\vu}^{\vu}  p(s) \frac{(s+\ell\vu)^{m-1}}{(m-1)!}e^{-z\ell\vu-zs}\cL_{\ell\vu}\cL_s \oldh\, ds
+\int_{0}^{\vu} p(s) \frac{s^{m-1}}{(m-1)!}e^{-zs}\cL_s \oldh\, ds\, .
\end{align}
Next, setting for $\ell \ge 1$,
\begin{equation}\label{eq:pstuff0}
p_{m,\ell,z}(s)=  p(s) \frac{(s+\ell\vu)^{m-1}}{(m-1)!}e^{-z\ell\vu-zs},\; \; \mbox{and} \; \; p_{m,0,z}(s)=  p(s) \frac{s^{m-1}}{(m-1)!}e^{-zs}\Id_{\{s\geq 0\}} \, , 
\end{equation} 
we fix $\psi \in \cC^\alpha(W)$ with $|\psi|_{\cC^\alpha(W)} \le 1$ and write
\begin{equation}\label{eq:step0}
\int_W \psi \cdot ( \cR(z)^m (\oldh)) \, dm_W=
 \sum_{\ell\in\bN}\int_{-\vu}^{\vu}
p_{m,\ell,z}(s)\int_{W} \psi \cdot \cL_{\ell\vu}\cL_s \oldh \, dm_W ds \, .
\end{equation} 
Changing variables, we rewrite \eqref{eq:step0} as
\begin{equation}\label{eq:step02}
\begin{split}
\int_W \psi \cdot &(\cR(z)^m (\oldh))\, dm_W =
\sum_{\ell\in\bN}\,  \int_{-\vu}^{\vu}  p_{m,\ell,z}(s)
\int_{\Phi_{-\ell\vu}W} J^s_{\ell\vu}\cdot \psi\circ \Phi_{\ell \vu} \cdot  \cL_s \oldh \, ds \\
=&\sum_{\ell\in\bN}\,\, \sum_{W_A\in  \hG_{\ell\vu}(W)}\int_{-\vu}^{\vu}\, \,
 p_{m,\ell,z}(s)\!\!\int_{W_A} J^s_{\ell\vu}\cdot  \psi\circ \Phi_{\ell \vu} \cdot  \cL_s \oldh \, dm_{W_A} \, ds\, ,
\end{split}
\end{equation}
where  $J^s_{\ell\vu} = J^s_{W_A}\Phi_{\ell \vu}$
 is the (stable) Jacobian of the change of variable and 
$\hG_{\ell\vu}(W):=\{W_A\}_{A\in A_\ell}$ is the decomposition of 
$\Phi_{-\ell\vu}W$ specified in Definition~\ref{cG_t}. 
By Lemma~\ref{lem:preserved}, for each $A\in A_\ell$ there exists $t_A\in[0,\tau_{\min}]$ such that  
$\Phi_{-t_A}W_A$ is a $\cC^2$ curve with uniform $\cC^2$ norm and $\Phi_{\ell\vus-t_A}$, restricted to $\Phi_{-t_A}W_A$ is a $\cC^2$ map.

Lemma \ref{lem:distortion} implies that $\Cs^{-1}\frac{|\Phi_{\ell\vu}W_A|}{|W_A|}\leq J^s_{\ell\vu}\leq \Cs\frac{|\Phi_{\ell\vu}W_A|}{|W_A|}$, provided $\Cs$ is chosen large enough.
Note that  $\sum_{A}\int_{W_A}  J^s_{\ell\vu}=|W|$.

Next, we shall localize in space as well, introducing in
\eqref{starstar} below a sequence of smooth partitions
of unity parametrised by
\[
\theta\in (0,1) \text{ and } \epsilonr \in (0, L_0)
\]
to be chosen later.  First, we need some preparations: 
We shall exclude a neighborhood of $\partial \Omega_0$ since\footnote{ In such a way in the following we will never need to consider a curve in the the midst of a collision.} the angle of the
stable and unstable cones can change quickly near this boundary.  
We shall also exclude
a neighborhood of the surfaces on which either the stable or unstable cones we have defined
are discontinuous.  Define
\begin{equation}\label{beware}
S^{0-} = \{ Z \in \Omega_0 : \vf(P^+(Z)) = \pm \pi/2 \}
\; \; \mbox{ and } \; \;
S^{0+} = \{ Z \in \Omega_0 : \vf(P^-(Z)) = \pm \pi/2 \} ,
\end{equation}
and their $\epsilonr$-neighborhoods, $S_\epsilonr^{0\pm} = \{ Z \in \Omega_0 : d(Z, S^{0\pm}) < \epsilonr \}$.
Note that $S^{0-}$ is the flow of the singularity curve for the map $\cS_0 = \{ \pm \pi/2 \}$ backwards until its 
first collision, while $S^{0+}$ is the flow of $\cS_0$ forwards until its next collision.
Stable cones are discontinuous across the surface $S^{0-}$ while unstable cones are
discontinuous across $S^{0+}$.
Similarly, let $\partial_\epsilonr(\Omega_0)$ denote the $\epsilonr$-neighborhood of $\partial \Omega_0$ 
{\em within} $\Omega_0$.  The following lemma shows that the curves in $\cW^s$ have small intersections with such sets. We will then be able to discard such intersections and this will allow us to introduce special charts as in Remark~\ref{suitcharts0}; see Remark \ref{suitcharts} below.  This lemma has no analogue in \cite{BaL}, where the situation was much easier.

\begin{lemma}
\label{lem:discarded}
There exists $\Cs>0$ such that for any $\epsilonr \in [0,L_0]$ and any homogeneous $W \in \cW^s$,
\[
|W \cap S_\epsilonr^{0-}| \le \Cs \epsilonr^{3/5}, \quad |W \cap S_\epsilonr^{0+}| \le \Cs \epsilonr, \quad
|W \cap \partial_\epsilonr \Omega_0| \le \Cs \epsilonr^{1/2} .
\]
Similar bounds hold for unstable curves with the estimates on 
$S_\epsilonr^{0-}$ and $S_\epsilonr^{0+}$ reversed.
\end{lemma}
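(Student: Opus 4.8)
\textbf{Proof plan for Lemma~\ref{lem:discarded}.}

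The plan is to treat each of the three sets separately, in each case pushing the stable curve forward to the collision map and invoking known facts about the billiard, combined with the distortion/expansion control we already have. For $W\cap S^{0+}_r$: recall that $S^{0+}$ is the forward flow of $\cS_0=\{\vf=\pm\pi/2\}$ up to the next collision, so $P^-(S^{0+})=\cS_0$. Since $W$ is a flow-stable curve, its tangent vectors lie in $C^s(Z)$, which is bounded away from the direction tangent to $S^{0+}$ because unstable cones (to which $\cS_0$ is tangent after one backward step, i.e., $T(\cS_0)\in\cS_{-1}$ consists of unstable curves) are uniformly transverse to stable cones (the very transversality discussed after Remark~\ref{rk1.1}). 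Hence $W$ crosses $S^{0+}$ transversally with a uniformly bounded angle, and $|W\cap S^{0+}_r|\le\Cs r$ follows directly. The factor is linear because no square-root billiard degeneracy is involved here: $S^{0+}$ is an honest smooth (piecewise $\cC^2$) surface away from $\partial\Omega_0$, transverse to the stable cone field.

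For $W\cap S^{0-}_r$: here $P^+(S^{0-})=\cS_0$, so $P^+(W)$ comes $r'$-close (for an appropriate $r'$) to the singularity set $\cS_0$ of the map. The subtlety is the change of length scales under $P^+$: distances along stable curves are only distorted by a bounded factor, but distances along unstable curves are expanded by a factor of order $k^2$ inside $\bH_k$, and the maximum length of a homogeneous map-stable/unstable curve in $\bH_k$ is $\sim k^{-3}$; this is exactly the mechanism recorded in~\eqref{mysterious}, giving $d(P^+(Z),P^+(Y))\le B' d(Z,Y)^{3/5}$. So $P^+(W\cap S^{0-}_r)$ lies in a $\Cs r^{3/5}$-neighbourhood of $\cS_0$ in $\cM$. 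Since $\cS_0$ is a finite union of horizontal segments $\{\vf=\pm\pi/2\}$ and homogeneous map-stable curves $P^+(W)$ are graphs of functions with slope bounded away from $0$ (absolute value $\ge\cK_{\min}$), $P^+(W)$ is uniformly transverse to $\cS_0$, so $|P^+(W)\cap(\text{$\Cs r^{3/5}$-nbhd of }\cS_0)|\le\Cs r^{3/5}$; pulling back via $S_W$ (Jacobian of order $1$ by Lemma~\ref{lem:smooth}) gives $|W\cap S^{0-}_r|\le\Cs r^{3/5}$.

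For $W\cap\partial_r\Omega_0$: a point $Z\in W$ is within $r$ of $\partial\Omega_0$ exactly when it is within $r$ of a scatterer, i.e., when its forward or backward collision time is small, $\min\{\tau(Z),\tau_-(Z)\}\le\Cs r$. The relevant estimate is the $1/2$-Hölder continuity of the collision time, which we have already used as~\eqref{ceiling}: along $W$, $|\tau(Z_1)-\tau(Z_2)|\le\Cs d_W(Z_1,Z_2)^{1/2}$ (and likewise for $\tau_-$, which is also $1/2$-Hölder). If the subset of $W$ with $\tau\le\Cs r$ (or $\tau_-\le\Cs r$) had length $\ell$, then the oscillation of $\tau$ over that subcurve is at least of order $\ell$ (the free flight time genuinely grows at unit rate away from a near-tangential collision, as $W$ is a diverging wavefront under the backward flow and converging under the forward flow — this is where one uses that $W$ is a stable curve, not an arbitrary curve), so $\ell\lesssim\Cs\ell^{1/2}+\Cs r$, whence $\ell\le\Cs r^{1/2}$. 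Summing the at-most-two such subcurves (from the forward and backward collisions) gives $|W\cap\partial_r\Omega_0|\le\Cs r^{1/2}$. The unstable-curve statements are identical with the roles of $P^+$ and $P^-$ (equivalently $\tau$ and $\tau_-$) swapped, which is why the $S^{0-}$ and $S^{0+}$ bounds interchange.

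The main obstacle is the $S^{0-}$ estimate: one must be careful that $P^+$ is applied only to pieces of $W$ lying in a single homogeneity strip (otherwise the $k^2$ expansion factor is not uniformly controlled), so the argument should really be run strip-by-strip, using that within $\bH_k$ the scale-change exponent $3/5$ in~\eqref{mysterious} is uniform, and then summing the geometrically-decaying contributions over strips of index $k\ge k_0$; since $W$ near $S^{0-}$ can only meet strips of index above some threshold determined by $r$, the sum telescopes to $\Cs r^{3/5}$ just as in the proof of Lemma~\ref{mollbound1}. The other two cases are comparatively routine given~\eqref{ceiling} and the transversality already established.
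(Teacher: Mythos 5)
Your argument for the middle bound, $|W\cap S_r^{0+}|\le \Cs r$, is essentially the paper's (uniform transversality of the stable cone with the surface obtained by flowing $\cS_0$ forward, whose transverse tangent is a weak-unstable direction), and is fine modulo noting that there are at most finitely many sheets of $S^{0+}$ near $W$. The other two bounds, however, have genuine gaps. For $S^{0-}$: the containment $P^+(W\cap S_r^{0-})\subset N_{\Cs r^{3/5}}(\cS_0)$ that you extract from \eqref{mysterious} is not justified and is in fact false. First, \eqref{mysterious} is derived for two points whose $P^+$-images lie in the \emph{same} homogeneity strip, whereas here $P^+(Y)\in\cS_0$ for the nearby point $Y\in S^{0-}$; the correct scaling near the curve's own nearly grazing collision is a square root, not a $3/5$ power: $d(Z,S^{0-})\approx\cos^2\vf(P^+(Z))$ (for a scatterer of radius $R$ and impact parameter $b$ one has $R-b\approx\tfrac R2\cos^2\vf$), while $d(P^+(Z),\cS_0)\approx\cos\vf(P^+(Z))$, so your route, done correctly, only yields $|W\cap S_r^{0-}|\le\Cs r^{1/2}$, which does \emph{not} imply the claimed $\Cs r^{3/5}$ (for $r<1$ one has $r^{1/2}>r^{3/5}$). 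Second, $Z$ can be $r$-close to a ``foreign'' sheet of $S^{0-}$, generated by trajectories grazing a scatterer that $Z$'s own trajectory misses; then $P^+(Z)$ is nowhere near $\cS_0$, so no neighborhood of $\cS_0$ captures $P^+(W\cap S_r^{0-})$ at all (those pieces must be handled by a separate, uniform transversality argument). The paper's mechanism is different and is what produces $3/5$: with $P^+(W)\subset\bH_k$ (a single strip, by (W3), so your strip-by-strip summation is moot), the angle between the flow-stable curve and $S^{0-}$ degenerates only like $k^{-2}$, giving $|W\cap S_r^{0-}|\le \Cs rk^2$, while homogeneity gives $|W|\le\Cs k^{-3}$; optimizing $\min(rk^2,k^{-3})$ over $k$ gives $\Cs r^{3/5}$. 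This interplay is absent from your proposal.

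For $\partial_r\Omega_0$: the reduction ``$Z$ is within $r$ of $\partial\Omega_0$ exactly when $\min\{\tau(Z),\tau_-(Z)\}\le\Cs r$'' is wrong in the direction you need: a near-tangential passage puts $W$ within $r$ of a scatterer it never hits (or hits only after a time of order $\sqrt r$), so the set you must control is not covered by small collision times. Moreover, the claim that $\tau$ varies ``at unit rate'' along a stable curve is unjustified (near a normal collision $\tau$ varies quadratically in arclength — which is precisely where $r^{1/2}$ should come from — and near a tangential one like a square root), and the displayed deduction ``$\ell\lesssim\Cs\ell^{1/2}+\Cs r$, whence $\ell\le\Cs r^{1/2}$'' is a non sequitur: that inequality only gives $\ell=O(1)$. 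The paper's proof instead compares curvatures of the projected wavefront and of the scatterer: the projection of a flow-stable curve has curvature bounded below by $B_{\min}>0$, is convex in opposition to the scatterer before a collision, and just after a collision has curvature at least $B_{\min}+\cK/\cos\vf$, hence bounded away from the scatterer's curvature; quadratic contact of the two boundaries then yields length $\le\Cs r^{1/2}$, and the near-tangential case is handled using transversality of the spatial projection together with the $1/\cos\vf$ expansion of the $\omega$-component. You would need some version of this curvature (quadratic-contact) argument to close the third estimate, and the angle/length interplay above to reach the exponent $3/5$ in the first.
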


\begin{proof}
Let $V = W \cap S_\epsilonr^{0-}$. The curve $P^+(W)$ is a homogeneous stable curve
by assumption.  Due to the uniform transversality of the map-stable cones
with horizontal lines, $P^+(V)$ is uniformly transverse to $\cS_0 = \{ \vf = \pm \pi/2 \}$.
Suppose $P^+(V)$ lies in a homogeneity strip of index $k$.  The expansion in the
stable cone from $P^+(V)$ to $V$ is of order 1, by Lemma~\ref{lem:smooth}.  On the other hand,
vectors in the unstable cone undergo an expansion of order $k^2$.  Thus the angle between
$V$ and $S^{0-}$ is no smaller than order $k^{-2}$, which means that $|V|$ is bounded
by a uniform constant times $\epsilonr k^2$.  But again using Lemma~\ref{lem:smooth},
we have the length of $|V|$ comparable to $|P^+(V)|$, which is at most $k^{-3}$.  
Since this holds for any $V'$ in the same homogeneity strip, in particular it is true of a curve
$V'$ of length $k^{-3}$ (even if $V$ itself is shorter).  Thus up to
a uniform constant, we must have $k^{-3} \leq \epsilonr k^2$, which implies $k \geq \epsilonr^{-1/5}$.  This means
$|P^+(V)|$ and so $|V|$ is at most a uniform constant times $\epsilonr^{3/5}$.

The analysis is similar for $V = W \cap S_\epsilonr^{0+}$.  
Note that $S^{0+}$ is a surface which may divide $V$ into several
components.  Let $V_1$ denote a component of $V$ on one side of $S^{0+}$, so that
$P^-(V_1)$ is contained in a single scatterer.  Since $P^-(V_1)$ is a stable curve, even though 
it may not be homogeneous, it is uniformly transverse to $\cS_0$.  Since 
$S^{0+}$ is the surface defined by flowing $\cS_0$ forward,
$S^{0+}$ will remain
uniformly transverse to $V_1$.  Thus $|V_1| \le \Cs \epsilonr$. Since the number of such
components of $V$ is uniformly bounded by $\lfloor \frac{\tau_{\max}}{\tau_{\min}} \rfloor +1$
\cite[\S 5.10]{chernov book},
the bound on $|V|$ follows.

Finally, let $V = W \cap \partial_\epsilonr \Omega_0$.  Recall our global coordinates $(x,y,\omega)$
from Section~\ref{setting}.
Although $V$ may not be uniformly transverse to
$\partial \Omega$ (consider when $V$ is close to making a normal collision with a 
scatterer), the estimate will follow from the fact that its curvature is uniformly bounded away from the
curvature of the scatterer.  

Note that the (absolute value) of the curvature of a stable wavefront (the projection of a stable curve 
in the $(x,y)$-plane) just after a collision
is given by $B^+ = B^- + \frac{2 K}{\cos \vf}$, where $B^-$ is the curvature just before collision;
between collisions, the curvature evolves according to $B_t = \frac{B_0^+}{B_0^+ + t}$
\cite[\S 3.8]{chernov book}.  Putting these together and using the definition of our stable
cones for the flow, we see that the minimum curvature for any stable curve is
$B_{\min} = \frac{2 \cK_{\min}}{2 \cK_{\min} + \tau_{\max}}$.

Now if $V$ is a component of a stable curve about to make a nearly normal collision
in backward time, then the curvatures of $V$ and the scatterer are convex in opposition, both
with minimum curvatures bounded away from 0 by our calculation above.  Thus 
the length of the projection of $V$ in the $(x,y)$-plane is bounded by $C\epsilonr^{1/2}$
and since the slope of $|V|$ in the $(d\xi, d\omega)$-plane just before collision is bounded 
above by $2/\tau_{\min}$, we have
$|V| \le C \epsilonr^{1/2}$ as well.  If, on the other hand, $V$ has just made a nearly normal collision,
then the curvature of its projection for a short time afterward
is at least 
\begin{equation}
\label{cardinality}
B^+ \ge B_{\min} + \frac{\cK}{\cos \vf}
\, ,
\end{equation}
 while the curvature of the scatterer is $\cK$.
Thus the curvatures of the two projections are bounded away from one another, and
so $|V| \le C \epsilonr^{1/2}$ as before.

The last case to consider is if $V$ is close to a tangential collision.  In this case, the projection
of $V$ in the $(x,y)$-plane is uniformly transverse to the boundary of the scatterer; however,
by the proof of Lemma~\ref{lem:preserved}, the length of $V$ in the $\omega$-coordinate 
undergoes an expansion of order $1/\cos \vf$ near such tangential collisions, 
and so again, $|V| \le C\epsilonr^{1/2}$. 
 \end{proof}

Let $c>2$ be a  constant that will be chosen shortly and let 
$N_{\epsilonr^\theta}(Z)$ denote the neighborhood of $Z$ in $\Omega_0$ 
of size $\epsilonr^\theta$ in the standard $(x,y, \omega)$ coordinates.
There exists $\Cs>0$ such that, for each $\epsilonr\in(0,L_0)$, there exists a 
$\cC^{\infty}$ partition of unity  of $\Omega_0$
\begin{equation}\label{starstar}
\{\phi_{\epsilonr,i}\}_{i=0}^{q(\epsilonr)}
\end{equation}
enjoying the following properties
\begin{enumerate}
\item[(i)] for each $i\in\{1,\dots,q(\epsilonr)\}$, there exists $x_i\in \Omega_0$ such that $\phi_{\epsilonr,i}(z)=0$ for all $z\not\in N_{\epsilonr^\theta/c}(x_i)$;
\item[(ii)] for each $\epsilonr,i$ we have $|\nabla\phi_{\epsilonr,i}|_{\infty}\leq\Cs \epsilonr^{-\theta}$;
\item[(iii)]  $q(\epsilonr)\leq \Cs \epsilonr^{-3\theta}$;
\item[(iv)] for each $\epsilonr > 0$ we have supp$(\phi_{\epsilonr, 0}) \subset \partial_{3c\epsilonr^\theta} \Omega_0 \cup S_{3c\epsilonr^\theta}^{0-}
\cup S_{3c\epsilonr^\theta}^{0+}$, and, for every $i \ge 1$, supp$(\phi_{\epsilonr,i}) \subset \Omega_0 \setminus (\partial_{2c\epsilonr^\theta} \Omega_0 \cup S_{2c\epsilonr^\theta}^{0-} \cup S_{2c\epsilonr^\theta}^{0+})$.
\end{enumerate}

\begin{remark}[$\cC^2$ cone-compatible Darboux charts]\label{suitcharts}
Following Remark~\ref{suitcharts0}, we setup a similar family of local charts 
as announced in Remark~\ref{Rk1.4}:
For any $Z \in \Omega_0 \setminus (\partial_{2c\epsilonr^\theta} \Omega_0 \cup S_{2c\epsilonr^\theta}^{0-}
\cup S_{2c\epsilonr^\theta}^{0+})$, it follows from the proof of Lemma~\ref{lem:discarded}
that $\cos \vf(P^+(Z)) \ge \epsilonr^{\theta/2}$ for $c$ sufficiently large.
Similarly, if $Z' \in \Omega_0$ satisfies $d(Z, Z') < c\epsilonr^\theta$, we have $\tau(Z') \ge c\epsilonr^\theta$
and $\cos \vf(P^+(Z')) \ge \epsilonr^{\theta/2}$.
Thus it follows from \eqref{eq:stable cone} that the width of the stable cones at
$Z$ and $Z'$ has angle at least of order $\epsilonr^{\theta/2}$, and the maximum and minimum
slopes in $C^s(Z)$ and $C^s(Z')$ are uniformly bounded multiples of one another.
Similar considerations hold for the unstable cones.
Indeed, given the relation $k \le \epsilonr^{-\theta/5}$ and the fact that 
$d(P^+(Z), P^+(Z')) < \Cs \epsilonr^\theta$ by Lemma~\ref{lem:smooth}, we conclude that
$P^+(Z)$ and $P^+(Z')$ must lie either in the same homogeneity strip or in adjacent 
homogeneity strips for small $\epsilonr$; specifically, $\epsilonr^{\theta} \le C' k^{-5}$,
 where $C'$ depends only
on $c$, the distortion of $P^+$ given by Lemma~\ref{lem:smooth} and the spacing of
the homogeneity strips.
Thus for each $i$, we may adopt local coordinates
in a $c\epsilonr^\theta$ neighborhood
of $x_i$,  $(x^u, x^s, x^0)$  as introduced in Remark~\ref{Rk1.4},  for which the contact
form is in standard form and $x_i$ is the origin, and where $c$
is large enough so that $N_{\epsilonr^\theta/c}(x_i)\subset
B_{\epsilonr^\theta}(x_i)$, where $B_{\epsilonr^\theta}(x_i)$ denotes the ball of radius $\epsilonr^\theta$ centered at $x_i$,
in the sup norm of the $(x^u, x^s, x^0)$ coordinates.   By the above discussion, the charts
effecting this change of coordinates are uniformly $\cC^2$.
\end{remark}

We will refer to the two sides of the box
$B_{\epsilonr^\theta}(x_i)$ comprising approximate weak stable manifolds as ``stable sides,'' the 
two sides comprising approximate weak unstable manifolds as ``unstable sides'' and the
two remaining sides as ``flow sides.''

Fix $c>2$ large enough and choose $\epsilonr$ small enough so that any manifold $W_A$ 
intersecting $B_{\epsilonr^\theta}(x_i)$ can intersect only (see \cite[Fig. 1]{BaL}) the weak unstable sides
of the boundary of $B_{c\epsilonr^\theta}(x_i)$. (This is possible since the $W_{A}$ belong to the stable cone
and the cones vary continuously in 
$\Omega_0 \setminus (\partial_{2c\epsilonr^\theta} \Omega_0 \cup S_{2c\epsilonr^\theta}^{0-}
\cup S_{2c\epsilonr^\theta}^{0+})$.) 
Define for $\ell \ge 0$, 
\[
A_{\ell,0}=D_{\ell, 0} = \{ A \in A_\ell \; : \; W_A \cap (\partial_{2c\epsilonr^\theta} \Omega_0 \cup S_{2c\epsilonr^\theta}^{0-}
\cup S_{2c\epsilonr^\theta}^{0+}) \neq \emptyset \},
\]
and for each $x_i$, $i \ge 1$, let  
\[
\begin{split}
&A_{\ell, i}=\{A\in A_\ell\;:\; W_A\cap B_{\epsilonr^\theta}(x_i)\neq \emptyset\},\\
D_{\ell,i}=\{A\in A_{\ell,i}\;:\;&\partial(W_A\cap B_{c\epsilonr^\theta}(x_i))\not\subset\partial B_{c\epsilonr^\theta}(x_i)\}\, , \qquad
E_{\ell,i}=A_{\ell,i}\setminus D_{\ell,i} \, .
\end{split}
\] 
The manifolds with index in $A_{\ell,i}$ are those which intersect the small box $B_{\epsilonr^\theta}(x_i)$, while
$E_{\ell,i}\subset A_{\ell,i}$ consists of the indices of those manifolds which go completely across the big
box $B_{c\epsilonr^\theta}(x_i)$.
The remaining manifolds of $A_{\ell,i}$, with indexes in $D_{\ell,i}$, are called the {\em discarded manifolds.}  
We set $W_{A, i}=W_{A}\cap B_{c\epsilonr^\theta}(x_i)$ for $i \ge 1$, 
$W_{A,0} = W_A \cap \partial_{3c\epsilonr^\theta} \Omega_0 \cup S_{3c\epsilonr^\theta}^{0-}
\cup S_{3c\epsilonr^\theta}^{0+}$,
 and
\begin{equation}\label{eq:ro-la1} 
Z_{\ell, A,i}= \int_{W_{A,i}}  J^s_{\ell\vu} \, dm_{W_{A,i}} \, , \qquad i \ge 0 \, .
\end{equation}
It is now natural to choose
\begin{equation}\label{eq:vu}
\vu=c \epsilonr^\theta\, .
\end{equation}

We are going to worry about the small $\ell$ first. Let $a=\Re(z)$. Remembering \eqref{eq:pstuff0}, setting
\begin{equation}\label{eq:l0conv}
\ell_0:=\frac{m}{ae^{2}\vu } \, ,
\end{equation}
and using Stirling's formula, we have
\begin{equation}\label{Stirling}
\begin{split}
\left| \sum_{\ell\leq  \ell_0 } \right. &  \left. \int_{-\vu}^{\vu}
p_{m,\ell,z}(s)\int_{W} \psi \cdot \cL_{\ell\vu+s} \oldh \, dm_W ds \right| 
\leq |f|_\infty |\psi|_\infty |W| \int_0^{\ell_0 \vu} e^{-as} \frac{s^{m-1}}{(m-1)!} \, ds \\
& \leq \Cs \frac{|\oldh|_\infty|\psi|_\infty}{(m-1)!a^{m}}\int_0^{e^{-2}m }e^{-x}x^{m-1} dx
\leq \Cs \frac{|\oldh|_\infty|\psi|_\infty(e^{-2}m )^m}{m!a^{m}}\leq \Cs \frac{|\oldh|_\infty|\psi|_\infty}{a^{m}e^{m}} ,  
\end{split}
\end{equation}
where we have made the substitution $x = as$ in the second line.
We impose  that
\begin{equation}\label{eq:h1}
e^{-m}\leq  \epsilonr^{\frac\theta 2} \, .
\end{equation}

Our next step is to estimate the contribution of the discarded manifolds corresponding to
indices in $D_{\ell,i}$
and $\ell \ge \ell_0$.
For fixed $\ell$, we use the fact that the $B_{\epsilonr^\theta}(x_i)$ have a bounded number of overlaps
as well as the fact that the number of components corresponding to a $D_{\ell,i}$
for each curve $W_A \in \cG_{\ell \vu}(W)$ is uniformly bounded to estimate
\begin{equation}
\begin{split}\label{eq:discard}
\sum_{i \ge 0}\sum_{A\in D_{\ell,i}} Z_{\ell,A,i} 
& \leq \Cs \sum_{i \ge0} \sum_{A\in D_{\ell,i}} |W_{A,i}| |J^s_{\ell \vu}|_{L^\infty(W_{A,i})} \\ 
& \le \Cs \epsilonr^{\theta/2} \sum_{W_A \in \cG_{\ell \vu}(W)}  |J^s_{\ell \vu}|_{L^\infty(W_A)}
\le \Cs \epsilonr^{\theta/2},
\end{split}
\end{equation}
where we have used Lemma~\ref{lem:discarded} to bound the lengths
of the discarded curves and Lemma~\ref{lem:growth} to bound the sum over the 
Jacobians.

\begin{remark}[About  Lemma 6.3 in \cite{BaL}]\label{corrigendum}
The statement of \cite[Lemma 6.3]{BaL}  is incorrect due to  a missing
sum over $\ell$ there (the error occurs in \cite[(E.6)]{BaL} since
$\Omega_{\beta,i}$ is two-dimensional while the neighborhood of $\widetilde O$
is three-dimensional). 
The argument of \cite[\S 6]{BaL} can be fixed either by using Lemma~ 6.8
of \cite{BaL}, or by providing a direct argument in the spirit of the one above.
\end{remark}

Now using \eqref{eq:discard} and remembering \eqref{eq:pstuff0}, we have
\begin{equation}\label{eq:step03}
\begin{split}
& \left|\sum_{\ell\geq \ell_0}\sum_{i\ge 0}\sum_{A\in D_{\ell,i}}\int_{-\vu}^{\vu}\!\!\!\!\! p_{m,\ell,z}(s)\int_{W_{A,i}}\!\!\!\!\!\ \psi \circ \Phi_{\ell \vu} \cdot
J^s_{\ell\vu}\cdot \phi_{\epsilonr,i} \cdot  \cL_s \oldh\right| \\
& \leq \Cs \epsilonr^{\frac \theta 2}|\oldh|_\infty|\psi|_\infty\, 
\int_{\ell_0 \vu}^\infty \frac{t^{m-1}}{(m-1)!} e^{-at} dt 
 \le \Cs \epsilonr^{\frac \theta 2} a^{-m} |f|_\infty |\psi|_\infty .
\end{split}
\end{equation}

We are left with the elements of $E_{\ell,i}$ for $\ell\geq \ell_0$. To study them, it is convenient to set
\begin{equation}\label{flowedleaf}
W^{0}_{A}=\cup_{t\in[-\vu,\vu]}\Phi_tW_{A} \quad
\forall  A\in A_{\ell} \, , \quad W^0=\cup_{t\in[-\vu,\vu]}\Phi_tW \, .
\end{equation}

To continue, for each $x_i$ we consider the line $x_{i}+(x^u,0,0)$ for $x^u\in[-\epsilonr^{\theta},\epsilonr^{\theta}]$, and we partition it in intervals of length $\epsilonr/3$. To each such interval $I_{i,j}$ we associate a point 
\begin{equation}\label{defxij}
x_{i,j}\in \cup_{A\in E_{\ell,i}}W_{A}^{0}\cap I_{i,j}\, , 
\end{equation}
 if the intersection is not empty.  For each such point $x_{i,j}$, we choose an index $A$ so that
$x_{i,j}\in W_{A}^{0}$, and
we associate  Reeb coordinates $\tilde\kappa_{x_{i,j}}$
to $x_{i,j}$  as follows: We
require that $x_{i,j}$ is at the origin in the $\tilde\kappa_{x_{i,j}}$ coordinates, that $\tilde\kappa_{x_{i,j}}(W_{A}^{0})\subset \{(0,x^s,x^0)\mid x^s,x^0\in\bR\}$, and that the vector 
$D_{x_{i,j}}\Phi_{-\ell \vu}(1,0,0)$ belongs to the unstable cone. 
Such changes of coordinates exist and are uniformly $\cC^2$ by \cite[Lemma A.4]{BaL}. 

Letting $\cB_{\epsilonr}=\{(x^u,x^s,x^0)\;:\; |x^u|\leq \epsilonr,\,|x^s|\leq \epsilonr^{\theta}, \, |x^0|\leq \epsilonr^\theta\}$,
we consider for each $x_{i,j}$   the box (in the coordinates $\tilde\kappa_{x_{i,j}}$)
\begin{equation}\label{defbox}
\cB_{\epsilonr,i,j}=\tilde\kappa_{x_{i,j}}^{-1}(\cB_\epsilonr) \, .
\end{equation} 
Up to taking larger $c$, we may ensure that the support of the element of the partition
of unity corresponding to $x_i$ does not intersect the flow and weak-unstable boundaries
of $\cB_{\epsilonr,i,j}$.

We next control regular pieces intersecting $\cB_{\epsilonr}$:

\begin{lemma}[Controlling regular pieces intersecting $\cB_{\epsilonr}$] \label{lem:smallbox}
For each $i$, $j$ and any $B\in E_{\ell,i}$ so that  $W_{B} \cap \cB_{\epsilonr,i,j}\ne \emptyset$, 
if $\epsilonr$ is sufficiently small so that $C\epsilonr^{\theta/5} \le 1/2$, where
$C$ is from Section~\ref{Lipschitz}, then 
the flowed leaf $W_{B}^0 \cap\partial \cB_{2\epsilonr,i,j}$ does not intersect the 
stable side of $\cB_{2\epsilonr,i,j}$.
\end{lemma}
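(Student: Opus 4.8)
\textbf{Proof plan for Lemma~\ref{lem:smallbox}.}
The plan is to argue that a regular stable curve $W_B$ (more precisely its flowed version $W_B^0$) passing through the small box $\cB_{r,i,j}$ cannot, when extended only to the double box $\cB_{2r,i,j}$, reach a stable side; it must exit through a flow side or a weak-unstable side instead. The geometric content is that $W_B^0$ is, up to the flow direction, tangent to the stable cone, whereas the stable sides of the Reeb box $\cB_{2r,i,j}$ are (by construction of the coordinates $\tilde\kappa_{x_{i,j}}$, via \cite[Lemma A.4]{BaL}) approximately weak-unstable surfaces, hence uniformly transverse to the stable cone. The only way a short stable curve could ``turn around'' and hit a stable side is if its direction varied substantially across the box, but the box has size only $O(r)$ in the $x^u$-direction and $O(r^\theta)$ in the other two, and the foliation of Theorem~\ref{thm:foliation}, applied at the relevant length scale $\rho$, controls precisely this variation.

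First I would fix the scale $\rho$ used in Section~\ref{Lipschitz} to be commensurate with $r$ (more precisely choose $\rho = r$, so that the hypothesis $d(W,\partial\Omega_0)\ge 2C_d\rho$ and the homogeneity condition $k_W \le C\rho^{-1/5}$ hold on the support of the relevant $\phi_{r,i}$ by Remark~\ref{suitcharts} and Lemma~\ref{lem:discarded}), and recall that in these Reeb coordinates $W_B^0$ lies in a plane $\{x^u=0\}$ only at its seeding point $x_{i,j}$; away from that point its $x^u$-coordinate is nonzero but its tangent direction stays within the stable cone $C^s$ by Lemma~\ref{flowinvar}. Next I would record the quantitative transversality: by Remark~\ref{suitcharts0} the width of the stable and unstable cones at all points within a $cr^\theta$-neighborhood is at least of order $r^{\theta/2}$, so in the $(x^u,x^s,x^0)$ coordinates adapted to $x_i$ and then the Reeb coordinates adapted to $x_{i,j}$ (both uniformly $\cC^2$), the stable sides of $\cB_{2r,i,j}$ make an angle bounded below by a uniform constant with the stable cone directions. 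Then I would estimate how far $W_B^0$ can travel in the $x^u$-direction before leaving $\cB_{2r,i,j}$: since the box has $x^u$-extent $2r$ and $x^s,x^0$-extent $2r^\theta$, and since $\theta<1$, the constraint $|x^u|\le 2r$ is the binding one; as $W_B^0$ has bounded slope relative to the stable-cone directions, reaching a stable side (which requires $|x^s|$ or $|x^0|$ to attain $2r^\theta\gg 2r$) is impossible once the $x^u$-coordinate has already been forced to exit at $\pm 2r$. The role of the hypothesis $Cr^{\theta/5}\le 1/2$ (with $C$ the foliation constant from Theorem~\ref{thm:foliation}) is to ensure the second-derivative bounds (v)--(vi) of the foliation — namely $\|\partial_{x^u}\partial_{x^s}G\|_{L^\infty}\le C\rho^{-4/5}$ times the box width $\le r$ gives a slope variation $\le Cr^{1/5}\le Cr^{\theta/5}\le 1/2$ — keep the curve's direction within a cone that still does not meet the stable sides; this is the step where the choice of exponents in Section~\ref{Lipschitz} is actually used.

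I expect the main obstacle to be the bookkeeping of the two successive changes of coordinates — first to the Darboux chart adapted to $x_i$ (Remark~\ref{suitcharts}), then to the Reeb chart adapted to $x_{i,j}$ (via \cite[Lemma A.4]{BaL}) — and verifying that the cone-transversality constants and the foliation derivative bounds survive both transformations with constants uniform in $\ell$, $i$, $j$, $W$, and $\rho$. One must check in particular that the flowed leaf $W_B^0$, which was only guaranteed to lie in a coordinate plane for the \emph{one} chosen index $A$ defining $x_{i,j}$, still has controlled geometry for the other $B\in E_{\ell,i}$ intersecting $\cB_{r,i,j}$; this follows because all such $W_B$ are stable curves within a $cr^\theta$-neighborhood where the cones vary by a uniformly bounded factor (Remark~\ref{suitcharts0}), so the transversality to the stable sides, established for one leaf, holds for all of them with a comparable constant. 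Once these uniformities are in place, the conclusion is essentially the elementary geometric observation above, and no further delicate estimate is required.
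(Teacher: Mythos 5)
There is a genuine gap, and it starts with a misreading of the geometry. In the paper's terminology the ``stable sides'' of $\cB_{2r,i,j}$ are the two faces that are approximate \emph{weak stable} manifolds, i.e.\ the faces $\{x^u=\pm\,\mathrm{const}\}$ spanned by the stable and flow directions — not approximate weak-unstable surfaces as you assert. Consequently $W_B^0$ is nearly \emph{parallel} to the stable sides, and the issue is not transversality at all: one must show that the $x^u$-coordinate of $W_B^0$ drifts by less than order $r$ while $x^s$ sweeps an interval of length of order $r^\theta\gg r$. Read literally, your ``binding constraint is $|x^u|\le 2r$, so the curve is forced out through the $x^u$-faces first'' argument asserts exactly the opposite of the lemma, since those $x^u$-faces \emph{are} the stable sides. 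Moreover, the fact that tangent vectors of $W_B$ lie in the stable cone cannot give the needed drift bound: to avoid the stable sides one needs the slope of $W_B$ relative to the central leaf $W_A$ to be of order $r^{1-\theta}$, whereas cone membership only bounds it by the cone width, which is of order one in general (and at best $r^{\theta/2}$ near grazing, via Remark~\ref{suitcharts0}). Two arbitrary stable curves through $\cB_{r,i,j}$ could perfectly well separate by more than $r$ in the $x^u$-direction over a length $r^\theta$; so an argument using only cones and box dimensions cannot prove the statement.

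The missing idea is dynamical: one must use that $A,B\in E_{\ell,i}$, so that both $\widetilde W_{A,i}$ and $\widetilde W_{B,i}$ are smooth images under $\Phi_{-\ell\vu}$ of subcurves of the \emph{same} admissible curve $W$, with $P^+(\Phi_{\ell\vu}\widetilde W_{A,i})$ and $P^+(\Phi_{\ell\vu}\widetilde W_{B,i})$ both contained in the single homogeneous curve $P^+(W)$. The paper exploits this by running the construction of Section~\ref{Lipschitz} in time-reversed form, with a seeding foliation on $P^+(\Phi_{\ell\vu}(I_{i,j}))$ chosen so that both curves are genuine leaves (not in gaps) of one fake \emph{stable} foliation at scale $\rho=cr^\theta$ — note $\rho$ must be $cr^\theta$, not $r$ as you propose, since the leaves have to cross the box in the stable direction, and with $\rho=r$ the bound $\rho^{-4/5}\cdot r^\theta$ from property (vi) need not be small. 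Then properties (ii), (v), (vi) of Theorem~\ref{thm:foliation} give $|\partial_{x^u}G-1|\le C\rho^{-4/5}\,r^\theta= Cr^{\theta/5}\le 1/2$, hence $|G(x^u_B,x^s)-G(x^u_A,x^s)|\le |x^u_A-x^u_B|(1+Cr^{\theta/5})<2r$ and $|H(x^u_B,x^s)-H(x^u_A,x^s)|\le \Cs r^{1+\theta}$, which is exactly the transverse and flow-direction drift control that keeps $W_B^0$ off the stable sides. Without realizing the two curves as leaves of a common controlled foliation (or some equivalent use of the shared backward orbit under $\Phi_{-\ell\vu}$), the claimed conclusion does not follow.
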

\begin{proof}
By construction, for each $\cB_{\epsilonr,i,j}$ there is a manifold $W_{A}^0$ going through  its center and 
perpendicular to $(1,0,0)$ (in the $\tilde\kappa_{x_{i,j}}$ coordinates),
for some $A \in E_{\ell,i}$. Let $\tau_{A}\in [-\vu,\vu]$ be such that 
$\widetilde W_{A}:=\Phi_{\tau_{A}}W_{A}$ satisfies 
$\widetilde W_{A}  \cap I_{i,j}=\{(x^u_A,0,0)\}\neq \emptyset$.\footnote{ When no confusion arises, to ease notation, we will identify $\tilde\kappa_{x_{i,j}}(W_{A}^0)$ and $W_{A}^0$.}

Next,  consider  $B\in E_{\ell,i}$ with $B\ne A$
so that $W_{B}^{0}$ intersects  $\cB_{\epsilonr,i,j}$, and let $x^u_B$ be the intersection point with $I_{i,j}$. Again, let $\widetilde W_{B}=\Phi_{\tau_{B}}W_{B}$. 
By construction $|x^u_A-x^u_B|\leq \epsilonr$. Our first goal is to estimate $d(\widetilde W_{A},\widetilde W_{B})$. To this end we will use the stable version
of the fake foliations constructed in Section~\ref{Lipschitz}. Observe that each result concerning the unstable curves can be transformed into a result for the stable curves by time reversal. 
More precisely, we construct a foliation in the neighborhood of $I_{i,j}$ made of stable curves of length 
$\rho=c\epsilonr^\theta$.

To do this, consider $P^+(\Phi_{\ell\vu}(I_{i,j}))$, which is a countable union of homogeneous unstable curves,
defined analogously to the set $\tG_{n}(V)$ for a map-stable curve $V$ in the proof of 
Lemma~\ref{lem:growth}.  
 Note that by construction, both $\widetilde W_{A,i}$ and $\widetilde W_{B,i}$ can be iterated 
 forward for a 
 time $\ell\vu$, and $\Phi_{\ell\vu}(\widetilde W_{A,i}\cup \widetilde W_{B,i})\subset W^0$. 
Thus both $P^+(\Phi_{\ell\vu}(\widetilde W_{A,i}))$ and $P^+(\Phi_{\ell\vu}(\widetilde W_{B,i}))$
are subcurves of $P^+(W)$, and they
intersect $P^+(\Phi_{\ell\vu}(I_{i,j}))$.
 
Now using the (time reversed) construction detailed in  Section~\ref{Lipschitz}, we choose a seeding foliation, defined on homogeneous subcurves of $P^+(\Phi_{\ell\vu}(I_{i,j}))$, that contains 
both $P^+(\Phi_{\ell\vu}(\widetilde W_{A,i}))$ and $P^+(\Phi_{\ell\vu}(\widetilde W_{B,i}))$ as 
two of its curves.  Notice that this choice still allows the seeding foliation to be uniformly $\cC^2$
since both $P^+(\Phi_{\ell\vu}(\widetilde W_{A,i}))$ and $P^+(\Phi_{\ell\vu}(\widetilde W_{B,i}))$ 
are subcurves of $P^+(W)$ and since we assumed that
$W$ is homogeneous, by definition of $\cW^s$ and Lemma~\ref{lem:smooth},
$P^+(W)$ is a single homogeneous map-stable curve with uniformly bounded curvature.

By the definition of $\Delta_{\up}$ in Section~ \ref{sec:step1}, this means that both 
$\widetilde W_{A,i}$ and $\widetilde W_{B,i}$ belong to the fake stable foliation in a 
neighborhood of $I_{i,j}$.  Note in particular that $\widetilde W_{A,i}$ and $\widetilde W_{B,i}$
do not belong to the gaps in the foliation since by construction, they are images of stable curves
on which $\Phi_{-\ell\vu}$ is smooth and ${A}, {B} \in E_{\ell,i}$.
Also, they are
guaranteed to have length at least $\rho = c\epsilonr^\theta$ because, having their
indices in $E_{\ell,i}$, they
completely cross $B_{c\epsilonr^\theta}(x_i)$.
In the coordinates used in Section~\ref{Lipschitz} (with the stable and unstable interchanged), 
let $(G(x_A^u,x^s), x^s, H(x_A^u,x^s))$  and $(G(x_B^u,x^s), x^s, H(x_B^u,x^s))$ be the 
curves $\widetilde W_{A,i}, \widetilde W_{B,i}$, respectively. By properties (ii) and (vi) of the 
fake foliation, it follows that
\begin{equation}\label{eq:shortcutG}
\begin{split}
|G(x_B^u,x^s)-G(x_A^u,x^s)|&=\left|\int_{x_A^u}^{x_B^u}\partial_{x^u}G(u,x^s) du\right|
=\left|\int_{x_A^u}^{x_B^u}\left[ 1+\int_{0}^{x^s}\partial_{x^s}\partial_{x^u}G(u,s) ds\right]du\right|\\
&\leq \int_{x_A^u}^{x_B^u}\left[ 1+C\epsilonr^{\theta/5}\right]du\leq \epsilonr(1 + C \epsilonr^{\theta/5}) \le (3/2) \epsilonr 
< 2\epsilonr \, ,
\end{split}
\end{equation}
where we have chosen $\epsilonr$ sufficiently small that $C\epsilonr^{\theta/5} \le 1/2$.
Analogously, by properties (ii) and (v) of the foliation,
\begin{equation}\label{eq:shortcutH}
\begin{split}
|H(x_B^u,x^s)-H(x_A^u,x^s)|&=\left|\int_{x_A^u}^{x_B^u}\int_{0}^{x^s}\partial_{x^s}\partial_{x^u}H(u,s) ds du\right|\leq \Cs \epsilonr^{1+\theta}\, .
\end{split}
\end{equation}
Hence $W^0_{B}$ cannot intersect the stable boundary of $\cB_{2\epsilonr,i,j}$, again assuming 
$\Cs \epsilonr^\theta \le 3/2$.
 \end{proof}

\begin{remark} The  above estimates on the distances of nearby manifolds can be applied to the ``central'' manifolds of the boxes as well. This readily implies that the covering $\{\cB_{\epsilonr,i,j}\}$ has a uniformly bounded number of overlaps.
\end{remark}
After  this preparation, we can summarise where we stand: From \eqref{eq:step02},  \eqref{Stirling}, \eqref{eq:step03} and assumption \eqref{eq:h1} we have
\begin{equation}\label{eq:step04}
\begin{split}
&\int_W \psi \cdot (\cR(z)^m (\oldh))\, dm_W =\sum_{\ell, i\in\bN}\, \sum_{A \in  A_{\ell,i}}\int_{-\vu}^{\vu}\!\!\!
 p_{m,\ell,z}(s)\int_{W_{A,i}}\!\!\! J^s_{\ell\vu}\cdot  \psi\circ \Phi_{\ell \vu} \cdot 
 \phi_{\epsilonr,i} \cdot  \cL_s \oldh  \\
&= \sum_{\ell\geq \ell_0}\,\sum_{i=1}^\infty\sum_j\, \sum_{A \in  E_{\ell,i,j}}\int_{-\vu}^{\vu}\!\!\!
 p_{m,\ell,z}(s)\!\!\int_{W_{A,i}}\!\! J^s_{\ell\vu}\cdot  \psi\circ \Phi_{\ell \vu} \cdot  
 \phi_{\epsilonr,i} \cdot \cL_s \oldh\\
 &\qquad\quad+ \cO(\epsilonr^{\frac \theta 2}a^{-m}|\oldh|_\infty|\psi|_\infty)\, ,
\end{split}
\end{equation}
where\footnote{ Some manifolds can be attributed either to a box or to
an adjacent box and a choice can be made to resolve the ambiguity.} 
$E_{\ell,i,j}\subset \{A\in E_{\ell,i}\;:\; W_{A,i}\cap \cB_{\epsilonr,i,j}\neq \emptyset\}$, $E_{\ell,i,j}\cap E_{\ell, i,k}\neq \emptyset$  implies $j=k$, and $\cup_jE_{\ell,i,j}=E_{\ell,i}$.
To continue, we need some notation that, for each $\ell, i,j$, is better stated in the above mentioned Reeb charts $\tilde \kappa_{x_{i,j}}$. 
In fact, from now on, we identify the coordinate charts and the manifold notation. For each $A\in E_{\ell,i,j}$, we now view $W_{A,i}^0\cap \cB_{c\epsilonr}$ (directly) as the graph of 
$\bW_A^0 (x^s, x^0) := \bW_A(x^s)+(0,0,x^0)$ where
\begin{equation}
\label{eq:board W}
\bW_A(x^s):=(M_A(x^s),x^s,N_A(x^s))\, ,\quad  |x^s|\leq c \epsilonr^\theta\, ,
\end{equation}
and $M_A$ and $N_A$ are uniformly $\cC^{2}$ functions. By the same exact arguments used in the proof of Lemma \ref{lem:smallbox}, 
we have\footnote{ The manifold $W_B$ in the proof of Lemma \ref{lem:smallbox} corresponds here to the central manifold of the box, which in the current coordinates reads $\{(0,x^s,0)\}$, while $M_A(x^s)$ corresponds to $G(x^u_A,x^s)$.}
\begin{equation}\label{condF_A}
|M_{A}'|\leq \Cs \epsilonr^{1-\frac{4\theta} 5} \, .
\end{equation} 
Since both $\bal$ and $\extd \bal$ are invariant under the flow, and the manifolds are the images of manifolds with tangent space in the kernel of both forms, we have that 
\[
N_A'(x^s)=x^s M_A'(x^s)\, .
\]
With the above notation, the integrals in equation \eqref{eq:step04} can be rewritten as 
\begin{equation}\label{eq:step1}
\int_{-\vu}^{\vu} p_{m,\ell,z}(s)\int_{W_{A,i}}\psi \circ \Phi_{\ell \vu} \cdot 
J^s_{\ell\vu}\cdot \phi_{\epsilonr,i}\cdot  \cL_s \oldh \, ds =\int_{\cQ_{c\epsilonr^\theta}} p_A
\cdot \psi_A  \cdot \bar J^s_{A, \ell \vu} \cdot \phi_{\epsilonr, A} \cdot \oldh_A \, dx^s dx^0 ,
\end{equation}
where  $\cQ_\delta=\{(x^s,x^0)\;:\;|x^s|\leq \delta, \,|x^0|\leq \delta\}$ and
\[
\begin{split}
p_{A}(x^s,x^0)& = p_{m,\ell,z}(-x^0) \, ,
\qquad
\phi_{\epsilonr,A}(x^s, x^0) = \phi_{\epsilonr,i} \circ \bW_A^0(x^s, x^0) 
\cdot \| \bW'_A(x^s) \|  \, , \\
\psi_{A}(x^s,x^0) & =
\psi \circ \Phi_{\ell \vu}\circ\bW_{A}(x^s) \, , \quad \bar J^s_{A, \ell \vu}(x^s,x^0) = J^s_{\ell \vu} \circ \bW_A(x^s) \, , \\
f_A(x^s, x^0) &= f \circ \bW_A^0(x^s, x^0) \, .
\end{split}
\]
Our strategy will be based on the fact that an oscillatory integral of a Lipschitz function is small. Unfortunately, the integrands above are not Lipschitz. To deal with this, let 
\begin{equation}\label{eq:xizeta}
\Xi_{\ell,A,i,j}=\psi_{A}(0,0) \cdot Z_{\ell,A,i} \, ,
\end{equation}
recalling $Z_{\ell,A,i}$ defined by \eqref{eq:ro-la1}.
By Lemma \ref{lem:distortion} and the fact that $|\psi|_{\cC^\oldp(W)}\leq 1$, we have
\begin{equation}\label{eq:toconstant}
\left|\psi_{A}\cdot \bar J^{s}_{A,\ell\vu}-\Xi_{\ell,A,i,j}|W_{A,i}|^{-1}\right|_\infty\leq \Cs \epsilonr^{\oldp\theta}\frac{Z_{\ell,A,i}}{|W_{A,i}|} \, .
\end{equation}
Therefore, using  \eqref{eq:toconstant} in \eqref{eq:step1} and substituting it in \eqref{eq:step04}, yields
\begin{equation}\label{eq:step3}
\begin{split}
\int_W\psi \cR(z)^m (\oldh)
\, dm_W=&\sum_{\ell\geq \ell_0}\sum_{i,j}\sum_{A\in E_{\ell,i,j}}\frac{\Xi_{\ell,A,i,j}}{|W_{A,i}|}
\int_{\cQ_{c\epsilonr^\theta}} p_{A} \cdot \phi_{\epsilonr,A}\cdot   \oldh_A\\
&\;+\cO(|\oldh|_{\infty}|\psi|_\infty \epsilonr^{\oldp\theta }a^{-m})\, ,
\end{split}
\end{equation}
since $\alpha \le 1/3$.
Recalling $p(s)=p(-s)$, we
introduce  (to ease notation we suppress some indices):
\begin{equation}\label{eq:defF}
\begin{split}
&{\bf G}_{\ell,m,i,A}(x^s,x^0)= p(x^0)\frac{(\ell\vu-x^0)^{m-1}}{|W_{A,i}|(m-1)! }e^{-z\ell\vu+ax^0}\\
&\qquad\qquad\qquad\qquad\cdot
\phi_{\epsilonr,i} (M_A(x^s),x^s,N_A(x^s)-x^0) \cdot\Theta_A(x^s)\, ,
\end{split}
\end{equation}
where $\Theta_A ds\wedge dx^s$ is the volume form on $W_A^{0}$ in the coordinates   
$\bW^0_A$. Note that
\begin{equation}\label{eq:Xi}
\begin{split}
&|{\bf G}_{\ell,m,i,A}|_{\infty}\leq \Cs \epsilonr^{-\theta}\frac{(\ell\vu)^{m-1}}{(m-1)!}e^{-a\ell\vu}\\
&|{\bf G}_{\ell,m,i,A}|_{\text{Lip}}\leq \Cs \epsilonr^{-2\theta}\frac{(\ell\vu)^{m-1}}{(m-1)!}e^{-a\ell\vu}\, .
\end{split}
\end{equation}
Indeed, by construction $|W_{A,i}|\geq \epsilonr^\theta$, and aside from $\ell$, 
the only large contribution to the Lipschitz  norm comes from $\phi_{\epsilonr,A}$
(which  only differs from $\phi_{\epsilonr,i}$ by the uniformly  smooth change of
coordinates $\bW^0_A$), and $p$ and is of order $\epsilonr^{-\theta}$.

With the above notation we can write,
\begin{equation}\label{eq:step4}
\begin{split}
&\int_W\psi \cR(z)^m (\oldh) \, dm_W=\cO(|\oldh|_{\infty}|\psi|_\infty \epsilonr^{\oldp\theta }a^{-m})\\
&+\sum_{\ell\geq \ell_0}\sum_{i,j}\sum_{A\in E_{\ell,i,j}}\!\!\Xi_{\ell,A,i,j}
\int_{\cQ_{c\epsilonr^\theta}} e^{ibx^0}{\bf G}_{\ell,m,i,A}(x^s,x^0) \oldh(\bW_A(x^s)+(0,0,x^0)) \, .
\end{split}
\end{equation}
At this point, we would like to compare different manifolds in the same box by sliding them along an approximate unstable direction.
For this we fix $\ell, i, j$ and use the approximate unstable fibres $\gamma_{i,j, \epsilonr}^\up$ constructed in Section \ref{Lipschitz}
(the parameter $\up$ represents the maximum time  in $\cF_{\up,x_0}$ there). In short, for each coordinate $\tilde\kappa_{x_{i,j}}$, we can construct a Lipschitz foliation in $\cB_{c\epsilonr}$ in a 
\[
\rho=\epsilonr^\varsigma
\]
 neighborhood  of $W_*=\{(0,x^s,x^0)\:\;|x^s|\leq c \epsilonr^\theta,|x^0|\leq c \epsilonr^\theta \}$. 
In order to have the foliation defined in all $\cB_{c\epsilonr}$ we need $\varsigma<1$, while for the foliation to have large part where it can be smoothly iterated backward as needed it is necessary that $\varsigma>\theta$. We thus impose
\begin{equation}\label{eq:varsigma}
\theta<\varsigma<1 \, .
\end{equation}
The foliation constructed in Section ~\ref{Lipschitz} can be described by the coordinate change
\[
\bF_{i,j,\up}(x^u,x^s,x^0)=(x^u,G_{i,j,\up}(x^u,x^s), H_{i,j,\up}(x^u,x^s)+x^0) \, .
\]
Also, in equation \eqref{eq:old8.48}
 it will be essential that $\up$ be large enough. It turns out that
\begin{equation}\label{eq:up-choice}
\up=10 \,ma^{-1}
\end{equation}
suffices.
The leaf $\gamma_{i,j,\epsilonr}^{\up}(x^s,x^0)$ is thus the graph of $\bF_{i,j,\up}(\cdot, x^s,x^0)$. By construction, we require that the vector $(1,0,0)$ is tangent to the 
curve in the foliation passing through $(0,0,0)$. 
Thus,\footnote{ We cannot require that the leaf $(x^u,0,0)$ belongs to the foliation, 
but  the proof of \cite[Lemma ~A.4]{BaL} ensures that we can obtain  the desired tangency.}
\begin{equation}\label{eq:straight-unst}
\begin{split}
G_{i,j,\up}(0,0) & = H_{i,j,\up}(0,0) = \partial_{x^u}G_{i,j,\up}(0,0) = \partial_{x^u}H_{i,j,\up}(0,0)=0 \\
|G_{i,j,\up}(x ^u,0)| & \le C \rho^2,\quad |H_{i,j,\up}(x^u,0)| \le C \rho^2.
\end{split}
\end{equation}
For $A \in E_{\ell, i, j}$, we consider the holonomy 
\[
\bh_{i,j,A, \up}: W_A\cap \cB_{\epsilonr}\to W_*
\]
defined by $\{z\}=\gamma_{i,j,\epsilonr}^\up(\bh_{i,j,A,\up}(\{z\}))\cap W_A$. Recalling \eqref{eq:board W},
we shall use the notation
\begin{equation}\label{defcoord}
\bh_{i,j,A,\up}\circ \bW_A(x^s)=(0,\bh^s_A(x^s), \bh^0_A(x^s)) \, ,
\end{equation}
where, by construction and provided $c$ has been chosen large enough, for all $|x^s|\leq \epsilonr^\theta$,
\begin{equation}\label{trivialbound}
|\bh^s_A(0)| \le C\epsilonr^2\, , \; |\bh^0_A(0) - N_A(0)| \le C\epsilonr^2\, , \;|\bh^s_A(x^s)|\le  c \epsilonr^\theta \, , 
\, |\bh^0_A(x^s)|\le c \epsilonr^\theta \, .
\end{equation}
In other words, $\bF_{i,j,\up}(M_A(x^s),\bh^s_A(x^s),\bh^0_A(x^s))=\bW_A(x^s)$, that is
\begin{equation}\label{eq:holo}
\begin{split}
&G_{i,j,\up}(M_A(x^s),\bh^s_A(x^s))=x^s\, ,
\quad H_{i,j,\up}(M_A(x^s),\bh^s_A(x^s))+\bh^0_A(x^s)=N_A(x^s)\, .
\end{split}
\end{equation}

We shall need the following bounds on the regularity of $\bh^s_A$.
The proof of Lemma ~\ref{lem:hpr}  is provided in Appendix \ref{sec:hoihoi}.

\begin{lemma}\label{lem:hpr}
There exists $\Cs>0$  such that for each $i$, $j$, $\ell$, 
and every  $A\in E_{\ell,i,j}$,  we have
\[
|\bh^s_A(x^s)-x^s|+|(\bh^s_A)^{-1}(x^s)-x^s|\leq \Cs \epsilonr^{1-\frac 45\varsigma+\theta}\; ,
\qquad\left|1- (\bh^s_A)'\right|\leq \Cs \epsilonr^{1-\frac 45\varsigma} \, .
\]
\end{lemma}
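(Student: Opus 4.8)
The plan is to extract the required estimates directly from the defining relations \eqref{eq:holo} together with the geometric properties (i)--(viii) of the fake unstable foliation established in Theorem~\ref{thm:foliation}, applied with the parameter $\rho=r^\varsigma$. First I would record the basic size bounds: by \eqref{condF_A} we have $|M_A(x^s)|\le |M_A(0)|+\Cs r^{1-4\theta/5}\,cr^\theta\le \Cs r$ on the box (using $|x^s|\le cr^\theta$ and $\theta<1$, together with the fact that the central leaf passes through the origin so $M_A(0)=O(r)$ as in \eqref{eq:shortcutG}), and by \eqref{trivialbound} the image point $(\bh^s_A(x^s),\bh^0_A(x^s))$ lies in $\cQ_{cr^\theta}$. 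So both arguments of $G_{i,j,\up}$ in the first line of \eqref{eq:holo} are small of the stated orders.

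Next I would prove the $\cC^0$-bound. Using property (ii) (namely $G_{i,j,\up}(0,\cdot)=\mathrm{id}$ and $H_{i,j,\up}(0,\cdot)=0$) and writing the first equation of \eqref{eq:holo} as
\[
x^s = G_{i,j,\up}(M_A(x^s),\bh^s_A(x^s)) = \bh^s_A(x^s) + \int_0^{M_A(x^s)}\partial_{x^u}G_{i,j,\up}(u,\bh^s_A(x^s))\,du,
\]
so that $|\bh^s_A(x^s)-x^s|\le |M_A(x^s)|\cdot\sup|\partial_{x^u}G_{i,j,\up}|$. Since $\partial_{x^u}G_{i,j,\up}(0,x^s)=1$ (the unstable-cone normalization, cf.\ \eqref{eq:straight-unst}) and $|\partial_{x^s}\partial_{x^u}G_{i,j,\up}|\le C\rho^{-4/5}$ by property (vi) with $\rho=r^\varsigma$, integrating over $|x^s|\le cr^\theta$ gives $|\partial_{x^u}G_{i,j,\up}|\le 1+Cr^{\theta}\rho^{-4/5}=1+Cr^{\theta-4\varsigma/5}$, which by \eqref{eq:varsigma} is $\le 2$ for $r$ small. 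Combined with $|M_A(x^s)|\le\Cs r$ this already yields the crude bound $|\bh^s_A(x^s)-x^s|\le\Cs r$; to sharpen it to $\Cs r^{1-4\varsigma/5+\theta}$ I would instead bound the integral by $|M_A(x^s)|$ times the \emph{variation} of $\partial_{x^u}G_{i,j,\up}$ in the $x^s$-direction — writing $\partial_{x^u}G_{i,j,\up}(u,\bh^s_A(x^s))-1 = \int_0^{u}\partial_{x^s}\partial_{x^u}G_{i,j,\up}$ contributes $O(\rho^{-4/5}|M_A|^2)=O(r^{2-8\varsigma/5})$, which is lower order, and the dominant term $\bh^s_A-x^s$ is of size $|M_A|\cdot\big(\partial_{x^u}G_{i,j,\up}(M_A,\bh^s_A)-1\big)$; iterating once (a one-step fixed-point bootstrap) and using $|M_A(x^s)|\le\Cs r^{1-4\theta/5}\cdot r^\theta+O(r)$ more carefully, one lands on $\Cs r^{1-4\varsigma/5+\theta}$. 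The estimate for $(\bh^s_A)^{-1}$ is identical after exchanging the roles of the two curves (the inverse holonomy is the holonomy of the reversed foliation, which satisfies the same bounds).

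For the derivative bound, differentiate the first relation of \eqref{eq:holo} in $x^s$:
\[
1 = \partial_{x^u}G_{i,j,\up}(M_A,\bh^s_A)\,M_A'(x^s) + \partial_{x^s}G_{i,j,\up}(M_A,\bh^s_A)\,(\bh^s_A)'(x^s),
\]
hence
\[
(\bh^s_A)'(x^s) = \frac{1 - \partial_{x^u}G_{i,j,\up}(M_A,\bh^s_A)\,M_A'(x^s)}{\partial_{x^s}G_{i,j,\up}(M_A,\bh^s_A)}.
\]
By property (v), $\partial_{x^s}G_{i,j,\up}$ is bounded above and below away from $0$, and (using property (ii) again, $\partial_{x^s}G_{i,j,\up}(0,x^s)=1$, together with (vi)) it differs from $1$ by at most $C\rho^{-4/5}|M_A|\le Cr^{1-4\theta/5-4\varsigma/5}\cdot$(small)$\le Cr^{1-4\varsigma/5}$ up to adjusting constants; likewise $\partial_{x^u}G_{i,j,\up}=1+O(r^{\theta-4\varsigma/5})$ and $|M_A'|\le\Cs r^{1-4\theta/5}$ by \eqref{condF_A}, so the numerator is $1 - M_A' + O(r^{1-4\varsigma/5}) = 1 + O(r^{1-4\varsigma/5})$ once one checks $r^{1-4\theta/5}\le r^{1-4\varsigma/5}$, which holds since $\varsigma<1$ forces — wait, one needs $\theta<\varsigma$, precisely \eqref{eq:varsigma}, so $r^{1-4\theta/5}\le \Cs r^{1-4\varsigma/5}$ is false; instead $M_A'$ itself is bounded by $\Cs r^{1-4\theta/5}\le\Cs r^{1-4\varsigma/5}$ is wrong, rather one observes $r^{1-4\theta/5}$ is \emph{larger}, so the honest statement is that $M_A'=O(r^{1-4\varsigma/5})$ only after noting $1-4\theta/5 > 1-4\varsigma/5$ means $r^{1-4\theta/5}$ is the \emph{smaller} power — I would double-check this numerology at the writing stage, but with $\theta<\varsigma$ one has $r^{1-4\theta/5}<r^{1-4\varsigma/5}$ for $r<1$, which is exactly what is needed. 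Expanding the quotient then gives $|(\bh^s_A)'(x^s)-1|\le\Cs r^{1-4\varsigma/5}$, as claimed. The $\cC^0$-bound on $\bh^s_A-x^s$ also follows by integrating this derivative bound from a base point, giving a second, consistent route to the first estimate.

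The main obstacle is bookkeeping the exponents: one must verify that each error term generated by inserting the bounds (vi), (v), \eqref{condF_A}, and the size $|M_A|\lesssim r$ is dominated by the target powers $r^{1-4\varsigma/5+\theta}$ and $r^{1-4\varsigma/5}$, using only \eqref{eq:varsigma} ($\theta<\varsigma<1$) and the smallness of $r$; the genuinely delicate point — and the reason the four-point condition (vii) was built into Theorem~\ref{thm:foliation} — is that a naive estimate of $(\bh^s_A)'$ would pick up $\partial_{x^u}\partial_{x^s}G_{i,j,\up}=O(\rho^{-4/5})$ which is \emph{not} $o(1)$, so one must instead always combine it with a factor $|M_A|$ coming from the smallness of the transverse displacement of $W_A$ inside the box, which is where \eqref{condF_A} and the geometry of $E_{\ell,i,j}$ enter. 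I would carry out the precise exponent arithmetic in Appendix~\ref{sec:hoihoi} as the paper indicates.
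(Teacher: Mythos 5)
Your overall route is the paper's: both estimates are extracted from \eqref{eq:holo} using properties (ii), (v), (vi) of the foliation together with \eqref{condF_A} and \eqref{trivialbound}, your formula for $(\bh^s_A)'$ is exactly \eqref{eq:hprimo}, and the derivative bound you draw from it is correct — the denominator is $\partial_{x^s}G_{i,j,\up}(0,\bh^s_A)+\int_0^{M_A}\partial_{x^u}\partial_{x^s}G_{i,j,\up}=1+O(|M_A|\,\rho^{-4/5})=1+O(r^{1-4\varsigma/5})$, the numerator is $1+O(r^{1-4\theta/5})$ using only $|\partial_{x^u}G_{i,j,\up}|_\infty\le\Cs$ and $|M_A'|\le\Cs r^{1-4\theta/5}$, and $r^{1-4\theta/5}\le r^{1-4\varsigma/5}$ precisely because $\theta<\varsigma$ (your hesitation there resolves the right way).

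The genuine problem is in your primary argument for the $\cC^0$ bound. You assert $\partial_{x^u}G_{i,j,\up}(0,x^s)=1$ "by the unstable-cone normalization, cf.\ \eqref{eq:straight-unst}"; but \eqref{eq:straight-unst} gives $\partial_{x^u}G_{i,j,\up}(0,0)=0$ (the central leaf is tangent to $(1,0,0)$), and property (ii) only normalizes $\partial_{x^s}G_{i,j,\up}(0,\cdot)=1$. Away from the origin $\partial_{x^u}G_{i,j,\up}$ is merely bounded, not close to $1$, so your "dominant term" $|M_A|\cdot\big(\partial_{x^u}G_{i,j,\up}(M_A,\bh^s_A)-1\big)$ is only $O(r)$, and the identity $\partial_{x^u}G_{i,j,\up}(u,\bh^s_A(x^s))-1=\int_0^{u}\partial_{x^s}\partial_{x^u}G_{i,j,\up}$ is not a correct use of the fundamental theorem of calculus: the variation of $\partial_{x^u}G_{i,j,\up}$ in the $x^s$-direction is $\int_0^{\bh^s_A(x^s)}\partial_{x^s}\partial_{x^u}G_{i,j,\up}(u,\sigma)\,d\sigma$, with baseline $\partial_{x^u}G_{i,j,\up}(u,0)$, not $1$. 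So the bootstrap as written does not produce the exponent $1-\tfrac45\varsigma+\theta$; the correct splitting is $\int_0^{M_A}\partial_{x^u}G_{i,j,\up}(u,0)\,du+\int_0^{\bh^s_A(x^s)}\!\int_0^{M_A}\partial_{x^u}\partial_{x^s}G_{i,j,\up}$, whose first term is still only $O(r)$ — which is why the paper ultimately obtains the $\cC^0$ bound by first proving the derivative estimate and then integrating it from $x^s=0$, using $|\bh^s_A(0)|\le Cr^2$ from \eqref{trivialbound}. That is exactly the route you relegate to a "second, consistent" check; it is the one that actually closes the argument, and applied to $((\bh^s_A)^{-1})'-1=1/\big((\bh^s_A)'\circ(\bh^s_A)^{-1}\big)-1$ it also gives the inverse bound more directly than your "reversed foliation" remark. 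A final small point: the four-point condition (vii) plays no role in this lemma; it is needed later, in Lemma~\ref{lem:cancel}, for the $\cC^{1+\ho}$ control of $\bomega_A-\bomega_B$.
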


Next, remember that the fibers $\gamma_{i,j,\epsilonr}^\up$ in the domain $\Delta_\up$ can be iterated backward a time $\up$ and still remain in the unstable cone. In the following we will use the notation
\begin{equation}\label{defnot}
\partial_{\up,i}\oldh=\sup_{(x^s,x^0)\in \Delta_\up}\esssup_{|x^u|\leq \epsilonr}|\langle\partial_{x^u}\bF_{i,j,\up}(x^u,x^s,x^0), (\nabla \oldh)\circ \bF_{i,j,\up}(x^u,x^s,x^0)\rangle|\, .
\end{equation}
With the above construction and notations, using \eqref{eq:Xi}, \eqref{trivialbound}, \eqref{eq:holo} and Lemma \ref{lem:hpr}, we can continue our estimate left at \eqref{eq:step4}
(recalling \eqref{defcoord})
\begin{equation}\label{eq:step5}
\begin{split}
&\int_{\cQ_{c\epsilonr^\theta}}\!\!\!\!dx^0\,dx^s\,e^{ibx^0}{\bf G}_{\ell,m,i,A}(x^s,x^0) \oldh(\bW_A(x^s)+(0,0,x^0))
\\&\quad
=\int_{\cQ_{c\epsilonr^\theta}}\!\!\!\!dx^0\,dx^s\; e^{ib x^0}{\bf G}_{\ell,m,i,A}(x^s,x^0) 
\oldh(0, \bh^s_A(x^s),\bh_A^0(x^s)+x^0)\\
&\qquad\quad+ \cO(\epsilonr^{1+\theta}\partial_{\up,i}\oldh +
\epsilonr^{\varsigma}|\oldh|_{\infty})\cdot\frac{(\ell\vu)^{m-1}}{(m-1)!}e^{-a\ell\vu}\\
&=\int_{\cQ_{c\epsilonr^\theta}}\!\!\!\!dx^0\,dx^s\; e^{ib(x^0-\bomega_A(x^s))}
\frac{{\bf G}^*_{\ell,m,i,A}(x^s,x^0)}
{ |(\bh^s_A)'\circ (\bh^s_A)^{-1}(x^s)|}
 \oldh(0, x^s,x^0)\\
&\qquad\quad+ \cO(\epsilonr^{1+\theta}\partial_{\up,i}\oldh +\epsilonr^{\varsigma}|\oldh|_{\infty})\cdot\frac{(\ell\vu)^{m-1}}{(m-1)!}e^{-a\ell\vu}\\
&=\int_{\cQ_{c\epsilonr^\theta}}\!\!\!\!dx^0\,dx^s\; e^{ib(x^0-\bomega_A(x^s))}{\bf G}^*_{\ell,m,i,A}(x^s,x^0) \oldh(0, x^s,x^0)\\
&\qquad\quad+ \cO(\epsilonr^{1+\theta}\partial_{\up,i}\oldh+(\epsilonr^{\varsigma}+\epsilonr^{1-4\varsigma/5+\theta})|\oldh|_{\infty})\cdot\frac{(\ell\vu)^{m-1}}{(m-1)!}e^{-a\ell\vu} \, ,
\end{split}
\end{equation}
where in the first equality, we have expanded $f$ at the reference manifold $W_*$,
using $\partial_{\up,i}\oldh$ on $\Delta_{\up}$ and property (iv) of the foliation on
$W_* \setminus \Delta_{\up}$; in the second equality, we have changed variables,
$x^0 \mapsto x^0 - h^0_A(x^s)$ and then $x^s \mapsto (h^s_A)^{-1}(x^s)$; 
and in the third equality we have used 
Lemma \ref{lem:hpr}, setting 
$\bomega_A(x^s)=\bh^0_A\circ (\bh^s_A)^{-1}(x^s)$,
and  
\begin{equation}\label{eq:defFs}
\begin{split}
&{\bf G}^*_{\ell,m,i,A}(x^s,x^0)={\bf G}_{\ell,m,i,A}((\bh_A^s)^{-1}(x^s),x^0 - \bomega_A(x^s))\, .
\end{split}
\end{equation}

Note that Lemma \ref{lem:hpr} and \eqref{eq:Xi} imply that there exists $\Cs>0$ such that, 
for all $A$,
\begin{equation}\label{eq:hoG}
\begin{split}
&|{\bf G}^*_{\ell,m,i,A}|_{\infty}\leq \Cs \epsilonr^{-\theta}\frac{(\ell\vu)^{m-1}}{(m-1)!}e^{-a\ell\vu}\, ,
\quad |{\bf G}^*_{\ell,m,i,A}|_{\text{Lip}}\leq \Cs \epsilonr^{-2\theta}\frac{(\ell\vu)^{m-1}}{(m-1)!}e^{-a\ell\vu}\, .
\end{split}
\end{equation}
At last, we can substitute \eqref{eq:step5} into \eqref{eq:step4}:  We note that \eqref{eq:ro-la1}, \eqref{eq:xizeta} imply 
\[
\sum_{i, j}\sum_{A\in E_{\ell,A,i,j}}\Xi_{\ell,A,i,j}\leq \Cs \, ,
\]
and use the Schwarz inequality (first with respect to the integrals
and then with respect to the sum on $i,j$)  together with
the trivial identity $|\sum_A z_A|^2=(\sum_A z_A )(\sum_B \bar z_B)$
on sums
of complex numbers, to obtain\footnote{ Remember that $i$ runs from $1$ to $\Cs \epsilonr^{-3\theta}$, while $j$ from $1$ to $\Cs \epsilonr^{\theta-1}$, so, all in all, the sum over $i,j$ consists of $\Cs \epsilonr^{-1-2\theta}$ terms.}
\begin{equation}\label{eq:almost}
\begin{split}
&\left|\int_W\psi \cR(z)^m (\oldh) \, dm_W\right|\leq \Cs\left[(\epsilonr^{\oldp\theta}+\epsilonr^{1-4\varsigma/5}+\epsilonr^{\varsigma-\theta})|\oldh|_\infty+\epsilonr\sup_i \partial_{\up,i}\oldh\right] a^{-m}\\
&+\Cs\sum_{\ell\geq \ell_0}\sum_{i,j} |\oldh|_\infty \epsilonr^{\theta}\left\{\sum_{A,B\in E_{\ell,i,j}}\!\!\!\!\Xi_{\ell,A,i,j} \Xi_{\ell,B,i,j}\!\!\int_{\cQ_{c\epsilonr^\theta}} \!\! e^{ib[\bomega_A-\bomega_B]}\left[{\bf G}^*_{\ell,m,i,A}\cdot\overline{{\bf G}^*_{\ell,m,i,B}}\right]\right\}^{\frac 12}\\
&\leq \Cs\left[(\epsilonr^{\oldp\theta}+\epsilonr^{1-4\varsigma/5}+\epsilonr^{\varsigma-\theta})|\oldh|_\infty+\epsilonr\sup_i \partial_{\up,i}\oldh\right] a^{-m}\\
&+\Cs\sum_{\ell\geq \ell_0} |\oldh|_\infty \epsilonr^{-\frac 1 2}\left\{\sum_{i,j}\sum_{A,B\in E_{\ell,i,j}}\!\!\!\!\!\!Z_{\ell,A,i} Z_{\ell,B,i}\!\left|\int_{\cQ_{c\epsilonr^\theta}} \!\!  e^{ib[\bomega_A-\bomega_B]}\left[{\bf G}^*_{\ell,m,i,A}\overline{{\bf G}^*_{\ell,m,i,B}}\right]\right|\right\}^{\frac 12} \, .
\end{split}
\end{equation}

In view of the cancellation argument needed to conclude the
present proof of Lemma~\ref{dolgolemma}, we need a fundamental, but technical, result
whose proof can be found in Appendix~\ref{sec:hoihoi}.  

\begin{lemma}[Oscillatory integral]\label{lem:cancel} We have
\begin{equation}\label{eq:below-o}
\inf_{x^s}|\partial_{x^s}\left[\bomega_A-\bomega_B\right](x^s)|\geq \Cs d(W_{A},W_{B}) \, .
\end{equation}
In addition, provided that 
\begin{equation}\label{eq:varvar}
0<\ho \le \frac\theta 5 \mbox{ and } 4 \varsigma /5 +\ho (7+11 \varsigma/15)\leq 1\, ,
\end{equation}
we have
\begin{equation}\label{eq:amazing}
|\bomega_A-\bomega_B|_{\cC^{1+\ho}(\cQ_{c\epsilonr^\theta})}\leq \Cs \epsilonr \, ,
\end{equation}
which implies
\begin{equation}\label{eq:up-o}
\begin{split}
\bigg|\int_{|x^s|\leq c\epsilonr^\theta}\!\!\!\!\!\!\!\!\!\!\!\!dx^s\,&e^{ib(\bomega_A(x^s)-\bomega_B(x^s))}{\bf G}^*_{\ell,m,i,A}\overline{{\bf G}^*_{\ell,m,i,B}}\,\bigg|_{\infty}
\\ &\qquad  \leq \Cs\frac{(\ell\vu)^{2m-2}}{[(m-1)!]^2} e^{-2a\ell\vu}
 \epsilonr^{-\theta}\left[\frac \epsilonr{d(W_A,W_B)^{1+\ho} b^\ho}+\frac1{\epsilonr^{\theta}d(W_A,W_B) b}\right]\, .
\end{split}
\end{equation}
\end{lemma}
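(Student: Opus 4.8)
\textbf{Plan of proof of Lemma~\ref{lem:cancel}.}
The proof splits into three parts, corresponding to the three displayed conclusions \eqref{eq:below-o}, \eqref{eq:amazing}, and \eqref{eq:up-o}. The last is a routine consequence of the first two, so the bulk of the work is in estimating the function $\bomega_A - \bomega_B = \bh^0_A\circ(\bh^s_A)^{-1} - \bh^0_B\circ(\bh^s_B)^{-1}$, which measures the ``vertical'' ($x^0$) displacement between the two leaves $W_A$ and $W_B$ once they have been straightened by the approximate unstable foliation $\bF_{i,j,\up}$.

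First I would address \eqref{eq:below-o}, the transversality/lower bound. The key point is that the contact form $\bal = \extd x^0 - x^s \extd x^u$ is preserved by the flow, and the leaves $W_A$, $W_B$, being images of stable curves, lie in the kernel of $\bal$; hence by \eqref{eq:holo} we have $N_A'(x^s) = x^s M_A'(x^s)$ and similarly for $B$. Differentiating the relations \eqref{eq:holo} defining the holonomies, I would express $\partial_{x^s}\bomega_A$ in terms of $M_A$, $M_B$, and the foliation functions $G_{i,j,\up}$, $H_{i,j,\up}$, and then use that $H_{i,j,\up}$ is ``almost'' an $x^u$-antiderivative of $G_{i,j,\up}$ (property (i) of Theorem~\ref{thm:foliation}: $\partial_{x^u}H = G$) together with the normalizations \eqref{eq:straight-unst}. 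The upshot should be that $\partial_{x^s}(\bomega_A - \bomega_B)$ is, to leading order, $\int_0^{x^s} [$ slope difference along the $x^u$-direction $]$, and this slope difference is bounded below by a constant multiple of the separation $|M_A(x^s) - M_B(x^s)|$ of the two leaves in the $x^u$-coordinate, which in turn is comparable to $d(W_A, W_B)$ by the uniform transversality of stable and unstable cones and the bound \eqref{condF_A} on $M_A'$. This is essentially the billiard-flow analogue of the contact non-integrability estimate in \cite{Li04, BaL}; the novelty here is only that we must use our constructed (fake) foliation rather than a genuine smooth one.

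Second, \eqref{eq:amazing}: the $\cC^{1+\ho}$ bound $\|\bomega_A - \bomega_B\|_{\cC^{1+\ho}(\cQ_{cr^\theta})} \le \Cs r$. The $\cC^0$ and $\cC^1$ parts follow from the explicit formulas together with Lemma~\ref{lem:hpr} (which controls $\bh^s_A$, $(\bh^s_A)'$, $(\bh^s_A)^{-1}$), the bounds \eqref{condF_A}, \eqref{trivialbound}, and the smoothness properties (v), (viii) of the foliation — in particular $\|\partial_{x^s}H\|_{\cC^0} \le C\rho$ gives the factor $r$ (with $\rho = r^\varsigma$), after using \eqref{eq:varsigma}. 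The $\ho$-Hölder part of the first derivative is where the four-point condition (vii) of Theorem~\ref{thm:foliation} enters decisively: differentiating $\bomega_A - \bomega_B$ in $x^s$ produces terms involving $\partial_{x^s}G_{i,j,\up}$ evaluated at the two points $(M_A(x^s), \cdot)$ and $(M_B(x^s), \cdot)$, and to control the Hölder modulus of such a difference I would feed the four-point estimate (vii) — which precisely bounds $|\partial_{x^s}G(x^u,x^s) - \partial_{x^s}G(x^u,y^s) - \partial_{x^s}G(y^u,x^s) + \partial_{x^s}G(y^u,y^s)|$ — together with the $L^\infty$ bound (vi) on $\partial_{x^u}\partial_{x^s}G$. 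Tracking the powers of $r$ ($\rho = r^\varsigma$, $|x^u|$-separation of order $r$, homogeneity index $k_W \le \rho^{-1/5}$) through (vi)--(viii) is exactly what forces the two constraints in \eqref{eq:varvar}: the condition $\ho \le \theta/5$ keeps the Hölder exponent compatible with the box scale, and $4\varsigma/5 + \ho(7 + 11\varsigma/15) \le 1$ is precisely what makes the $\rho^{-4/5 - 11\varpi/15}$-type constant in (vii) absorb into the $\Cs r$ bound when $\varpi = \ho$.

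Finally, \eqref{eq:up-o} is obtained from \eqref{eq:below-o} and \eqref{eq:amazing} by the standard non-stationary-phase / integration-by-parts argument for oscillatory integrals with a $\cC^{1+\ho}$ phase: writing $\varphi = \bomega_A - \bomega_B$, one integrates $\int e^{ib\varphi} G\,dx^s$ by parts once, using $\partial_{x^s}(e^{ib\varphi}) = ib\varphi' e^{ib\varphi}$ and $|\varphi'| \ge \Cs d(W_A,W_B)$ from \eqref{eq:below-o}, so that the integral is bounded by $(b\, d(W_A,W_B))^{-1}$ times the total variation of $G/\varphi'$; the $\cC^{1+\ho}$ control of $\varphi$ and the Lipschitz bound \eqref{eq:hoG} on $G = {\bf G}^*_{\ell,m,i,A}\overline{{\bf G}^*_{\ell,m,i,B}}$ then yield the two terms on the right of \eqref{eq:up-o} (the $b^{-\ho}$ term coming from the fractional regularity of $\varphi'$ near points where it may be small relative to its Hölder oscillation, the $b^{-1}$ term from the honest Lipschitz part and the boundary contributions over the interval of length $r^\theta$). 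I expect the main obstacle to be the bookkeeping in the second part: carefully propagating the $\rho$- and $k_W$-dependent constants from the four-point estimate (vii) and the $L^\infty$ bounds (v), (vi), (viii) through the holonomy composition $\bh^0_A \circ (\bh^s_A)^{-1}$ and verifying that the resulting exponent of $r$ is nonnegative precisely under \eqref{eq:varvar} — this is delicate and is the place where a sign error would break the whole Dolgopyat estimate.
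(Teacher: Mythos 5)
Your treatment of \eqref{eq:below-o} and \eqref{eq:amazing} is essentially the paper's. The paper imports the purely geometric identity \cite[(E.1)]{BaL} (valid because the leaves lie in the kernel of $\bal$ and the foliation surfaces in the kernel of $\extd\bal$), namely \eqref{eq:symplectic-geo}, so that $\partial_{x^s}(\bomega_B-\bomega_A)(x^s)=\int_{M_A((\bh^s_A)^{-1}(x^s))}^{M_B((\bh^s_B)^{-1}(x^s))}\partial_{x^s}G_{i,j,\up}(u,x^s)\,du$, which by properties (ii) and (vi) equals the $x^u$-separation of the leaves up to a factor $1+\cO(r^{1-4\varsigma/5})$, and this separation stays comparable to $d(W_A,W_B)=|M_A(0)-M_B(0)|$ uniformly in $x^s$ thanks to $|M_A'|\le \Cs r^{1-4\theta/5}$ and Lemma~\ref{lem:hpr}; the H\"older modulus of the derivative is then controlled exactly as you propose, by inserting $\partial_{x^s}G_{i,j,\up}(0,x^s)=\partial_{x^s}G_{i,j,\up}(0,y^s)=1$ so that the four-point condition (vii) (with $\varpi=\ho$) applies, the constraints \eqref{eq:varvar} being precisely what absorbs the $\rho^{-4/5-11\ho/15}$ constant with $\rho=r^\varsigma$. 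One slip in your first step: the integral $\int_0^{x^s}$ belongs to $\bomega$ itself, not to its derivative; taken literally your formula would give a lower bound degenerating at $x^s=0$ and could not yield the uniform estimate \eqref{eq:below-o}.

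The genuine problem is your route to \eqref{eq:up-o}. Integration by parts is not available here: the phase $\varphi=\bomega_A-\bomega_B$ is only $\cC^{1+\ho}$, so after one integration by parts one would have to differentiate $G/(ib\varphi')$, which requires $\varphi''$; and ``the total variation of $G/\varphi'$'' is not controlled either, since an $\ho$-H\"older $\varphi'$ need not have bounded variation. Even granting such control, this would produce only the $(b\,d(W_A,W_B))^{-1}$ term, not the $b^{-\ho}$ term, which is the one that drives the final choice of parameters. The paper instead uses a cancellation-over-periods argument: choose $w_0=-cr^\theta$ and $w_{j+1}$ with $\partial_{x^s}\bomega_{A,B}(w_j)(w_{j+1}-w_j)=2\pi b^{-1}$; on each $[w_j,w_{j+1}]$ replace the phase by its linearization at $w_j$ (error $\le \Cs r\,\bdelta_j^{1+\ho}$ by \eqref{eq:amazing}, $\bdelta_j=w_{j+1}-w_j$) and the amplitude by its value at $w_j$ (error $\le \Cs r^{-3\theta}\bdelta_j$ by \eqref{eq:hoG}); the linearized oscillation integrates to zero over the full period, so each interval contributes at most $\Cs\bigl(b\bdelta_j^{1+\ho}r^{1-2\theta}+r^{-3\theta}\bdelta_j\bigr)\bdelta_j$, and since $\bdelta_j\le \Cs (b\,d(W_A,W_B))^{-1}$ by \eqref{eq:below-o} while $\sum_j\bdelta_j\le \Cs r^\theta$ (plus a trivial bound on the leftover interval $[w_M,cr^\theta]$), summing gives exactly \eqref{eq:up-o}. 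Your heuristic for where the $b^{-\ho}$ and $b^{-1}$ terms come from is right in spirit, but the mechanism must be this subdivision argument (or an equivalent limited-regularity van der Corput/half-period device), not integration by parts.
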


Note that the proof 
of the $\cC^{1+\ho}$ estimates on the holonomy in
\cite[Lemma 6.6]{BaL} used 
a H\"older bound on $\partial_{x^u} \partial_{x^s}G$ (called (6) in \cite[App. ~D]{BaL}) 
that is not available in the present context. Yet, it turns out that the four-point estimate (vii) 
from  Section ~ \ref{Lipschitz} suffices to prove  the bound
\eqref{eq:amazing} on $\bomega_A-\bomega_B$.

We have now all the ingredients to conclude
the proof of Lemma~\ref{dolgolemma}. It is  convenient to introduce a parameter $\vartheta>1$ and to define 
\begin{align*}
&\bE^{\text{close}}_{\ell,i,j}=\{(A,B)\in E_{\ell,i,j}\times E_{\ell,i,j} \;:\;   d(W_A,W_B)\leq \epsilonr^\vartheta\}\\
&\bE^{\text{far}}_{\ell,i,j}=\{(A,B)\in  E_{\ell,i,j}\times E_{\ell,i,j}\;:\;   d(W_A,W_B)> \epsilonr^\vartheta\}
\, .
\end{align*}

To estimate the sum on $\bE^{\text{close}}_{\ell,i,j}$, we will need the following lemma
(proved in Appendix~\ref{sec:hoihoi}).

\begin{lemma}
\label{lem:disco} 
For each $\ell\geq \ell_0$, $i\in\bN$ and $A\in E_{\ell,i}$, the following estimate holds,
\[
 \sum_{\{(B,i)\;:\; i\in\bN, B\in E_{\ell,i}, d(W_{B,i},W^0_{A,i})\leq \rho_{*}\}} Z_{\ell,B,i}\leq  
\Cs\left[\epsilonr^\theta\sqrt \rho_*+\lambda^{\ell\vu/2}\right] \, ,
\]
where $\lambda<1$ is from Lemma~\ref{lem:growth}.
\end{lemma}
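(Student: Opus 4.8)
\textbf{Proof proposal for Lemma~\ref{lem:disco}.}

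The plan is to transfer the estimate to the collision map via $P^+$ and then invoke the growth lemma together with the transversality of stable and unstable cones. First I would observe that, by Lemma~\ref{lem:expansion}, the stable Jacobian $J^s_{\ell\vu}$ along the flow is comparable (up to the constant $C_0$) to the product of map-Jacobians $J_{P^+(W_B)}T^n$ for the appropriate number $n=n(B)$ of collisions, and by Lemma~\ref{lem:distortion} and Lemma~\ref{lem:smooth} we have $Z_{\ell,B,i} = \int_{W_{B,i}} J^s_{\ell\vu} = C^{\pm 1}|\Phi_{\ell\vu}(W_{B,i})|$. So the quantity $\sum Z_{\ell,B,i}$ over the relevant $B$ is, up to uniform constants, the total $m_W$-measure of the union of the pieces $\Phi_{\ell\vu}(W_{B,i}) \subset W$ whose preimages $W_{B,i}$ are $\rho_*$-close to the fixed central manifold $W^0_{A,i}$.

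The key geometric point is that closeness of $W_{B,i}$ to $W^0_{A,i}$ \emph{along an approximate unstable direction} (the distance $d$ here being the one used throughout, which is realized by connecting the two curves with an unstable curve or a short flow segment) forces, after flowing forward by $\ell\vu$, the images $\Phi_{\ell\vu}(W_{B,i})$ to lie in a band around $\Phi_{\ell\vu}(W^0_{A,i})$ in $W$ whose width in the transverse (unstable) direction has been \emph{expanded} by the hyperbolicity factor: a distance $\rho_*$ between stable curves, measured transversally, grows to roughly $\rho_* \Lambda^{\ell\vu}$ in forward time, but since we are inside a box of size $r^\theta$ the relevant bound on the preimage separation is $C\rho_*\Lambda_0^{-n}$ where $n \asymp \ell\vu/\tau_{\max}$. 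Concretely: if $W_{B,i}$ is within $\rho_*$ of $W^0_{A,i}$, then $P^+(W_{B,i})$ is within $C\rho_*$ (up to the $1/2$-H\"older distortion from $\tau$, which is harmless here) of $P^+(W^0_{A,i})$ along an unstable curve, and pulling back $n$ steps this separation shrinks to $C\rho_*\Lambda_0^{-n}$. Hence all such $W_{B,i}$, when iterated forward to $W$, have images lying within a transverse neighborhood of $\Phi_{\ell\vu}(W^0_{A,i})$ of width $\le C\rho_*$ — but more usefully, their \emph{total} length is controlled by decomposing: those components that stayed short (never grew to length $L_0/3$) contribute exponentially little by the growth Lemma~\ref{lem:growth}(a), giving the term $\lambda^{\ell\vu/2}$; the remaining components are grouped by most recent long ancestor exactly as in the proof of Lemma~\ref{lem:growth}(b), and for each such ancestor the portion of $W$ covered has $m_W$-measure at most (width of band in $W$) $\times$ (distortion), where the band width is $\le C\sqrt{\rho_*}$ after accounting for the fact that the transversality between stable and unstable cones degrades like $\cos\varphi$ near tangential collisions — this is where the square root $\sqrt{\rho_*}$ enters, precisely as in Lemma~\ref{lem:discarded} and the appearance of $\sqrt{\ve}$ in the compactness Lemma~\ref{lem:compact}. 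Summing over the (uniformly $O(r^\theta/\rho_*)$ or fewer) bands inside the box of size $r^\theta$, and over the bounded overlaps of the boxes $\cB_{r,i,j}$, yields the claimed $\Cs[r^\theta\sqrt{\rho_*} + \lambda^{\ell\vu/2}]$.

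I expect the main obstacle to be bookkeeping the interplay of three length scales — the box size $r^\theta$, the foliation neighborhood size $\rho=r^\varsigma$, and the closeness parameter $\rho_*$ — while correctly tracking the loss from the $1/2$-H\"older continuity of the free-flight and backward-flight times (which intervenes whenever we compare $d_{\cW^s}$ of projected curves to the flow distance $d$, cf. \eqref{ceiling} and the bound $d_{\cW^s}(W,W')<C\sqrt\ve$ in Lemma~\ref{lem:compact}), and the corresponding $\cos\varphi$-degeneration of cone transversality near grazing collisions. A clean way to organize this is to first prove the purely map-dynamical statement — that for a homogeneous map-stable curve $V=P^+(W)$, the total length of components of $T^{-n}V$ lying within $\rho_*$ (in the unstable/holonomy sense) of a fixed component is $\le \Cs[\sqrt{\rho_*}+\lambda_1^n]$ — by combining \cite[Theorem 5.52]{chernov book} (growth) with the uniform transversality estimate \cite[Prop. 4.41]{chernov book}, and then lift to the flow using Lemma~\ref{lem:expansion} and Lemma~\ref{lem:smooth} exactly as was done for Lemma~\ref{lem:growth}; the factor $\lambda^{\ell\vu/2}$ rather than $\lambda^{\ell\vu}$ absorbs the mismatch between collision count $n$ and flow time $\ell\vu$ as well as the square root coming from converting a $\cC^0$ band estimate into the $\sqrt{\rho_*}$ form.
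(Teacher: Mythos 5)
Your reduction $Z_{\ell,B,i}\asymp|\Phi_{\ell\vu}(W_{B,i})|$ and your splitting into pieces that never grew long (handled by Lemma~\ref{lem:growth}(a), which is indeed where $\lambda^{\ell\vu/2}$ comes from) versus pieces grouped by a most recent long ancestor matches the skeleton of the paper's argument. The gap is in the only genuinely hard step: bounding, for a fixed long ancestor, the total length of those of its subcurves whose backward images land within unstable distance $\rho_*$ of the fixed surface $W^0_{A,i}$. Your proposed mechanisms do not deliver this. The forward images $\Phi_{\ell\vu}(W_{B,i})$ are disjoint subcurves spread along $W$; they are not confined to a ``transverse band of width $C\rho_*$ around $\Phi_{\ell\vu}(W^0_{A,i})$'' (forward flow expands unstable separations, and a transverse band would not control total stable length anyway). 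Likewise, cone transversality, the $\cos\varphi$ degeneration and the $1/2$-H\"older continuity of $\tau$ (Lemma~\ref{lem:discarded}, the $\sqrt\ve$ of Lemma~\ref{lem:compact}) are not the source of $\sqrt{\rho_*}$, and your ``purely map-dynamical statement'' does not follow from \cite[Theorem 5.52]{chernov book} plus \cite[Prop. 4.41]{chernov book}: the growth lemma controls complexity and the measure of points close to the endpoints of \emph{their own} component ($r_{-n}(z)<\ve$), not the recurrence of the countably many components of $T^{-n}V$ to a thin slab around one fixed curve --- that is a different, absolute-continuity/volume type statement.

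What the paper actually does: it introduces the first backward time $\Rtime$ at which the component of $\Phi_{-t\vu}W$ containing a point is longer than $\kappa_*L_0$ and $L_0$-far from $\partial\Omega_0$, writes $Z_{\ell,B,i}\le \Cs\frac{|J_j|}{|\overline W_j|}\,|\Phi_{(\ell-\Rtime_j)\vu}W_{B,i}|$ by distortion, and then bounds $\sum_B|\Phi_{(\ell-\Rtime_j)\vu}W_{B,i}|$ by a volume (Fubini) argument: these are disjoint subcurves of the long ancestor $\overline W_j$, which gets thickened by the approximate unstable foliation of Section~\ref{Lipschitz} of width $\rho$ (a second use of the fake foliation, beyond the Dolgopyat cancellation), so each subcurve contributes volume $\gtrsim|\Phi_{(\ell-\Rtime_j)\vu}W_{B,i}|\,\rho\, r^\theta$ with bounded overlaps; pulling back by the volume-preserving flow, all these thickened sets lie in a slab around $W^0_{A,i}$ of unstable width $\rho_*+\Lambda^{-(\ell-\Rtime_j)\vu}\rho$ and cross-section $\Cs r^\theta\times\Cs r^\theta$, whence $\sum_B|\Phi_{(\ell-\Rtime_j)\vu}W_{B,i}|\le \Cs r^\theta\bigl(\rho_*\rho^{-1}+\Lambda^{-(\ell-\Rtime_j)\vu}\bigr)$. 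Ancestors with $\Rtime_j>\ell/2$ are exactly the $\cI_{\ell\vu/2}(W)$-type pieces and give $\lambda^{\ell\vu/2}$ via Lemma~\ref{lem:growth}(a); for the rest one uses $\Rtime_j\le\ell/2$ and finally sets $\rho=\sqrt{\rho_*}$ --- this optimization of the foliation width, not grazing-collision transversality, is where the square root comes from. Without an argument of this type (a transverse foliation plus volume invariance, or an equivalent absolute-continuity estimate), your proof does not close.
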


Thus, by the above Lemma with $\rho_*=\epsilonr^\vartheta$, we have
\begin{equation}\label{eq:nearby0}
\begin{split}
\sum_{(A,B)\in \bE^{\text{close}}_{\ell,i,j}}Z_{\ell, A, i}Z_{\ell, B, i}&=\sum_{A\in E_{\ell, i,j}}Z_{\ell, A, i} \sum_{(A,B)\in \bE^{\text{close}}_{\ell,i,j}}Z_{\ell, B, i}\\
&\leq \Cs\sum_{A\in E_{\ell, i,j}}Z_{\ell, A, i}\left[\epsilonr^{\theta+\vartheta/2} +\lambda^{\ell\vu/2}\right] \, .
\end{split}
\end{equation}
Next, we assume (possibly strengthening assumption \eqref{eq:h1}),
\begin{equation}
\label{eq:msigma}
\lambda^{\ell_0\vu/2}=(e^{-m})^{\frac{\ln\lambda^{-1}}{2ae^2}}\leq \epsilonr^{\theta+\vartheta/2} \, .
\end{equation}
Hence, using estimate \eqref{eq:nearby0}, for $\ell\geq \ell_0$,
\begin{equation}\label{eq:nearby}
\begin{split}
\sum_{i,j}\sum_{(A,B)\in \bE^{\text{close}}_{\ell,i,j}}Z_{\ell, A, i}Z_{\ell, B, i}&\leq \Cs|W| \epsilonr^{\theta+\vartheta/2} \, .
\end{split}
\end{equation}
To conclude, we want to use estimate \eqref{eq:almost}, applied to $\cR(z)^m \oldh$ rather than to $\oldh$. The reason for this is to obtain an estimate in terms of the $H^1_\infty$ norm, but with a very small factor in front.  
More precisely,  we will in fact estimate,
\begin{equation}
\label{eq:est}
\int_W \cR(z)^{2m}f \, \psi \, dm_W = \int_W \cR(z)^m g_{\up, 1} \, \psi \, dm_W + \int_W \cR(z)^m g_{\up, 2} \, \psi \, dm_W  \, , 
\end{equation}
where $g_{\up,1}$ and  $g_{\up,2}$ are defined by
\[
\cR(z)^m f = \int_0^\up \frac{s^{m-1}}{(m-1)!} e^{-zs} \cL_s f\, ds 
+ \int_\up^\infty \frac{s^{m-1}}{(m-1)!} e^{-zs} \cL_s f\, ds =: g_{\up,1} + g_{\up,2} \, .
\]
To estimate the second term in \eqref{eq:est},  recalling \eqref{eq:int} and
\eqref{eq:up-choice}, notice that
\begin{align}\label
{eq:old8.48}
|g_{\up,2}|_{\infty} & \le |f|_{\infty} \int_\up^\infty \frac{s^{m-1}}{(m-1)!} e^{-as} ds
= |f|_{\infty} e^{-10m} a^{-m} \sum_{k=0}^{m-1} \frac{(10m)^k}{k!} \\
\nonumber
& \le  |f|_{\infty} e^{-10m} a^{-m} \frac{(10m)^{m-1}}{(m-1)!} \sum_{k=0}^{m-1} \frac{(m-1)\cdots k}{(10m)^{m-1-k} } \\
\nonumber
& \le |f|_{\infty} e^{-10m} a^{-m} \frac{(10m)^{m}}{m!} \sum_{k=0}^{m-1} 10^{-m+k} 
\le \Cs |f|_{\infty} a^{-m} e^{-6m} \, .
\end{align} 
Thus, 
\[
| \cR(z)^m g_{\up,2} |_{\infty} \le a^{-m} |g_{\up,2}|_{\infty}
 \le \Cs a^{-2m} e^{-6m} |f|_{\infty}
\le \Cs a^{-2m} \epsilonr^{3\theta} |f|_{\infty}\, ,
\]
where in the last inequality we have used condition \eqref{eq:h1}, $e^{-m} \le \epsilonr^{\theta/2}$.

To estimate the first term in \eqref{eq:est}, we apply the bounds derived throughout this
section, in particular \eqref{eq:up-choice} and
\eqref{defnot}, with $g_{\up,1}$
in place of $f$, noting that $| g_{\up,1}|_{\infty} \le a^{-m} |f|_{\infty}$ and
\[
\begin{split}
|\partial_{\up, i} (g_{\up, 1}) | & \le \int_0^\up \frac{s^{m-1}}{(m-1)!} e^{-as} \partial_{\up,i} (f \circ \Phi_{-s}) \, ds \\
& \le \Cs | \nabla f |_{\infty} \int_0^\infty \frac{s^{m-1}}{(m-1)!} e^{-(a+\ln \Lambda)s} \, ds 
\le \Cs (a+\ln \Lambda)^{-m} |f|_{H^1_\infty} .
\end{split}
\]

Thus, by using \eqref{eq:hoG} and \eqref{eq:nearby} to estimate the sum over $\bE^{\text{close}}_{\ell,i,j}$ and Lemma \ref{lem:cancel} to estimate the sum over $\bE^{\text{far}}_{\ell,i,j}$
in \eqref{eq:almost}, we obtain
\begin{equation}
\label{eq:done}
\begin{split}
&\left|\int_W\psi \cR(z)^{2m} (\oldh) \, dm_W\right|
\le \left| \int_W \psi \cR(z)^m (g_{\up, 1}) \, dm_W \right| + \left| \int_W \psi \cR(z)^m (g_{\up,2}) \, dm_W \right| \\
& \leq \Cs\sum_{\ell\geq \ell_0}| g_{\up, 1} |_{\infty}\frac{e^{-a\ell\vu}(\ell\vu)^{m-1}}{(m-1)!\;\epsilonr^{\frac 1 2}} \bigg[\frac{\epsilonr^{-\vartheta(1+\ho)+1}}{b^{\ho}}+\frac{\epsilonr^{-\theta-\vartheta}}{b}+\epsilonr^{\frac{\vartheta}2+\theta}\bigg]^{\frac 12}\\
&\qquad+\Cs\left[(\epsilonr^{\oldp\theta}+\epsilonr^{1-4\varsigma/5}+\epsilonr^{\varsigma-\theta})| g_{\up, 1} |_\infty+\epsilonr\sup_i \partial_{\up,i}(g_{\up, 1})\right] a^{-m} + | \cR(z)^m g_{\up, 2} |_\infty \\
&\leq  \Cs a^{-2m}\left(\frac{\epsilonr^{-\frac {(1+\ho)\vartheta+2\theta} 2}}{b^{\frac \ho 2}}+\frac{\epsilonr^{-\frac{1+3\theta+\vartheta}2}}{b^{\frac 12}} +\epsilonr^{\frac{\vartheta-2\theta-2}4}+\epsilonr^{\oldp\theta }+
\epsilonr^{1-4\varsigma/5}+\epsilonr^{\varsigma-\theta} + \epsilonr^{3\theta} \right)|\oldh|_{\infty}\\
&\qquad+\Cs  a^{-2m}\epsilonr (1+a^{-1}\ln\Lambda)^{-m}|\oldh|_{H^1_\infty}\, ,
\end{split}
\end{equation}
where we have used the fact that
\[
\sum_{\ell \ge \ell_0} e^{-a\ell \vu} \frac{(\ell \vu)^{m-1}}{(m-1)!} \le C \frac{a^{-m}}{\vu} \le C' a^{-m} \epsilonr^{-\theta} \, .
\]
Note that if we choose $\ho=\frac 1{40}$, then condition \eqref{eq:varvar} is satisfied for each $\frac 13\leq \theta<\varsigma\leq 1$. Thus all the conditions imposed along the computation are
\[
0<\theta<\varsigma<1 \;;\;\; \vartheta> 2 + 2\theta \;;\;\;\lambda^{\ell_0\vu/2}<\epsilonr^{\theta + \vartheta/2} \, .
\]
If we choose  $\vartheta=2+4\theta$, $b=\epsilonr^{-\frac{(1+\ho)\vartheta+2(1+\oldp)\theta}{\ho}}$, $\theta=\frac 1{1+2\oldp}$, $\varsigma=\frac{1+\oldp}{1+2\oldp}$, and $m\geq C a \ln b$, for some appropriate fixed constant $C$, then we satisfy all the conditions, and  Lemma~\ref{dolgolemma} follows
with $\gamma_{Do}=\frac{\oldp\ho}{8+6\ho+(6+4\ho)\oldp}$
since the largest term appearing in \eqref{eq:done} is of 
order\footnote{ Note that with the above choices if $\alpha=\frac 13$, then we have the rather small value $\gamma_{Do}=\frac 1{1222}$. Working more it is certainly possible to obtain a better estimate but it remains unclear what the optimal value is.}
$\epsilonr^{\alpha \theta} = b^{-\gamma_{Do}}$.
\qed

\section{Completing the proofs (exponential mixing of $\Phi_t$ and resonances of $X$)}
\label{theend}

\subsection{Proofs of Theorem~ \ref{main}  and \ref{resonances}}\label{subsec:theend}

The key step for exponential mixing consists in showing the following consequence of the Lasota--Yorke
estimates and the Dolgopyat bound on $\cR(z)$:

\begin{proposition}\label{dolgo}
For any $0<\gamma< \oldp\le 1/3$, if    $\oldq>0$ and $\oldbeta <1$  are such that
$\max\{\oldq, 1-\oldbeta\}$ is small enough, then there exist $1\le a_0 \le a_1 \le b_0$,
$0 <  c_1< c_2$, and $\upsilon_{Do} >0$, so that
$$
\| \cR (a+ib)^n\|_\cB \le \left ( \frac {1}{a+\upsilon_{Do}} \right )^n
$$
$c_2\le  (\ln(1+a^{-1} \upsilon_{Do}))^{-1}$ and for all $|b|\ge b_0$, $a \in [a_0 , a_1]$, and 
$n \in [c_1   \ln |b|, c_2  \ln |b|]$.
\end{proposition}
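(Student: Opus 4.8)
The strategy is the standard Dolgopyat bootstrap, combining the Lasota--Yorke bounds for $\cR(z)$ from Proposition~\ref{prop:LY R} with the cancellation estimate of Lemma~\ref{dolgolemma}, but with the twist (as in \cite{Li04, BaL}) that the mollification operators $\bM_\epsilon$ of Section~\ref{molsec} are used to bridge between the weak/strong norms of $\cB$ appearing on the left of the Lasota--Yorke estimates and the $L^\infty$ and $H^1_\infty$ norms appearing on the right of the Dolgopyat bound. First I would fix $0<\gamma<\oldp\le 1/3$ and choose $\oldq>0$, $1-\oldbeta>0$ small; in particular small enough that $\tilde\lambda>\max\{\Lambda^{-\oldq},\Lambda^{-\gamma},\lambda^{1-\oldbeta}\}$ can be taken close to $1$ and that the exponent $\delta=\min\{\gamma,1/(2q),1/q-2/5-\beta\}$ in Lemma~\ref{mollbound2} is positive. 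I would then work with an iterate $\cR(z)^N$, writing (using the Lasota--Yorke Corollary~\ref{CorLY} / Proposition~\ref{prop:LY R}) a bound of ``baby Lasota--Yorke'' type
\[
a^{N}\|\cR(z)^N\oldh\|_\cB \le \nu_a^{N}\|\oldh\|_\cB + C(1+|z|a+a^2)|\oldh|_w\, ,
\]
with $\nu_a<1$ once $N$ is large enough depending on $a$ (and $c_u$ small). Iterating this $k$ times and using \eqref{eq:weak R} to propagate the weak term, for $n=kN$ one gets
\[
a^n\|\cR(z)^n\oldh\|_\cB \le \nu_a^{n}\|\oldh\|_\cB + C'(1+|z|a+a^2)\max_{0\le j< k} a^{jN}|\cR(z)^{jN}\oldh|_w\, .
\]
Thus everything reduces to showing that the weak norms $a^{m}|\cR(z)^m\oldh|_w$ decay geometrically (beating $a^{-m}$ by a definite factor) for $m$ in a window of length $\asymp\ln|b|$.

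Second, to estimate $|\cR(z)^m\oldh|_w=\sup_{W,\psi}\int_W \cR(z)^m(\oldh)\psi\,dm_W$ I would insert the mollifier: write $\cR(z)^{2m}\oldh=\cR(z)^{m}(\bM_\epsilon \cR(z)^{m}\oldh)+\cR(z)^{m}((\Id-\bM_\epsilon)\cR(z)^{m}\oldh)$. Wait --- more precisely, on each stable curve $W$,
\[
\int_W\cR(z)^{2m}(\oldh)\,\psi\,dm_W = \int_W \cR(z)^{m}\big(\bM_\epsilon(\cR(z)^m\oldh)\big)\psi\,dm_W + \int_W\cR(z)^m\big((\Id-\bM_\epsilon)(\cR(z)^m\oldh)\big)\psi\,dm_W\, .
\]
For the second term, apply \eqref{eq:weak R} (weak norm contracts like $a^{-m}$) and then Lemma~\ref{mollbound2} to get a bound $C a^{-m}\epsilon^\delta\|\cR(z)^m\oldh\|_\cB$; by the previous paragraph (or directly by the Lasota--Yorke bound \eqref{eq:strong stable R}--\eqref{eq:neutral R}) $\|\cR(z)^m\oldh\|_\cB$ is controlled by $a^{-m}$ times a polynomial in $a,|z|$ times $\|\oldh\|_\cB$, so this term is $O(a^{-2m}\epsilon^\delta \mathrm{poly}(|b|)\|\oldh\|_\cB)$. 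For the first term, $\bM_\epsilon(\cR(z)^m\oldh)$ is a genuine $\cC^1$ function, so Lemma~\ref{dolgolemma} (Dolgopyat bound) applies with this function as the argument: it gives
\[
\Big|\int_W\psi\,\cR(a+ib)^{m}\big(\bM_\epsilon(\cR(z)^m\oldh)\big)dm_W\Big|\le \frac{\Cs}{a^{m}b^{\gamma_0}}\Big(\|\bM_\epsilon(\cR(z)^m\oldh)\|_{L^\infty}+(1+a^{-1}\ln\Lambda)^{-m}\|\bM_\epsilon(\cR(z)^m\oldh)\|_{H^1_\infty}\Big)\, ,
\]
valid for $1<a<2$, $b>1$, $m\ge C_{Do}\ln b$. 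Now I would use Lemma~\ref{mollbound1}: $\|\bM_\epsilon(\cR(z)^m\oldh)\|_{L^\infty}\le C\epsilon^{-\oldq-1+\oldbeta}\|\cR(z)^m\oldh\|_s$ and $\|\bM_\epsilon(\cR(z)^m\oldh)\|_{H^1_\infty}\le C\epsilon^{-\oldq-2+\oldbeta}\|\cR(z)^m\oldh\|_s$, and then the strong stable estimate \eqref{eq:strong stable R} gives $\|\cR(z)^m\oldh\|_s\le C((a-\ln\tilde\lambda)^{-m}\|\oldh\|_s+a^{-m}|\oldh|_w)\le C a^{-m}\mathrm{poly}(|b|)\|\oldh\|_\cB$. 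The factor $(1+a^{-1}\ln\Lambda)^{-m}$ kills the extra $\epsilon^{-1}$ in the $H^1_\infty$ term since $m\asymp\ln b$ and $\epsilon$ will be a fixed power of $b$: choosing $m=\lceil C\ln b\rceil$ large enough makes $(1+a^{-1}\ln\Lambda)^{-m}\epsilon^{-1}\le 1$.

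Third, the bookkeeping: set $\epsilon=b^{-\sigma}$ for a small $\sigma>0$ and $2m=n\asymp\ln b$. Collecting the two contributions we get
\[
a^{n}|\cR(z)^n\oldh|_w \le \Cs\big(b^{-\gamma_0+\sigma(\oldq+1-\oldbeta)}\,\mathrm{poly}(b) + b^{-\sigma\delta}\,\mathrm{poly}(b)\big)\|\oldh\|_\cB\, .
\]
Since $\gamma_0,\delta>0$ are fixed positive constants (depending only on $\oldp$ and the dynamics, via Section~\ref{Lipschitz} and Lemma~\ref{mollbound2}) and $\oldq,1-\oldbeta$ are at our disposal and small, we can choose $\sigma$ small enough that both exponents of $b$ are strictly negative; the polynomial factors ($\mathrm{poly}(b)$ coming from the $(1+|z|a+a^2)$ and $|z|=|b|$ terms) only cost a fixed power of $b$ and so are absorbed for $b\ge b_0$ large. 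Hence $a^{n}|\cR(z)^n\oldh|_w\le b^{-\beta_0}\|\oldh\|_\cB$ for some $\beta_0>0$ and all $n$ in a window $[c_1\ln|b|,c_2\ln|b|]$ (the lower bound $c_1\ln|b|\ge C_{Do}\ln|b|$ ensuring Lemma~\ref{dolgolemma} applies, the upper bound keeping $n\asymp\ln|b|$ so the $\mathrm{poly}(b)$ stays a fixed power). Feeding this back into the iterated baby-Lasota--Yorke inequality from the first paragraph, for such $n$,
\[
a^n\|\cR(z)^n\oldh\|_\cB\le \big(\nu_a^{n}+C'\,b^{-\beta_0}\mathrm{poly}(b)\big)\|\oldh\|_\cB\le (1-\eta)^n\|\oldh\|_\cB
\]
for some fixed $\eta>0$ once $b_0$ is large (since $\nu_a<1$ and $b^{-\beta_0}\mathrm{poly}(b)\to 0$); rewriting $(1-\eta)/a$ as $(a+\upsilon)^{-1}$ for a suitable $\upsilon>0$, uniformly for $a\in[a_0,a_1]\subset(1,2)$, gives the claim. \emph{The main obstacle} is the balancing of the three small quantities $\gamma_0$ (Dolgopyat gain, quite small --- of order $1/1222$ in the worst case by the footnote after \eqref{eq:done}), $\delta$ (mollification error exponent), and the loss $\epsilon^{-\oldq-2+\oldbeta}$ from $\bM_\epsilon$ amplified by $H^1_\infty$: one must verify that the exponents can be made simultaneously favorable by taking $\oldq$ and $1-\oldbeta$ small and $\sigma$ appropriately small, while checking that the polynomial-in-$|b|$ losses (from the $a^2$, $|z|a$ factors in the Lasota--Yorke constants, and from the finitely many iterates of the baby-LY step) are genuinely only polynomial and hence beaten by the fixed negative power of $b$; this is exactly the point where one exploits that $n$ is confined to a logarithmic window in $|b|$.
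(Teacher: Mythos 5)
Your skeleton (Lasota--Yorke bounds for $\cR(z)$, mollification at scale $\epsilon=b^{-\sigma}$, the Dolgopyat bound on a logarithmic window) is the one the paper uses, but there is a genuine gap at the assembly step: you never deal with the factor $|z|\approx|b|$ coming from the neutral-norm estimate \eqref{eq:neutral R}, which sits in the constant $C(1+|z|a+a^2)$ of your ``baby Lasota--Yorke'' inequality. Your weak-norm gain is at best $b^{-\beta_0}$ with $\beta_0\le\gamma_0-\sigma(\oldq+1-\oldbeta)<\gamma_0$, and the $\gamma_0$ produced by Lemma~\ref{dolgolemma} is tiny (of order $10^{-3}$); so when you feed the windowed weak-norm decay back into your iterated inequality, the right-hand side contains a term of size $\approx |b|^{1-\beta_0}\|\oldh\|_\cB$, a \emph{positive} power of $|b|$, and no choice of $\upsilon,c_1,c_2,\sigma,\oldq,\oldbeta$ makes that $\le (a/(a+\upsilon))^n\|\oldh\|_\cB$. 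A fixed positive power of $|b|$ cannot be ``absorbed for $b\ge b_0$ large''; it must be beaten by a strictly larger negative power, and the only full-strength negative powers available are the mollification exponent $\sigma\gamma$ and the length of the logarithmic window -- not $\gamma_0$. This is precisely why the paper introduces the $b$-dependent auxiliary norm $\|\cdot\|_*=\|\cdot\|_s+\tfrac{c_u}{|b|}\|\cdot\|_u+\tfrac1{|b|}\|\cdot\|_0$ in \eqref{weighted}: with the unstable and neutral components down-weighted by $1/|b|$, the Lasota--Yorke constant becomes uniform in $b$, the Dolgopyat gain only has to beat the loss $\epsilon^{-(\oldq+1-\oldbeta)}$ (condition \eqref{ssa}), the one unavoidable factor $|b|$ (from $\|\oldh\|_\cB\le|b|\|\oldh\|_*$ in the mollification error) is killed by $\sigma\gamma>1$ (condition \eqref{ss0}), and the $|b|$ lost when converting back from $\|\cdot\|_*$ to $\|\cdot\|_\cB$ is paid by composing an extra block $\cR^{Lm}$ and using $|b|(1+a^{-1}\upsilon)^{-Lm}\le1$. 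Without this re-weighting (or an equivalent device) your scheme does not close.

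Two secondary points. First, in the iterated inequality you replace $\sum_j\nu_a^{(k-1-j)N}a^{jN}|\cR^{jN}\oldh|_w$ by a maximum over $j$; that maximum includes indices with $jN$ below the Dolgopyat threshold $C_{Do}\ln|b|$ (in particular $j=0$), where no gain beyond $|\oldh|_w\le\|\oldh\|_\cB$ is available, so substituting the windowed decay for \emph{all} $j$ is unjustified -- one must keep the $\nu_a^{(k-1-j)N}$ weights and take $c_1$ large enough that they compensate the sub-threshold terms. Second, the direction of $\sigma$ is backwards: since the mollification error carries a factor $|b|$ (through $\|\cR(z)^m\oldh\|_\cB$ in your version, through $\|\oldh\|_\cB\le|b|\|\oldh\|_*$ in the paper's), one needs $\sigma\gamma>1$, i.e. $\sigma$ \emph{larger} than $1/\gamma\ge3$, not small; it is then the competing requirement $\sigma(\oldq+1-\oldbeta)<\gamma_0$ that forces the hypothesis that $\max\{\oldq,1-\oldbeta\}$ be small (cf.\ \eqref{finalcond}), a balance which your ``choose $\sigma$ small, the polynomial factors are absorbed'' glosses over.
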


Proposition~ \ref{dolgo} will be shown in the next subsection. Together with
Corollary ~ \ref{spX}   it gives 
the following corollary:

\begin{cor}[Spectral gap for the generator $X$]\label{specX'}
Under the assumptions of Corollary~\ref{CorLY}
and Proposition~\ref{dolgo}, 
there exists  $\upsilon_0 >0$ so that
the spectrum of $X$ on $\cB$
in the half-plane
$$
\{ z \in \bC \mid \Re (z) > -\upsilon_0 \}
$$
consists only of the eigenvalue $z=0$, which has algebraic multiplicity equal to one.
\end{cor}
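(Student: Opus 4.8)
The plan is to derive the spectral gap by combining the quasi-compactness of the resolvent (Corollary~\ref{CorLY}, which bounds the essential spectral radius of $\cR(a+ib)$ on $\cB$ by $(a-\ln\tilde\lambda)^{-1}$) with the Dolgopyat-type estimate of Proposition~\ref{dolgo} (which controls the true spectral radius of $\cR(a+ib)^n$ for large $|b|$), passing from information on $\cR(z)$ to information on $X$ via the standard correspondence $\rho\in\spp(\cR(z))\setminus\{0\}$ iff $\rho=(z-\tilde\rho)^{-1}$ with $\tilde\rho\in\spp(X)$ (as in the proof of Corollary~\ref{spX}, using \cite[Lemma~8.1.9]{Davies}).

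First I would fix $a\in[a_0,a_1]$ and argue by contradiction, assuming that for every $\upsilon>0$ there is $z_\upsilon=a_\upsilon+ib_\upsilon\in\spp(X)$ with $-\upsilon<\Re(z_\upsilon)\le 0$ and $z_\upsilon\ne 0$. Since by Corollary~\ref{spX} the spectrum on $i\bR$ consists only of $z=0$, any such $z_\upsilon$ with small $|\Re(z_\upsilon)|$ must have $|\Im(z_\upsilon)|=|b_\upsilon|$ bounded away from $0$; and since the part of $\spp(X)$ in $\{\Re z>-\upsilon_{ess}\}$ consists of isolated eigenvalues of finite multiplicity accumulating only on $\Re z=-\upsilon_{ess}$ (Corollary~\ref{spX}, claim (b) of Theorem~\ref{resonances}), taking $\upsilon\downarrow 0$ forces $|b_\upsilon|\to\infty$ (otherwise we would get an eigenvalue on $i\bR$ other than $0$ by a compactness/limiting argument, using that eigenvalues in a compact subset of $\{\Re z>-\upsilon_{ess}\}$ are isolated and finite in number). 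So we may assume $|b_\upsilon|\ge b_0$ with $b_0$ as in Proposition~\ref{dolgo}, and $\Re(z_\upsilon)\in(-\upsilon,0]$.

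Now pick $n=n(b_\upsilon)\in[c_1\ln|b_\upsilon|,c_2\ln|b_\upsilon|]$ and apply Proposition~\ref{dolgo} with $a=a(\upsilon)$ chosen in $[a_0,a_1]$: it gives $\|\cR(a+ib_\upsilon)^n\|_\cB\le (a+\upsilon_*)^{-n}$ for a fixed $\upsilon_*>0$ independent of $\upsilon$. Hence the spectral radius of $\cR(a+ib_\upsilon)$ on $\cB$ is at most $(a+\upsilon_*)^{-1}$. On the other hand, if $z_\upsilon\in\spp(X)$ with $\Re(z_\upsilon)\in(-\upsilon,0]$, then $(a+ib_\upsilon-z_\upsilon)^{-1}\in\spp(\cR(a+ib_\upsilon))$, and $|a+ib_\upsilon-z_\upsilon|=|(a-\Re z_\upsilon)+i(b_\upsilon-\Im z_\upsilon)|$. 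Choosing $b_\upsilon=\Im z_\upsilon$ in the application (legitimate since $\Im z_\upsilon$ ranges over arbitrarily large values and the proposition holds for all $|b|\ge b_0$), we get $|a+ib_\upsilon-z_\upsilon|=a-\Re(z_\upsilon)\le a+\upsilon$, so $(a+ib_\upsilon-z_\upsilon)^{-1}$ has modulus at least $(a+\upsilon)^{-1}$. For $\upsilon<\upsilon_*$ this contradicts the spectral radius bound $(a+\upsilon_*)^{-1}$. Therefore there exists $\upsilon_0\in(0,\upsilon_*)$ such that $\spp(X)\cap\{\Re z>-\upsilon_0\}$ contains no point with $|\Im z|\ge b_0$ and $\Re z>-\upsilon_0$; shrinking $\upsilon_0$ further using claim (b) of Theorem~\ref{resonances} to push the finitely many eigenvalues in the compact region $\{|\Im z|\le b_0,\ \Re z>-\upsilon_{ess}\}\setminus\{0\}$ off of $\{\Re z>-\upsilon_0\}$, we conclude $\spp(X)\cap\{\Re z>-\upsilon_0\}=\{0\}$. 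Simplicity of the eigenvalue $0$ is already given by Corollary~\ref{spX} (via ergodicity of the billiard flow).

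The main obstacle is the bookkeeping in quantifying ``$|b_\upsilon|\to\infty$'': one must rule out that $\spp(X)$ approaches $i\bR\setminus\{0\}$ at bounded height, and then genuinely combine the two regimes (bounded $|b|$, handled by Corollary~\ref{spX} plus discreteness of eigenvalues; large $|b|$, handled by Proposition~\ref{dolgo}) into a single uniform $\upsilon_0$. A secondary subtlety is matching the range of admissible $a$ and $n$ in Proposition~\ref{dolgo} with the location of the hypothetical eigenvalue; this is why the proposition is stated with $a$ ranging over an interval $[a_0,a_1]$ and $n$ over a window $[c_1\ln|b|,c_2\ln|b|]$, giving exactly the flexibility needed. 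Everything else is the standard resolvent-to-generator spectral translation already used for Corollary~\ref{spX}.
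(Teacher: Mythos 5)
Your proposal is correct and follows essentially the same route as the paper: Proposition~\ref{dolgo} controls the spectrum of $X$ in the region $\{\Re z>-\upsilon,\ |\Im z|>b_0\}$ via the spectral radius bound on $\cR(a+ib)$, and Corollary~\ref{spX} (finitely many isolated eigenvalues in the compact region $|\Im z|\le b_0$, with $0$ the only spectrum on $i\bR$, simple by ergodicity) handles the rest. The only cosmetic difference is that you pass from the resolvent bound to $\spp(X)$ by the spectral mapping theorem and a contradiction, whereas the paper analytically continues $z\mapsto\cR(z)$ across the imaginary axis via the Neumann-series identity $\cR(z+\eta^{-1})=\cR(z)(1+\eta^{-1}\cR(z))^{-1}$; these are equivalent.
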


Corollaries ~ \ref{spX} and \ref{specX'} with Lemma~\ref{lem:C1}
Remark ~\ref{opt} give claims (a) and (b) of Theorem~ \ref{resonances}.

\begin{proof}
By Proposition \ref{dolgo} and Corollary \ref{spX},  the sets
$\{z\in \bC\;:\; \Re(z)>-\upsilon_{Do}, \, |\Im(z)|> b_0\}$ and $\{z\in\bC\;:\; \Re(z)>0\}$ are included in
 the resolvent set of $X$, in fact, $z\mapsto \cR(z)$
 is holomorphic, as a bounded operator on $\cB$,  on the union of these two sets.  (Just like for \cite[Cor. 3.10]{BaL}
or \cite[Cor. 2.13]{Li04},  we apply 
$\cR(z+\eta^{-1})=\cR(z)  (1+\eta^{-1} \cR(z))^{-1}$. See \cite[Lemma 4.2]{Butterley} for details.)

On the other hand, Corollary~  \ref{spX} implies that in the set $\{z\in \bC\;:\; \Re(z)>-\upsilon_{Do} , \, |\Im(z)|\leq b_0\}$ there can be only finitely many eigenvalues, while the intersection
of the spectrum with the imaginary axis consists in a simple eigenvalue
at $z=0$. Corollary~\ref{specX'}  is proved.
\end{proof}

\begin{proof}[Proof of Theorems~\ref{main} and  claim (c) of Theorem ~\ref{resonances}]
 We will apply results of Butterley \cite{Butterley} to obtain  
 exponential decay of correlations for H\"older observables 
(bypassing  the final argument in  \cite[Proof of Theorem 1.1]{BaL}).
Proposition~\ref{dolgo} is \cite[Assumption ~3A]{Butterley},
Lemma ~ \ref{lem:lip} is \cite[Assumption ~1]{Butterley}, 
and  Corollary~ \ref{CorLY}
is \cite[Assumption ~2]{Butterley}.
In addition, Lemma~ \ref{lem:strong c} gives  strong continuity on $\cB$
 (in particular the domain $\Dom(X)$ of $X$
is dense). Thus, \cite[Theorem 1]{Butterley} (noting also the corrigendum) provides a finite set
 \[
 \{z_{j}\}_{j=0}^{N}= \spp(X)\cap  \{z\in \bC: -\upsilon_{Do}<  \Re({z})\leq 0, |\Im(z)| \leq b_0\} \, ,
 \]
 a (nontrivial) finite rank projector $\Pi:\cB\to \cB$, and an operator-valued function $t\mapsto \cP_{t}$, where $\cP_t$ is bounded on $\cB$,  $\Pi\cP_t=\cP_t\Pi=0$, and a matrix  $\widehat X\in L(F,F)$, $F=\Pi(\cB)$, which has exactly $\{z_j\}$ as eigenvalues and 
 \begin{equation}\label{eq:result}
 \cL_t = \cP_{t} + e^{t\widehat X}\Pi  \quad \text{for all $t\geq 0$,}
 \end{equation}
and for each $\upsilon_1 < \upsilon_{Do}$ there exists $C_{\upsilon_1}>0$ such that,  for all $\oldh$
in $\Dom X \subset \cB$,
 \begin{equation}\label{eq:estimate}
 |\cP_{t}\oldh|_{w} \leq C_{\upsilon_1} e^{-\upsilon_1 t}\|X \oldh\|_{\cB}  \quad \text{for all $t\geq 0$.}
 \end{equation}
Next, 
by Corollary ~ \ref{specX'}, we know that the only $z_j$ on the imaginary axis is
$z_0=0$, corresponding to the constant fixed point of $\cL_t$ and to
the invariant volume $\Pi_0 \oldh=\int_{\Omega_0} \oldh \, d m$.
This implies claim (c) of Theorem~ \ref{resonances}.

Finally, Theorem ~\ref{main} (and Corollary ~\ref{truemain})  follows from applying \eqref{eq:result}
and  \eqref{eq:estimate} to $\oldh\in \cC^2(\Omega_0)\cap \cC^0(\Omega)$ and $\psi \in \cC^1(\Omega_0)$, since
Lemma~\ref{relating} gives
$$
\left| \int \psi \cP_t \oldh \, dm \right| \le | \cP_t \oldh|_{(\cC^{1})^*}|\psi|_{\cC^1}\le | \cP                                                              _t \oldh|_{w}|\psi|_{\cC^1} \, ,
$$
while
the proof of  Lemma~ \ref{lem:strong c} gives
$\cC^2(\Omega_0)\cap\cC^0(\Omega) \subset \Dom( X)$ and $\|X \oldh\|_{\cB}\le C |\oldh|_{\cC^2(\Omega_0)}$.
\end{proof}

\subsection{Proof of Proposition~\ref{dolgo}}
\label{dolgoproof}

It only remains to deduce Proposition~\ref{dolgo} from Lemma~ \ref{dolgolemma}.

\begin{proof}[Proof of Proposition~\ref{dolgo}]
We shall use several times the trivial observation that if 
$0<\hat \eta <  \eta$ are small enough then for any $D >1$ we have
\begin{equation}\label{trivv}
\hat \eta < \frac{\eta }{D} \Rightarrow
\frac{1}{(1+\eta)^m} \le \frac{1}{(1+\hat \eta)^{Dm}} \quad \forall m\ge 1 \, .
\end{equation}

We introduce an auxiliary norm in order to handle the\footnote{ In \cite{GLP} another approach was used to address this issue, involving
an auxiliary norm $\|\cdot\|^*$ based on exact stable leaves (bypassing 
the neutral norm, using the resolvent). See
\cite[Lemma 7.4, Lemma 7.8, just after (7.4)]{GLP}.} 
$|z|$ factor in \eqref{eq:neutral R} which is much larger
than the $|b|^{-\gamma_{Do}}$ decay from the Dolgopyat Lemma~\ref{dolgolemma}.
For fixed $|b|\ge  1$ and $\oldh\in   \cC^1(\Omega_0)$, set
\begin{equation}\label{weighted}
\|\oldh \|_* = \|\oldh\|_s + \frac{c_u}{|b|} \|\oldh\|_u + \frac {1}{|b|} \|\oldh \|_0 \,.
\end{equation}

We shall prove Proposition~ \ref{dolgo} for the norm $\|\cdot\|_*$. We first check that
this is sufficient:
For $M >L>1$, if $a \in [a_0,a_1]$ and $|b|\ge b_0\ge 1$, 
where $a_0$, $a_1$, $b_0$, $0<c_1<c_2$,  and $\upsilon_{Do}$ are as in
Proposition~ \ref{dolgo} for  $\|\cdot\|_*$, we have for $c\in [c_1, c_2]$ and 
$m = [c \ln b]$,
\begin{align}
\|\cR^{Lm+Mm}(a+ib) \oldh\|_{\cB} &\le    |b| \|\cR^{Lm+Mm}(a+ib) \oldh\|_*\\
\nonumber                    &\le  C a^{-Lm} \|\cR^{Mm}(a+ib) \oldh\|_*\\
   \nonumber                 &\le  C a^{-Lm-Mm} (1+a^{-1} \upsilon_{Do})^{-Mm} \|\oldh\|_{\cB} \, ,
\end{align}
where  we used  in the first inequality the 
fact that $ \|\cdot\|_\cB
\le |b|  \|\cdot\|_*$), in the second inequality  Proposition~ \ref{dolgo} 
with the bound
$|b |(1+ a^{-1} \upsilon_{Do})^{-Lm}\le 1$ for $L$  large
enough, and in the last inequality Proposition~ \ref{dolgo} with $\|\cdot \|_{*} \le  \|\cdot \|_{\cB}$.
Taking large enough  $M/L$  and small enough  $0<\hat \upsilon_{Do} <\upsilon_{Do}$ 
(recalling \eqref{trivv}), we have proved
$$
\|\cR^{n} (a+ib)\|_{\cB}  \le (a+\hat \upsilon_{Do})^{-n}
$$
for some  $\hat \upsilon_{Do} >0$ and $\hat c_1 
< \hat c_2$, all $|b| \ge b_0$, $a\in [a_0, a_1]$, any $\hat c_1 <\hat c 
< \hat c_2$,
and $n=[\hat c \ln b]$. 

Now we prove Proposition~\ref{dolgo} for the norm $\| \cdot \|_*$.
Let $\oldh\in \cB \cap 
\cC^0_\sim \cap 
\cC^2(\Omega_0)$.
We shall assume that $1\le a_0 \le a_1 \le b_0$, to be determined later, and take 
$b\ge b_0$ (the case $b\le - b_0$ is similar) and $a\in [a_0, a_1]$. In particular
$$
\frac{|a+ib|}{a|b|}\le  2 \,  . 
$$

We shall consider $n=2m$, the case $n=2m+1$ is similar.
Our starting point is  the Lasota-Yorke estimate \eqref{babyLY} in the proof of Corollary~\ref{CorLY}, which holds also when replacing $\|\cdot\|_\cB$
by $\|\cdot \|_*$: 
Let $0<{\tilde \lambda}<1$  be as in Proposition~\ref{prop:LY R}, 
then there exists\footnote{ We use here that the constant $C$ in \eqref{babyLY} is uniform
in $a$ and $b$, also for $\|\cdot\|_*$.}   $m_0\ge 1$ so that for all $a\ge 1$, all
$|b|>0$, and all $m\ge m_0$, we have
 \begin{equation}\label{startdolgo}
  \| \cR(a+ib)^{2m} \oldh \|_{*} 
   \le a^{-m}  \nu_a^m \|\cR(a+ib)^{m} \oldh\|_* + C  a^{1-m}  |\cR(a+ib)^{m} \oldh|^{\mathbb H}_w \, ,
 \end{equation}
 where   $\nu_a^m \le C ( 1- a^{-1} \ln {\tilde \lambda})^{-m}<1$, and where the homogeneous
 weak norm  $|\oldh|_w^{\mathbb H}$ was defined in \eqref{onemorenorm}.
(Indeed, by Definition \ref{cG_t} of the partition $\cG_t(W)$,
all the admissible stable curves appearing in the right-hand sides of  Propositions~\ref{prop:L_t}
and~\ref{prop:LY R} are  homogeneous.)

Recalling \eqref{trivv}, take $\upsilon_{Do} >0$ so that
$$
0< \upsilon_{Do} < -\ln \tilde \lambda/ 2 \,  .
$$
Since \eqref{babyLY} for $\|\cdot\|_*$ also implies
$$
 a^{-m}   \nu_a^m\| \cR^{m} \oldh\|_*
\le \Cs a^{-2m}   \nu_a^m   \|\oldh\|_* \le \Cs a^{-2m}  
\frac{ 1}{ (1 -  a^{-1} \ln {\tilde \lambda})^m } \|\oldh\|_* \, , 
$$
and since we assume $a\ge a_0\ge 1$
the first term in the right-hand side of \eqref{startdolgo} 
is bounded by 
$$
\frac{ a^{-2m}}{6(1 + a^{-1} \upsilon_{Do})^{2m}} \|\oldh\|_*=
\frac{ 1}{6(a +\upsilon_{Do})^{2m}} \|\oldh\|_* \, ,
$$
 taking $D=2$ in \eqref{trivv},
 if $b_0$ is large enough and if
$c_1 > 0 $.

We  may thus focus on the second term
in the right-hand side of \eqref{startdolgo}, that
is, the homogeneous weak norm contribution. 
For $\sigma >1$ to be determined later,
we fix 
$$\epsilon=|b|^{-\sigma} \, . $$
Then, 
using \eqref{eq:weak R}, which gives 
$| \cR(z)^{m}|_{w}\le \Cs a^{-m}$, 
 we find (recalling that $1\le a\le a_1$)
\begin{align}
\label{eq4'} 
 \Cs  a^{1-m}   |\cR(z)^{m}(\oldh)|_{w}^{\mathbb H}
&\le \Cs    
 \big[ a^{-m}
|\cR(z)^{m}( \bM_{\epsilon}(\oldh))|_{w}^{\mathbb H}
  +  \Cs a^{-2m}|\oldh-\bM_{\epsilon}(\oldh)|_w^{\mathbb H} \big] \, .
\end{align}
If $q>1$ is close enough to $1$ and $\beta>0$ is small enough, then 
$\min\{\gamma, (2q)^{-1}, 1/q-2/5-\beta\}=\gamma$, since $\gamma<1/3$.
By Lemma~\ref{mollbound2},   the second term
on the right-hand side of \eqref{eq4'} is thus bounded by
\begin{align}\label{eq5'}
\Cs L_0^\oldbeta
a^{-2m}   \epsilon^{\gamma}  \|\oldh\|_\cB\le \Cs
a^{-2m}  |b|^{1-\sigma \gamma } \|\oldh\|_* .
\end{align}
Next,  if
\begin{equation}\label{ss0}
\sigma \gamma > 1 \, , 
\end{equation}
we get
$$
\Cs 
a^{-2m} |b|^{1-\sigma \gamma  }
\le  \frac{1}{6(a+\upsilon_{Do})^{2m}}
$$
if  $b\ge b_0$ is  large enough  and if
$$
2m\le \frac{ (\sigma \gamma-1 ) \ln |b| -\ln (6\Cs)  }
{  \ln (1+ a^{-1} \upsilon_{Do})} \, .
$$
The numerator in the right-hand side above is $>    (\sigma \gamma-1 )\ln |b|/2$
for large enough  $b_0$ and $a_0/\upsilon_{Do}$, since $a\ge a_0$. This gives
a constraint satisfied  if 
\begin{equation}\label{bdlogb2}c_2 \le \frac{a_0  (\sigma \gamma-1
 )}{4 \upsilon_{Do}}\, .
\end{equation}

 The first term in \eqref{eq4'} will be more tricky to handle. Recalling
 $a\le a_1$, we must estimate:
\begin{equation}
\label{eq6'}
 \Cs a^{-m}   \sup_{\substack{\psi \in C^{\oldp}(W)\\ |\psi|_{C^{\oldp}(W)}\le 1}}
\int_W \cR(z)^{m}( \bM_{\epsilon}(\oldh)) dm_W
\end{equation}
on some admissible stable homogeneous curve $W$.
Recall $\Lambda=\Lambda_0^{1/\tau_{\max}}$ from
\eqref{Lambda}. By the Dolgopyat bound (Lemma~\ref{dolgolemma}),
there exist $C_{Do}>0$ and $\gamma_{Do}$ (independent of
$\oldbeta$ and  $\oldq$),  so that, if \footnote{ The proof gives a constant $C_{Do}$  much larger than  $\gamma_{Do}$, and
we may safely assume that $\gamma_{Do}<1$.}
\begin{equation}
\label{bdlogb0}
m \ge  C_{Do}  \ln| b| \qquad \mbox{ i.e., }\quad c_1 \ge C_{Do} \, ,
\end{equation}
then \eqref{eq6'} 
is bounded by
\begin{align}\label{eq66'}
 \Cs   a^{-2m} |b|^{-\gamma_{Do}}
\biggl (  |\bM_{\epsilon} (\oldh)|_{\infty(\Omega_0)}
+\frac{ |\bM_{\epsilon}( \oldh)|_{H^1_\infty(\Omega_0)}}{(1+ a^{-1} \ln \Lambda )^{m/2}}
\biggr )\, .
\end{align}

Now,    Lemma~\ref{mollbound1} and Lemma~\ref{relating}  give
\begin{align}\label{eq7'}
| \bM_{\epsilon}( \oldh)|_{L^\infty(\Omega_0)}
 \le \Cs\epsilon^{-\oldq-1+\oldbeta} 
\|\oldh\|_{s}
 \le \Cs\epsilon^{-\oldq-1+\oldbeta} 
\|\oldh\|_{*}\, ,
\end{align}
and  
\begin{align}
\label{eq8'}
|\bM_\epsilon(\oldh)|_{H^{1}_\infty(\Omega_0)}&
\le \Cs\epsilon^{-\oldq-2+\oldbeta} \|\oldh\|_{s}
\le \Cs\epsilon^{-\oldq-2+\oldbeta} \|\oldh\|_{*}\, .
\end{align}

On the one hand, if 
\begin{equation}\label{ssa}
\sigma (\oldq +1-\oldbeta)< \gamma_{Do} \, ,
\end{equation}
the estimate  \eqref{eq7'}   gives
the following bound for the first term of \eqref{eq66'}
$$
\Cs  a^{-2m} |b|^{- \gamma_{Do} +\sigma (\oldq+1-\oldbeta)}\|\oldh\|_{*}
\le  \frac{1}{6(a+\upsilon_{Do})^{2m}}\|\oldh\|_{*}\, ,
$$
if $b_0$ is large enough, and
$$
2m\le \frac{(\gamma_{Do}-\sigma (\oldq+1-\oldbeta))\ln |b|- \ln (6\Cs)}
{\ln (1+ a^{-1} \upsilon_{Do})}\, .
$$
which holds if $b_0$ is large enough and
\begin{equation}\label{bdlogb3}
c_2 \le a_0  \frac{\gamma_{Do}-\sigma (\oldq+1-\oldbeta)}
{2 \upsilon_{Do}} \, .
\end{equation}

On the other hand, recalling $1\le a_0\le a \le a_1$, \eqref{eq8'} gives that the second term of
\eqref{eq66'} is bounded by $\|\oldh\|_{*}$ multiplied by
(note that \eqref{ss0} implies
$\sigma (\oldq+2-\oldbeta)>\gamma^{-1}$, while $\gamma^{-1}\ge 3>\gamma_{Do}$)
$$
\Cs  \frac{ a^{-2m}|b|^{ - \gamma_{Do}+\sigma (\oldq-\oldbeta+2)} }{ (1+ a^{-1}\ln \Lambda)^{m/2}} 
\le \Cs  \frac{ a^{-2m}|b|^{ - \gamma_{Do}+\sigma (\oldq-\oldbeta+2)} }{ (1+ a^{-1} \ln \Lambda)^{m/4} (1+ a^{-1} \ln \Lambda )^{m/4}} 
\le  \frac{1}{6(1+ a^{-1} \upsilon_{Do} )^{2m}} \, , 
$$
if $b_0$ is large enough and $m$ is large enough, more precisely
\begin{equation}\label{morelower}
c_1 > 4 \frac{\sigma (\oldq+2-\oldbeta)-\gamma_{Do}}{\ln (1+ a^{-1} \ln  \Lambda )} \, ,
\end{equation}
and if, in addition, recalling \eqref{trivv}, we assume
$
 \upsilon_{Do}
< \frac{\ln  \Lambda}{8}
$.

Along the way, we have collected the lower bounds on $c_1$ given by
\eqref{morelower} and 
\eqref{bdlogb0}, the upper bounds on $c_2$ listed in
\eqref{bdlogb2}
 and  \eqref{bdlogb3}, as well as the conditions
 \eqref{ss0} (lower bound on $\sigma$) and
 \eqref{ssa}
(upper bound on $\sigma$).
Fixing $\oldp=1/3$,  Lemma ~\ref{dolgolemma}
provides values for $\gamma_{Do}$ and $C_{Do}$.
 If $b_0>1$ is large enough and
 $\upsilon_{Do}>0$ is small enough,
 then all conditions can be satisfied simultaneously if $\oldbeta <1$ is close enough
to $1$ and  $\oldq>0$ is small enough.  
In particular, recalling\footnote{ We may take $\gamma=\oldp-\oldq$, which is smaller than
(and close
to) $1/3<\oldbeta$, given our other choices, so that $\sigma$ is
larger than (and close to) $3$.} 
$\sigma^{-1}<\gamma \le \oldp-\oldq$ 
we require
\begin{equation}\label{finalcond}
\oldq +1-\oldbeta< \frac{\gamma_{Do}}{\sigma}<\gamma_{Do}(\oldp-\oldq)  \, .
\end{equation}

This completes the proof of Proposition~\ref{dolgo}.
\end{proof}

\appendix

\section{Proofs of  Lemmas~\ref{lem:holder jac} and ~ \ref{lem:interpolate}
(approximate foliation holonomy)}
\label{postponeproof}

This section contains the proofs of Lemmas~\ref{lem:holder jac} and ~\ref{lem:interpolate}
(Subsections \ref{sec:step2} and ~ \ref{sec:step4}) on the H\"older bounds of the Jacobians of the holonomy
of the fake (approximate) foliation. These bounds are
instrumental to get the four-point estimate (vii) for the fake unstable foliation.

\begin{proof}[Proof of Lemma~\ref{lem:holder jac}]
Letting $\bh_{V^n_1, V^n_2}$ denote the holonomy map from $V^n_1 = T^{-n}V_1$
to $V^n_2 = T^{-n} V_2$, we begin by noting that
\begin{equation}
\label{eq:holder split}
\begin{split}
&\ln \frac{J\bh_{12}( v_1(\bxs),\bxs)}{J\bh_{12}( v_1(\bys),\bys)}\\
 &\qquad= \ln \frac{J^s_{V_1}T^{-n}( v_1(\bxs),\bxs)}{J^s_{V_2}T^{-n}(\bh_{12}(v_1(\bxs),\bxs,))}
- \ln \frac{J^s_{V_1}T^{-n}( v_1(\bys),\bys)}{J^s_{V_2}T^{-n}(\bh_{12}(v_1(\bys),\bys))}
+ \ln \frac{J\bh_{V^n_1, V^n_2}(x_n)}{J\bh_{V^n_1, V^n_2}(y_n)} \\
&\qquad = \sum_{j=0}^{n-1} \ln J^s_{T^{-j}V_1}T^{-1}(x_j) - \ln J^s_{T^{-j}V_2}T^{-1}(\tx_j)
- \ln J^s_{T^{-j}V_1}T^{-1}(y_j) + \ln J^s_{T^{-j}V_2}T^{-1}(\ty_j) \\
& \qquad \qquad\quad+ \ln \frac{J\bh_{V^n_1, V^n_2}(x_n)}{J\bh_{V^n_1, V^n_2}(y_n)} \, ,
\end{split}
\end{equation}
where as in the proof of Lemma~\ref{lem:jac holonomy},
$x_j = T^{-j}(v_1(\bxs),\bxs)$, $\tx_j = T^{-j}(\bh_{12}(v_1(\bxs),\bxs))$, and similarly for $y_j$
and $\ty_j$.  

We begin by estimating the difference of stable Jacobians in \eqref{eq:holder split}.
The factors in each term involving the stable Jacobian are given by \eqref{eq:jac factors}, 
and we can bound the
differences by grouping together either the terms on the same stable curve 
(standard distortion bounds), or the terms on the same unstable curve (using 
Lemma~\ref{lem:jac holonomy}).  
Assuming without loss of generality that $d(x_0, \tx_0) \ge d(y_0, \ty_0)$,
this yields the following bound on the sum,
\begin{equation}
\label{eq:min sum}
 C \sum_{j=0}^{n-1} \min \{ d(x_{j+1}, y_{j+1}) k_{j+1}^2, d(x_{j+1}, \tx_{j+1}) k_{j+1}^2
+ d(x_j, \tx_j) + \phi(x_j, \tx_j) + \phi(x_{j+1}, \tx_{j+1}) \} \, ,
\end{equation}
where we have used the uniform bound on $J\bh_{12}$ given by Lemma~\ref{lem:jac holonomy} 
and \eqref{eq:uniform} to eliminate terms involving $d(\tx_j, \ty_j)$ and $d(y_j, \ty_j)$.
We estimate each term using one of two cases.

\smallskip
\noindent
{\em Case 1.}  $d(x_{j+1}, y_{j+1}) \le d(x_{j+1}, \tx_{j+1})$.  We write,
\begin{align}
\label{eq:split exp}
d(x_{j+1},y_{j+1}) &= d(x_{j+1}, y_{j+1})^{\varpi} d(x_{j+1}, y_{j+1})^{1-\varpi}\\
\nonumber
&\le C (J^s_{V_1}T^{-j-1}(x_0))^\varpi d(x_0, y_0)^\varpi d(x_{j+1}, \tx_{j+1})^{1-\varpi} \, .
\end{align}
On the one hand, we have 
\[
d(x_{j+1} , \tx_{j+1}) \le C d(x_0, \tx_0) (J^u_{T^{n-j}\ell}T^{j+1}(x_j))^{-1} \, ,
\]
where $\ell$ is the curve  so that $x_0 \in T^n(\ell) =: \bar\gamma$.
On the other hand,
by the uniform transversality of the curves $T^{-j-1}(V_1)$ and $T^{-j-1}(\bar\gamma)$,
we have 
\begin{equation}
\label{eq:translate factors}
\begin{split}
& J^s_{V_1}T^{-j-1}(x_0)J^u_{\bar\gamma}T^{-j-1}(x_0) = C^{\pm 1} JT^{-j-1}(x_0)
= C^{\pm 1} \tfrac{\cos \vf(x_0)}{\cos \vf(x_{j+1})} \\
\implies & J^s_{V_1}T^{-j-1}(x_0) = C^{\pm 1} k_W^{-2} k_{j+1}^{2} J^u_{T^{-j-1}(\bar\gamma)}T^{j+1}(x_{j+1}) \, ,
\end{split}
\end{equation}
where $JT^{-j-1}$ is the full Jacobian of the map $T^{-j-1}$.  This estimate together
with \eqref{eq:strip bound} implies
\begin{equation}
\label{eq:stable jac bound}
J^s_{V_1}T^{-j-1}(x_0) \le C k_W^{-10/3} \rho^{-2/3} 
(J^u_{T^{-j-1}\bar\gamma}T^{j+1}(x_{j+1}))^{5/3} \, .
\end{equation}
Now using this estimate together with
\eqref{eq:split exp} and again \eqref{eq:strip bound} yields,
\begin{equation}
\label{eq:case 1}
d(x_{j+1},y_{j+1})k_{j+1}^2 \le C d(x_0,y_0)^\varpi d(x_0, \tx_0)^{1-\varpi}
\frac{k_W^{-(4+10\varpi)/3}}{\rho^{2(1+\varpi)/3}} 
\frac{1}{(J^u_{T^{-j-1}\bar\gamma}T^{j+1}(x_{j+1}))^{(1-8\varpi)/3}}\, ,
\end{equation}
and this will be summable over $j$ as long as $\varpi < 1/8$.

\smallskip
\noindent
{\em Case 2.}  $d(x_{j+1}, \tx_{j+1}) \le d(x_{j+1}, y_{j+1})$.  
In order to control the terms involving $\phi(x_j, \tx_j)$, we 
use the expressions for the Jacobians given by \eqref{eq:jac factors} and  \eqref{eq:translate factors} 
together with \cite[eq. (5.27) and following]{chernov book}
 to write,
\begin{equation}
\begin{split}
\label{eq:angle bound}
\phi(x_j, \tx_j) &  \le  C\sum_{m=1}^j 
\frac{d(x_{j-m}, \tx_{j-m}) }{J^s_{T^{m-j-1}V_1}T^{-m+1}(x_{j-m+1}) J^u_{T^{-j}\bar\gamma}T^{m-1}(x_{j}) }
+ C \frac{\phi(x_0, \tx_0)}{J^s_{V_1}T^{-j}(x_0) J^u_{T^{-j}\bar\gamma}T^j(x_j) }  \\
& \le C d(x_0, \tx_0) \sum_{m=1}^j \frac{\Lambda_0^{-m+1}}{J^u_{T^{m-j}\bar\gamma}T^{j-m}(x_{j-m}) J^u_{T^{-j}\bar\gamma}T^{m-1}(x_{j})} + C \frac{\Lambda_0^{-j} d(x_0, \tx_0)}{J^u_{T^{-j}\bar\gamma}T^j(x_j)} \, , 
\end{split}
\end{equation}
where in the second line we have used the assumption that $\phi(x_0, \tx_0)$ is
proportional to $d(x_0, \tx_0)$ and that 
$d(x_{j-m}, \tx_{j-m}) \le C d(x_0, \tx_0)/J^u_{T^{m-j}\bar\gamma}T^{j-m}(x_{j-m})$.
Note that for terms with $m \ge 2$, there is a gap between the expansion factors of the 
unstable Jacobians
in the denominator of the sum:  The Jacobian $J^u_{T^{m-j-1}\bar\gamma}T(x_{j-m+1})$
is missing.  We use the fact that this is proportional to $k_{j-m}^2$ to estimate,
\[
\begin{split}
\frac{d(x_0, \tx_0)^{9\varpi}}{J^u_{T^{m-j}\bar\gamma}T^{j-m}(x_{j-m}) J^u_{T^{-j}\bar\gamma}T^{m-1}(x_{j})}
& \le \frac{C d(x_j, \tx_j)^{9\varpi} k_{j-m}^{18\varpi}}{(J^u_{T^{m-j}\bar\gamma}T^{j-m}(x_{j-m}) J^u_{T^{-j}\bar\gamma}T^{m-1}(x_{j}))^{1-9\varpi}} \\
& \le \frac{C d(x_j, \tx_j)^{9\varpi} k_W^{-12\varpi} \rho^{-6\varpi}}{(J^u_{T^{m-j}\bar\gamma}T^{j-m}(x_{j-m}))^{1-15\varpi}
(J^u_{T^{-j}\bar\gamma}T^{m-1}(x_{j}))^{1-9\varpi}} \\
& \le C d(x_j, \tx_j)^{9\varpi} k_W^{-12\varpi} \rho^{-6\varpi} \Lambda_0^{-(j-1)(1-15\varpi)}\,  ,
\end{split}
\]
where we have used \eqref{eq:strip bound} in the second line.  Notice that this estimate
holds for $m=1$ as well.
Putting this
estimate together with \eqref{eq:angle bound} yields,
\begin{equation}
\label{eq:angle final}
\phi(x_j , \tx_j) \le C d(x_0, \tx_0)^{1-9\varpi} d(x_j, \tx_j)^{9\varpi} 
k_W^{-12\varpi} \rho^{-6\varpi} \Lambda_0^{-(j-1)(1-15\varpi)}  \, .
\end{equation}
A similar estimate holds for $\phi(x_{j+1}, \tx_{j+1})$ with $d(x_{j+1}, \tx_{j+1})$ in place
of $d(x_j, \tx_j)$.

Now since
$d(x_j, \tx_j) = C^{\pm 1} d(x_{j+1}, \tx_{j+1}) J^u_{T^{-j-1}\bar\gamma}T(x_{j+1})$, we have
\[
d(x_j,\tx_j) \le C d(x_{j+1}, y_{j+1}) k_j^2
\]
by the assumption of this case as well.
Since 
$d(x_{j+1}, \tx_{j+1}) \le C d(x_j, \tx_j)$, combining similar terms in
\eqref{eq:min sum}, it remains to estimate the following expression in
this case,
\begin{equation}
\label{eq:case 2 first}
\begin{split}
& d(x_{j+1}, \tx_{j+1}) k_{j+1}^2 + d(x_j, \tx_j) + d(x_0, \tx_0)^{1-9\varpi} d(x_j, \tx_j)^{9\varpi} 
k_W^{-12\varpi} \rho^{-6\varpi}\Lambda_0^{-(j-1)(1-15\varpi)} \\
& \le d(x_{j+1}, \tx_{j+1}) k_{j+1}^2 + d(x_{j+1}, \tx_{j+1}) k_j^2 
+ \frac{d(x_0, \tx_0)^{1-9\varpi} d(x_{j+1}, \tx_{j+1})^{9\varpi} k_j^{18\varpi}} 
{k_W^{12\varpi} \rho^{6\varpi} \Lambda_0^{(j-1)(1-15\varpi)}}  \,  .
\end{split}
\end{equation}
For the first term, we use the assumption of this case to write,
\begin{align*}
d(x_{j+1}, \tx_{j+1}) &= d(x_{j+1}, \tx_{j+1})^\varpi d(x_{j+1}, \tx_{j+1})^{1-\varpi}\\
&\le C (J^s_{V_1}T^{-j-1}(x_0))^\varpi d(x_0, y_0)^\varpi
d(x_{j+1}, \tx_{j+1})^{1-\varpi} \, ,
\end{align*}
which is 
the same expression as in \eqref{eq:split exp}.  Thus the first term
in \eqref{eq:case 2 first} is bounded by \eqref{eq:case 1}.

For the second term in \eqref{eq:case 2 first}, we use \eqref{eq:strip bound} to bound
$k_j^2$ and again \eqref{eq:stable jac bound} to estimate,
\[
\begin{split}
d(x_{j+1}, \tx_{j+1}) k_j^2 
& \le C (J^s_{V_1}T^{-j-1}(x_0))^{\varpi} d(x_0, y_0)^\varpi 
\frac{d(x_0, \tx_0)^{1-\varpi} }{(J^u_{T^{-j-1}\bar\gamma}T^{j+1}(x_{j+1}))^{1-\varpi}} 
\frac{(J^u_{T^{-j}\bar\gamma}T^j(x_j))^{2/3}}{k_W^{4/3} \rho^{2/3}} \\
& \le C d(x_0, y_0)^\varpi d(x_0, \tx_0)^{1-\varpi} \frac{k_W^{-(4+ 10\varpi)/3}}{\rho^{2(1+\varpi)/3}}
\frac{1}{(J^u_{T^{-j-1}\bar\gamma}T^{j+1}(x_{j+1}))^{(1-8\varpi)/3}} \,  ,
\end{split}
\]
where in the second line we have used  the fact that
$J^u_{T^{-j}\bar\gamma}T^j(x_j) \le C J^u_{T^{-j-1}\bar\gamma}T^{j+1}(x_{j+1})$
as well as \eqref{eq:translate factors} to obtain a bound equivalent to
\eqref{eq:case 1}.

Finally, we estimate the third term in \eqref{eq:case 2 first} following a similar strategy, using again \eqref{eq:stable jac bound},
\[
\begin{split}
&d(x_{j+1}, \tx_{j+1})^{9\varpi} k_j^{18\varpi} \\
&\qquad \le C (J^s_{V_1}T^{-j-1}(x_0))^{\varpi} d(x_0, y_0)^\varpi 
\frac{d(x_0, \tx_0)^{8\varpi} }{(J^u_{T^{-j-1}\bar\gamma}T^{j+1}(x_{j+1}))^{8\varpi}} 
\frac{(J^u_{T^{-j}\bar\gamma}T^j(x_j))^{6\varpi}}{k_W^{12\varpi} \rho^{6\varpi}} \\
& \qquad\le C d(x_0, y_0)^\varpi d(x_0, \tx_0)^{8\varpi} \frac{k_W^{-46\varpi/3}}{\rho^{20\varpi/3}}
\frac{1}{(J^u_{T^{-j-1}\bar\gamma}T^{j+1}(x_{j+1}))^{\varpi/3}} \,  .
\end{split}
\]
Putting these three estimates together in \eqref{eq:case 2 first}
implies that the minimum factor from 
\eqref{eq:min sum} in this case is bounded by
\[
Cd(x_0,y_0)^\varpi d(x_0, \tx_0)^{1-\varpi} 
\Lambda_0^{-(j-1)(1-44\varpi/3)} \max \{ k_W^{-82\varpi/3} \rho^{-38\varpi/3},  k_W^{-4/3 - 10\varpi/3} \rho^{-2(1+\varpi)/3} \} \, ,
\]
which will be summable over $j$ as long as $\varpi \le 1/15$.

\smallskip
Finally, using Case 1 or Case 2 as appropriate for each term appearing in \eqref{eq:min sum}
and summing over $j$ completes the required estimate on the difference of stable Jacobians
appearing in \eqref{eq:holder split}.

It remains to estimate the term involving $J\bh_{V^n_1, V^n_2}$, which is the holonomy
of the seeding foliation $\{ \ell_z \}_{z \in \oW_i}$.  
Since $\{ \ell_z \}$ is uniformly $\cC^2$, we have on the one hand,

\[
\ln \frac{J\bh_{V^n_1, V^n_2}(x_n)}{J\bh_{V^n_1, V^n_2}(y_n)} \le
C(d(x_n, \tx_n) + \phi(x_n, \tx_n) + d(y_n, \ty_n) + \phi(y_n, \ty_n)) \, ,
\]
while on the other hand, using the fact that the curvatures of $V^n_1$ and
$V^n_2$ are uniformly bounded,
\[
\ln \frac{J\bh_{V^n_1, V^n_2}(x_n)}{J\bh_{V^n_1, V^n_2}(y_n)} \le
C ( d(x_n ,y_n) + d(\tx_n, \ty_n)) \, .
\]
Now 
$\ln \frac{J\bh_{V^n_1, V^n_2}(x_n)}{J\bh_{V^n_1, V^n_2}(y_n)}$ is bounded by the 
minimum of these two estimates, which we recognize as a simplified
version of the expression \eqref{eq:min sum} with $j=n$.  Thus this quantity satisfies the same bounds
as in Cases 1 and 2 above, completing the
proof of Lemma~\ref{lem:holder jac}.
\end{proof}

\begin{proof}[Proof of Lemma~\ref{lem:interpolate}]
Let $V_0$ be the subcurve of $P^+(W)$ corresponding to the gap; in the coordinates $(\bxu, \bxs)$,
$V_0$ is the vertical line segment identified by the interval $[a,b]$.  The boundary curves of the gap
from the surviving foliation on either side as described by 
$\{ (\barG(\bxu, a), a) : |\bxu| \le k_W^2 \rho \}$
and $\{ (\barG(\bxu, b), b) : |\bxu| \le k_W^2 \}$.  Let $x_a = (0, a)$ and
$y_b = (0,b)$.  Fix $\bxu$ with $|\bxu| \le k_W^2 \rho$ and define $\tx_a = (\bxu, \barG(\bxu, a))$
and $\ty_b = (\bxu, \barG(\bxu, b))$.  Let $V_1$ denote the stable curve (vertical in the $(\bxu, \bxs)$
coordinates) connecting $\tx_a$ and $\ty_b$.

Using similar notation to Subsection~ \ref{sec:step3}, let $j+1$ denote the least integer
$j' \ge 1$ such that $T^{j'}(\cS_0)$ intersects $V_0$.  This implies in
particular, that the surviving parts of the foliation immediately on either side of the gap 
lie in the same homogeneity
strip for the first $j$ interates of $T^{-1}$.
 Define $x_i = T^{-i}(x_a)$, 
$y_i = T^{-i}(y_b)$, $\tx_i = T^{-i}(\tx_a)$ and $\ty_i = T^{-i}(\ty_b)$, for $i = 0, 1, \ldots j$.

The following expressions for the quantities appearing in the slope will be useful,
\[
\barG(\bxu, b) - \barG(\bxu, a) = \int_{T^{-j}V_1} J^s_{T^{-j}V_1}T^j \, dm_W\, ,
\qquad b-a = \int_{T^{-j}V_0} J^s_{T^{-j}V_0} T^j \, dm_W \,  .
\]
And similarly,
\[
\partial_{\bxs} \barG(\bxu, a) = \frac{J^s_{T^{-j}V_1}T^j(y_j)}{J^s_{T^{-j}V_0}T^j(x_j)}
J \bh_{-j}(x_j) \, ,
\] 
where $\bh_{-j}$ is the holonomy map from $T^{-j}V_0$ to $T^{-j}V_1$ along the surviving foliation
on the sides of the gap.
Using these expressions, we estimate,
\begin{equation}
\label{eq:slope split}
\begin{split}
\ln \left| \frac{\barG(\bxu, b) - \barG(\bxu, a)}{(b-a) \partial_{\bxs} \barG(\bxu, a)} \right|
= & \ln \frac{\frac{1}{|T^{-j}V_1|} \int_{T^{-j}V_1} J^s_{T^{-j}V_1}T^j \, dm_W}{J^s_{T^{-j}V_1}T^j(y_j)}
- \ln J \bh_{-j}(x_j)  \\
& \; \; +  \ln \frac{J^s_{T^{-j}V_0}T^j(x_j)}{\frac{1}{|T^{-j}V_0|} \int_{T^{-j}V_0} J^s_{T^{-j}V_0}T^j \, dm_W} 
 + \ln \frac{|T^{-j}V_0|}{|T^{-j}V_1|}  \, .
\end{split}
\end{equation}
Since $J^s_{T^{-j}V_1}T^j$ is continuous on $T^{-j}V_1$, the average value of the function is
equal to the function evaluated at some point on $T^{-j}V_1$.  Thus by standard distortion bounds along
stable curves, the first term on the right-hand side of \eqref{eq:slope split} is bounded by 
$C_d |a-b|^{1/3}$.
A similar bound holds for the third term above.  

In order to estimate $J\bh_{-j}(x_j)$, we note that this is part of the surviving foliation
that originated at time $-n$.   Applying 
Lemma~\ref{lem:jac holonomy} from time $-j$ to time $-n$, there exists $C>0$ such that
\[
\ln J\bh_{-j}(x_j)
\le C (d(x_j, \tx_j) \rho^{-2/3} k_{W}^{-4/3} 
+ \phi(x_j, \tx_j))  \, ,
\]
where $\phi(x_j, \tx_j)$ represents the angle formed by the tangent vectors to $T^{-j}V_0$
and $T^{-j}V_1$ at $x_j$ and $\tx_j$, respectively.

For the first term above, 
$d(x_j, \tx_j )\le C d(x_0 , \tx_0)/ J^u_{T^{-j}\bgamma}T^j(x_j)$, where
$\bgamma$ denotes the element of the surviving foliation containing $x_a$.
For the second term, we use \eqref{eq:angle bound} to write,
\[
\phi(x_j, \tx_j) \le C d(x_0, \tx_0) 
\sum_{m=1}^{j} \frac{\Lambda_0^{-m+1}}{J^u_{T^{m-j}\bgamma}T^{j-m}(x_{j-m})
J^u_{T^{-j}\bgamma}T^{m-1}(x_j)} + C \frac{\Lambda_0^{-j} d(x_0, \tx_0)}{J^u_{T^{-j}\bgamma}T^j(x_j)} \, ,
\]
where we have used the fact that $\phi(x_0, \tx_0) \le C d(x_0, \tx_0)$ since both
$V_0$ and $V_1$ are vertical segments and the unstable curves in the surviving foliation
have uniformly bounded curvature.
As in the proof of Lemma~\ref{lem:holder jac}, there is a gap in the product of
unstable Jacobians; the factor $J^u_{T^{m-j}\bgamma} T(x_{j-m+1})$ is missing. 
This is proportional to $k^2_{j-m}$, but due to \eqref{eq:strip bound}, we do not ask for
a full power of this factor; rather, we estimate,
\[
\begin{split}
\frac{1}{J^u_{T^{m-j}\bgamma}T^{j-m}(x_{j-m})J^u_{T^{-j}\bgamma}T^{m-1}(x_j)}
&\le \frac{C k_{j-m}^{6/5}}{J^u_{T^{m-j}\bgamma}T^{j-m}(x_{j-m}) 
(J^u_{T^{m-j}\bgamma} T(x_{j-m+1}))^{3/5} J^u_{T^{-j}\bgamma}T^{m-1}(x_j)} \\
& \le C \frac{k_W^{4/5} \rho^{-2/5}}{(J^u_{T^{-j}\bgamma} T^j(x_j))^{3/5}} \, .
\end{split}
\]
This, together with the above estimates implies,
\begin{equation}
\label{eq:h bound} 
\ln J\bh_{-j}(x_j) \le C \left( \frac{d(x_0, \tx_0)}{\rho^{2/3} k_W^{4/3} J^u_{T^{-j} \bgamma}T^j(x_j)}
+ \frac{d(x_0, \tx_0) k_W^{4/5}}{\rho^{2/5} (J^u_{T^{-j}\bgamma}T^j(x_j))^{3/5}} \right)\,  .
\end{equation}

\begin{sublem}[Relating the gap size and the unstable Jacobian]
\label{sub:gap}
Let $a,b$ define the gap in $P^+(W)$ as in the statement of the lemma and let $j, x_j$ be as
defined above.  There exists $C>0$, depending only on $T$, and not on $n$, $j$ or $W$, such
that
\[
\frac{1}{J^u_{T^{-j}\bgamma}T^j(x_j) } \le C k_W^{-2} \rho^{-26/35} |a-b|^{9/35} \,  .
\]
\end{sublem}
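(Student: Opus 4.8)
The idea is to express the gap length $|a-b|$ in terms of expansion factors at time $-j$ and to compare the unstable Jacobian $J^u_{T^{-j}\bgamma}T^j(x_j)$ with the index $k_j$ of the homogeneity strip containing $T^{-j}(\bgamma)$. Recall from Subsection~\ref{sec:step3} (Case~1, eq.~\eqref{eq:new(2)} and the line after) that at time $-j$, the surviving leaf $T^{-j}(\bgamma)$ through $x_a$ is a homogeneous unstable curve of length $k_W^2\rho / J^u_{T^{-j}\bgamma}T^j(z)$ for $z\in T^{-j}(\bgamma)$, so that its homogeneity index $k_j$ satisfies $k_j^{-3} = C^{\pm 1} k_W^2\rho / J^u_{T^{-j}\bgamma}T^j(x_j)$, i.e.
\[
J^u_{T^{-j}\bgamma}T^j(x_j) = C^{\pm 1} k_W^2 \rho\, k_j^3 \, .
\]
So the Sublemma would follow once we show $k_j^{-3} \le C \rho^{-26/35+1}|a-b|^{9/35}$ modulo the factor $k_W^2\rho$, equivalently once we relate $k_j$ to $|a-b|$.

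\textbf{Key steps.} First I would estimate the image of the gap at time $-j$: as noted in Subsection~\ref{sec:step3}, since $T^{-j}(V_1)$ (the curve $P^+(W)$ flowed to time $-j$) crosses $\cS_0$, the length of the image of the gap inside $T^{-j}(V_1)$ is of order $k_j^{-2}$, because the surviving leaves there are homogeneous of length $\asymp k_j^{-3}$ and they are uniformly transverse to the stable curve, spanning a homogeneity strip of width $\asymp k_j^{-3}$; more precisely, the image of $(a,b)$ under $T^{-j}$ has length bounded below by a uniform constant times $k_j^{-2}$ (this is eq.~\eqref{eq:new(2)} and the two lines after it). Second, I would pull this back to $P^+(W)$: since $T^{-j+1}(V_1)$ is still a single homogeneous stable curve (we are at the first cutting time), the expansion from $T^{-j+1}(V_1)$ to $T^{-j}(V_1)$ in the gap is of order $k_j^2$, while the contraction from $P^+(W)$ to $T^{-j+1}(V_1)$ along a stable curve has a lower bound $\Lambda_0^{-(j-1)}$ and is $\le 1$; hence, crudely using only the lower bound on the full backward stable contraction $J^s_{P^+(W)}T^{-j}$,
\[
|a-b| \ge C k_j^{-2}\cdot k_j^{-2} = C k_j^{-4}\, ,
\]
which is too weak. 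So instead I must not throw away the expansion: writing $|a-b| = \int_{T^{-j}V_0} J^s_{T^{-j}V_0}T^j \,dm_W$ (as in the proof of Lemma~\ref{lem:interpolate}) and using bounded distortion, $|a-b| = C^{\pm 1} |T^{-j}V_0^{\gap}| \cdot J^s_{T^{-j}V_0}T^j(x_j)$, with $|T^{-j}V_0^{\gap}| \asymp k_j^{-2}$ and $J^s_{T^{-j}V_0}T^j(x_j) = C^{\pm 1} J^s_{T^{-j+1}V_0}T^{j-1}(\cdot) \cdot k_j^2$ by \eqref{eq:jac factors} (the last collision contributes the factor $1/\cos\vf \asymp k_j^2$). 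The remaining factor $J^s_{T^{-j+1}V_0}T^{j-1}$ is bounded below; combining, $|a-b| \ge C$ — no, this still needs care: the point is to bound $J^s_{T^{-j}V_0}T^j$ from \emph{below} by a positive power of $J^u_{T^{-j}\bgamma}T^j$ using transversality, exactly as in \eqref{eq:translate factors}–\eqref{eq:stable jac bound}: transversality of $T^{-j-1}(V_0)$ with $T^{-j-1}(\bgamma)$ gives $J^s_{V_0}T^{-j}(x_0)\cdot J^u_{\bgamma}T^{-j}(x_0) = C^{\pm 1}\cos\vf(x_0)/\cos\vf(x_j) \asymp k_W^{-2}k_j^2$, so $J^s_{T^{-j}V_0}T^j(x_j) \ge C k_W^{-2}k_j^2 / J^u_{T^{-j}\bgamma}T^j(x_j) \ge C k_W^{-2}k_j^2 \cdot k_W^{-2}\rho^{-1}k_j^{-3} = C k_W^{-4}\rho^{-1}k_j^{-1}$. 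Then $|a-b| \ge C k_j^{-2}\cdot k_W^{-4}\rho^{-1}k_j^{-1}$... this gives $|a-b|\ge C k_W^{-4}\rho^{-1}k_j^{-3}$, i.e. $k_j^{-3}\le C k_W^4\rho |a-b|$, hence $1/J^u_{T^{-j}\bgamma}T^j(x_j) = C^{\pm 1}k_W^{-2}\rho^{-1}k_j^{-3} \le C k_W^2 |a-b|$. That is \emph{stronger} than claimed, so the actual difficulty is that bounded distortion of $J^s$ over a long stable curve is only $1/3$-Hölder, not multiplicative, and $|T^{-j}V_0^{\gap}|$ is only \emph{comparable to} $k_j^{-2}$ up to the scale of the \emph{whole} strip, not the gap — so the genuine estimate must track how $k_j$ compares with the strip actually crossed and use \eqref{eq:strip bound} to convert powers of $k_j$ into powers of $J^u$.

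\textbf{Main obstacle.} The delicate point is the exponent bookkeeping: one wants an estimate valid uniformly in $n$ and $j$, and the only uniform gain comes from \eqref{eq:strip bound} ($k_j^2 \le (C J^u_{T^{-j}\bgamma}T^j(x_j)/(k_W^2\rho))^{2/3}$), which must be applied to a carefully chosen fractional power so that the leftover power of $J^u$ is positive (giving summability/boundedness) while the powers of $\rho$ and $k_W$ match the target $\rho^{-26/35}$, $k_W^{-2}$. Concretely I expect to write $1/J^u_{T^{-j}\bgamma}T^j(x_j) = k_W^{-2}\rho^{-1}k_j^{-3}$ via \eqref{eq:new(2)}, bound $k_j^{-3} = (k_j^{-3})^{a}(k_j^{-3})^{1-a}$, replace one copy of $k_j^{-3}$ by $C k_W^4\rho|a-b|$ (the honest worst-case gap-size-to-strip comparison, analogous to the $k\ge r^{-1/5}$ argument in Lemma~\ref{lem:discarded}) and the other copy back in terms of $J^u$ via \eqref{eq:strip bound}, then solve for the exponent $a$ forcing the residual $J^u$-power to vanish; the resulting $a = 9/35$ produces $\rho^{-26/35}$ after collecting. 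This is a one-parameter optimization like the ones throughout Section~\ref{Lipschitz}, so the risk is purely arithmetic rather than conceptual, but it is the step where a wrong choice breaks the downstream exponents in Lemma~\ref{lem:gap holder}.
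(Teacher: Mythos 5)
Your plan correctly assembles the right ingredients --- \eqref{eq:gap length}, \eqref{eq:strip bound}, the transversality relation \eqref{eq:translate factors}, and an exponent computation --- and the paper's proof is indeed exactly such a chain. But the execution has genuine gaps. First, an inversion error: by \eqref{eq:translate factors} the \emph{forward} stable Jacobian satisfies $J^s_{T^{-j}V_0}T^j(x_j) = C^{\pm 1} k_W^{2} k_j^{-2}/J^u_{T^{-j}\bgamma}T^j(x_j)$, not $C k_W^{-2}k_j^{2}/J^u_{T^{-j}\bgamma}T^j(x_j)$; this is why your first attempt lands on a bound "stronger than claimed," which should have been treated as a fatal sign rather than a parenthesis. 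Second, the ingredient your final recipe hinges on, $k_j^{-3}\le C k_W^4\rho|a-b|$, is asserted by analogy with Lemma~\ref{lem:discarded} but never derived; a direct attempt produces a bound involving both $k_j$ and $k_{j+1}$, which are unrelated a priori. Moreover \eqref{eq:strip bound} only gives a \emph{lower} bound on $k_j^{-3}$ in terms of $1/J^u$, so "putting the other copy of $k_j^{-3}$ back in terms of $J^u$" goes in the wrong direction for the upper bound you need. Even granting your inputs, the bookkeeping yields a factor $k_W^{4a-2}=k_W^{-34/35}$ at $a=9/35$, not the required $k_W^{-2}$.

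The paper's proof splits into two cases according to whether the component of $T^{-j-1}(V_1)$ containing $T^{-j-1}(x_a)$ meets $\cS_0$ --- a case distinction you omit and which is needed because the lower bound $|a-b|\ge C k_{j+1}^{-4}/J^s_{V_1}T^{-j}(x_a)$ from \eqref{eq:gap length} is only available in the first case (in the second, uniform transversality of $\cS_1$ with unstable curves gives the larger lower bound $C\rho k_W^4 (J^u_{T^{-j}\bgamma}T^j(x_j))^{-2}$, using that $k_{j+1}$ is then of order one). In Case 1 one converts $k_{j+1}^{-4}\ge C\bigl(k_W^2\rho/J^u_{T^{-j-1}\bgamma}T^{j+1}(x_{j+1})\bigr)^{4/3}$ by \eqref{eq:strip bound}, writes $J^s_{V_1}T^{-j}(x_a)=C^{\pm1}k_W^{-2}k_j^2 J^u_{T^{-j}\bgamma}T^j(x_j)$ and $J^u_{T^{-j-1}\bgamma}T^{j+1}(x_{j+1})=C^{\pm1}k_j^2J^u_{T^{-j}\bgamma}T^j(x_j)$, and finally absorbs the leftover $k_j^{14/3}$ via \eqref{eq:strip bound} once more, arriving at $|a-b|\ge Ck_W^{70/9}\rho^{26/9}\bigl(J^u_{T^{-j}\bgamma}T^j(x_j)\bigr)^{-35/9}$; raising to the power $9/35$ gives exactly the stated exponents. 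So $9/35$ is not a free optimization parameter to be solved for, but the reciprocal of the accumulated power $35/9$ of the unstable Jacobian.
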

\begin{proof}
We consider the size of the gap created in a neighborhood of $T^{-j-1}(x_a)$, depending on whether
this gap is created by an intersection with homogeneity strips of high index or not. 

\smallskip
\noindent
{\em Case 1.} The connected component of $T^{-j-1}(V_1)$ containing $T^{-j-1}(x_a)$
intersects $\cS_0$.  Then using \eqref{eq:new(2)} and 
\eqref{eq:gap length}, we have,
\[
|a-b| \ge \frac{C^{\pm 1} k_{j+1}^{-4}}{J^s_{V_1}T^{-j}(x_a)}
\ge \frac{C^{\pm 1} k_W^{8/3} \rho^{4/3}}{(J^u_{T^{-j-1}\bgamma}T^{j+1}(x_{j+1}))^{4/3} J^s_{V_1}T^{-j}(x_a)} \, ,
\]
where  we  used \eqref{eq:strip bound} in the second inequality.  Now 
$J^u_{T^{-j-1}\bgamma}T^{j+1}(x_{j+1}) = C^{\pm 1} k_j^2 J^u_{T^{-j}\bgamma}T^j(x_j)$. 
Using this together with \eqref{eq:translate factors} to convert between stable and unstable 
Jacobians yields,
\[
|a-b| \ge \frac{C^{\pm 1} k_W^{8/3} \rho^{4/3}}{(J^u_{T^{-j}\bgamma}T^j(x_j))^{7/3} 
k_j^{14/3} k_W^{-2}}
\ge  \frac{C^{\pm 1} k_W^{70/9} \rho^{26/9} }{(J^u_{T^{-j}\bgamma}T^j(x_j))^{35/9} } \, ,
\]
where in the second inequality we have used \eqref{eq:strip bound}.
This proves the sublemma in this case.

\smallskip
\noindent
{\em Case 2.} The component of $T^{-j-1}(V_1)$ containing $T^{-j-1}(x_a)$ does not intersect
$\cS_0$.  In this case, by the uniform transversality of $\cS_1$ with unstable curves,
the gap is bounded below by
\[
\begin{split}
|a-b| & \ge \frac{C^{\pm 1} \rho k_W^2}{J^u_{T^{-j-1}\bgamma}T^{j+1}(x_{j+1}) J^s_{V_1}T^{-j-1}(x_a)}
\ge \frac{C^{\pm 1} \rho k_W^4}{(J^u_{T^{-j-1}\bgamma}T^{j+1}(x_{j+1}))^2 k_{j+1}^2 } \\
& \ge \frac{C^{\pm 1} \rho k_W^4}{(J^u_{T^{-j}\bgamma}T^j(x_j))^2} \, ,
\end{split}
\]
where in the last step, we have used the fact that in Case 2, $k_{j+1}$ is of order 1.  This
is clearly a greater lower bound on $|a-b|$ than in Case 1, proving the sublemma.
\end{proof}

With the sublemma proved, we return to our estimate \eqref{eq:h bound},
\[
\begin{split}
\ln J\bh_{-j}(x_j) & \le C \left( \frac{d(x_0, \tx_0)}{\rho^{2/3} k_W^{4/3} J^u_{T^{-j} \bgamma}T^j(x_j)}
+ \frac{d(x_0, \tx_0) k_W^{4/5}}{\rho^{2/5} (J^u_{T^{-j}\bgamma}T^j(x_j))^{3/5}} \right) \\
& \le C  d(x_0, \tx_0)\left( k_W^{-10/3} \rho^{-148/105}  |a-b|^{9/35}
+  k_W^{-2/5} \rho^{-148/175} |a-b|^{27/175} \right) \\
& \le C \left( k_W^{-4/3} \rho^{-43/105}  |a-b|^{9/35} + k_W^{8/5} \rho^{27/175} |a-b|^{27/175} \right) \\
& \le C \left(  \rho^{-43/105} |a-b|^{9/35} + \rho^{-29/175} |a-b|^{27/175} \right)
\le C \rho^{-31/105} |a-b|^{1/7} \, ,
\end{split}
\]
where in the third line we have used the fact that $d(x_0, \tx_0) \le C \rho k_W^2$ and in the
last line that $k_W \le C \rho^{-1/5}$.  We have also opted to take simpler (slightly less than
optimal) exponents, taking the power $1/7$
rather than $27/175$ in the second term of the last line and converting $|a-b|^{4/35} \le C\rho^{4/35}$
in the first term.  Note that $|a-b| \le C\rho$ follows from Subsection ~ \ref{sec:step1}.

Finally, for the fourth term on the right-hand side of \eqref{eq:slope split}, we note that
the boundary curves for the gap containing $T^{-j}V_0$ and $T^{-j}V_1$ both lie in the unstable cone.
Since $T^{-j}V_0$ and $T^{-j}V_1$ have bounded curvature, we have
\[
\ln \frac{|T^{-j}V_0|}{|T^{-j}V_1|} \le C d(x_j, \tx_j) \le \frac{C d(x_0, \tx_0)}{J^u_{T^{-j} \bgamma}T^j(x_j)} \, .
\]
Using Sublemma~\ref{sub:gap}, this quantity is bounded by
\[
C d(x_0, \tx_0) k_W^{-2} \rho^{-26/35} |a-b|^{9/35} 
\le C \rho^{9/35} |a-b|^{9/35}\, ,
\]
where again we have used the fact that $d(x_0, \tx_0) \le k_W^2 \rho$.
Using the estimates for these four terms in \eqref{eq:slope split} ends the
proof of  Lemma~\ref{lem:interpolate} since
both the average slope and $\partial_{\bxs} \barG$ are uniformly bounded
away from 0 and infinity.
\end{proof}

\section{Estimates for the Dolgopyat bound (Lemmas ~ \ref{lem:hpr},  \ref{lem:cancel},
and \ref{lem:disco})}\label{sec:hoihoi}

This appendix contains several crucial, but technical,  estimates used in the Dolgopyat type cancellation argument developed in Section \ref{dodo}.

\begin{proof}[
Proof of Lemma \ref{lem:hpr}]
Recalling condition (ii) on the foliation from Section~\ref{Lipschitz}, the first identity in \eqref{eq:holo}
together with \eqref{trivialbound} gives  
\begin{equation}\label{eq:h-a}
\begin{split}
x^s= &G_{i,j,\up}(M_A(x^s),0)+\int_0^{\bh^s_A(x^s)}\!\!\!\!\!\!\!\!ds\;
\partial_{x^s} G_{i,j,\up}(M_A(x^s),s)\\
=&
\int_0^{M_A(x^s)}\!\!\!\!\!\!\!\! du\; \partial_{x^u}G_{i,j,\up}(u,0)+ \int_0^{\bh^s_A(x^s)}\!\!\!\!\!\!\!\! ds\left[1+\int_0^{M_A(x^s)}\!\!\!\!\!\!\!\! du\; \partial_{x^u}\partial_{x^s}G_{i,j,\up}(u,s)\right] \\
=&\bh^s_A(x^s)+\int_0^{M_A(x^s)}\!\!\!\!\!\!\!\!\!\! du\; \partial_{x^u}G_{i,j,\up}(u,0)+ \int_0^{\bh^s_A(x^s)}\!\!\!\!\!\!\!\! ds\int_0^{M_A(x^s)}\!\!\!\!\!\!\!\!\!\! du\; \partial_{x^u}\partial_{x^s}G_{i,j,\up}(u,s) \, ,
\end{split}
\end{equation}
where we have used the fact that $\partial_{x^s} G_{i,j,\up}(0,s) =1$ for each $s$
by property (ii) of the foliation.
Thus, combining the bound (vi) from Section ~\ref{Lipschitz}, that is 
$|\partial_{x^u}\partial_{x^s} G_{i,j,\up}|\le C\rho^{-4/5}=C\epsilonr^{-4\varsigma/5}$,
the bound  $|M'_A|\leq \Cs \epsilonr^{1-\frac 45\theta}$ from \eqref{condF_A}
(which implies $|M_A(x^s)|\le \Cs \epsilonr$),
and the condition  $|\bh^s_A(x^s)|\le \Cs \epsilonr^\theta$ from (\ref{trivialbound}),
we get $\bh^s_A(x^s)=x^s(1+\cO(\epsilonr^{1-4\varsigma/5}))+\cO(\epsilonr)$.

Next, differentiating the first identity in \eqref{eq:holo}, we find
\begin{equation}\label{eq:hprimo}
(\bh^s_A)'(x^s)=\frac{1-\partial_{x^u} G_{i,j,\up}(M_A(x^s),\bh^s_A(x^s))
M'_A(x^s)}{\partial_{x^s} G_{i,j,\up}(M_A(x^s),\bh^s_A(x^s))}\, .
\end{equation}
We have
\[
\begin{split}
\partial_{x^s} G_{i,j,\up}(M_A(x^s),\bh^s_A(x^s))&=\partial_{x^s} G_{i,j,\up}(0,\bh^s_A(x^s))+
\int_0^{M_A(x^s)}\!\!\!\!\!\!\! \partial_{x^u}\partial_{x^s} G_{i,j,\up}(u,\bh^s_A(x^s)) du\\
&=1+\cO(\epsilonr^{1-4\varsigma/5})\, ,
\end{split}
\]
while $|\partial_{x^u} G_{i,j,\up}|_\infty\leq \Cs$, hence we have the second inequality of 
Lemma ~ \ref{lem:hpr}.
\begin{equation}\label{eq:deriv-done}
|1- (\bh^s_A)'|\leq \Cs \epsilonr^{1-\frac 45\varsigma} \, ,
\end{equation}
while the bound on  $|(\bh^s_A)(x^s)-x^s|$ follows by integration.
Finally,  $\bh^s_A$ is invertible  and the claimed bound on
$|(\bh^s_A)^{-1}(x^s)-x^s|$ holds 
using $((\bh^s_A)^{-1}(x^s) - x^s)' = \frac{1}{(\bh^s_A)' \circ (\bh^s_A)^{-1}} - 1$
and integrating as before.
\end{proof}

\begin{proof}[Proof of Lemma~\ref{lem:cancel}]
To start with, note that formula \cite[(E.1)]{BaL} is obtained by a purely geometric argument and uses only that the strong manifolds are in the kernel of the contact form and the weak manifolds in the
kernel of its differential. Since the exact same situation holds here, we have, for all relevant manifolds 
$W_A$, the formula
\begin{equation}\label{eq:symplectic-geo}
\bomega_A(x^s)=\int_0^{x^s}ds\int_0^{M_A((\bh_A^s)^{-1}(s))}du\;\partial_{x^s}G_{i,j\up}(u,s)\, .
\end{equation}
Note that this implies
\begin{equation}\label{eq:omegaC0}
|\bomega_A|_{\infty}\leq \Cs \epsilonr^{1+\theta} \, .
\end{equation}
To obtain the formula we are interested in for each pair $A,B$, it suffices to differentiate. 
Remembering properties (ii) and (vi) of the foliation constructed in Section \ref{Lipschitz}, we have
\begin{equation}\label{eq:o-ab}
\begin{split}
\partial_{x^s} \bomega_B(x^s)-\partial_{x^s} \bomega_A(x^s)&=\int_{M_A((\bh_A^s)^{-1}(x^s))}^{M_B((\bh_B^s)^{-1}(x^s))}\!du\;\partial_{x^s} G_{i,j,\up}(u,x^s)\\
&=\int_{M_A((\bh_A^s)^{-1}(x^s))}^{M_B((\bh_B^s)^{-1}(x^s))}\!du\left[ 1+\int_0^u du_1\; \partial_{x^u} \partial_{x^s}G_{i,j,\up}(u_1,x^s)\right]\\
&=\left[M_B((\bh_B^s)^{-1}(x^s))-M_A((\bh_A^s)^{-1}(x^s))\right](1+\cO(\epsilonr^{1-\frac 45\varsigma}))\, .
\end{split}
\end{equation}
Next,  the distance between $W_A^0\cap \{x^s=0\}$ and $W_B^0\cap \{x^s=0\}$ is given by $|M_A(0)-M_B(0)|=:d(W_A,W_B)$. Then, by the argument developed in 
\eqref{eq:shortcutG} (proof of Lemma~ \ref{lem:smallbox}), we have
\begin{equation}\label{eq:onceclosealways}
|\,|M_A(0)-M_B(0)|-|M_A(x^s)-M_B(x^s)|\,|\leq \Cs d(W_A,W_B)\epsilonr^{\theta/5} \, .
\end{equation}
On the other hand, by \eqref{eq:h-a} and property (vi) of the foliation, we have 
\begin{equation}\label{eq:h-ab}
\begin{split}
\big|(\bh^s_B)^{-1}(x^s) -(\bh^s_A)^{-1}(x^s)\big| &=
\left|\int_{M_A\circ (\bh^s_A)^{-1}(x^s)}^{M_B\circ (\bh^s_B)^{-1}(x^s)} \!\!\!\!\!\!\! 
du \; \partial_{x^u} G_{i,j,\up}(u,0) \right| \\
& \qquad + \left|\int_{M_A\circ (\bh^s_A)^{-1}(x^s)}^{M_B\circ (\bh^s_B)^{-1}(x^s)}\!\!\!\!\!\!\!du\int_0^{x^s}\! ds\; \partial_{x^u}\partial_{x^s}G_{i,j,\up}(u,s)\right|  \\
&\leq \Cs (1+ \epsilonr^{\theta-\frac{4}{5} \varsigma}) \left|M_B\circ(\bh^s_B)^{-1}(x^s)-M_A\circ (\bh^s_A)^{-1}(x^s)\right|\, .
\end{split}
\end{equation}
To conclude recall that we are working in coordinates in which $|M'_{A}|\leq \Cs \epsilonr^{1-\frac 45\theta}$, cf. the proof of Lemma \ref{lem:smallbox}, hence
\[
\begin{split}
&\left|M_B\circ(\bh^s_B)^{-1}-M_A\circ (\bh^s_A)^{-1}\right|
\geq\left| M_B\circ (\bh^s_B)^{-1}-M_A\circ (\bh^s_B)^{-1}\right|-\Cs \epsilonr^{1-\frac 45\theta}|(\bh^s_B)^{-1}-(\bh^s_A)^{-1}|\\
&\qquad \geq\left| M_B\circ (\bh^s_B)^{-1}-M_A\circ (\bh^s_B)^{-1}\right|
-\Cs (\epsilonr^{1-\frac 45\theta} + \epsilonr^{1-\frac 45 \varsigma+ \frac 15 \theta})\left|M_B\circ(\bh^s_B)^{-1}-M_A\circ (\bh^s_A)^{-1}\right|\, ,
\end{split}
\]
which, together with \eqref{eq:o-ab} and \eqref{eq:onceclosealways}, proves \eqref{eq:below-o}. 

To prove the second statement, let us introduce the shorthand notation $\bomega_{A,B}(x^s)=\bomega_A(x^s)-\bomega_B(x^s)$ and $\Beta_A=(\bh_A^s)^{-1}$, $\Beta_B=(\bh_B^s)^{-1}$.
By \eqref{eq:o-ab} we have
\[
\begin{split}
&\left|\partial_{x^s}\bomega_{A,B}(x^s)-\partial_{x^s}\bomega_{A,B}(y^s)\right|=\left|\int_{M_A(\Beta_A(x^s))}^{M_B(\Beta_B(x^s))}\hskip-.6cm du\;\partial_{x^s} G_{i,j,\up}(u,x^s)-\int_{M_A(\Beta_A(y^s))}^{M_B(\Beta_B(y^s))}\hskip-.6cm du\;\partial_{x^s} G_{i,j,\up}(u,y^s)\right|\\
&\qquad \leq\left|\int_{M_A(\Beta_A(x^s))}^{M_B(\Beta_B(x^s))}\!du\;\partial_{x^s} G_{i,j,\up}(u,x^s)- \partial_{x^s}G_{i,j,\up}(u,y^s)\right|\\
&\qquad\qquad\quad+\left|\int_{M_B(\Beta_B(y^s))}^{M_B(\Beta_B(x^s))}\!du\;\partial_{x^s} G_{i,j,\up}(u,y^s)\right|+\left|\int_{M_A(\Beta_A(x^s))}^{M_A(\Beta_B(y^s))}\!du\;\partial_{x^s} G_{i,j,\up}(u,y^s)\right|
\, .
\end{split}
\]
Notice that  (ii) of the foliation implies
\[
\begin{split}
\partial_{x^s}G_{i,j,\up}(u,x^s)- \partial_{x^s}G_{i,j,\up}(u,y^s)=&\partial_{x^s}G_{i,j,\up}(u,x^s)-\partial_{x^s} G_{i,j,\up}(u,y^s)-\partial_{x^s} G_{i,j,\up}(0,x^s)\\
&\qquad+\partial_{x^s} G_{i,j,\up}(0,y^s) \, .
\end{split}
\]
Thus, the four-point property (vii) (applied to the points $(u,x^s)$ and $(0,y^s)$) implies
\begin{equation}\label{eq:four-point-at-work}
|\partial_{x^s}G_{i,j,\up}(u,x^s)- \partial_{x^s}G_{i,j,\up}(u,y^s)|\leq \Cs \epsilonr^{-(\frac 45+\frac{11\ho}{15})\varsigma+1-7\ho}|x^s-y^s|^\ho
 \, .
\end{equation}
On the other hand, equation \eqref{condF_A} and Lemma \ref{lem:hpr} imply
\[
|M_B(\Beta_B(x^s))-M_B(\Beta_B(y^s))|\leq \Cs \epsilonr^{1-\frac45\theta}|x^s-y^s|\, ,
\]
and the same for $A$. Remembering that $|x^s|, |y^s|\leq c\epsilonr^\theta$, property (v) of the foliation, and our conditions on $\ho, \theta,\varsigma$ from \eqref{eq:varvar}, the above facts yield
\[
\left|\partial_{x^s}\bomega_{A,B}(x^s)-\partial_{x^s}\bomega_{A,B}(y^s)\right|\leq  \Cs \epsilonr |x^s-y^s|^\ho+\epsilonr^{1-\frac45\theta}|x^s-y^s|\leq\Cs \epsilonr|x^s-y^s|^\ho
\]
which, together with \eqref{eq:omegaC0}, proves the second statement of  Lemma~\ref{lem:cancel}.

To continue, let 
\[
\BG_{A,B}(x^s, x^0)=\frac{[(m-1)!]^2}{(\ell\vu)^{2m-2}}
e^{-2a\ell \vu}
{\bf G}^*_{\ell,m,i,A}(x^s, x^0)\overline{{\bf G}^*_{\ell,m,i,B}(x^s, x^0)} \, .
\]
Next we introduce a sequence $\{w_j\}_{j=0}^M\subset \bR$ 
such that $w_0=-cr^\theta$ and $\partial_{x^s}\bomega_{A,B}(w_j)(w_{j+1}-w_j)=2\pi b^{-1}$ and let $M\in\bN$ be such that $w_M\leq c \epsilonr^\theta$ and $w_{M+1}> c\epsilonr^\theta$.\footnote{ Note that $M$ exists and is finite by the first part of the lemma, \eqref{eq:below-o}.} Also, we 
set $\bdelta_j=w_{j+1}-w_j$. 
By Lemma \ref{lem:hpr} it follows that, for each $x^s\in [w_j,w_{j+1}]$,
\[
|\bomega_{A,B}(x^s)-\bomega_{A,B}(w_j)-\partial_{x^s}\bomega_{A,B}(w_j)(x^s-w_j)|\leq \Cs \epsilonr\bdelta_j^{1+\ho}\, .
\]
In addition, the bounds in  \eqref{eq:hoG} imply
\[
|\BG_{A,B}(x^s,x^0)-\BG_{A,B}(w_j,x^0)|\leq \Cs \bdelta_j \epsilonr^{-3\theta}  \, .
\]
Then, using the first part of the Lemma,
\[
\begin{split}
&\left|\int_{w_j}^{w_{j+1}} e^{-ib\bomega_{A,B}(x^s)} \BG_{A,B}(x^s,x^0) d x^s\right|\\
&\quad=\left|
\int_{w_j}^{w_{j+1}}  e^{-ib[\partial_{x^s}\bomega_{A,B}(w_i)(x^s-w_j)+\cO(\epsilonr\bdelta_j^{1+\ho})]} [\BG_{A,B}(w_j, x^0)+\cO(\epsilonr^{-3\theta}\bdelta_j)] dx^s\right|\\
&\quad\leq\Cs\left(b\bdelta_j^{1+\ho}\epsilonr^{-2\theta+1}+\epsilonr^{-3\theta}\bdelta_j\right)\bdelta_j
\leq\Cs\left(\frac{\epsilonr^{-2\theta+1}}{d(W_A,W_B)^{1+\ho} b^\ho}+\frac{\epsilonr^{-3\theta}}{d(W_A,W_B) b}\right)\bdelta_j \, .
\end{split}
\]
We may be left with the integral over the interval $[w_M,cr^\theta]$ which is trivially bounded by $\Cs \epsilonr^{-2\theta}\bdelta_M\leq \Cs [\epsilonr^{2\theta}bd(W_A, W_B)]^{-1}$.
The statement follows since the manifolds we are considering have length at most $c\epsilonr^\theta$, hence $\sum_{j=0}^{M-1}\bdelta_j \leq c\epsilonr^\theta$.
\end{proof}

\begin{proof}[Proof of Lemma~\ref{lem:disco}]
We start by introducing a function $\bRtime:W\to\bN$ such that $\bRtime(\xi)$ is the first $t\in\bN$ at which $\Phi_{-t\vu}\xi$ belongs to a component of $\Phi_{-t\vu}W$ of size larger than $\kappa_* L_{0}$, $\kappa_*<1/3$, and distant more than $L_0$ from $\partial\Omega_0$.

\begin{remark}\label{rem:atleastL03} We choose $\kappa_*$ such that, if $\Phi_{-t\vu}W$ is a regular piece of size larger than $L_{0}/3$ but in an $L_0$ neighborhood of $\partial\Omega_0$, then either $\Phi_{-(t+k)\vu}W$ or $\Phi_{-(t-k)\vu}W$ will satisfy our requirement for some $\Cs>k\vu>L_0$.
\end{remark}

We define then $\Rtime(\xi)=\min\{\bRtime(\xi),\ell\}$. Let $\cP=\{J_{i}\}$ be the coarser partition of $W$ in intervals on which $\Rtime$ is constant. Note that, for each $W_{B,i}$, $\Phi_{\ell\vu}(W_{B,i})\subset J_{j}$ for some $J_{j}\in\cP$.

Let $\Sigma_{\ell,j}=\{(B,i)\;:\; i\in\bN,\, B\in E_{\ell,i},\, d(W_{B,i}, W^0_{A,i})\leq \rho_{*},\; \Phi_{\ell\vu}W_{B,i}\subset J_{j}\}$. Then, by Lemma~ \ref{lem:distortion}, for each $(B,i)\in \Sigma_{\ell,j}$
\begin{equation}\label{eq:zerozeroZ}
Z_{\ell,B,i} = \int_{W_{B,i}}J^s_{\ell \vu}\leq \Cs\frac{|J_{j}|}{|\overline W_{j}|}\int_{W_{B,i}} J^s_{(\ell-\Rtime_j) \vu}
 \leq \Cs\frac{|J_{j}|}{|\overline W_{j}|}|\Phi_{(\ell-R_{j})\vu}W_{B,i}|\,,
\end{equation}
where $\Rtime_{j}=\Rtime(J_{j})$ and $\overline W_{j}=\Phi_{-\Rtime_{j}\vu}J_{j}$. Note that, by construction, either $|\overline W_{j}|\geq \kappa_* L_{0}$ or $\Rtime_j=\ell$ and $\overline W_{j}=W_{B,i}$.  Next, consider the local weak stable surfaces
$W_{B,i}^{0} := \cup_{t \in [-cr^\theta, cr^\theta]} \Phi_t W_{B,i}$ and
$\overline W^{0}_j := \cup_{t \in [-cr^\theta, cr^\theta]} \Phi_t \overline W_{j}$.

Let us analyze first the case in which $\Rtime_j<\ell$.
Let $\rho\leq \Cs L_0^{\frac 53}$. Then, by assumption, $\overline W_{j}$ is a manifold with satisfies condition (a) at the beginning of Section~\ref{Lipschitz}. Indeed $\overline W_{j}$ is too long to belong to $\bH_k$ with $k\geq C\rho^{-\frac 15}$, so that $\overline W_j$ satisfies condition (b)
of that section as well. We can then use the construction in Section~\ref{Lipschitz} 
to define an approximate unstable foliation in a $\rho$ neighborhood of 
the surface $\overline W^{0}_j$.  
Let $\Gamma_{B,i}$ be the set of leaves that intersect 
$\Delta_{\ell-\Rtime_{j}}\cap \Phi_{(\ell-\Rtime_{j})\vu}W^{0}_{B,i}$. 
By the construction of the covering $B_{c\epsilonr^\theta}(x_i)$, the 
$\Gamma_{B,i}$ can have at most $\Cs$ overlaps and since $W_B$ completely crosses
$B_{c\epsilonr^\theta}(x_i)$, there can be no gaps between the curves in $\Gamma_{B,i}$.
Now using the uniform transversality between the stable, unstable and flow directions, 
\[
\sum_{(B,i)\in \Sigma_{\ell,j}}m(\Gamma_{B,i}) \geq \Cs\!\!\sum_{(B,i)\in \Sigma_{\ell,j}}|\Phi_{(\ell-\Rtime_{j})\vu}W_{B,i}|\rho \epsilonr^\theta .
\]

Accordingly, for each $j$ such that $\Rtime_j<\ell$, remembering \eqref{eq:zerozeroZ},
\begin{equation}\label{eq:common-1}
\sum_{(B,i)\in \Sigma_{\ell,j}}Z_{\ell,B,i}\leq \Cs|J_{j}| \rho^{-1} \epsilonr^{-\theta} \sum_{(B,i)\in \Sigma_{\ell,j}}m(\Phi_{-(\ell-\Rtime_{j})\vu}\Gamma_{B,i}) \,,
\end{equation}
where we have used the invariance of the volume.
Remembering that the $\Phi_{-(\ell-\Rtime_{j})\vu}\Gamma_{B,i}$ have a fixed maximal number of overlaps and since they must be all contained in a box containing $\widetilde O$
of length $\Cs \epsilonr^\theta$ in the flow direction, of length $\Cs \epsilonr^\theta$ in the stable direction and 
of length $\Cs(\rho_{*} + \Lambda^{-(\ell-\Rtime_j)\vu}\rho)$ in the unstable direction, 
we have,\footnote{ Remember that the $J_j$ are all disjoint and $\cup_j J_j =W$ and $|W|\leq L_0$.}
\begin{equation}\label{eq:common-2}
\begin{split}
\sum_j\sum_{(B,i)\in \Sigma_{\ell,j}}Z_{B,i}\leq &\sum_{\{j\;:\; \Rtime_{j}\le \frac \ell 2\}} \Cs |J_{j}| \, \rho^{-1} \epsilonr^\theta (\rho_{*}+\Lambda^{-\frac{\ell}2\vu} \rho )  
+\sum_{\{j\;:\; \Rtime_{j}> \frac\ell 2\}}\sum_{(B,i)\in \Sigma_{\ell,j}}Z_{B,i}\, .
\end{split}
\end{equation}
To estimate the sum on $\Rtime_{j}> \frac\ell 2$ we use the growth Lemma \ref{lem:growth}(a), with 
$\oldbetazero=0$, which, remembering Remark \ref{rem:atleastL03}, implies
\[
\begin{split}
\sum_{\{j\;:\; \Rtime_{j}>\frac  \ell 2\}}\sum_{(B,i)\in \Sigma_{\ell,j}}Z_{B,i}&\leq \sum_{\{j\;:\; \Rtime_{j}> \frac \ell 2\}} \sum_{(B,i)\in \Sigma_{\ell,j}}\Cs\int_{\Phi_{\ell\vu/2}W_{B,i}}J^s_{\Phi_{\ell\vu/2}W_{B,i}}\Phi_{\ell\vu/2}\\
&\leq\sum_{W_B\in \cI_{\ell\vu/2}(W)} \Cs \int_{W_{B}}J^s_{W_{B}}\Phi_{\ell\vu/2} \\
&\leq \sum_{W_B\in \cI_{\ell\vu/2}(W)} \Cs L_0|J^s_{W_{B}}\Phi_{\ell\vu/2}|_{\cC^0}\leq \Cs \lambda^{\ell \vu/2}\, .
\end{split}
\]
Since $\lambda > \Lambda^{-1}$, the lemma follows by choosing $\rho= \rho_{*}^{1/2}$.
\end{proof}


\end{document}